\definecolor{cite}{HTML}{11871E}
\definecolor{url}{HTML}{698996}
\definecolor{link}{HTML}{912F1B}
\definecolor{greencolor}{rgb}{0,0.45,0}
\definecolor{greenbluecolor}{rgb}{0,0.45,0.1}
\definecolor{ucphcolor}{rgb}{0.517,0.016,0.016}
\definecolor{OliveGreen}{rgb}{0,0.6,0}
\tikzstyle{arrow} = [-{Straight Barb[scale=0.8]}, line width=0.2mm]
\tikzset{
	math to/.tip={Glyph[glyph math command=rightarrow]},
	loop/.tip={Glyph[glyph math command=looparrowleft, swap]},
}
\Crefname{prop}{Proposition}{Propositions}
\Crefname{lem}{Lemma}{Lemmas}
\Crefname{cor}{Corollary}{Corollaries}
\Crefname{thm}{Theorem}{Theorems}
\Crefname{alphThm}{Theorem}{Theorems}
\Crefname{defn}{Definition}{Definitions}
\Crefname{nota}{Notation}{Notations}
\Crefname{cons}{Construction}{Constructions}
\Crefname{rmk}{Remark}{Remarks}
\Crefname{obs}{Observation}{Observations}
\Crefname{warning}{Warning}{Warnings}
\Crefname{conj}{Conjecture}{Conjectures}
\Crefname{recollect}{Recollection}{Recollections}
\Crefname{terminology}{Terminology}{Terminologies}
\Crefname{setting}{Setting}{Settings}
\Crefname{example}{Example}{Examples}
\newtheorem{thm}{Theorem}[subsection]
\newtheorem{prop}[thm]{Proposition}
\newtheorem{lem}[thm]{Lemma}
\newtheorem{cor}[thm]{Corollary}
\newtheorem{alphThm}{Theorem}
\theoremstyle{definition}
\newtheorem{defn}[thm]{Definition}
\newtheorem{nota}[thm]{Notation}
\newtheorem{recollect}[thm]{Recollections}
\newtheorem{terminology}[thm]{Terminology}
\newtheorem{setting}[thm]{Setting}
\newtheorem{rmk}[thm]{Remark}
\newtheorem{obs}[thm]{Observation}
\newtheorem{example}[thm]{Example}
\newtheorem{warning}[thm]{Warning}
\newtheorem{constr}[thm]{Construction}
\newcommand{\cat}{\mathrm{Cat}}
\newcommand{\relcat}{\mathrm{RelCat}}
\newcommand{\presentable}{\mathrm{Pr}}
\newcommand{\catTopos}{\mathbf{Cat}}
\newcommand{\animaTopos}{\pmb{\mathcal{S}}}
\newcommand{\presentableTopos}{\mathbf{Pr}}
\newcommand{\cocartesianCatTopos}{\mathbf{coCart}}
\newcommand{\cartesianCatTopos}{\mathbf{Cart}}
\newcommand{\presheafTopos}{\pmb{\mathscr{P}}}
\newcommand{\funTopos}{\mathbf{Fun}}
\newcommand{\mapTopos}{\mathbf{Map}}
\newcommand{\relCatTopos}{\mathbf{RelCat}}
\newcommand{\baseTopos}{\mathscr{T}}
\newcommand{\spc}{\mathcal{S}}
\newcommand{\cofree}{\mathfrak{c}}
\newcommand{\frakTwistedArrow}{\mathfrak{T}\mathrm{w}}
\newcommand{\sphere}{\mathbb{S}}
\newcommand{\presheaf}{\mathscr{P}}
\newcommand{\scL}{\mathscr{L}} 
\newcommand{\calO}{\mathcal{O}}
\newcommand{\bbZ}{\mathbb{Z}}
\newcommand{\bbQ}{\mathbb{Q}}
\newcommand{\bbR}{\mathbb{R}}
\newcommand{\sM}{\mathcal{M}}
\newcommand{\sN}{\mathcal{N}}
\newcommand{\X}{\mathcal{X}}
\newcommand{\Y}{\mathcal{Y}}
\newcommand{\sC}{{\mathcal C}} 
\newcommand{\D}{{\mathcal D}}
\newcommand{\B}{\mathcal{B}} 
\newcommand{\Z}{\mathcal{Z}}
\newcommand{\E}{\mathcal{E}}
\newcommand{\F}{\mathcal{F}}
\newcommand{\W}{\mathcal{W}}
\newcommand{\V}{\mathcal{V}}
\newcommand{\sU}{\mathcal{U}}
\newcommand{\yoneda}{\text{\usefont{U}{min}{m}{n}\symbol{'107}}}
\DeclareFontFamily{U}{min}{}
\DeclareFontShape{U}{min}{m}{n}{<-> dmjhira}{}
\newcommand*\cocolon{%
        \nobreak
        \mskip6mu plus1mu
        \mathpunct{}%
        \nonscript
        \mkern-\thinmuskip
        {:}%
        \mskip2mu
        \relax
}
\newcommand{\eilenbergMacLaneCoeff}{{\mathrm{H}\bbZ}}
\newcommand{\simplex}{\mathbb{\Delta}}
\newcommand{\op}{^{\mathrm{op}}}
\newcommand{\hop}{^{\mathrm{hop}}}
\newcommand{\vop}{^{{\mathrm{vop}}}}
\DeclareMathOperator{\straighten}{\mathrm{St}}
\DeclareMathOperator{\unstraighten}{\mathrm{Un}}
\newcommand{\cartesianCategory}{\mathrm{Cart}}
\newcommand{\cocartesianCategory}{\mathrm{coCart}}
\newcommand{\spectra}{\mathrm{Sp}}
\newcommand{\stable}{\mathrm{st}}
\newcommand{\dbl}{{\mathrm{dbl}}}
\newcommand{\id}{\mathrm{id}}
\DeclareMathOperator{\map}{\mathrm{Map}}
\DeclareMathOperator{\fib}{\operatorname{fib}}
\DeclareMathOperator{\cofib}{\operatorname{cofib}}
\DeclareMathOperator{\func}{\mathrm{Fun}}
\newcommand{\module}{\mathrm{Mod}}
\newcommand{\unit}{\mathbb{1}}
\newcommand{\twistedArrow}{\mathrm{Tw}}
\newcommand{\lmodule}{\mathrm{LMod}}
\newcommand{\picardSpace}{\mathrm{Pic}}
\newcommand{\picardSpaceTopos}{\mathbf{{Pic}}}
\newcommand{\alg}{\mathrm{Alg}}
\newcommand{\calg}{\mathrm{CAlg}}
\newcommand{\leftmod}{\mathcal{LM}}
\newcommand{\canonical}{\mathrm{can}}
\newcommand{\lfib}{\mathrm{LFib}}
\newcommand{\rfib}{\mathrm{RFib}}
\newcommand{\eval}{\mathrm{ev}}
\newcommand{\constant}{\operatorname{const}}
\DeclareMathOperator{\res}{res}
\DeclareMathOperator{\nattrans}{\mathrm{Nat}}
\newcommand{\ind}{\mathrm{Ind}}
\def\colim{\qopname\relax m{colim}}
\newcommand{\udl}[1]{\underline{{#1}}}
\newcommand{\slice}[3]{{#1}\downarrow_{#2}{#3}}
\DeclareMathOperator{\lift}{lift}
\newcommand{\interior}[1]{\mathring{#1}}
\newcommand{\paramFibred}[1]{\mathfrak{#1}}
\title[{POINCAR\'E DUALITY PAIRS OF $\infty$-CATEGORIES}]{\Large\textsc{Poincar\'e Duality Pairs of $\infty$-Categories}}
\author{ANDREA BIANCHI}
\email{bianchi@mpim-bonn.mpg.de}
\address{Max Planck Institute for Mathematics, Vivatsgasse 5, 53111 Bonn, Germany}
\author{KAIF HILMAN}
\email{kaif@math.uni-bonn.de}
\address{Mathematik Zentrum der Universit\"at Bonn, Endenicher Allee 60, 53115 Bonn, Germany}
\author{DOMINIK KIRSTEIN}
\email{kirstein@math.lmu.de}
\address{Mathematisches Institut, Ludwig-Maximilians-Universität München, Theresienstraße 39, 80333 Munich, Germany}
\author{CHRISTIAN KREMER}
\email{kremer@mpim-bonn.mpg.de}
\address{Max Planck Institute for Mathematics, Vivatsgasse 5, 53111 Bonn, Germany}
\date{\today}
\begin{document}

\begin{abstract}
We introduce a notion of Poincar\'e duality for pairs of $\infty$-categories,
extending Poincar\'e--Lefschetz duality for pairs of spaces.
This categorical extension yields an efficient book-keeping device that affords, among other things, a uniform  treatment of Wall's Poincar\'e ads of spaces, iterated Poincar\'e cobordisms, and in general, diagrams of spaces parametrised by the face poset of a combinatorial manifold. 
In each of these cases, the theory reduces them to studying a single pair of $\infty$-categories and the properties of a single functor, the relative cohomology functor. Using this formalism, we prove a very general fibration theorem which, in particular, specialises to a generalisation of Klein--Qin--Su's fibration theorem for Poincar\'e triads to all ads. This theory also lays the foundation for future work by the authors on Poincar\'e cobordism categories, isovariant Poincar\'e spaces and string topology.
\end{abstract}
\def\uppercasenonmath#1{}

\subjclass[2020]{
16D90,  
55N20,  
57P10, 
57R19 
}
\keywords{Poincar\'e--Lefschetz duality, Morita theory, twisted ambidexterity}
\maketitle

\tableofcontents

\section{Introduction}
Poincar\'e spaces \cite{Browder1, Browder2, Spivak, Wall} were introduced as a homotopical counterpart to the notion of closed manifolds, and feature extensively in their classification via surgery theory. In fact, just as central in this program is the consideration of homotopical analogues for more complicated geometric objects, such as compact manifolds with boundary, cobordisms or manifold triads. Traditionally, in order to model these homotopical analogues, one considers a diagram of spaces indexed by a finite poset, which one thinks of as a family of spaces related by inclusion relations; one declares a list of data providing Poincar\'e--Lefschetz dualities relating consecutive strata; and one requires a list of suitable compatibilities among these data; see for instance \cite{Wall,Wall_scm}. As the number of strata increases, however, one is quickly burdened by combinatorial difficulties. This renders the ``obvious'' generalisations of statements from the absolute setting to the stratified setting considerably harder.  For instance, while the fibration theorem for Poincar\'e spaces was proved in \cite{Kleindual} some 25 years ago, the proof for the analogous generalisation to Poincar\'e triads only appeared relatively recently in \cite{KleinQinSu}. Moreover, the lack of a uniform treatment forces one to repeat arguments in an ad-hoc fashion for every new, but similar, situation.

In this article, we offer a new perspective on this subject by introducing the notion of Poincar\'e pairs of \textit{$\infty$-categories}, which will be seen to be a direct generalisation of Poincar\'e--Lefschetz duality for a pair of spaces. The insight here is that by merely extending the notions of interest to categories, we are also able to capture all the compatibilities and coherences between the  dualities across different strata by just considering the Poincar\'e--Lefschetz duality enjoyed by a single pair of $\infty$-categories. 

As a basic illustrative example, we prove in \cref{alphThm:classical_vs_us} that Wall's notion of Poincar\'e ads from \cite{Wall_scm} may equivalently be encoded as a property of the $\infty$-category obtained by unstraightening a cubical diagram of finitely dominated spaces. More generally, we provide in \cref{alphThm:combinatorial_manifold} a ``local'' and checkable characterisation of Poincar\'e duality for diagrams of spaces indexed over the face poset of a combinatorial manifold: taking diagrams indexed over the face poset of a hyperrectangle tesselated by hypercubes, we plan in future work to model higher Poincar\'e cobordisms categories in an efficient way \cite{pdcob}. As ingredients to prove these and more results, we provide a suite of ``cutting-and-gluing'' principles for Poincar\'e duality, some of which will be highlighted below. 

The categorical basis of our work will be the Morita theory for presheaves over $\infty$-categories. We develop aspects of this in the present work, and we distil some of the fundamental points as \cref{thm:A} below. As an extra layer of generality, we in fact developed the requisite Morita theory in the setting of categories parametrised over an arbitrary $\infty$-topos, as introduced by Martini-Wolf \cite{Martini2022Yoneda,Martini2022Cocartesian,MartiniWolf2022Presentable}. Besides paving the way for some of the future applications that we have in mind, specifically the study of equivariant Poincar\'e duality, this generality turns out to be very convenient when we want to address relative Poincar\'e duality for \textit{pairs of functors}, i.e. for pairs of $\infty$-categories lying over a common base $\infty$-category: in this article we will use this strategy to prove a fairly general fibration theorem, \cref{thm:fibred_dualising_object_factorisation}, from which we shall deduce \cref{thm:ads_fibrations} below, which in turn generalises \cite[Thm. G]{KleinQinSu} to arbitrary ads and arbitrary presentable stable $\infty$-categories of coefficients.
\subsection{Motivation}
Before stating our main results, let us mention a couple of concrete and illustrative examples of diagrams of spaces satisfying a form of stratified Poincar\'e--Lefschetz duality, which can be studied using the results and techniques of this article.
\begin{enumerate}
\item Poincar\'e pairs are a special class of pairs of spaces $(\X,\partial \X)$ designed to satisfy Poincar\'e--Lefschetz duality, thus giving a homotopical analogue of compact manifolds with boundary. They are useful, for example, to classify compact manifolds with boundary or to speak of bordisms of Poincar\'e duality spaces.
\item Given a pushout square of compact spaces
\begin{equation*}
\begin{tikzcd}[row sep=10pt]
\partial \W \ar[r] \ar[d, "p"] & \W \ar[d] \\
\X \ar[r] & \Y
\end{tikzcd}
\end{equation*}
in which $p$ is a spherical fibration, $\X$ a Poincar\'e space and $(\W, \partial \W)$ a Poincar\'e pair, it so happens that $\Y$ is a Poincar\'e space as well. We may view this diagram as a decomposition of $\Y$ into an embedded copy of $\X$, a complement to the embedding $\W$ and a normal spherical fibration $\partial \W$. This is the notion of a Poincar\'e embedding of $\X$ into $\Y$ \cite{Levitt68, KleinEmb1} and provides the homotopical analogue of a closed manifold with an embedded submanifold, relevant in the study of surgery for submanifolds \cite[Ch. 11, 12]{Wall_scm}.  
\item 
Many decompositions of Poincar\'e spaces of interest come from bordism-theoretic considerations.
Consider for example a diagram of spaces parametrised by the face poset of two adjacent squares
\[
\begin{tikzcd}
\X_{00} \arrow[d, "\V_0" description, no head] \arrow[r, "\W_{0}" description, no head] \arrow[rd, "\Z_0", phantom] & \X_{10} \arrow[r, "\W_{1}" description, no head] \arrow[d, "\V_1" description, no head] \arrow[rd, "\Z_1", phantom] & \X_{20} \arrow[d, "\V_2" description, no head] \\
\X_{10} \arrow[r, "\sU_0" description, no head]                                                                      & \X_{11} \arrow[r, "\sU_1" description, no head]                                                                      & \X_{21}                                       
\end{tikzcd}
\]
so that we have inclusions $\X_{00} \rightarrow \W_0$, $\W_0 \rightarrow \Z_0$, and so on. Diagrams of manifolds of this form occur in the definition of higher cobordism categories, and analogous diagrams of spaces, satisfying a form of Poincar\'e--Lefschetz duality at all ``depths'', ought to underly the definition of higher Poincar\'e cobordism categories. 
\end{enumerate}
All three examples have in common that they present geometrically interesting analogues of certain decompositions of compact manifolds with boundary. The notion of Poincar\`e duality for $\infty$-category pairs, developed in this article, allows one to uniformly treat all three situations above; specifically, the three $\infty$-category pairs to be considered are the following:
\begin{enumerate}
\item the unstraightening $\int_{[1]}(\partial\X\to\X)$ of the pair of spaces $(\X,\partial \X)$, considered as a functor $[1] \rightarrow \spc$, relative to its $\infty$-subcategory $\partial \X\subset\int_{[1]}(\partial\X\to\X)$;
\item the unstraightening of the diagram $\X \leftarrow \partial \W \rightarrow \W$, relative to the empty $\infty$-subcategory;
\item the unstraightening of the given diagram of spaces, relative to the $\infty$-subcategory given by the unstraightening of the restriction of the diagram of spaces on the boundary of the union of the two squares.
\end{enumerate}
Many more examples fit into our framework, including the Poincar\'e ads from \cite{Wall_scm}, which we discuss in this article, and transversality structures, see e.g. \cite{Levitt_Ranicki}, which we also plan to study in future work.

The novelty of the approach we take is its uniformity: Poincar\'e duality in a large diagram of geometric interest can be formulated as a single homological condition, instead of giving an ad-hoc decomposition in which all pieces are asked to satisfy Poincar\`e duality. To the authors, such a uniform treatment seemed necessary for at least three reasons.
\begin{enumerate}
    \item In forthcoming work of the authors on categories of Poincar\'e cobordisms and cobordism spectra \cite{pdcob}, there is an abundant need for constructions such as gluing and taking products of diagrams. Instead of giving case-by-case proofs that these constructions preserve the Poincar\'e duality property, our approach allows us to treat them all at the same time.
    \item In forthcoming work of the first named author \cite{Bianchi:stringtopology}, the approach to Poincar\'e--Lefschetz duality developed in this article will be used in identifying the Chas--Sullivan string product, constructed for oriented closed manifolds \cite{ChasSullivan}, with a purely homotopy-theoretical string product, constructed for arbitrary oriented Poincar\'e duality spaces.
    \item We simultaneously develop most of the theory in the setting of an arbitrary $\infty$-topos $\baseTopos$, in particular having applications to equivariant Poincar\'e duality spaces in mind.
    An important notion is that of an isovariant structure on an equivariant Poincar\'e space, which additionally provides nice embeddings of various fixed point spaces in each other.
    Forthcoming work of the third and fourth author \cite{isovariantpd} serves as a first step in that direction and studies the existence and uniqueness of isovariant structures on semifree Poincar\'e spaces.
\end{enumerate}

Although we develop the theory in larger generality, we restrict our attention to the base $\infty$-topos $\baseTopos = \spc$ of spaces for the rest of the introduction, to facilitate stating our main results.

\subsection{Summary of the results}
In the entire article we work with $\infty$-categories, and we use the word ``space'' to refer to an $\infty$-groupoid.
The starting point of our work is the notion of an \textit{$\infty$-category pair}, which is the datum $(\X,\partial \X)$ of an $\infty$-category $\X$ equipped with a left closed full $\infty$-subcategory $i\colon \partial \X\subseteq \X$. The archetype that will be our running example throughout the introduction is the cocartesian unstraightening $\X\coloneqq \int_{[1]}X$ associated to a functor $X\colon[1]\rightarrow\spc^{\omega}$ corresponding to a map of spaces $g\colon Y\coloneqq X(0)\rightarrow Z \coloneqq X(1)$; in this case we set $\partial\X=Y= X(0)$.

Let $\sC\in\calg(\presentable^L_{\mathrm{st}})$ be a presentably symmetric monoidal stable $\infty$-category, and let $\presheaf(\X;\sC)$ denote the $\infty$-category of $\sC$-valued presheaves on $\X$. There is a natural transformation $\X_*\rightarrow \partial \X_*i^*$ of functors $\presheaf(\X;\sC)\rightarrow\sC$ coming from the contravariant functoriality of limits.
Assume that $(\X,\partial \X)$ is $\sC$\textit{-twisted ambidextrous} (cf. \cref{defn:ambidexterityforpairs}) in the sense that   ``relative cohomology'' functor  \[(\X,\partial \X)_*\coloneqq \fib(\X_*\rightarrow \partial\X_*j^*)\colon \presheaf(\X;\sC)\longrightarrow \sC\] preserves colimits and its canonical lax $\sC$-linear structure is in fact $\sC$-linear; then the appropriate Morita theory guarantees that there is a system $\omega_{\X,\partial \X}\colon \X\rightarrow\sC$ that ``classifies'' $(\X,\partial\X)_*$. Note the change in variance: $\presheaf(\X;\sC)$ is the $\infty$-category of \textit{contravariant} functors from $\X$ to $\sC$, but $\omega_{\X,\partial\X}$ is \textit{covariant}.
We say that the $\infty$-category pair $(\X,\partial \X)$ is $\sC$\textit{-Poincar\'e} if $\omega_{\X,\partial \X}$ factors through the $\infty$-subgroupoid $\picardSpace(\sC)$ of invertible objects in $\sC$.

In the running example $\X=\int_{[1]}X$, the basic insight is that the single functor $\omega_{\X,\partial \X}\colon\X\to\sC$ carries all of the following information:
\begin{itemize}
\item the relative dualising system $D_{Z,Y}\in\sC^{Z}$, characterised by the equivalence $\fib(Z_*\to Y_*g^*)\simeq Z_!(D_{Z,Y}\otimes-)$; we have $D_{Z,Y}\simeq\omega_{\X,\partial\X}|_{Z}$;
\item the dualising system $D_{Y}\in\sC^{Y}$, characterised by the equivalence $Y_*\simeq Y_!(D_{Y}\otimes-)$; we have $\Omega D_{Y}\simeq\omega_{\X,\partial\X}|_{Y}$;
\item the connecting map $\delta\colon\Omega D_{Y}\rightarrow D_{Z,Y}|_{Y}$, coming from the canonical natural transformation $\Omega Y_*g^*\to\fib(Z_*\to Y_*g^*)$; this is recovered by assembling the values of $\omega_{\X,\partial\X}$ on the morphism of $\X$ lying over $0\to1$.
\end{itemize}

In this way, requiring that $\omega_{\X,\partial \X}$ factors through $\picardSpace(\sC)$ will precisely enforce both that the relative cohomology functor $(Z,Y)_*\colon\sC^{Z}\to\sC$ is classified by an invertible system, \textit{and} that this system restricts to the dualising system for the cohomology functor of the boundary $Y_*$. In other words, working with the pair of $\infty$-categories $(\X,\partial \X)$ allows us to simultaneously reason about both the relative cohomology of the pair of spaces $(Z,Y)$ \textit{as well as} its interaction with the cohomology of the boundary space $Y$.

\vspace{1mm}

Having sketched the general philosophy, we now state the first set of results in the form of the following omnibus Morita package that will form the categorical basis of our work.

\begin{alphThm}[Full versions in {\cref{prop:formula_with_twisted_arrow,prop:categorical_poincare_lefschetz_duality,lem:naturality_classification_C_linear_functors,cor:formula_in_terms_of_dualising_system_for_dualish}}]
\label{thm:A}
Let $\X$ and $\Y$ be small $\infty$-categories, let $\sC\in \calg(\presentable^L_{\mathrm{st}})$, and let $F\in\func^L_{\sC}(\presheaf(\X;\sC),\sC)$ and $G\in\func^L_{\sC}(\presheaf(\Y;\sC),\sC)$ be $\sC$-linear functors. Then:
\begin{enumerate}[label=(\arabic*)]
\item (Classification). Precomposition with the Yoneda embedding induces an equivalence $\yoneda^*\colon \func^L_{\sC}(\presheaf(\X;\sC),\sC)\xrightarrow{\simeq}\func(\X,\sC)$ with inverse given by sending $\zeta\in \func(\X,\sC)$ to the functor $\twistedArrow(\X)_!(t^*(-)\otimes s^*\zeta)$ where $(s,t)\colon \twistedArrow(\X)\rightarrow \X\op\times \X$ denotes the source and target projections from the twisted arrow $\infty$-category of $\X$. We will write $\omega_F\coloneqq \yoneda^*(F)$.
\item (Naturality). For a functor $f\colon \X\rightarrow \Y$, there are equivalences $\omega_{Gf_!}\simeq f^*\omega_G$ and $\omega_{Ff^*}\simeq f_!\omega_F$.\item (Groupoidality). Suppose $\omega_F$ is groupoidal, i.e. it factors through $\sC^{\simeq}\subset \sC$. Then writing $D_F\coloneqq t_!s^*\omega_F\in\presheaf(\X;\sC)$, there is a simplified equivalence $F(-)\simeq \X_!(D_F\otimes -)$.
\end{enumerate}
\end{alphThm}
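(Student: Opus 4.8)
The plan is to establish part~(1) first and then to extract (2) and (3) from it by essentially formal manipulations. For part~(1) the equivalence $\yoneda^*$ is soft: since $\presheaf(\X;\sC)\simeq\presheaf(\X)\otimes\sC$ is the free $\sC$-module (in $\presentable^L$) on the free cocompletion $\presheaf(\X)$, base-change adjunctions together with the universal property of presheaves give
\[
\func^L_\sC(\presheaf(\X;\sC),\sC)\;\simeq\;\func^L(\presheaf(\X),\sC)\;\simeq\;\func(\X,\sC),
\]
and chasing definitions identifies the composite with restriction along $\yoneda$. The real content is the explicit inverse. I would first observe that $\zeta\mapsto\twistedArrow(\X)_!\bigl(t^*(-)\otimes s^*\zeta\bigr)$ is colimit-preserving and $\sC$-linear, being assembled from a colimit and a pointwise tensor --- here one must check that the $\sC$-linear structure obtained this way is the canonical one, not merely a lax structure. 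Granting that, it suffices by the equivalence above to verify that restricting this functor along $\yoneda$ recovers $\zeta$; evaluating on a representable $\yoneda(x)$, this is exactly the $\sC$-enriched co-Yoneda/density statement, namely that the resulting colimit over $\twistedArrow(\X)$ collapses onto $\zeta(x)$ because the relevant over-category in $\twistedArrow(\X)$ is weakly contractible, having a terminal object.

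For part~(2), the first equivalence is immediate from the fact that left Kan extension sends representables to representables, $f_!\yoneda_\X\simeq\yoneda_\Y f$, whence $\omega_{Gf_!}=Gf_!\yoneda_\X\simeq G\yoneda_\Y f=f^*\omega_G$. For the second, I would read this first equivalence as the statement that, under the classification of~(1), precomposition with $f_!$ corresponds to precomposition with $f$, and then pass to left adjoints on both sides. On presheaf categories $f_!\dashv f^*$, so precomposition with $f^*$ is left adjoint to precomposition with $f_!$ among $\sC$-linear colimit-preserving functors (note $f^*$ is itself colimit-preserving and $\sC$-linear, having adjoints on both sides); on functor categories $f_!=\mathrm{Lan}_f$ is left adjoint to $f^*$. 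Matching these adjoints through the equivalence of~(1) yields $\omega_{Ff^*}\simeq f_!\omega_F$.

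For part~(3), I would feed the groupoidality hypothesis into the formula of~(1), writing $F(-)\simeq\twistedArrow(\X)_!\bigl(t^*(-)\otimes s^*\omega_F\bigr)$, and decompose the colimit along the projection $t\colon\twistedArrow(\X)\to\X\op$, so that $\twistedArrow(\X)_!=\X_!\circ t_!$. Because $\omega_F$ factors through $\sC^{\simeq}$, the pullback $s^*\omega_F$ inverts every morphism of $\twistedArrow(\X)$ and hence is essentially constant along the over-categories computing $t_!$; combined with the weak contractibility of those over-categories (whose fibres are slice categories of $\X$), the relative Kan extension $t_!$ collapses and one identifies $t_!\bigl(t^*(-)\otimes s^*\omega_F\bigr)$ with $D_F\otimes(-)$, where $D_F=t_!s^*\omega_F$ is the groupoidal presheaf with $D_F(x)\simeq\omega_F(x)$. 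Applying $\X_!$ then gives $F(-)\simeq\X_!(D_F\otimes-)$. Equivalently --- and perhaps more robustly --- one checks directly that $\X_!(D_F\otimes-)$ lies in $\func^L_\sC(\presheaf(\X;\sC),\sC)$ and computes $\omega_{\X_!(D_F\otimes-)}(x)=\X_!\bigl(D_F\otimes\yoneda(x)\bigr)\simeq\omega_F(x)$, again via co-Yoneda and the groupoidality of $D_F$, so that the uniqueness in~(1) forces $\X_!(D_F\otimes-)\simeq F$.

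I expect the main obstacle to sit precisely in part~(3): the hypothesis is genuinely needed, since without it $D_F=t_!s^*\omega_F$ discards information and the identity $F(-)\simeq\X_!(D_F\otimes-)$ fails, so the delicate point is to organise the collapse of the twisted-arrow colimit (the interaction of the relative left Kan extension along $t$ with the fact that $\omega_F$ inverts all morphisms) so that the groupoidality is brought to bear correctly. By contrast, the abstract equivalence and the precise twisted-arrow form of the density statement in~(1), and the adjoint bookkeeping in~(2), should be comparatively routine, the only subtlety there being the coherence of the various $\sC$-linear structures.
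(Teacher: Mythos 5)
Your proposal is correct in outline, and for parts (2) and (3) it follows essentially the paper's own route: (2) is exactly the argument of \cref{lem:naturality_classification_C_linear_functors} ($f_!\yoneda_\X\simeq\yoneda_\Y f$, apply $\func(-,\sC)$, then pass to left adjoints using $(f^*)^*\dashv(f_!)^*$), and (3) is the mechanism of \cref{prop:dualish} together with \cref{lem:cartesian_fibrations_with_contractible_fibres}: since $t\op\colon\twistedArrow(\X)\op\to\X\op$ is a cartesian fibration with weakly contractible fibres $\X_{/x}$, a groupoidal $s^*\omega_F$ makes the relative left Kan extension $t_!$ collapse, giving $t_!(t^*(-)\otimes s^*\omega_F)\simeq D_F\otimes(-)$; your alternative uniqueness argument (compute the classifying system of $\X_!(D_F\otimes-)$ and match it with $\omega_F$) is essentially \cref{cor:formula_in_terms_of_dualising_system_for_dualish} read in the relevant direction. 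Where you genuinely diverge is in identifying the inverse in (1): you check that the coend formula is a section of $\yoneda^*$ by evaluating on representables and invoking co-Yoneda, whereas the paper (\cref{prop:formula_with_twisted_arrow}) proves the coherent statement at once, by identifying $\yoneda_{\X,\sC}$ with $((s,t)\op)_!(\unit)$ and running a diagrammatic argument whose key inputs are Beck--Chevalley for $\twistedArrow(\X)\times_\X\twistedArrow(\X)\op$ and the fact that the concatenation functor to $(\X^{[1]})\op$ is a localisation. The paper's route buys the statement internally over an arbitrary base topos $\baseTopos$ and naturally in all variables, which the later parametrised applications require; your route is shorter and perfectly adequate for Theorem A as stated over $\spc$ (a section of an equivalence, object by object, does identify the inverse). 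Two refinements you would need to supply: the ``terminal object'' in your co-Yoneda step does not live in the unstraightening of $t^*\yoneda(x)$ (that category has no terminal object in general) but in the fibre $\X_{/x}$ of $t\op$, reached via base change along the cartesian fibration $t\op$ plus the projection formula; and the pointwise collapses in (1) and (3) must be upgraded to equivalences of functors by first writing down the canonical comparison transformations (the analogue of the map \cref{eq:yonedatoD} in the paper) and checking those objectwise. Both are routine, so there is no genuine gap.
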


It is conceptually important to note that, unlike in the case of $\infty$-groupoids, there is a ``decoupling'' that happens in the Morita theory for $\infty$-categories: for a $\sC$-linear functor $F\colon\presheaf(\X;\sC)\to\sC$, there are now \textit{two} canonical objects one can consider, namely the \textit{classifying system} $\omega_F$ and the \textit{dualising system} $D_F$. In general, the object which classifies the functor $F$ is only $\omega_F$, via the more complicated twisted arrow formula, and not $D_F$. The content of \cref{thm:A}(3) is that when $\omega_F$ is groupoidal, then $F$ is equivalently classified by $D_F$ via the simpler and more familiar formula.

\vspace{1mm}

Next, we enumerate a list of basic principles in working with the current framework.
For brevity, we only give ``slogan'' versions of our principles, without specifying all hypotheses or completely clarifying the statements; we refer to the precise statements in the body of the article.
These principles allow one to manipulate and infer Poincar\'e duality for various $\infty$-category pairs obtained from one another by cutting and pasting; such manoeuvres  form the backbone of most of the applications.
\begin{enumerate}[label=(\arabic*)]
\item \textbf{Boundary principle} (\cref{prop:boundary_principle}). \textit{Under mild finality conditions, the classifying system of the boundary is the once suspended restriction of the classifying system of the pair.}
\item \textbf{Local-to-global principle} (\cref{cor:local_to_global_principle}). \textit{If an $\infty$-category pair admits a finite complemented cover by smaller $\infty$-category pairs (cf. \cref{term:complemented_covers}), then it is a Poincar\'e $\infty$-category pair if and only if each of the covering $\infty$-category pairs is.}
\item \textbf{Localisation principle} (\cref{prop:localisation_principle}). \textit{If we have a localisation of $\infty$-category pairs  with Poincar\'e source, then the target  is Poincar\'e too.}
\item \textbf{Realisation principle} (\cref{prop:barXdXpdpair}). \textit{Under mild finality conditions, the geometric realisation of a Poincar\'e $\infty$-category pair is a Poincar\'e pair of spaces in the classical sense.}
\end{enumerate}

As a first application of the above principles, we highlight the following general form of Poincar\'e--Lefschetz duality.

\begin{alphThm}[Full version in \cref{prop:categorical_poincare_lefschetz_duality}]
\label{alphThm:PoincareLefschetz}
Let $(\X,\partial \X)$ be a groupoidally $\sC$-twisted ambidextrous $\infty$-category pair, i.e. the relative cohomology functor can be represented by means of a groupoidal dualising system $D_{\X,\partial\X}$, and denote by $i\colon\partial\X\to\X$ the inclusion functor. Under mild finality conditions, there is an equivalence of fibre sequences in $\func^L_\sC(\presheaf(\X;\sC),\sC)$:
\[
\begin{tikzcd}
\Omega\partial \X_* i^* (-) \ar[d, "\simeq"] \ar[r] &(\X,\partial \X)_*(-) \ar[r] \ar[d, "\simeq"] & \X_*(-) \ar[d, "\simeq"] \\
\partial \X_! i^*(D_{\X,\partial \X} \otimes -)\ar[r] &\X_!(D_{\X,\partial \X} \otimes -) \ar[r] & (\X,\partial \X)_!(D_{\X,\partial \X} \otimes -).
\end{tikzcd}
\]
\end{alphThm}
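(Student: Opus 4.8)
The plan is to extract the three vertical equivalences one at a time and then assemble them into a morphism of fibre sequences by a purely formal cofibre argument, so that the right-hand equivalence (the one carrying the actual content of Poincar\'e--Lefschetz duality) comes for free. First I would record the two structural observations on which everything rests: the top row is nothing but the defining fibre sequence $(\X,\partial\X)_*\to\X_*\to\partial\X_*i^*$ of relative cohomology, rotated one step to the left; and the bottom row is, by definition, the cofibre sequence computing relative homology, $\partial\X_!i^*(-)\to\X_!(-)\to(\X,\partial\X)_!(-)$, precomposed with the exact functor $D_{\X,\partial\X}\otimes-$. The mild finality hypotheses enter precisely to guarantee that $\X_*$, $\partial\X_*$ and $\partial\X_*i^*$ preserve colimits and are strictly $\sC$-linear (the relative functor already is, by the standing hypothesis), so that the top row is an honest fibre sequence inside $\func^L_{\sC}(\presheaf(\X;\sC),\sC)$; the bottom row manifestly is one too. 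Since this functor category is stable, a map of fibre sequences is pinned down by its components at any two of the three positions together with the connecting square between them, so it suffices to produce the middle and left equivalences and to check that the square joining them commutes.

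The middle equivalence $(\X,\partial\X)_*(-)\simeq\X_!(D_{\X,\partial\X}\otimes-)$ is immediate from \cref{thm:A}(3) applied to $F=(\X,\partial\X)_*$: being \emph{groupoidally} $\sC$-twisted ambidextrous says exactly that $\omega_{\X,\partial\X}$ is groupoidal, and then $D_{\X,\partial\X}=t_!s^*\omega_{\X,\partial\X}$ is its associated dualising system. For the left equivalence I would start from the boundary principle \cref{prop:boundary_principle}, which identifies the classifying system $\omega_{\partial\X_*}\in\func(\partial\X,\sC)$ of the boundary with the once-suspended restriction $\Sigma\,i^*\omega_{\X,\partial\X}$; this is again groupoidal, so \cref{thm:A}(3) applies to $\partial\X_*$ and yields $\partial\X_*(-)\simeq\partial\X_!(D_{\partial\X}\otimes-)$ with $D_{\partial\X}=t_!s^*\omega_{\partial\X_*}$. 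The key intermediate point is the base-change identity $i^*\circ\bigl(t^{\X}_!(s^{\X})^*\bigr)\simeq\bigl(t^{\partial\X}_!(s^{\partial\X})^*\bigr)\circ i^*$, valid because $\partial\X\subseteq\X$ is left closed: this forces $\twistedArrow(\partial\X)\simeq\twistedArrow(\X)\times_{\X}\partial\X$, so the comma categories computing the left Kan extension along $t^{\X}$ at an object of $\partial\X$ already live in $\twistedArrow(\partial\X)$. Feeding this in gives $D_{\partial\X}\simeq\Sigma\,i^*D_{\X,\partial\X}$, and tensoring with $i^*(-)$, applying $\partial\X_!$ and looping down once yields the left equivalence $\Omega\partial\X_*i^*(-)\simeq\partial\X_!i^*(D_{\X,\partial\X}\otimes-)$.

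It remains to check the compatibility, i.e.\ that the \cref{thm:A}(3) equivalence on the middle term carries the connecting map $\Omega\partial\X_*i^*\to(\X,\partial\X)_*$ of the top row to the canonical transformation $\partial\X_!i^*(D_{\X,\partial\X}\otimes-)\to\X_!(D_{\X,\partial\X}\otimes-)$ induced by the counit $i_!i^*\to\id$. I would do this after transporting everything across the classification equivalence $\yoneda^*\colon\func^L_{\sC}(\presheaf(\X;\sC),\sC)\xrightarrow{\simeq}\func(\X,\sC)$ of \cref{thm:A}(1), which is exact: there the top fibre sequence becomes $\omega_{\X,\partial\X}\to\omega_{\X_*}\to\omega_{\partial\X_*i^*}$, and by \cref{thm:A}(2) together with the boundary principle its last map is the suspension of the counit $i_!i^*\omega_{\X,\partial\X}\to\omega_{\X,\partial\X}$, so the connecting map $\Omega\omega_{\partial\X_*i^*}\to\omega_{\X,\partial\X}$ is that counit itself; on the other side, unwinding the twisted-arrow formula $(\yoneda^*)^{-1}(\zeta)=\twistedArrow(\X)_!(t^*(-)\otimes s^*\zeta)$ shows the bottom transformation has the same classifying datum. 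Taking the cofibres of the two now-identified horizontal maps simultaneously produces the right equivalence $\X_*(-)\simeq(\X,\partial\X)_!(D_{\X,\partial\X}\otimes-)$ and completes the equivalence of fibre sequences.

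The main obstacle I anticipate is not any single equivalence but the coherence: upgrading the termwise equivalences to a genuine map of fibre sequences forces one to pin down the two connecting maps as \emph{the same} twisted counit, coherently and not merely objectwise, and this is where one has to track the single suspension shift carefully so it lands exactly where the statement predicts. Subordinate points are the twisted-arrow base-change identity along the left-closed inclusion $i$ and the verification that the stated finality conditions genuinely place $\X_*$ and $\partial\X_*i^*$ into $\func^L_{\sC}(\presheaf(\X;\sC),\sC)$; both are routine once set up, but the first is the kind of identity it is easy to write down with the wrong variance.
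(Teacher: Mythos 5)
Your proposal is correct and follows essentially the same route as the paper's proof of \cref{prop:categorical_poincare_lefschetz_duality}: transport the connecting map $\Omega\partial\X_*i^*\to(\X,\partial\X)_*$ to classifying systems, identify it via the boundary principle \cref{prop:boundary_principle} with the counit $i_!i^*\omega_{\X,\partial\X}\to\omega_{\X,\partial\X}$, deduce the left square from \cref{lem:naturality_classification_C_linear_functors} and \cref{prop:Dequivalence} (your twisted-arrow base-change computation of $D_{\partial\X}\simeq\Sigma\,i^*D_{\X,\partial\X}$ just makes the latter citation explicit), and take horizontal cofibres. One small correction: the finality (tameness) hypothesis is not what places $\X_*$ and $\partial\X_*i^*$ in $\func^L_{\sC}$ — that already follows from twisted ambidexterity via \cref{lem:C_linear_structure_on_relative_cohomology} — rather it is exactly what makes the boundary principle (i.e.\ $\X_*i_!\simeq 0$) available, which your argument does invoke at the right place.
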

At this point, it behoves us to clarify the relationship of our definitions with known approaches in the literature. 
We give a full comparison in the case of most geometric interest, the case of Poincar\'e ads. 
Wall used ads to underpin his theory of surgery sets and L-groups in an essential way; they were taken up again later by Quinn \cite{quinnThesis,quinnBordism} in setting up a space-level approach to surgery theory which replaced surgery sets and groups with surgery spaces and related them to ``bordism-type'' spectra. In fact, our main motivation for writing this article precisely stems from our forthcoming work on cobordism $\infty$-categories and cobordism spectra; our approach will be based on moduli spaces of ad-like objects, and it will further Quinn's program \cite{pdcob}.

In current parlance, an $(n+1)$-ad is a diagram $X\colon [1]^n\to\spc^\omega$ indexed by a cube poset and valued in finitely dominated spaces. We call it a \textit{Poincar\'e $(n+1)$-ad} if the total category of its cocartesian unstraightening, relative to the subcategory corresponding to the subposet $[1]^{n} \setminus \{ (1,\dots 1) \}$ is a Poincar\'e duality category pair. 

Of course, when $\sC\in\{\spectra,\module_\eilenbergMacLaneCoeff\}$, there is also Wall's original  \cref{defn:classical_poincare_pairs} in terms of cap products on homology and cohomology groups. 
For more general $\sC$, one may mimic Wall's definition to get a new definition (\cref{defn:postclassical_poincare_pairs}) of Poincar\'e pairs using cap products. 
But one can also avoid cap products and employ the usual Morita theory for spaces to give a third definition (\cref{defn:neoclassical_poincare_pairs}) of Poincar\'e ads in terms of  the compatibilities of classifying systems for various pairs of spaces occurring in the ad. We then prove:

\begin{alphThm}[cf. {\cref{cor:seven_equivalent_properties,prop:wall_vs_us_ads}}]\label{alphThm:classical_vs_us}
For arbitrary $\sC\in\calg(\presentable^L_{\mathrm{st}})$, our definition of $\sC$-Poincar\'e ads coincide with the two definitions described above. For $\sC\in\{\spectra, \module_\eilenbergMacLaneCoeff\}$, these definitions also agree with Wall's classical definition of ads in the finitely dominated case.
\end{alphThm}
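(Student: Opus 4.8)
The plan is to prove the three asserted coincidences in turn: (i) our definition of a $\sC$-Poincar\'e ad agrees with the neoclassical one for arbitrary $\sC\in\calg(\presentable^L_{\mathrm{st}})$; (ii) the neoclassical and postclassical definitions agree, again for arbitrary $\sC$; and (iii) for $\sC\in\{\spectra,\module_{\eilenbergMacLaneCoeff}\}$ and finitely dominated $X$, all of these agree with Wall's original definition. Throughout, for an ad $X\colon[1]^n\to\spc^\omega$ I write $\X=\int_{[1]^n}X$ for the cocartesian unstraightening and let $\partial\X\subseteq\X$ be the full subcategory over $[1]^n\setminus\{(1,\dots,1)\}$; finite domination of the spaces $X(\sigma)$ ensures that $(\X,\partial\X)$, and each of its sub-pairs indexed by a face of the cube, is $\sC$-twisted ambidextrous (\cref{defn:ambidexterityforpairs}), so that the Morita classification of \cref{thm:A} is available. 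For (i) the point is to reduce the single condition ``$\omega_{\X,\partial\X}$ factors through $\picardSpace(\sC)$'' to a finite family of conditions, each involving a single pair of spaces together with compatibilities: I would first exhibit $(\X,\partial\X)$ as glued from the pairs attached to the closed faces of $[1]^n$ along their common sub-faces and apply the local-to-global principle (\cref{cor:local_to_global_principle}) to reduce $\sC$-Poincar\'e-ness of $(\X,\partial\X)$ to that of each face-pair; then, by induction on the codimension of a face and repeated use of the boundary principle (\cref{prop:boundary_principle}), I would identify the classifying system of a codimension-$k$ face-pair with the $k$-fold suspension of the restriction of $\omega_{\X,\partial\X}$ to that face, and the naturality statement \cref{thm:A}(2), applied to the inclusion functors among the relevant unstraightenings, rewrites each such restriction through the classifying system of the corresponding pair of spaces in the ordinary Morita theory over $\spc$. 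The conclusion is that $\omega_{\X,\partial\X}$ is groupoidal and invertible exactly when all of these space-level classifying systems are invertible \emph{and} they agree under restriction along the face inclusions -- which is the neoclassical definition (\cref{defn:neoclassical_poincare_pairs}) -- and collecting the equivalent reformulations met along the way assembles \cref{cor:seven_equivalent_properties}.

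For (ii), both definitions are local in the cube, so it suffices to compare them for a single pair of spaces $(Z,Y)$ and then to match up the inter-stratum compatibilities. Since $Z$ and $Y$ are $\infty$-groupoids, every system on them is automatically groupoidal and the classifying and dualising systems of $(Z,Y)$ coincide; \cref{alphThm:PoincareLefschetz} then applies unconditionally (twisted ambidexterity being supplied by finite domination) and exhibits the relative cohomology functor as given by capping with a relative fundamental class in $\pi_0(Z,Y)_!(\unit)$. Invertibility of $\omega_{(Z,Y)}$ is thereby equivalent to this cap product being an equivalence with arbitrary coefficient systems -- which is precisely the postclassical cap-product condition of \cref{defn:postclassical_poincare_pairs} -- the condition on the boundary coming for free from the boundary principle, while the compatibility conditions between strata transform into one another through the naturality of the cap product under restriction along face inclusions. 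This is \cref{prop:wall_vs_us_ads} for arbitrary $\sC$.

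For (iii), specialise first to $\sC=\module_{\eilenbergMacLaneCoeff}$: then $\presheaf(Z;\sC)$ is the derived $\infty$-category of local systems of abelian groups on $Z$, the functors $Z_!$ and $Z_*$ compute homology and cohomology with local coefficients, and an invertible $\omega_{(Z,Y)}$ is an orientation local system placed in the formal dimension. Finite domination guarantees that the relevant (co)chain complexes are perfect over $\bbZ[\pi_1]$, so that a map between them is an equivalence precisely when it induces isomorphisms on (co)homology with all local coefficients; the cap-product criterion of step (ii) thereby becomes word-for-word Wall's condition from \cref{defn:classical_poincare_pairs}. For $\sC=\spectra$, one implication is base change along $\sphere\to\eilenbergMacLaneCoeff$ -- legitimate because finite domination makes relative cohomology commute with this base change -- and the converse for finitely dominated $X$ is the classical theory of the Spivak normal fibration \cite{Spivak}, which says precisely that a finitely dominated $\bbZ$-Poincar\'e complex has invertible dualising spectrum; applied on each stratum and reassembled via the local-to-global principle, this upgrades $\module_{\eilenbergMacLaneCoeff}$-Poincar\'e-ness of the ad to $\spectra$-Poincar\'e-ness.

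I expect step (i) to be the main obstacle. One must verify that the face decomposition of $(\X,\partial\X)$ genuinely constitutes a complemented cover in the sense of \cref{term:complemented_covers}, and in particular that the ``mild finality conditions'' required by the local-to-global and boundary principles are met for the unstraightenings of restricted cubical diagrams; and one must keep the inductive bookkeeping honest, correctly tracking both the accumulating suspensions and the change of variance between the covariant classifying systems and the contravariant presheaves. Steps (ii) and (iii), by contrast, are essentially unwindings of definitions, the only genuine extra inputs being the perfectness over $\bbZ[\pi_1]$ supplied by finite domination -- needed to reconcile $\infty$-categorical invertibility with Wall's chain-level formulation -- and the Spivak normal fibration for the $\sphere$-versus-$\eilenbergMacLaneCoeff$ comparison.
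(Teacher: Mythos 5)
Your overall architecture matches the paper's: an induction over the cube using complemented covers, the boundary and local-to-global principles, a pair-level comparison via cap products, and a Hurewicz-type argument for $\spectra$ versus $\module_\eilenbergMacLaneCoeff$. But there is a genuine gap in step (i). The proper face pairs $(\Box^n_{/\udl{s}},\partial\Box^n_{/\udl{s}})$ with $\#\udl{s}=n-1$ form a complemented cover of $(\partial\Box^n,\emptyset)$, \emph{not} of $(\Box^n,\partial\Box^n)$: the top cell $\udl{1}$ lies in no proper face, so \cref{cor:local_to_global_principle} only reduces Poincar\'e duality of $\partial\X$ to that of the face pairs. The verification you defer to the end would therefore fail, and the one stratum that distinguishes the ad from its boundary is left untreated. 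This is exactly where \cref{prop:wall_vs_us_ads} does its real work: for the forward direction one applies the realisation principle \cref{prop:barXdXpdpair} to the cocartesian fibration $\X\to\Box^n\xrightarrow{\min}[1]$ to see that $\colim_{\partial\Box^n}X\to X(\udl{1})$ is a Poincar\'e pair of spaces, and for the converse one observes that $\omega_{\X,\partial\X}$ factors through the localisation of $\X$ at $\partial\X\amalg\interior{\X}$ (the unstraightening of $|\partial\X|\to X(\udl{1})$) and invokes the localisation principle \cref{prop:localisation_principle}. Neither principle appears in your plan, and without them the induction does not close.

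A secondary imprecision occurs in step (ii): \cref{alphThm:PoincareLefschetz} presupposes groupoidal twisted ambidexterity and does not by itself produce a fundamental class. To pass between ``the classifying system is invertible'' and ``there is a class $c$ whose cap product, together with $\partial c$ on the boundary, induces equivalences'' one needs the classification of all $\sC$-linear transformations from relative cohomology to twisted homology as cap products, i.e. \cref{lem:absolute_classification_natural_transformations} and \cref{cor:relative_classification_natural_transformations} (and the class lives in $(Z,Y,g)_!\zeta$ for the putative dualising system $\zeta$, not in $(Z,Y)_!(\unit)$); the compatibility with $\partial c$ is then extracted by the diagram chase of \cref{prop:pdpair_equals_neoclassical}, not ``for free'' from the boundary principle. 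Your step (iii) is essentially the paper's argument: passing to homotopy groups and generation by heart-valued systems for $\module_\eilenbergMacLaneCoeff$, base change along $\sphere\to\eilenbergMacLaneCoeff$ one way, and for the converse your appeal to the Spivak normal fibration is a legitimate external substitute for the paper's internal route via \cref{lem:compact_omega_bounded_below} and Hurewicz.
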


In light of \cref{alphThm:classical_vs_us}, one may fairly wonder what the point even is of introducing the  outwardly more complicated theory of Poincar\'e duality for $\infty$-categories. 
One argument for this is that this language cleanly packages all the local structures - of ads, say - into a global whole. And remarkably, manipulating everything at the level of these global incarnations permits us to deal with the multitude of local structures in one fell swoop by merely reasoning about a single pair of $\infty$-categories. 
For instance, we may immediately conclude that Poincar\'e punctured ads glue to yield a Poincar\'e space from the realisation principle above. The reader should try to prove this ``by hand'' to see that the combinatorial book-keeping, albeit not impossible to deal with,  escalates quite quickly. 

As a more involved application combining all the principles above together with the piecewise linear Sch\"onflies theorem, we obtain the following result, saying that glueing the spaces involved in a Poincar\'e diagram indexed by the face poset of triangulated manifolds results in a new Poincar\'e space; this generalises the case of punctured ads.
In the classical formulation, a proof of this would require intricate arguments providing a global compatibility of various local fundamental classes.
In the following theorem, Poincar\'e duality always refers to Poincar\'e duality with coefficients in $\spectra$.

\begin{alphThm}[Full and precise version in {\cref{prop:combinatorial_manifold_Poincare}}]\label{alphThm:combinatorial_manifold}
Let $I$ be the face poset of a triangulated closed manifold whose links of simplices are spheres, and let $X\colon I\to\spc^\omega$ be a functor. Then the $\infty$-category $\X\coloneqq\int_IX$, obtained by unstraightening, satisfies Poincar\'e duality if and only if for all elements $i \in I$ the pair of spaces $(X(i), \colim_{j < i} X(j))$ is a Poincar\'e pair.
In this case, the space $|\X|\simeq\colim_IX$ also satisfies Poincar\'e duality. 
\end{alphThm}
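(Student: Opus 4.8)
The plan is to reduce the global Poincaré–duality statement for $\X=\int_IX$ to the family of local statements by an induction over the poset $I$, running everything through the four principles and using the piecewise–linear structure of a combinatorial manifold only to organise the local pieces. For a downward closed subposet $J\subseteq I$ write $\X_J\coloneqq\int_J(X|_J)$; this is a left closed full $\infty$-subcategory of $\X=\X_I$, and for $J'\subseteq J$ both downward closed the pair $(\X_J,\X_{J'})$ has geometric realisation the pair of spaces $(\colim_{J}X,\colim_{J'}X)$. Since $i$ is terminal in the down-set $I_{\le i}$, the realisation of $(\X_{I_{\le i}},\X_{I_{<i}})$ is exactly $(X(i),\colim_{j<i}X(j))$. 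I would prove, by induction on $\dim i$, that \emph{if $(X(j),\colim_{k<j}X(k))$ is a Poincaré pair of spaces for every $j\le i$ then $(\X_{I_{\le i}},\X_{I_{<i}})$ is a Poincaré $\infty$-category pair}, together with the converse implication for each fixed $i$. For the base case $i$ a vertex, both directions only compare the classical notion of a Poincaré space with the categorical one for the groupoid $X(i)$, which is the zero-dimensional-poset case of the Morita theory.

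For the inductive step the first task is the boundary category $\X_{I_{<i}}$. The poset $I_{<i}$ is the face poset of $\partial\Delta^{\dim i}\cong S^{\dim i-1}$, again a combinatorial manifold whose links are spheres, and its facets $F<i$ pairwise meet exactly along their common boundaries, since in a closed combinatorial manifold every codimension-one face lies in precisely two top-dimensional simplices; hence the pairs $\{(\X_{I_{\le F}},\X_{I_{<F}})\}$ form a finite complemented cover of $\X_{I_{<i}}$. By the inductive hypothesis each piece is Poincaré, so the Local-to-Global principle (\cref{cor:local_to_global_principle}) shows $(\X_{I_{<i}},\emptyset)$ is Poincaré, and then the Realisation and Boundary principles (\cref{prop:barXdXpdpair,prop:boundary_principle}) show $\colim_{j<i}X(j)\simeq|\X_{I_{<i}}|$ is a Poincaré space. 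The verification that these covers — and the cover of $\X$ itself by top-dimensional simplices used at the end — are honest complemented covers satisfying the relevant finality hypotheses is the combinatorial core of the proof, and is where the hypothesis that all links are spheres enters (in the full version, also the piecewise-linear Schönflies theorem, to recognise unions and intersections of stars as discs and spheres).

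The conceptually central point is to compare $(\X_{I_{\le i}},\X_{I_{<i}})$ with a pair of \emph{spaces}. Collapsing the proper faces of $i$ to $0$ defines a functor $I_{\le i}\to[1]$ exhibiting $\X_{I_{\le i}}$ as the cocartesian fibration over $[1]$ with fibres $\X_{I_{<i}}$ and $X(i)$ classified by the cocartesian pushforward $p\colon\X_{I_{<i}}\to X(i)$, $(j,x)\mapsto X(j\le i)(x)$; as $X(i)$ is a space, $p$ factors canonically through the localisation $q\colon\X_{I_{<i}}\to\colim_{j<i}X(j)$ as $p=\bar p\circ q$. Inverting the $0$-fibre morphisms produces a localisation of $\infty$-category pairs $F\colon(\X_{I_{\le i}},\X_{I_{<i}})\to(\int_{[1]}\bar p,\,\colim_{j<i}X(j))$, the identity on $1$-fibres and $q$ on $0$-fibres. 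Localisations being initial and final, $F$ identifies the relative-cohomology functors along $F^{*}$; and once $\X_{I_{<i}}$ is Poincaré, the Boundary principle forces $\omega_{(\X_{I_{\le i}},\X_{I_{<i}})}$ to invert the $0$-fibre morphisms, so the naturality statement of \cref{thm:A} upgrades this to $\omega_{(\X_{I_{\le i}},\X_{I_{<i}})}\simeq F^{*}\omega_{(\int_{[1]}\bar p,\,\colim_{j<i}X(j))}$ (this is the computation underlying the Localisation principle \cref{prop:localisation_principle}, run here in both directions). Therefore $(\X_{I_{\le i}},\X_{I_{<i}})$ is Poincaré if and only if $(\int_{[1]}\bar p,\,\colim_{j<i}X(j))$ is; and the latter is a $2$-ad, so by \cref{alphThm:classical_vs_us} it is Poincaré if and only if $(X(i),\colim_{j<i}X(j))$ is a Poincaré pair of spaces. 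This settles both implications of the inductive step.

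Granting the induction the theorem follows. In the ``if'' direction the hypothesis provides the premise of the inductive statement for every $i$, so $(\X_{I_{\le\sigma}},\X_{I_{<\sigma}})$ is Poincaré for every top-dimensional $\sigma$; these $\sigma$ form a finite complemented cover of $\X$, and the Local-to-Global principle gives that $\X$, that is the pair $(\X,\emptyset)$, is Poincaré. In the ``only if'' direction the same cover shows each $(\X_{I_{\le\sigma}},\X_{I_{<\sigma}})$ is Poincaré, whence the Realisation principle handles $i=\sigma$ and the Boundary principle together with a downward induction on the dimension of the manifold — recognising $\X_{I_{<\sigma}}$ as the unstraightening over the lower-dimensional combinatorial manifold $\partial\Delta^{\dim\sigma}$ — handles all $i<\sigma$; every $i\in I$ is a face of some top-dimensional $\sigma$ because a combinatorial manifold is pure. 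Finally $|\X|\simeq\colim_IX$ is a Poincaré space by the Realisation principle applied to $(\X,\emptyset)$. I expect the main obstacle to be organisational rather than structural: arranging the stars and their unions and intersections into genuine complemented covers to which the principles apply, and setting up the comparison localisation $F$ together with its effect on classifying systems — once this is done, the argument is driven mechanically by \cref{thm:A} and the four principles.
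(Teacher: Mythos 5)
Your argument is correct and uses the paper's toolkit, but it is organised along a genuinely different route than the paper's own proof of \cref{prop:combinatorial_manifold_Poincare}. The paper works at the level of $\infty$-category pairs throughout and proves, by induction on the codimension $d-\rho(j)$, the quantitative statement that $\omega_{\X,\partial\X}$ restricts on each star $\X_j$ to $\Omega^{d-\rho(j)}\omega_{\X_j,\partial\X_j}$: the codimension-zero case is the complemented cover of \cref{lem:complements_in_combinatorial_manifolds} pulled back via \cref{lem:pulling_back_complemented_pairs} together with the complementation principle \cref{prop:complementation_principle}, and the inductive step is the boundary principle applied to a top star followed by the inductive hypothesis for the $(d-1)$-manifold $I_{<i}$; the equivalence of the three categorical conditions then drops out by restriction plus one application of \cref{cor:local_to_global_principle}, and the translation to the space-level pairs $(X(i),\colim_{j<i}X(j))$ appearing in the theorem is deferred to the remark following the proposition, referring to the localisation argument of \cref{prop:wall_vs_us_ads}. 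You instead run an induction over faces at the space level, re-applying the cover/local-to-global machinery to each boundary sphere $I_{<i}$ and performing at every face the localisation-over-$[1]$ comparison (the fibration $\min\colon I_{\le i}\to[1]$, factoring the cocartesian pushforward through $|\X_{I_{<i}}|$, and \cref{prop:localisation_principle} run in both directions via the boundary principle) -- this is exactly the mechanism of \cref{prop:wall_vs_us_ads}/\cref{prop:barXdXpdpair}, so in effect you merge the paper's proof with the argument it leaves to the reader. What the paper's route buys is the stronger explicit desuspension formula for the classifying system, uniformity in $\baseTopos$, $\sC$ and $\cat_\baseTopos$-valued diagrams, and the case of manifolds with boundary via doubling, while avoiding redoing covers in every dimension; what yours buys is a direct, self-contained proof of the space-level iff actually stated in the theorem for the closed, space-valued, $\spectra$-coefficient case.

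Two small caveats. First, your heuristic justification that the facet stars of $I_{<i}$ form a complemented cover ("facets pairwise meet along their common boundaries") is not the relevant condition: complementedness in the sense of \cref{defn:complements_of_subcategory_pairs} requires finality of the complement of each closed star, which is precisely where the link-sphere hypothesis and the piecewise-linear Sch\"onflies theorem enter (\cref{lem:locally_flat_in_combinatorial_manifolds,lem:complements_in_combinatorial_manifolds}); you correctly defer to this verification, which the paper supplies, so this is an imprecision rather than a gap. Second, in the comparison step make sure the order is as you wrote it: the boundary principle only lets $\omega_{\X_{I_{\le i}},\X_{I_{<i}}}$ descend along $F$ once groupoidality of $\omega_{\X_{I_{<i}}}$ has been established, so the local-to-global step for the sphere must precede the localisation comparison in the "if" direction -- which your proposal does respect.
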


We now turn to the final application of our theory. Consider the following geometric situation:  let $p \colon E \rightarrow B$ be a fibre bundle where $B$ is a topological manifold with boundary, and the fibre $F$ is another topological manifold with boundary. Then $E$ is a topological manifold with boundary as well, and its boundary is the union of $\partial_{\mathrm{base}}E \coloneqq p^{-1}(\partial B)$ with the boundaries of all fibres. Concretely, the boundaries of all fibres assemble into a fibre bundle $\partial_{\mathrm{fib}}E \rightarrow B$,  and writing $\partial \partial E \coloneqq \partial_{\mathrm{fib}} E \cap \partial_{\mathrm{base}} E$, we have the decomposition of $\partial E$ as the union $\partial_{\mathrm{fib}} E \cup_{\partial \partial E} \partial_{\mathrm{base}}E$ of two manifolds along their common boundary. One may ask if the phenomenon above persists when passing to pure homotopy theory, i.e. after replacing topological manifolds with boundary by Poincar\'e pairs, or more generally Poincar\'e ads.
We give an affirmative answer to this question in the following result, working with general coefficients $\sC\in\calg(\presentable^L_{\mathrm{st}})$.

\begin{alphThm}[Precise version in {\cref{cor:kleinQinSu_integration}}]
\label{thm:ads_fibrations}
Let $B\colon \Box^n\rightarrow \spc^{\omega}$ be an $(m+1)$-ad of compact spaces with top space $\bar B=B(1,\dots,1)$ and suppose we have a family  $F\colon \Box^m\rightarrow \spc^\omega$ of $(n+1)$-ads of compact spaces parametrised by $\bar B$. Assume that, for all $b\in\bar\B$, the restriction $F|_{\{b\}\times\Box^n}\colon \Box^n\rightarrow\spc$ is not the empty functor. 
Then the following are equivalent:
\begin{itemize}
\item the ``total ad'' $E\in\func(\Box^m\times \Box^n,\spc)$  is a $\sC$-Poincar\'e $(m+n+1)$-ad;
\item both $B\colon \Box^m\rightarrow \spc$ and $F|_{\{b\}\times\Box^n}\colon \Box^n\rightarrow\spc$ for all $b\in\bar B$ are $\sC$-Poincar\'e ads. 
\end{itemize}
\end{alphThm}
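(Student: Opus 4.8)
The plan is to deduce the statement from the fibred dualising‑object factorisation theorem \cref{thm:fibred_dualising_object_factorisation}, using the dictionary between ads and $\infty$-category pairs from \cref{alphThm:classical_vs_us}. Writing $\partial\Box^k\coloneqq\Box^k\setminus\{(1,\dots,1)\}$ for the punctured cube, I would first form the $\infty$-category pairs $(\mathcal{B},\partial\mathcal{B})\coloneqq\bigl(\int_{\Box^m}B,\ \int_{\partial\Box^m}B|_{\partial\Box^m}\bigr)$ and $(\mathcal{E},\partial\mathcal{E})\coloneqq\bigl(\int_{\Box^m\times\Box^n}E,\ \int_{\partial\Box^{m+n}}E|_{\partial\Box^{m+n}}\bigr)$; by \cref{alphThm:classical_vs_us} these are precisely the pairs computing ``$B$ is $\sC$-Poincar\'e'' and ``$E$ is $\sC$-Poincar\'e''. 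Here I use the identification $\Box^{m+n}\simeq\Box^m\times\Box^n$, under which the punctured cube decomposes as $\partial\Box^{m+n}=(\partial\Box^m\times\Box^n)\cup(\Box^m\times\partial\Box^n)$. The projection $\Box^m\times\Box^n\to\Box^m$ together with the canonical natural transformation $E\Rightarrow\mathrm{pr}_1^*B$ that exhibits $E$ as a family over $B$ induces, after unstraightening and using base change for unstraightening, a functor $\pi\colon\mathcal{E}\to\mathcal{B}$. Its fibre over an object $b$ of $\mathcal{B}$ is the unstraightening $\mathcal{F}_b\coloneqq\int_{\Box^n}\bigl(F|_{\{b'\}\times\Box^n}\bigr)$ of a fibre ad, where $b'\in\bar B$ is the image of $b$ under the functor $\mathcal{B}=\int_{\Box^m}B\to B(1,\dots,1)=\bar B$ induced by the structure maps of $B$; and as $b$ ranges over all objects of $\mathcal{B}$, the ads $F|_{\{b'\}\times\Box^n}$ so obtained are exactly those with $b'\in\bar B$. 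Moreover $\partial\mathcal{E}$ splits as $\pi^{-1}(\partial\mathcal{B})\cup\partial_{\mathrm{fib}}\mathcal{E}$, where $\partial_{\mathrm{fib}}\mathcal{E}\coloneqq\int_{\Box^m\times\partial\Box^n}\bigl(E|_{\Box^m\times\partial\Box^n}\bigr)$ meets each fibre $\mathcal{F}_b$ in the punctured-cube subcategory $\partial\mathcal{F}_b$; this is the $\infty$-categorical shadow of the decomposition $\partial E=\partial_{\mathrm{base}}E\cup\partial_{\mathrm{fib}}E$.

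Since $B$ and the fibre ads take values in $\spc^\omega$, the total ad $E$ is again finitely dominated and all fibre ads are twisted ambidextrous; together with the mild finality conditions enjoyed by unstraightened ads, this should place me in position to apply \cref{thm:fibred_dualising_object_factorisation} to $\pi$. I expect this to yield that $(\mathcal{E},\partial\mathcal{E})$ is twisted ambidextrous, together with a factorisation of classifying systems of the shape
\[
\omega_{\mathcal{E},\partial\mathcal{E}}\;\simeq\;\pi^*\omega_{\mathcal{B},\partial\mathcal{B}}\;\otimes\;\omega_\pi\qquad\text{in }\func(\mathcal{E},\sC),
\]
where $\omega_\pi$ is the fibrewise relative classifying system. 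Using naturality of the classification under base change (\cref{thm:A}(2)), I would identify the restriction of $\omega_\pi$ along each fibre inclusion $\mathcal{F}_b\hookrightarrow\mathcal{E}$ with the classifying system $\omega_{\mathcal{F}_b,\partial\mathcal{F}_b}$ of the corresponding fibre ad.

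To conclude, recall that $E$ is a $\sC$-Poincar\'e ad precisely when $\omega_{\mathcal{E},\partial\mathcal{E}}$ factors through $\picardSpace(\sC)$, i.e.\ is pointwise invertible, and likewise for $B$ and the $F|_{\{b'\}\times\Box^n}$. In any symmetric monoidal $\infty$-category a tensor product $x\otimes y$ is invertible if and only if both $x$ and $y$ are; applying this pointwise, the displayed factorisation shows $\omega_{\mathcal{E},\partial\mathcal{E}}$ is pointwise invertible iff both $\pi^*\omega_{\mathcal{B},\partial\mathcal{B}}$ and $\omega_\pi$ are. As every object of $\mathcal{E}$ lies in some fibre of $\pi$, pointwise invertibility of $\omega_\pi$ is equivalent to pointwise invertibility of every $\omega_{\mathcal{F}_b,\partial\mathcal{F}_b}$, i.e.\ to every fibre ad being $\sC$-Poincar\'e; by the previous paragraph this is exactly the second bullet point. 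On the other hand, the nonemptiness hypothesis ensures that for every object of $\mathcal{B}$ some stratum of the associated fibre ad is nonempty, so that $\pi$ is essentially surjective; hence $\pi^*$ reflects pointwise invertibility, and $\pi^*\omega_{\mathcal{B},\partial\mathcal{B}}$ is pointwise invertible iff $\omega_{\mathcal{B},\partial\mathcal{B}}$ is, iff $B$ is $\sC$-Poincar\'e. Assembling these three equivalences yields the claim.

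The genuinely substantive content — that the relative cohomology of the total space factors through that of the base, producing the displayed classifying-object factorisation — is isolated in \cref{thm:fibred_dualising_object_factorisation}, which I treat as a black box. Within the present argument, the main obstacle is the first step: constructing $\pi\colon\mathcal{E}\to\mathcal{B}$ so that its fibres are \emph{on the nose} the unstraightened fibre ads and the pair structure $\partial\mathcal{E}$ splits compatibly as $\pi^{-1}(\partial\mathcal{B})\cup\partial_{\mathrm{fib}}\mathcal{E}$, then verifying that this input meets the hypotheses of the general theorem (fibrewise twisted ambidexterity, the finality conditions), and identifying the resulting $\omega_\pi$ with the fibrewise classifying systems via \cref{thm:A}(2). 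This amounts to careful bookkeeping with the cube poset $\Box^{m+n}\simeq\Box^m\times\Box^n$ and with the base-change (Fubini) properties of unstraightening; once it is in place, everything downstream is formal.
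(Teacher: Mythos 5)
Your route is essentially the paper's: Theorem D is deduced exactly as you propose, by unstraightening the ads to the pairs $(\B,\partial\B)$ and $(\E,\partial\E)$, exhibiting $\E$ as fibred over $\B$ with fibres the unstraightened fibre ads, and then invoking \cref{thm:fibred_dualising_object_factorisation} as a black box (this is the content of \cref{setting:integration_bicartesian}, \cref{cor:integration_for_unstraightenings} and \cref{cor:kleinQinSu_integration}); the non-emptiness hypothesis is used, as you say, only to make the projection essentially surjective.

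Two points in your write-up deserve attention. First, the hypothesis of \cref{thm:fibred_dualising_object_factorisation} you would actually have to verify is not just fibrewise twisted ambidexterity and tameness, but that the projection together with the fibrewise boundary forms a \emph{special bicartesian} $\B$-pair in the sense of \cref{defn:special_pairs}. This is precisely where the assumption that the fibre family is parametrised by the top space $\bar B\simeq|\B|$ enters structurally: the paper arranges specialness by construction, defining $\E$ as the pullback of $\int_{|\B|\times\Box^n}F$ along $\B\to|\B|$ in \cref{eq:setting_KQS}. Your construction of $\pi$ via a natural transformation $E\Rightarrow\mathrm{pr}_1^*B$ and your checklist (``fibrewise twisted ambidexterity, the finality conditions'') omit this hypothesis, and without it the theorem does not apply. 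Second, you equate ``$\omega_{\E,\partial\E}$ factors through $\picardSpace(\sC)$'' with ``$\omega_{\E,\partial\E}$ is pointwise invertible''; these are not the same, since factoring through $\picardSpace(\sC)$ also forces groupoidality (see \cref{ex:two_copies_RP2}). Consequently your final step, deducing the equivalence from the factorisation $\omega_{\E,\partial\E}\simeq\pi^*\omega_{\B,\partial\B}\otimes\omega_\pi$ by ``a tensor product is invertible iff both factors are'', only controls pointwise invertibility and not Poincar\'e duality of the factors. This is harmless here only because \cref{thm:fibred_dualising_object_factorisation} already asserts the equivalence of the Poincar\'e properties directly (its proof supplies the missing groupoidality via \cref{cor:horizontal_groupoidality_special_bicartesian} and the fact that every morphism of $\B$ lifts along the cartesian fibration), so you should quote its conclusion as stated rather than re-derive it from a classifying-system factorisation.
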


Surprisingly, a proof of this result in the special case when the fibre and base are Poincar\'e pairs (and so the total diagram is a Poincar\'e triad) only appeared relatively recently as \cite[Thm. G]{KleinQinSu}. There, the proof comprised of a mix of homotopical and manifold methods.

In order to prove \cref{thm:fibred_dualising_object_factorisation}, of which \cref{thm:ads_fibrations} is an immediate consequence, we generalise the approach of \cite[Thm. E]{PD1}, which only considered fibrations $\mathcal{F} \rightarrow \E \xrightarrow{p} \B$ of $\sC$-Poincar\'e spaces (without boundary). 
The proof there crucially built upon the notion of ambidexterity for \textit{maps} of spaces. Cnossen \cite{Cnossen2023} defines a map $\E\rightarrow \B$ in a topos $\baseTopos$ to be \textit{$\sC$-twixted ambidextrous} if the object $\E\in\baseTopos_{/\B}$ is $\pi^*_\B\sC$-ambidextrous, where $\pi^*_\B\colon \cat_{\baseTopos}\rightarrow \cat_{\baseTopos_{/\B}}$ is the basechange functor. Out of this definition, Morita theory gives a ``fibrewise'' dualising system $D_p\in\presheaf(E;\sC)$, and in \cite{PD1} the equivalence $D_\E\simeq D_p\otimes p^*D_\B$ was established by considering how ambidextrous maps compose.

Coming back to our setting, an attempt to generalise ambidextrous maps between $\infty$-groupoids to ambidextrous maps between $\infty$-categories presents us with new complications. The essential reason is the following: while the $\infty$-category of presheaves on a groupoid is the limit (in large $\infty$-categories) of the constant diagram over the groupoid, the $\infty$-category of $\sC$-valued presheaves on a given $\infty$-category $\X$ is in general only the \textit{lax} limit of the constant diagram over $\X\op$ with value $\sC$. To overcome this issue, we will replace $\pi^*_\B\sC$ from Cnossen's theory with the ``cofree'' construction in order to handle arbitrary base $\infty$-categories. By employing Morita theory for the topos $\presheaf(\B)$, we are able to obtain appropriate classifying and dualising systems.

\subsection{Historical remarks}
As already mentioned, Poincar\'e spaces serve as the purely homotopical analogue of closed manifolds. They were introduced by Browder \cite{Browder1,Browder2} in the simply connected case and more generally by Spivak \cite{Spivak} and Wall \cite{Wall} later. Similarly, Poincar\'e  pairs serve as the purely homotopical analogue of manifolds with boundary. Spivak \cite[§3]{Spivak} gave the first definition, but the standard one is due to Wall \cite[§1]{Wall}. Wall's definitions of Poincar\'e spaces and pairs are in terms of the existence of local coefficient systems and fundamental classes inducing isomorphisms from cohomology to homology groups via cap product.
The definition of Poincar\'e  spaces via parametrised spectra first appeared in \cite{Kleindual} (see also \cite[Appendix]{LandPD} and \cite[page 93]{nineauthors} for more modern treatments), and a generalisation to Poincar\'e  pairs is given for example in \cite{LuriePoincare}.

Knowledge about relative Poincar\'e duality is abundant, but unfortunately, many statements are  ``known'' to experts and exist only either as folklore or have been proven in special cases. To improve the situation, \cite{KleinQinSu} provided the great service of comparing known definitions of Poincar\'e duality pairs in the literature as well as proving the fibration theorem for Poincar\'e triads.

\subsection{Overview and acknowledgements}
We start the article by recalling some basic facts about parametrised homotopy theory in \cref{sec:preliminaries}; our intention is to develop most of our results in the context of $\baseTopos$-categories, for a fixed $\infty$-topos $\baseTopos$. 

In \cref{part:foundations} 
we introduce and develop the  theory of ambidexterity and Poincar\'e duality for pairs of $\baseTopos$-categories, which will form the abstract cornerstone of this article. We shall mostly defer the examples, including the applications to classical Poincar\'e  ads of spaces, to \cref{part:applications}. We first set up the requisite ``Morita theory'' which will allow us to classify linear functors in terms of so-called \textit{classifying systems} in \cref{sec:morita}. We then introduce and study the basic theory of $\baseTopos$-category pairs in \cref{sec:pdcatpairs} with a special focus on those which are  \textit{twisted ambidextrous} and \textit{Poincar\'e}, the latter being a property of $\baseTopos$-category pairs defined in terms of the aforementioned classifying system. Lastly, we round off the abstract theory by bringing the cap products into the fold and explaining the miscellaneous rigidities enjoyed by the classifying system as well as our notions of ambidexterity and Poincar\'e duality.

In \cref{part:applications} we present some applications of our theory. We first deal with $\infty$-category pairs obtained by unstraightening diagrams of $\baseTopos$-spaces, usually parametrised by a plain 1-category (in fact, most of the times, by a finite poset) in \cref{sec:unstraightenedpairs}: we start by comparing the various notions of Poincar\'e pair and Poincar\'e ad, showing that our theory recovers the classical notions in the case of spaces; we then generalise to the case in which the poset parametrising our diagram of $\baseTopos$-spaces is a \textit{combinatorial manifold}, proving a local-to-global principle to recognise Poincar\'e duality in this setting; and we conclude the section by analysing a few more specific examples. We then present the fibration theorem in \cref{sec:fibredpd}, and give more applications including the Thom isomorphism for spherical fibrations.

Finally, in the appendices gathered in \cref{part:appendices},
we record some ``folklore'' results about parametrised category theory - mainly of a technical nature - which we have not been able to find explicit statements and proofs of in the literature.
\vspace{.4cm}

We are grateful to Markus Land, Wolfgang L\"uck, and Connor Malin for conversations that helped writing this paper.
KH is supported by the European Research Council (ERC) under Horizon Europe (GeoCats,
ID: 101042990 and BorSym, ID: 101163408). KH  thanks the University of Bonn and 
DK thanks the Ludwig-Maximilians-Universität München for their conducive working environments.
All authors would like to thank the Max Planck Institute for Mathematics (MPIM) in Bonn for its hospitality.

\section{Preliminaries}
\label{sec:preliminaries}
In this section we recall some material on parametrised category theory, for which suitable references are given, and we fix some notation to be used throughout the article. We postpone to the appendices in \cref{part:appendices} the development of some further tools, for which we were unable to find sources in the literature.
\subsection{Recollections on parametrised category theory}
\label{subsec:recollections}
In this article we make use of parametrised homotopy theory over a generic topos $\baseTopos$ as developed in \cite{Martini2022Yoneda,Martini2022Cocartesian,MartiniWolf2022Presentable,MartiniWolf2024Colimits}.
The reader who is only interested in the classical setting of Poincar\'e spaces can safely skip this subsection and just think of the case $\baseTopos=\spc$, where the language of (presentable, symmetric monoidal, etc) $\baseTopos$-categories collapses to the world of ordinary (presentable, symmetric monoidal, etc) $\infty$-categories.
Let us begin by recalling the basic setup of parametrised homotopy theory.
A more detailed exposition can be found in \cite[Section 2]{Cnossen2023} or \cite[Section 2]{PD1}.

\begin{description}
\item[($\baseTopos$-categories)] A \textit{$\baseTopos$-category} is a limit preserving functor $\baseTopos\op \to \cat$.
We denote by $\cat_{\baseTopos} \subseteq \func(\baseTopos\op, \cat)$ the (large) $\infty$-category of small $\baseTopos$-categories. By considering $\cat$ as a large $(\infty,2)$-category, we have that $\func(\baseTopos\op,\cat)$ is an $(\infty,2)$-category as well, and we may consider $\cat_\baseTopos$ as a full $(\infty,2)$-subcategory spanned by the aforementioned class of objects. A \textit{$\baseTopos$-functor} is a 1-morphism in $\cat_\baseTopos$, i.e. a natural transformation of functors $\baseTopos\op\to\cat$; a \textit{$\baseTopos$-natural transformation} is similarly a 2-morphism in $\cat_\baseTopos$. 
  
Passing to a larger universe and repeating the definition, we also consider the huge $\infty$-category $\widehat{\cat}_{\baseTopos} \subseteq \func(\baseTopos\op, \widehat{\cat})$ of large $\baseTopos$-categories.
One can even define a large $\baseTopos$-category of $\baseTopos$-categories, denoted $\catTopos_\baseTopos$, by setting $\catTopos_{\baseTopos}(\tau) = \cat_{\baseTopos_{/\tau}}$ for $\tau\in\baseTopos$. We then have $\catTopos_\baseTopos\in\widehat{\cat}_\baseTopos$.
 
\item[($\baseTopos$-groupoids)] The Yoneda embedding $\yoneda\colon\baseTopos \hookrightarrow \func(\baseTopos\op, \spc)$ restricts to a fully faithful functor $\baseTopos \hookrightarrow \cat_{\baseTopos}$.
We call objects in the essential image of this functor \textit{$\baseTopos$-groupoids}.
  The essential images of these functors, for varying topos among those of the form $\baseTopos_{/\tau}$ for $\tau\in\baseTopos$, assemble into the $\baseTopos$-category $\animaTopos_{\baseTopos} \subseteq \catTopos_{\baseTopos}$ of \textit{$\baseTopos$-groupoids}, given by $\animaTopos_{\baseTopos}(\tau) = \baseTopos_{/\tau} \subseteq \cat_{\baseTopos_{/\tau}}$. (This is denoted by $\Omega_{\baseTopos}$ in \cite{Martini2022Yoneda}).
  
  The inclusion $\baseTopos$-functor $\animaTopos_\baseTopos\hookrightarrow\catTopos_\baseTopos$ admits a left and a right adjoint $\baseTopos$-functors in the 2-category $\widehat{\cat}_\baseTopos$, which are denoted by $|-| $ and $(-)^\simeq\colon\catTopos_\baseTopos\to\animaTopos_\baseTopos$, referred to as the \textit{realisation functor} and \textit{core functor}, respectively. A $\baseTopos$-functor $\D\to\E$ is \textit{groupoidal} if it factors through $|\D|$, or equivalently, through $\E^\simeq$.
  \item[(Objects and morphisms)] An \textit{object} of a $\baseTopos$-category $\D$ consists of an object $t \in \baseTopos$ (sometimes called a ``level'') together with an object $x \in \D(t)$.
  Equivalently by the Yoneda lemma, it is a functor $x \colon t \to \D$ of $\baseTopos$-categories. Similarly, a morphism in $\D$ is the data of $t\in\baseTopos$ and a morphism in $\D(t)\in\cat$, or equivalently, an object in $\D^{[1]}(t)\simeq \D(t)^{[1]}\in\cat$ (cf. the point on functor $\baseTopos$-categories below for the notation $\D^{[1]}$).
  \item[(Full and wide inclusions)] Given a $\baseTopos$-functor $F\colon\D\to\E$, we say that $F$ exhibits $\D$ as a \textit{full} (resp. \textit{wide}) $\baseTopos$-subcategory of $\E$ if $F(\tau)$ is a full or wide $\infty$-subcategory inclusion, respectively, for all $\tau\in\baseTopos$.
  
\item[(Geometric basechange and global sections)] Consider a geometric morphism $f^* \colon \baseTopos \rightleftharpoons \baseTopos' \cocolon f_*$ of topoi (i.e. $f^* $ is a left adjoint and it is left exact).
There is an induced adjoint pair $f^* \colon \cat_{\baseTopos} \rightleftharpoons \cat_{\baseTopos'} \cocolon f_*$ whose right adjoint $f_*$ is given by restriction along $(f^*)\op \colon \baseTopos\op \to (\baseTopos')\op$.
For the unique geometric morphism $\constant \colon \spc \rightleftharpoons \baseTopos \cocolon \Gamma$ we obtain the adjunction $\constant \colon \cat \rightleftharpoons \cat_{\baseTopos} \cocolon \Gamma$
and refer to $\Gamma$ as the \textit{global sections} functor. When there is danger of confusion, we will write $\Gamma_{\baseTopos}\colon \cat_{\baseTopos}\rightarrow\cat$ to remind the reader to which topos we are applying the global sections functor.
  
Note that $\Gamma(\D)$ agrees with $\D(\ast)$, the evaluation of $\D$ at the terminal object of $\baseTopos$.  We often omit ``$\constant$'' and for $I\in\cat$ we write $I\in\cat_\baseTopos$ for $\constant(I)$. As important examples, we have equivalences $\Gamma\catTopos_{\baseTopos}\simeq \cat_{\baseTopos} $ and $ \Gamma\animaTopos_{\baseTopos}\simeq \spc_{\baseTopos}$ in $\widehat{\cat}$.
\item[(\'Etale basechange)]
For an $\infty$-topos $\baseTopos$ and an object $\tau\in\baseTopos$ the forgetful functor
$(\pi_\tau)_! \colon \baseTopos_{/\tau} \rightarrow \baseTopos$
has two iterated right adjoints $(\pi_\tau)_! \dashv \pi_\tau^* \dashv (\pi_\tau)_*$, so that in particular, $\pi_\tau^* \dashv (\pi_\tau)_*$ describes a geometric morphism. Geometric morphisms arising this way are called \textit{\'etale geometric morphisms}, and are characterised in \cite[Prop. 6.3.5.11.]{lurieHTT}.
For an \'etale geometric morphism, the functor $\pi_\tau^* \colon \cat_{\baseTopos} \rightarrow \cat_{\baseTopos_{/\tau}}$ has a left adjoint $(\pi_\tau)_!$ which identifies $\cat_{\baseTopos_{/\tau}} \simeq (\cat_\baseTopos)_{/\tau}$, where $\tau\in\baseTopos \subset\cat_\baseTopos$ is regarded as a $\baseTopos$-groupoid, see \cite[Sec. 3.3.]{Martini2022Yoneda}. As a principle, all statements made in internal higher category should be stable under \'etale base change. We refer the reader to \cite[Sec. 2.14.]{MartiniWolf2024Colimits} for a longer elaboration on this principle.
  
\item[(Functor $\baseTopos$-categories)] 
For $\D,\E\in\cat_\baseTopos$ we denote by $\func_{\baseTopos}(\D,\E)\in\cat$ the $\infty$-category of morphisms from $\D$ to $\E$ in $\cat_\baseTopos$, and by $\map_{\baseTopos}(\D,\E)\coloneqq\func_{\baseTopos}(\D,\E)^\simeq$ the core groupoid of the previous.
Moreover, the $\infty$-category $\cat_\baseTopos$ is cartesian closed; we denote by $\funTopos_{\baseTopos}(-, -)\colon\cat_\baseTopos\op\times\cat_\baseTopos\to\cat_\baseTopos$ the internal hom, associating to $\D,\E\in\cat_\baseTopos$ the $\baseTopos$-category of $\baseTopos$-functors; we abbreviate by $\mapTopos_\baseTopos(\D,\E)\simeq \funTopos_\baseTopos(\D,\E)^\simeq\in\spc_\baseTopos$ and we also write $\D^{\E}\in\cat_{\baseTopos}$ for $\funTopos_{\baseTopos}(\D,\E)$. There are then equivalences $\Gamma\funTopos_{\baseTopos}(\D,\E)\simeq \funTopos_\baseTopos(\D,\E)(\ast)\simeq\func_{\baseTopos}(\D,\E)\in\cat$ and $\Gamma\mapTopos_{\baseTopos}(\D,\E)\simeq \mapTopos_\baseTopos(\D,\E)(\ast)\simeq\map_{\cat_{\baseTopos}}(\D,\E)\in\spc$. For functors from constant $\infty$-categories, we have the following description: for $I\in\cat$ and $t\in\baseTopos$, $\D^I(t)\simeq \D(t)^I$.

Objects in functor $\baseTopos$-categories can be described explicitly at all levels:
for $\tau\in\baseTopos$, there is an identification $\funTopos_{\baseTopos}(\D,\E)(\tau) \simeq \Gamma_{\baseTopos_{/\tau}}(\funTopos_{\baseTopos_{/\tau}}(\pi_\tau^*\D, \pi_\tau^*\E))$.

\item[(Slices)] For $\tau\in\baseTopos$ and $x\in\D(\tau)$, i.e. for a $\baseTopos$-functor $x\colon\tau\to\D$, the slice $\baseTopos$-category $\D_{/x}$ is defined as the pullback $\D_{/x} = \tau \times_{\D} \D^{[1]}$ of the maps $x \colon \tau \to \D$ and $\eval_1 \colon \D^{[1]} \to \D$.

\item[(Adjoints and (co)limits)]
Since $\cat_\baseTopos$ is an $(\infty,2)$-category, we may consider adjunctions inside it.
Unwinding this notion, a $\baseTopos$-functor $F \colon \D \to \E$ admits a left (resp. right) adjoint if and only if $F(\tau) \colon \D(\tau) \to \E(\tau)$ admits a left (resp. right) adjoint for all $\tau\in\baseTopos$ and these satisfy the left (resp. right) Beck--Chevalley condition.
One also defines the notions of $\baseTopos$-colimits and $\baseTopos$-limits: given $\baseTopos$-categories $\X$ and $\D$, we consider the $\baseTopos$-functor $\constant_\X \colon \D \to \funTopos_{\baseTopos}(\X, \D)$, and denote by $\colim_\X,\lim_\X\colon\funTopos_\baseTopos(\X,\D)\to\D$ the left and right adjoint, if they exist, respectively. We will often abbreviate $\X^*=\constant$, $\X_!=\colim_\X$ and $\X_*=\lim_\X$. 
  
A $\baseTopos$-category $\D$ is called \textit{cocomplete} if the $\baseTopos_{/\tau}$-category $\pi_\tau^* \D$ admits colimits indexed by objects in $\cat_{\baseTopos_{/\tau}}$ for all $\tau\in\baseTopos$. Generally, existence and preservation of colimits can be checked by reducing to constant $\baseTopos$-categories and $\baseTopos$-groupoids, see \cite[Prop. 4.7.1.]{MartiniWolf2024Colimits}. For these special cases, Martini-Wolf offer explicit criteria \cite[Examples 4.1.13., 4.1.14., 4.2.6., 4.2.7.]{MartiniWolf2024Colimits}, all of which we shall freely use.
A $\baseTopos$-functor $f\colon\X\to\Y$ is \textit{final} if for all $\baseTopos$-functors $F\colon\Y\to\D$ the canonical map $\colim_\X F\circ f\to\colim_\Y F$ is an equivalence; $f$ is \textit{initial} if $f\op\colon\X\op\to\Y\op$ is final.
A functor $f \colon \sC \rightarrow \D$ of complete/cocomplete $\baseTopos$-categories is said to be limit/colimit-preserving if it preserves all $\baseTopos$-limits/colimits.

\item[(Presentability)] A (large) $\baseTopos$-category $\D \colon \baseTopos\op \to \widehat{\cat}$ is \textit{presentable} if it factors through the $\infty$-subcategory $\presentable^L \subset \widehat{\cat}$ and satisfies the following two conditions:
\begin{enumerate}
\item For any map $f \colon \tau \to \tau'$ in $\baseTopos$ the map $f^* \colon \D(\tau') \to \D(\tau)$ admits a left adjoint $f_!$.
\item For any cartesian square in $\baseTopos$ 
\[
\begin{tikzcd}[row sep=10pt]
\tau_1' \ar[r, "g'"] \ar[d, "f'"'] \ar[dr,phantom,"\lrcorner"very near start]& \tau_1 \ar[d, "f"] \\
\tau_2' \ar[r, "g"] & \tau_2,
\end{tikzcd}
\]
the Beck--Chevalley transformation $f'_! (g')^* \to g^* f_!$ of functors $\D(\tau_1) \to \D(\tau_2')$ is an equivalence.

\end{enumerate}
Denote by $\presentable_{\baseTopos}^L \subset \widehat{\cat}_{\baseTopos}$ the nonfull $(\infty,2)$-subcategory of presentable $\baseTopos$-categories, $\baseTopos$-left adjoint functors, and all $\baseTopos$-natural transformations between them.
By the adjoint functor theorem for $\baseTopos$-categories, a $\baseTopos$-functor $F \colon \D \to \E$ of presentable $\baseTopos$-categories is a left adjoint if and only if $F(\tau) \colon \D(\tau) \to \E(\tau)$ preserves colimits for all $\tau\in\baseTopos$ and for any map $f \colon \tau\to\tau'$ in $\baseTopos$ the Beck--Chevalley transformation $f_! F(\tau) \to F(\tau') f_!$ is an equivalence.
The $\infty$-categories $\presentable_{\baseTopos_{/\tau}}^L$ for varying $\tau\in\baseTopos$ again assemble into the $\baseTopos$-category of presentable $\baseTopos$-categories, denoted $\presentableTopos_\baseTopos^L$.
We will also write $\presentableTopos^L_{\baseTopos,\stable}\subseteq \presentableTopos^L_{\baseTopos}$ for the full $\baseTopos$-subcategory spanned at level $\tau\in\baseTopos$ by those presentable $\baseTopos_{/\tau}$-categories which are levelwise stable.
The $\baseTopos$-category $\presentableTopos^L_{\baseTopos}$ has a symmetric monoidal structure \cite[Section 2.6]{MartiniWolf2022Presentable} given by a parametrised version of Lurie's tensor product \cite[Section 4.8]{lurieHA}; we may therefore regard $\presentableTopos^L_{\baseTopos}$ as an object in $\calg(\widehat{\cat}_\baseTopos)$. 

\item[(Simplicial $\baseTopos$-objects)]
We denote by $\simplex$ the category of finite, nonempty totally ordered sets and nondecreasing maps; for $n\ge0$ we let $[n]=\{0\prec\dots\prec n\}\in\simplex$. We sometimes regard $\cat$ as the full $\infty$-subcategory of $s\spc\coloneqq\func(\simplex\op,\spc)$ spanned by complete Segal objects. Here $\spc\subset\cat$ denotes the $\infty$-category of small $\infty$-groupoids, also known as ``spaces''.

For $\E\in\cat_\baseTopos$ and $I\in\cat$ we have identifications of $\infty$-categories $\func_\baseTopos(\constant(I),\E)(\tau)\simeq\func(I,\E(\tau))$. Taking $I=\simplex\op$, we obtain the $\baseTopos$-category $s\E\coloneqq\funTopos_\baseTopos(\simplex\op,\E)\simeq\E^{\simplex\op}$. 
Passing to a larger universe we may moreover set $\E=\animaTopos_\baseTopos\in\widehat{\cat}_\baseTopos$, thus getting the large $\baseTopos$-category $s\animaTopos_\baseTopos\in\widehat{\cat}_\baseTopos$. For $\tau\in\baseTopos$, the objects of the $\infty$-category $s\animaTopos_\baseTopos(\tau)$ are limit preserving functors $\baseTopos_{/\tau}\op\to s\spc$.
We may then regard $\catTopos_\baseTopos$ as a full $\baseTopos$-subcategory of $s\animaTopos_\baseTopos$, having at level $\tau\in\baseTopos$ the full $\infty$-subcategory of $\func(\baseTopos_{/\tau},s\spc)$ spanned by functors $\baseTopos_{/\tau}\to s\spc$ that are limit preserving and attain complete Segal spaces as values. In particular, for $\tau=\ast$ being terminal in $\baseTopos$, we obtain a fully faithful inclusion of $\cat_\baseTopos$ inside $s\baseTopos$; concretely, we associate with $\D\in\cat_\baseTopos$ the simplicial object $[n]\mapsto\D_n\coloneqq\mapTopos_\baseTopos(\constant([n]),\D)\in\baseTopos$.
  
\item[(Straightening)]
For a $\baseTopos$-category $\D$, we will consider the nonfull, large $\infty$-subcategories $\cocartesianCategory^{\baseTopos}_{/\D}, \lfib^{\baseTopos}_{/\D} \subset (\cat_{\baseTopos})_{/\D}$ of $\baseTopos$-cocartesian fibrations (cf. \cite[Def. 3.1.1]{Martini2022Cocartesian} or \cref{rec:parametrised_cocartesian_fibrations}) and $\baseTopos$-left fibrations (c.f \cite[Def. 4.1.1]{Martini2022Yoneda}) over $\D$, and maps of $\baseTopos$-cocartesian of $\baseTopos$-left fibrations, respectively. 
They  participate in the following straigthening-unstraightening equivalence of $\infty$-categories:
\begin{equation}\label{eq:straightening_cocart}
 \straighten \colon \cocartesianCategory^{\baseTopos}_{/\D} \simeq \func_{\baseTopos}(\D, \catTopos_{\baseTopos}) \cocolon \unstraighten,
 \end{equation}
restricting to the following equivalence, see \cite[Thm. 4.5.1]{Martini2022Yoneda}:
\begin{equation}\label{eq:straightening_lfib}
\straighten \colon \lfib^{\baseTopos}_{/\D} \simeq \func_{\baseTopos}(\D, \animaTopos_{\baseTopos}) \cocolon \unstraighten.
\end{equation}
We often denote by $\int_\D X\in\cat_\baseTopos$ the source of a $\baseTopos$-cocartesian fibration corresponding to the $\baseTopos$-functor $X\colon \D\to\catTopos_\baseTopos$.

\item[(Twisted arrow construction)]
For $\X\in\cat_\baseTopos$, we denote by $\twistedArrow(\X)\in\cat_\baseTopos$ the twisted arrow $\baseTopos$-category of $\X$ (e.g. constructed in \cite[Def. 4.2.4]{Martini2022Yoneda}).
Objects at level $\tau\in\baseTopos$ are given by morphisms in $\X(\tau)$; a morphism in $\twistedArrow(\X)(\tau)$ from $f \colon x \to y$ to $f' \colon x' \to y'$ is given by a commutative square in $\X(\tau)$ of the form
\[
\begin{tikzcd}[row sep=10pt]
  x\ar[d,"f"']
  & x' \ar[d, "f'"]\lar
  \\
  y  \rar
  &y'.
\end{tikzcd}
\]
Formally, the underlying simplicial $\baseTopos$-space of $\twistedArrow(\X)$ is the composite functor $\simplex\op\xrightarrow{[-]\ast[-]\op}\simplex\op\xrightarrow{\X}\baseTopos$, where $[-]\op\colon\simplex\to\simplex$ is the non-trivial involution, and where $\ast$ denotes the concatenation monoidal product on $\simplex$. In particular, for $t\in\baseTopos$, we have $(\twistedArrow(\X))(t)\simeq\twistedArrow(\X(t))$.

Taking sources and targets of arrows gives rise to a $\baseTopos$-left fibration $(s,t)\colon\twistedArrow(\X)\to \X\op\times \X$, which classifies a  $\baseTopos$-functor $\mapTopos_{\X} \colon \X\op \times \X \to \animaTopos_{\baseTopos}$ under the straightening equivalence \eqref{eq:straightening_lfib}.

We may consider $\mapTopos_{\X}$ also as an object in the $\infty$-category $\presheaf_\baseTopos(\X\times\X\op)\coloneqq\func_\baseTopos(\X\op\times\X,\animaTopos_\baseTopos)$ of $\animaTopos_\baseTopos$-valued presheaves. In this case we have $\mapTopos_{\X}\simeq(s,t)_!(\ast)$, i.e. $\mapTopos_{\X}$ is the left Kan extension of the terminal presheaf$\ast$ along $(s,t)$.

\item[(Presheaves and Yoneda embedding)]
By currying $\mapTopos_{\X}$, we get the \textit{Yoneda embedding} $\yoneda \colon \X \hookrightarrow \presheafTopos_{\baseTopos}(\X)$, see \cite[Thm. 4.7.8]{Martini2022Yoneda}, where $\presheafTopos_{\baseTopos}(\X)\coloneqq\funTopos_\baseTopos(\X\op,\animaTopos_\baseTopos)\in\widehat{\cat}_\baseTopos$ denotes the $\baseTopos$-category of $\animaTopos_\baseTopos$-valued presheaves. When the topos is clear, we will often abbreviate this to $\presheafTopos(\X)$.
  The $\baseTopos$-Yoneda lemma states that there is an equivalence between the $\baseTopos$-functors $\eval \simeq \mapTopos_{\presheafTopos(\X)} \circ (\yoneda \times \id) \colon \X\op \times \presheafTopos(\X) \to \animaTopos_{\baseTopos}$.
  
  For a small $\baseTopos$-category $\X$, the large $\baseTopos$-category $\presheafTopos(\X)$ is presentable and restriction along $\yoneda$ induces for any cocomplete $\baseTopos$-category $\D$ an equivalence of $\baseTopos$-categories
  \begin{equation}\label{eq:presheaves_free}
    \yoneda^* \colon \funTopos^L_\baseTopos(\presheafTopos(\X), \D) \xrightarrow{\simeq} \funTopos_\baseTopos(\X, \D)
  \end{equation}
  by \cite[Theorem 7.1.1]{MartiniWolf2024Colimits}. Here $\funTopos_\baseTopos^L(\presheafTopos(\X),\D)$ is defined as the full $\baseTopos$-subcategory of $\funTopos_\baseTopos(\presheafTopos(\X),\D)$ spanned levelwise by $\baseTopos$-colimit-preserving functors.
The inverse equivalence is given by left Kan extension $\yoneda_! \colon \funTopos_{\baseTopos}(\X, \D) \to  \funTopos_{\baseTopos}(\presheafTopos(\X), \D)$ along the Yoneda embedding, which factors through the $\baseTopos$-subcategory $\funTopos^L_\baseTopos(\presheafTopos(\X), \D) \subseteq \funTopos_\baseTopos(\presheafTopos(\X), \D)$. Hence, the Yoneda embedding exhibits $\presheafTopos(\X)$ as the free cocompletion of $\X$.

  More generally, for $\D\in\widehat{\cat}_{\baseTopos}$, we write $\presheafTopos_{\baseTopos}(\X;\D) = \funTopos_\baseTopos(\X\op,\D)\in\cat_{\baseTopos}$ for the $\baseTopos$-categories of $\D$-valued presheaves. For ease of notation, we will also abbreviate this to $\presheafTopos(\X;\D)$ when the base topos $\baseTopos$ is clear. In particular, we have by definition that $\presheafTopos(\X) = \presheafTopos(\X; \spc_{\baseTopos})$.

\end{description}

\begin{example}[Presheaf topoi]
\label{ex:presheaftopoi}
For a small $\infty$-category $\B\in\cat$ we may consider the presheaf topos $\baseTopos \coloneqq\presheaf(\B) = \func(\B\op,\spc)$.
Then restriction along the Yoneda embedding $\yoneda\colon\B \to\presheaf(\B)$ induces an equivalence $\cat_{\presheaf(\B)}\simeq \func(\B\op, \cat)$.
In particular, for $\B =*$ we obtain $\cat_\spc \simeq \cat$.
\end{example}

\begin{rmk}[Checking equivalences on objects]
Given a natural transformation $(\alpha\colon F\rightarrow G)\colon \sC\rightarrow \D^{[1]}$ of $\baseTopos$-functors, we have that $\alpha$ is a natural equivalence if and only if for all objects in $\sC$ (i.e. for all $\tau\in \baseTopos$ and $x\in\sC(\tau)$, as recalled in the list of recollections above), the morphism $\alpha_x\colon F(x)\rightarrow G(x)$ in the ordinary $\infty$-category $\D(\tau)\in\cat$ is an equivalence. In other words, a natural transformation of $\baseTopos$-functors may be checked to be a natural equivalence ``objectwise'', where objects are to be interpreted in the preceding sense. 
\end{rmk}

\subsection{Summary of notations}
We collect here the important notational conventions we adopt throughout the article for the convenience of the reader. Let $\baseTopos$ be a fixed base topos.

\begin{enumerate}
\item We will denote by $\spc_{\baseTopos}$, $\cat_{\baseTopos}$ and $ \presentable^L_{\baseTopos}$ the ordinary $\infty$-categories of $\baseTopos$-spaces, of $\baseTopos$-categories, and of presentable $\baseTopos$-categories and left adjoint $\baseTopos$-functors as morphisms, respectively. Of course, $\spc_\baseTopos$ is just different notation for $\baseTopos$, chosen to confirm with the internal enhancement below.
All of $\spc_{\baseTopos}$,  $\cat_{\baseTopos}$ and $\presentable^L_{\baseTopos}$ belong to $\widehat{\cat}$, the (huge) $\infty$-category of large $\infty$-categories. 
In contradistinction to this, we shall denote by $\animaTopos_{\baseTopos}$, $\catTopos_{\baseTopos}$ and $ \presentableTopos^L_{\baseTopos}$ their respective $\baseTopos$-categorical enhancements, i.e. these are objects in the huge $\infty$-category of large $\baseTopos$-categories $\widehat{\cat}_{\baseTopos}$. 
In particular, we have equivalences $\Gamma\animaTopos_{\baseTopos}\simeq \spc_{\baseTopos}$, $ \Gamma\catTopos_{\baseTopos}\simeq \cat_{\baseTopos}$ and $ \Gamma\presentableTopos^L_{\baseTopos}\simeq \presentable^L_{\baseTopos}$. 
    
\item For $I\in\cat_{\baseTopos}$, we will write $\cocartesianCategory^{\baseTopos}_{/I}$ (resp. $ \cartesianCategory^{\baseTopos}_{/I}$) $\in\widehat{\cat}$ for the nonfull ordinary $\infty$-subcategory of $\cat^{\baseTopos}_{/I}$ spanned by cocartesian (resp. cartesian) fibrations and cocartesian (resp. cartesian) functors over $I$. Similarly, we will write $\cocartesianCatTopos^{\baseTopos}_{/I}, \cartesianCatTopos^{\baseTopos}_{/I}\subset \catTopos_{\baseTopos}$ for the respective nonfull $\baseTopos$-subcategories.
\item Let $\X, \E\in\cat_{\baseTopos}$, $\D\in\widehat{\cat}_{\baseTopos}$. We will write $\func_{\baseTopos}(\X,\E)\in\cat$ for the ordinary $\infty$-category of $\baseTopos$-functors from $\X$ to $\E$ and $\funTopos_{\baseTopos}(\X,\E)\in\cat_{\baseTopos}$ for the $\baseTopos$-category of $\baseTopos$-functors from $\X$ to $\E$. We also abbreviate $\E^\X\coloneqq\funTopos_{\baseTopos}(\X,\E)$, so that we have $\func_{\baseTopos}(\X,\E)\simeq\Gamma\E^\X$.
Similarly, we will write $\presheaf_{\baseTopos}(\X;\D)\coloneqq \func_{\baseTopos}(\X\op,\D)\in\cat$ for the ordinary $\infty$-category of $\baseTopos$-presheaves, and $\presheafTopos_{\baseTopos}(\X;\D)\coloneqq \funTopos_{\baseTopos}(\X\op,\D)$ for the $\baseTopos$-category of $\baseTopos$-presheaves. Whenever the topos $\baseTopos$ is clear from the context, we abbreviate $\presheafTopos_\baseTopos$ by $\presheafTopos$ and $\funTopos_{\baseTopos}$ by $\funTopos$.
\item For a $\baseTopos$-functor $f\colon \X \to \Y$ we denote by $f^*\colon\D^\Y\to\D^\X$, respectively $f^*\colon\presheafTopos(\Y;\D)\to\presheafTopos(\X;\D)$ the associated restriction $\baseTopos$-functor, and we denote by $f_!$ and $f_*$ the left and right adjoints of $f^*$, if they exist. When $\Y \simeq \ast$, we will often write the map $f$ as $\X\colon \X\rightarrow\ast$, so that  $\X^*\colon\D\to\D^\X$ and $\X^*\colon\D\to\presheafTopos(\X;\D)$ denote the respective diagonal $\baseTopos$-functors. Correspondingly,  $\X_!$ and $\X_*$ will denote the respective left and right adjoints, if they exist.
\item We usually denote by $\sC$ a presentably symmetric monoidal $\baseTopos$-category, i.e. $\sC\in\calg(\presentable^L_\baseTopos)$. From \cref{sec:pdcatpairs} onwards, with the exception of the appendices, we assume that $\sC \in \calg(\presentable^L_{\baseTopos,\stable})$ is also levelwise stable.
\end{enumerate}

\part{Foundations}\label{part:foundations}
\section{Morita theory}
\label{sec:morita}
Morita theory in higher algebra says that for $S,S' \in \alg(\spectra)$, evaluation at $S$ gives rise to an equivalence $\func^L(\lmodule_S,\lmodule_S') \xrightarrow{\simeq}  \mathrm{BiMod}_{S,S'}$, so that a bimodule ``classifies'' a linear functor.
This phenomenon can be generalised to the situation in which module $\infty$-categories are replaced by $\infty$-categories of local systems over a space, taking values in some $\sC \in \calg(\presentable^L)$: this is implemented in the setting of parametrised category theory in  \cite{Cnossen2023}.
Morita theory is the foundation upon which the implementation of Poincar\'e duality in the context of parametrised homotopy is built. For example, if $\X\in\spc^\omega$ is a compact space, the equivalence $\func^L(\spectra^\X,\spectra) \simeq \spectra^\X$ allows us to assign a dualising object $D_\X\in\spectra^\X$ to the colimit-preserving limit functor $\X_* \in \func^L(\spectra^\X,\spectra)$; following \cite{Kleindual} we then say that $\X$ is a Poincar\'e duality space if $D_\X$ takes values in $\picardSpace(\spectra)$, the full subgroupoid of $\spectra^\simeq$ spanned by invertible spectra.

In \cref{subsec:Morita_theory}, we provide a version of Morita theory for local systems over \textit{$\infty$-categories} instead of spaces. To accommodate potential examples in the study of Poincar\'e duality in more general contexts (e.g. for the study of equivariant Poincar\'e duality), we develop the theory in the generality of local systems on a $\baseTopos$-category $\X$ with coefficients in a presentably symmetric monoidal $\baseTopos$-category $\sC$.
The goal is to classify colimit-preserving and $\sC$-linear $\baseTopos$-functors $F\colon \presheafTopos(\X;\sC) \rightarrow \sC$ in terms of \textit{copresheaves}, i.e. covariant $\baseTopos$-functors $\omega_F\colon\X\to\sC$, which we call the \textit{classifying system}. This is achieved in \cref{prop:classification_of_linear_functors}. As complements to this, we also record various naturalities enjoyed by the Morita equivalence in \cref{subsec:Morita_theory} which we will use repeatedly throughout the article.

As hinted at above, a new subtlety appears in passing from spaces to $\infty$-categories: the classifying system is no longer a presheaf but a copresheaf. This precludes a simplistic Morita classification, as the following simple example memorably illustrates. 

\begin{example}
\label{ex:interval_has_no_dualising_system}
Let 
$\X=[1]=\{0\to1\}$. Then $[1]_!\colon\presheafTopos([1];\sC) \rightarrow \sC$ is evaluation at $0$, while $[1]_*$ is evaluation at $1$ (in particular, it is $\sC$-linear and colimit preserving). Unless $\sC=0$, there is no $\xi \in \func([1]\op,\sC)$ for which there is an equivalence $[1]_* \simeq [1]_!(-\otimes \xi)$. 
\end{example}

Nevertheless, there is still a formula re-expressing a linear functor in terms of its classifying system in the form of a \textit{coend}, which we provide in \cref{prop:formula_with_twisted_arrow}. We will have use of such a formula for many of our applications in \cref{part:applications} where we need some control over the classifying systems. Even more importantly, we show in \cref{subsec:classifying_vs_dualising} that the situation improves significantly when the classifying system $\X\rightarrow\sC$ is \textit{groupoidal}, in that it factors through the groupoid $|\X|$. In this case, we do, for instance, have the expected formula $\X_*(-)\simeq \X_!(D_\X\otimes-)$ for some essentially unique \textit{presheaf} $D_{\X}\in\presheaf_{\baseTopos}(\X;\sC)$ which we term as the \textit{dualising system}.

\subsection{Classification of linear functors}
\label{subsec:Morita_theory}
Given $\sC \in \calg(\presentable^L_{\baseTopos})$, we consider the symmetric monoidal $(\infty,2)$-category $\module_{\sC}(\presentable^L_\baseTopos)$ whose objects are referred to as \textit{$\sC$-linear $\baseTopos$-categories}. For $\D,\E\in\module_{\sC}(\presentable^L_\baseTopos)$ we denote by $\func^L_{\baseTopos,\sC}(\D,\E)$ the corresponding morphism $\infty$-category. The symmetric monoidal structure of $\module_{\sC}(\presentable^L_\baseTopos)$ is closed, and we denote by 
\[
\funTopos^L_{\baseTopos,\sC}(-,-)\colon \module_{\sC}(\presentable^L_\baseTopos)
\op\times \module_{\sC}(\presentable^L_\baseTopos)\to \module_{\sC}(\presentable^L_\baseTopos),
\]
the internal hom, so that $\Gamma \funTopos^L_{\baseTopos,\sC}(\D,\E)\simeq \func^L_{\baseTopos,\sC}(\D,\E)$. As in \cref{sec:preliminaries}, we will often abbreviate this to $\funTopos^L_{\sC}$ when the base topos $\baseTopos$ is clear.

There is a symmetric monoidal base change functor $- \otimes \sC \colon \presentable^L_\baseTopos \rightarrow \module_\sC(\presentable^L_\baseTopos)$. In particular, since for any $\X\in\cat_\baseTopos$, the pointwise symmetric monoidal structure refines $\animaTopos^{\X}$ to an object in $\calg(\presentable^L_{\baseTopos})$,  $\sC^\X \simeq \animaTopos^\X \otimes \sC\in\module_\sC(\presentable^L_\baseTopos)$ also refines to an object in $\calg(\module_{\sC}(\presentable^L_\baseTopos))$ with the pointwise symmetric monoidal structure; we will write $\yoneda\colon \X\xhookrightarrow{\yoneda}\animaTopos^{\X}\rightarrow \sC^\X$ for the Yoneda functor (which need not be fully faithful anymore). Moreover, given  $\D \in \module_{\sC}(\presentable^L_\baseTopos)$, the $\baseTopos$-category $\D^\X \simeq \sC^\X \otimes_\sC \D$ carries a canonical $\sC$-linear structure.

\begin{prop}\label{prop:classification_of_linear_functors}
For $\D, \E \in \module_{\sC}(\presentable^L_{\baseTopos})$ and $\X\in\cat_\baseTopos$ there is an equivalence 
\[
\funTopos^L_{\sC}(\presheafTopos(\X;\D), \E) \simeq \funTopos^L_{\sC}(\D, \E^\X)\in \module_{\sC}(\presentable^L_\baseTopos),
\]
natural in $\D$ and $\E$, which we term as \textit{Morita equivalence}. For $\D = \sC$ this specialises to
\[
\yoneda^*\colon \funTopos^L_{\sC}(\presheafTopos(\X;\sC), \E) \xrightarrow{\simeq} \E^\X\in \module_{\sC}(\presentable^L_\baseTopos).
\]
\end{prop}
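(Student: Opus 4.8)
The plan is to deduce the general statement from the special case $\D=\sC$, and to prove the latter by feeding the free‑cocompletion universal property \eqref{eq:presheaves_free} through the $\sC$‑module structure. For the reduction, recall from the discussion preceding the statement that $\presheafTopos(\X;\D)=\D^{\X\op}\simeq\sC^{\X\op}\otimes_\sC\D=\presheafTopos(\X;\sC)\otimes_\sC\D$ in $\module_\sC(\presentable^L_\baseTopos)$, naturally in $\D$. Since $\module_\sC(\presentable^L_\baseTopos)$ is closed symmetric monoidal with internal hom $\funTopos^L_\sC$ (see \cite{MartiniWolf2022Presentable}), the tensor--hom adjunction together with the symmetry of $\otimes_\sC$ gives, naturally in $\D$ and $\E$,
\[
\funTopos^L_\sC(\presheafTopos(\X;\D),\E)\;\simeq\;\funTopos^L_\sC\!\big(\presheafTopos(\X;\sC)\otimes_\sC\D,\,\E\big)\;\simeq\;\funTopos^L_\sC\!\big(\D,\,\funTopos^L_\sC(\presheafTopos(\X;\sC),\E)\big),
\]
so it suffices to construct a natural equivalence $\funTopos^L_\sC(\presheafTopos(\X;\sC),\E)\simeq\E^\X$ and substitute.

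For the special case, the key observation is that $\presheafTopos(\X;\sC)=\sC^{\X\op}\simeq\animaTopos_\baseTopos^{\X\op}\otimes\sC=\presheafTopos(\X)\otimes\sC$ is the image of the free cocompletion $\presheafTopos(\X)$ of $\X$ under the base‑change functor $-\otimes\sC\colon\presentable^L_\baseTopos\to\module_\sC(\presentable^L_\baseTopos)$, which is left adjoint to the forgetful functor. Composing the two left adjoints (free cocompletion, then free $\sC$‑module) exhibits $\presheafTopos(\X;\sC)$ as the \emph{free $\sC$‑linear presentable $\baseTopos$‑category on $\X$}: for $\E\in\module_\sC(\presentable^L_\baseTopos)$, restriction along $\yoneda\colon\X\to\presheafTopos(\X;\sC)$ is an equivalence $\map_{\module_\sC}(\presheafTopos(\X;\sC),\E)\xrightarrow{\simeq}\map_{\cat_\baseTopos}(\X,\E)$. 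To promote this from mapping \emph{spaces} to internal functor $\baseTopos$‑categories, I would test $\funTopos^L_\sC(\presheafTopos(\X;\sC),\E)$ and $\E^\X$ against an arbitrary $\B\in\module_\sC(\presentable^L_\baseTopos)$ and invoke the $\baseTopos$‑Yoneda lemma: on the one side, $\B\otimes_\sC\presheafTopos(\X;\sC)\simeq\presheafTopos(\X;\B)$ combined with the universal property of $\otimes$ in $\presentable^L_\baseTopos$ and \eqref{eq:presheaves_free} identifies $\map_{\module_\sC}(\B,\funTopos^L_\sC(\presheafTopos(\X;\sC),\E))$ with $\map_{\cat_\baseTopos}(\X,\funTopos^L_\sC(\B,\E))$; on the other side, the exponential adjunction for $(-)^\X$ in $\cat_\baseTopos$, refined $\sC$‑linearly, identifies $\map_{\module_\sC}(\B,\E^\X)$ with the same $\map_{\cat_\baseTopos}(\X,\funTopos^L_\sC(\B,\E))$. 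Matching the two, and checking that for $\B=\sC$ the equivalence is restriction along $\yoneda$, yields the claim.

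Since all objects in play are $\baseTopos$‑categories — the internal homs $\funTopos^L_\sC$, the cotensors $(-)^\X$, etc. — rigour is obtained by the standard principle that statements of internal higher category theory may be checked after étale base change to $\baseTopos_{/\tau}$ followed by global sections, using that the formation of $\presheafTopos(-;-)$, the Yoneda embedding and the relevant tensor products are all compatible with étale base change; this reduces each displayed adjunction to its non‑parametrised counterpart. I expect the main obstacle to be exactly this bookkeeping of universal properties in the parametrised $\sC$‑linear setting — in particular verifying the mixed tensor--hom and exponential identities $\B\otimes_\sC\presheafTopos(\X;\sC)\simeq\presheafTopos(\X;\B)$ and the $\sC$‑linear refinement of the exponential law for $(-)^\X$ — rather than any conceptual difficulty. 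A cleaner, essentially formal alternative that sidesteps most of this is to note that $\presheafTopos(\X;\sC)=\presheafTopos(\X)\otimes\sC$ is a dualisable object of $\module_\sC(\presentable^L_\baseTopos)$ with dual $\presheafTopos(\X\op;\sC)=\sC^\X$ (from $\presheafTopos(\X\times\Y;\sC)\simeq\presheafTopos(\X;\sC)\otimes_\sC\presheafTopos(\Y;\sC)$ together with the co/Yoneda lemma), whence $\funTopos^L_\sC(\presheafTopos(\X;\sC),\E)\simeq\presheafTopos(\X;\sC)^{\vee}\otimes_\sC\E=\sC^\X\otimes_\sC\E=\E^\X$ in $\module_\sC(\presentable^L_\baseTopos)$; the cost of this route is having to identify the evaluation and coevaluation of the duality explicitly in order to recognise the resulting equivalence as $\yoneda^*$.
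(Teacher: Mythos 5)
Your proposal is correct, and your closing ``formal alternative'' is in fact precisely the paper's argument: the paper first proves in \cref{lem:presheaves_dualisable} that $\animaTopos^\X$ is dualisable in $\presentable^L_\baseTopos$ with dual $\presheafTopos(\X)$ --- verifying the duality via the free-cocompletion equivalence \cref{eq:presheaves_free} and recording that the evaluation is adjoint to $(\yoneda^*)^{-1}$ --- and then transports this duality along the symmetric monoidal base change $-\otimes\sC\colon\presentable^L_\baseTopos\to\module_\sC(\presentable^L_\baseTopos)$, so that $\sC^\X$ is dualisable with dual $\presheafTopos(\X;\sC)$ and the Morita equivalence is the chain $\funTopos^L_\sC(\presheafTopos(\X;\D),\E)\simeq\funTopos^L_\sC(\D\otimes_\sC\presheafTopos(\X;\sC),\E)\simeq\funTopos^L_\sC(\D,\sC^\X\otimes_\sC\E)\simeq\funTopos^L_\sC(\D,\E^\X)$; the identification of the $\D=\sC$ case with $\yoneda^*$ is then built into the lemma. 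Your primary route --- reducing to $\D=\sC$ by the tensor--hom adjunction and then proving $\funTopos^L_\sC(\presheafTopos(\X;\sC),\E)\simeq\E^\X$ by corepresentability testing against arbitrary $\B\in\module_\sC(\presentable^L_\baseTopos)$ --- is a genuinely different decomposition and is also sound: it avoids the duality lemma, but shifts the work onto the mixed identities $\B\otimes_\sC\presheafTopos(\X;\sC)\simeq\presheafTopos(\X;\B)$ and the $\sC$-linear pointwise exponential law for $(-)^\X$, exactly the verifications you flag. The one point where your route needs slightly more care than you indicate is the identification of the resulting equivalence with $\yoneda^*$: checking agreement only at $\B=\sC$ does not by itself pin down a map of objects, so you should check that $\yoneda^*$ induces your chain of equivalences naturally in all $\B$ (or argue as the paper does, where the identification comes for free from the duality data). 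Both arguments ultimately rest on the same two inputs you isolate --- \cref{eq:presheaves_free} and the symmetric monoidality of $-\otimes\sC$ --- and no \'etale-descent bookkeeping is actually needed in the paper's version.
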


We will deduce the general case of \cref{prop:classification_of_linear_functors} from the case $\sC \simeq \D \simeq \E = \animaTopos$. As we recalled in \cref{eq:presheaves_free}, restricting along the Yoneda embedding defines an equivalence $\yoneda^*\colon\funTopos^L(\presheafTopos(\X), \animaTopos) \xrightarrow{\simeq} \animaTopos^\X$ exhibiting $\presheafTopos(\X)$ as the free cocompletion of $\X$. For better adaptibility to the various module structures, we rephrase this in a multiplicative fashion.

\begin{lem}\label{lem:presheaves_dualisable}
Let $\X\in\cat_\baseTopos$; then
$\animaTopos^\X \in \presentable^L_{\baseTopos}$ is dualisable with dual $\presheafTopos(\X)$ and evaluation $\eval \colon \animaTopos^\X\otimes\presheafTopos(\X)\to \animaTopos$ adjoint to the inverse of $\yoneda^*\colon\funTopos^L(\presheafTopos(\X), \animaTopos) \xrightarrow{\simeq} \animaTopos^\X$.
\end{lem}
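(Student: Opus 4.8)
The plan is to read the statement off from the universal property \eqref{eq:presheaves_free} of the free cocompletion, by way of the standard criterion for dualisability in a closed symmetric monoidal $\infty$-category. We work in $\presentable^L_\baseTopos$, whose monoidal unit is $\animaTopos_\baseTopos$ and whose internal hom between presentable $\baseTopos$-categories is $\funTopos^L_\baseTopos(-,-)$. The criterion (see, e.g., \cite{lurieHA}) says that a presentable $\baseTopos$-category $\D$ is dualisable --- with dual $\funTopos^L_\baseTopos(\D,\animaTopos_\baseTopos)$ and with evaluation the canonical counit $\funTopos^L_\baseTopos(\D,\animaTopos_\baseTopos)\otimes\D\to\animaTopos_\baseTopos$ --- if and only if for every presentable $\baseTopos$-category $\E$ the canonical comparison map $\funTopos^L_\baseTopos(\D,\animaTopos_\baseTopos)\otimes\E\to\funTopos^L_\baseTopos(\D,\E)$ is an equivalence. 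We apply this with $\D=\animaTopos^\X$, which is itself presentable since $\animaTopos^\X=\presheafTopos(\X\op)$ and $\X$ is small.

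We first identify the two sides of this comparison map. On one hand, since $\animaTopos^\X=\funTopos_\baseTopos(\X,\animaTopos_\baseTopos)=\presheafTopos(\X\op)$, applying \eqref{eq:presheaves_free} with $\X$ replaced by $\X\op$ gives, naturally in $\E$, an equivalence $\yoneda^*\colon\funTopos^L_\baseTopos(\animaTopos^\X,\E)=\funTopos^L_\baseTopos(\presheafTopos(\X\op),\E)\xrightarrow{\simeq}\funTopos_\baseTopos(\X\op,\E)=\presheafTopos(\X;\E)$ for every cocomplete, in particular presentable, $\baseTopos$-category $\E$; taking $\E=\animaTopos_\baseTopos$ identifies the candidate dual $\funTopos^L_\baseTopos(\animaTopos^\X,\animaTopos_\baseTopos)$ with $\presheafTopos(\X)$. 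On the other hand, the compatibility of the free cocompletion with Lurie's tensor product recalled just before \cref{prop:classification_of_linear_functors}, namely $\D^\Y\simeq\animaTopos^\Y\otimes\D$, gives with $\Y=\X\op$ an equivalence $\presheafTopos(\X)\otimes\E=\animaTopos^{\X\op}\otimes\E\xrightarrow{\simeq}\E^{\X\op}=\presheafTopos(\X;\E)$. Under these identifications one checks, by unwinding the definitions, that the comparison map $\funTopos^L_\baseTopos(\animaTopos^\X,\animaTopos_\baseTopos)\otimes\E\to\funTopos^L_\baseTopos(\animaTopos^\X,\E)$ becomes the canonical comparison map $\presheafTopos(\X)\otimes\E\to\presheafTopos(\X;\E)$, which is precisely the equivalence $\animaTopos^{\X\op}\otimes\E\xrightarrow{\simeq}\E^{\X\op}$ just mentioned. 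Hence the criterion applies, and $\animaTopos^\X$ is dualisable with dual $\presheafTopos(\X)$.

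It then remains to identify the evaluation furnished by the criterion --- the canonical counit $\presheafTopos(\X)\otimes\animaTopos^\X\to\animaTopos_\baseTopos$, transported along the equivalence $\presheafTopos(\X)\simeq\funTopos^L_\baseTopos(\animaTopos^\X,\animaTopos_\baseTopos)$ above --- with the map $\eval$ of the statement, which by definition is the mate of $(\yoneda^*)^{-1}=\yoneda_!\colon\animaTopos^\X\xrightarrow{\simeq}\funTopos^L_\baseTopos(\presheafTopos(\X),\animaTopos_\baseTopos)$ under the tensor--hom adjunction. Both maps correspond under this adjunction (using the symmetry of $\otimes$ for the transported counit) to colimit-preserving $\baseTopos$-functors $\animaTopos^\X\to\funTopos^L_\baseTopos(\presheafTopos(\X),\animaTopos_\baseTopos)$, so by \eqref{eq:presheaves_free} applied to $\X\op$ it suffices to compare them after restriction along the Yoneda embedding $\X\op\hookrightarrow\presheafTopos(\X\op)=\animaTopos^\X$. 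A short computation with the Yoneda lemma shows that both restrictions send an object $y$ --- that is, the corepresentable copresheaf $\mapTopos_\X(y,-)\colon\X\to\animaTopos_\baseTopos$ --- to the colimit-preserving functor ``evaluation at $y$'' in $\funTopos^L_\baseTopos(\presheafTopos(\X),\animaTopos_\baseTopos)$. Hence the two functors agree, and so do the two evaluations.

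The first two paragraphs are little more than a repackaging of \eqref{eq:presheaves_free} together with the tensor-product compatibility, so I expect them to be routine. The step I expect to be the real obstacle is the last one: one must carry out the Yoneda-lemma computation with all the variances tracked correctly (the two Yoneda embeddings, of $\X$ and of $\X\op$, and the associated occurrences of $(-)\op$), and must promote the objectwise agreement of the two restricted functors to a coherent equivalence --- for which one again invokes \eqref{eq:presheaves_free}, now read as a uniqueness statement for colimit-preserving $\baseTopos$-functors out of $\presheafTopos(\X\op)$.
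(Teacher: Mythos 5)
Your argument rests on the same key input as the paper's proof, namely the free-cocompletion equivalence \eqref{eq:presheaves_free}, but it is organised differently, and the reorganisation costs you an extra step. The paper takes the candidate evaluation $\eval$ (defined as the mate of $(\yoneda^*)^{-1}$) and verifies the duality criterion for \emph{that specific map} directly, by factoring the induced map $\funTopos^L(\D,\E\otimes\animaTopos^\X)\to\funTopos^L(\D\otimes\presheafTopos(\X),\E)$ as the chain of equivalences $\funTopos^L(\D,\E\otimes \animaTopos^\X)\simeq \funTopos^L(\D,\E^\X)\simeq \funTopos^L(\D,\funTopos^L(\presheafTopos(\X),\E))\simeq\funTopos^L(\D\otimes \presheafTopos(\X), \E)$, the middle step being \eqref{eq:presheaves_free}. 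This never requires comparing two different evaluation maps. You instead first establish dualisability abstractly, with the canonical dual $\funTopos^L(\animaTopos^\X,\animaTopos)$ and the canonical counit, and must then identify that counit with the $\eval$ of the statement; your first two paragraphs are fine and essentially parallel the paper's chain of equivalences, but the identification in your last paragraph is extra work the paper's route avoids.

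That last step is also where your proof has a genuine (if standard and fillable) gap. After restricting along $\yoneda_{\X\op}$ you obtain two functors $\X\op\to\funTopos^L(\presheafTopos(\X),\animaTopos)$ and compute that both send an object $y$ to evaluation at $y$. Objectwise agreement of two functors is not an equivalence of functors, and invoking \eqref{eq:presheaves_free} at this point does not repair this: that equivalence gives uniqueness of colimit-preserving extensions from $\X\op$ to $\presheafTopos(\X\op)$, i.e.\ it reduces the comparison to the restricted functors on $\X\op$, but it does not produce a \emph{natural} identification of two functors defined on $\X\op$ from their values on objects. You need a structured argument here — for instance, composing both functors with $\yoneda^*\colon\funTopos^L(\presheafTopos(\X),\animaTopos)\xrightarrow{\simeq}\animaTopos^\X$ and identifying both composites with $\yoneda_{\X\op}$ by a coherent Yoneda computation of the kind carried out in the proof of \cref{lem:naturality_classification_C_linear_functors}. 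The paper's direct verification is the cleaner route precisely because it sidesteps this comparison altogether.
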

\begin{proof}
To check that the evaluation $\eval \colon  \animaTopos^\X\otimes\presheafTopos(\X)  \to \animaTopos$ exhibits a duality between $\animaTopos^{\X}$ and $\presheafTopos(\X)$, we have to check that the induced map $\funTopos^L(\D,\E \otimes\animaTopos^\X )\to \funTopos^L(\D \otimes \presheafTopos(\X),\E)$ is an equivalence for all $\D,\E \in \presentable^L_\baseTopos$. 
We can factor the map as a chain of equivalences as follows, 
\[
\funTopos^L(\D,\E\otimes \animaTopos^\X)\simeq \funTopos^L(\D,\E^\X)\overset{(\yoneda^*)^{-1}} \simeq \funTopos^L(\D,\funTopos^L(\presheafTopos(\X),\E))\simeq\funTopos^L(\D\otimes \presheafTopos(\X), \E).
\]
where the middle equivalence uses that $\presheafTopos(\X)$ is the free cocompletion of $\X$.
\end{proof}

\begin{proof}[Proof of \cref{prop:classification_of_linear_functors}]
By symmetric monoidality of the functor $- \otimes \sC \colon \presentable^L_{\baseTopos} \to \module_\sC(\presentable^L_\baseTopos)$, the $\sC$-linear $\baseTopos$-category $\sC^\X \simeq \animaTopos^\X \otimes \sC$ is dualisable with dual $\presheafTopos(\X) \otimes \sC \simeq \presheafTopos(\X;\sC)$.
We thus obtain the claimed equivalence as the composite equivalence of internal homs
\[
\funTopos^L_{\sC}(\presheafTopos(\X;\D), \E) \simeq \funTopos^L_{\sC}(\D \otimes_\sC \presheafTopos(\X;\sC), \E) \simeq \funTopos^L_{\sC}(\D, \sC^\X \otimes_{\sC}\E) \simeq \funTopos^L_{\sC}(\D, \E^\X).
\]
In the case $\D=\sC$ we additionally have the equivalence $\funTopos^L_{\sC}(\sC, \E^\X) \simeq \E^\X$.
\end{proof}

The following lemma records some functoriality of the Morita  equivalence of \cref{prop:classification_of_linear_functors} that will be relevant in the sequel.

\begin{lem}\label{lem:naturality_classification_C_linear_functors}
Let $\sC\in\calg(\presentable^L_{\baseTopos})$, let $\D \in \module_{\sC}(\presentable^L_{\baseTopos})$ and let $f \colon \X \to \Y$ be a $\baseTopos$-functor of small $\baseTopos$-categories. Then the following hold:
\begin{enumerate}[label=(\arabic*)]
\item The $\sC$-linear functor $f^* \colon \D^\Y \to \D^\X$ admits a $\sC$-linear left adjoint $f_!$;
\item The following squares in $\module_\sC(\Pr^L_\baseTopos)$ commute: 
\[
\begin{tikzcd}
\funTopos^L_{\sC}(\presheafTopos(\Y;\sC),\D) \ar[r, "\simeq"] \ar[d, "(f_!)^*"]
&\D^\Y \ar[d, "f^*"]\\
\funTopos^L_{\sC}(\presheafTopos(\X;\sC),\D) \ar[r, "\simeq"]
& \D^\X;
\end{tikzcd}
\hspace{1cm}
\begin{tikzcd}
\funTopos^L_{\sC}(\presheafTopos(\X;\sC), \D) \ar[r, "\simeq"] \ar[d, "(f^*)^*"]
& \D^\X \ar[d, "f_!"]\\
\funTopos^L_{\sC}(\presheafTopos(\Y;\sC),\D) \ar[r, "\simeq"]
& \D^\Y.
\end{tikzcd}
\]
Moreover, the horizontal equivalences are compatible with the adjunctions $(f^*)^*\dashv (f_!)^*$ between the left vertical  functors and $f_!\dashv f^*$ between the right vertical  functors.
\end{enumerate}
\end{lem}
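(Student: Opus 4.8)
The plan is to handle part~(1) by a short dualisability/projection-formula argument, to recognise the first square in part~(2) as the assertion that the Morita equivalence of \cref{prop:classification_of_linear_functors} is natural in the variable $\X$, and then to obtain the second square and the compatibility of adjunctions for free by passing to mates. For part~(1), I would note that $f^*\colon\D^\Y\to\D^\X$ is restriction along $f$, hence $\sC$-linear (the $\sC$-action on functor $\baseTopos$-categories being pointwise) and a morphism in $\presentable^L_\baseTopos$, so it admits a left adjoint $f_!$ by the adjoint functor theorem for $\baseTopos$-categories; that $f_!$ is again $\sC$-linear follows because $f_!$ is a pointwise left Kan extension and the $\sC$-action is cocontinuous, so the projection formula $f_!(c\otimes-)\simeq c\otimes f_!(-)$ holds. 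Equivalently, under the equivalence $\D^\X\simeq\animaTopos^\X\otimes\D$ coming from \cref{lem:presheaves_dualisable} together with \cref{eq:presheaves_free}, the restriction $f^*$ becomes $g^*\otimes\id_\D$ for the restriction $\baseTopos$-functor $g^*\colon\animaTopos^\Y\to\animaTopos^\X$, whose left adjoint $g_!\otimes\id_\D$ is visibly $\sC$-linear. (This is a general fact about parametrised $\sC$-linear category theory that could equally well be quoted from the appendix.)

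For the first square in part~(2), I would observe that both horizontal arrows are the equivalences $\yoneda^*=(-)\circ\yoneda$ of \cref{prop:classification_of_linear_functors}, and that $(f_!)^*$ denotes precomposition with the left adjoint $f_!\colon\presheafTopos(\X;\sC)\to\presheafTopos(\Y;\sC)$ of the restriction $f^*\colon\presheafTopos(\Y;\sC)\to\presheafTopos(\X;\sC)$ on presheaf $\baseTopos$-categories. Tracing $G\in\funTopos^L_\sC(\presheafTopos(\Y;\sC),\D)$ around the square, one route yields $G\circ\yoneda_\Y\circ f$ and the other $G\circ f_!\circ\yoneda_\X$; these coincide as soon as one knows $f_!\circ\yoneda_\X\simeq\yoneda_\Y\circ f$, i.e.\ the naturality of the Yoneda embedding under left Kan extension. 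I would cite this from \cite{Martini2022Yoneda,MartiniWolf2024Colimits} (or record it in \cref{part:appendices}); concretely it is verified by computing the right adjoint of the $\sC$-linear cocontinuous extension of $\yoneda_\Y\circ f$ along $\yoneda_\X$ through two applications of the $\baseTopos$-Yoneda lemma, recognising it as restriction $f^*$, and invoking uniqueness of adjoints to identify that extension with $f_!$. Since the Yoneda embedding is a natural transformation, precomposing with it is natural in $\X$, so this step in fact promotes the levelwise equivalence of \cref{prop:classification_of_linear_functors} to a natural equivalence of functors $\cat_\baseTopos\op\to\module_\sC(\presentable^L_\baseTopos)$, which in particular makes all the coherence data $\sC$-linear and so yields commutativity in $\module_\sC(\presentable^L_\baseTopos)$.

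With the first square established, it is a commuting square in the $(\infty,2)$-category $\module_\sC(\presentable^L_\baseTopos)$ whose horizontal edges are equivalences and whose vertical edges are right adjoints: the right vertical $f^*\colon\D^\Y\to\D^\X$ has left adjoint $f_!$ by part~(1), and the left vertical $(f_!)^*$ has left adjoint $(f^*)^*$ because $f_!\dashv f^*$ on presheaf $\baseTopos$-categories and precomposition reverses adjunctions. I would then form the mate square of the left adjoints; since the two horizontal $2$-cells being inverted are equivalences, the resulting Beck--Chevalley $2$-cell is itself an equivalence, and the mate square is precisely the second square, with left vertical $(f^*)^*$ and right vertical $f_!$. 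Finally, the mate construction by its very definition intertwines the units and counits of $(f^*)^*\dashv(f_!)^*$ and of $f_!\dashv f^*$ with the horizontal equivalences, which is exactly the asserted compatibility.

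The only genuinely non-formal ingredient is the naturality in $\X$ of the Morita equivalence, which collapses to the compatibility $f_!\circ\yoneda_\X\simeq\yoneda_\Y\circ f$ of the Yoneda embedding with left Kan extension; in the generality of a base topos $\baseTopos$ and $\sC$-linear coefficients, this is where one must lean on (or reprove) the relevant naturality statements of Martini--Wolf. Everything else is bookkeeping in the calculus of adjunctions and mates in an $(\infty,2)$-category. The one point demanding care throughout is to keep the two restriction--left-Kan adjunctions separate: the one on covariant functor $\baseTopos$-categories $\D^{(-)}$, which governs part~(1) and the right-hand columns, and the one on presheaf $\baseTopos$-categories $\presheafTopos(-;\sC)$, which governs the left-hand columns.
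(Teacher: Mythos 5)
Your proposal is correct and follows essentially the same route as the paper: part (1) by reducing the linearity of $f_!$ to the universal case (the paper basechanges along $-\otimes\D$ from $\sC=\D=\animaTopos$ and quotes Martini--Wolf for the left Kan extension), the first square in (2) from the compatibility $f_!\circ\yoneda_\X\simeq\yoneda_\Y\circ f$ of the Yoneda embedding with left Kan extension, and the second square plus the adjunction compatibility by passing to mates. The only cosmetic difference is in how that Yoneda compatibility is verified — you identify the right adjoint of the cocontinuous extension of $\yoneda_\Y\circ f$ and invoke uniqueness of adjoints, whereas the paper runs the direct mapping-space diagram chase of \cite[Prop.~5.2.6.3]{lurieHTT} internally to $\baseTopos$ — but these are the same argument in two standard guises.
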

\begin{proof}
For (1), it suffices to consider the case $\sC = \D = \animaTopos$. The general case then follows by base change along $- \otimes \D \colon \presentable^L_{\baseTopos} \to \module_\sC(\presentable^L_{\baseTopos})$. Now by \cite[Thm. 6.3.5]{MartiniWolf2024Colimits}, the functor $f^* \colon \animaTopos^\Y \to \animaTopos^\X$ admits a (colimit preserving) left adjoint $f_!$ given by left Kan extension, since $\animaTopos$ is a cocomplete $\baseTopos$-category.

For (2), it suffices to prove commutativity of the left square, as the right square is obtained by passing to left adjoints. This would moreover also prove the compatibility of the adjunctions.
We can also reduce to the case $\sC = \animaTopos$ by basechange.
Let us first prove that the following square of $\baseTopos$-categories commutes; the proof is the same as in the nonparametrised version of this statement \cite[Prop. 5.2.6.3]{lurieHTT}, and we spell out some details:
\begin{equation}\label{eq:naturality_yoneda_embedding}
\begin{tikzcd}[row sep=10pt]
\X \ar[d, "f"] \ar[r, "\yoneda_\X"] & \presheafTopos(\X) \ar[d, "{f_!}"] \\
\Y \ar[r, "\yoneda_\Y"] & \presheafTopos(\Y).
\end{tikzcd}
\end{equation}
To show that \eqref{eq:naturality_yoneda_embedding} indeed commutes, consider the following diagram of $\baseTopos$-categories, where the commutativity of the triangles are provided by Yoneda's lemma \cite[Thm. 4.7.8]{Martini2022Yoneda}, and the commutativity of the square is given by the definition of $f^*\colon\presheafTopos(\Y)\to\presheafTopos(\X)$:
\begin{equation}
\begin{tikzcd}
& \X\op \times \presheafTopos(\X) \ar[dr, "\eval"']\ar[rr, "(\yoneda_\X)\op \times \id"] & &\presheafTopos(\X)\op \times \presheafTopos(\X) \ar[dl, "\hom_{\presheafTopos(\X)}"] \\
\X\op \times \presheafTopos(\Y) \ar[dr, "f \times \id"'] \ar[ur, "\id \times f^*"]  & & \animaTopos_\baseTopos \\
& \Y\op \times \presheafTopos(\Y) \ar[rr, "(\yoneda_\Y)\op \times \id"] \ar[ur, "\eval"] & & \presheafTopos(\Y)\op \times \presheafTopos(\Y). \ar[ul, "\hom_{\presheafTopos(\Y)}"']
\end{tikzcd}
\end{equation}
The adjunction $f_! \dashv f^*$ \cite[Prop. 3.3.4]{MartiniWolf2024Colimits} provides a further commutative diagram:
\begin{equation}
\begin{tikzcd}[column sep=40pt]
\X\op \times \presheafTopos(\Y) \ar[r, "(\yoneda_\X)\op \times \id"] & \presheafTopos(\X)\op \times \presheafTopos(\Y) \ar[r, "\id \times f^*"] \ar[d, "f_! \times \id"] & \presheafTopos(\X)\op \times \presheafTopos(\X) \ar[d, "\hom_{\presheafTopos(\X)}"]\\
& \presheafTopos(\Y)\op \times \presheafTopos(\Y) \ar[r, "\hom_{\presheafTopos(\Y)}"] & \animaTopos_\baseTopos.
\end{tikzcd}
\end{equation}
Putting together the two previous diagrams, we get an equivalence $\hom_{\presheafTopos(\Y)}(\yoneda_\Y\circ f, \id_{\presheaf(\Y)}) \simeq \hom_{\presheafTopos(\Y)}(f_! \circ\yoneda_\X,\id_{\presheaf(\Y)})$ of $\baseTopos$-functors $\X\op \times \presheafTopos(\Y)\to\animaTopos_\baseTopos$.
Now let $g \colon \X \rightarrow \presheafTopos(\Y)$ be an arbitrary $\baseTopos$-functor. Precomposing with $\twistedArrow(\X)\xrightarrow{(s,t)}\X\op\times\X\xrightarrow{\id_{\X\op}\times g}\X\op\times\presheafTopos(\Y)$,
we get an equivalence of $\baseTopos$-functors $\hom_{\presheaf(\Y)}(f_! \circ\yoneda_\X,g) \simeq \hom_{\presheaf(\Y)}(\yoneda_\Y\circ f, g) \colon \twistedArrow(\X) \rightarrow \animaTopos_\baseTopos$; taking $\baseTopos$-limits over $\twistedArrow(\X)$ and then global sections, we finally get that an equivalence of spaces $\map_\baseTopos(f_! \circ\yoneda_\X,g)\simeq\map_\baseTopos(\yoneda_\Y\circ f,g)$, naturally in $g\in\func_\baseTopos(\X,\presheafTopos(\Y))$. By the Yoneda lemma, we get $f_! \circ\yoneda_\X \simeq \yoneda_\Y\circ f$. 
  
The left square in (2) is obtained by applying $\funTopos(-, \D)\colon\widehat{\cat}_\baseTopos\op\to\module_\sC(\presentable^L_\baseTopos)$ to \cref{eq:naturality_yoneda_embedding}.
\end{proof}

\begin{obs}[Naturality in $\sC$]
\label{obs:naturality_in_C}
The equivalence given in \cref{prop:classification_of_linear_functors} is natural in $\sC\in \calg(\presentable^L_{\baseTopos})$; in particular, if $\sC\to\sC'$ is a colimit-preserving symmetric monoidal $\baseTopos$-functor, then for $\sC$-linear $\baseTopos$-categories $\D,\E$ and for $\X\in\cat_\baseTopos$, we have a commutative square of  $\sC'$-linear $\baseTopos$-categories
\[
\begin{tikzcd}
\sC'\otimes_\sC\funTopos^L_{\sC}(\presheafTopos(\X;\D), \E) \ar[r,"\simeq"]\ar[d]& \sC'\otimes_\sC\funTopos^L_{\sC}(\D, \E^\X)\ar[d]\\
\funTopos^L_{\sC'}(\presheafTopos(\X;\sC'\otimes_\sC\D), \sC'\otimes_\sC\E) \ar[r,"\simeq"]& \funTopos^L_{\sC'}(\sC'\otimes_\sC\D, (\sC'\otimes_\sC\E)^\X).
\end{tikzcd}
\]
If $\D \in \module_\sC(\presentable^L_\baseTopos)$ is dualisable, so is $\presheafTopos(\X;\D)$, and in this case the vertical maps are equivalences.
In particular, the specialised equivalence $\funTopos^L_{\sC'}(\presheafTopos(\X,\sC'), \sC'\otimes_\sC\E) \simeq (\sC'\otimes_\sC\E)^\X$ can be obtained by applying $\sC'\otimes_\sC-$ to the specialised equivalence $\funTopos^L_{\sC}(\presheafTopos(\X,\sC), \E) \simeq \E^\X$.

\end{obs}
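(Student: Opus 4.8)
The plan is to trace through the construction of the Morita equivalence in \cref{prop:classification_of_linear_functors} and observe that every ingredient it used is compatible with the symmetric monoidal, colimit-preserving base change functor $\sC'\otimes_\sC-\colon\module_\sC(\presentable^L_\baseTopos)\to\module_{\sC'}(\presentable^L_\baseTopos)$. Recall that the equivalence was built from three pieces: the identification $\presheafTopos(\X;\D)\simeq\D\otimes_\sC\presheafTopos(\X;\sC)$, the tensor--hom adjunction, and the dualisability of $\sC^\X\simeq\animaTopos^\X\otimes\sC$ with dual $\presheafTopos(\X;\sC)\simeq\presheafTopos(\X)\otimes\sC$ furnished by \cref{lem:presheaves_dualisable}. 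As preliminary identifications I would first record $\sC'\otimes_\sC\presheafTopos(\X;\sC)\simeq\presheafTopos(\X;\sC')$ and $\sC'\otimes_\sC\sC^\X\simeq(\sC')^\X$ — these hold because base change takes free modules to free modules, so $\sC'\otimes_\sC(\presheafTopos(\X)\otimes\sC)\simeq\presheafTopos(\X)\otimes\sC'$ and similarly for $\animaTopos^\X$ — and deduce, using $\presheafTopos(\X;\D)\simeq\D\otimes_\sC\presheafTopos(\X;\sC)$, $\E^\X\simeq\E\otimes_\sC\sC^\X$ and symmetric monoidality of $\sC'\otimes_\sC-$, the identifications $\sC'\otimes_\sC\presheafTopos(\X;\D)\simeq\presheafTopos(\X;\sC'\otimes_\sC\D)$ and $\sC'\otimes_\sC\E^\X\simeq(\sC'\otimes_\sC\E)^\X$. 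Under these, the two vertical maps of the square become instances of the canonical lax-closedness comparison $\sC'\otimes_\sC\funTopos^L_{\sC}(A,B)\to\funTopos^L_{\sC'}(\sC'\otimes_\sC A,\sC'\otimes_\sC B)$ attached to the symmetric monoidal functor $\sC'\otimes_\sC-$, taken at $A=\presheafTopos(\X;\D)$, $B=\E$ on the left and at $A=\D$, $B=\E^\X$ on the right.

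Next I would establish commutativity of the square. The $\sC$-Morita equivalence factors, by its very construction, as
\[
\funTopos^L_{\sC}(\D\otimes_\sC\presheafTopos(\X;\sC),\E)\;\simeq\;\funTopos^L_{\sC}\bigl(\D,\funTopos^L_{\sC}(\presheafTopos(\X;\sC),\E)\bigr)\;\simeq\;\funTopos^L_{\sC}(\D,\sC^\X\otimes_\sC\E)\;\simeq\;\funTopos^L_{\sC}(\D,\E^\X),
\]
where the first map is tensor--hom adjunction, the second is induced by the duality $\presheafTopos(\X;\sC)^\vee\simeq\sC^\X$, and the third is the identification $\sC^\X\otimes_\sC\E\simeq\E^\X$; the $\sC'$-Morita equivalence factors the same way over $\sC'$. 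Applying $\sC'\otimes_\sC-$ and inserting the images of the two intermediate terms subdivides the square into three, and I would check each separately. The outer two commute by naturality of the lax-closedness comparison in $A$ and $B$ together with the fact that $\sC'\otimes_\sC-$ strictly preserves $\otimes$. The middle square — that $\sC'\otimes_\sC-$ carries the ``hom-out-of-a-dualisable-object'' equivalence over $\sC$ to the one over $\sC'$ — is a formal consequence of the fact that a symmetric monoidal functor sends the evaluation and coevaluation witnessing the dual pair $(\sC^\X,\presheafTopos(\X;\sC))$ to those witnessing $((\sC')^\X,\presheafTopos(\X;\sC'))$. I expect this last point, namely organising the coherent compatibility of duality data with base change at the $(\infty,2)$-categorical level, to be the only genuinely delicate step; the rest is bookkeeping.

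Finally, the two supplementary claims. If $\D$ is dualisable in $\module_\sC(\presentable^L_\baseTopos)$, then $\presheafTopos(\X;\D)\simeq\D\otimes_\sC\presheafTopos(\X;\sC)$ is a tensor product of dualisable objects — $\presheafTopos(\X;\sC)$ being dualisable with dual $\sC^\X$ by \cref{lem:presheaves_dualisable} — hence itself dualisable. For any dualisable $A$, the lax-closedness comparison $\sC'\otimes_\sC\funTopos^L_{\sC}(A,B)\to\funTopos^L_{\sC'}(\sC'\otimes_\sC A,\sC'\otimes_\sC B)$ is an equivalence, since both sides identify with $(\sC'\otimes_\sC A)^\vee\otimes_{\sC'}(\sC'\otimes_\sC B)$ using $\funTopos^L_{\sC}(A,B)\simeq A^\vee\otimes_\sC B$ and the fact that $\sC'\otimes_\sC-$ sends $A^\vee$ to $(\sC'\otimes_\sC A)^\vee$; applying this to $A=\presheafTopos(\X;\D)$ and to $A=\D$ shows both vertical maps are equivalences in that case. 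Specialising the commuting square to $\D=\sC$, which is the unit and hence dualisable, then yields the last displayed assertion: the top horizontal map is by definition $\sC'\otimes_\sC-$ applied to $\funTopos^L_{\sC}(\presheafTopos(\X;\sC),\E)\simeq\E^\X$, the bottom horizontal map is the specialised $\sC'$-Morita equivalence, and the vertical maps are the now-invertible comparisons.
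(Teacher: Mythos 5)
Your argument is correct and matches what the paper intends: the statement is recorded as an Observation without proof, the point being precisely what you spell out, namely that the Morita equivalence of \cref{prop:classification_of_linear_functors} is assembled from the duality datum of \cref{lem:presheaves_dualisable} together with tensor--hom adjunctions, all of which are carried along by the symmetric monoidal $(\infty,2)$-functor $\sC'\otimes_\sC-$, while dualisability of $\D$ (hence of $\presheafTopos(\X;\D)\simeq\D\otimes_\sC\presheafTopos(\X;\sC)$) makes the lax comparison maps invertible. The only step you pass over lightly is the compatibility of the right-hand vertical map with the unit-evaluation equivalences $\funTopos^L_{\sC}(\sC,\E^\X)\simeq\E^\X$ and $\funTopos^L_{\sC'}(\sC',(\sC'\otimes_\sC\E)^\X)\simeq(\sC'\otimes_\sC\E)^\X$ needed to pass from the commuting square at $\D=\sC$ to the final claim about the specialised equivalences, but this is the same routine monoidality bookkeeping as the rest.
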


\begin{defn}
\label{defn:classifying_system_of_C_linear_functor}
Given a $\sC$-linear colimit preserving $\baseTopos$-functor $F \colon \presheafTopos(\X;\sC) \rightarrow \sC$, we let $\omega_F \colon \X \rightarrow \sC$ denote the \textit{classifying system of $F$} under \cref{prop:classification_of_linear_functors}.
\end{defn}

\begin{obs}
\label{obs:concrete_formula_for_classifying_system}
Concretely, $\omega_F$ is given as the composite
$\X \xrightarrow{\yoneda_\X} \presheafTopos(\X;\sC) \xrightarrow{F} \sC$ of the Yoneda embedding with $F$.
\end{obs}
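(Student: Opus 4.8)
The plan is to unwind the construction of the Morita equivalence from \cref{prop:classification_of_linear_functors} and reduce the claim to the non-linear case $\sC=\animaTopos_\baseTopos$, where it is just the defining universal property of the free cocompletion recorded in \cref{eq:presheaves_free}. Concretely, for $\D=\sC=\E=\animaTopos_\baseTopos$ the equivalence of \cref{prop:classification_of_linear_functors} is assembled out of the dualisability datum of \cref{lem:presheaves_dualisable}, whose evaluation $\eval\colon\animaTopos^\X\otimes\presheafTopos(\X)\to\animaTopos_\baseTopos$ is by definition adjoint to $(\yoneda_\X^*)^{-1}=\yoneda_!$; tracing through, the resulting equivalence $\funTopos^L(\presheafTopos(\X),\animaTopos_\baseTopos)\xrightarrow{\simeq}\animaTopos^\X$ is exactly restriction along the Yoneda embedding $\yoneda_\X\colon\X\hookrightarrow\presheafTopos(\X)$. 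So in this case $\omega_F=\yoneda_\X^*(F)=F\circ\yoneda_\X$ holds on the nose.

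To pass to an arbitrary $\sC$, I would invoke the last sentence of \cref{obs:naturality_in_C} for the unit map $\animaTopos_\baseTopos\to\sC$ of the presentably symmetric monoidal $\baseTopos$-category $\sC$: using the identifications $\sC\otimes_{\animaTopos_\baseTopos}\presheafTopos(\X)\simeq\presheafTopos(\X;\sC)$ and $\sC\otimes_{\animaTopos_\baseTopos}\animaTopos^\X\simeq\sC^\X$, the specialised equivalence $\funTopos^L_\sC(\presheafTopos(\X;\sC),\sC)\xrightarrow{\simeq}\sC^\X$ is obtained by applying the base change functor $\sC\otimes_{\animaTopos_\baseTopos}(-)$ to the equivalence $\yoneda_\X^*$ of the previous paragraph. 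It then remains to verify that base-changing the functor ``precomposition with $\yoneda_\X$'' along $-\otimes\sC$ yields again ``precomposition with the Yoneda functor'', now in the form $\yoneda_\X\colon\X\to\presheafTopos(\X;\sC)$; this is an instance of the compatibility of the Yoneda embedding with change of coefficients, of the same flavour as the commutativity of \cref{eq:naturality_yoneda_embedding} established inside the proof of \cref{lem:naturality_classification_C_linear_functors}. Combining the two steps gives $\omega_F=\yoneda_\X^*(F)=F\circ\yoneda_\X$ for every $\sC$-linear colimit-preserving $F\colon\presheafTopos(\X;\sC)\to\sC$, which is the assertion.

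The main obstacle is exactly this last bookkeeping point --- identifying the base change along $-\otimes\sC$ of ``precomposition with the Yoneda embedding'' with ``precomposition with the $\sC$-linearised Yoneda embedding''. In practice this comes down to pinning down the image of $\yoneda_\X$ under the base change natural transformation $\animaTopos^{(-)}\Rightarrow\sC^{(-)}$ of functors $\cat_\baseTopos\op\to\widehat{\cat}_\baseTopos$ and matching it with the Yoneda functor fixed before \cref{prop:classification_of_linear_functors}; once this is done, the conclusion is immediate.
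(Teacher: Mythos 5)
Your argument is correct and is exactly the justification the paper leaves implicit: the Observation carries no proof because the specialised equivalence in \cref{prop:classification_of_linear_functors} is already named $\yoneda^*$, the duality datum of \cref{lem:presheaves_dualisable} being chosen precisely so that evaluation is adjoint to $(\yoneda^*)^{-1}$, and the passage to general $\sC$ is \cref{obs:naturality_in_C} together with the convention that $\yoneda\colon\X\to\sC^\X$ is by definition the base change of the $\animaTopos_\baseTopos$-valued Yoneda embedding. So your unwinding matches the paper's (implicit) reasoning; no gap.
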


The following is a useful characterisation of $\sC$-linear functors.

\begin{recollect}[Lax $\sC$-linear functors and internal adjoints]
\label{rec:recognition_c_linear_functors}
Consider $\sC \in \calg(\presentable^L_{\baseTopos})$ and $\sM, \sN \in \module_{\sC}(\presentable^L_{\baseTopos})$.
The symmetric monoidal structure on $\presentable^L_{\baseTopos}$ is defined as a suboperad $(\presentable^L_{\baseTopos})^\otimes \hookrightarrow \widehat{\cat}_\baseTopos^\times$.
Therefore, the forgetful functor $\func^L_{\baseTopos, \sC}(\sM, \sN) \to \func_{\baseTopos, \sC}(\sM, \sN)$, whose target denotes the mapping $\infty$-category in the $(\infty, 2)$-category $\module_{\sC}(\widehat{\cat}_\baseTopos)$, identifies the source as the full subcategory spanned by colimit preserving functors.

Recall the notion of a lax $\sC$-linear functor of $\sC$-linear $\baseTopos$-categories from \cref{subsec:lax_linearity}.
In the situation above, let $F \colon \sM \to \sN$ be a lax $\sC$-linear functor; then either $F$ admits no refinement to a $\sC$-linear functor, i.e. to a morphism in $\module_{\sC}(\presentable^L_{\baseTopos})$, or $F$ admits an essentially unique such refinement.
The refinement exists if and only if $F$ is colimit preserving and the lax $\sC$-linear structure on $F$ is strict.

Now let $F \colon \sM \to \sN$ in $\module_{\sC}(\presentable^L_{\baseTopos})$ and assume that $F$ admits a right adjoint $G$ in $\widehat{\cat}_\baseTopos$.
By \cref{prop:right_adjoint_to_C_linear_functor_is_lax_C_linear}, $G$ admits a preferred lax $\sC$-linear structure. Furthermore, \cref{lem:adjunction_unit_lax_linear} shows that $F$ is an internal left adjoint in $\module_{\sC}(\widehat{\cat}_\baseTopos)$ if and only if this lax $\sC$-linear structure on $G$ is strict.It follows that $F$ is an internal left adjoint in $\module_{\sC}(\presentable^L_{\baseTopos})$ if and only if the lax $\sC$-linear structure on $G$ is strict and $G$ is a functor in $\presentable_\baseTopos^L$, i.e. it preserves colimits.
\end{recollect}

In the next two lemmas we compare different $\infty$-topoi via basechange, and we reflect each time the base $\infty$-topos in the notation to avoid confusion. 

\begin{lem}
\label{lem:geometric_basechange_classifying_systems}
Let $f^* \colon \baseTopos \rightleftharpoons \baseTopos' \cocolon f_*$ be a geometric morphism, let $\X\in\cat_\baseTopos$, let $\D\in\calg(\presentable^L_{\baseTopos'})$, and let 
$F\colon\presheafTopos_{\baseTopos'}(f^*\X;\D)\to \D$ be a lax $\D$-linear $\baseTopos$-functor. Consider the lax $f_*\D$-linear functor $f_*F\colon\presheafTopos_\baseTopos(\X;f_*\D)\simeq f_*\presheafTopos_{\baseTopos'}(f^*\X;\D)\to f_*\D$, where we use the equivalence from \cref{lem:base_change_formula_functor_category}.
Then $F$ is colimit preserving and $\D$-linear if and only if $f_*F$ is colimit preserving and $f_*\D$-linear.
In that case, the classifying systems for $F$ and $f_*F$ can be computed from one another by the composites
\[
\omega_F\colon f^*\X \xrightarrow{f^*\omega_{f_*F}} f^* f_* \D \xrightarrow{\epsilon_{f_*}} \D \quad\text{ and }\quad \omega_{f_*F}\colon\X \xrightarrow{\eta_{f_*}} f_* f^* \X \xrightarrow{f_* \omega_F} f_* \D.
\]
\end{lem}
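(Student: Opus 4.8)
The plan is to exploit that a geometric inverse image $f^*$ is left exact and symmetric monoidal, so that $f_*$ is lax symmetric monoidal and, moreover, restricts to a functor $f_* \colon \presentable^L_{\baseTopos'} \to \presentable^L_{\baseTopos}$. Indeed, for $\E \in \presentable^L_{\baseTopos'}$ the $\baseTopos$-category $f_*\E$ is again presentable: $(f_*\E)(\tau) \simeq \E(f^*\tau)$ is presentable, and the Beck--Chevalley condition for $f_*\E$ along a cartesian square in $\baseTopos$ holds because $f^*$ carries it to a cartesian square in $\baseTopos'$, where $\E$ satisfies Beck--Chevalley; the same bookkeeping shows that $f_*$ of a $\baseTopos'$-left adjoint is a $\baseTopos$-left adjoint. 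Being lax symmetric monoidal, $f_*$ then lifts to $f_* \colon \module_{\D}(\presentable^L_{\baseTopos'}) \to \module_{f_*\D}(\presentable^L_{\baseTopos})$ and, two-categorically, to a map on the associated $\infty$-categories of lax-linear functors, compatible with internal adjoints and with the canonical lax structures of \cref{rec:recognition_c_linear_functors}. Combined with the identification $f_*\presheafTopos_{\baseTopos'}(f^*\X;\D) \simeq \presheafTopos_\baseTopos(\X;f_*\D)$ of \cref{lem:base_change_formula_functor_category} --- compatible with the module structures and, crucially, with the Yoneda embeddings via an equivalence $f_*(\yoneda_{f^*\X}) \circ \eta_\X \simeq \yoneda_\X$ --- this already yields the forward implication: if $F$ is colimit-preserving and $\D$-linear, i.e.\ a morphism in $\module_{\D}(\presentable^L_{\baseTopos'})$, then $f_*F$ is a morphism in $\module_{f_*\D}(\presentable^L_{\baseTopos})$, that is, colimit-preserving and $f_*\D$-linear.

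For the converse I would route through the Morita classification of \cref{prop:classification_of_linear_functors}. Put $\omega \coloneqq F \circ \yoneda_{f^*\X} \colon f^*\X \to \D$, a composite that is defined for any $F$ (cf.\ \cref{obs:concrete_formula_for_classifying_system}), and let $F' \colon \presheafTopos_{\baseTopos'}(f^*\X;\D) \to \D$ be the colimit-preserving $\D$-linear functor classified by $\omega$; then there is a canonical natural transformation $\alpha \colon F' \Rightarrow F$ of lax $\D$-linear functors that restricts to an equivalence along $\yoneda_{f^*\X}$. By the forward implication, $f_*F'$ is colimit-preserving and $f_*\D$-linear with classifying system $f_*(\omega) \circ \eta_\X$; since the base-change compatibility of Yoneda embeddings computes $f_*F \circ \yoneda_\X$ by the same formula, once $f_*F$ is assumed colimit-preserving and $f_*\D$-linear, $f_*\alpha$ is a map between two colimit-preserving linear functors inducing an equivalence of classifying systems, hence an equivalence. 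What remains is to upgrade this to ``$\alpha$ is an equivalence'', so that $F \simeq F'$ is colimit-preserving and $\D$-linear; for this one exploits that the $\infty$-category of lax $\D$-linear functors $\presheafTopos_{\baseTopos'}(f^*\X;\D) \to \D$ is, by a base-change argument along $f$ (a lax-linear counterpart of \cref{lem:base_change_formula_functor_category} applied to the category $f^*\X$ pulled back from $\baseTopos$), controlled by $f_*$ tightly enough to detect the equivalence $\alpha$.

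Granting the ``if and only if'', the two displayed formulas for the classifying systems come out formally. The equivalence $\omega_{f_*F} \simeq f_*(\omega_F) \circ \eta_\X$ is immediate from $\omega_{f_*F} = f_*F \circ \yoneda_\X$ together with $\yoneda_\X \simeq f_*(\yoneda_{f^*\X}) \circ \eta_\X$ and naturality; applying $f^*$ to it, postcomposing with the counit $\epsilon_\D$, and using naturality of $\epsilon$ together with the triangle identity $\epsilon_{f^*\X} \circ f^*\eta_\X \simeq \id$ then gives $\omega_F \simeq \epsilon_\D \circ f^*\omega_{f_*F}$.

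The step I expect to be the main obstacle is the converse half of the equivalence: because $f_*$ is not conservative in general ($f^*f_* \not\simeq \id$), neither colimit-preservation nor strictness of the lax structure can be reflected simply levelwise, so the argument must pass through the Morita correspondence and a base-change statement for lax-linear functor categories, where it is precisely the pulled-back nature of $f^*\X$ that furnishes the rigidity making $f_*$ sufficiently faithful on the morphism in question.
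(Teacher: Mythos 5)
Your forward implication and your derivation of the two displayed formulas are sound and essentially coincide with the paper's: $f_*$ is a lax symmetric monoidal $2$-functor on presentables, the Yoneda embeddings are compatible with base change via $\yoneda_\X\simeq f_*(\yoneda_{f^*\X})\circ\eta_{f_*}$, and the formula for $\omega_F$ follows from the one for $\omega_{f_*F}$ by the triangle identities. The gap is in the converse, exactly where you flagged it. Your argument reduces to the claim that $f_*$ reflects equivalences among ($\baseTopos'$-natural, lax $\D$-linear) transformations between functors $\presheafTopos_{\baseTopos'}(f^*\X;\D)\to\D$, and this is neither proved nor provable by the route you sketch. The functor $f_*$ only records the components of $\alpha$ at levels of the form $f^*\tau$ with $\tau\in\baseTopos$ (for the geometric morphism $\constant\colon\spc\rightleftharpoons\baseTopos'\cocolon\Gamma$ it records only the global-sections component), whereas $\alpha$ must be an equivalence at every $\tau'\in\baseTopos'$ and on every object of $\presheafTopos_{\baseTopos'}(f^*\X;\D)(\tau')$; since the restriction $\presheafTopos_{\baseTopos'}(f^*\X;\D)(\ast)\to\presheafTopos_{\baseTopos'}(f^*\X;\D)(\tau')$ is far from essentially surjective, nothing propagates. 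The only way to leverage the known components would be to express a general object as a colimit of objects pulled back along $f^*$ and use that $F$ commutes with such colimits --- but that is precisely what you are trying to prove, so the argument is circular. Note also that the ``lax-linear counterpart of \cref{lem:base_change_formula_functor_category} applied to $f^*\X$'' you invoke concerns functor categories out of $f^*\X$ itself, not out of $\presheafTopos_{\baseTopos'}(f^*\X;\D)$, so it cannot supply the missing conservativity.

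The paper closes the converse by a different mechanism, which avoids ever comparing $F$ and $F'$ through $f_*$ at the lax level. It assembles one commutative diagram whose horizontal arrows are the Morita restrictions $\yoneda^*$ (equivalences on the categories of colimit-preserving linear functors by \cref{prop:classification_of_linear_functors}), whose right-hand vertical composite is the equivalence $\func_{\baseTopos'}(f^*\X,\D)\simeq\func_{\baseTopos}(\X,f_*\D)$ from \cref{lem:base_change_formula_functor_category}, and whose bottom-left vertical arrow is the equivalence $f_*\presheafTopos_{\baseTopos'}(f^*\X;\D)\simeq\presheafTopos_{\baseTopos}(\X;f_*\D)$. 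A two-out-of-three argument then shows that $f_*$ restricts to an \emph{equivalence} between the full subcategories of colimit-preserving $\D$-linear, respectively $f_*\D$-linear, functors; since this restriction is induced by the functor on lax linear functor categories, the biconditional follows, and chasing $F$ around the diagram yields both formulas. The moral is that the rigidity coming from $\X$ being pulled back from $\baseTopos$ enters through the classifying systems (the right-hand column of the diagram), not through any conservativity of $f_*$ on lax linear functors; you should restructure the converse accordingly.
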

\begin{proof}
Recall from \cite[Lemma 2.1.31]{PD1} that the functor $f_* \colon \presentable^L_{\baseTopos'} \to \presentable^L_{\baseTopos}$ has a lax symmetric monoidal enhancement. 
This allows us to consider $f_* F$ as a lax $f_* \D$-linear functor as stated.
We have moreover a commutative diagram
\begin{equation}
\label{eq:omegaF_vs_omegafF}
\begin{tikzcd}[row sep=10pt, column sep=50pt]
\func^L_{\baseTopos', \D}(\presheafTopos_{\baseTopos'}(f^*\X; \D), \D) \ar[r, "\simeq"',"(\yoneda_{f*\X})^*"] \ar[d, "f_*"]&
\func_{\baseTopos'}(f^* \X, \D) \ar[d, "f_*"] \\
\func^L_{\baseTopos, f_*\D}(f_*\presheafTopos_{\baseTopos'}(f^*\X; \D),f_*\D) \ar[r, "(\yoneda_{f*\X})^*"]\ar[d,"\simeq"]
&  \func_{\baseTopos} (f_*f^*\X, f_* \D)\ar[d,"(\eta_{f_*})^*"]\\
\func^L_{\baseTopos, f_*\D}(\presheafTopos_{\baseTopos}(\X; f_*\D), f_*\D) \ar[r, "(\yoneda_{f*\X})^*","\simeq"']
&  \func_{\baseTopos} (\X, f_* \D),
\end{tikzcd}
\end{equation}
in which the bottom left vertical functor, as well as the right vertical composition, are equivalences by Lemma \ref{lem:base_change_formula_functor_category}. It follows that left vertical composite in \cref{eq:omegaF_vs_omegafF} is also an equivalence; as this functor is restricted from the analogous functor between $\infty$-categories of lax linear functors, we obtain as desired that $F$ is colimit preserving and $\D$-linear if and only if $f_*F$ is colimit preserving and $f_*\D$-linear. The computation of $\omega_{f_*F}$ in terms of $\omega_F$ is given by chasing \cref{eq:omegaF_vs_omegafF} starting at $F$ in the top left corner; the given formula for $\omega_F$ in terms of $\omega_{f_*F}$ then holds true because of the triangle identities.
\end{proof}
For the next lemma, recall that given an \'etale geometric morphism $\pi^*_\tau \colon \baseTopos \rightarrow \baseTopos_{/\tau}$ one gets an induced symmetric monoidal left adjoint functor $\pi_\tau^* \colon \presentable^L_\baseTopos \rightarrow \presentable^L_{\baseTopos_{/\tau}}$ which in fact induces for $\sC \in \calg(\presentable^L_\baseTopos)$ an $(\infty,2)$-functor $\pi_\tau^* \colon \module_{\sC}(\presentable^L_\baseTopos) \rightarrow \module_{\pi_\tau^* \sC}(\presentable^L_{\baseTopos_{/\tau}})$.

\begin{lem}
\label{lem:etale_basechange_classifying_systems}
Let $\sC\in\calg(\presentable^L_\baseTopos)$, let $F\colon \presheafTopos_{\baseTopos}(\X;\sC) \rightarrow \sC$ be a lax $\sC$-linear functor, and let $\tau\in\baseTopos$.
Then the following hold.
\begin{enumerate}
\item If $F$ is colimit preserving and $\sC$-linear, then also
\[
\pi_\tau^* F \colon \presheafTopos_{\baseTopos_{/\tau}}(\pi_\tau^* \X;\pi_\tau^* \sC)  \simeq \pi^*_\baseTopos \presheafTopos_{\baseTopos}(\X;\sC) \longrightarrow \pi_\tau^*\sC
\]
is colimit preserving and its lax $\pi_\tau^*\sC$-linear structure is strict; in this case, the classifying system of $\pi_\tau^*F$ agrees with $\pi_\tau^*\omega_F\colon\pi_\tau^*\X\to\pi_\tau^*\sC$.
\item If $\tau\to*$ is an effective epimorphism in $\baseTopos$, and if $\pi_\tau^*F$ is colimit preserving and $\pi_\tau^*\sC$-linear, then also $F$ is colimit preserving and $\sC$-linear.
\end{enumerate}
\end{lem}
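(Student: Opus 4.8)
The plan is to deduce both parts from the behaviour of the $(\infty,2)$-functor $\pi_\tau^*$ recalled just before the statement, together with the fact that, when $\tau\to\ast$ is an effective epimorphism, $\pi_\tau^*$ is conservative. Indeed, since $\tau\to\ast$ is an effective epimorphism, $\ast\simeq\colim_{\simplex\op}\check C_\bullet$ in $\baseTopos$, where $\check C_\bullet$ is the \v{C}ech nerve of $\tau\to\ast$; as $\catTopos_\baseTopos$ is a $\baseTopos$-category, i.e.\ a limit-preserving functor $\baseTopos\op\to\widehat\cat$, this gives $\cat_\baseTopos\simeq\lim_{\simplex}\cat_{\baseTopos_{/\check C_\bullet}}$, the leg onto the zeroth term being $\pi_\tau^*$. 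Each $\check C_n$ admits a map to $\check C_0=\tau$, so the composite of $\pi_\tau^*$ with the resulting functor $\cat_{\baseTopos_{/\tau}}\to\cat_{\baseTopos_{/\check C_n}}$ is the $n$-th leg of the limit cone; hence $\pi_\tau^*$ is conservative, and running the same argument over each slice $\baseTopos_{/\sigma}$ shows that a $\baseTopos$-functor or $\baseTopos$-natural transformation is an equivalence as soon as it becomes one after applying $\pi_\tau^*$.

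For part (1): by \cref{rec:recognition_c_linear_functors}, the hypothesis that $F$ is colimit preserving and $\sC$-linear says exactly that $F$ is a morphism in $\module_\sC(\presentable^L_\baseTopos)$. Applying the $(\infty,2)$-functor $\pi_\tau^*\colon\module_\sC(\presentable^L_\baseTopos)\to\module_{\pi_\tau^*\sC}(\presentable^L_{\baseTopos_{/\tau}})$ produces a morphism there whose underlying $\baseTopos_{/\tau}$-functor is the restriction of $F$ along $\pi_\tau$ (after the identification $\pi_\tau^*\presheafTopos_\baseTopos(\X;\sC)\simeq\presheafTopos_{\baseTopos_{/\tau}}(\pi_\tau^*\X;\pi_\tau^*\sC)$ of $\pi_\tau^*\sC$-linear $\baseTopos_{/\tau}$-categories, which follows from the compatibility of presheaf $\baseTopos$-categories with étale base change, the symmetric monoidality of $\pi_\tau^*$, and $\presheafTopos(\X;\sC)\simeq\presheafTopos(\X)\otimes\sC$); being a morphism in $\module_{\pi_\tau^*\sC}(\presentable^L_{\baseTopos_{/\tau}})$, this $\pi_\tau^*F$ is colimit preserving with strict $\pi_\tau^*\sC$-linear structure. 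For the classifying systems, recall from \cref{obs:concrete_formula_for_classifying_system} that $\omega_F$ is the composite $\X\xrightarrow{\yoneda}\presheafTopos_\baseTopos(\X;\sC)\xrightarrow{F}\sC$; since $\pi_\tau^*$ carries the Yoneda embedding of $\X$ to that of $\pi_\tau^*\X$, applying $\pi_\tau^*$ exhibits $\pi_\tau^*\omega_F$ as the composite $\pi_\tau^*\X\xrightarrow{\yoneda}\presheafTopos_{\baseTopos_{/\tau}}(\pi_\tau^*\X;\pi_\tau^*\sC)\xrightarrow{\pi_\tau^*F}\pi_\tau^*\sC$, which is $\omega_{\pi_\tau^*F}$ by the same formula.

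For part (2): by \cref{rec:recognition_c_linear_functors} the lax $\sC$-linear functor $F$ refines to a colimit-preserving $\sC$-linear functor precisely when $F$ preserves $\baseTopos$-colimits and its lax $\sC$-linear structure is strict, and each of these is the assertion that certain $\baseTopos$-natural transformations are equivalences — the lax structure maps in one case and, after reducing colimit preservation to constant $\baseTopos$-categories and $\baseTopos$-groupoids (or invoking the adjoint functor theorem for $\baseTopos$-categories), the colimit comparison maps together with the Beck--Chevalley transformations in the other. Since $\pi_\tau^*$ is symmetric monoidal and commutes with the formation of presheaf $\baseTopos$-categories and of constant and groupoidal diagrams, it sends each of these transformations for $F$ to the corresponding transformation for $\pi_\tau^*F$, which is an equivalence by hypothesis; by the conservativity of $\pi_\tau^*$ from the first paragraph, the transformations for $F$ are themselves equivalences, so $F$ is colimit preserving and $\sC$-linear. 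The main obstacle is precisely this interplay: establishing that an effective epimorphism $\tau\to\ast$ makes $\pi_\tau^*$ conservative on $\baseTopos$-categories, $\baseTopos$-functors and $\baseTopos$-natural transformations, and checking carefully that $\pi_\tau^*$ takes the structure morphisms of $F$ to those of $\pi_\tau^*F$; this rests on the étale-base-change compatibilities of presheaf $\baseTopos$-categories, Yoneda embeddings and $\baseTopos$-(co)limits, after which the argument is routine.
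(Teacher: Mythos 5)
Your proof of part (1) is essentially the paper's: apply the symmetric monoidal $(\infty,2)$-functor $\pi_\tau^*$ to the morphism $F$ of $\module_\sC(\presentable^L_\baseTopos)$, and identify the classifying systems via the concrete formula $\omega_F = F\circ\yoneda$ together with the compatibility of $\pi_\tau^*$ with Yoneda embeddings (the paper cites \cite[Lem.~4.7.14]{Martini2022Yoneda} for exactly this point).

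For part (2) your argument is correct but packaged differently. The paper does not argue by conservativity of $\pi_\tau^*$ on natural transformations; instead it invokes the \v{C}ech descent equivalence $\presentable^L_\baseTopos\simeq\lim_{[n]\in\simplex}\presentable^L_{\baseTopos_{/\tau^{n+1}}}$, embedded lax symmetric monoidally and non-fully into $\widehat{\cat}_\baseTopos\simeq\lim_{[n]}\widehat{\cat}_{\baseTopos_{/\tau^{n+1}}}$: since $F$ corresponds to the system $(\pi_{\tau^{n+1}}^*F)_n$ and each member lies in the non-full subcategory (resp.\ carries a strict linear structure), so does $F$. This handles colimit-preservation and strictness of the lax structure in one stroke. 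Your route unwinds the same \v{C}ech descent into a conservativity statement and then checks that the individual structure maps become equivalences; this works, but it carries two burdens the paper's formulation avoids. First, colimit-preservation is not a single $\baseTopos$-natural transformation being an equivalence but a property quantified over all diagrams (plus Beck--Chevalley conditions), so one must run the limit-detection argument for each comparison map separately, using that the restriction functors $\sC(\sigma)\to\sC(\sigma\times\tau^{n+1})$ preserve colimits. Second, one must verify that the lax $\pi_\tau^*\sC$-linear structure on $\pi_\tau^*F$ appearing in the hypothesis really is the image under $\pi_\tau^*$ of the lax structure on $F$ --- you flag this yourself, and it is the one point I would insist you make precise before calling the argument complete. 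With those caveats addressed, both proofs are sound.
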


\begin{proof}
For (1), we use that $\pi_\tau^*$ is a symmetric monoidal 2-functor $\presentable^L_{\baseTopos}\to\presentable^L_{\baseTopos_{/\tau}}$. By \cref{obs:concrete_formula_for_classifying_system}, $\pi_\tau^* \omega_F$ is given as the composite
\[
\pi_\tau^* \X \xrightarrow{\pi_\tau^* \yoneda_\X} \pi_\tau^* \presheafTopos_{\baseTopos}(\X;\sC) \xrightarrow{\pi_\tau^* F} \pi_\tau^* \sC
\]
and the claim follows from identifying the composite $\pi_\tau^* \X \rightarrow \pi_\tau^* \presheafTopos_\baseTopos(\X;\sC) \simeq \presheafTopos_{\baseTopos_{/\tau}}(\pi_\tau^* \X;\pi_\tau^* \sC)$ with the Yoneda embedding, using \cite[Lem. 4.7.14.]{Martini2022Yoneda}.

For (2), suppose that $\tau\to*$ is an effective epimorphism and that $\pi_\tau^* F$ is colimit preserving and $\pi_\tau^*\sC$-linear.
Then for all $[n]\in\simplex$ we have that $\pi_{\tau^{n+1}}^*F$ is colimit preserving and $\pi_{\tau^{n+1}}^*\sC$-linear, i.e. it is a morphism in $\module_{\pi_{\tau^{n+1}}^*\sC}(\presentable^L_{\baseTopos_{/\tau^{n+1}}})$. 
We have a symmetric monoidal equivalence of large $\infty$-categories $\presentable^L_\baseTopos\simeq\lim_{[n]\in\simplex}\presentable^L_{\baseTopos_{/\tau^{n+1}}}$, embedding lax symmetric monoidally and non-fully into the symmetric monoidal equivalence of huge $\infty$-categories $\widehat{\cat}_\baseTopos\simeq\lim_{[n]\in\simplex}\widehat{\cat}_{\baseTopos_{/\tau^{n+1}}}$. Since $F$ corresponds in the latter equivalence to the system of functors $(\pi_{\tau^{n+1}}^*F)_{[n]\in\simplex}$, it follows that $F$ is colimit preserving (between presentables), i.e. it lies in the $\infty$-subcategory $\presentable^L_\baseTopos$. 
Similarly, the lax $\sC$-linear structure on $F$ is
the limit of the strict $\pi_{\tau^{n+1}}^*\sC$-linear structures on $\pi_{\tau^{n+1}}^*F$, so it is strict.
\end{proof}

\subsection{A coend formula}
Let $\X\in\cat_{\baseTopos}$ and let $\sC\in\calg(\presentable^L_{\baseTopos})$. In this subsection we give an explicit formula for the inverse $\sC^\X\xrightarrow{\simeq}\funTopos^L_{\sC}(\presheafTopos(\X;\sC), \sC)$
of the equivalence from
\cref{prop:classification_of_linear_functors}. 
Since $\funTopos^L_{\sC}(-,-)$ is the internal hom of the symmetric monoidal $\baseTopos$-category $\module_\sC(\presentable^L_\baseTopos)$, in order to describe a $\sC$-linear $\baseTopos$-functor $\sC^\X\xrightarrow{\simeq}\funTopos^L_{\sC}(\presheafTopos(\X;\sC), \sC)$ we may equivalently describe a $\sC$-linear $\baseTopos$-functor $\sC^\X\otimes_\sC\presheafTopos(\X;\sC)\to\sC$.

For the next result, recall the twisted arrow $\baseTopos$-category $\twistedArrow(\X)\in\cat_{\baseTopos}$ from \cref{subsec:recollections} associated to $\X\in\cat_{\baseTopos}$, equipped with its source and target $\baseTopos$-left fibration $(s,t)\colon \twistedArrow(\X)\rightarrow\X\op\times\X$. 

\begin{prop}
\label{prop:formula_with_twisted_arrow}
The morphism in $\module_{\sC}(\presentable^L_\baseTopos)$  given by the composition
$
\sC^\X\otimes_\sC\presheafTopos(\X;\sC)\simeq \presheafTopos(\X\op\times\X;\sC)\xrightarrow{(s,t)^*}\presheafTopos(\twistedArrow(\X);\sC)\xrightarrow{\twistedArrow(\X)_!}\sC$ 
is adjoint to the equivalence $(\yoneda^*)^{-1}\colon \sC^\X\xrightarrow{\simeq}\funTopos^L_{\sC}(\presheafTopos(\X;\sC), \sC)$ from \cref{prop:classification_of_linear_functors}.
In particular, an object $\xi\in\func_\baseTopos(\X,\sC)$ corresponds to the colimit preserving $\sC$-linear $\baseTopos$-functor
\[
\twistedArrow(\X)_!(s^*\xi\otimes t^*(-))\colon \presheafTopos(\X;\sC) \xrightarrow{t^*} \presheafTopos(\twistedArrow(\X);\sC) \xrightarrow{s^*\xi\otimes-} \presheafTopos(\twistedArrow(\X);\sC) \xrightarrow{\twistedArrow(\X)_!} \sC.  \]
\end{prop}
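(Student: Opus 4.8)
The plan is to use the universal property of presheaf $\baseTopos$-categories twice: once to reduce the claim to an identity of classifying systems, and once — in the form of the $\baseTopos$-Yoneda lemma — to compute those classifying systems.

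First I would record that the displayed composite, which I abbreviate $\Phi\colon\presheafTopos(\X\op\times\X;\sC)\to\sC$, is a morphism in $\module_{\sC}(\presentable^L_\baseTopos)$: the equivalence $\sC^\X\otimes_\sC\presheafTopos(\X;\sC)\simeq\presheafTopos(\X\op\times\X;\sC)$ is the $\baseTopos$-internal instance of $\presheafTopos(\mathcal A)\otimes\presheafTopos(\mathcal B)\simeq\presheafTopos(\mathcal A\times\mathcal B)$ tensored up to $\sC$ (as in the proof of \cref{prop:classification_of_linear_functors}), restriction $(s,t)^*$ along the $\baseTopos$-functor $(s,t)$ is a $\sC$-linear left adjoint by \cref{lem:naturality_classification_C_linear_functors}(1), and the $\baseTopos$-colimit functor $\twistedArrow(\X)_!$ is a $\sC$-linear left adjoint by construction. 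Transposing $\Phi$ through the hom-tensor adjunction of the closed symmetric monoidal $\baseTopos$-category $\module_{\sC}(\presentable^L_\baseTopos)$ produces a morphism $\Psi\colon\sC^\X\to\funTopos^L_{\sC}(\presheafTopos(\X;\sC),\sC)$; unwinding the above equivalence — under which $\xi\otimes(-)$ goes to the external product $\xi\boxtimes(-)$ and $(s,t)^*(\xi\boxtimes P)=s^*\xi\otimes t^*P$ — identifies $\Psi(\xi)$ with $\twistedArrow(\X)_!(s^*\xi\otimes t^*(-))$. Thus the ``in particular'' clause follows once we prove $\Psi\simeq(\yoneda^*)^{-1}$, equivalently $\Phi\simeq\eval$, where $\eval\colon\presheafTopos(\X\op\times\X;\sC)\to\sC$ is the duality counit exhibiting $\presheafTopos(\X;\sC)$ as the $\sC$-linear dual of $\sC^\X$ — this is exactly the functor transpose to $(\yoneda^*)^{-1}$, by \cref{lem:presheaves_dualisable} together with \cref{obs:naturality_in_C}.

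Both $\Phi$ and $\eval$ are morphisms in $\module_{\sC}(\presentable^L_\baseTopos)$ out of $\presheafTopos(\X\op\times\X;\sC)$, so by \cref{prop:classification_of_linear_functors} applied to the small $\baseTopos$-category $\X\op\times\X$ it suffices to show that they have the same classifying system, i.e. that $\Phi\circ\yoneda_{\X\op\times\X}\simeq\eval\circ\yoneda_{\X\op\times\X}$ in $\func_\baseTopos(\X\op\times\X,\sC)$ (\cref{obs:concrete_formula_for_classifying_system}). For $\eval$ this is immediate: the representable $\yoneda_{\X\op\times\X}(a,b)$ corresponds to $\yoneda_{\X\op}(a)\boxtimes\yoneda_\X(b)$, and since $\eval(v\otimes w)=[(\yoneda^*)^{-1}(v)](w)$ and $(\yoneda^*)^{-1}(v)\circ\yoneda_\X\simeq v$, one gets $\eval(\yoneda_{\X\op}(a)\boxtimes\yoneda_\X(b))\simeq\yoneda_{\X\op}(a)(b)\simeq\mapTopos_\X(a,b)\otimes\unit_{\sC}$; hence $\omega_{\eval}\simeq\mapTopos_\X\otimes\unit_{\sC}$, the $\sC$-linearisation of the mapping-$\baseTopos$-groupoid functor. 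For $\Phi$ I would compute $\omega_\Phi(a,b)=\twistedArrow(\X)_!\big(s^*\yoneda_{\X\op}(a)\otimes t^*\yoneda_\X(b)\big)$; as $\twistedArrow(\X)_!$ commutes with $-\otimes\unit_{\sC}$, this reduces to the $\animaTopos_\baseTopos$-valued statement $\twistedArrow(\X)_!\big(s^*\yoneda_{\X\op}(a)\otimes t^*\yoneda_\X(b)\big)\simeq\mapTopos_\X(a,b)$, which is an instance of the $\baseTopos$-internal co-Yoneda lemma: writing $s^*\zeta\otimes t^*P=(s,t)^*(\zeta\boxtimes P)$ and using that $(s,t)\colon\twistedArrow(\X)\to\X\op\times\X$ is the $\baseTopos$-left fibration classifying $\mapTopos_\X$, so $(s,t)_!(\ast)\simeq\mapTopos_\X$, the projection formula gives $\twistedArrow(\X)_!\circ(s,t)^*\simeq(\X\op\times\X)_!\big(\mapTopos_\X\otimes(-)\big)$, and contracting the boxed pair of representables against $\mapTopos_\X$ via the $\baseTopos$-Yoneda lemma \cite[Thm.~4.7.8]{Martini2022Yoneda} yields $\mapTopos_\X(a,b)$, naturally in $(a,b)$. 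Then $\omega_\Phi\simeq\omega_{\eval}$ and we are done.

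The main obstacle is this last identification $\omega_\Phi\simeq\mapTopos_\X\otimes\unit_{\sC}$: one must track carefully the variances around $\twistedArrow(\X)$, its source and target projections, and the external products; justify the projection formula $\twistedArrow(\X)_!\circ(s,t)^*\simeq(\X\op\times\X)_!\circ(\mapTopos_\X\otimes-)$ for the $\baseTopos$-left fibration $(s,t)$ — where the input $(s,t)_!(\ast)\simeq\mapTopos_\X$ enters; and carry out the $\baseTopos$-internal co-Yoneda contraction, bearing in mind that the naive slogan ``tensor a presheaf with a representable and take a colimit equals evaluation'' is \emph{false} and must be replaced by its correct twisted form. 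An alternative route packaging the same content is to check directly that $\Phi$, together with the coevaluation $\animaTopos_\baseTopos\to\presheafTopos(\X\op\times\X;\animaTopos_\baseTopos)$ classifying $\mapTopos_\X$, satisfies the triangle identities; uniqueness of duals then forces $\Phi$ to be the canonical counit, and one deduces the case of general $\sC$ by base change along $\animaTopos_\baseTopos\to\sC$ using \cref{obs:naturality_in_C}. Everything else — $\sC$-linearity and colimit-preservation of $\Phi$, and the reduction to classifying systems — is formal.
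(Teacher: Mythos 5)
Your overall architecture is sound and is in fact a clean repackaging of the paper's strategy: the paper also reduces, via the free-cocompletion property, to checking the identity after precomposition with the Yoneda embedding, and the entire difficulty in both arguments is concentrated in the same place, namely the internal co-Yoneda identity $\twistedArrow(\X)_!\bigl(s^*\yoneda_{\X\op}(a)\otimes t^*\yoneda_\X(b)\bigr)\simeq\mapTopos_\X(a,b)\otimes\unit_\sC$. Your computation of $\omega_{\eval}$ is fine. The problem is how you justify the computation of $\omega_\Phi$.

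The step ``the projection formula gives $\twistedArrow(\X)_!\circ(s,t)^*\simeq(\X\op\times\X)_!(\mapTopos_\X\otimes(-))$'' is a genuine gap. First, there is a variance mismatch: $\mapTopos_\X$ is a \emph{covariant} functor on $\X\op\times\X$ (equivalently a presheaf on $\X\times\X\op$), so tensoring it against a presheaf on $\X\op\times\X$ does not typecheck without a swap, and with the swap the right-hand side computes the wrong value on representables (already for $\X=[1]$ and $(a,b)=(1,0)$ the left-hand side is $\emptyset\simeq\map(1,0)$ while the right-hand side is contractible). Second, and more fundamentally, the projection formula for presheaf pushforward you invoke is simply not available for $(s,t)$: it is a \emph{left} fibration, whereas \cref{lem:projection_formula_for_presheaves} requires a \emph{cartesian} fibration, and one can check directly (again for $\X=[1]$) that $(s,t)_!(s,t)^*Q\not\simeq\mapTopos_\X\otimes Q$. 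The paper is explicit about this danger: immediately after \cref{eq:yonedatoD} it warns that the analogous projection-formula map for $t_!$ is \emph{not} an equivalence in general, and the whole point of \cref{prop:dualish} is to isolate when it is. So the co-Yoneda identity you need is true, but it cannot be obtained by citing a projection formula for $(s,t)$; establishing it internally to $\baseTopos$ is precisely the content of the paper's proof (the identification of $\yoneda_{\X,\sC}$ with $((s,t)\op)_!(\unit)$, Beck--Chevalley along the cartesian fibration $(s,t)\op$, the localisation property of the concatenation functor $c$, and the identity $t_!s^*\simeq\id$ on $\X^{[1]}$), and that work is not bypassed by your argument. Your closing remark that one must ``carry out the $\baseTopos$-internal co-Yoneda contraction'' concedes the point: that contraction is the theorem, not a tool one may quote.
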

\begin{proof}
For uniform notation, we write $\sC^\X$ as $\presheafTopos(\X\op;\sC)$ in the proof.
The equivalence $\funTopos^L_{\baseTopos,\sC}(\presheafTopos(\X;\sC),\sC)\xrightarrow{\simeq}\sC^\X$ from \cref{prop:classification_of_linear_functors} is given by restriction along the $\sC$-linear Yoneda embedding $\yoneda_{\X,\sC}\colon\X\to\presheafTopos(\X;\sC)$.
We will construct a commutative diagram as follows:
\[
\begin{tikzcd}[column sep=15pt]
\funTopos^L_{\baseTopos,\sC}(\presheafTopos(\X;\sC), \sC)\otimes_\sC\presheafTopos(\X;\sC)\ar[d,"\yoneda_{\X,\sC}^*\otimes\id_{\presheafTopos(\X;\sC)}","\simeq"']& &\funTopos^L_{\baseTopos,\sC}(\presheafTopos(\X;\sC), \sC)
\times\X\ar[d,"\yoneda_{\X,\sC}^*","\simeq"'] \ar[ll,"\id_{\funTopos^L_{\baseTopos,\sC}}\times\yoneda_{\X,\sC}"]\\
\presheafTopos(\X\op;\sC)\otimes_\sC\presheafTopos(\X;\sC) \ar[d,"\simeq"']
& &\presheafTopos(\X\op;\sC)\times\X\ar[ll,"\id_{\presheafTopos(\X\op;\sC)}\times\yoneda_{\X,\sC}"']\ar[d,"\eval"]\\
\presheafTopos(\X\op\times\X;\sC)\ar[r,"{(s,t)^*}"] &
\presheafTopos(\twistedArrow(\X);\sC)\ar[r,"{\twistedArrow(\X)_!}"]&\sC.
\end{tikzcd}
\]
We first assume to have such a commutative diagram and conclude the proof from there. We need to prove that the left-bottom composite is equivalent to the evaluation $\baseTopos$-functor $\eval_\sC\colon\funTopos^L_{\baseTopos,\sC}(\presheafTopos(\X;\sC), \sC)\otimes_\sC\presheafTopos(\X;\sC)\to\sC$, as this would show that the adjoint of the left-bottom composite is the identity of $\funTopos^L_{\baseTopos,\sC}(\presheafTopos(\X;\sC), \sC)$. Since $\presheafTopos(\X;\sC)$ is the free $\sC$-linear cocompletion of $\X$, it suffices to check that the top-left-bottom composite is equivalent to $\eval_\sC\circ (\id_{\funTopos^L_{\baseTopos,\sC}}\times\yoneda_{\X,\sC})$, but the latter is evidently equivalent to the right composite in the diagram, whence the commutativity of the diagram lets us conclude.

The top square in the diagram commutes in an evident way, so it is only left to make the bottom pentagon commutative.
We claim that the $\baseTopos$-functor $\yoneda_{\X,\sC}$ is adjoint to the $\baseTopos$-functor $\X\op\times\X\to\sC$ given by $h\coloneqq((s,t)\op)_!(\unit)\in\presheaf_{\baseTopos}(\X\times\X\op;\sC)$,
where $\unit\in\presheaf_{\baseTopos}(\twistedArrow(\X)\op;\sC)$ denotes the monoidal unit.
By base change, it suffices to check this for $\sC = \animaTopos_\baseTopos$.
Note that, in this case, $\yoneda_{\X,\animaTopos}$ is adjoint to the functor $\mapTopos_\X \colon \X\op \times \X \to \animaTopos_\baseTopos$, which is classified by the left fibration $(s,t)\colon \twistedArrow(\X)\rightarrow\X\op\times\X$ under the straightening-unstraightening equivalence.
This proves the claim.
Using this, and adjoining the above pentagon, it suffices to prove that the top composite in the following diagram is equivalent to the identity of $\presheafTopos(\X\op;\sC)$, where for simplicity we remove ``$\sC$'' from the notation:
\[
\begin{tikzcd}[column sep=25pt]
&\presheafTopos(\X\op\times\X\times\X\op)\ar[r,"{((s,t)\times\id)^*}"] &\presheafTopos(\twistedArrow(\X)\times\X\op)\ar[dr,"{(\pi_2)_!}"]\\
\presheafTopos(\X\op)\ar[dr,"s^*"']\ar[ur,"{\pi_1^*\otimes(\pi_{2,3})^*h}"]\ar[r,"{\pi_1^*}"]&\presheafTopos(\X\op\times\twistedArrow(\X)\op)\ar[u,"{(\id\times(s,t))_!}"']\ar[r,"{(s\pi_1,\pi_2)^*}"]&\presheafTopos(\twistedArrow(\X)\times_\X\twistedArrow(\X)\op)\ar[u,"{(\pi_1,t\pi_2)_!}"]\ar[d,"c_!"]\ar[r,"{(t\pi_2)_!}"] &\presheafTopos(\X\op)\\
 &\presheafTopos((\X^{[1]})\op)\ar[r,equal]\ar[ur,"c^*"]&\presheafTopos((\X^{[1]})\op).\ar[ur,"t_!"']
\end{tikzcd}
\]
In the previous diagram we have denoted by $\pi_k$ the projection onto the $k$\textsuperscript{th} factor(s); the fibre product $\twistedArrow(\X)\times_\X\twistedArrow(\X)\op$ is taken along the target and source functors; and we denote by $c\colon\twistedArrow(\X)\times_\X\twistedArrow(\X)\op\to(\X^{[1]})\op$ the concatenation functor. To explain why the diagram is commutative, we observe in particular the following.
\begin{itemize}
\item The upper rectangle commutes because $(s,t)\colon\twistedArrow(\X)\op\to\X\times\X\op$ is a $\baseTopos$-cartesian fibration, and the two $\baseTopos$-functors $(\id\times(s,t))$ and $(\pi_1,t\pi_2)$ are pullbacks of this $\baseTopos$-cartesian fibration, so the Beck--Chevalley map is an equivalence by basechange.

\item The bottom triangle commutes because $c$ is a localisation. To see this, note that $t\pi_2\colon\twistedArrow(\X)\times_\X\twistedArrow(\X)\op\to\X\op$ is a $\baseTopos$-cartesian fibration;
its straightening is the $\baseTopos$-functor $\theta\colon\X\to\catTopos_\baseTopos$ given by the formula $\twistedArrow(\X)\times_\X\X_{/(-)}$. Similarly, the $\baseTopos$-cartesian fibration $t\colon(\X^{[1]})\op\to\X\op$ is classified by the $\baseTopos$-functor $(\X_{/(-)})\op$. 
Then $c$ refines to a $\baseTopos$-natural transformation $c'\colon \theta\to(\X_{/(-)})\op$ of $\baseTopos$-functors $\X\to\catTopos_\baseTopos$; in fact $c'\colon\X\to\funTopos_\baseTopos([1],\catTopos_\baseTopos)$ takes values into the full $\baseTopos$-subcategory spanned at level $\tau\in\baseTopos$ by $\baseTopos_{/\tau}$-cocartesian fibrations with weakly contractible fibres,
so by \cref{lem:cartesian_fibrations_with_contractible_fibres} below, we may consider the wide $\baseTopos$-subfunctor $\theta'\coloneqq(\X\op)^\simeq\times_{\X\op}\twistedArrow(\X)\times_\X\X_{/(-)}$ and identify $(\X_{/(-)})\simeq\scL_\baseTopos\circ(\theta'\to \theta)$; we then apply \cref{lem:localisation_of_fibration} and conclude that $c$ is a localisation.
\end{itemize}
We conclude by arguing that the composite $\presheafTopos(\X\op;\sC)\xrightarrow{s^*}\presheafTopos((\X^{[1]})\op;\sC)\xrightarrow{t_!}\presheafTopos(\X\op)$ is equivalent to the identity of $\presheafTopos(\X\op;\sC)$: indeed $s\colon(\X^{[1]})\op\to\X\op$ is left adjoint to the fully faithful inclusion $\id_{(-)}\colon\X\op\to(\X^{[1]})\op$, so we may identify $s^*$ with $(\id_{(-)})_!$ on presheaves.
\end{proof}

\begin{example}
\label{ex:interval_continued}
We continue with \cref{ex:interval_has_no_dualising_system}, so let
again $\baseTopos = \spc$ and $\X = [1]$. Then $\twistedArrow([1]) \simeq \{ 00 \leftarrow 01 \rightarrow 11 \}$. Let $\omega \colon [1] \rightarrow \sC$ denote the presheaf sending $0\mapsto\unit\in\sC$ and $1\mapsto0$. Then we compute, naturally in $\xi \in \presheaf_{\baseTopos}([1];\sC)$, the following chain of equivalences:
\[
\twistedArrow_!(s^*\omega \otimes t^* \xi) \simeq \twistedArrow_!(0 \rightarrow \xi(1) \leftarrow 0) \simeq \xi(1) \simeq [1]_*(\xi). 
\]
This shows that $\omega$ is in fact the classifying system of $[1]$. 
Note that $\omega$ is not a groupoidal system, since it does not send the non-identity arrow in $[1]$ to an equivalence. The next subsection will simplify \cref{prop:formula_with_twisted_arrow} in the case when $\omega$ happens to be groupoidal.
\end{example}

\subsection{Classifying vs dualising systems}\label{subsec:classifying_vs_dualising}

In this section, we will start with a general functor $F \in \func^L_{\baseTopos,\sC}(\presheafTopos(\X;\sC),\sC)$ and ask the general question about the existence of an equivalence $\X_!(\xi\otimes -)$ for some object $\xi \in \presheaf_{\baseTopos}(\X;\sC)$. Our answer to this will be \cref{cor:formula_in_terms_of_dualising_system_for_dualish}.

\begin{defn}
\label{defn:DandbarD}
For $\X\in\cat_{\baseTopos}$ and $\sC\in\calg(\presentable^L_{\baseTopos})$, we denote by $D\colon\sC^\X\simeq\presheafTopos(\X\op;\sC)\to\presheafTopos(\X;\sC)$ the $\baseTopos$-functor $t_!s^*$ associated with the $\baseTopos$-functors $\X\op\overset{s}{\leftarrow}\twistedArrow(\X)\xrightarrow{t}\X$, and by $\bar D=s_*t^*$ its right adjoint.
\end{defn}

\begin{defn}
For $F\in\func^L_{\baseTopos,\sC}(\presheafTopos(\X;\sC),\sC)$, we denote by $\omega_F\in\func_\baseTopos(\X,\sC)$ the associated object under the equivalence from \cref{prop:classification_of_linear_functors}. We call $\omega_F$ the \textit{classifying system of $F$} and $D_F\coloneqq D(\omega_F)$ the \textit{(candidate) dualising system of $F$.}
\end{defn}

\begin{warning}
Note that, away from the setting in which $\X$ is a $\baseTopos$-groupoid, $D_F$ does \textit{not} classify the functor $F$ in any way. Only the object $\omega_F$ corresponds uniquely to the functor $F$, under the equivalence from \cref{prop:classification_of_linear_functors}, whence its name ``classifying system''. 
\end{warning}

In some cases, however,  $D(\omega_F)$ can also be used to classify $F$, whence the name ``(candidate) dualising system''. More precisely,
the projection formula for $t_!$ allows us to write a natural transformation of $\sC$-linear $\baseTopos$-functors $\presheafTopos(\X;\sC)\to\sC$ as follows, naturally in $\omega\in\func_\baseTopos(\X,\sC)$:
\begin{equation}
\label{eq:yonedatoD}
\twistedArrow(\X)_!(s^*(\omega)\otimes t^*(-))\simeq \X_!t_!(s^*(\omega)\otimes t^*(-))\to \X_!(t_!s^*(\omega)\otimes -)\simeq \X_!(D(\omega)\otimes-).
\end{equation}
The $\baseTopos$-natural transformation \cref{eq:yonedatoD} is \textit{not} an equivalence in general, but when it is, it allows us to employ $D(\omega)$ and represent as $\X_!(D(\omega)\otimes-)$ the $\sC$-linear $\baseTopos$-functor corresponding to $\omega$. In \cref{prop:dualish},
we will provide a suitable worth of objects $\omega$ for which \cref{eq:yonedatoD} is an equivalence, including all groupoidal objects.

To this end, we first study the adjunction $D\dashv \bar D$ from \ref{defn:DandbarD}.
\begin{prop}
\label{prop:Dequivalence}
Let $\X\in\cat_{\baseTopos}$ and let $\sC\in\calg(\presentable^L_{\baseTopos})$. Then the following hold.
\begin{enumerate}[label=(\arabic*)]
\item The adjunction $D\dashv\bar D$ from \cref{defn:DandbarD} restricts to a pair of inverse equivalences
\[
D\colon\sC^{\vert \X\vert}\simeq\presheafTopos(\vert \X\vert;\sC)\cocolon\bar D
\]
when passing to the full $\sC$-linear $\baseTopos$-subcategories of groupoidal functors.
\item We can identify the restricted equivalence $D\colon\sC^{\vert\X\vert}\xrightarrow{\simeq}\presheafTopos(\vert\X\vert;\sC)$ with the composite equivalence $\sC^{\vert\X\vert}=\funTopos_\baseTopos(\vert\X\vert,\sC)\simeq\funTopos_\baseTopos(\vert\X\vert\op,\sC)=\presheafTopos(\vert\X\vert;\sC)$ induced by the equivalence $\vert\X\vert\simeq\vert\X\vert\op$.
\end{enumerate}
\end{prop}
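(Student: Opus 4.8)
The plan is to reduce both assertions to a single fact about the twisted arrow $\baseTopos$-category. Write $q_{\mathcal Z}\colon\mathcal Z\to|\mathcal Z|$ for the canonical functor to the realisation, and $\kappa\colon|\X\op|\xrightarrow{\simeq}|\X|$ for the canonical equivalence (realisation does not see opposites). The fact, which I will label $(\star)$, is: the target projection $t\colon\twistedArrow(\X)\to\X$ is a $\baseTopos$-cocartesian fibration and the source projection $s\colon\twistedArrow(\X)\to\X\op$ a $\baseTopos$-cartesian fibration, with fibres the slice $\baseTopos$-categories $(\X_{/x})\op$ and coslice $\baseTopos$-categories $\X_{x/}$ respectively; these admit an initial object and are therefore weakly contractible, so that $|t|$ and $|s|$ are equivalences; and, crucially, there is an equivalence of $\baseTopos$-functors $q_\X\circ t\simeq\kappa\circ q_{\X\op}\circ s\colon\twistedArrow(\X)\to|\X|$, that is, $|t|$ and $|s|$ coincide after the identification $|\X\op|\simeq|\X|$. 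The (co)cartesian fibration claims and the fibre computation follow from the description of $(s,t)$ as a $\baseTopos$-left fibration recorded in \cref{subsec:recollections} (cf.\ also the proof of \cref{prop:formula_with_twisted_arrow}); the identification of $|t|$ with $|s|$ is the heart of the matter --- it says that the tautological arrow borne by every object of $\twistedArrow(\X)$ becomes a natural equivalence upon groupoidification. I would establish it by producing that natural equivalence explicitly, or --- since every construction in sight is stable under étale base change --- by reducing to $\baseTopos=\spc$ and checking on nerves of $1$-categories.

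Granting $(\star)$, the next step is to compute $D$ on groupoidal objects. The groupoidal functors in $\sC^\X$ are the essential image of the fully faithful $q_\X^*\colon\sC^{|\X|}\hookrightarrow\sC^\X$, and the groupoidal presheaves that of $(q_\X\op)^*\colon\presheafTopos(|\X|;\sC)\hookrightarrow\presheafTopos(\X;\sC)$. Given $\omega=q_\X^*\bar\omega$ with $\bar\omega\colon|\X|\to\sC$, the pointwise formula for the $\baseTopos$-left Kan extension $t_!$ exhibits $D(\omega)(x)$ as a $\baseTopos$-colimit of $s^*\omega$ over the comma $\baseTopos$-category of $t$ at $x$, which is weakly contractible since $t$ is $\baseTopos$-cocartesian with weakly contractible fibres. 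As $s^*\omega$ is groupoidal on $\twistedArrow(\X)$, this colimit is the value of $s^*\omega$ at the point of $|\twistedArrow(\X)|$ hit by the fibre over $x$, namely $|t|^{-1}([x])$; unwinding, this value is $\bar\omega\big(\kappa\circ|s|\circ|t|^{-1}([x])\big)$, which by $(\star)$ equals $\bar\omega([x])$. Hence $D$ carries groupoidal functors to groupoidal presheaves, and the resulting $\baseTopos$-functor $\sC^{|\X|}\to\presheafTopos(|\X|;\sC)$ agrees, as one checks on objects, with the canonical equivalence $\funTopos_\baseTopos(|\X|,\sC)\simeq\funTopos_\baseTopos(|\X|\op,\sC)$ induced by $|\X|\simeq|\X|\op$. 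This is assertion (2), and shows in particular that $D$ restricts to an equivalence of the $\baseTopos$-subcategories of groupoidal objects. (When $\X=G$ is already a $\baseTopos$-groupoid this is immediate: $t$ and $s$ are then equivalences, $D=t_!s^*$ is restriction along $s\circ t^{-1}\colon G\xrightarrow{\simeq}G\op$, and $(\star)$ identifies $s\circ t^{-1}$ with the canonical equivalence $G\xrightarrow{\simeq}G\op$ --- also visible from the equivalence $\delta\colon G\xrightarrow{\simeq}\twistedArrow(G)$, $x\mapsto\id_x$, which is inverse to $t$ and whose composite with $s$ is that canonical equivalence.)

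Finally I would identify $\bar D$ restricted to groupoidal presheaves with the inverse of $D$. The dual pointwise computation --- $\bar D=s_*t^*$ with $s_*$ a $\baseTopos$-right Kan extension, $t^*$ preserving groupoidality, and the relevant comma $\baseTopos$-categories again weakly contractible --- shows that $\bar D$ too preserves groupoidal objects. Therefore the adjunction $D\dashv\bar D$ restricts to an adjunction between the full $\baseTopos$-subcategories of groupoidal objects, in which the left adjoint $D$ is an equivalence by the previous step; its right adjoint $\bar D$ is then forced to be its inverse, which is assertion (1). I expect the only genuine obstacle to be $(\star)$ --- settling, with all the opposite-category bookkeeping, that the two projections out of $\twistedArrow(\X)$ realise to the \emph{same} equivalence; everything downstream is formal, flowing from the pointwise Kan extension formulas together with the weak contractibility of slices and coslices.
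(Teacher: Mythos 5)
Your argument is correct and rests on the same two pillars as the paper's proof: (a) the fibres of $s$ and $t$ are weakly contractible, so that on groupoidal systems the Kan extensions $t_!$ and $s_*$ collapse to fibrewise evaluations (this is \cref{lem:cartesian_fibrations_with_contractible_fibres} in the paper), and (b) the identification of the resulting comparison between $s$ and $t$ on realisations with the canonical equivalence $\vert\X\vert\simeq\vert\X\vert\op$ --- your $(\star)$ is precisely the paper's (also unproved, merely recalled) assertion that this canonical equivalence is implemented by $s'\circ(t')^{-1}$ for the groupoid $\vert\X\vert$. The packaging differs: the paper routes everything through the comparison square with $\twistedArrow(\vert\X\vert)$ and the Beck--Chevalley equivalences of \cref{cor:Beck_Chevalley_for_twisted_arrows}, identifying the restricted $D$ and $\bar D$ with $(t')_!(s')^*$ and $(s')_*(t')^*$ where $s'$ and $t'$ are equivalences, and proves (1) before (2); you compute pointwise and deduce (1) from (2) by the "right adjoint of an equivalence is its inverse" argument, which is equally valid and arguably makes the logical dependence clearer. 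One harmless slip: with the paper's conventions $s\colon\twistedArrow(\X)\to\X\op$ is a \emph{cocartesian} fibration (classifying $x\mapsto\X_{x/}$; equivalently $s\op$ is cartesian over $\X$), not a cartesian one --- since \cref{lem:cartesian_fibrations_with_contractible_fibres} covers both variances and your fibrewise computations of $t_!$ and $s_*$ only need weak contractibility of the fibres together with finality/initiality of the fibre inclusions, nothing downstream is affected.
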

We need some preliminary results to prove  \cref{prop:Dequivalence}.
\begin{lem}
\label{lem:fibration_weakly_contractible_fibres}
Let $p \colon \X \rightarrow \Y$ be a cocartesian or cartesian fibration of $\baseTopos$-categories; assume that all fibres of $p$ have contractible classifying $\baseTopos$-space. Then $p$ exhibits $\Y$ as the localisation $\scL(\X,p^{-1}(\Y^\simeq))$ of $\X$ at the morphisms $p^{-1}(\Y^{\simeq})$.
\end{lem}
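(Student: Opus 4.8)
The plan is to reduce first to the case where $p$ is a cocartesian fibration: if $p$ is cartesian then $p\op\colon\X\op\to\Y\op$ is cocartesian, and since $(\Y\op)^\simeq\simeq(\Y^\simeq)\op$ the morphisms $p^{-1}(\Y^\simeq)$ of $\X$ correspond to $(p\op)^{-1}((\Y\op)^\simeq)$ of $\X\op$, while $\scL_\baseTopos(-,-)$ is compatible with $(-)\op$; so the cartesian case follows from the cocartesian one. Assume then that $p$ is cocartesian, classified by a $\baseTopos$-functor $F\colon\Y\to\catTopos_\baseTopos$ under the straightening equivalence \cref{eq:straightening_cocart}, so $\X\simeq\int_\Y F$. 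The hypothesis says precisely that for every object $y\colon\tau\to\Y$ the fibre $F(y)\in\cat_{\baseTopos_{/\tau}}$ has $|F(y)|\simeq\ast$, equivalently that $|F|\coloneqq|-|\circ F\colon\Y\to\animaTopos_\baseTopos$ is the constant $\baseTopos$-functor at the terminal $\baseTopos$-groupoid (a statement that may be checked objectwise).

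The heart of the argument is a fibrewise localisation statement. Let $W_{\mathrm{fib}}\subseteq W\coloneqq p^{-1}(\Y^\simeq)$ be the subset of morphisms of $\X$ lying over identities of $\Y$, equivalently those inverted in every fibre. I claim that the canonical $\baseTopos$-functor $\int_\Y F\to\int_\Y|F|$ over $\Y$, induced by the $\baseTopos$-natural transformation $F\Rightarrow|F|$ assembling the realisation maps $F(y)\to|F(y)|$ of the fibres, exhibits its target as $\scL_\baseTopos(\X,W_{\mathrm{fib}})$. This is the internal incarnation of the classical fact that localising a cocartesian fibration at a set of fibrewise morphisms is computed fibrewise; it should be proved in the spirit of \cref{lem:localisation_of_fibration}, using that unstraightening, the realisation $\baseTopos$-functor $|-|$, cores, and localisations at small sets of morphisms are all compatible with \'etale base change, which reduces the claim to the defining universal property of $\scL_\baseTopos$ tested against an arbitrary $\baseTopos$-category $\D$: a $\baseTopos$-functor $\int_\Y F\to\D$ inverting $W_{\mathrm{fib}}$ is, by the universal property of $|-|$, constant on each fibre $F(y)$, and such constancy data is exactly what is needed to factor it through $\int_\Y|F|$. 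Granting this, and using that $|F|$ is the constant functor at $\ast$, whose unstraightening over $\Y$ is $\id_\Y$ (via \cref{eq:straightening_lfib}), so that the composite $\X\to\int_\Y|F|\simeq\Y$ is $p$, we conclude that $p$ exhibits $\Y$ as $\scL_\baseTopos(\X,W_{\mathrm{fib}})$.

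It remains to promote $W_{\mathrm{fib}}$ to the full set $W=p^{-1}(\Y^\simeq)$. Since $p$ is cocartesian, any morphism $f$ of $\X$ with $p(f)$ invertible factors, via a $p$-cocartesian lift of $p(f)$, as $f\simeq g\circ\tilde f$ with $\tilde f$ a $p$-cocartesian morphism over $p(f)$ and $g$ a morphism over an identity, so $g\in W_{\mathrm{fib}}$. Writing $W_{\mathrm{cocart}}$ for the set of $p$-cocartesian lifts of equivalences of $\Y$, this factorisation together with the inclusions $W_{\mathrm{fib}},W_{\mathrm{cocart}}\subseteq W$ gives $\scL_\baseTopos(\X,W)\simeq\scL_\baseTopos(\X,W_{\mathrm{fib}}\cup W_{\mathrm{cocart}})\simeq\scL_\baseTopos(\scL_\baseTopos(\X,W_{\mathrm{fib}}),\overline{W}_{\mathrm{cocart}})$, where $\overline{W}_{\mathrm{cocart}}$ is the image of $W_{\mathrm{cocart}}$. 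Under the identification $\scL_\baseTopos(\X,W_{\mathrm{fib}})\simeq\Y$ from the previous step, a $p$-cocartesian lift of an equivalence $e$ of $\Y$ maps to $e$ itself, so $\overline{W}_{\mathrm{cocart}}$ consists of equivalences of $\Y$; localising $\Y$ at morphisms that are already equivalences has no effect, and we conclude $\scL_\baseTopos(\X,W)\simeq\Y$, with the canonical comparison functor being an equivalence since every reduction above was performed over $\Y$.

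The main obstacle is the fibrewise localisation claim of the second paragraph: once it is in place the rest is diagram-level bookkeeping. The care required in the parametrised setting is in checking that $\scL_\baseTopos$ of the total $\baseTopos$-category is again a cocartesian fibration over $\Y$, computed levelwise over $\Y$ by the fibrewise localisations, and that this is compatible with $\int_\Y(-)$ and with the realisation $\baseTopos$-functor --- precisely the sort of statement that \cref{lem:localisation_of_fibration}, together with \cref{lem:cartesian_fibrations_with_contractible_fibres}, is tailored to supply --- after which the argument proceeds in parallel with the classical one, performed internally to $\baseTopos$.
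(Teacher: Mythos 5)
Your proof follows the same basic strategy as the paper's: reduce the cartesian case to the cocartesian one via $(-)\op$, straighten $p$ to a functor $\theta\colon\Y\to\catTopos_\baseTopos$, and identify the localisation with the unstraightening of $|{-}|\circ\theta$, which is constant at $\ast$ by hypothesis. The paper's proof, however, is a two-line application of \cref{lem:localisation_of_fibration}: taking $\theta'=\theta$ there, one has $\widehat{\W}=\X$ and $\W=\Y^\simeq\times_\Y\X=p^{-1}(\Y^\simeq)$, and the lemma's conclusion is literally that $\scL(\X,p^{-1}(\Y^\simeq))$ unstraightens $\scL\circ(\theta\to\theta)\simeq|{-}|\circ\theta\simeq\ast$, i.e.\ equals $\Y$. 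In other words, the lemma you cite "in the spirit of" already localises at the \emph{full} class of morphisms over equivalences of $\Y$, not merely at the fibrewise ones, so your entire third paragraph (the promotion from $W_{\mathrm{fib}}$ to $W$ via cocartesian lifts of equivalences) is unnecessary.

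As written there is a small gap: the statement you isolate as the heart of the argument — that $\scL(\X,W_{\mathrm{fib}})$ at the \emph{fibrewise} morphisms only is $\int_\Y|F|$ — is a genuinely different (finer) assertion than \cref{lem:localisation_of_fibration}, and you do not prove it, only assert that it "should be" provable by similar means. It is true, and your subsequent bookkeeping (factoring a morphism over an equivalence as a cocartesian lift followed by a fibrewise morphism, and observing that the lifts become equivalences in the quotient) is correct, but the cleanest repair is simply to apply the existing lemma to $W=p^{-1}(\Y^\simeq)$ directly rather than to re-derive a variant of it for $W_{\mathrm{fib}}$.
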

\begin{proof}
Assume first that $p$ is cocartesian; then by \cref{lem:localisation_of_fibration} we may compute the localisation $\scL(\X,p^{-1}(\Y^\simeq))$ as the $\baseTopos$-unstraightening of the $\baseTopos$-functor $\vert-\vert\circ \theta\colon \Y\to\catTopos_\baseTopos$, where $\theta\colon \Y\to\catTopos_\baseTopos$ is a straightening of $p$. The hypothesis that $\vert-\vert\circ \theta\simeq\ast$ ensures that $\scL(\X,p^{-1}(\Y^\simeq))\simeq \Y$.
If $p$ is cartesian, we argue similarly with $p\op\colon \X\op\to \Y\op$, using \cref{obs:localisation_op} twice.
\end{proof}

\begin{lem}
\label{lem:cartesian_fibrations_with_contractible_fibres}
In the hypotheses of \cref{lem:fibration_weakly_contractible_fibres}, the following hold.
\begin{enumerate}[label=(\arabic*)]
\item For any cocomplete $\baseTopos$-category $\D$, the counit of the adjunction $p_!\colon \D^\X\rightleftharpoons\D^\Y\colon p^*$ is an equivalence.
\item For any cocomplete $\baseTopos$-category $\D$, the adjunction $p_!\colon \D^\X\rightleftharpoons\D^\Y\colon p^*$ restricts to a pair of inverse equivalences $p_!\colon \D^{\vert \X\vert}\simeq\D^{\vert \Y\vert}\colon p^*$ between full $\baseTopos$-subcategories of groupoidal functors.
\item Dually, for a complete $\baseTopos$-category $\D$, the unit of the adjunction $p^*\dashv p_*$ is an equivalence, and the adjunction restricts to a pair of inverse equivalences $\D^{\vert \Y\vert}\simeq\D^{\vert \X\vert}$.
\end{enumerate}
\end{lem}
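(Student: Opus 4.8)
The plan is to isolate two facts about $p$ and then let the three assertions follow by formal adjunction bookkeeping. \textit{Fact} (i): the $\baseTopos$-functor $p^* \colon \D^\Y \to \D^\X$ is fully faithful for every $\baseTopos$-category $\D$. Indeed, by \cref{lem:fibration_weakly_contractible_fibres}, $p$ exhibits $\Y$ as the localisation $\scL(\X, W)$ of $\X$ at $W \coloneqq p^{-1}(\Y^\simeq)$, and the universal property of localisations says precisely that restriction along such a localisation is a fully faithful inclusion, with essential image the $\baseTopos$-functors inverting $W$. \textit{Fact} (ii): the induced map $|p| \colon |\X| \to |\Y|$ is an equivalence of $\baseTopos$-groupoids. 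This again uses that $\Y \simeq \scL(\X, W)$: any $\baseTopos$-functor from $\X$ to a $\baseTopos$-groupoid $\mathcal{G}$ inverts every morphism of $\X$, hence a fortiori inverts $W$, so it factors uniquely through $p$; thus $p^* \colon \func_\baseTopos(\Y, \mathcal{G}) \to \func_\baseTopos(\X, \mathcal{G})$ is an equivalence for every $\baseTopos$-groupoid $\mathcal{G}$, and since $|-|$ is left adjoint to the inclusion $\animaTopos_\baseTopos \hookrightarrow \catTopos_\baseTopos$, this is, by adjunction and the Yoneda lemma, exactly the assertion that $|p|$ is an equivalence. Both facts, and hence all three parts below, apply uniformly whether $p$ is $\baseTopos$-cocartesian or $\baseTopos$-cartesian, since \cref{lem:fibration_weakly_contractible_fibres} covers both cases.

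Parts (1) and the first half of (3) are then immediate. For (1): since $\D$ is cocomplete, $p^*$ admits a left adjoint $p_!$ given by left Kan extension (as in the proof of \cref{lem:naturality_classification_C_linear_functors}), and a right adjoint is fully faithful if and only if the counit of the adjunction is an equivalence; by Fact (i) the counit $p_! p^* \to \id_{\D^\Y}$ is therefore an equivalence. Dually, when $\D$ is complete, $p^*$ admits a right adjoint $p_*$ given by right Kan extension, and the left adjoint $p^*$ being fully faithful is equivalent to the unit $\id_{\D^\Y} \to p_* p^*$ being an equivalence.

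It remains to treat the statements about groupoidal functors in (2) and (3). Let $\gamma_\X \colon \X \to |\X|$ and $\gamma_\Y \colon \Y \to |\Y|$ be the realisation $\baseTopos$-functors, so that $\gamma_\Y \circ p \simeq |p| \circ \gamma_\X$. Applying $\D^{(-)}$ to this square, the functors $\gamma_\X^*$ and $\gamma_\Y^*$ are the fully faithful inclusions of the full $\baseTopos$-subcategories of groupoidal functors, while $|p|^*$ is an equivalence by Fact (ii); it follows that $p^*$ carries $\D^{|\Y|}$ into $\D^{|\X|}$ and restricts there to the equivalence $|p|^*$, whose inverse I denote $\Phi$. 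Now for a groupoidal $g \in \D^{|\X|} \subseteq \D^\X$ we have $g \simeq p^* \Phi(g)$, so by the counit equivalence of (1), $p_! g \simeq p_! p^* \Phi(g) \simeq \Phi(g) \in \D^{|\Y|}$; hence $p_!$ also carries $\D^{|\X|}$ into $\D^{|\Y|}$ and agrees there with $\Phi$, the inverse of $p^*|_{\D^{|\Y|}}$ — this proves (2). The identical computation with the unit equivalence $\id_{\D^\Y} \to p_* p^*$ of (3) yields $p_* g \simeq \Phi(g)$, so $p^* \colon \D^{|\Y|} \rightleftharpoons \D^{|\X|} \colon p_*$ restrict to the asserted mutually inverse equivalences. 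I expect no real obstacle beyond having the universal property of the ($\baseTopos$-parametrised) localisation from \cref{lem:fibration_weakly_contractible_fibres} available in the precise form used for Facts (i) and (ii); everything downstream of these is soft.
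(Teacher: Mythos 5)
Your proof is correct and follows essentially the same route as the paper's: both deduce from \cref{lem:fibration_weakly_contractible_fibres} that $p^*$ is fully faithful (hence the counit, resp. unit, is an equivalence) and then use that $|p|\colon|\X|\to|\Y|$ is an equivalence to identify the groupoidal subcategories and restrict the adjunction. The only difference is presentational: you spell out why $|p|$ is an equivalence and name the inverse $\Phi$ explicitly, steps the paper leaves implicit.
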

\begin{proof}
\begin{enumerate}[label=(\arabic*)]
\item By \cref{lem:fibration_weakly_contractible_fibres} the $\baseTopos$-functor $p^*\colon\D^\Y\to\D^\X$ is fully faithful; it follows that the counit $\epsilon_{p_!}\colon p_!p^*\to\id_{\D^\Y}$ is an equivalence.
\item The unit $\eta_{p_!}\colon p^*p_!\to\id_{\D^\X}$ restricts to an equivalence on the essential image of $p^*$, which contains $\D^{\vert \X\vert}\simeq p^*(\D^{\vert \Y\vert})$, where this equivalence is since $|X|\xrightarrow[\simeq]{p}|\Y|$.
\item The argument is analogous to (2).\qedhere
\end{enumerate}
\end{proof}

\begin{cor}
\label{cor:Beck_Chevalley_for_twisted_arrows}
Let $\X\in\cat_\baseTopos$ and consider the following diagram of $\baseTopos$-categories:
\begin{equation}
\begin{tikzcd}[row sep=15pt]
\X\op \ar[d,"r\op"] & \ar[l, "s"']\twistedArrow(\X) \ar[r,"t"] \ar[d,"r'"] & \X \ar[d, "r"] \\
\vert \X \vert\op  &\ar[l, "s'"',"\simeq"]\twistedArrow(\vert \X \vert) \ar[r, "t'","\simeq"'] & \vert \X \vert
\end{tikzcd}
\end{equation}
Then the Beck--Chevalley transformation $t_! (r')^* \rightarrow r^*(t')_!$ is an equivalence of $\baseTopos$-functors $\presheafTopos(\twistedArrow(\vert \X\vert);\sC)\to\presheafTopos(\X;\sC)$, and the Beck--Chevalley transformation $(r\op)^*s'_*\to s_*(r')^*$ is an equivalence of $\baseTopos$-functors $\presheafTopos(\twistedArrow(\vert \X\vert);\sC)\to\presheafTopos(\X\op;\sC)$.
\end{cor}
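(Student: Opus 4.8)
The plan is to exhibit both transformations as Beck--Chevalley mates and to factor each as a composite of two equivalences: one coming from an adjoint equivalence, and one from \cref{lem:cartesian_fibrations_with_contractible_fibres}. Two pieces of geometric input are needed. First, since $\vert\X\vert$ is a $\baseTopos$-groupoid, the simplicial $\baseTopos$-object underlying $\twistedArrow(\vert\X\vert)$ is constant on $\vert\X\vert$, so $s'$ and $t'$ are equivalences, as already marked in the diagram. Second, $(s,t)\colon\twistedArrow(\X)\to\X\op\times\X$ is a $\baseTopos$-left fibration, hence $\baseTopos$-cocartesian; composing with the $\baseTopos$-cocartesian projections then shows that $t\colon\twistedArrow(\X)\to\X$ and $s\colon\twistedArrow(\X)\to\X\op$ are $\baseTopos$-cocartesian fibrations, whose fibres are computed levelwise as the opposite slice $(\X_{/x})\op$ at an object $x$, respectively the coslice $\X_{x/}$ at $x$. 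Each such fibre has weakly contractible classifying $\baseTopos$-space, since $\X_{/x}$ has a terminal object and $\X_{x/}$ an initial one; hence \cref{lem:cartesian_fibrations_with_contractible_fibres} applies to both $t$ and $s$.

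For the first assertion I would argue as follows. Since $\presheafTopos(-;\sC)$ is cocomplete, \cref{lem:cartesian_fibrations_with_contractible_fibres}(1) shows that the counit $\epsilon\colon t_!t^*\to\id$ on $\presheafTopos(\X;\sC)$ is an equivalence. The right-hand square of the diagram commutes by naturality of $t$, so on presheaves $t^*r^*\simeq(r')^*(t')^*$, and by its very construction the Beck--Chevalley transformation factors as
\[
t_!(r')^*\longrightarrow t_!(r')^*(t')^*(t')_!\simeq t_!t^*r^*(t')_!\xrightarrow{\ \epsilon\ }r^*(t')_!,
\]
where the first map is induced by the unit of the adjoint equivalence $(t')_!\dashv(t')^*$, and the last by $\epsilon$; both are equivalences, so the Beck--Chevalley transformation is an equivalence. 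The second assertion is dual: $\presheafTopos(-;\sC)$ is complete, so \cref{lem:cartesian_fibrations_with_contractible_fibres}(3) shows that the unit $\eta\colon\id\to s_*s^*$ on $\presheafTopos(\X\op;\sC)$ is an equivalence; the left-hand square commutes by naturality of $s$, so on presheaves $s^*(r\op)^*\simeq(r')^*(s')^*$, and the Beck--Chevalley transformation factors as
\[
(r\op)^*s'_*\xrightarrow{\ \eta\ }s_*s^*(r\op)^*s'_*\simeq s_*(r')^*(s')^*s'_*\longrightarrow s_*(r')^*,
\]
with the first map induced by $\eta$ and the last by the counit of the adjoint equivalence $(s')^*\dashv s'_*$; both are equivalences.

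I expect the only point requiring a little care to be the verification that $s$ and $t$ are $\baseTopos$-(co)cartesian fibrations whose fibres are parametrically weakly contractible, so that \cref{lem:cartesian_fibrations_with_contractible_fibres} genuinely applies in the internal ($\baseTopos$-categorical) setting; this is, however, the same type of input already used in the proof of \cref{prop:formula_with_twisted_arrow}, and everything else is formal mate-chasing.
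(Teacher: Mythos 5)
Your proposal is correct and follows essentially the same route as the paper: both factor the Beck--Chevalley transformation through the unit of the adjoint equivalence $(t')_!\dashv(t')^*$ and the counit of $t_!\dashv t^*$, the latter being an equivalence by \cref{lem:cartesian_fibrations_with_contractible_fibres} applied to the (co)cartesian fibration $t$ (equivalently $t\op$) with weakly contractible fibres, and dually for $s$. The paper merely states the second case is "analogous" where you write it out, and your identification of the fibres of $s$ and $t$ as coslices/opposite slices is the correct justification for applying the lemma.
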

\begin{proof}
The first Beck--Chevalley transformation is the composite
\[
t_!(r')^*\to t_!(r')^*(t')^*(t')_!\simeq t_!t^*r^*(t')_!\to r^*(t')_!
\]
where the first arrow is an equivalence because $t'$ is an equivalence, and the second arrow is an equivalence because $t\op\colon\twistedArrow(\X)\op\to \X\op$ is a cartesian fibration with weakly contracible fibres,
so that we can apply \cref{lem:cartesian_fibrations_with_contractible_fibres}. The argument for the second Beck--Chevalley transformation is analogous.
\end{proof}

\begin{proof}[Proof of \cref{prop:Dequivalence}]
For (1), by \cref{cor:Beck_Chevalley_for_twisted_arrows}
we obtain an equivalence $t_!s^*(r\op)^*\simeq r^*(t')_!(s')^*$ of $\baseTopos$-functors $\presheafTopos(\vert \X\vert\op;\sC)\to\presheafTopos(\X;\sC)$, and an equivalence $(r\op)^*(s')_*(t')^*\simeq s_*t^*r^*$ of $\baseTopos$-functors $\presheafTopos(\vert \X\vert;\sC)\to\presheafTopos(\X\op;\sC)$. We also observe that $(r\op)^*\colon\presheafTopos(\vert \X\vert\op;\sC)\hookrightarrow\presheafTopos(\X\op;\sC)$ and $r^*\colon\presheafTopos(\vert \X\vert;\sC)\hookrightarrow\presheafTopos(\X;\sC)$ are the fully faithful inclusions of the $\baseTopos$-subcategories of groupoidal presheaves, so that we may identify the restrictions of $D$ and $\bar D$ with $(t')_!(s')^*$ and with $(s')_*(t')^*$, respectively, and the latter are inverse equivalences.

For (2), first recall that the canonical equivalence $|\X|\simeq |\X|\op$ is explicitly implemented by $s'\circ (t')^{-1}\colon |\X|\rightarrow|\X|\op$, where we have used the notations from \cref{cor:Beck_Chevalley_for_twisted_arrows}. Furthermore, since $(t')_!$ is the inverse of $(t')^*$, we thus see that $((t')^{-1})^*\simeq (t')_!$. On the other hand, by the previous paragraph, restricting $D=t_!s^*$ along the inclusion $(r\op)^*\colon \presheafTopos(\vert \X\vert\op;\sC)\subseteq \presheafTopos(\X\op;\sC)$ coincides with the functor $\presheafTopos(\vert \X\vert\op;\sC)\xrightarrow{(t')_!(s')^*}\presheafTopos(\vert \X\vert;\sC)\xhookrightarrow{r^*}\presheafTopos(\X;\sC)$. Combining these points then yields the desired conclusion.
\end{proof}

We next turn to the problem of finding suitable conditions on $\omega\in\func_\baseTopos(\X,\sC)$ under which \eqref{eq:yonedatoD} is an equivalence.
\begin{prop}
\label{prop:dualish}
Let $\X\in\cat_\baseTopos$; then the following hold.
\begin{enumerate}
\item If $\omega\in\func_\baseTopos(\X,\sC)$ satisfies that $s^*(\omega)\xrightarrow{\simeq} t^*t_!s^*(\omega)$ is an equivalence, the $\baseTopos$-natural transformation \cref{eq:yonedatoD} is an equivalence.
\item If $\omega \in \func_\baseTopos(\X,\sC)$ is groupoidal, then $s^*(\omega) \xrightarrow{\simeq} t^*t_!s^*(\omega)$ is an equivalence.
\end{enumerate}
\end{prop}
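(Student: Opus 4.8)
The plan is to reduce both statements to a single structural fact about the twisted arrow $\baseTopos$-category together with a formal observation about fully faithful adjoints. The structural fact: the target projection $t\colon\twistedArrow(\X)\to\X$ — equivalently $t\op\colon\twistedArrow(\X)\op\to\X\op$ — is a cartesian fibration with weakly contractible fibres (this is already used inside the proof of \cref{cor:Beck_Chevalley_for_twisted_arrows}), so \cref{lem:cartesian_fibrations_with_contractible_fibres}(1) applies and the counit $t_!t^*\to\id$ on $\sC$-presheaves is an equivalence; equivalently, $t^*\colon\presheafTopos(\X;\sC)\to\presheafTopos(\twistedArrow(\X);\sC)$ is fully faithful. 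The formal observation: for any fully faithful right adjoint $t^*$ (left adjoint $t_!$), the unit $\eta_Z\colon Z\to t^*t_!Z$ is an equivalence exactly when $Z$ lies in the essential image of $t^*$ — one direction is trivial, and if $Z\simeq t^*W$ then $\eta_{t^*W}$ is forced to be an equivalence by the triangle identity $(t^*\epsilon_W)\circ\eta_{t^*W}=\id$ and the fact that $\epsilon_W$ is an equivalence.

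For part (2) I would argue as follows. Since $\omega$ is groupoidal it is of the form $(r\op)^*\bar\omega$ with $\bar\omega\in\presheafTopos(|\X|\op;\sC)$ and $r\colon\X\to|\X|$ the localisation. Using the commuting squares $(r\op)\circ s=s'\circ r'$ and $r\circ t=t'\circ r'$ from \cref{cor:Beck_Chevalley_for_twisted_arrows}, the Beck--Chevalley equivalence $t_!(r')^*\simeq r^*(t')_!$ proved there, and the fact that $t'\colon\twistedArrow(|\X|)\to|\X|$ is an equivalence (the twisted arrow category of a $\baseTopos$-groupoid, so $(t')^*(t')_!\simeq\id$), one computes $s^*\omega\simeq(r')^*(s')^*\bar\omega$, then $D(\omega)=t_!s^*\omega\simeq r^*(t')_!(s')^*\bar\omega$, and finally $t^*t_!s^*\omega\simeq(r')^*(t')^*(t')_!(s')^*\bar\omega\simeq(r')^*(s')^*\bar\omega\simeq s^*\omega$. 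Hence $s^*\omega$ lies in the essential image of $t^*$, so by the observation above the unit $s^*(\omega)\to t^*t_!s^*(\omega)$ is an equivalence.

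For part (1), set $A:=s^*\omega$; the hypothesis says precisely that the unit $A\to t^*t_!A$ is an equivalence, i.e. $A\simeq t^*C$ with $C:=t_!A=D(\omega)$. Since \cref{eq:yonedatoD} is the image under $\X_!$ of the projection formula transformation $\mathrm{pr}\colon t_!\bigl(A\otimes t^*(-)\bigr)\to t_!A\otimes(-)$, it is enough to show $\mathrm{pr}$ is an equivalence. Using that $t^*$ is symmetric monoidal for the pointwise tensor, $A\otimes t^*B\simeq t^*(C\otimes B)$, so both $t_!(A\otimes t^*B)\simeq t_!t^*(C\otimes B)$ and $t_!A\otimes B\simeq t_!t^*C\otimes B$ are carried to $C\otimes B$ by the counit. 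A short diagram chase with the triangle identities shows $(\epsilon_C\otimes\id_B)\circ\mathrm{pr}_B=\epsilon_{C\otimes B}$ (both are adjoint-transpose to the canonical isomorphism $t^*C\otimes t^*B\simeq t^*(C\otimes B)$); since $t^*$ is fully faithful, both $\epsilon_{C\otimes B}$ and $\epsilon_C\otimes\id_B$ are equivalences, and therefore so is $\mathrm{pr}_B$.

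I expect the genuine content to lie entirely in the fully-faithfulness of $t^*$, i.e. the fibrational description of the target projection of $\twistedArrow(\X)$; everything else is formal. The two points that still need care are the identification, in part (1), of the abstract counit equivalence with the map \cref{eq:yonedatoD}, and, in part (2), the bookkeeping of the functors $s,t,s',t',r,r'$ through the two Beck--Chevalley squares.
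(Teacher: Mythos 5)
Your proof is correct and follows essentially the same route as the paper's: both rest on the fact that $t\op$ is a cartesian fibration with weakly contractible fibres (so the counit $t_!t^*\to\id$ is an equivalence, i.e. $t^*$ is fully faithful on presheaves), and both reduce (1) to the unit--monoidality--counit factorisation of the projection-formula map for $t_!$, with the unit step handled by the hypothesis and the counit step by the contractibility of the fibres. The only divergence is in (2), where the paper directly invokes \cref{lem:cartesian_fibrations_with_contractible_fibres}(2) — groupoidal presheaves lie in the essential image of $t^*$, so the unit is an equivalence on them — whereas you re-derive this for $s^*\omega$ by chasing the Beck--Chevalley squares of \cref{cor:Beck_Chevalley_for_twisted_arrows}; that is a slightly longer but equivalent path, since the corollary is itself proved from the same lemma.
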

\begin{proof}
Let $\omega\in\func_\baseTopos(\X,\sC)$; then the following composite $\baseTopos$-natural transformation of $\baseTopos$-functors $\presheafTopos(\X;\sC)\to\sC$ agrees with the middle arrow in \cref{eq:yonedatoD}:
\begin{equation}
\label{eq:decomposition_projection_formula}
\X_!t_!(s^*(\omega)\otimes t^*(-))\xrightarrow{\eta_{t_!}} \X_!t_!(t^*t_!s^*(\omega)\otimes t^*(-))\simeq \X_!t_!t^*(t_!s^*(\omega)\otimes-)\xrightarrow{\epsilon_{t_!}} \X_!(t_!s^*(\omega)\otimes -).
\end{equation}
The second arrow in \cref{eq:decomposition_projection_formula} is an equivalence for all $\omega\in\func_\baseTopos(\X,\sC)$: it involves the counit of the adjunction $t_!\dashv t^*$, which is an equivalence by \cref{lem:cartesian_fibrations_with_contractible_fibres} (1). The first arrow in \cref{eq:decomposition_projection_formula} involves the unit of the adjunction $t_!\colon\func_\baseTopos(\twistedArrow(\X)\op,\sC)\rightleftharpoons\func_\baseTopos(\X\op,\sC)\cocolon t^*$ computed at $s^*(\omega)\in\func_\baseTopos(\twistedArrow(\X)\op,\sC)$, whence (1) follows.
For (2), we observe that $s^*(\omega)$ is a groupoidal presheaf on $\twistedArrow(\X)$, so we may apply \cref{lem:cartesian_fibrations_with_contractible_fibres} (2), by which $s^*(\omega)\xrightarrow{\simeq} t^*t_!s^*(\omega)$ is an equivalence.
\end{proof}

\begin{cor}\label{cor:formula_in_terms_of_dualising_system_for_dualish}
Let $\X\in\cat_\baseTopos$ and let $F \in \func^L_{\baseTopos,\sC}(\presheafTopos(\X;\sC),\sC)$ be a $\sC$-linear $\baseTopos$-functor. Then there exists a groupoidal $\zeta\in\presheaf_{\baseTopos}(\X;\sC)$ participating in a $\sC$-linear equivalence $F(-)\simeq \X_!(-\otimes\zeta)$ if and only if $\omega_F$ is groupoidal. In this case, there is an equivalence $D_F\simeq \zeta$.
\end{cor}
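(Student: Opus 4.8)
The plan is to establish the two implications separately, deriving the ``if'' direction directly from the coend formula together with \cref{prop:dualish,prop:Dequivalence}, and then bootstrapping the ``only if'' direction from it.

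For the ``if'' direction, assume $\omega_F$ is groupoidal. Since $\omega_F=\yoneda^*F$ is the classifying system of $F$, \cref{prop:formula_with_twisted_arrow} gives $F(-)\simeq\twistedArrow(\X)_!(s^*\omega_F\otimes t^*(-))$. Because $\omega_F$ is groupoidal, \cref{prop:dualish}(2) shows that the unit $s^*\omega_F\to t^*t_!s^*\omega_F$ is an equivalence, so \cref{prop:dualish}(1) makes the comparison map \cref{eq:yonedatoD} an equivalence; concatenating, we obtain a $\sC$-linear equivalence $F(-)\simeq\X_!(D(\omega_F)\otimes-)=\X_!(D_F\otimes-)$. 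Finally, $\zeta\coloneqq D_F$ is groupoidal: by \cref{prop:Dequivalence}(1) the functor $D=t_!s^*$ restricts to an equivalence $\sC^{|\X|}\xrightarrow{\simeq}\presheafTopos(|\X|;\sC)$ between the full $\baseTopos$-subcategories of groupoidal functors, so $D(\omega_F)$ is groupoidal whenever $\omega_F$ is. This exhibits the required $\zeta$ and, along the way, proves the equivalence $D_F\simeq\zeta$ claimed in the last sentence.

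For the ``only if'' direction, suppose we are handed a groupoidal $\zeta\in\presheaf_\baseTopos(\X;\sC)$ together with a $\sC$-linear equivalence $F(-)\simeq\X_!(-\otimes\zeta)$. The idea is to recognise $\zeta$ as the candidate dualising system of a groupoidal classifying system and then invoke uniqueness of the Morita classification. Indeed, since $D$ restricts to an equivalence on groupoidal objects by \cref{prop:Dequivalence}(1), there is a groupoidal $\omega\in\func_\baseTopos(\X,\sC)$ with $D(\omega)\simeq\zeta$. Let $F_\omega\in\func^L_{\baseTopos,\sC}(\presheafTopos(\X;\sC),\sC)$ be the $\sC$-linear functor classified by $\omega$, i.e. with $\omega_{F_\omega}\simeq\omega$. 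Running the computation of the ``if'' direction with $\omega$ in place of $\omega_F$ yields $F_\omega(-)\simeq\X_!(D(\omega)\otimes-)\simeq\X_!(\zeta\otimes-)\simeq F(-)$ as $\sC$-linear $\baseTopos$-functors. By the uniqueness half of \cref{prop:classification_of_linear_functors}, we conclude $\omega_F\simeq\omega_{F_\omega}=\omega$; in particular $\omega_F$ is groupoidal, and $D_F=D(\omega_F)\simeq D(\omega)=\zeta$.

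The only genuinely delicate point is in the ``only if'' direction: one cannot compare $\omega_F$ and $\zeta$ directly, since a priori they live in opposite variances and bear no relation when $\omega_F$ fails to be groupoidal. The device that circumvents this is \cref{prop:Dequivalence}, which guarantees that on groupoidal objects $D$ is invertible, so that the classifying and dualising points of view become interchangeable there; this is exactly what lets us manufacture the auxiliary groupoidal $\omega$ and match $F$ with $F_\omega$ on the nose before applying the classification. Everything else is a formal concatenation of equivalences already recorded in this section.
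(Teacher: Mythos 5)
Your proof is correct and follows essentially the same route as the paper: both directions rest on the coend formula of \cref{prop:formula_with_twisted_arrow} together with \cref{prop:dualish} and the restricted equivalence $D\dashv\bar D$ of \cref{prop:Dequivalence}, and your auxiliary groupoidal $\omega$ with $D(\omega)\simeq\zeta$ is exactly the paper's $\bar D(\zeta)$, with the comparison of $F$ and $F_\omega$ concluded via the Morita classification equivalence just as in the paper.
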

\begin{proof}
Given such a $\zeta$, its image under the composite functor $\presheaf(\X;\sC)\xrightarrow{\bar{D}} \func_{\baseTopos}(\X,\sC) \xrightarrow[\simeq]{(\yoneda^*)^{-1}}\func^L_{\baseTopos,\sC}(\presheafTopos(\X;\sC),\sC)$    is equivalent to $F$: this is because $\bar{D}(\zeta)\in\func_{\baseTopos}(\X,\sC)$ is yet again groupoidal by \cref{prop:Dequivalence}, and so by \cref{prop:dualish}, the image of $\zeta$ under the composite is precisely given by $\X_!(-\otimes\zeta)$. Hence, via the equivalence $\yoneda^*$, we see that $\bar{D}(\zeta)\simeq \omega_F$, whence the groupoidality of $\omega_F$. Additionally, since the adjunction $D\dashv \bar{D}$ restricts to an equivalence on groupoidal systems by \cref{prop:Dequivalence}, we also get that $\zeta\simeq D\bar{D}(\zeta)\simeq D(\omega_F)=D_F$. Conversely, if $\omega_F$ is groupoidal, then by  \cref{prop:dualish}, we see that $F(-)\simeq \X_!(-\otimes D_F)$, and so $\zeta=D_F$ works.
\end{proof}

The following lemma will be used much later in \cref{sec:fibredpd}, but its content fits well in this section.
\begin{lem}\label{lem:projection_formula_for_presheaves}
Let $\sC\in\calg(\presentable^L_{\baseTopos})$, let $f\colon \X\rightarrow\Y$ be
a $\baseTopos$-cartesian fibration, and let $\zeta\in\presheaf_{\baseTopos}(\Y;\sC)$. Then the projection formula map $f_!(-\otimes f^*\zeta)\rightarrow f_!(-)\otimes \zeta$ is an equivalence of $\baseTopos$-functors $\presheafTopos(\X;\sC)\to\presheafTopos(\Y;\sC)$.
\end{lem}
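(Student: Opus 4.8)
The plan is to reduce, via the fibrewise formula for left Kan extension along a cocartesian fibration, to the case of a colimit over a single fibre, where the projection formula degenerates to the statement that tensoring preserves colimits.

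First I would record that, since $f$ is a $\baseTopos$-cartesian fibration, $f\op\colon\X\op\to\Y\op$ is a $\baseTopos$-cocartesian fibration, and that the functor $f_!\colon\presheafTopos(\X;\sC)\to\presheafTopos(\Y;\sC)$ in the statement is precisely left Kan extension along $f\op$ (it exists and is a $\baseTopos$-left adjoint by \cref{lem:naturality_classification_C_linear_functors}(1)). Since equivalences of $\baseTopos$-functors, and equivalences of presheaves, may be checked on objects, it suffices to show that the canonical projection formula transformation becomes an equivalence after restriction along every object $y\colon\tau\to\Y$ of $\Y$, i.e.\ after applying $y^*\colon\presheafTopos(\Y;\sC)\to\presheafTopos(\tau;\sC)$, naturally in $F\in\presheafTopos(\X;\sC)$.

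Next I would invoke the pointwise formula for left Kan extension along the cocartesian fibration $f\op$. Writing $p\colon\X_y\coloneqq\tau\times_\Y\X\to\tau$ and $\iota_y\colon\X_y\to\X$ for the two legs of the resulting cartesian square, so that $f\iota_y\simeq yp$, this says that the Beck--Chevalley transformation $p_!\iota_y^*\to y^*f_!$ of $\baseTopos$-functors $\presheafTopos(\X;\sC)\to\presheafTopos(\tau;\sC)$ is an equivalence. (This is ``smooth base change'' along a cocartesian fibration; it follows from the cofinality of the fibre inclusion into the relevant comma $\baseTopos$-category, which in turn holds because, $f$ being cartesian, that inclusion admits a left adjoint given by cartesian base change — the same kind of base-change reasoning used in the proofs of \cref{prop:formula_with_twisted_arrow} and \cref{cor:Beck_Chevalley_for_twisted_arrows}.) Granting this, I would compute, naturally in $F$,
\[
y^*\bigl(f_!(F\otimes f^*\zeta)\bigr)\;\simeq\;p_!\bigl(\iota_y^*F\otimes p^*(y^*\zeta)\bigr)\;\simeq\;p_!(\iota_y^*F)\otimes y^*\zeta\;\simeq\;y^*\bigl(f_!(F)\otimes\zeta\bigr).
\]
Here the first equivalence uses the Beck--Chevalley equivalence together with the identification $\iota_y^*f^*\zeta\simeq p^*(y^*\zeta)$ coming from $f\iota_y\simeq yp$ (so $\iota_y^*f^*\zeta$ is the constant presheaf on $\X_y$ at the object $y^*\zeta$, since $\tau$ is the terminal $\baseTopos_{/\tau}$-groupoid); the second is the projection formula for the terminal $\baseTopos_{/\tau}$-functor $p$ against the constant coefficient $y^*\zeta$, which holds because $-\otimes y^*\zeta$ preserves $\baseTopos_{/\tau}$-colimits, $\sC$ being presentably symmetric monoidal; and the last reuses the Beck--Chevalley equivalence and the (symmetric) monoidality of $y^*$.

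The remaining point, and the one I expect to require the most care, is to check that this composite equivalence is the one \emph{induced by the canonical projection formula map} rather than merely an abstract equivalence between the two functors. This amounts to tracing the projection formula transformation — which is adjoint to $F\otimes f^*\zeta\xrightarrow{\eta\otimes\id}f^*f_!F\otimes f^*\zeta\simeq f^*(f_!F\otimes\zeta)$ via the unit of $f_!\dashv f^*$ and the monoidality of $f^*$ — through the Beck--Chevalley identification and seeing that it matches the canonical ``colimits-commute-with-tensor'' comparison for $p$. Once this compatibility is in place, the objectwise check above finishes the proof; each individual equivalence used is immediate from material already in the text.
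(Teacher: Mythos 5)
Your argument is correct and follows essentially the same route as the paper: reduce by conservativity to a groupoidal base (the paper restricts once along the core $\Y^\simeq\subseteq\Y$, you restrict along each object $y\colon\tau\to\Y$), invoke smooth basechange for the cartesian fibration, and conclude from the $\sC$-linearity of the colimit functor over a $\baseTopos$-space after \'etale basechange. The compatibility of the Beck--Chevalley identification with the canonical projection-formula map, which you flag at the end, is handled at the same implicit level of detail in the paper's own proof, so it is not a gap relative to the paper's standard.
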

\begin{proof}
First, suppose that $f$ is a $\baseTopos$-cartesian fibration.
As the map $i^* \colon \presheafTopos(\Y;\sC) \to \presheafTopos(\Y^\simeq;\sC)$ induced by the inclusion $i \colon \Y^\simeq \to \Y$ of the core $\baseTopos$-groupoid is conservative,
it suffices to prove that $i^*f_!(-\otimes f^*\zeta)\to i^*(f_!(-)\otimes \zeta)\simeq i^*f_!(-)\otimes i^*\zeta$ is an equivalence; by smooth basechange, letting $f'\colon\X':=\Y^\simeq\times_\Y\X\to\Y^\simeq$ denote the basechange of $f$, we may identify the previous with the projection formula map for $f'$ associated with $i^*\zeta\in\presheaf(\Y^\simeq;\sC)$.

We may therefore assume that $\Y\simeq\tau\in\baseTopos$ is a $\baseTopos$-space; in this case we may consider $\X\in(\cat_\baseTopos)_{/\tau}\simeq\cat_{\baseTopos_{/\tau}}$ and regard $\zeta$ as an object in the $\infty$-category $\Gamma(\pi^*_\tau\sC)$; doing this we reduce to checking that the projection formula map $\X_!(-\otimes\X^*\zeta)\to \X_!(-)\otimes\zeta$ is an equivalence of $\baseTopos_{/\tau}$-functors $\presheafTopos_{\baseTopos_{/\tau}}(\X;\pi^*_\tau\sC)\to\pi^*_\tau\sC$, and this is the fact that $\X_!$ is $\pi^*_\tau\sC$-linear.
\end{proof}

\begin{obs}[Formulas for the classifying and dualising systems]
\label{obs:formula_for_dualising_object}
Let $\X\in\cat_\baseTopos$ and let $F\in\func^L_{\baseTopos,\sC}(\presheafTopos(\X;\sC),\sC)$.
Then $\omega_F\colon\X\to\sC$ agrees \textit{by definition} with the composite $\X\xrightarrow{\yoneda_\X}\presheafTopos(\X;\sC)\xrightarrow{F}\sC$.
For $\baseTopos=\spc$ and $x\in \X$, this recovers Klein's formula $\omega_F(x)\simeq F(x_!(\unit_\sC))$, since we have $\yoneda_\X(x)\simeq x_!(\unit_\sC)$.

We can also express $\omega_F$ in terms of the twisted arrow $\baseTopos$-category. Under the equivalence $\func_\baseTopos(\X,\presheafTopos(\X;\sC))\simeq\presheafTopos(\X\op\times\X;\sC)$, the Yoneda embedding $\yoneda_\X$ corresponds to the presheaf $\mapTopos_\X=(s,t)_!\twistedArrow(\X)^*(\unit_\sC)$, where we denote by $\unit_\sC\in\Gamma\sC\simeq\presheaf(*;\sC)$ the monoidal unit. We may therefore identify $\omega_F$ also with the image of $\unit_\sC$ along the composite functor
\[
\Gamma\sC\xrightarrow{\twistedArrow(\X)^*}
\presheaf(\twistedArrow(\X);\sC)\xrightarrow{(s,t)_!}\presheaf(\X\op\times\X;\sC)\simeq\presheaf(\X\op;\sC)\otimes_{\Gamma\sC}\presheaf(\X;\sC)\xrightarrow{\id\otimes \Gamma F}\presheaf(\X\op;\sC).
\]
If $\omega_F$ is moreover groupoidal, then we may also identify $D_F\in\presheaf_\baseTopos(\X;\sC)$ with the the image of $\unit_\sC$ along the composite functor
\[
\Gamma\sC\xrightarrow{\X^*}\presheaf(\X;\sC)\xrightarrow{\Delta_!}\presheaf(\X\times\X;\sC)\simeq\presheaf(\X;\sC)\otimes_{\Gamma\sC}\presheaf(\X;\sC)\xrightarrow{\id\otimes \Gamma F}\presheaf(\X;\sC).
\]
For $\baseTopos=\spc$ and $x\in\X$, we then have $D_F(x)\simeq\omega_F(x)\simeq F(x_!(\unit_{\Gamma\sC}))$ as before.
\end{obs}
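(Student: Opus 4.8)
The plan is to verify each clause by unwinding the Morita equivalence of \cref{prop:classification_of_linear_functors} together with the twisted-arrow description of the Yoneda embedding, so the whole argument is essentially a chain of identifications. The first clause is literally \cref{obs:concrete_formula_for_classifying_system}. For the Klein-style formula in the case $\baseTopos=\spc$ it then remains to observe that $\yoneda_\X(x)\simeq x_!(\unit_\sC)$ for $x\colon\ast\to\X$: the functor $x_!\colon\sC\to\presheafTopos(\X;\sC)$ is by definition left adjoint to evaluation $x^*$ at $x$, so $x_!(\unit_\sC)$ corepresents $x^*$, and by the $\baseTopos$-Yoneda lemma so does the representable presheaf $\yoneda_\X(x)$; equivalently, $x_!(\unit_\sC)\simeq x_!(\ast)\otimes\unit_\sC\simeq\yoneda_\X(x)$ by $\sC$-linearity of $x_!$ (\cref{lem:naturality_classification_C_linear_functors}(1)).

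For the twisted-arrow formula for $\omega_F$, the key point is that the Morita equivalence $\funTopos^L_{\baseTopos,\sC}(\presheafTopos(\X;\sC),\D)\simeq\D^\X\simeq\sC^\X\otimes_\sC\D$ is natural in $\D$, so that post-composition with $F\colon\presheafTopos(\X;\sC)\to\sC$ corresponds, on classifying systems, to $\id_{\sC^\X}\otimes F$ on the $\D$-factor. Applying this with $\D=\presheafTopos(\X;\sC)$ and chasing the tautological element $\id_{\presheafTopos(\X;\sC)}$, whose classifying system in $\func_\baseTopos(\X,\presheafTopos(\X;\sC))\simeq\presheaf_\baseTopos(\X\op\times\X;\sC)$ is $\yoneda_\X$, yields $\omega_F\simeq(\id\otimes\Gamma F)(\yoneda_\X)$. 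I would then use the $\baseTopos$-Yoneda lemma to identify $\yoneda_\X$, so regarded, with the $\sC$-valued mapping presheaf $\mapTopos_\X$, and the presentation $\mapTopos_\X\simeq(s,t)_!(\ast)$ from \cref{subsec:recollections}, base-changed along $-\otimes\sC$ as in \cref{obs:naturality_in_C}, to rewrite $\mapTopos_\X\simeq(s,t)_!\twistedArrow(\X)^*(\unit_\sC)$. Concatenating these three identifications presents $\omega_F$ as the image of $\unit_\sC$ along the displayed composite.

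For the formula for $D_F$ when $\omega_F$ is groupoidal, I would start from $D_F=D(\omega_F)=t_!s^*(\omega_F)$ (\cref{defn:DandbarD}) and use that $D=t_!s^*$ is $\sC$-linear, being the composite of the $\sC$-linear restriction $s^*$ with the $\sC$-linear left adjoint $t_!$ (\cref{lem:naturality_classification_C_linear_functors}(1)). Running the formula for $\omega_F$ through $D$ and commuting $D$ past $\Gamma F$ --- they act on the two separate tensor factors of $\mapTopos_\X\in\sC^\X\otimes_\sC\presheafTopos(\X;\sC)$ --- gives $D_F\simeq(\id\otimes\Gamma F)\bigl((D\otimes\id)(\mapTopos_\X)\bigr)$, so it remains to identify $(D\otimes\id)(\mapTopos_\X)\in\presheaf_\baseTopos(\X\times\X;\sC)$ with the diagonal system $\Delta_!\X^*(\unit_\sC)$. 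Reducing to $\sC=\animaTopos_\baseTopos$ by base change, both objects unwind, via the $\baseTopos$-(co)limit formulas recalled in \cref{subsec:recollections}, to the $\baseTopos$-realisation of the $\baseTopos$-category of cospans $x\to c\leftarrow x'$: for $\Delta_!\X^*(\ast)$ this is the definition of the left Kan extension along $\Delta\colon\X\to\X\times\X$, and for $(D\otimes\id)(\mapTopos_\X)$ one writes $\mapTopos_\X$ as a left Kan extension of $\ast$ along $(s,t)$, applies $t_!s^*$ to the covariant slot, and simplifies the resulting iterated Kan extension by a co-Yoneda reduction. Alternatively, since $\omega_F$, being groupoidal, factors through $|\X|$, one may instead compute $D_F=D(\omega_F)$ through the variance-flip equivalence $\sC^{|\X|}\simeq\presheafTopos(|\X|;\sC)$ of \cref{prop:Dequivalence}, whereupon the $\Delta_!$-formula reduces to the standard Morita description over the $\baseTopos$-groupoid $|\X|$.

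Finally, for $\baseTopos=\spc$ and $x\in\X$, the identity $\omega_F(x)\simeq F(x_!(\unit_\sC))$ is the formula of the first paragraph, and $D_F(x)\simeq\omega_F(x)$ since, $\omega_F$ being groupoidal, $D_F=D(\omega_F)$ is the transport of $\omega_F$ along the equivalence $|\X|\simeq|\X|\op$ of \cref{prop:Dequivalence}(2), which is the identity on objects. I expect the only step that is not pure bookkeeping with the Morita equivalence and its naturalities to be the identification $(D\otimes\id)(\mapTopos_\X)\simeq\Delta_!\X^*(\unit_\sC)$ in the third paragraph: this is where a genuine --- if routine --- (co)end manipulation is required, and it can be modelled on the localisation arguments already carried out in the proof of \cref{prop:formula_with_twisted_arrow}.
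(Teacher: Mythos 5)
Your proposal is correct in substance and, for the first three clauses, is exactly the unwinding the paper intends (the statement is an unproved Observation resting on \cref{obs:concrete_formula_for_classifying_system}, the identification $\mapTopos_\X\simeq(s,t)_!(\ast)$ from the recollections, and functoriality of the tensor decomposition). Where you diverge is in the last clause: you reduce the $D_F$-formula to the unconditional identification $(D\otimes\id)(\mapTopos_\X)\simeq\Delta_!\X^*(\unit_\sC)$ in $\presheaf_\baseTopos(\X\times\X;\sC)$, which you correctly flag as the one substantive step. That identification is in fact true (no groupoidality needed): pointwise it compares the realisation of the category of factorised cospans $w\to x\to y\leftarrow z$ with that of cospans $w\to c\leftarrow z$, and the comparison functor given by composing the $w$-leg admits the fully faithful section $x\coloneqq w$ as a left adjoint, so it is a realisation equivalence; making this natural is the "co-Yoneda" content you defer. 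The cheaper route, which uses the groupoidal hypothesis directly and is presumably what the authors have in mind, is simply to evaluate the displayed composite at $z$: one gets $F\bigl((\Delta_!\X^*\unit)(z,-)\bigr)\simeq F\bigl(\colim_{\X_{z/}}\yoneda\bigr)\simeq\colim_{\X_{z/}}\omega_F\simeq\omega_F(z)\simeq D_F(z)$, using colimit-preservation of $F$, weak contractibility of the coslices $\X_{z/}$ together with groupoidality of $\omega_F$, and \cref{prop:Dequivalence}; so your stronger intermediate claim buys generality but costs the only genuinely nontrivial argument.

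One caveat: your proposed alternative, "compute $D_F$ through \cref{prop:Dequivalence}, whereupon the $\Delta_!$-formula reduces to the standard Morita description over $|\X|$", does not work by mere substitution. The object $\Delta_!\X^*(\unit_\sC)$ is \emph{not} the restriction along $\X\times\X\to|\X|\times|\X|$ of the diagonal class over $|\X|$: for the span poset $u\leftarrow z\to v$ the former vanishes at $(u,v)$ while the latter does not. The two only agree after applying $\id\otimes\Gamma F$ under the groupoidality hypothesis, so this route still needs the evaluation argument above rather than a formal reduction to $|\X|$. This is a soft spot in an auxiliary remark, not a gap in your main argument.
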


\section{Poincar\'e duality category pairs}
\label{sec:pdcatpairs}
As explained in the introduction, Wall's notion of Poincar\'e pairs and ads of spaces involves various compatibilities between the local coefficient systems on the interior space (witnessing Poincar\'e--Lefschetz duality) with those on the boundaries, corners, etc. In this section, we propose to package all these structures in one stroke: we introduce the notion of \textit{Poincar\'e $\baseTopos$-category pairs} and we develop some basic theory about these objects.

The basic substrate of this approach will be the notion of \textit{$\baseTopos$-category pairs} $(\X,\partial\X)$, i.e. the datum of a $\baseTopos$-category $\X$ equipped with the datum of a left closed subcategory $\partial\X\subseteq \X$. Introducing these will be the purpose of \cref{subsec:left_closed_subcategories}. 
In \cref{subsec:pdcatpairs_definition} we introduce Poincar\'e duality as a property of $\baseTopos$-category pairs. This will be defined in terms of the relative cohomology $\baseTopos$-functor $(\X,\partial\X)_*\colon \presheafTopos(\X;\sC) \to \sC$, depending on the pair $(\X,\partial\X)$. 
In fact, Poincar\'e duality will be the strictest of three properties of $\baseTopos$-category pairs, the other two being \textit{twisted ambidexterity}, which is a finiteness/compactness condition ensuring that the $\baseTopos$-functor $(\X,\partial\X)_*$ admits a classifying system $\omega_{\X,\partial\X}$ via Morita theory, and \textit{groupoidal twisted ambidexterity}, which demands additionally that the classifying system is groupoidal; the last property ensures in particular that the classification formula in terms of coends from \cref{prop:formula_with_twisted_arrow} simplifies via \cref{cor:formula_in_terms_of_dualising_system_for_dualish} to one of the form $(\X,\partial\X)_*(-)\simeq \X_!(-\otimes D_{\X,\partial\X})$ for some $D_{\X,\partial\X}\in\presheaf(|\X|;\sC)$. 
As expected, Poincar\'e duality then imposes the further requirement that $\omega_{\X,\partial\X}$ (or equivalently, $D_{\X,\partial\X}$) takes values in invertible objects in $\sC^{\simeq}$. In the short \cref{subsec:connecting_map_of_classifying_systems}, we explain how the classifying system $\omega_{\X,\partial\X}$ captures the compatibilities between the classifying system of the pair with that of the boundary via certain \textit{connecting morphisms}.

We suggest again the following motivating example to keep in mind, which will be discussed in detail in \cref{sec:unstraightenedpairs}: for a functor $X\colon [1]\rightarrow\spc$ encoding a map $g\colon X(0)\rightarrow X(1)$ of compact spaces, we may consider the $\infty$-category pair $(\X,\partial\X)=(\int_{[1]}X,X(0))$ which will turn out to be twisted ambidextrous. The condition that the classifying system takes pointwise values in invertible objects is the usual condition of Poincar\'e duality; the additional demand that $\omega_{\X,\partial\X}$ be groupoidal precisely encodes, via the aforementioned connecting morphisms, that the classifying system of the pair restricts to the desuspension of the classifying system of the boundary space $\partial\X$. 

Next, we provide a suite of ``cutting and pasting'' techniques in \cref{subsec:cut_and_paste} to check the Poincar\'e property as well as obtain new Poincar\'e $\baseTopos$-category pairs from old ones.
The results of \cref{subsec:cut_and_paste} should justify our focus on left closed $\baseTopos$-subcategory inclusions $\partial\X\to\X$: this assumption ensures a certain amount of ``directionality'' which, for instance, allows us to compute certain left and right Kan extensions as being extensions-by-zero. Note that such manoeuvres are for instance not available if one only considers a map of spaces $g\colon X(0)\rightarrow X(1)$; one has to pass to the associated left closed inclusion $X(0)=\partial\X \subseteq \X$. Lastly, we will prove two versions of the Poincar\'e--Lefschetz sequence for Poincar\'e $\baseTopos$-category pairs in \cref{subsec:poincare_lefschetz_sequence}, the second of which establishes the compatibility of Poincar\'e--Lefschetz duality with Mayer--Vietoris sequences. This computationally powerful result is well known for homology with discrete coefficients, and we generalise it to arbitrary coefficients.

To facilitate our comparison with classical definitions in the following section, we prove in  \cref{subsec:Spivak}  that, under suitable conditions, all natural transformations from cohomology to homology are implemented by cap product with a fundamental class. Finally, in \cref{subsec:dualisable_results}, we generalise  \cite[Thm. B]{KleinQinSu} by way of a ``purely algebraic'' proof, answering \cite[Remark 4.2]{KleinQinSu}. Along the way, we prove some useful generalities about local systems on $\baseTopos$-categories: for example, we show in \cref{prop:dualisables_are_groupoidal} that dualisable local systems are automatically groupoidal, and that dualisability of classifying systems already implies Poincar\'e duality in \cref{prop:omega_dualisable_iff_invertible}.

\subsection{Left closed subcategories and category pairs}
\label{subsec:left_closed_subcategories}
We bring to attention here the notion of left closed $\baseTopos$-subcategories and define $\baseTopos$-category pairs. We will also record some elementary properties of these that will be relevant to us. Given a $\baseTopos$-category pair, the relationship between the ambient $\baseTopos$-category, the boundary, and the complement of the boundary is conveniently organised via a stable recollement which we record as \cref{constr:stable_recollement_for_pairs}.

\begin{defn}
\label{defn:categorypair}
Let $\X$ be a $\baseTopos$-category.
We call a full $\baseTopos$-subcategory $\Y\subseteq \X$ \textit{left closed} in $\X$ if there exists a $\baseTopos$-functor $f\colon\X\to[1]$ such that $\Y=f^{-1}(0)=\X\times_{[1]}\{0\}$.
Similarly, we say that $\Y$ is \textit{right closed} in $\X$ if there exists a $\baseTopos$-functor $f\colon\X\to[1]$ such that $\Y=f^{-1}(1)=\X\times_{[1]}\{1\}$.

A $\baseTopos$-\textit{category pair} is a pair $(\X,\partial\X)$ of a $\baseTopos$-category and a left closed subcategory $\partial\X\subseteq \X$. A morphism of $\baseTopos$-category pairs $(\X,\partial\X)\rightarrow(\Y,\partial\Y)$ is a morphism in $(\cat_{\baseTopos})_{/[1]}$.
\end{defn}

\begin{nota}\label{nota:X_dX_interiorX}
    Given a $\baseTopos$-category pair $(\X,\partial\X)$, we write $\interior{\X}\coloneqq \X\backslash \partial\X$ for the complement right-closed subcategory which we think of as the ``interior'' of $\X$.
    We further denote by $i\colon\partial\X\to\X$ and $j\colon\interior{\X}\to\X$ the inclusions.
\end{nota}
The next lemma shows that being left closed is a \textit{property} of full $\baseTopos$-subcategories.

\begin{lem}
\label{lem:uniqueness_functor_to_1}
Let $\Y\subseteq\X$ be a full $\baseTopos$-subcategory inclusion; then the subspace $\map_{\cat_{\baseTopos}}(\X,[1])_{\Y\mathrm{lc}}\subseteq\map_{\cat_{\baseTopos}}(\X,[1])$ of $\baseTopos$-functors that exhibit $\Y$ as a left closed $\baseTopos$-subcategory of $\X$ is either empty or contractible. Similarly, the choice of a $\baseTopos$-functor $\X\to[1]$ exhibiting $\Y$ as right closed is either empty or contractible. 
\end{lem}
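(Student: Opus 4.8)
The plan is to show that the space $\map_{\cat_{\baseTopos}}(\X,[1])$ is discrete, and that inside it a $\baseTopos$-functor $f\colon\X\to[1]$ is completely pinned down by the full $\baseTopos$-subcategory $f^{-1}(0)=\X\times_{[1]}\{0\}$; granting this, the subspace $\map_{\cat_{\baseTopos}}(\X,[1])_{\Y\mathrm{lc}}$ of functors with $f^{-1}(0)\simeq\Y$ is a subset of a set containing at most one element, hence empty or contractible.

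First I would record that $[1]=\constant([1])\in\cat_\baseTopos$ is an \emph{internal poset} in the sense that all of its mapping $\baseTopos$-groupoids are subterminal: being constant at the genuine poset $[1]$, the $\baseTopos$-groupoid $\mapTopos_{[1]}(x,y)$ is $\ast$ or $\emptyset$ (at each level) for every pair of objects $x,y$. Consequently $\funTopos_\baseTopos(\X,[1])$ is again an internal poset, since its internal mapping $\baseTopos$-groupoids are computed as $\baseTopos$-ends of the various $\mapTopos_{[1]}$'s, hence are $\baseTopos$-limits of subterminal $\baseTopos$-groupoids and therefore subterminal. Applying $\Gamma$, which commutes with forming cores and with limits, shows that $\func_\baseTopos(\X,[1])=\Gamma\funTopos_\baseTopos(\X,[1])$ is a $1$-category with subterminal mapping spaces, i.e.\ equivalent to an ordinary poset; hence $\map_{\cat_{\baseTopos}}(\X,[1])=\func_\baseTopos(\X,[1])^\simeq$ is a set (its set of equivalence classes of objects).

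Next I would show that $f\mapsto f^{-1}(0)$ is injective on this set. A $\baseTopos$-functor $f\colon\X\to[1]$ restricts levelwise to functors $f_\tau\colon\X(\tau)\to[1]$, and $f^{-1}(0)(\tau)=f_\tau^{-1}(0)$; so if $f,g\colon\X\to[1]$ satisfy $f^{-1}(0)=g^{-1}(0)$ then $f_\tau(x)=g_\tau(x)\in\{0,1\}$ for every $\tau\in\baseTopos$ and every $x\in\X(\tau)$, i.e.\ $f$ and $g$ agree on all objects of $\X$. Since the target is an internal poset, the $\baseTopos$-end computing the internal natural-transformation $\baseTopos$-groupoid $\funTopos_\baseTopos(\X,[1])(f,g)$ is a limit of the subterminal objects $\mapTopos_{[1]}(f(x),g(x))$, hence is terminal precisely when each such factor is; as $f(x)=g(x)$ makes $\mapTopos_{[1]}(f(x),g(x))=\ast$, there is a (unique) $\baseTopos$-natural transformation $f\to g$, and symmetrically one $g\to f$. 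Thus $f\simeq g$ in the poset $\func_\baseTopos(\X,[1])$, so $f=g$ in $\map_{\cat_{\baseTopos}}(\X,[1])$. Therefore the fibre $\map_{\cat_{\baseTopos}}(\X,[1])_{\Y\mathrm{lc}}$ over $\Y$ is empty or a single point, i.e.\ empty or contractible. The statement about right closed subcategories is proved identically, replacing $\{0\}\subseteq[1]$ by $\{1\}\subseteq[1]$: the same $\baseTopos$-functor $f$ is now pinned down by $f^{-1}(1)$.

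The only step requiring genuine care — and the mild technical obstacle — is the identification of $\baseTopos$-natural transformations into the constant internal poset $[1]$ with the evident objectwise inequality, and the attendant compatibility between the internal $\baseTopos$-categorical picture and the levelwise picture $\X(\tau)\to[1]$ (in particular that $f^{-1}(0)$ is computed levelwise). This is an instance of the ``check everything on objects'' principle recorded in the preliminaries; it is routine, but one should be careful to phrase it so that no ``partial'' subterminal objects intervene, which is guaranteed here because $[1]$ is constant at a genuine poset.
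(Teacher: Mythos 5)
Your proof is correct, but it takes a different technical route from the paper's. The paper works in the complete Segal object model: it observes that $[1]_n\simeq\udl{n+2}$ is a coproduct of terminal objects, so each $\map_{\baseTopos}(\X_n,[1]_n)$ is discrete and hence so is $\map_{\cat_\baseTopos}(\X,[1])$; uniqueness then follows because $f$ is pinned down on $0$-simplices by the decomposition $\X_0\simeq\Y_0\amalg\Z_0$, and the monomorphisms $[1]_n\to([1]_0)^{n+1}$ force $f_n\simeq g_n$ for all $n$. You instead stay inside internal category theory: $[1]$ is an internal poset, hence so is $\funTopos_\baseTopos(\X,[1])$ (its mapping $\baseTopos$-groupoids are limits of subterminal objects), which gives discreteness of the core; and two functors agreeing on objects admit $\baseTopos$-natural transformations in both directions, hence are equivalent in a preorder. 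Your route buys a conceptual explanation (everything happens in a preorder, so existence of comparison maps is all that matters) at the cost of invoking the end formula for internal natural transformations — precisely the point you flag as delicate — whereas the paper's more hands-on simplicial argument avoids it entirely.

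One imprecision you should repair: the end computing $\funTopos_\baseTopos(\X,[1])(f,g)$ is a limit over $\twistedArrow(\X)$, so its factors are $\mapTopos_{[1]}(f(x),g(y))$ for \emph{arrows} $x\to y$ of $\X$, not only the factors $\mapTopos_{[1]}(f(x),g(x))$ at identity arrows that you list. This does not break the argument — for an arrow $x\to y$ one has $f(x)=g(x)\le g(y)$ because $g$ is a functor into $[1]$, so every factor is still terminal, and a limit of subterminal objects is terminal exactly when all of them are — but as written your criterion is being applied to the wrong indexing diagram, and for a general target this distinction is exactly where naturality lives.
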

\begin{proof}
We focus on the left closed case.
We may regard $\X$ and $[1]$ as objects in $\cat_{\baseTopos}\subseteq s\baseTopos$ as in \cref{subsec:recollections}; then we have $[1]_n\simeq\udl{n+2}\in\baseTopos$, where $\udl{n+2}$ denotes the coproduct of $n+2$ copies of the terminal object in $\baseTopos$. In particular the spaces $\map_{\baseTopos}(\X_n,[1]_n)$ are discrete, and hence so  are the spaces $\map_{\cat_{\baseTopos}}(\X,[1])$ and its subspace $\map_{\cat_{\baseTopos}}(\X,[1])_{\Y\mathrm{lc}}$.

Now assume $\map_{\cat_{\baseTopos}}(\X,[1])_{\Y\mathrm{lc}}\neq\emptyset$ and let $(f\colon\X\to[1])\in \map_{\cat_{\baseTopos}}(\X,[1])_{\Y\mathrm{lc}}$.
The map $f_0\colon\X_0\to[1]_0\simeq\{0,1\}$ exhibits $\X_0$ as a coproduct $\Y_0\amalg\Z_0$, for some ``coproduct complement'' $\Z_0\in\baseTopos$. For any $g\in\map_{\cat_{\baseTopos}}(\X,[1])_{\Y\mathrm{lc}}$ we then have that the map $g_0\colon\X_0\to\{0,1\}$ must restrict to $\Y_0\to\{0\}$ and to $\Z_0\to\{1\}$ if we want to have $g^{-1}(0)=\Y_0$, i.e. $f_0=g_0$.
Since the maps $[1]_n\to([1]_0)^{n+1}$ are monomorphisms, we then have $f_n\simeq g_n$ for all $n\ge1$, and this implies $f=g$. 
\end{proof}

We collect a couple of elementary observations, focusing on the left closed case.
\begin{obs}
\label{obs:leftclosed}
\begin{enumerate}[label=(\arabic*)]
\item \label{rmk:left_closed_fibration} Let $F\colon \X\to \X'$ be a functor, let $\Y'\subseteq \X'$ be a full subcategory and denote $\Y:=F^{-1}(\Y')\subseteq \X$. If $\Y'$ is left (resp. right) closed in $\X'$, then so is $\Y$ in $\X$.
We combine this with the following additional observation: if $F$ is a cartesian (resp. cocartesian) fibration and $\Y'$ is initial (resp. final) in $\X'$, then so is $\Y$ in $\X$.
\item \label{rmk:left_closed_left_fibration} The inclusion $\Y \subseteq \X$ of a left closed subcategory is a right fibration: it is the pullback of the right fibration $\{0\}\to[1]$ along the essentially unique functor $\X\to[1]$ classifying $\Y$ as in
\cref{defn:categorypair}.
\item If $\Z\subseteq\Y$ and $\Y\subseteq\X$ are left closed $\baseTopos$-subcategory inclusions, then $\Z\subseteq\X$ is also left closed, as the $\baseTopos$-functor $\Y\to[1]$ classifying $\Z$ uniquely extends to $\X$.
\item \label{rmk:colimits_left_closed_subcategories} We may consider a $\baseTopos$-category pair as an object in $(\cat_\baseTopos)_{/[1]}$. We have two functors $(\cat_\baseTopos)_{/[1]}\to\cat_\baseTopos$, sending $(\X,\partial\X)\mapsto \X$ and $(\X,\partial\X)\mapsto\partial\X$. Both functors preserve colimits: for the first we observe that it is the source functor of an overcategory; for the second we observe that it admits a right adjoint $(-)^{\triangleright}\colon\cat_\baseTopos\to(\cat_\baseTopos)_{/[1]}$, sending $\D$ to the $\baseTopos$-category $\D^{\triangleright}$ obtained from $\D$ by ``freely adjoining a terminal object'': concretely, $\D^\triangleright$ is the unstraightening of the functor $[1]\to\cat_\baseTopos$ given by the terminal $\baseTopos$-functor $\D\to\ast$.
\end{enumerate}
\end{obs}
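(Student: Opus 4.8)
The plan is to treat the four assertions separately; all of them except the initiality clause of (1) are formal.

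For the first assertion of (1): if $f'\colon\X'\to[1]$ exhibits $\Y'=(f')^{-1}(0)$ as left closed, then $f'\circ F\colon\X\to[1]$ exhibits $\Y=F^{-1}(\Y')=(f'F)^{-1}(0)$ as left closed, and symmetrically for right closed subcategories using preimages of $1$. For the initiality clause, write $j'\colon\Y'\hookrightarrow\X'$ and $j\colon\Y\hookrightarrow\X$ for the inclusions; since $\Y'$ is full, the equality $\Y=F^{-1}(\Y')$ realises $\Y$ as the pullback $\X\times_{\X'}\Y'$, so $j$ is the base change of $j'$ along $F$. I would then invoke that a $\baseTopos$-cartesian fibration is proper, so that base change along it preserves $\baseTopos$-initiality: properness of $F$ gives the Beck--Chevalley equivalence $(F|_\Y)_*\,j^*\simeq j'^*\,F_*$, and feeding this into the identity $(p_{\Y'})_*\,j'^*\simeq(p_{\X'})_*$ expressing initiality of $j'$ --- together with $p_{\X}=p_{\X'}\circ F$ and $p_{\Y}=p_{\Y'}\circ F|_\Y$, where $p_{(-)}$ denotes the structure functor to the terminal $\baseTopos$-category --- yields $(p_{\Y})_*\,j^*\simeq(p_{\X})_*$, i.e.\ the initiality of $j$. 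The cocartesian--final case follows by passing to opposite $\baseTopos$-categories. (Properness of $\baseTopos$-cartesian fibrations is the parametrised form of a standard fact and is recorded in the appendices; alternatively one argues pointwise via the slice criterion for initiality, using that $\X_{/x}\to\X'_{/F(x)}$ is again cartesian and $\X'_{/F(x)}$ has a terminal object.)

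For (2): by \cref{defn:categorypair} together with the uniqueness clause of \cref{lem:uniqueness_functor_to_1}, $\Y$ is the fibre product $\X\times_{[1]}\{0\}$ taken along the classifying $\baseTopos$-functor $\X\to[1]$; the inclusion $\{0\}\hookrightarrow[1]$ is a (constant, hence $\baseTopos$-) right fibration --- under straightening it corresponds to the subterminal presheaf on $[1]$ with value the point at $0$ and the empty $\baseTopos$-space at $1$ --- and $\baseTopos$-right fibrations are stable under pullback, so $\Y\hookrightarrow\X$ is one as well. For (3): the content is that $\Z\subseteq\X$ is left closed, equivalently that $\Z$ is closed under sources of morphisms of $\X$ with target in $\Z$ (this characterisation of left closed full $\baseTopos$-subcategories, and the uniqueness of the resulting classifier, being collected in the appendices resp.\ \cref{lem:uniqueness_functor_to_1}). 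This holds by transitivity: given $x\to z$ in $\X$ with $z\in\Z\subseteq\Y$, left-closedness of $\Y$ in $\X$ forces $x\in\Y$, whence $x\to z$ is a morphism of $\Y$ with target in $\Z$, and left-closedness of $\Z$ in $\Y$ forces $x\in\Z$. The resulting essentially unique classifier $h\colon\X\to[1]$ of $\Z$ then restricts on $\Y$ to a classifier of $\Z\cap\Y=\Z$, which by \cref{lem:uniqueness_functor_to_1} must be the given $\baseTopos$-functor $\Y\to[1]$; this is the asserted extension, uniquely characterised among extensions by sending $\interior{\X}$ to $1$.

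For (4): the functor $(\X,\partial\X)\mapsto\X$ is the domain projection $(\cat_\baseTopos)_{/[1]}\to\cat_\baseTopos$, and the forgetful functor out of any overcategory creates --- in particular preserves --- colimits. For $(\X,\partial\X)\mapsto\partial\X$, which is the base change $\{0\}^*\colon(\cat_\baseTopos)_{/[1]}\to(\cat_\baseTopos)_{/\{0\}}\simeq\cat_\baseTopos$ along $\{0\}\hookrightarrow[1]$, it suffices to exhibit a right adjoint, and I claim $(-)^\triangleright$ as in the statement works. Indeed, a $\baseTopos$-functor $\X\to\D^\triangleright$ lying over $[1]$ must carry $\partial\X$ into the fibre $\D$ over $0$ and $\interior{\X}$ into the fibre $\ast$ over $1$; since that fibre --- and hence every mapping $\baseTopos$-space of $\D^\triangleright$ with target an object of it --- is contractible, such a functor is determined, coherently, by its restriction $\partial\X\to\D$. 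This produces a natural equivalence $\map_{(\cat_\baseTopos)_{/[1]}}\big((\X,\partial\X),\D^\triangleright\big)\simeq\map_{\cat_\baseTopos}(\partial\X,\D)$, exhibiting $(-)^\triangleright$ as right adjoint to $(\X,\partial\X)\mapsto\partial\X$; hence the latter, being a left adjoint, preserves colimits.

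The only genuinely non-formal ingredient is the properness of $\baseTopos$-cartesian fibrations invoked in the initiality clause of (1), and that is where I expect the main work to sit. The one remaining point needing care is the coherence claim in (4) --- that the restriction $\map\big((\X,\partial\X),\D^\triangleright\big)\to\map(\partial\X,\D)$ is an equivalence of \emph{spaces}, not merely a bijection on components --- which is forced by the contractibility of the mapping $\baseTopos$-spaces of $\D^\triangleright$ into the adjoined terminal object.
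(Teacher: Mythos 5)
Your treatment of (1), (2) and (4) is correct and follows the route the paper intends: the first clause of (1) by composing classifiers, the initiality/finality clause by reducing to the proper/smooth basechange of \cref{prop:proper_smooth_base_change} (exactly the appendix result the paper relies on), (2) by pulling back $\{0\}\to[1]$, and (4) by exhibiting $(-)^{\triangleright}$ as a right adjoint via contractibility of mapping spaces into the cone point. Only your parenthetical ``alternative'' for (1) is garbled: the terminal object of $\X'_{/F(x)}$ is not the relevant input; what one would use is that $\X_{/x}\to\X'_{/F(x)}$ has weakly contractible fibres together with weak contractibility of $\Y'_{/F(x)}$ coming from initiality of $\Y'\subseteq\X'$. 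Since your main Beck--Chevalley argument is fine, this is harmless.

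The genuine issue is in (3). You claim that, over a general base topos $\baseTopos$, a full $\baseTopos$-subcategory is left closed \emph{if and only if} it is closed under sources of morphisms with target in it, and you attribute this characterisation to the appendices. No such statement appears there, and the equivalence is false in general: a $\baseTopos$-functor $\X\to[1]$ encodes, besides the levelwise ``closed under sources'' condition, strict compatibility with restriction, namely $\Z(\tau')=(u^*)^{-1}\bigl(\Z(\tau)\bigr)$ for every $u\colon\tau\to\tau'$ (this is the second bullet of the paper's example on presheaf topoi); a full $\baseTopos$-subcategory only guarantees the inclusion $\Z(\tau')\subseteq(u^*)^{-1}(\Z(\tau))$. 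Concretely, over $\baseTopos=\presheaf([1])$ take $\X$ given by the inclusion of discrete categories $\X(1)=\{a\}\hookrightarrow\X(0)=\{a,b\}$ as restriction functor, and the full subfunctor with $\Z(1)=\emptyset$, $\Z(0)=\{a\}$: it is levelwise left closed but admits no classifier $\X\to[1]$, hence is not left closed. So the step ``closed under sources $\Rightarrow$ left closed'' does not stand as written.

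The statement of (3) is nevertheless true and your argument is repairable: in the transitive situation the preimage condition is automatic, since $x\in\X(\tau')$ with $u^*(x)\in\Z(\tau)\subseteq\Y(\tau)$ lies in $(u^*)^{-1}(\Y(\tau))=\Y(\tau')$ by left-closedness of $\Y\subseteq\X$, and then in $\Z(\tau')$ by left-closedness of $\Z\subseteq\Y$; since mapping spaces into $[1]$ are discrete (as in the proof of \cref{lem:uniqueness_functor_to_1}), the levelwise classifiers assemble essentially uniquely into a $\baseTopos$-functor $\X\to[1]$. Note also that the paper's own (terse) route is different: it extends the given classifier $\Y\to[1]$ of $\Z$ to $\X$ by the constant value $1$ on $\X\setminus\Y$, which amounts to the same verification but avoids invoking any general characterisation.
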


\begin{example}
If $\baseTopos=\presheaf(\B)$ as in \cref{ex:presheaftopoi}, then a $\baseTopos$-category $\X$ can be described as a functor $\X\colon\B\op\to\cat$; a left closed $\baseTopos$-subcategory $\Y\subseteq\X$ is then the same as a family of full subcategories $\Y(b)\subseteq\X(b)$, for $b\in\B$, satisfying the following:
\begin{itemize}
\item for all $b\in \B$ and all morphism $x\to y$ in $\X(b)$ with $y\in\Y(b)$ we have $x\in\Y(b)$;
\item for all $f\colon b\to b'$ in $\B$ we have $\Y(b')=(f^*)^{-1}(\Y(b))$.
\end{itemize}
\end{example}

\begin{defn}
\label{defn:complement_union_intersection}
Given left closed $\baseTopos$-subcategories $\Y=f^{-1}(0)$ and $\Y'=g^{-1}(0)$ for $\baseTopos$-functors $f,g\colon\X\to[1]$ we define the following:
\begin{itemize}
\item we let $\X\setminus\Y\subseteq\X$ denote the right closed $\baseTopos$-subcategory $f^{-1}(1)$; this definition does not depend on $f$ by \cref{lem:uniqueness_functor_to_1};
\item the intersection $\Y\cap\Y'$ is defined as the pullback $\Y\times_\X\Y'$; this coincides with the left closed $\baseTopos$-subcategory of $\X$ corresponding to the composite $\baseTopos$-functor $\X\xrightarrow{f\times g} [1]^2\xrightarrow{\max}[1]$: indeed $\Y\times_\X\Y'$ is the fibre at $(0,0)$ of $f\times g\colon\X\to[1]^2$;
\item the union $\Y\cup\Y'$ is the left closed $\baseTopos$-subcategory corresponding to the composite $\baseTopos$-functor $\X\xrightarrow{f\times g} [1]^2\xrightarrow{\min}[1]$.
\end{itemize}
We similarly define complements, intersections and unions of right closed $\baseTopos$-subcategories.
\end{defn}

\begin{rmk}
\label{rmk:left_closed_covers}
Recall from Subsection \ref{subsec:recollections} that a $\baseTopos$-category $\D$ may be regarded as a complete Segal object in $s\baseTopos$, with object of $n$-simplices given by $\D_n\coloneqq \mapTopos_{\baseTopos}([n],\D) \in \baseTopos$.

Let $\X$ be a $\baseTopos$-category, let $\Y,\Y' \subset \X$ be left closed $\baseTopos$-subcategories, and denote  $\Z\coloneqq\Y\cap\Y'$ and $\W=\Y\cup\Y'$. Then we may compute the pushout $\Y\amalg_\Z\Y'$ in $s\baseTopos$ as the $\baseTopos$-functor $\simplex\op\to\baseTopos$ sending $[n]\mapsto \Y_n \amalg_{\Z_n} \Y'_n$.
Using the left closedness assumption, for all $\tau\in\baseTopos$ and $[n]\in\simplex$ we have $\Y_n(\tau)\amalg_{\Z_n(\tau)}\Y'_n(\tau)\xrightarrow{\simeq}\W_n(\tau)$, in particular the pushout $\Y\amalg_\Z\Y'$ in $s\baseTopos$ is computed levelwise and it is isomorphic to $\W$; since $\W$ is a $\baseTopos$-category, we conclude that $\W$ is also the pushout in $\cat_\baseTopos$ of the span $Y\leftarrow\Z\to\Y'$.
\end{rmk}

\begin{constr}\label{cons:integral_categories}
Given a $\baseTopos$-category pair $(I,\partial I)$ and a functor $p \colon \X \to I$, we get a new $\baseTopos$-category pair $(\X,\partial\X)$ by setting $\partial\X = p^{-1}(\partial I)$. In particular, this gives the following construction device for $\baseTopos$-categories. Given a $\baseTopos$-category pair $(I,\partial I)$ and a $\baseTopos$-functor $X \colon I \to \cat_\baseTopos$ we get by means of the cocartesian unstraightening the \textit{unstraightened $\baseTopos$-category} $\X = \int_I X \to I$. The functor down to $I$ enhances $\X$ to a $\baseTopos$-category pair with $\partial\X = \int_{\partial I} X$.
\end{constr}

\begin{example}
\label{ex:mappingcylinder}
Suppose $X\colon I \rightarrow\baseTopos\subseteq \cat_{\baseTopos}$. If $(I,\partial I)=(*,\emptyset)$, then $\int_I X\simeq X(*)$ is a $\baseTopos$-space, 
and $\int_{\partial I}X\simeq\emptyset$. 
If $(I,\partial I)=([1],\{0\})$, then $\int_I X$ is  the ``mapping cylinder'' $(X(0)\times[1])\amalg_{X(0)\times1}X(1)$ of the morphism $g\colon X(0)\to X(1)$ in $\baseTopos$ corresponding to $X$, and $\int_{\partial I}X\simeq X(0)\times\{0\}\simeq X(0)$ is again a $\baseTopos$-space. 
\end{example}

We end the subsection by establishing a stable recollement for $\baseTopos$-category pairs.
\begin{rmk}
\label{rmk:left_closed_kan_extension}
Let $i\colon \Y \subset \X$ be a left closed $\baseTopos$-subcategory inclusion, and let $\D$ be a complete $\baseTopos$-category. Then, for $\zeta\colon \Y \to \D$, the right Kan extension $i_* (\zeta) \colon \X \to \D$ restricts to the terminal functor on the complement $\Z\coloneqq\X \setminus \Y$. To see this, we observe that the pullback $\Z\times_\X\Y$ is the ``empty'' (initial) $\baseTopos$-category $\emptyset$, as a consequence of the left closed assumption; moreover, by \cref{obs:leftclosed}\ref{rmk:left_closed_left_fibration}, the vertical maps in the pullback square
\[
\begin{tikzcd}[row sep=10pt]
\emptyset\ar[r,"\tilde j"]\ar[d,"\tilde i"']\ar[dr,phantom,"\lrcorner"very near start]&\Y\ar[d,"i"]\\
\Z\ar[r,"j"]&\X
\end{tikzcd}
\]
are right fibrations. Hence, using \cref{prop:proper_smooth_base_change}, we may compute $j^*i_*(\zeta)\simeq \tilde i_*\tilde j^*(\zeta)$, and the right Kan extension of the (unique) functor $\emptyset\to\D$ along $\tilde i$ yields the terminal functor $\Z\to\D$. This argument can be repeated levelwise in $\baseTopos$ to conclude that the $\baseTopos$-functor $j^*i_*\colon\D^\Y\to\D^\Z$ is equivalent to the composite $\D^\Y\to\ast\xrightarrow{\mathrm{ter}}\D\xrightarrow{\Z^*}\D^\Z$, where $\mathrm{ter}$ picks the terminal object.

Analogously, if $j\colon\Z \subset \X$ is right closed and $\D$ is cocomplete, then the left Kan extension of $\zeta\colon\Z\to\D$ to $\X$ restricts to the initial functor on $\Y\coloneqq\X\setminus\Z$; and $i^*j_!\colon\D^\Z\to\D^\Y$ agrees with the composite $\D^\Z\to\ast\xrightarrow{\mathrm{ini}}\D\xrightarrow{\Y^*}\D^\Y$.

If we work with presheaves as opposed to $\baseTopos$-categories of covariant $\baseTopos$-functors, we precompose with $(-)\op$, which interchanges the notion of left and right closed $\baseTopos$-subcategories. Hence the composite $\presheafTopos(\Z;\D) \xrightarrow{j_*}\presheafTopos(\X;\D) \xrightarrow{i^*} \presheafTopos(\Y;\D)$ is constant at the terminal object, and $\presheafTopos(\Y;\D) \xrightarrow{i_!} \presheafTopos(\X;\D) \xrightarrow{j^*} \presheafTopos(\Z;\D)$ is constant at the initial object.
\end{rmk}

\begin{constr}[Stable recollement for pairs]\label{constr:stable_recollement_for_pairs}
Let $(\X,\partial\X)$ be a $\baseTopos$-category pair as in \cref{nota:X_dX_interiorX}.
The fully faithful inclusion $j_* \colon \presheafTopos(\interior{\X};\sC) \hookrightarrow \presheafTopos(\X;\sC)$ may be identified with the canonical inclusion $\fib(\presheafTopos(\X;\sC)\xrightarrow{i^*}\presheafTopos(\partial\X;\sC))\subseteq \presheafTopos(\X;\sC)$. Indeed, $j^* j_* \simeq \id$ and $i^*j_* \simeq 0$ by \cref{rmk:left_closed_kan_extension}, as a consequence of $\interior{\X} \subset \X$ being right closed. 

Moreover the fibre $\fib(\presheafTopos(\X;\sC)\xrightarrow{i^*}\presheafTopos(\partial\X;\sC))\subseteq \presheafTopos(\X;\sC)$ can be computed levelwise in $\baseTopos$, and for all $\tau\in\baseTopos$ we have that $j_*(\tau)$ is fully faithful with essential image $\fib(\func_{\baseTopos_{/\tau}}(\pi_\tau^*\X\op,\pi_\tau^*\sC)\xrightarrow{\pi_\tau^*(i)}\func_{\baseTopos_{/\tau}}(\pi_\tau^*\partial\X\op,\pi_\tau^*\sC))$.
Since $i^*$ admits both adjoints which happen to be fully faithful, by \cite[Sec. A.2]{nineauthorsII} we get the stable recollement
\[
\begin{tikzcd}
\presheafTopos(\interior{\X};\sC) \rar[hook, "j_*"] & \presheafTopos(\X;\sC) \rar[two heads, "i^*"]\lar[two heads, bend right = 45, "j^*"']\lar[two heads, bend right = -40, "j^{\#}"] & \presheafTopos(\partial\X;\sC), \lar[hook, bend right = 45, "i_!"']\lar[hook, bend right = -40, "i_*"]
\end{tikzcd}
\]
i.e. the datum of three bifibre sequences in $\presentable^L_{\baseTopos,\stable}$ where each bifibre sequence is left adjoint to the one immediately below it. By general properties of a stable recollement, we may describe the $\baseTopos$-functors $j^{*} $ and $j^{\#}$ as
\[
j^{\#}\simeq j^*j_*j^{\#}\simeq \fib\big(j^*\to j^*i_*i^*\big)\quad\text{    and    }\quad  j_*j^*\simeq \cofib\big(i_!i^*\to \id_{\presheafTopos(\X;\sC)}\big).
\]
\end{constr}

The functors in \cref{constr:stable_recollement_for_pairs} are all essential for our study of Poincar\'e duality pairs, and we will often refer to them in what follows.

\subsection{Poincar\'e duality category pairs}
\label{subsec:pdcatpairs_definition}
\begin{nota}
For the rest of this article, except for the appendices, we will assume that $\sC \in \calg(\presentable^L_{\baseTopos,\stable})$ is a presentably symmetric monoidal $\baseTopos$-category that is levelwise stable. 
\end{nota}

Let $(\X,\partial \X)$ be a $\baseTopos$-category pair as in \cref{nota:X_dX_interiorX}. We have $\baseTopos$-natural transformations $\X_* \rightarrow \X_* i_* i^* \simeq \partial \X _* i^*$ and $\partial\X_!i^*\simeq \X_!i_!i^*\rightarrow\X_! $ in $\func_{\baseTopos}(\presheafTopos(\X;\sC),\sC)$ coming from the adjunction (co)units $\id\rightarrow i_*i^*$ and $i_!i^*\rightarrow \id$.
\begin{defn}
\label{defn:relative_cohomology}
We define the \textit{relative cohomology} and \textit{relative homology} $\baseTopos$-functors  as
\[ 
(\X,\partial \X)_* \coloneqq \fib(\X_* \to \partial \X_* i^*)\colon\presheafTopos(\X;\sC)\rightarrow\sC,\quad  (\X,\partial\X)_!\coloneqq\cofib(\partial\X_!i^*\to\X_!)\colon\presheafTopos(\X;\sC)\to\sC.
\]
\end{defn}
\begin{obs}\label{obs:ambidextral_interpretation}
Using \cref{constr:stable_recollement_for_pairs}, we see that the relative cohomology functor $(\X,\partial\X)_*$ is equivalent to $\interior{\X}_*j^{\#}$, and hence it 
is right adjoint to $j_*\interior{\X}^*\simeq\cofib(i_!i^*\X^*\to \X^*)$; the latter functor $\sC\to\presheafTopos(\X;\sC)$, informally, sends an object of $\sC$ to the presheaf on $\X$ which is constant at that object on $\interior{\X}$, and vanishes on $\partial\X$.
\end{obs}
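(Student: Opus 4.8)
The plan is to reduce the statement to the stable recollement of \cref{constr:stable_recollement_for_pairs} by means of two elementary observations together with one slightly less formal identification of functors.

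First I would record the identities $\X_* i_* \simeq \partial\X_*$ and $\X_* j_* \simeq \interior{\X}_*$. Both follow from uniqueness of right adjoints: the restriction of a constant presheaf is again constant, so $i^*\X^* \simeq \partial\X^*$ and $j^*\X^* \simeq \interior{\X}^*$, and passing to right adjoints identifies the composites $\X_* i_*$ and $\X_* j_*$ with $\partial\X_*$ and $\interior{\X}_*$ respectively. Under the first identity, the natural transformation $\X_* \to \partial\X_* i^*$ from \cref{defn:relative_cohomology} becomes $\X_*$ applied to the unit $\id \to i_* i^*$; since $\X_*$ is a right adjoint it preserves fibres, so
\[
(\X,\partial\X)_* \;\simeq\; \X_* \circ \fib\bigl(\id_{\presheafTopos(\X;\sC)} \to i_* i^*\bigr).
\]

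The crux is then the identification $\fib(\id_{\presheafTopos(\X;\sC)} \to i_* i^*) \simeq j_* j^{\#}$, i.e. the recognition of the colocalisation complementary to the Bousfield localisation $i_*i^*$. Since $i^*i_* \simeq \id$, applying $i^*$ to $\fib(F \to i_* i^* F)$ yields $\fib(i^* F \xrightarrow{\simeq} i^* F) \simeq 0$; thus $\fib(\id \to i_*i^*)$ takes values in $\ker(i^*)$, which by \cref{constr:stable_recollement_for_pairs} is precisely the essential image of the fully faithful $j_*$. Hence $\fib(\id \to i_*i^*) \simeq j_*\Phi$ for an essentially unique $\Phi\colon\presheafTopos(\X;\sC)\to\presheafTopos(\interior{\X};\sC)$, and it remains to see that the canonical map $j_*\Phi = \fib(\id\to i_*i^*) \to \id$ is the counit of an adjunction $j_* \dashv \Phi$. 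For that, given $G \in \presheafTopos(\interior{\X};\sC)$ and $F \in \presheafTopos(\X;\sC)$, one has $\map(j_* G, i_* i^* F) \simeq \map(i^* j_* G, i^* F) \simeq \map(0, i^* F) \simeq *$ using $i^* j_* \simeq 0$, so applying $\map(j_* G, -)$ to the fibre sequence $j_*\Phi F \to F \to i_* i^* F$ produces an equivalence $\map(j_* G, j_*\Phi F) \xrightarrow{\simeq} \map(j_* G, F)$; by uniqueness of right adjoints $\Phi \simeq j^{\#}$. Combining with the displayed formula and $\X_* j_* \simeq \interior{\X}_*$ gives $(\X,\partial\X)_* \simeq \X_* j_* j^{\#} \simeq \interior{\X}_* j^{\#}$.

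The remaining assertions are formal. Being the composite of the right adjoint $\interior{\X}_*$ of $\interior{\X}^*$ with the right adjoint $j^{\#}$ of $j_*$, the functor $\interior{\X}_* j^{\#}$ is right adjoint to $j_*\interior{\X}^*$. Finally $\interior{\X}^* \simeq j^*\X^*$ as above, so the recollement formula $j_* j^* \simeq \cofib(i_! i^* \to \id)$ gives $j_*\interior{\X}^* \simeq j_* j^*\X^* \simeq \cofib(i_! i^*\X^* \to \X^*)$, and the informal description of this functor follows by computing its two restrictions, $j^*(j_*\interior{\X}^*(-)) \simeq \interior{\X}^*(-)$ and $i^*(j_*\interior{\X}^*(-)) \simeq 0$, the latter by \cref{rmk:left_closed_kan_extension}. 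The only step carrying genuine content is the identification of $\fib(\id \to i_*i^*)$ with $j_*j^{\#}$ inside the stable recollement; everything else is bookkeeping with units, counits, and uniqueness of adjoints.
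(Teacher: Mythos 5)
Your proof is correct and takes essentially the same route the paper intends: identify $\partial\X_*\simeq\X_*i_*$ and $\interior{\X}_*\simeq\X_*j_*$, use that $\X_*$ preserves fibres, and read the rest off the stable recollement of \cref{constr:stable_recollement_for_pairs} (in particular $j_*j^*\simeq\cofib(i_!i^*\to\id)$). The only difference is that you re-derive the colocalisation identification $\fib(\id\to i_*i^*)\simeq j_*j^{\#}$ by hand (via $\ker(i^*)=\mathrm{im}(j_*)$ and a mapping-space argument) instead of quoting it as part of the recollement package, which is harmless.
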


Our definition of Poincar\'e duality for the pair $(\X,\partial \X)$ will be a property of the functor $(\X,\partial\X)_*$.
Namely, we want to assign a dualising system to this functor via Morita theory from \cref{sec:morita}, and define $(\X,\partial \X)$ to be a Poincar\'e duality pair provided that the dualising system is invertible. 
To do so, we need that $(\X,\partial \X)$ refines to a colimit preserving $\sC$-linear functor.
This is encapsulated in the next definition, following the terminology of \cite{Cnossen2023}.

\begin{defn}
\label{defn:ambidexterityforpairs}
A $\baseTopos$-category $\X$ is \textit{$\sC$-twisted ambidextrous} if the $\baseTopos$-functor $\X_* \colon \presheafTopos(\X;\sC) \to \sC$ with its lax $\sC$-linear structure provided by \cref{prop:right_adjoint_to_C_linear_functor_is_lax_C_linear} is in fact $\sC$-linear and preserves colimits. A $\baseTopos$-category pair $(\X,\partial \X)$ is \textit{$\sC$-twisted ambidextrous} if both $\X$ and $\partial \X$ are so.
\end{defn}

The condition on a $\baseTopos$-category to be $\sC$-twisted ambidextrous is rather strong. Philosophically, it should be thought of as a ``smallness/finiteness/compactness'' condition on $\X$ from the eyes of the levelwise stable $\sC\in\calg(\presentable^L_{\baseTopos,\stable})$. For example, in \cref{cor:ambidexterity_and_duality} we will see that if $\X$ is $\sC$-twisted ambidextrous, then $\X_!(\unit) \in \sC$ is dualisable.

\begin{lem}
\label{lem:C_linear_structure_on_relative_cohomology}
Let $(\X,\partial \X)$ be a $\sC$-twisted ambidextrous pair. Then the $\baseTopos$-functor 
$(\X,\partial \X)_*$ commutes with colimits and carries a $\sC$-linear structure so that $(\X,\partial \X)_* \rightarrow \X_* \rightarrow \partial \X_*i^*$ is a fibre sequence of $\sC$-linear $\baseTopos$-functors.
\end{lem}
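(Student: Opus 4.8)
The plan is to exhibit $(\X,\partial\X)_*$ as a fibre formed inside the \emph{stable} $\infty$-category $\func^L_{\baseTopos,\sC}(\presheafTopos(\X;\sC),\sC)$ of colimit-preserving $\sC$-linear $\baseTopos$-functors, so that colimit-preservation, the strict $\sC$-linear structure, and the $\sC$-linearity of the fibre sequence all come for free.

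First I would assemble the inputs. By $\sC$-twisted ambidexterity (\cref{defn:ambidexterityforpairs}), both $\X_*$ and $\partial\X_*$ are colimit-preserving and $\sC$-linear. The restriction functor $i^*\colon\presheafTopos(\X;\sC)\to\presheafTopos(\partial\X;\sC)$ is of the form treated in \cref{lem:naturality_classification_C_linear_functors}(1) (applied to $f=i\op$): it is $\sC$-linear and, admitting the left adjoint $(i\op)_!$, colimit-preserving. Hence $\partial\X_*i^*$ is a colimit-preserving $\sC$-linear $\baseTopos$-functor, i.e. an object of $\func^L_{\baseTopos,\sC}(\presheafTopos(\X;\sC),\sC)$, as is $\X_*$. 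Since $\sC$ is levelwise stable, all the $\sC$-linear $\baseTopos$-categories and their internal homs appearing here are levelwise stable; in particular $\func^L_{\baseTopos,\sC}(\presheafTopos(\X;\sC),\sC)$ is stable, with finite (co)limits computed pointwise.

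The crux is that the canonical transformation $\X_*\to\partial\X_*i^*$ — which by construction is $\X_*$ whiskered with the unit $\eta\colon\id_{\presheafTopos(\X;\sC)}\to i_*i^*$ of $i^*\dashv i_*$, under the identification $\X_*i_*\simeq\partial\X_*$ — refines to a morphism in $\func^L_{\baseTopos,\sC}(\presheafTopos(\X;\sC),\sC)$, and not merely of underlying $\baseTopos$-functors. Here $i_*$ carries its canonical lax $\sC$-linear structure as right adjoint of the $\sC$-linear functor $i^*$ (\cref{prop:right_adjoint_to_C_linear_functor_is_lax_C_linear}), the equivalence $\X_*i_*\simeq\partial\X_*$ is one of lax $\sC$-linear functors by compatibility of these canonical structures with composition, and by \cref{lem:adjunction_unit_lax_linear} (recalled in \cref{rec:recognition_c_linear_functors}) the unit $\eta$ is a lax $\sC$-linear natural transformation; whiskering it on the left by the lax $\sC$-linear functor $\X_*$ preserves this. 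Thus $\X_*\to\partial\X_*i^*$ is a lax $\sC$-linear transformation between two objects of $\func^L_{\baseTopos,\sC}(\presheafTopos(\X;\sC),\sC)$, and since membership there is a full condition among (lax) $\sC$-linear functors (\cref{rec:recognition_c_linear_functors}), it is a morphism there.

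Finally I would take $\fib(\X_*\to\partial\X_*i^*)$ in $\func^L_{\baseTopos,\sC}(\presheafTopos(\X;\sC),\sC)$: this yields a colimit-preserving $\sC$-linear $\baseTopos$-functor fitting into a fibre sequence of $\sC$-linear functors. It remains to identify this fibre with $(\X,\partial\X)_*$ of \cref{defn:relative_cohomology}; this holds because the forgetful functor to $\func_\baseTopos(\presheafTopos(\X;\sC),\sC)$ is full and preserves finite limits — pointwise limits of colimit-preserving $\sC$-linear functors remain colimit-preserving (finite limits commute with colimits in stable presentable $\baseTopos$-categories) and $\sC$-linear (the tensoring functors of $\sC$ are exact) — so the fibre formed linearly maps to the pointwise fibre, which is exactly $(\X,\partial\X)_*$. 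I would also remark that, by essential uniqueness of strict refinements of lax $\sC$-linear functors (\cref{rec:recognition_c_linear_functors}), the structure so produced coincides with the canonical lax $\sC$-linear structure on $(\X,\partial\X)_*$ arising from \cref{obs:ambidextral_interpretation}. The one delicate point, and the main obstacle, is the third step: keeping track of the lax/strict distinction and the $2$-categorical compatibilities of adjunction units and whiskering with lax $\sC$-linear structures, all of which is packaged into \cref{rec:recognition_c_linear_functors} and the results it cites.
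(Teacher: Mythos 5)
Your proposal is correct and follows essentially the same route as the paper: upgrade the unit map $\X_*\to\partial\X_*i^*$ to a (lax, hence strict) $\sC$-linear transformation via \cref{lem:adjunction_unit_lax_linear}, form the fibre in the stable $\infty$-category $\func^L_{\baseTopos,\sC}(\presheafTopos(\X;\sC),\sC)$, and check that the forgetful functor to $\func_{\baseTopos}(\presheafTopos(\X;\sC),\sC)$ preserves fibres. The only divergence is in that last verification — the paper shows the forgetful functor preserves colimits (hence fibres, by stability) using the Morita identification $\func^L_{\baseTopos,\sC}(\presheafTopos(\X;\sC),\sC)\simeq\func_{\baseTopos}(\X,\sC)$, whereas you argue directly that pointwise fibres of colimit-preserving $\sC$-linear functors remain such; also note that the forgetful functor to plain $\baseTopos$-functors is faithful but not full (only the forgetful functor to $\func_{\baseTopos,\sC}$ of \cref{rec:recognition_c_linear_functors} is a full inclusion), though fullness is not actually needed for your argument.
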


\begin{proof}
The adjunction unit $\X_* \to \X_* i_* i^* \simeq \partial \X i^*$ is a map of (lax) $\sC$-linear $\baseTopos$-functors by \cref{lem:adjunction_unit_lax_linear}, so it is in fact a map in $\func_{\baseTopos,\sC}^L(\presheafTopos(\X;\sC),\sC) \simeq \func_\baseTopos(\X,\sC)$, which is a category that admits fibres.
We conclude by showing that both forgetful functors in the composite
\[ \func_{\baseTopos,\sC}^L(\presheafTopos(\X;\sC),\sC) \to \func^L_{\baseTopos}(\presheafTopos(\X;\sC),\sC) \to  \func_{\baseTopos}(\presheafTopos(\X;\sC),\sC) \]
preserve fibres. By levelwise stability of $\sC$, it suffices to check that both functors preserve colimits. This is clear for the second functor, whereas the first functor identifies as
\[ 
\func_{\baseTopos,\sC}^L(\presheafTopos(\X;\sC),\sC) \simeq \func_{\baseTopos}(\X,\sC) \to  \func_{\baseTopos}(\X,\funTopos^L_{\baseTopos}(\sC,\sC)) \simeq \func^L_{\baseTopos}(\presheafTopos(\X;\sC),\sC) \]
where the middle functor is induced by the colimit preserving $\baseTopos$-functor $\sC \to \funTopos^L_\baseTopos(\sC,\sC)$ adjoint to the tensor product $\sC\otimes\sC\to\sC$ morphism in $\presentable^L_{\baseTopos}$.
\end{proof}

\begin{obs}
Let $(\X,\partial \X)$ be a $\baseTopos$-category pair as in \cref{nota:X_dX_interiorX}. 
The adjunction $j_*\interior{\X}^* \dashv (\X,\partial \X)_*$ from \cref{obs:ambidextral_interpretation} always equips $(\X,\partial \X)_*$ with a lax $\sC$-linear structure. Indeed, the lax $\sC$-linear structure on $j_*$ as a right adjoint to $j^*$ is in fact $\sC$-linear, as it is an ``extension by zero'', see \cref{obs:ambidextral_interpretation}. Hence $j_* \interior{X}^*$ attains a $\sC$-linear structure as a composite of $\sC$-linear functors, and so its right adjoint carries a lax $\sC$-linear structure.
If both $\partial \X$ and $\X$ are $\sC$-twisted ambidextrous, the latter lax $\sC$-linear structure agrees with the one from \cref{lem:C_linear_structure_on_relative_cohomology}, hence it is in fact a $\sC$-linear structure.
\end{obs}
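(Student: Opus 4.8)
The plan is to establish the two assertions of the observation separately, in both cases by bookkeeping adjunctions and lax $\sC$-linear structures. For the unconditional claim I would start from \cref{obs:ambidextral_interpretation}, which exhibits $(\X,\partial\X)_*$ as the right adjoint of $j_*\interior{\X}^*$; by \cref{prop:right_adjoint_to_C_linear_functor_is_lax_C_linear} it then suffices to check that $j_*\interior{\X}^*$ is a morphism in $\module_{\sC}(\presentable^L_{\baseTopos,\stable})$. Here the constant-diagram functor $\interior{\X}^*\colon\sC\to\presheafTopos(\interior{\X};\sC)$ is $\sC$-linear because the $\sC$-action and the monoidal structure on presheaves are pointwise, and colimit-preserving because it is left adjoint to $\interior{\X}_*=\lim_{\interior{\X}}$; and $j_*$ is $\sC$-linear and colimit-preserving because, by \cref{constr:stable_recollement_for_pairs} together with \cref{rmk:left_closed_kan_extension}, it is the inclusion of $\fib(i^*)\subseteq\presheafTopos(\X;\sC)$ formed in $\module_{\sC}(\presentable^L_{\baseTopos,\stable})$, i.e.\ an extension-by-zero. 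Hence $j_*\interior{\X}^*$ lies in $\module_{\sC}(\presentable^L_{\baseTopos,\stable})$, and $(\X,\partial\X)_*$ inherits the canonical lax $\sC$-linear structure $\lambda$ of a right adjoint.

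For the conditional claim I would use that $\sC$-twisted ambidexterity of $\X$ and $\partial\X$ says exactly that the canonical lax structures on $\X_*$ and $\partial\X_*$ are strict and that these functors preserve colimits. Together with the $\sC$-linearity and colimit-preservation of $i^*$, $i_!$ (\cref{lem:naturality_classification_C_linear_functors}), $\X^*$ and $\partial\X^*$, this promotes $\X^*\dashv\X_*$ and $i_!\partial\X^*\dashv\partial\X_*i^*$ to internal adjunctions in $\module_{\sC}(\presentable^L_{\baseTopos,\stable})$, with the right adjoints carrying their strict structures. The counit $\alpha\colon i_!\partial\X^*\simeq i_!i^*\X^*\to\X^*$ is then a $2$-morphism there whose cofibre is $j_*\interior{\X}^*$ by \cref{obs:ambidextral_interpretation} (the underlying functor is as claimed because $\func^L_{\baseTopos,\sC}(\presheafTopos(\X;\sC),\sC)\to\func^L_{\baseTopos}(\presheafTopos(\X;\sC),\sC)$ preserves cofibres, as in the proof of \cref{lem:C_linear_structure_on_relative_cohomology}). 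Passing to right adjoints turns this cofibre sequence of $\sC$-linear left adjoints into a fibre sequence $(\X,\partial\X)_*\to\X_*\to\partial\X_*i^*$ of lax $\sC$-linear functors carrying the canonical lax structures, so its first term carries the structure $\lambda$ and its second map is the mate of $\alpha$, namely the unit $\X_*\to\X_*i_*i^*\simeq\partial\X_*i^*$ used in \cref{lem:C_linear_structure_on_relative_cohomology}. Since under twisted ambidexterity $\X_*$, $\partial\X_*i^*$ and that map are already strictly $\sC$-linear, this is equally the fibre sequence of \cref{lem:C_linear_structure_on_relative_cohomology} computed inside $\func^L_{\baseTopos,\sC}(\presheafTopos(\X;\sC),\sC)$; comparing the two, $\lambda$ agrees with that $\sC$-linear structure, and in particular it is strict. (Alternatively, once \cref{lem:C_linear_structure_on_relative_cohomology} supplies colimit-preservation of $(\X,\partial\X)_*$, it suffices to see that $\lambda$ is strict, and the uniqueness of strict refinements in \cref{rec:recognition_c_linear_functors} then pins it down from its underlying $\baseTopos$-functor.)

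The step I expect to be the main obstacle is the penultimate one: making precise that passing to right adjoints, i.e.\ forming mates, transports the cofibre sequence of $\sC$-linear left adjoints to a fibre sequence of \emph{lax} $\sC$-linear functors \emph{carrying the canonical lax structures} of \cref{prop:right_adjoint_to_C_linear_functor_is_lax_C_linear}, so that the structure induced on $(\X,\partial\X)_*$ by that fibre sequence is literally $\lambda$ and not merely some lax structure with the same underlying $\baseTopos$-functor. This is a $2$-categorical compatibility between adjunctions, finite (co)limits and lax linearity; I would extract it from \cref{lem:adjunction_unit_lax_linear} and \cref{rec:recognition_c_linear_functors}, if necessary after isolating a short lemma to the effect that the right-adjoint assignment is a (lax symmetric monoidal) functor out of the subcategory of internal left adjoints.
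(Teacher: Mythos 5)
Your proposal is correct, and on the unconditional claim it is exactly the paper's argument: identify $j_*\interior{\X}^*$ as a composite of the extension-by-zero $j_*$ (strictly $\sC$-linear and colimit-preserving by the recollement of \cref{constr:stable_recollement_for_pairs}) with the $\sC$-linear left adjoint $\interior{\X}^*$, and then invoke \cref{prop:right_adjoint_to_C_linear_functor_is_lax_C_linear} to endow the right adjoint $(\X,\partial\X)_*$ with a lax structure. For the conditional claim the paper simply \emph{asserts} that this lax structure agrees with the one produced in \cref{lem:C_linear_structure_on_relative_cohomology}, whereas you actually argue it: you realise $j_*\interior{\X}^*$ as the cofibre of $i_!\partial\X^*\to\X^*$ inside $\module_\sC(\presentable^L_{\baseTopos,\stable})$ and pass to mates, so that the canonical lax structure on the right adjoint of the cofibre is exhibited as the fibre, in lax $\sC$-linear functors, of the unit $\X_*\to\partial\X_*i^*$ equipped with its canonical (strict, by ambidexterity) structures — which is precisely the construction of the lemma. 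The $2$-categorical compatibility you flag (that forming right adjoints of internal left adjoints is functorial and carries this cofibre sequence to the fibre sequence with the canonical lax structures) is genuinely the crux, and the paper does not spell it out either; isolating it as a small lemma, as you suggest, is the honest way to complete the observation. One caution about your parenthetical fallback: \cref{rec:recognition_c_linear_functors} gives essential uniqueness of a strict refinement of a \emph{given} lax structure, not uniqueness of strict $\sC$-linear structures on a given underlying $\baseTopos$-functor, so knowing that $\lambda$ is strict and that the lemma's structure is strict does not by itself identify them; that route would still require comparing the two lax structures (or a Morita-type identification of their classifying systems), so the mate-calculus argument should remain your main line.
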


\begin{defn}
Given a $\sC$-twisted ambidextrous $\baseTopos$-category pair $(\X,\partial\X)$, we define the \textit{classifying system} $\omega_{\X,\partial \X} \in \func_{\baseTopos}(\X,\sC)$ to be the image of $(\X,\partial \X)_*$ under the equivalence of categories $\yoneda^*\colon \func_{\baseTopos,\sC}^L(\presheafTopos(\X;\sC),\sC) \xrightarrow{\simeq} \func_{\baseTopos}(\X,\sC)$ from \cref{prop:classification_of_linear_functors}, so that $\omega_{\X,\partial\X}\simeq (\X,\partial\X)_*\circ\yoneda$. When $\partial\X=\emptyset$, we write $\omega_\X\coloneqq \omega_{\X,\emptyset}\in\func_{\baseTopos}(\X,\sC)$  corresponding to $\X_*$.
\end{defn}

We now come to the main definition of this article.
\begin{defn}[$\sC$-Poincar\'e $\baseTopos$-category pair]
\label{defn:Poincarecatpair}
Let $\sC\in\calg(\presentable^L_{\baseTopos,\stable})$ be a levelwise stable presentably symmetric monoidal $\baseTopos$-category. A $\baseTopos$-category pair $(\X,\partial \X)$ is \textit{$\sC$-Poincar\'e} if it is $\sC$-twisted ambidextrous and, moreover, $\omega_{\X,\partial \X}$ takes values in the $\baseTopos$-subgroupoid $\picardSpaceTopos(\sC) \subset \sC^{\simeq}$ of invertible objects in $\sC$. We say that $\X\in\cat_{\baseTopos}$ is $\sC$-Poincar\'e if the pair $(\X,\emptyset)$ is $\sC$-Poincar\'e.
\end{defn}

The following definition is weaker than the requirement of satisfying $\sC$-Poincar\'e duality, but is a very useful intermediate notion that facilitating many theoretical considerations. 
\begin{defn}[Groupoidal ambidexterity]
    \label{defn:C_groupoidal_ambidexterity}
    A $\sC$-twisted ambidextrous $\baseTopos$-category pair $(\X,\partial \X)$ is \textit{groupoidally $\sC$-twisted ambidextrous} if $\omega_{\X,\partial \X} \in \sC^{\X}$ takes values in $\sC^{\simeq}$.
\end{defn}

\begin{prop}
\label{obs:omega_natural_in_C}
    Let $f \colon \sC \rightarrow \sC'$ be a morphism in $\calg(\presentable^L_\baseTopos)$.
    Then any $\sC$-twisted ambidextrous $\baseTopos$-category pair $(\X, \partial \X)$ is also $\sC'$-twisted ambidextrous, classified by the composite $\X\xrightarrow{\omega_{\X,\partial\X}}\sC\to\sC'$.
    In particular, if $(\X, \partial \X)$ is groupoidally $\sC$-twisted ambidextrous (resp. Poincar\'e), then $(\X, \partial \X)$ is also groupoidally $\sC'$-twisted ambidextrous (resp. Poincar\'e).
\end{prop}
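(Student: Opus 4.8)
The plan is to reduce everything to the behaviour of the Morita classification under the base change functor $f_* \colon \module_{\sC}(\presentable^L_\baseTopos) \to \module_{\sC'}(\presentable^L_\baseTopos)$ — or rather, to the behaviour under the symmetric monoidal left adjoint $\sC' \otimes_\sC - $, which is the relevant direction here since $f$ goes from $\sC$ to $\sC'$. First I would observe that the hypothesis that $(\X,\partial\X)$ is $\sC$-twisted ambidextrous means precisely that $(\X,\partial\X)_* \colon \presheafTopos(\X;\sC)\to\sC$ is colimit-preserving and $\sC$-linear, and by \cref{lem:C_linear_structure_on_relative_cohomology} it sits in a fibre sequence $(\X,\partial\X)_* \to \X_* \to \partial\X_* i^*$ of $\sC$-linear $\baseTopos$-functors. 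The key point is then that applying $\sC' \otimes_\sC -$ to this fibre sequence of $\sC$-linear functors yields the corresponding fibre sequence of $\sC'$-linear functors $(\X,\partial\X)_*^{\sC'} \to \X_*^{\sC'} \to \partial\X_*^{\sC'} i^*$; this requires knowing that $\sC' \otimes_\sC -$ sends $\X_*^{\sC}$ to $\X_*^{\sC'}$ (and similarly for $\partial\X$), i.e. that the $\baseTopos$-limit functor is compatible with extension of scalars. This compatibility is what makes $\X$ (and $\partial\X$, and hence the pair) remain $\sC'$-twisted ambidextrous.

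Next I would use \cref{obs:naturality_in_C}, which gives exactly the commutative square relating the Morita classification over $\sC$ and over $\sC'$: the classifying system of $\sC' \otimes_\sC F$ is obtained by applying $\sC'\otimes_\sC-$ (post-composing with $f\colon\sC\to\sC'$ at the level of copresheaves) to the classifying system of $F$. Applying this with $F = (\X,\partial\X)_*$ gives $\omega^{\sC'}_{\X,\partial\X} \simeq f \circ \omega^{\sC}_{\X,\partial\X} \colon \X \to \sC \to \sC'$, which is the asserted formula. For the ``in particular'' clause: if $\omega^{\sC}_{\X,\partial\X}$ factors through $\sC^\simeq$, then since any symmetric monoidal functor (in particular $f$) preserves the core $\baseTopos$-groupoid — more precisely, the square $\sC^\simeq \to \sC$, $\sC'^\simeq \to \sC'$ is compatible with $f$ — the composite $f\circ\omega^{\sC}_{\X,\partial\X}$ factors through $\sC'^\simeq$, giving groupoidal $\sC'$-twisted ambidexterity. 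Similarly, a symmetric monoidal functor sends invertible objects to invertible objects, so $f$ restricts to $\picardSpaceTopos(\sC)\to\picardSpaceTopos(\sC')$, and hence if $\omega^{\sC}_{\X,\partial\X}$ factors through $\picardSpaceTopos(\sC)$ then $\omega^{\sC'}_{\X,\partial\X}$ factors through $\picardSpaceTopos(\sC')$, giving the Poincar\'e case.

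The main obstacle I anticipate is the first step: verifying that extension of scalars along $\sC\to\sC'$ is compatible with the limit functors $\X_*$ and $\partial\X_*$, equivalently that $\sC'\otimes_\sC-$ preserves the relevant fibre sequences of $\sC$-linear functors. One clean way to handle this is to note that \cref{obs:naturality_in_C} already records that the vertical base-change maps in the displayed square are \emph{equivalences} when $\D$ is dualisable; since $\presheafTopos(\X;\sC)$ is dualisable (this is used in the proof of \cref{prop:classification_of_linear_functors}), the whole identification $\funTopos^L_\sC(\presheafTopos(\X;\sC),\sC)\simeq \sC^\X$ is stable under $\sC'\otimes_\sC-$, and the fibre sequence of functors corresponds under $\yoneda^*$ to the fibre sequence $\omega_{\X,\partial\X}\to\omega_\X\to i_*i^*\omega_\X$ of copresheaves, which is manifestly preserved by post-composition with the (exact, as $\sC$ and $\sC'$ are stable) functor $f$. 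So the argument is essentially: unwind the definitions, invoke \cref{obs:naturality_in_C} and \cref{lem:C_linear_structure_on_relative_cohomology}, and use that symmetric monoidal functors preserve cores and invertible objects.
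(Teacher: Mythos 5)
Your overall route is the same as the paper's (base change along $\sC'\otimes_\sC-$, \cref{obs:naturality_in_C} to identify the new classifying system with $f\circ\omega_{\X,\partial\X}$, and the fact that symmetric monoidal functors preserve cores and invertible objects), and the second half of your argument is fine. The gap is in the first step, and you have in fact flagged the right issue yourself — "that the $\baseTopos$-limit functor is compatible with extension of scalars" — but the justification you then offer does not close it. Stability of the Morita identification $\funTopos^L_\sC(\presheafTopos(\X;\sC),\sC)\simeq\sC^{\X}$ under $\sC'\otimes_\sC-$ tells you what the classifying system of the base-changed functor $\sC'\otimes_\sC\X_*$ is; it does not tell you that this functor \emph{is} the right adjoint $\X_*$ of $\X^*\colon\sC'\to\presheafTopos(\X;\sC')$, nor that that right adjoint is colimit-preserving with strict $\sC'$-linear structure — which is exactly what $\sC'$-twisted ambidexterity asserts. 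Likewise, exactness of $f$ on the fibre sequence of copresheaves only constrains classifying systems of functors already known to be classifiable.

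The missing ingredient is that $\ind_f=\sC'\otimes_\sC-$ is a \emph{2-functor} $\module_{\sC}(\presentable^L_\baseTopos)\to\module_{\sC'}(\presentable^L_\baseTopos)$, and 2-functors preserve internal adjunctions. By \cref{rec:recognition_c_linear_functors}, $\sC$-twisted ambidexterity of $\X$ says precisely that $\X^*\colon\sC\to\presheafTopos(\X;\sC)$ is an internal left adjoint in $\module_\sC(\presentable^L_\baseTopos)$; applying $\ind_f$ and noting $\ind_f(\X^*)\simeq\X^*\colon\sC'\to\presheafTopos(\X;\sC')$ shows this is again an internal left adjoint, which simultaneously gives $\sC'$-twisted ambidexterity and the identification $\sC'\otimes_\sC\X_*\simeq\X_*$ that your argument needs (and similarly for $\partial\X$ and the pair). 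This is how the paper argues. A minor secondary slip: the third term in the fibre sequence of classifying systems is $i_!\omega_{\partial\X}$ by \cref{lem:naturality_classification_C_linear_functors}, not $i_*i^*\omega_\X$.
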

\begin{proof}
    The functor $f$ induces a functor of $2$-categories $\ind_f=\sC'\otimes_\sC- \colon \module_{\sC}(\presentable^L_\baseTopos) \rightarrow \module_{\sC'}(\presentable^L_\baseTopos)$. 
    Now, if $\X$ is $\sC$-twisted ambidextrous, then $\X^* \colon \sC \rightarrow \presheafTopos(\X;\sC)$ is an internal left adjoint in the 2-category $\module_{\sC}(\presentable^L_\baseTopos)$.
    Its image $\X^* \colon \sC' \rightarrow \presheafTopos(\X;\sC')$ under the 2-functor $\ind_f$ is an interal left adjoint in $\module_{\sC}(\presentable^L_\baseTopos)$, showing that $\X$ is $\sC'$-twisted ambidextrous.
    The same will hold for pairs $(\X, \partial \X)$.

    Moreover, by \cref{obs:naturality_in_C}, we obtain after applying $\Gamma$ that the composite $\baseTopos$-functor $\X\xrightarrow{\omega_{\X,\partial\X}}\sC\to\sC'$ is precisely the $\baseTopos$-functor classifying the $\sC'$-linear functor $\sC'\otimes_\sC(\X,\partial\X)_*$. This implies that if $(\X,\partial\X)$ is groupoidally $\sC$-ambidextrous (resp. $\sC$-Poincar\'e), then it is also groupoidally $\sC'$-ambidextrous (resp. $\sC'$-Poincar\'e).
\end{proof}

\begin{obs}
Recall \cref{obs:formula_for_dualising_object}, and let $(\X,\partial\X)$ be a $\sC$-twisted ambidextrous $\baseTopos$-category pair. Then for $x\in\Gamma\X$, considered as a $\baseTopos$-functor $x\colon\ast\to\X$, we have that $\Gamma\omega_{\X,\partial\X}(x)\in\Gamma\sC$ agrees with the image of $\unit_{\Gamma\sC}$ along the composite functor $\Gamma\sC\simeq\presheaf(*;\sC)\xrightarrow{x_!}\presheaf(\X;\sC)\xrightarrow{(\X,\partial\X)_*}\Gamma\sC$. If $(\X,\partial\X)$ is \textit{groupoidally} $\sC$-twisted ambidextrous, then the same description holds for $\Gamma D_{\X,\partial\X}(x)$. 
This extends the validity of Klein's formula for the dualising spectrum in \cite{Kleindual}.
\end{obs}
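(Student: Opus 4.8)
The plan is to unravel the definition of the classifying system $\omega_{\X,\partial\X}$ and to reduce both assertions to the standard identification $\yoneda_\X(x)\simeq x_!(\unit_{\Gamma\sC})$.

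For the first assertion, I would start from the fact that, by construction (see \cref{obs:concrete_formula_for_classifying_system} applied to $F=(\X,\partial\X)_*$), the $\baseTopos$-functor $\omega_{\X,\partial\X}$ is the composite $\X\xrightarrow{\yoneda_\X}\presheafTopos(\X;\sC)\xrightarrow{(\X,\partial\X)_*}\sC$. Applying the global sections functor and precomposing with the $\baseTopos$-functor $x\colon\ast\to\X$ gives $\Gamma\omega_{\X,\partial\X}(x)\simeq\Gamma(\X,\partial\X)_*\bigl(\Gamma\yoneda_\X(x)\bigr)$, so that it only remains to identify $\Gamma\yoneda_\X(x)\in\presheaf_\baseTopos(\X;\sC)$ with $x_!(\unit_{\Gamma\sC})$. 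This is the $\baseTopos$-pointwise counterpart of the formula already recorded in \cref{obs:formula_for_dualising_object}: specialising the commuting square \cref{eq:naturality_yoneda_embedding} of \cref{lem:naturality_classification_C_linear_functors} to the functor $x\colon\ast\to\X$ and passing to the $\sC$-linear Yoneda embedding (by tensoring with $\sC$) yields $x_!\circ\yoneda_\ast\simeq\yoneda_\X\circ x$, while $\yoneda_\ast\colon\ast\to\presheafTopos(\ast;\sC)\simeq\sC$ picks out the monoidal unit. Substituting then gives the asserted description $\Gamma\omega_{\X,\partial\X}(x)\simeq\Gamma(\X,\partial\X)_*(x_!\unit_{\Gamma\sC})$.

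For the second assertion, suppose $(\X,\partial\X)$ is groupoidally $\sC$-twisted ambidextrous, so $\omega_{\X,\partial\X}$ is groupoidal and corresponds, via the localisation $r\colon\X\to|\X|$, to some $\bar\omega\in\func_\baseTopos(|\X|,\sC)$. By \cref{prop:Dequivalence}(1) the candidate dualising system $D_{\X,\partial\X}=D(\omega_{\X,\partial\X})$ is again groupoidal, and by \cref{prop:Dequivalence}(2) --- together with the Beck--Chevalley equivalences of \cref{cor:Beck_Chevalley_for_twisted_arrows}, which relate the functor $D$ for $\X$ with the one for $|\X|$ --- it corresponds under $r$ to the presheaf on $|\X|$ obtained from $\bar\omega$ by precomposition along the canonical equivalence $|\X|\op\simeq|\X|$. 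Evaluating at the $\baseTopos$-point $x$ therefore reduces to evaluating $\bar\omega$ at the image of the $\baseTopos$-point $r\circ x$ under $|\X|\op\simeq|\X|$; since this equivalence is the identity on $\baseTopos$-points, one obtains $\Gamma D_{\X,\partial\X}(x)\simeq\Gamma\omega_{\X,\partial\X}(x)$, and combining with the first assertion completes the argument.

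The only genuinely delicate step, which I expect to be the main obstacle, is the claim that the canonical equivalence $|\X|\op\simeq|\X|$ is the identity on $\baseTopos$-points. I would check this using the explicit description $s'\circ(t')^{-1}$ from the proof of \cref{prop:Dequivalence}(2): for a $\baseTopos$-point $y$ of $|\X|$, the object $(t')^{-1}(y)$ of $\twistedArrow(|\X|)$ is equivalent to the identity arrow $\id_y$ (since $|\X|$ is a $\baseTopos$-groupoid, $t'$ is an equivalence, and $\id_y$ lies in its preimage over $y$), whence $s'\bigl((t')^{-1}(y)\bigr)\simeq s'(\id_y)\simeq y$. Everything else amounts to routine bookkeeping with the global sections functor and the Morita equivalence of \cref{prop:classification_of_linear_functors}.
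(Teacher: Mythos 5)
Your proposal is correct and follows essentially the route the paper intends: the observation is treated as an immediate consequence of \cref{obs:concrete_formula_for_classifying_system} (i.e.\ $\omega_{\X,\partial\X}\simeq(\X,\partial\X)_*\circ\yoneda_\X$), the identification $\yoneda_\X(x)\simeq x_!(\unit)$ via the naturality square \cref{eq:naturality_yoneda_embedding}, and, for the groupoidal case, \cref{prop:Dequivalence} together with the fact that the canonical equivalence $|\X|\simeq|\X|\op$ is the identity on objects. Your verification of that last point via $s'\circ(t')^{-1}$ is exactly the right check, and the rest is the bookkeeping the paper leaves implicit.
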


As a motivating example, we immediately give our definition of Poincar\'e pair of $\baseTopos$-spaces. We will compare this notion to Wall's classical classical one in \cref{subsec:pdpairs}.

\begin{defn}
\label{defn:pdpairs}
Recall \cref{ex:mappingcylinder},
let $X\colon [1]\to\baseTopos$ be a functor, and let $(\X,\partial\X)\coloneqq(\int_{[1]}X,\int_{\{0\}}X)$ denote the associated unstraightened $\baseTopos$-category pair, with $\partial\X\simeq X(0)\in\baseTopos$.
Let $\sC\in\calg(\presentable^L_{\baseTopos,\stable})$. We say that $X$ is a \textit{$\sC$-Poincar\'e pair of $\baseTopos$-spaces} if the classifying system $\omega_{\X,\partial\X}\in\func_{\baseTopos}(\X,\sC)$
factors through $\picardSpaceTopos(\sC)$. If $X(0)\simeq\emptyset$, we say that $X(1)\simeq\X$ is a \textit{$\sC$-Poincar\'e $\baseTopos$-space}, which recovers \cite[Definition 3.2.9]{PD1}.
\end{defn}

We end this subsection with some easy examples of $\sC$-twisted ambidextrous $\baseTopos$-category pairs coming from the following two closure properties.

\begin{rmk}[Closure under retracts]
\label{rmk:ambidexterity_closed_under_retracts}
The class of $\sC$-twisted ambidextrous $\baseTopos$-categories  is closed under retracts. Indeed, if $\X \xrightarrow{s} \Y \xrightarrow{r} \X$ is a retraction in $\cat_\baseTopos$, then we get a retraction $\X_* \rightarrow \Y_* r^* \rightarrow \X_* s^* r^* \simeq \X_*(rs)^* \simeq \X_*$
of lax $\sC$-linear functors $\presheaf(\X;\sC) \rightarrow \sC$. Since $r^*$ is a $\sC$-linear functor and since $\sC$-linear functors are closed under composition and retracts, we see that $\X$ is $\sC$-linear provided $\Y$ is. 
\end{rmk}

\begin{rmk}[Closure under finite colimits]
\label{rmk:ambidexterity_closed_under_finite_colimits}
If $\sC\in\calg(\presentable^L_{\baseTopos,\stable})$ is levelwise stable, then the class of $\sC$-twisted ambidextrous $\baseTopos$-categories is closed under finite fibrewise colimits. Indeed, it contains the empty $\baseTopos$-category and is closed under pushouts. To see the latter, let 
\[
\begin{tikzcd}[row sep=12pt]
\X \ar[r, "f'"] \ar[d, "g'"]\ar[dr,phantom,"\ulcorner"very near end] & \X' \ar[d, "g"]\\
\Y \ar[r, "f"] & \Y'
\end{tikzcd}
\]
be a pushout in $\cat_\baseTopos$ and set $h= gf'$. We get an equivalence $\Y'_* \simeq (\Y_* f^*) \times_{\X_* h^*} (\X'_* g^*)$.
The stability assumption on $\sC$ implies that a pullback of colimit preserving $\baseTopos$-functors is again colimit preserving, and a pullback of $\sC$-linear functors, when endowed with the induced lax $\sC$-linear structure, is $\sC$-linear; these remarks apply to the latter functor.
\end{rmk}

\begin{lem}
\label{lem:compact_is_ambidextrous}
Let $\sC\in\calg(\presentable^L_{\baseTopos,\stable})$. Then every $\infty$-category pair $(\X,\partial\X)$ with $\X$ a compact $\infty$-category, when considered as constant a $\baseTopos$-category pair, is $\sC$-twisted ambidextrous. In particular every compact $\infty$-category, when considered as a $\baseTopos$-category, is $\sC$-ambidextrous.
\end{lem}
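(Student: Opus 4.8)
The plan is to reduce the statement for pairs to the absolute case, and then prove the absolute case by combining the closure properties of $\sC$-twisted ambidexterity with a cellular induction on a compact $\infty$-category. First I would observe that if $(\X,\partial\X)$ is an $\infty$-category pair with $\X$ compact, then $\partial\X \subseteq \X$ is a left closed full subcategory, hence (by \cref{obs:leftclosed}\ref{rmk:left_closed_left_fibration}) a right fibration; since $\X$ is compact and $\partial\X$ is a retract-closed full subcategory pulled back from $[1]$, it is also compact. Thus it suffices to prove the \emph{absolute} statement: every compact $\infty$-category $\X$, viewed as a constant $\baseTopos$-category, is $\sC$-twisted ambidextrous, i.e. $\X_* \colon \presheafTopos(\X;\sC) \to \sC$ is colimit-preserving and its canonical lax $\sC$-linear structure (from \cref{prop:right_adjoint_to_C_linear_functor_is_lax_C_linear}) is strict. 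Equivalently, by \cref{rec:recognition_c_linear_functors}, it suffices to show that $\X^* \colon \sC \to \presheafTopos(\X;\sC)$, i.e. the constant-presheaf functor, is an internal left adjoint in $\module_\sC(\presentable^L_\baseTopos)$, which is the condition that $\X_*$ be $\sC$-linear and preserve colimits.

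Next I would invoke the two closure properties just established in the excerpt: by \cref{rmk:ambidexterity_closed_under_retracts} the class of $\sC$-twisted ambidextrous $\baseTopos$-categories is closed under retracts, and by \cref{rmk:ambidexterity_closed_under_finite_colimits}, using levelwise stability of $\sC$, it is closed under finite fibrewise pushouts and contains $\emptyset$. A compact $\infty$-category is by definition a retract of a finite $\infty$-category, that is, of an $\infty$-category obtained from $\emptyset$ by finitely many pushouts along boundary inclusions $\partial\Delta^n \hookrightarrow \Delta^n$ of simplices (equivalently, $\cat$ is compactly generated by the $\Delta^n$ and finite colimits of these exhaust the compact objects up to retract). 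So by the retract-closure it is enough to treat a finite $\infty$-category, and by the pushout-closure and an induction on the number of cells it is enough to treat the building blocks: the empty category (immediate, $\emptyset_* = \constant$ is an equivalence onto $\sC$, which is $\sC$-linear) and the standard simplices $\Delta^n = [n]$ together with their boundaries $\partial\Delta^n$.

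The heart of the matter — and the main obstacle — is therefore showing that each $[n]$ is $\sC$-twisted ambidextrous, and that the boundary inclusions appearing as the attaching maps land among ambidextrous categories; for the latter, $\partial\Delta^n$ is itself a finite $\infty$-category built from lower simplices by pushouts, so it is handled by the same induction, provided we know all $[m]$ with $m \le n$ are ambidextrous. To see that $[n]$ is $\sC$-twisted ambidextrous, the clean argument is that $[n]$ has a terminal object $n$, so the inclusion $\{n\} \hookrightarrow [n]$ is \emph{final} (equivalently $\{n\}\op \to [n]\op$ is initial), whence $[n]_* \simeq \{n\}_* \circ (\ev_n)^* = \ev_n$; more precisely, $\presheafTopos([n];\sC)$ has $\{n\}$ as an initial object of $[n]\op$ and limits over $[n]\op$ compute as evaluation at $n$. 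Evaluation at an object is manifestly colimit-preserving and $\sC$-linear (it is the pullback along the inclusion of a point, which has a left adjoint and satisfies Beck--Chevalley trivially since $\sC^{\{n\}} \simeq \sC$), so $[n]_*$ is colimit-preserving and $\sC$-linear, and one checks (using uniqueness of lax-to-strict refinements in \cref{rec:recognition_c_linear_functors}) that this refinement agrees with the canonical lax $\sC$-linear structure. The delicate point to get right is the bookkeeping of the induction: one must phrase it as ``every finite $\infty$-category is $\sC$-twisted ambidextrous'', prove it by induction on cell count using that (i) $\emptyset$ and each $[n]$ are ambidextrous and (ii) a pushout of ambidextrous categories along a map out of an ambidextrous category is ambidextrous, and then deduce the compact case by \cref{rmk:ambidexterity_closed_under_retracts}. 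Finally, for a pair $(\X,\partial\X)$ with $\X$ compact, both $\X$ and $\partial\X$ are compact $\infty$-categories by the first paragraph, so both are $\sC$-twisted ambidextrous, and hence by \cref{defn:ambidexterityforpairs} the pair $(\X,\partial\X)$ is $\sC$-twisted ambidextrous, as claimed.
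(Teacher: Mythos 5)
Your proof is correct in substance but takes a genuinely different route from the paper's. The paper never leaves the slice $(\cat)_{/[1]}$ of pairs: its compact objects are generated under retracts and finite colimits by the three pairs $([1],\emptyset)$, $([1],\{0\})$ and $([1],[1])$, and the proof simply identifies the three relative cohomology functors as $\eval_1$, $\fib(\eval_1\to\eval_0)$ and $0$, each visibly colimit preserving and $\sC$-linear. You instead reduce to the absolute statement (both $\X$ and $\partial\X$ compact) and run a cellular induction over all simplices $[n]$ and their boundaries, with the key input that $[n]$ has a terminal object, so $[n]_*\simeq\eval_n$; that finality argument is a clean way to see linearity, and your bookkeeping of the induction is sound. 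What your route costs is (a) needing every $[n]$ and $\partial\Delta^n$ as a building block, where the paper gets away with $[1]$ alone (the spine decomposition $[n]\simeq[1]\amalg_{[0]}\cdots\amalg_{[0]}[1]$ in $\cat$ shows $\emptyset$ and $[1]$ already generate), and (b) having to know separately that $\partial\X$ is compact. That last point is the one genuine soft spot: ``a retract-closed full subcategory pulled back from $[1]$'' is an assertion, not an argument. The fact is true, but the proof the paper gives goes through the generation statement in $(\cat)_{/[1]}$ together with the observation that $(\X,\partial\X)\mapsto\partial\X$ preserves colimits (\cref{obs:leftclosed}), so that compact pairs — being built from the three generators, all of which have compact boundary — have compact boundary; you should supply something of this kind. (A trivial slip: $\presheafTopos(\emptyset;\sC)$ is the zero category, so $\emptyset_*$ is the zero functor into $\sC$, not ``an equivalence onto $\sC$''; the conclusion that $\emptyset$ is ambidextrous is of course still immediate.)
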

\begin{proof}
The presentable $\infty$-category $(\cat_\infty)_{/[1]}$ is generated under colimits by the three objects $[1]\xrightarrow{0}[1]$, $[1]\xrightarrow{\id}[1]$ and $[1]\xrightarrow{1}[1]$, i.e. by the three $\infty$-category pairs $([1],\emptyset)$, $([1],\{0\})$, and $([1],[1])$; its compact objects are generated under retracts and finite colimits by the same three objects. Using \cref{obs:leftclosed}\ref{rmk:colimits_left_closed_subcategories}, we obtain that an $\infty$-category pair $(\X,\partial\X)$ is a compact object in $(\cat_\infty)_{/[1]}$ if and only if $\X$ is compact in $\cat_\infty$; and if this is the case, then also $\partial\X$ is compact.

Since the functor $\constant\colon\cat\to\cat_\baseTopos$ preserves colimits, it suffices to check that the three generating compact $\infty$-category pairs are $\sC$-twisted ambidextrous when considered as $\baseTopos$-category pairs. For this, we just identify $([1],\emptyset)_*\simeq\eval_1$, $([1],\{0\})_*\simeq\fib(\eval_1\to\eval_0)$, and $([1],[1])_*\simeq0$, so that all these three lax $\sC$-linear $\baseTopos$-functors $\presheafTopos([1];\sC)\to\sC$ are in fact $\sC$-linear and colimit preserving.
\end{proof}

\subsection{The connecting map of classifying systems}
Next, we explain how our categorified approach to Poincar\'e duality pairs automatically builds in the structure of \textit{connecting maps} which witness the lax compatibilities between local coefficients of the ambient $\baseTopos$-category $\X$ and its boundary $\partial\X$. For the convenience of our discussions later, we first introduce the following definition generalising relative (co)homology as introduced in the previous subsection.

\begin{defn}
\label{defn:generalised_relative_cohomology}
Let $u\colon \Y\to\Z$ be a $\baseTopos$-functor between small $\baseTopos$-categories. We denote by 
\[
(\Z,\Y,u)_*\coloneqq\fib(\Z_*\xrightarrow{\eta_{u_*}}\Y_*u^*)\quad \text{ and } \quad (\Z,\Y,u)_!\coloneqq\cofib(\Y_!u^*\xrightarrow{\epsilon_{u_!}}\Z_!)\colon\presheafTopos(\Z;\sC)\longrightarrow \sC
\]
the \textit{generalised relative cohomology} and \textit{generalised relative homology} $\baseTopos$-functors, respectively. If both $\Y$ and $\Z$ are twisted $\sC$-ambidextrous, then $(\Z,\Y,u)_*$ is colimit preserving and $\sC$-linear (compare with \cref{lem:C_linear_structure_on_relative_cohomology}) and we let 
$\omega_{\Z,\Y,u}\in\func_\baseTopos(\Z,\sC)$ denote its classifying system.
\end{defn}

For $u$ a left closed inclusion, \cref{defn:generalised_relative_cohomology} recovers \cref{defn:relative_cohomology}. 

\label{subsec:connecting_map_of_classifying_systems}
\begin{defn}[Connecting map of classifying systems]\label{defn:connecting_map_of_classifying_systems}
Using the notation from \cref{defn:generalised_relative_cohomology},
we denote by $\delta\colon \Omega \omega_\Y \longrightarrow j^*\omega_{\Z,\Y,u}$, leaving $\Y$, $\Z$ and $u$ implicit, the morphism in $\func_\baseTopos(\Y,\sC)$ corresponding via \cref{lem:naturality_classification_C_linear_functors} to the following composite $\sC$-linear $\baseTopos$-natural transformation, in which 
$\canonical\colon\Omega\Y_*u^*\to(\Z,\Y,u)_*$ is the canonical $\baseTopos$-natural transformation coming from the definition of $(\Z,\Y,u)_*$ as a fibre:
\[
\begin{tikzcd}
\Omega \Y_* \xlongrightarrow{\eta_{u_!}}\Omega \Y_*u^*u_! \xlongrightarrow{\canonical} (\Z,\Y,u)_*u_!.
\end{tikzcd}
\]
\end{defn}

\begin{nota}\label{nota:left_adjoint_to_j}
Let $(\X,\partial\X)$ be a $\baseTopos$-category pair and assume that the $\baseTopos$-functor $f\colon\X\to[1]$ classifying $\partial\X$ as in 
\cref{defn:categorypair} is a $\baseTopos$-cocartesian fibration. 
In this setting, we usually denote by $(\partial\X \xrightarrow{g} \interior{\X}) \in \func([1],\cat_\baseTopos)$ the straightening of $f$, and by $\ell\colon\X\to\interior{\X}$ the left adjoint to the inclusion $j$; note that $g\simeq \ell i$.
\end{nota}

\begin{prop}
\label{obs:connecting_map_of_classifying_systems}
Consider a category pair $(\X,\partial\X)$ arising from a cocartesian fibration $\X \to [1]$.
Then the map $i^*\omega_{\X,\partial\X}\to g^*j^*\omega_{\X,\partial\X}$ induced by the adjunction unit $\id \to l^* j^*$ identifies with the connecting map $\delta\colon\Omega\omega_{\partial\X}\to g^*\omega_{\interior{\X},\partial\X,g}$ from \cref{defn:connecting_map_of_classifying_systems}, where $g$ is as in  \cref{nota:left_adjoint_to_j}.
\end{prop}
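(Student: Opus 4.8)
The plan is to push everything through the Morita equivalence of \cref{prop:classification_of_linear_functors}, turning the asserted identity of maps in $\func_\baseTopos(\partial\X,\sC)$ into an identity of $\sC$-linear $\baseTopos$-natural transformations between functors $\presheafTopos(\partial\X;\sC)\to\sC$. First I would reidentify the left-hand side. Since $\omega_{\X,\partial\X}=(\X,\partial\X)_*\circ\yoneda_\X$ (\cref{obs:formula_for_dualising_object}), naturality of the Yoneda embedding (\cref{eq:naturality_yoneda_embedding}) gives $i^*\omega_{\X,\partial\X}\simeq(\X,\partial\X)_*\circ i_!\circ\yoneda_{\partial\X}$ and $g^*j^*\omega_{\X,\partial\X}\simeq(\X,\partial\X)_*\circ j_!\circ g_!\circ\yoneda_{\partial\X}$, so these presheaves classify $(\X,\partial\X)_*\circ i_!$ and $(\X,\partial\X)_*\circ(jg)_!$ respectively. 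The unit $\eta\colon\id_\X\to j\ell$ of $\ell\dashv j$ (\cref{nota:left_adjoint_to_j}) restricts along $i$ to a natural transformation $\eta_i\colon i\Rightarrow jg$ (recall $g=\ell i$), and I would observe — this being exactly the definition of the left-hand map, combined with naturality of $\yoneda$ and of the Morita equivalence — that the left-hand map of the proposition is the one classified by $(\X,\partial\X)_*\ast\alpha\colon(\X,\partial\X)_*\circ i_!\Rightarrow(\X,\partial\X)_*\circ(jg)_!$, where $\alpha\colon i_!\Rightarrow(jg)_!$ is the left Kan extension of $\eta_i$. Concretely, this says that the left-hand map is the gluing datum obtained by evaluating the co-presheaf $\omega_{\X,\partial\X}$ on the cocartesian morphisms of $\X=\int_{[1]}g$. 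On the other side, unwinding \cref{defn:connecting_map_of_classifying_systems} for $\Y=\partial\X$, $\Z=\interior\X$, $u=g$ (and using $\omega_{Gg_!}\simeq g^*\omega_G$ from \cref{lem:naturality_classification_C_linear_functors} applied to $G=(\interior\X,\partial\X,g)_*$), the map $\delta$ is the one classified by the composite $\Omega\,\partial\X_*\xrightarrow{\partial\X_*(\eta_{g_!})}\Omega\,\partial\X_*g^*g_!\xrightarrow{\canonical\ast g_!}(\interior\X,\partial\X,g)_*\circ g_!$.

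Next I would identify the source and target functors. Since $f\colon\X\to[1]$ is a cocartesian fibration, the inclusion $j\colon\interior\X=f^{-1}(1)\hookrightarrow\X$ is final: for every object $x$ of $\X$ the comma $\baseTopos$-category $\interior\X\times_\X\X_{x/}$ is equivalent to $(\interior\X)_{\ell(x)/}$, which has an initial object. Hence $\X_*\simeq\interior\X_*\circ j^*$ (so, in particular, $\interior\X$ is again $\sC$-twisted ambidextrous). Using moreover $j^*i_!\simeq 0$ (\cref{rmk:left_closed_kan_extension}, as $\interior\X\subset\X$ is right closed), $i^*i_!\simeq\id$, $j^*j_!\simeq\id$, and the presentation of $(\X,\partial\X)_*$ from \cref{defn:relative_cohomology}, I obtain
\[
(\X,\partial\X)_*\circ i_!=\fib(\X_*i_!\to\partial\X_*i^*i_!)\simeq\fib(0\to\partial\X_*)=\Omega\,\partial\X_*.
\]
For the target, $\ell\dashv j$ forces $j_!\simeq\ell^*$ on presheaves, whence $i^*j_!\simeq i^*\ell^*=g^*$, so
\[
(\X,\partial\X)_*\circ j_!=\fib(\X_*j_!\to\partial\X_*i^*j_!)\simeq\fib(\interior\X_*\to\partial\X_*g^*)=(\interior\X,\partial\X,g)_*,
\]
and therefore $(\X,\partial\X)_*\circ(jg)_!\simeq(\interior\X,\partial\X,g)_*\circ g_!$.

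It remains to check that under these identifications $(\X,\partial\X)_*\ast\alpha$ goes over to the composite displayed in the first paragraph; this is where the work is. Presenting $(\X,\partial\X)_*\circ i_!$ and $(\X,\partial\X)_*\circ(jg)_!$ as fibres via \cref{defn:relative_cohomology}, the map $(\X,\partial\X)_*\ast\alpha$ is the induced map of fibres of the square whose columns are $[\X_*i_!\to\partial\X_*i^*i_!]$ and $[\X_*(jg)_!\to\partial\X_*i^*(jg)_!]$ and whose horizontal arrows are whiskered from $\alpha$. I would then verify: (i) the bottom horizontal arrow, after applying $i^*i_!\simeq\id$ and $i^*(jg)_!\simeq g^*g_!$, becomes $\partial\X_*(\eta_{g_!})$ — this follows from the formula $\eta_{g_!}\simeq(i^*\eta_{\ell_!}i_!)\circ\eta_{i_!}$ for the unit of the composite adjunction $g_!=\ell_!i_!\dashv i^*\ell^*=g^*$ together with the fact that $\eta_{i_!}\colon\id\xrightarrow{\simeq}i^*i_!$ is invertible ($i$ fully faithful); (ii) the right-hand column, after $\X_*(jg)_!\simeq\interior\X_*g_!$ and $\partial\X_*i^*(jg)_!\simeq\partial\X_*g^*g_!$, becomes $\eta_{g_*}\ast g_!$ — here one uses the natural transformation $g_*\Rightarrow j^*i_*$ obtained from $\eta_i$ by right Kan extension (via $j^*j_*\simeq\id$) and its compatibility with the finality equivalence $\X_*\simeq\interior\X_*j^*$ and with $\X_*i_*\simeq\partial\X_*$; and (iii) the connecting map of the morphism of fibre sequences so obtained is, by the standard computation for a square of the form $[0\to B']\to[A\to B]$, the composite of $\canonical\ast g_!$ with $\Omega$ of the bottom arrow. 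The main obstacle is precisely (i)–(ii): reconciling the structural equivalences $j_!\simeq\ell^*$, $\X_*\simeq\interior\X_*j^*$, $\X_*i_*\simeq\partial\X_*$, $i^*i_!\simeq\id$ with the naturality of the relevant adjunction (co)units — conceptually routine, but requiring careful bookkeeping at the level of $\baseTopos$-functors.
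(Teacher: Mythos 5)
Your proposal is correct and follows essentially the same route as the paper's proof: both transport the statement through the Morita equivalence, identify $(\X,\partial\X)_*i_!\simeq\Omega\,\partial\X_*$ (via $\X_*i_!\simeq 0$, the boundary-principle computation) and $(\X,\partial\X)_*(jg)_!\simeq(\interior{\X},\partial\X,g)_*g_!$ (via $j_!\simeq\ell^*$ and finality of $j$), and then match the whiskering of the unit of $\ell\dashv j$ with the defining composite of $\delta$ by a diagram chase on the resulting morphism of fibre sequences. Your organisation via the standard connecting-map computation for a square $[0\to B']\to[A\to B]$ is just a repackaging of the paper's three-row diagram, and your steps (i)–(ii) correspond to the unit-of-composite-adjunction identities the paper encodes in its first commutative triangle.
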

\begin{proof}
We have a commutative triangle of $\baseTopos$-functors $\presheafTopos(\interior{\X};\sC)\to\sC$
\[
\begin{tikzcd}[row sep=2pt,column sep=50pt]
&\X_*\ell^*\ar[dr,"\eta_{i_*}"]\\
\interior{\X}_*\ar[ur,"{\eta_{\ell_*}, \simeq}"]\ar[rr,"\eta_{g_*}"']& &\partial\X_* f^*,
\end{tikzcd}
\]
giving rise to an identification $(\X,\partial\X)_*\ell^*\simeq(\interior{\X},\partial\X,g)_*$. This gives in particular a formula of generalised relative cohomology from \cref{defn:generalised_relative_cohomology} in terms of relative cohomology from \cref{defn:relative_cohomology}.
Moreover, if $(\X,\partial\X)_*$ is colimit preserving and $\sC$-linear, then so is $(\interior{\X},\partial\X)_*$ and we obtain an equivalence $j^*\omega_{\X,\partial\X}\simeq \ell_!\omega_{\X,\partial\X}\simeq\omega_{\interior{\X},\partial\X,g}$. 

We also have a commutative diagram as follows, in which the arrow labeled $\eta_{\ell_*}$ is an equivalence because $\ell^*$ is fully faithful, and the bent arrow is an equivalence by \cref{prop:boundary_principle}:
\[
\begin{tikzcd}
\Omega\partial\X_*\ar[rr,bend left=10,"\simeq"]\ar[r,"\eta_{i_!}"']\ar[d,equal]&\Omega\partial\X_*i^*i_!\ar[d,"\eta_{\ell_!}"]\ar[r,"\canonical"']&(\X,\partial\X)_*i_!\ar[d,"\eta_{\ell_!}"]\ar[r]&\X_*i_!\ar[d,"\eta_{\ell_!}"]\ar[r,"\eta_{i_*}"]&\partial\X_*i^*i_!\ar[d,"\eta_{\ell_!}"]\\
\Omega\partial\X_*\ar[r,"\eta_{g_!}"]&\Omega\partial\X_*g^*g_!\ar[r,"\canonical"] &(\X,\partial\X)_*\ell^*g_!\ar[r]&\X_*\ell^*g_!\ar[r,"\eta_{i_*}"]&\partial\X_*g^*g_!\\
\Omega\partial\X_*\ar[u,equal]\ar[r,"\eta_{g_!}"]&\Omega\partial\X_*g^*g_!\ar[u,equal]\ar[r,"\canonical"] &(\interior{\X},\partial\X,g)_*g_!\ar[r],\ar[u,"\simeq"]&\interior{\X}_*g_!\ar[u,"\eta_{\ell_*}"',"\simeq"]\ar[r,"\eta_{g_*}"] &\partial\X_*g^*_!.\ar[u,equal]
\end{tikzcd}
\]
The diagram allows us to identify the connecting natural transformation $\Omega\partial\X_*\to(\interior{\X},\partial\X,g)_*g_!$ with the natural transformation labeled $\eta_{\ell_!}$ in the top middle, and hence it allows us to identify the connecting map $\delta\colon\Omega\omega_{\partial\X}\to g^*\omega_{\interior{\X},\partial\X,g}$ with the map $i^*\omega_{\X,\partial\X}\to g^*\ell_!\omega_{\X,\partial\X}$ also induced by $\eta_{\ell_!}$, thanks to \cref{lem:naturality_classification_C_linear_functors}.

We may then identify the adjunction $\ell_!\colon\func_\baseTopos(\X,\sC)\rightleftharpoons\func_\baseTopos(\interior{\X},\sC)\cocolon \ell^*$ with the adjunction $j^*\dashv \ell^*$ induced by the adjunction $b\dashv j$, as the two adjunctions share the right adjoint. 
And thus we may further identify $\delta$ with the map $i^*\omega_{\X,\partial\X}\to g^*j^*\omega_{\X,\partial\X}$ induced by the $\baseTopos$-natural transformation $\eta_{b\dashv j}i\colon i\to jbi\simeq jg$.
Note that the latter $\baseTopos$-natural transformation adjoints to a map $\partial\X\times[1]\to\X$ of $\baseTopos$-cocartesian fibrations over $[1]$, whose straightening is the following commutative square, seen as a morphism in $\func([1],\cat_\baseTopos)$ between vertical arrows:
\[
\begin{tikzcd}[row sep=8pt]
\partial\X\ar[d,equal]\ar[r,equal]&\partial\X\ar[d,"g"]\\
\partial\X\ar[r,"g"]&\interior{\X},
\end{tikzcd}
\]
\end{proof}

The following is a direct consequence of \cref{obs:connecting_map_of_classifying_systems}.
\begin{cor}
\label{cor:towards_neoclassical_equivalence}
Let $(\X,\partial\X)$ be a $\sC$-twisted ambidextrous $\baseTopos$-category pair arising from a cocartesian fibration $\X \to [1]$.
Then $(\X,\partial\X)$ is groupoidally $\sC$-twisted ambidextrous (respectively, $\sC$-Poincar\'e) if and only if both $\omega_{\partial\X}$ and $\omega_{\interior{\X},\partial\X,g}$ factor through $\sC^\simeq$ (respectively, through $\picardSpaceTopos(\sC)$) and the connecting map $\delta\colon\Omega\omega_{\partial\X}\to g^*\omega_{\interior{\X},\partial\X,g}$ from \cref{defn:connecting_map_of_classifying_systems} is an equivalence.
\end{cor}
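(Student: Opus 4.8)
The plan is to deduce \cref{cor:towards_neoclassical_equivalence} from \cref{obs:connecting_map_of_classifying_systems} by a careful bookkeeping of which systems need to be groupoidal (or invertible) and how they assemble. First I would recall that, since $\X\to[1]$ is a cocartesian fibration, the $\baseTopos$-category $\X$ decomposes via the stable recollement of \cref{constr:stable_recollement_for_pairs}: the restriction $\X\to\partial\X$ along $i$ and the left adjoint $\ell\colon\X\to\interior{\X}$ to the inclusion $j$ exhibit $\X$ as glued from $\partial\X$ and $\interior{\X}$. Concretely, an object $\omega\in\func_\baseTopos(\X,\sC)$ is groupoidal (resp. factors through $\picardSpaceTopos(\sC)$) if and only if both restrictions $i^*\omega\in\func_\baseTopos(\partial\X,\sC)$ and $j^*\omega\in\func_\baseTopos(\interior{\X},\sC)$ are, \emph{plus} the morphism encoding how they are glued---namely the transition map over the unique nonidentity arrow of $[1]$---is an equivalence of groupoidal (resp. invertible) systems. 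This ``objectwise'' criterion is just the statement that a functor out of $\X$ is groupoidal iff it sends every morphism to an equivalence, combined with the fact that every morphism of $\X$ factors (up to the relevant cocartesian lift) through morphisms of $\partial\X$, morphisms of $\interior{\X}$, or cocartesian lifts of $0\to 1$.

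Next I would apply this criterion to $\omega=\omega_{\X,\partial\X}$. By \cref{obs:connecting_map_of_classifying_systems} we have $j^*\omega_{\X,\partial\X}\simeq\omega_{\interior{\X},\partial\X,g}$ (this identification was established in the proof of that proposition via the equivalence $(\X,\partial\X)_*\ell^*\simeq(\interior{\X},\partial\X,g)_*$ together with \cref{lem:naturality_classification_C_linear_functors}, since $\ell_!\dashv\ell^*$ and $j^*\dashv\ell^*$ share their right adjoint). By the \textbf{Boundary principle} (\cref{prop:boundary_principle}, invoked already inside the proof of \cref{obs:connecting_map_of_classifying_systems}), the restriction $i^*\omega_{\X,\partial\X}$ is equivalent to the once-suspended boundary system; more precisely $i^*\omega_{\X,\partial\X}\simeq\Omega\omega_{\partial\X}$ under mild finality hypotheses (which hold here since $\partial\X\subseteq\X$ is left closed and the fibration is cocartesian). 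Since $\Omega\colon\sC\to\sC$ is an equivalence (levelwise stability of $\sC$!) and preserves $\sC^\simeq$ and $\picardSpaceTopos(\sC)$, the system $i^*\omega_{\X,\partial\X}$ is groupoidal/invertible if and only if $\omega_{\partial\X}$ is. Finally, \cref{obs:connecting_map_of_classifying_systems} identifies precisely the transition datum of $\omega_{\X,\partial\X}$ over $0\to1$---i.e. the map $i^*\omega_{\X,\partial\X}\to g^*j^*\omega_{\X,\partial\X}$ induced by the unit $\id\to\ell^*j^*$---with the connecting map $\delta\colon\Omega\omega_{\partial\X}\to g^*\omega_{\interior{\X},\partial\X,g}$.

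Assembling these three inputs: $\omega_{\X,\partial\X}$ factors through $\sC^\simeq$ (resp. $\picardSpaceTopos(\sC)$) iff $i^*\omega_{\X,\partial\X}$ does, $j^*\omega_{\X,\partial\X}$ does, and the gluing transition map is an equivalence; translating each via the identifications above gives exactly: $\omega_{\partial\X}$ and $\omega_{\interior{\X},\partial\X,g}$ factor through $\sC^\simeq$ (resp. $\picardSpaceTopos(\sC)$), and $\delta$ is an equivalence. This is the claimed statement. For the Poincar\'e case one also notes that being $\sC$-Poincar\'e adds the invertibility requirement on top of groupoidal twisted ambidexterity, so the groupoidal case and the invertible case run in parallel with no extra work.

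The main obstacle I anticipate is making precise and rigorous the ``objectwise'' gluing criterion for functors out of a cocartesian fibration over $[1]$: one must check that a $\baseTopos$-functor $\X\to\sC$ lands in $\sC^\simeq$ (resp. $\picardSpaceTopos(\sC)$) iff its two restrictions do and the single transition natural transformation is an equivalence, and one must verify this at every level $\tau\in\baseTopos$ and stably under \'etale base change. This is morally the statement that $\func_\baseTopos(\X,\sC)$ is the lax (in fact oplax, given the cocartesian structure) limit of the diagram $\partial\X_*\leftarrow(\interior{\X},\partial\X,g)_*$, but spelling it out against the recollement of \cref{constr:stable_recollement_for_pairs} requires care. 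Everything else---the identification of $\delta$ with the transition map, the Boundary principle input, the desuspension being invisible to groupoidality and invertibility---is already packaged in \cref{obs:connecting_map_of_classifying_systems} and its proof, so the corollary is genuinely a short formal consequence once that criterion is in hand.
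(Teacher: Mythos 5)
Your proposal is correct and follows exactly the route the paper intends: the paper gives no written proof beyond declaring the corollary ``a direct consequence of \cref{obs:connecting_map_of_classifying_systems}'', and your argument simply spells out that deduction — the objectwise gluing criterion for groupoidality/invertibility of a functor out of a cocartesian fibration over $[1]$, combined with the identifications $j^*\omega_{\X,\partial\X}\simeq\omega_{\interior{\X},\partial\X,g}$, $i^*\omega_{\X,\partial\X}\simeq\Omega\omega_{\partial\X}$ (the pair is tame since $\{1\}\subseteq[1]$ is final and $\X\to[1]$ is cocartesian), and the identification of the transition map with $\delta$. Nothing is missing.
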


\subsection{Cutting and pasting principles}
\label{subsec:cut_and_paste}
We now develop  various devices  to check $\sC$-Poincar\'e duality via local-global principles and to infer $\sC$-Poincar\'e duality on certain localisations of a $\baseTopos$-category pairs, which in some cases can be interpreted as a technique to glue $\sC$-Poincar\'e $\baseTopos$-category pairs. 
As usual, we assume throughout that $\sC$ is a levelwise stable presentably symmetric monoidal $\baseTopos$-category.
We begin by recording the following easy observation. 
\begin{lem}\label{lem:vanishing_limit_extension_by_zero}
Let $(\X,\partial\X)$ be a $\baseTopos$-category pair and assume that the inclusion of the complement $j\colon \interior{\X}\subseteq\X$ is final.
Then $\X_*i_! \simeq 0$ in $\func_{\baseTopos}(\presheafTopos(\X;\sC),\sC)$.
\end{lem}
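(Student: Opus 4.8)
The plan is to peel apart the two factors: replace $i_!$ by an ``extension‑by‑zero'' and replace $\X_*$ by a limit taken only over the interior $\interior{\X}$, after which the composite $\X_*i_!$ becomes visibly zero.

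First I would use the finality hypothesis to obtain an equivalence $\X_*\simeq\interior{\X}_*\circ j^*$ of $\baseTopos$-functors $\presheafTopos(\X;\sC)\to\sC$. The $\baseTopos$-functor $j\colon\interior{\X}\hookrightarrow\X$ being final means $\interior{\X}_!\circ j^*\simeq\X_!$ on covariant $\baseTopos$-functors out of $\X$; passing to opposite $\baseTopos$-categories (equivalently, noting that $j\op$ is initial in the sense of \cref{subsec:recollections}) this becomes the assertion that $\X_*=\lim_{\X\op}$ is computed as $\lim_{\interior{\X}\op}\circ j^*=\interior{\X}_*\circ j^*$ on $\sC$-valued presheaves. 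Then I would invoke \cref{rmk:left_closed_kan_extension}: since $\interior{\X}=\X\setminus\partial\X$ is the right closed complement of the left closed inclusion $i\colon\partial\X\hookrightarrow\X$, the composite $\presheafTopos(\partial\X;\sC)\xrightarrow{i_!}\presheafTopos(\X;\sC)\xrightarrow{j^*}\presheafTopos(\interior{\X};\sC)$ is constant at the initial presheaf, which equals the zero object because $\sC$ is levelwise stable; that is, $j^*i_!\simeq 0$. Combining the two steps yields $\X_*i_!\simeq\interior{\X}_*\circ j^*\circ i_!\simeq\interior{\X}_*\circ 0\simeq 0$ in $\func_\baseTopos(\presheafTopos(\X;\sC),\sC)$, which is the claim.

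The only delicate point—the step I would be most careful about—is the translation ``$j$ final $\Rightarrow$ $\X_*\simeq\interior{\X}_*\circ j^*$ on $\sC$-valued presheaves'' in the parametrised setting: one must check that the dictionary between $\baseTopos$-colimits of covariant functors and $\baseTopos$-limits of presheaves is compatible with passage to opposite $\baseTopos$-categories and with the identification $\presheafTopos(\X;\sC)=\func_\baseTopos(\X\op,\sC)$. Everything else is formal, since $i_!$ is extension by zero along a left closed inclusion by \cref{rmk:left_closed_kan_extension}, and $\interior{\X}_*$ annihilates the zero presheaf, being a pointed right adjoint.
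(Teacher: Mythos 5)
Your proof is correct and follows essentially the same route as the paper: the paper likewise observes that finality of $j$ makes $j\op$ initial, giving $\X_*\simeq\X_*j_*j^*\simeq\interior{\X}_*j^*$, and then reduces to $j^*i_!\simeq 0$ via \cref{rmk:left_closed_kan_extension}. The point you flag as delicate (the dictionary between finality of $j$, initiality of $j\op$, and limits of presheaves) is exactly the paper's built-in definition of initial functors, so it requires no further checking.
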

\begin{proof}
Since $j$ is final, $j\op\colon \interior{\X}\op \to \X\op$ is initial, and in particular the unit of the adjunction $\X_* \to \X_*j_*j^*\simeq \interior{\X}_* j^*$ is an equivalence. So to show $\X_* i_! \simeq 0$ it suffices to prove $j^* i_! \simeq 0$, and this follows from \cref{rmk:left_closed_kan_extension}.
\end{proof}

This result is the reason why we shall often assume that the inclusion $\interior{\X}\subseteq \X$ to be final for various pasting results. Let us package this in a definition.
\begin{defn}
\label{defn:tame_pair}
We call a $\baseTopos$-category pair $(\X,\partial\X)$ tame if the inclusion $\interior{\X}\subseteq \X$ is final.
\end{defn}

The next proposition allows us to compute the classifying system of the boundary of a $\baseTopos$-category pair in terms of that of the pair itself.
\begin{prop}[Boundary principle]
\label{prop:boundary_principle}
Let $(\X,\partial\X)$ be a $\sC$-twisted ambidextrous tame $\baseTopos$-category pair. Then the connecting map $\delta \colon \Omega \omega_{\partial\X} \to i^* \omega_{\X,\partial\X}$ from \cref{defn:connecting_map_of_classifying_systems} is an equivalence.
\end{prop}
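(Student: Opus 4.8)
The statement to prove is the Boundary principle: for a $\sC$-twisted ambidextrous \emph{tame} $\baseTopos$-category pair $(\X,\partial\X)$, the connecting map $\delta\colon\Omega\omega_{\partial\X}\to i^*\omega_{\X,\partial\X}$ is an equivalence. Since the classification functor $\yoneda^*\colon\func^L_{\baseTopos,\sC}(\presheafTopos(\X;\sC),\sC)\xrightarrow{\simeq}\func_\baseTopos(\X,\sC)$ from \cref{prop:classification_of_linear_functors} is an equivalence, and $\delta$ corresponds under \cref{lem:naturality_classification_C_linear_functors}(2) to the composite $\sC$-linear natural transformation $\Omega\partial\X_*\xrightarrow{\eta_{i_!}}\Omega\partial\X_* i^* i_!\xrightarrow{\canonical}(\X,\partial\X)_* i_!$ (this is \cref{defn:connecting_map_of_classifying_systems} specialised to $u=i$, where by the remark after that definition generalised relative cohomology along a left closed inclusion recovers relative cohomology), it suffices to prove that this natural transformation of $\baseTopos$-functors $\presheafTopos(\partial\X;\sC)\to\sC$ is an equivalence. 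Both sides are $\sC$-linear and colimit preserving, but in fact we don't even need that: we just check it is an equivalence of underlying $\baseTopos$-functors, which can be tested objectwise.

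The key input is \cref{lem:vanishing_limit_extension_by_zero}: tameness gives $\X_* i_!\simeq 0$. First I would apply $(-)\circ i_!$ to the defining fibre sequence $(\X,\partial\X)_*\to\X_*\to\partial\X_* i^*$ of \cref{defn:relative_cohomology}, obtaining a fibre sequence $(\X,\partial\X)_* i_!\to\X_* i_!\to\partial\X_* i^* i_!$ of $\baseTopos$-functors $\presheafTopos(\partial\X;\sC)\to\sC$ (precomposition is exact). By the vanishing $\X_* i_!\simeq 0$, the connecting map of this fibre sequence is an equivalence $\Omega(\partial\X_* i^* i_!)\xrightarrow{\simeq}(\X,\partial\X)_* i_!$; unwinding, this is precisely $\Omega$ applied to the identity on $\partial\X_* i^* i_!$ followed by $\canonical$, i.e. the second map in the composite defining $\delta$ is an equivalence. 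It then remains to show the first map $\eta_{i_!}\colon\Omega\partial\X_*\to\Omega\partial\X_* i^* i_!$ is an equivalence, equivalently (suspending is invertible on stable targets) that $\eta_{i_!}\colon\partial\X_*\to\partial\X_* i^* i_!$ is an equivalence. This is just the statement that the unit $\id\to i^* i_!$ of the adjunction $i_!\dashv i^*$ is an equivalence, which holds because $i\colon\partial\X\to\X$ is the inclusion of a left closed (hence, after passing to presheaves, right closed; cf. \cref{rmk:left_closed_kan_extension}) full subcategory, so $i_!$ is fully faithful and its counit—equivalently the unit of $i_!\dashv i^*$ on the image—is an equivalence. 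Alternatively one cites the recollement \cref{constr:stable_recollement_for_pairs}, where $i_!$ is one of the fully faithful adjoints.

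**Expected main obstacle.** The genuinely delicate point is the bookkeeping of identifying the abstractly-defined connecting map $\delta$ (which lives on the ``classifying system'' side, after transporting along $\yoneda^*$) with the concrete composite $\eta_{i_!}$ followed by $\canonical$ on the ``functor'' side, and checking that the connecting map of the fibre sequence obtained by precomposing with $i_!$ matches the $\canonical$ map up to the equivalence coming from $\X_* i_!\simeq 0$. This is exactly the kind of coherence juggling that \cref{obs:connecting_map_of_classifying_systems} handles in the cocartesian-fibration case via a large commuting diagram; here in the tame case it is considerably lighter because tameness kills the middle term outright, so the ``connecting map is an equivalence'' is immediate from the long exact sequence rather than requiring a comparison of two different models of relative cohomology. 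Everything else is a routine application of $i_!$ being fully faithful together with exactness of precomposition.
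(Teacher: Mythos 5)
Your proposal is correct and follows essentially the same route as the paper's proof: precompose the defining fibre sequence with $i_!$, use tameness via \cref{lem:vanishing_limit_extension_by_zero} to kill $\X_* i_!$, use full faithfulness of $i$ (so the unit $\id\to i^*i_!$ is an equivalence) to conclude the connecting transformation $\Omega\partial\X_*\to(\X,\partial\X)_* i_!$ is an equivalence, and transport back along \cref{lem:naturality_classification_C_linear_functors}. The only cosmetic difference is that you separate the two factors of $\delta$ explicitly, whereas the paper treats the composite at once; the content is identical.
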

\begin{proof}
Composing the defining fibre sequence for $(\X, \partial\X)_*$ with  the $\sC$-linear $\baseTopos$-functor $i_! \colon \presheafTopos(\partial\X;\sC) \to \presheafTopos(\X;\sC)$, we obtain the fibre sequence
\begin{equation}\label{eq:fiber_sequence_dualising_sheaf_boundary}
(\X, \partial\X)_* i_! \to \X_* i_! \to \partial\X_* i^* i_!
\end{equation}
of $\sC$-linear $\baseTopos$-functors $\presheafTopos(\partial\X;\sC)\to\sC$.
As $i$ is fully faithul, the adjunction unit $\id_{\presheafTopos(\partial\X;\sC)} \to i^* i_!$ is an equivalence of $\sC$-linear $\baseTopos$-functors; moreover \cref{lem:vanishing_limit_extension_by_zero} shows that $\X_* i_! \simeq 0$.
This means that the connecting map $\Omega \partial\X_* \to (\X, \partial\X)_* i_!$ is an equivalence.
Under \cref{lem:naturality_classification_C_linear_functors}, this corresponds to the connecting map $\delta \colon \Omega \omega_{\partial\X} \to i^* \omega_{\X,\partial\X}$ in the statement.
\end{proof}
\begin{cor}
Suppose that $(\X,\partial\X)$ is a $\sC$-Poincar\'e tame $\baseTopos$-category pair. Then $\partial\X$ also is a $\sC$-Poincar\'e $\baseTopos$-category.
\end{cor}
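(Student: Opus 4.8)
The plan is to read the statement off from the Boundary principle. Since $(\X,\partial\X)$ is $\sC$-Poincar\'e, it is in particular $\sC$-twisted ambidextrous, and by \cref{defn:ambidexterityforpairs} this means that $\partial\X$ is itself $\sC$-twisted ambidextrous; hence the classifying system $\omega_{\partial\X}\in\func_\baseTopos(\partial\X,\sC)$ is defined, and all that remains is to verify that it factors through the $\baseTopos$-subgroupoid $\picardSpaceTopos(\sC)\subseteq\sC^\simeq$.

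For this, I would invoke \cref{prop:boundary_principle}, whose hypotheses are met because $(\X,\partial\X)$ is tame and $\sC$-twisted ambidextrous: it provides an equivalence $\delta\colon\Omega\omega_{\partial\X}\xrightarrow{\simeq}i^*\omega_{\X,\partial\X}$ in $\func_\baseTopos(\partial\X,\sC)$. Since $(\X,\partial\X)$ is $\sC$-Poincar\'e, the system $\omega_{\X,\partial\X}$ factors through $\picardSpaceTopos(\sC)$, and precomposition with $i\colon\partial\X\to\X$ preserves this property; therefore $i^*\omega_{\X,\partial\X}$, and hence also $\Omega\omega_{\partial\X}$, factors through $\picardSpaceTopos(\sC)$.

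Finally, I would apply the suspension endofunctor $\Sigma$ of $\sC$, which is an equivalence inverse to $\Omega$ because $\sC$ is levelwise stable; since $\Sigma X\simeq\Sigma\unit\otimes X$ with $\Sigma\unit$ invertible, $\Sigma$ restricts to an autoequivalence of $\picardSpaceTopos(\sC)$. Consequently $\omega_{\partial\X}\simeq\Sigma\Omega\omega_{\partial\X}$ factors through $\picardSpaceTopos(\sC)$ as well, so $\partial\X$ is $\sC$-Poincar\'e. There is no genuine obstacle here; the only point to keep straight is that $\Omega$ and $\Sigma$ restrict to mutually inverse autoequivalences of $\picardSpaceTopos(\sC)$, which is immediate from levelwise stability of $\sC$.
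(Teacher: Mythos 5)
Your argument is correct and is exactly the intended one: the paper states this corollary without proof, as an immediate consequence of the boundary principle, and your route — ambidexterity of $\partial\X$ from the definition of a twisted ambidextrous pair, the equivalence $\Omega\omega_{\partial\X}\simeq i^*\omega_{\X,\partial\X}$ from \cref{prop:boundary_principle}, and the observation that $\Sigma$ and $\Omega$ restrict to inverse autoequivalences of $\picardSpaceTopos(\sC)$ — is precisely what is being invoked. No gaps.
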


The next proposition allows us to restrict the classifying system from a larger to a smaller $\baseTopos$-category pair.
\begin{prop}[Complementation principle]
\label{prop:complementation_principle}
Let $(\X,\partial\X)$ be a $\baseTopos$-category pair and assume that $\X$ is written as a union $\X_1\cup \X_2$ of two left closed subcategories as in \cref{defn:complement_union_intersection}. Denote $\W\coloneqq\X_1\cap \X_2$, $\partial\X_k=\partial\X\cap \X_k$ for $k=1,2$, and $\partial\W=\partial\X\cap \W$.
\begin{enumerate}[label=(\arabic*)]
\item Assume that all $\baseTopos$-category pairs $(\X_k,\partial\X_k)$ and $(\W,\partial\W)$ are $\sC$-twisted ambidextrous; then also $(\X,\partial\X)$ is $\sC$-twisted ambidextrous.
\item  Assume further that both of the inclusions $\X_2\setminus \W\subseteq \X_2$ and $\partial\X_2\setminus\partial\W\subseteq \partial\X_2$ are final. Then  $\omega_{\X_1,\partial\X_1\cup \W}\simeq\omega_{\X,\partial\X}|_{\X_1}$.
\end{enumerate}
\end{prop}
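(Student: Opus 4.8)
\emph{Part (1).} A short manipulation with preimages shows that inside $\partial\X$ one has $\partial\X_1\cup\partial\X_2=\partial\X$, $\partial\X_1\cap\partial\X_2=\partial\W$, with $\partial\X_1,\partial\X_2,\partial\W$ left closed in $\partial\X$, and that $\W$ is left closed in both $\X_1$ and $\X_2$. Hence by \cref{rmk:left_closed_covers} we may write $\X\simeq\X_1\amalg_\W\X_2$ and $\partial\X\simeq\partial\X_1\amalg_{\partial\W}\partial\X_2$ as pushouts in $\cat_\baseTopos$. The $\baseTopos$-categories $\X_1,\X_2,\W$ and $\partial\X_1,\partial\X_2,\partial\W$ are all $\sC$-twisted ambidextrous, being components of the $\sC$-twisted ambidextrous pairs in the hypothesis, so \cref{rmk:ambidexterity_closed_under_finite_colimits} shows $\X$ and $\partial\X$ are $\sC$-twisted ambidextrous, proving~(1). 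The same reasoning applied to the pushout $\partial\X_1\cup\W\simeq\partial\X_1\amalg_{\partial\W}\W$ shows $(\X_1,\partial\X_1\cup\W)$ is $\sC$-twisted ambidextrous, so that the system $\omega_{\X_1,\partial\X_1\cup\W}$ appearing in~(2) is defined.

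\emph{Part (2): reduction.} Write $\iota_1\colon\X_1\hookrightarrow\X$, $i\colon\partial\X\hookrightarrow\X$, $\iota_2\colon\X_2\hookrightarrow\X$, $k\colon\W\hookrightarrow\X_1$, $\tilde\imath\colon\partial\X_1\hookrightarrow\X_1$ and $q\colon\partial\W\hookrightarrow\X_1$ for the inclusions. Since $\omega_{\X,\partial\X}|_{\X_1}=\iota_1^*\omega_{\X,\partial\X}$, \cref{lem:naturality_classification_C_linear_functors} identifies this with the classifying system of the colimit-preserving $\sC$-linear $\baseTopos$-functor $(\X,\partial\X)_*\circ(\iota_1)_!\colon\presheafTopos(\X_1;\sC)\to\sC$. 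Thus it suffices to exhibit an equivalence $(\X,\partial\X)_*\circ(\iota_1)_!\simeq(\X_1,\partial\X_1\cup\W)_*$ in $\func^L_{\baseTopos,\sC}(\presheafTopos(\X_1;\sC),\sC)$; applying $\yoneda^*$ then gives the claim.

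\emph{Part (2): the computation.} The plan is to evaluate the defining fibre sequence $(\X,\partial\X)_*(\iota_1)_!=\fib\bigl(\X_*(\iota_1)_!\to\partial\X_*i^*(\iota_1)_!\bigr)$ on both factors. For $\zeta\in\presheafTopos(\X_1;\sC)$, the presheaf $(\iota_1)_!\zeta$ restricts to $\zeta$ on $\X_1$ and vanishes on the complement $\X\setminus\X_1=\X_2\setminus\W$ by \cref{rmk:left_closed_kan_extension}; using the pullback squares $\W=\X_1\times_\X\X_2$ and $\partial\X_1=\partial\X\times_\X\X_1$ and base change along the right fibrations $\iota_2$ and $i$ (\cref{prop:proper_smooth_base_change}), the restrictions $\iota_2^*(\iota_1)_!\zeta$ and $i^*(\iota_1)_!\zeta$ are the extensions by zero of $\zeta|_\W$ to $\X_2$ and of $\zeta|_{\partial\X_1}$ to $\partial\X$, respectively. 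Now Mayer--Vietoris descent along $\X=\X_1\amalg_\W\X_2$ (as in the proof of \cref{rmk:ambidexterity_closed_under_finite_colimits}), combined with the vanishing $(\X_2)_*\circ(\W\hookrightarrow\X_2)_!\simeq 0$ — which holds by \cref{lem:vanishing_limit_extension_by_zero} applied to the pair $(\X_2,\W)$, whose interior inclusion is final by the first hypothesis in~(2) — gives $\X_*(\iota_1)_!\simeq\fib\bigl((\X_1)_*\to\W_*k^*\bigr)$. Likewise, Mayer--Vietoris along $\partial\X=\partial\X_1\amalg_{\partial\W}\partial\X_2$ together with $(\partial\X_2)_*\circ(\partial\W\hookrightarrow\partial\X_2)_!\simeq 0$ — from \cref{lem:vanishing_limit_extension_by_zero} and the second hypothesis in~(2) — gives $\partial\X_*i^*(\iota_1)_!\simeq\fib\bigl((\partial\X_1)_*\tilde\imath^*\to(\partial\W)_*q^*\bigr)$. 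Hence $(\X,\partial\X)_*(\iota_1)_!$ is the total fibre of the commutative square of $\sC$-linear $\baseTopos$-functors with vertices $(\X_1)_*$, $\W_*k^*$, $(\partial\X_1)_*\tilde\imath^*$, $(\partial\W)_*q^*$ and edges the evident restrictions. By the standard description of the total fibre of a square in a stable setting, this equals $\fib\bigl((\X_1)_*\to(\partial\X_1)_*\tilde\imath^*\times_{(\partial\W)_*q^*}\W_*k^*\bigr)$, and by Mayer--Vietoris descent along $\partial\X_1\cup\W=\partial\X_1\amalg_{\partial\W}\W$ the fibre product in the target is precisely $(\partial\X_1\cup\W)_*(\partial\X_1\cup\W\hookrightarrow\X_1)^*$; therefore the total fibre is $(\X_1,\partial\X_1\cup\W)_*$, as desired. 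Every equivalence used here is natural in $\zeta$ and is a map of (lax) $\sC$-linear $\baseTopos$-functors — the defining fibre sequences, the Mayer--Vietoris formula, and the base-change equivalences all being so — hence the resulting equivalence lies in $\func^L_{\baseTopos,\sC}$.

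I expect the only real obstacle to be the bookkeeping in the third paragraph: keeping track of which subcategory each restriction of $(\iota_1)_!\zeta$ is supported on, checking that the recollement gluing data of these extensions-by-zero remains trivial under restriction (so that the base-change identifications are legitimate), and verifying compatibility of all the intermediate equivalences with the canonical lax $\sC$-linear structures, so that the final equivalence is genuinely one of $\sC$-linear functors and not merely of underlying $\baseTopos$-functors.
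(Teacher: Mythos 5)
Your proposal is correct and follows essentially the same route as the paper: reduce via \cref{lem:naturality_classification_C_linear_functors} to identifying $(\X,\partial\X)_*(\iota_1)_!$ with $(\X_1,\partial\X_1\cup\W)_*$, then use the pushout decompositions from \cref{rmk:left_closed_covers}, Beck--Chevalley along the right-fibration inclusions (\cref{prop:proper_smooth_base_change}) and the vanishing of limits of extensions-by-zero under the finality hypotheses (\cref{lem:vanishing_limit_extension_by_zero}). The only difference is bookkeeping: the paper applies a single relative Mayer--Vietoris equivalence $(\X,\partial\X)_*\simeq(\X_1,\partial\X_1)_*i_1^*\times_{(\W,\partial\W)_*i^*}(\X_2,\partial\X_2)_*i_2^*$ and kills the $\X_2$-corner, whereas you apply absolute Mayer--Vietoris to $\X_*$ and $\partial\X_*$ separately and reassemble by a total-fibre argument, which comes to the same thing.
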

\begin{proof}
Denote by $i_k \colon \X_k \hookrightarrow \X$, $j_k \colon \W \hookrightarrow \X_k$ and $i \colon \W \hookrightarrow X$ the respective inclusions.
By \cref{rmk:left_closed_covers}, $\X$ and $\partial\X$ are the pushouts in $\cat_{\baseTopos}$ of the span diagrams  $\X_1 \hookleftarrow \W\hookrightarrow \X_2$ and $\partial\X_1\hookleftarrow\partial\W\hookrightarrow\partial\X_2$, respectively. As a consequence $(\X,\partial \X)$ is $\sC$-twisted ambidextrous; moreover 
the canonical $\sC$-linear map is an equivalence:
\begin{equation}
\label{eq:limit_over_union}
(\X, \partial\X)_* \xrightarrow{\simeq} (\X_1, \partial\X_1)_* i_1^* \times_{(\W, \partial\W)_*i^*} (\X_2, \partial\X_2)_* i_2^*. 
\end{equation}

For point (2), consider the following cartesian square in $\cat_{\baseTopos}$:
\[
\begin{tikzcd}[row sep=10pt, column sep=40pt]
\W\op \ar[r, "j_1\op"] \ar[d, "j_2\op"]\ar[dr,phantom,"\lrcorner"very near start] & \X_1\op \ar[d, "i_1\op"] \\
\X_2\op \ar[r, "i_2\op"] & \X\op.
\end{tikzcd}
\]
Recall from 
\cref{obs:leftclosed}\ref{rmk:left_closed_left_fibration} that $i_2\op$ is a right fibration.
It follows from \cref{prop:proper_smooth_base_change} that the Beck--Chevalley transformation $(j_2)_! (j_1)^* \to (i_2)^* (i_1)_!$ is an equivalence.
We then obtain that $(\X_2)_* (i_2)^* (i_1)_! \simeq (\X_2)_* (j_2)_! (j_1)^* \simeq 0$ from \cref{lem:vanishing_limit_extension_by_zero}.
With a similar argument, now using that $\partial\X_2 \setminus \partial\W \subseteq \partial\X_2$ is final, one can show that $(\partial\X_2)_* (i_2)^* (i_1)_! \simeq 0$.
Together this gives $(\X_2, \partial\X_2)_* (i_2)^* (i_1)_! \simeq 0$.
Precomposing the equivalence from  \cref{eq:limit_over_union} with $(i_1)_!$, and using also that the unit of the adjunction $(i_1)_!\dashv (i_1)^*$ is an equivalence, we obtain the following $\sC$-linear equivalence
\[
(\X, \partial\X)_* (i_1)_! \simeq \fib((\X_1, \partial\X_1)_* \to (\W, \partial\W)_*j_1^*) \simeq (\X_1, \partial\X_1 \cup \W)_*.
\]
By \cref{lem:naturality_classification_C_linear_functors}, the left side is classified by the system $i_1^*(\omega_{X, \partial\X})$, from which we obtain the desired equivalence $\omega_{\X_1, \partial\X_1 \cup \W} \simeq i_1^* (\omega_{\X, \partial\X})$.
\end{proof}
Motivated by \cref{prop:complementation_principle} we give the following definition.
\begin{defn}\label{defn:complements_of_subcategory_pairs}
Let $(\X,\partial\X)$ be a $\baseTopos$-category pair. A pair of left closed full $\baseTopos$-subcategories $\partial\Y \subseteq\Y\subseteq\X$ is called \textit{complemented} in $(\X,\partial\X)$ if there exists a pair of left closed $\baseTopos$-subcategories $\partial\Z\subseteq\Z\subseteq\X$ such that the following hold:
\begin{enumerate}[label=(\arabic*)]
\item $\X=\Y\cup\Z$;
\item writing $\W\coloneqq\Y\cap\Z$ and $\partial\W\coloneqq\partial\X\cap\W$ we have $\partial\Y\simeq(\partial\X \cap\Y)\cup\W$ and $\partial\Z\simeq (\partial\X\cap\Z)\cup\W$;
\item the inclusions $\Z\setminus\W \subseteq\Z$ and $\partial\Z\setminus \partial\W\subseteq\partial\Z$ are final $\baseTopos$-functors.
\end{enumerate}
In this case, we call $(\Z,\partial\Z)$ a \textit{complement} for $(\Y,\partial\Y)$.
\end{defn}

\begin{terminology}[Complemented covers]\label{term:complemented_covers}
For a $\baseTopos$-category pair $(\X,\partial\X)$, we say that a finite collection $\{(\Y_i,\partial\Y_i)\}$ of left closed $\baseTopos$-subcategories $\Y_i\subseteq \X$ equipped with further left closed $\baseTopos$-subcategories $\partial\Y_i\subseteq \Y_i$ form a \textit{complemented cover} of $(\X,\partial\X)$ if each $(\Y_i,\partial\Y_i)$ is complemented in $(\X,\partial\X)$ as in \cref{defn:complements_of_subcategory_pairs}, and $\X=\bigcup_i\Y_i$ as in \cref{defn:complement_union_intersection}.
\end{terminology}

\begin{cor}[Local-to-global principle]\label{cor:local_to_global_principle}
Let $(\X,\partial\X)$ be a $\sC$-twisted ambidextrous $\baseTopos$-category pair together with a complemented cover $\{(\Y_i,\partial\Y_i)\}_{i \in I}$ by $\sC$-twisted ambidextrous $\baseTopos$-category pairs.
Then $(\X,\partial\X)$ is groupoidally $\sC$-twisted ambidextrous (resp. $\sC$-Poincar\'e) if and only if for each $i \in I$ the pair $(\Y_i, \partial\Y_i)$ is groupoidally $\sC$-twisted ambidextrous (resp. $\sC$-Poincar\'e).
\end{cor}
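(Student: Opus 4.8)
The plan is to reduce the whole statement to a single assertion: for each $i\in I$ the classifying system of $(\Y_i,\partial\Y_i)$ is the restriction along the inclusion $j_i\colon\Y_i\hookrightarrow\X$ of the classifying system of $(\X,\partial\X)$, i.e. $\omega_{\Y_i,\partial\Y_i}\simeq j_i^*\omega_{\X,\partial\X}$. Granting this, recall that by hypothesis $(\X,\partial\X)$ and every $(\Y_i,\partial\Y_i)$ are already $\sC$-twisted ambidextrous, so by \cref{defn:C_groupoidal_ambidexterity,defn:Poincarecatpair} the pair $(\X,\partial\X)$ is groupoidally $\sC$-twisted ambidextrous (resp.\ $\sC$-Poincar\'e) exactly when $\omega_{\X,\partial\X}$ factors through $\sC^\simeq$ (resp.\ through $\picardSpaceTopos(\sC)$), and similarly for each $(\Y_i,\partial\Y_i)$ with $j_i^*\omega_{\X,\partial\X}$ in place of $\omega_{\X,\partial\X}$; thus the theorem becomes the assertion that $\omega_{\X,\partial\X}$ factors through one of these full $\baseTopos$-subgroupoids of $\sC$ if and only if each of its restrictions to the $\Y_i$ does.

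For that I would first observe that a complemented cover is jointly surjective on objects and on morphisms. The equality $\X=\bigcup_i\Y_i$ exhausts objects at every level $\tau\in\baseTopos$, since the union of left closed $\baseTopos$-subcategories is computed levelwise; and since each $\Y_i\subseteq\X$ is left closed it is stable under the source of any arrow whose target lies in $\Y_i$, so a morphism $x\to y$ of $\X$ with $y\in\Y_i$ already lies in $\Y_i$, hence morphisms are exhausted as well. Now factoring a $\baseTopos$-functor $\X\to\sC$ through $\sC^\simeq$ is precisely the condition of sending every morphism of $\X$ to an equivalence, and factoring through $\picardSpaceTopos(\sC)\subseteq\sC^\simeq$ additionally imposes the objectwise condition of landing in invertible objects; both are therefore detected after restriction to the members of any cover that is jointly surjective on objects and morphisms, which settles both implications at once.

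It remains to prove $\omega_{\Y_i,\partial\Y_i}\simeq j_i^*\omega_{\X,\partial\X}$, where the actual content lies. For each $i$ I would choose a complement $(\Z_i,\partial\Z_i)$ for $(\Y_i,\partial\Y_i)$ in $(\X,\partial\X)$ as in \cref{defn:complements_of_subcategory_pairs} and feed the decomposition $\X=\Y_i\cup\Z_i$ into the Complementation principle \cref{prop:complementation_principle}: with $\W_i=\Y_i\cap\Z_i$, condition (2) of \cref{defn:complements_of_subcategory_pairs} identifies $(\partial\X\cap\Y_i)\cup\W_i$ with $\partial\Y_i$ and condition (3) supplies the finality hypotheses of part (2) of that proposition, whose conclusion then reads exactly $\omega_{\Y_i,\partial\Y_i}\simeq\omega_{\X,\partial\X}|_{\Y_i}$. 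The point requiring care is the $\sC$-twisted ambidexterity of the auxiliary pieces $(\Z_i,\partial\X\cap\Z_i)$, $(\W_i,\partial\W_i)$ and $(\Y_i,\partial\X\cap\Y_i)$ demanded by part (1); these should be extracted from the given hypotheses using closure of $\sC$-twisted ambidexterity under retracts and finite colimits (\cref{rmk:ambidexterity_closed_under_retracts,rmk:ambidexterity_closed_under_finite_colimits}) together with the finality already built into \cref{defn:complements_of_subcategory_pairs}. Should this bookkeeping prove delicate, one can avoid it: the pushout presentations $\X\simeq\Y_i\amalg_{\W_i}\Z_i$ and $\partial\X\simeq(\partial\X\cap\Y_i)\amalg_{\partial\W_i}(\partial\X\cap\Z_i)$ from \cref{rmk:left_closed_covers} split $\X_*$ and $\partial\X_*$ as iterated pullbacks, and for a presheaf $\xi=(j_i)_!\eta$ the summands supported away from $\Y_i$ vanish --- use proper--smooth base change (\cref{prop:proper_smooth_base_change}) along the cartesian squares of the cover to rewrite the restrictions of $\xi$ to $\Z_i$ and to $\partial\X\cap\Z_i$ as extensions by zero along left closed inclusions, then invoke \cref{lem:vanishing_limit_extension_by_zero} with the finality conditions --- so that, after an iterated-fibre manipulation legitimate by stability of $\sC$, one gets $(\X,\partial\X)_*\circ(j_i)_!\simeq(\Y_i,\partial\Y_i)_*$; passing to classifying systems via \cref{lem:naturality_classification_C_linear_functors} yields $j_i^*\omega_{\X,\partial\X}\simeq\omega_{\Y_i,\partial\Y_i}$ using only that $(\X,\partial\X)$ and $(\Y_i,\partial\Y_i)$ are $\sC$-twisted ambidextrous, as assumed. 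This identification --- concretely, the matching of the finality and twisted-ambidexterity data of the complemented cover with the hypotheses of \cref{prop:complementation_principle} --- is the one place I expect genuine friction; everything else is formal.
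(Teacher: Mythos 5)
Your argument is correct, and its backbone coincides with the paper's: both proofs rest on the identification $\omega_{\Y_i,\partial\Y_i}\simeq j_i^*\omega_{\X,\partial\X}$ obtained from \cref{prop:complementation_principle}(2), after which groupoidality resp.\ Poincar\'e duality of $(\X,\partial\X)$ is equivalent to the corresponding property of all restrictions. Where you diverge is the ``local implies global'' direction: the paper invokes that $\coprod_i\Y_i\to\X$ is an effective epimorphism and descends along the \v{C}ech nerve, $\funTopos(\X,\sC)\simeq\lim_{[n]\in\Delta}\funTopos((\coprod_i\Y_i)^{\times_\X n+1},\sC)$, whereas you observe that, levelwise, every object and (by left closedness) every morphism of $\X$ already lies in some $\Y_i$, and that factoring through $\sC^\simeq$ resp.\ $\picardSpaceTopos(\sC)$ is exactly a morphismwise/objectwise condition; this is a more elementary, hands-on form of the same descent and is perfectly valid in the parametrised setting since cores and Picard subgroupoids are computed levelwise. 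One caveat on your primary route to the key identification: a literal application of \cref{prop:complementation_principle}(2) asks for twisted ambidexterity of the complement $(\Z_i,\partial\X\cap\Z_i)$ and the intersection $(\W_i,\partial\W_i)$, and I do not see how to extract these from the corollary's hypotheses via closure under retracts and finite colimits as you first suggest (ambidexterity of $\X$, $\partial\X$, $\Y_i$, $\partial\Y_i$ does not control $\Z_i$ or $\W_i$). However, your fallback argument --- rerunning the proof of \cref{prop:complementation_principle}(2) directly, using the pushout presentations, proper/smooth basechange and \cref{lem:vanishing_limit_extension_by_zero} to get $(\X,\partial\X)_*\circ(j_i)_!\simeq(\Y_i,\partial\Y_i)_*$, which only needs ambidexterity of $(\X,\partial\X)$ and $(\Y_i,\partial\Y_i)$ --- is sound, and is in effect how the paper's own citation of the complementation principle must be read; so the hedge you built in is exactly the right one.
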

\begin{proof}
From \cref{prop:complementation_principle} it follows that $\omega_{\Y_i,\partial\Y_i}$ is the restriction of $\omega_{\X,\partial\X}$; so if $(\X,\partial\X)$ is groupoidally $\sC$-twisted ambidextrous (resp. $\sC$-Poincar\'e), then so is $(\Y_i,\partial\Y_i)$ for all $i$. 

Conversely, assume that each $(\Y_i,\partial\Y_i)$ is groupoidally $\sC$-twisted ambidextrous (resp. $\sC$-Poincar\'e).
Note that the functor $j \colon \coprod_{i \in I} \Y_i \to \X$ is an effective epimorphism, meaning that $\X$ is equivalent to the realisation of the associated \v{C}ech nerve.
By the resulting equivalence 
\[
\funTopos(\X, \sC) \xrightarrow{\simeq} \lim_{[n] \in \Delta} \funTopos\left(\left(\coprod\Y_i\right)^{\times_\X n+1}, \sC\right)
\]
it suffices to show the restriction of $\omega_{\X,\partial\X}$ to each $\Y_i$ factors through $\sC^\simeq$ (resp. $\picardSpaceTopos(\sC)$), which is true as it identifies with $\omega_{\Y_i,\partial\Y_i}$, again by \cref{prop:complementation_principle}.
\end{proof}

\begin{example}
\label{ex:gluingXY}
In the setting of \cref{prop:complementation_principle}, assume that both pairs of inclusions $\X_k\setminus \W\subseteq \X_k$ and $\partial\X_k\setminus\partial\W\subseteq \partial\X_k$ are final, for $k=1,2$. Then  $\{(\X_k,\partial\X_k\cup\W)\}_{k=1}^2$ is a complemented cover of $(\X,\partial\X)$, hence $(\X,\partial\X)$ is groupoidally $\sC$-twisted ambidextrous (resp. $\sC$-Poincar\'e) if and only if both $\baseTopos$-category pairs $(\X_k,\partial\X_k\cup \W)$, for $k=1,2$, are so.
\end{example}

\begin{example}[Doubling principle]
\label{ex:doubling_principle}
Let $(\Y,\partial\Y)$ be a $\sC$-twisted ambidextrous tame $\baseTopos$-category pair.
We may then construct the $\baseTopos$-category $\X=\Y\amalg_{\partial\Y}\Y$, by ``doubling'' $\Y$ along $\partial\Y$. The hypothesis that $(\Y,\partial\Y)$ is tame ensures that $\X$ admits a complemented cover by two copies of $(\Y,\partial\Y)$; it follows from \cref{cor:local_to_global_principle} that $\X$ is a $\sC$-Poincar\'e $\baseTopos$-category if and only if $(\Y,\partial\Y)$ is a $\sC$-Poincar\'e $\baseTopos$-category pair. We will use variations of this observation in some proofs, thus reducing from the ``relative case'' to the ``absolute case'' with empty boundary.
\end{example}

The next lemma allows us to transfer complemented covers along cocartesian fibrations.
\begin{lem}\label{lem:pulling_back_complemented_pairs}
Let $(\X,\partial\X)$ be a $\baseTopos$-category pair and let $p \colon \E \to \X$ be a $\baseTopos$-cocartesian fibration. Set $\partial\E = p^{-1}(\partial\X)$. Assume that $\partial\Y\subseteq\Y \subseteq\X$ is a complemented $\baseTopos$-subcategory pair. Then $p^{-1}(\partial\Y) \subseteq p^{-1}(\Y) \subseteq\E$ is also a complemented $\baseTopos$-subcategory pair. In particular, if $\{(\Y_i,\partial\Y_i)\}_{i \in I}$ is a complemented cover of $(\X,\partial\X)$, then $\{(p^{-1}(\Y_i),p^{-1}(\partial\Y_i))\}_{i\in I}$ is a complemented cover of $(\E,\partial\E)$.
\end{lem}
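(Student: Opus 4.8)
The plan is to exhibit the pullback of a complement as a complement and then to check the three clauses of \cref{defn:complements_of_subcategory_pairs} by pushing them through $p^{-1}$. So let $(\Z,\partial\Z)$ be a complement for $(\Y,\partial\Y)$ in $(\X,\partial\X)$, with $\W\coloneqq\Y\cap\Z$ and $\partial\W\coloneqq\partial\X\cap\W$ as in that definition, and set $\partial\E\coloneqq p^{-1}(\partial\X)$. I claim that $(p^{-1}(\Z),p^{-1}(\partial\Z))$ is a complement for $(p^{-1}(\Y),p^{-1}(\partial\Y))$ in $(\E,\partial\E)$.

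The first, purely formal, ingredient is that $p^{-1}(-)$ is compatible with left closedness and with the lattice operations on (left) closed $\baseTopos$-subcategories. By \cref{obs:leftclosed}\ref{rmk:left_closed_fibration}, preimages of left closed $\baseTopos$-subcategories are again left closed, so each of $p^{-1}(\Y)$, $p^{-1}(\Z)$, $p^{-1}(\partial\Y)$, $p^{-1}(\partial\Z)$, $\partial\E$ is left closed in the appropriate $\baseTopos$-category; in particular $(\E,\partial\E)$ is a $\baseTopos$-category pair. Writing each subcategory in sight as the fibre of a $\baseTopos$-functor to $[1]$ as in \cref{defn:complement_union_intersection} and precomposing with $p$, one reads off (invoking \cref{lem:uniqueness_functor_to_1} for well-definedness) that $p^{-1}$ commutes with finite unions, finite intersections and complements of left closed $\baseTopos$-subcategories; for instance $p^{-1}(\Y\cup\Z)=p^{-1}(\Y)\cup p^{-1}(\Z)$, $p^{-1}(\Y\cap\Z)=p^{-1}(\Y)\cap p^{-1}(\Z)$, $p^{-1}(\Z\setminus\W)=p^{-1}(\Z)\setminus p^{-1}(\W)$, and $p^{-1}(\X)=\E$. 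Applying $p^{-1}$ to the identities defining the complement $(\Z,\partial\Z)$ then yields clauses (1) and (2) of \cref{defn:complements_of_subcategory_pairs} for the pulled-back data verbatim, with $p^{-1}(\W)=p^{-1}(\Y)\cap p^{-1}(\Z)$ and $p^{-1}(\partial\W)=\partial\E\cap p^{-1}(\W)$ playing the roles of $\W$ and $\partial\W$.

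The only clause with content is (3). Here I would observe that $p|_{p^{-1}(\Z)}\colon p^{-1}(\Z)\to\Z$ and $p|_{p^{-1}(\partial\Z)}\colon p^{-1}(\partial\Z)\to\partial\Z$ are base changes of the $\baseTopos$-cocartesian fibration $p$ along the full (left closed) inclusions $\Z\hookrightarrow\X$ and $\partial\Z\hookrightarrow\X$, hence are themselves $\baseTopos$-cocartesian fibrations, since $\baseTopos$-cocartesian fibrations are stable under base change. Now $\Z\setminus\W\subseteq\Z$ and $\partial\Z\setminus\partial\W\subseteq\partial\Z$ are right closed and final by hypothesis, so the cocartesian/final half of \cref{obs:leftclosed}\ref{rmk:left_closed_fibration}, applied to these two cocartesian fibrations, gives that $p^{-1}(\Z)\setminus p^{-1}(\W)=p^{-1}(\Z\setminus\W)$ is final in $p^{-1}(\Z)$ and that $p^{-1}(\partial\Z)\setminus p^{-1}(\partial\W)=p^{-1}(\partial\Z\setminus\partial\W)$ is final in $p^{-1}(\partial\Z)$. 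This establishes (3), so $(p^{-1}(\Y),p^{-1}(\partial\Y))$ is complemented in $(\E,\partial\E)$.

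For the final assertion, if $\{(\Y_i,\partial\Y_i)\}_{i\in I}$ is a complemented cover of $(\X,\partial\X)$, then each $(p^{-1}(\Y_i),p^{-1}(\partial\Y_i))$ is complemented in $(\E,\partial\E)$ by the above, and $\bigcup_{i\in I}p^{-1}(\Y_i)=p^{-1}\big(\bigcup_{i\in I}\Y_i\big)=p^{-1}(\X)=\E$ because $p^{-1}$ commutes with finite unions, so $\{(p^{-1}(\Y_i),p^{-1}(\partial\Y_i))\}_{i\in I}$ is a complemented cover of $(\E,\partial\E)$. I expect no serious obstacle here; the only points that require care are phrasing the lattice-compatibility of $p^{-1}$ cleanly enough that clauses (1)--(2) become genuinely automatic, and citing the stability of $\baseTopos$-cocartesian fibrations under base change so that the restricted projections onto $\Z$ and $\partial\Z$ are indeed still cocartesian fibrations.
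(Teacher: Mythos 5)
Your proof is correct and follows essentially the same route as the paper: the paper's (very terse) proof also pulls back the complement $(\Z,\partial\Z)$ and invokes \cref{obs:leftclosed}\ref{rmk:left_closed_fibration} for both left closedness and the preservation of finality under cocartesian fibrations. You simply spell out the details the paper compresses, namely the compatibility of $p^{-1}$ with unions, intersections and complements via classifying functors to $[1]$, and the base change of $p$ over $\Z$ and $\partial\Z$ needed to apply the finality observation.
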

\begin{proof}
Let $(\Z,\partial\Z)$ be a complement for $(\Y,\partial\Y)$, and set $\W=\Y \cap\Z$ and $\partial\W = \partial\X \cap\W$. Then $(p^{-1}(\Z),p^{-1}(\Y))$ is a complement for $(p^{-1}(\Y),p^{-1}(\partial\Y))$ by \cref{obs:leftclosed}\ref{rmk:left_closed_fibration}.
\end{proof}

The next proposition allows us to identify the classifying system of the localisation of a $\baseTopos$-category pair.
\begin{prop}[Localisation principle]
\label{prop:localisation_principle}
Let $f \colon (\X,\partial\X)\to(\Y,\partial\Y)$ be a map of $\baseTopos$-category pairs. Assume that both $\baseTopos$-functors $f^*\colon\presheafTopos(\Y;\sC)\to\presheafTopos(\X;\sC)$ and $\partial f^*\colon\presheafTopos(\partial\Y;\sC)\to\presheafTopos(\partial\X;\sC)$ are fully faithful. Also suppose that $(\X, \partial \X)$ is $\sC$-twisted ambidextrous. Then:
\begin{enumerate}
    \item the pair $(\Y,\partial\Y)$ is $\sC$-twisted ambidextrous, and $\omega_{\Y,\partial\Y}\simeq f_! (\omega_{\X,\partial\X})$;
    \item if, additionally, $\omega_{\X,\partial\X}\simeq f^*\xi$ for some $\xi \in \func_{\baseTopos}(\Y,\sC)$, then $\omega_{\Y,\partial\Y}\simeq \xi$;
    \item if $f\colon\X\rightarrow \Y$ were a localisation, and $(\X,\partial\X)$ is groupoidally $\sC$-twisted ambidextrous (resp. $\sC$-Poincar\'e), then so is $(\Y,\partial\Y)$. 
\end{enumerate}
\end{prop}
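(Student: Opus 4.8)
The plan is to bootstrap everything from the single equivalence $\Y_{*}\simeq\X_{*}f^{*}$ together with the naturality of the Morita classification from \cref{lem:naturality_classification_C_linear_functors}. For (1): since $f^{*}\colon\presheafTopos(\Y;\sC)\to\presheafTopos(\X;\sC)$ is fully faithful, the unit of $f^{*}\dashv f_{*}$ is an equivalence $\id\xrightarrow{\simeq}f_{*}f^{*}$; combining this with the factorisation $\X_{*}\simeq\Y_{*}f_{*}$ of the limit functor (the limit over $\X$ is the limit over $\Y$ of the right Kan extension along $f$) gives $\Y_{*}\simeq\X_{*}f^{*}$, and likewise $\partial\Y_{*}\simeq\partial\X_{*}(\partial f)^{*}$ using full-faithfulness of $\partial f^{*}$. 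As $f^{*}$ and $\partial f^{*}$ are colimit-preserving strict $\sC$-linear functors and $\X_{*},\partial\X_{*}$ are colimit-preserving and strictly $\sC$-linear by the hypothesis that $(\X,\partial\X)$ is $\sC$-twisted ambidextrous, \cref{rec:recognition_c_linear_functors}, applied to these equivalences (which are induced by adjunction units, hence lax $\sC$-linear by \cref{lem:adjunction_unit_lax_linear}), shows that $\Y$ and $\partial\Y$ are $\sC$-twisted ambidextrous. Finally, as $f$ is a morphism of pairs we have $\partial\X=f^{-1}(\partial\Y)$, so the square with vertices $\partial\X,\X,\partial\Y,\Y$ is cartesian; base change along it identifies the defining fibre sequence of $(\Y,\partial\Y)_{*}$ with that of $(\X,\partial\X)_{*}$ precomposed with $f^{*}$, whence $(\Y,\partial\Y)_{*}\simeq(\X,\partial\X)_{*}\circ f^{*}$ as $\sC$-linear functors, and \cref{lem:naturality_classification_C_linear_functors} yields $\omega_{\Y,\partial\Y}\simeq f_{!}\,\omega_{\X,\partial\X}$.

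For (2): by (1) we have $\omega_{\Y,\partial\Y}\simeq f_{!}f^{*}\xi$, so it suffices to prove that the counit $f_{!}f^{*}\to\id$ on the copresheaf categories $\func_{\baseTopos}(-,\sC)$ is an equivalence, i.e.\ that $f^{*}$ is fully faithful there as well. For this I would use the duality $\func_{\baseTopos}(\X,\sC)\simeq\presheafTopos(\X;\sC)^{\vee}$ established in the proof of \cref{prop:classification_of_linear_functors}: under it, the second square in \cref{lem:naturality_classification_C_linear_functors} exhibits $f_{!}$ on copresheaves as the dual of $f^{*}$ on presheaves. Since $f^{*}$ on presheaves is fully faithful, $f_{!}$ on presheaves is a Bousfield localisation, and the dual of a localisation functor is fully faithful; hence $f_{!}f^{*}\simeq\id$ on copresheaves, giving $\omega_{\Y,\partial\Y}\simeq\xi$.

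For (3): a localisation $f$ has $f^{*}$ fully faithful on presheaves by the universal property of localisations, and $f\op$ is again a localisation, so $f^{*}$ is also fully faithful on copresheaves; moreover $|f|\colon|\X|\to|\Y|$ is an equivalence, since localisations induce equivalences on realisations. Write $q_{\X}\colon\X\to|\X|$ and $q_{\Y}\colon\Y\to|\Y|$ for the realisation functors. If $(\X,\partial\X)$ is groupoidally $\sC$-twisted ambidextrous (resp.\ $\sC$-Poincar\'e), then $\omega_{\X,\partial\X}\simeq q_{\X}^{*}\bar\omega$ for some functor $\bar\omega$ out of $|\X|$ that is valued in $\sC$ (resp.\ in $\picardSpaceTopos(\sC)$); being groupoidal, $\omega_{\X,\partial\X}$ inverts every morphism of $\X$, in particular every morphism inverted by $f$, so by the universal property $\omega_{\X,\partial\X}\simeq f^{*}\tilde\omega$ for a unique $\tilde\omega\in\func_{\baseTopos}(\Y,\sC)$, and then $\omega_{\Y,\partial\Y}\simeq f_{!}f^{*}\tilde\omega\simeq\tilde\omega$ by (2). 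Chasing $q_{\Y}f\simeq|f|q_{\X}$ and using full-faithfulness of $q_{\Y}^{*}$ and $f^{*}$ identifies $\omega_{\Y,\partial\Y}\simeq\tilde\omega$ with $q_{\Y}^{*}\big((|f|^{-1})^{*}\bar\omega\big)$, so $\omega_{\Y,\partial\Y}$ is again groupoidal, and if $\bar\omega$ was $\picardSpaceTopos(\sC)$-valued then so is $(|f|^{-1})^{*}\bar\omega$; hence $(\Y,\partial\Y)$ is groupoidally $\sC$-twisted ambidextrous (resp.\ $\sC$-Poincar\'e).

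I expect the main obstacle to be the passage, in (2) and (3), from full-faithfulness of $f^{*}$ on presheaves to full-faithfulness on copresheaves (equivalently $f_{!}f^{*}\simeq\id$ there): the duality argument above should settle it, but one must verify that the duality of \cref{prop:classification_of_linear_functors} and the compatibilities of \cref{lem:naturality_classification_C_linear_functors} interact as claimed. The remaining bookkeeping---matching lax $\sC$-linear structures and identifying the two fibre sequences via base change in (1)---is routine.
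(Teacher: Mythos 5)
Your proof is correct and follows essentially the same route as the paper's: point (1) via the unit equivalences $\Y_*\simeq\X_*f^*$, $\partial\Y_*\simeq\partial\X_*\partial f^*$ together with \cref{lem:naturality_classification_C_linear_functors} (your invocation of ``base change'' along the cartesian square is more than is needed — commutativity of the square plus the two unit equivalences already identify $(\Y,\partial\Y)_*$ with $(\X,\partial\X)_*f^*$), point (2) via the counit $f_!f^*\xi\to\xi$ on copresheaves, and point (3) by factoring the groupoidal $\omega_{\X,\partial\X}$ through the localisation and reducing to (2). The only real difference is that in (2) you spell out, via the dualisability of $\presheafTopos(\X;\sC)$ and the adjunction-compatibility clause of \cref{lem:naturality_classification_C_linear_functors}, why full faithfulness of $f^*$ on presheaves forces the copresheaf counit to be an equivalence — a step the paper asserts without comment — and this justification is valid.
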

\begin{proof}
For point (1), the assumption implies that the units for the adjunctions $f^* \dashv f_*$ and $\partial f^* \dashv \partial f_*$ are equivalences.
Now the resulting equivalences $\Y_* \simeq \X_* f^*$ and $\partial \Y_* \simeq \partial \X_* f^*$ show that $(\Y,\partial \Y)$ is $\sC$-twisted ambidextrous provided $(\X,\partial \X)$ is. 
In this case, denoting by $i\colon\partial\X\to \X$ and $j\colon\partial\Y\to \Y$ the inclusions, we have a chain of equivalences of $\sC$-linear functors
\begin{equation}\label{eq:limit_localisation_lemma}
(\Y,\partial\Y)_*\simeq \fib(\Y_*\to Y_*j_*j^*) \xrightarrow{\simeq} \fib(\Y_*f_*f^*\to \Y_*j_*\partial f_*\partial f^* j^*) \simeq (\X,\partial\X)_* f^*.
\end{equation}
The right side in \cref{eq:limit_localisation_lemma} is classified by the object $\omega_{\Y,\partial\Y} \simeq f_! (\omega_{\X,\partial\X})$ by \cref{lem:naturality_classification_C_linear_functors}. 

Point (2) now is a simple consequence of point (1), since $\omega_{\Y,\partial\Y}\simeq f_!f^*\xi\xrightarrow[\simeq]{\epsilon}\xi$. 
Finally, point (3) is immediate since $(\X,\partial\X)$ being groupoidally $\sC$-twisted ambidextrous and $f$ being a localisation implies that we have a factorisation $\omega_{\X,\partial\X}\colon \X\xrightarrow{f} \Y\rightarrow |\X|\rightarrow \sC$, and so we are in the situation of point (2).
\end{proof}

\begin{example}[Absolute realisation principle]
\label{ex:absolute_realisation_principle}
Let $\X$ be a groupoidally $\sC$-ambidextrous $\baseTopos$-category with classifying system $\omega_{\X}\in\func_{\baseTopos}(\X,\sC)$. Then $|\X|\in\spc_\baseTopos$ is also $\sC$-ambidextrous  with classifying system the essentially unique $\omega_{|\X|}\in\func(|\X|,\sC)$ factoring $\omega_\X$ through $|\X|$. In particular, if $\X$ is a $\sC$-Poincar\'e $\baseTopos$-category, then $|\X|$ is a $\sC$-Poincar\'e $\baseTopos$-space.
\end{example}

\cref{ex:absolute_realisation_principle} generalises as follows for $\baseTopos$-Poincar\'e pairs of $\baseTopos$-spaces.
\begin{prop}[Relative realisation principle]
\label{prop:barXdXpdpair}
Let $(\X,\partial\X)$ be a $\sC$-Poincar\'e $\baseTopos$-category pair arising from a cocartesian fibration $\X \to [1]$.
Then the functor $[1]\to\baseTopos$ corresponding to the arrow $|i|\colon|\partial\X|\to|\X| \simeq \lvert \interior{\X} \rvert$ is a $\sC$-Poincar\'e pair of $\baseTopos$-spaces
in the sense of \cref{defn:pdpairs}.

\end{prop}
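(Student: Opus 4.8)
The plan is to realise the natural ``fibrewise geometric realisation'' map $F\colon\X\to\X'$ as a localisation of $\baseTopos$-category pairs and then feed it into the localisation principle (\cref{prop:localisation_principle}).

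First I would set up the target. By \cref{nota:left_adjoint_to_j} the given cocartesian fibration $p\colon\X\to[1]$ has straightening $(g\colon\partial\X\to\interior\X)\in\func([1],\cat_\baseTopos)$, and $g\simeq\ell i$, where $\ell\colon\X\to\interior\X$ is left adjoint to the fully faithful inclusion $j$; in particular $\ell$ is a localisation, so $|\ell|\colon|\X|\xrightarrow{\simeq}|\interior\X|$, and under this identification the arrow $|i|\colon|\partial\X|\to|\X|$ becomes $|g|\colon|\partial\X|\to|\interior\X|$. Regarding $|g|$ as a functor $[1]\to\baseTopos$, \cref{defn:pdpairs} reduces the claim to showing that the unstraightened $\baseTopos$-category pair $(\X',\partial\X')\coloneqq(\int_{[1]}|g|,\int_{\{0\}}|g|)$ (cf. \cref{cons:integral_categories,ex:mappingcylinder}) is $\sC$-Poincar\'e, where $\partial\X'\simeq|\partial\X|$.

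The key step is to identify $\X\to\X'$ as a localisation. The morphisms of $\X$ lying over equivalences of $[1]$, i.e. those in $p^{-1}([1]^\simeq)$, are exactly the ``vertical'' morphisms living inside the fibres $\partial\X$ and $\interior\X$. By \cref{lem:localisation_of_fibration} the localisation $\scL(\X,p^{-1}([1]^\simeq))$ is the unstraightening of $|-|\circ g=(|g|\colon|\partial\X|\to|\interior\X|)$, which is precisely $\X'$; moreover the localisation functor $F\colon\X\to\X'$ is a map over $[1]$ since $p$ inverts the morphisms in $p^{-1}([1]^\simeq)$. Hence $F$ is a morphism of $\baseTopos$-category pairs $(\X,\partial\X)\to(\X',\partial\X')$, and its restriction $\partial F\colon\partial\X\to\partial\X'\simeq|\partial\X|$ is the localisation of $\partial\X$ at all of its morphisms, i.e. the realisation map. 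In particular both $F$ and $\partial F$ are localisations, so $F^*$ and $\partial F^*$ are fully faithful.

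Finally, $(\X,\partial\X)$ is $\sC$-twisted ambidextrous because it is $\sC$-Poincar\'e, so \cref{prop:localisation_principle} applies to $F$; since moreover $F\colon\X\to\X'$ is a localisation and $(\X,\partial\X)$ is $\sC$-Poincar\'e, part (3) of that proposition gives that $(\X',\partial\X')$ is $\sC$-Poincar\'e, i.e. $\omega_{\X',\partial\X'}$ factors through $\picardSpaceTopos(\sC)$. By \cref{defn:pdpairs} this says exactly that $|g|\simeq|i|$ is a $\sC$-Poincar\'e pair of $\baseTopos$-spaces, which is the assertion. The only non-formal ingredient is \cref{lem:localisation_of_fibration}, identifying the vertical localisation of a cocartesian fibration over $[1]$ with the fibrewise realisation; everything else is bookkeeping with the localisation principle and the adjunction $\ell\dashv j$, the one point deserving a line of care being the compatibility of the identification $|i|\simeq|g|$ with the identification $|\X|\simeq|\interior\X|$ in the statement, which is immediate from $g=\ell i$.
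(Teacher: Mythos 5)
Your proposal is correct and is essentially the paper's own argument: both localise $\X$ at the morphisms lying over $[1]^\simeq$ (the fibrewise morphisms), use \cref{lem:localisation_of_fibration} to identify the result with the unstraightening of $|g|\colon|\partial\X|\to|\interior{\X}|$, and then invoke \cref{prop:localisation_principle} to transfer the Poincar\'e property, finishing with the identification $|i|\simeq|g|$ coming from $g\simeq \ell i$ (equivalently, from finality of $\interior{\X}\subset\X$). The extra care you take in checking that both $F^*$ and $\partial F^*$ are fully faithful is exactly what the paper leaves implicit, so nothing is missing.
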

\begin{proof}
Let $\W\coloneqq\partial\X\amalg\interior{\X}\simeq[1]^\simeq\times_{[1]}\X$, where the pullback is taken over a $\baseTopos$-functor $f\colon\X\to[1]$ classifying $\partial\X$ as in
\cref{defn:categorypair}, and let $\X'\coloneqq\scL(\X,\W)$. By \cref{lem:localisation_of_fibration} we have that the $\baseTopos$-functor $\X'\to[1]$ induced on the localisation is a left fibration corresponding to the functor $(|g|\colon|\partial\X|\to|\interior{\X}|)\colon[1]\to\baseTopos$, where $g \colon \partial \X \to \interior{\X}$ is the arrow classified by $\X$. By \cref{prop:localisation_principle} we have that $|g|$ is a $\sC$-Poincar\'e pair of $\baseTopos$-spaces; and since $\interior{\X}\subset\X$ is final, we may identify $|\interior{\X}|$ with $|\X|$ and $|g|$ with $|i|$. 
\end{proof}

\begin{rmk}
The hypothesis that $\X\to[1]$ is a cocartesian fibration in \cref{prop:barXdXpdpair} allows us to write a concise proof, but it is probably not optimal; for instance, we expect the statement of \cref{prop:barXdXpdpair} to hold under the weaker assumption that the inclusion $\interior{\X}\subset\X$ be final. Since all applications we have in mind in our future work satisfy the stronger cocartesian hypothesis, we have refrained from trying to relax it.
\end{rmk}

\subsection{The categorical Poincar\'e--Lefschetz sequence}
\label{subsec:poincare_lefschetz_sequence}
We now record a version of the Poincar\'e--Lefschetz sequence. We first give the dual homological version of \cref{defn:relative_cohomology}. To this end, first observe that, using the notation of \cref{constr:stable_recollement_for_pairs}, we may also express $(\X,\partial\X)_!$ as $\X_!j_*j^*$.

\begin{prop}[Categorical Poincar\'e--Lefschetz sequence]\label{prop:categorical_poincare_lefschetz_duality}
Let $(\X,\partial\X)$ be a groupoidally $\sC$-twisted ambidextrous (e.g. $\sC$-Poincar\'e) tame $\baseTopos$-category pair. Then there is a commuting diagram of fibre sequences in $\func^L_{\baseTopos,\sC}(\presheafTopos(\X;\sC),\sC)$ as follows:
\begin{equation}
\label{eq:categorical_poincare_lefschetz_duality}
\begin{tikzcd}
\Omega\partial \X_* i^* (-) \ar[d, "\simeq"] \ar[r] &(\X,\partial \X)_*(-) \ar[r] \ar[d, "\simeq"] & \X_*(-) \ar[d, "\simeq"] \\
 \partial \X_! i^*(D_{\X,\partial \X} \otimes -)\ar[r] &\X_!(D_{\X,\partial \X} \otimes -) \ar[r] & (\X,\partial \X)_!(D_{\X,\partial \X} \otimes -).
\end{tikzcd}
\end{equation}
\end{prop}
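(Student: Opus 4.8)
The plan is to transport the entire diagram along the Morita equivalence $\yoneda^*\colon\func^L_{\baseTopos,\sC}(\presheafTopos(\X;\sC),\sC)\xrightarrow{\simeq}\func_\baseTopos(\X,\sC)$ of \cref{prop:classification_of_linear_functors}. Since this is an equivalence of stable $\infty$-categories it preserves and detects fibre sequences, so it is enough to exhibit the corresponding diagram of classifying systems in $\func_\baseTopos(\X,\sC)$. There the top row is the rotated defining fibre sequence of $(\X,\partial\X)_*$; by \cref{lem:naturality_classification_C_linear_functors} it reads $\Omega i_!\omega_{\partial\X}\xrightarrow{c}\omega_{\X,\partial\X}\to\omega_\X$ (here $\X$ is itself $\sC$-twisted ambidextrous, being part of such a pair), so $\omega_\X\simeq\cofib(c)$. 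For the bottom row I would first note, using that $\omega_{\X,\partial\X}$ is groupoidal and hence so is $D_{\X,\partial\X}=D(\omega_{\X,\partial\X})$ (\cref{prop:Dequivalence}), that \cref{cor:formula_in_terms_of_dualising_system_for_dualish} yields a $\sC$-linear equivalence $(\X,\partial\X)_*(-)\simeq H(-)$ with $H\coloneqq\X_!(D_{\X,\partial\X}\otimes-)$; this is already the middle vertical. Applying the exact $\sC$-linear functor $H$ to the recollement cofibre sequence $i_!i^*\to\id\to j_*j^*$ of \cref{constr:stable_recollement_for_pairs}, and using the projection formula for the right fibration $i$ (\cref{lem:projection_formula_for_presheaves,obs:leftclosed}) to rewrite $D_{\X,\partial\X}\otimes i_!i^*(-)\simeq i_!i^*(D_{\X,\partial\X}\otimes-)$ — whence $Hi_!i^*\simeq\partial\X_!i^*(D_{\X,\partial\X}\otimes-)$, and, together with $(\X,\partial\X)_!=\X_!j_*j^*$, also $Hj_*j^*\simeq(\X,\partial\X)_!(D_{\X,\partial\X}\otimes-)$ — identifies the resulting cofibre (hence fibre) sequence with the bottom row. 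Transported to classifying systems it becomes $i_!i^*\omega_{\X,\partial\X}\xrightarrow{\epsilon_{i_!}}\omega_{\X,\partial\X}\to\cofib(\epsilon_{i_!})$, the first map being the counit of $i_!\dashv i^*$ since it is $H$ applied to the recollement counit and \cref{lem:naturality_classification_C_linear_functors} is compatible with these adjunctions.

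Next I would produce the left vertical equivalence and check the left square. The boundary principle \cref{prop:boundary_principle} (applicable because the pair is tame) gives an equivalence $\delta\colon\Omega\omega_{\partial\X}\xrightarrow{\simeq}i^*\omega_{\X,\partial\X}$, and applying the exact functor $i_!$ yields the left vertical $i_!\delta\colon\Omega i_!\omega_{\partial\X}\xrightarrow{\simeq}i_!i^*\omega_{\X,\partial\X}$. The step I expect to be the main obstacle is showing that $i_!\delta$ intertwines the connecting map $c$ with the counit $\epsilon_{i_!}$, i.e. $c\simeq\epsilon_{i_!}\circ i_!\delta$, which is exactly commutativity of the left square. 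The idea is to unwind the definition of $\delta$ from the proof of \cref{prop:boundary_principle}: there $\delta$ is the Morita transport of the connecting map of the fibre sequence obtained by precomposing the defining fibre sequence of $(\X,\partial\X)_*$ with $i_!$, after using $\X_*i_!\simeq0$ (tameness and \cref{lem:vanishing_limit_extension_by_zero}) and $i^*i_!\simeq\id$. Since forming connecting maps commutes with precomposition by $i_!$, and the Morita equivalence is natural under this precomposition — corresponding to $i^*$ on classifying systems by \cref{lem:naturality_classification_C_linear_functors} — this translates into $i^*c\simeq\delta$ (modulo the invertible unit $i^*i_!\simeq\id$), and the adjunction $i_!\dashv i^*$ then converts this into $c\simeq\epsilon_{i_!}\circ i_!\delta$.

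Granting this, $i_!\delta$ and $\id_{\omega_{\X,\partial\X}}$ assemble into a map from the top fibre sequence to the bottom one; the induced map on cofibres is an equivalence $\omega_\X\simeq\cofib(c)\simeq\cofib(\epsilon_{i_!})\simeq\omega_{(\X,\partial\X)_!(D_{\X,\partial\X}\otimes-)}$ (the first step because $i_!\delta$ is invertible, the last by the identification of the bottom row above), giving the right vertical and making the right square commute for free. Transporting the resulting diagram of fibre sequences in $\func_\baseTopos(\X,\sC)$ back along $\yoneda^*$ then produces the asserted commuting diagram of fibre sequences in $\func^L_{\baseTopos,\sC}(\presheafTopos(\X;\sC),\sC)$. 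Apart from the connecting-map-versus-counit identification, every step is a routine application of the Morita package, the stable recollement, and the projection formula.
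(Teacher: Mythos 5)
Your argument is correct and takes essentially the same route as the paper: the crucial point — that the map $\Omega i_!\omega_{\partial\X}\to\omega_{\X,\partial\X}$ classifying $\Omega\partial\X_*i^*\to(\X,\partial\X)_*$ is identified with the counit $\epsilon_{i_!}\colon i_!i^*\omega_{\X,\partial\X}\to\omega_{\X,\partial\X}$ via the definition of $\delta$, \cref{prop:boundary_principle}, and the adjunction-compatibility in \cref{lem:naturality_classification_C_linear_functors} — is exactly how the paper obtains the left square, after which the remaining squares follow by taking horizontal cofibres. Your detour through $H=\X_!(D_{\X,\partial\X}\otimes-)$ applied to the recollement sequence together with the projection formula is a slightly longer, but correct, way of producing the bottom row, which the paper gets directly from the definition of $(\X,\partial\X)_!$ as a cofibre.
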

\begin{proof}
By \cref{lem:naturality_classification_C_linear_functors}, the top left horizontal functor $\Omega\partial\X_*i^*\to(\X,\partial\X)_*$, coming from the definition of $(\X,\partial\X)_*$ as a fibre, corresponds to a map $\Omega i_!\omega_{\partial\X}\to\omega_{\X,\partial\X}$; this map enters the definition of $\delta$, see
\cref{defn:connecting_map_of_classifying_systems} (up to a mild application of \cref{lem:naturality_classification_C_linear_functors}), so that
\cref{prop:boundary_principle} really identifies the above map $\Omega i_!\omega_{\partial\X}\to\omega_{\X,\partial\X}$ with the adjunction counit $i_!i^*\omega_{\X,\partial\X}\to\omega_{\X,\partial\X}$. By \cref{lem:naturality_classification_C_linear_functors} and \cref{prop:Dequivalence} we obtain the  left square in \cref{eq:categorical_poincare_lefschetz_duality}, from which the rest of the diagram is obtained by taking horizontal cofibres.
\end{proof}

Based on this more ``standard'' formulation of the Poincar\'e--Lefschetz sequence, we may also obtain the following ``Mayer--Vietoris'' incarnation thereof, usually attributed to Browder \cite[Proof of Thm. 2.1.]{Wall}.

\begin{cor}[Categorical Poincar\'e--Lefschetz sequence, Mayer--Vietoris version]
Let $\X$ be a groupoidally $\sC$-twisted ambidextrous $\baseTopos$-category written as a union $\X_1\cup\X_2$ of left closed $\baseTopos$-subcategories, let $\W\coloneqq\X_1\cap\X_2$ and assume that for $k=1,2$ the inclusion $\interior{\X}_k\coloneqq\X_k\setminus\W\to\X_k$ is final. Denote by $i_k \colon \X_k \hookrightarrow \X$, $j_k \colon \W \hookrightarrow \X_k$ and $i \colon \W \hookrightarrow X$ the respective inclusions.
Then we have two equivalences of cartesian squares in $\func^L_{\baseTopos,\sC}(\presheafTopos(\X;\sC),\sC)$ as follows:
\[
\left(\begin{tikzcd}
\Omega\W_*i^*\ar[r]\ar[d]&(\X_1,\W)_*i_1^*\ar[d]\\
(\X_2,\W)_*i_2^*\ar[r]&\X_*
\end{tikzcd}\right)\simeq
\left(\begin{tikzcd}
\W_!i^*(D_\X\otimes-)\ar[r]\ar[d]&(\X_1)_!i_1^*(D_\X\otimes-)\ar[d]\\
(\X_2)_!i_2^*(D_\X\otimes-)\ar[r]&\X_!(D_\X\otimes-)
\end{tikzcd}\right);
\]
\[
\left(\begin{tikzcd}
\X_*\ar[r]\ar[d]&(\X_1)_*i_1^*\ar[d]\\
(\X_2)_*i_2^*\ar[r]&\W_*i^*
\end{tikzcd}\right)\simeq
\left(\begin{tikzcd}
\X_!(D_\X\otimes-)\ar[r]\ar[d]&(\X_1,\W)_!i_1^*(D_\X\otimes-)\ar[d]\\
(\X_2,\W)_!i_2^*(D_\X\otimes-)\ar[r]&\Sigma\W_!i^*(D_\X\otimes-)
\end{tikzcd}\right).
\]
\end{cor}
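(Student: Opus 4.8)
The plan is to reduce everything to the decomposition $\X\simeq\X_1\cup_\W\X_2$ and feed it into the Poincar\'e--Lefschetz sequence of \cref{prop:categorical_poincare_lefschetz_duality}, applied separately to $\X$, to $\W$, and to the pairs $(\X_1,\W)$ and $(\X_2,\W)$. First I would record the \emph{basic cartesian square}: by \cref{rmk:left_closed_covers}, $\X$ is the pushout of $\X_1\hookleftarrow\W\hookrightarrow\X_2$ in $\cat_\baseTopos$, so exactly as in the proof of \cref{prop:complementation_principle} (take empty boundary in \cref{eq:limit_over_union}) the square in $\func^L_{\baseTopos,\sC}(\presheafTopos(\X;\sC),\sC)$ with vertices $\X_*$, $\X_{1*}i_1^*$, $\X_{2*}i_2^*$, $\W_*i^*$ and adjunction-unit structure maps is cartesian --- this is the cohomological side of the second asserted equivalence. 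Taking horizontal and vertical fibres of this square, using $j_k^*i_k^*\simeq i^*$, the identification $(\X_k,\W)_*i_k^*\simeq\fib(\X_{k*}i_k^*\to\W_*i^*)$ from \cref{defn:relative_cohomology}, and the fact that in a stable category the iterated fibre of a cartesian square over its terminal vertex $C$ is $\Omega C$, then yields that the square with vertices $\Omega\W_*i^*$, $(\X_1,\W)_*i_1^*$, $(\X_2,\W)_*i_2^*$, $\X_*$ --- the cohomological side of the first asserted equivalence --- is also cartesian.

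Next I would establish \emph{ambidexterity of the pieces}. The inclusions $i_k\colon\X_k\hookrightarrow\X$ and $i\colon\W\hookrightarrow\X$ are left closed, so by the stable recollement \cref{constr:stable_recollement_for_pairs} the right Kan extensions $(i_k)_*$ and $i_*$ on presheaves are $\sC$-linear and colimit preserving; hence $\X_{k*}\simeq\X_*(i_k)_*$ and $\W_*\simeq\X_*i_*$ are too, so $\X_1,\X_2,\W$ are $\sC$-twisted ambidextrous, and so are the pairs $(\X_k,\W)$, which are tame since $\interior{\X}_k\subseteq\X_k$ is final by hypothesis. Then \cref{prop:complementation_principle}(2) (its finality hypotheses hold; the boundary ones are vacuous for $\partial\X=\emptyset$) gives $\omega_{\X_k,\W}\simeq i_k^*\omega_\X$, hence groupoidal, and since on groupoidal systems $D$ is the restriction-compatible equivalence of \cref{prop:Dequivalence}(2), also $D_{\X_k,\W}\simeq i_k^*D_\X$. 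The boundary principle \cref{prop:boundary_principle} applied to $(\X_1,\W)$ gives $\Omega\omega_\W\simeq i^*\omega_\X$, so $\W$ is groupoidally $\sC$-twisted ambidextrous with $D_\W\simeq\Sigma(i^*D_\X)$.

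Finally I would \emph{compare the two sides}. Applying \cref{prop:categorical_poincare_lefschetz_duality} to $\X$ (empty, hence tame, boundary) gives $\X_*(-)\simeq\X_!(D_\X\otimes-)$; applying it to each $(\X_k,\W)$ and precomposing the resulting commuting diagram of fibre sequences with $i_k^*$ --- using $D_{\X_k,\W}\simeq i_k^*D_\X$ and the symmetric monoidality of $i_k^*$ --- identifies the fibre sequence $\Omega\W_*i^*\to(\X_k,\W)_*i_k^*\to\X_{k*}i_k^*$ with $\W_!i^*(D_\X\otimes-)\to(\X_k)_!i_k^*(D_\X\otimes-)\to(\X_k,\W)_!i_k^*(D_\X\otimes-)$; and applying it to $\W$ gives $\W_*i^*(-)\simeq\Sigma\W_!i^*(D_\X\otimes-)$. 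These equivalences match the four vertices of each cohomological square from the first step with the four vertices of the corresponding homological square, and the structure maps match as well because the Poincar\'e--Lefschetz equivalences are equivalences of entire diagrams of fibre sequences and the remaining identifications are instances of the naturality of the Morita equivalence \cref{lem:naturality_classification_C_linear_functors} together with the (co)cartesianness of the first step; transporting cartesianness across then also gives it for the homological squares.

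The step I expect to be the main obstacle is precisely this last assembly: verifying that the four corner-wise Poincar\'e--Lefschetz equivalences are mutually compatible on their shared terms --- above all on $\Omega\W_*i^*$, which is dualised both by $\W$ directly and, after restriction, by each $(\X_k,\W)$ --- so that they glue to genuine equivalences of squares rather than merely of vertices. The identification $D_{\X_k,\W}\simeq i_k^*D_\X$ (restriction-compatibility of the $D$-construction on groupoidal systems) is the technical linchpin of this compatibility, and, as is usual for Mayer--Vietoris, one should be prepared to absorb a harmless sign or suspension automorphism along the way.
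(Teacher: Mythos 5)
Your proposal is correct in its ingredients --- the pushout decomposition of $\X$, the complementation and boundary principles to get $D_{\X_k,\W}\simeq i_k^*D_\X$ and $\Omega D_\W\simeq i^*D_\X$, and \cref{prop:categorical_poincare_lefschetz_duality} applied to each piece --- but it assembles them in the opposite order from the paper, and the paper's order is what dissolves the compatibility worry you flag at the end. You first establish cartesianness of the cohomological squares and then try to match the two squares vertex by vertex, which leaves you having to check that the several dualising equivalences agree on the shared corner $\Omega\W_*i^*$. The paper instead only invokes \cref{prop:categorical_poincare_lefschetz_duality} for the two pairs $(\X_k,\W)$ to produce an equivalence of \emph{spans} $(\X_1,\W)_*i_1^*\leftarrow\Omega\W_*i^*\rightarrow(\X_2,\W)_*i_2^*$ with the corresponding homological span, and then computes the pushouts of both spans: the homological one is $\X_!(D_\X\otimes-)$ by inspection, and the cohomological one is identified with $\X_*$ via an iterated-pullback diagram. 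Cartesianness of the cohomological square is thus an output rather than an input, and --- as the paper remarks at the end --- one may even take the glued system as the \emph{definition} of the dualising datum, whereupon the resulting equivalence $\X_*\simeq\X_!(D\otimes-)$ identifies it with $D_\X$ by uniqueness of classifying systems (\cref{prop:classification_of_linear_functors}); this is exactly what makes the gluing on $\Omega\W_*i^*$ coherent, rather than a ``harmless sign to absorb.'' If you want to keep your order of operations, you should make the resolution of that compatibility explicit along these lines rather than leaving it as an acknowledged obstacle.
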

\begin{proof}
We focus on the first equivalence, the second being analogous. 
By \cref{prop:boundary_principle}, \cref{prop:complementation_principle} and \ref{prop:dualish}, the restriction of $D_\X$ on $\X_k$ and $\W$ is $D_{\X_k,\W}$ and $\Omega D_{\W}$, respectively. 
By \cref{prop:categorical_poincare_lefschetz_duality} we then have an equivalence between the spans in $\func_{\baseTopos,\sC}^L(\presheafTopos(\X;\sC),\sC)$
\[
\left(\begin{tikzcd}[row sep=5pt, column sep=-8pt]
&\Omega\W_*i^*\ar[dl]\ar[dr]\\
(\X_1,\W)_*i_1^*& &
(\X_2,\W)_*i_2^*
\end{tikzcd}\right)
\simeq
\left(\begin{tikzcd}[row sep=5pt, column sep=-8pt]
&\W_!i^*(D_\X\otimes-)\ar[dl]\ar[dr]\\
(\X_1)_!i_1^*(D_\X\otimes-)& &
(\X_2)_!i_2^*(D_\X\otimes-)
\end{tikzcd}\right).
\]
The pushout of the right span is immediately identified with $\X_!(D_\X\otimes-)$, whereas the pushout of the left cospan can be computed as the suspension of the iterated pullback coming from the following diagram, and can thus be identified with $\X_*$:
\[
\begin{tikzcd}[row sep=7pt]
\Omega\X^*\ar[r]\ar[d]\ar[dr,phantom,"\lrcorner"very near start] &\Omega(\X_1)_*i_1^*\ar[r]\ar[d]\ar[dr,phantom,"\lrcorner"very near start]&0\ar[d]\\
\Omega(\X_1)_*i_1^*\ar[r]\ar[d]\ar[dr,phantom,"\lrcorner"very near start]&\Omega\W_*i^*\ar[r]\ar[d]&(\X_1,\W)_*i_1^*\\
0\ar[r]&(\X_2,\W)_*i_2^*
\end{tikzcd}
\]
We thus obtain an identification of pushouts $\X_*\simeq\X_!(D_\X\otimes-)$. We now observe that in the argument so far we could have \textit{defined} $D_\X$ as the functor $\X\to\sC$ restricting to $D_{\X_k,\W}$ on $\X_k$ for $k=1,2$, and identifying both restrictions $j_k^*D_{\X_k,\W}$ with $\Omega D_\W$ by \cref{prop:boundary_principle}; the obtained equivalence $\X_*\simeq\X_!(D_\X\otimes-)$ therefore reproves one arrow of \cref{prop:complementation_principle}, exhibiting $D_\X$ also as the dualising system of $\X$.
\end{proof}

\subsection{Cap product and Spivak data}
\label{subsec:Spivak}
Next, we turn to the problem of expressing the equivalence between (generalised) relative cohomology and (suitably twisted) homology as induced by cap product with a fundamental class.

\begin{defn}
Let $\Y\in\cat_\baseTopos$ be $\sC$-twisted ambidextrous.
The cap product $\cap\colon\Y_!(-)\otimes\Y_*(-)\to\Y_!(-\otimes-)$ is defined as the $\sC$-linear natural transformation of $\sC$-linear $\baseTopos$-functors $\presheafTopos(\Y;\sC)\otimes_\sC\presheafTopos(\Y;\sC)\to\sC$ given by the following composite:
\[
\Y_!(-)\otimes\Y_*(-)\overset{\simeq}{\leftarrow}\Y_!(-\otimes\Y^*\Y_*(-))\xrightarrow{\epsilon_{\Y_*}}\Y_!(-\otimes-).
\]
We used that the projection formula holds for $\Y_!$, i.e. the following composite is an equivalence:
\[
\Y_!(-\otimes\Y^*(-))\xrightarrow{\eta_{\Y_!}}\Y_!(\Y^*\Y_!(-)\otimes\Y^*(-))\simeq\Y_!\Y^*(\Y_!(-)\otimes-)\xrightarrow{\epsilon_{\Y_!}}\Y_!(-)\otimes-.
\]
\end{defn}

\begin{nota}
Given two 1-morphisms $F,G\colon\D\to\E$ in $\module_{\sC}(\presentable^L_{\baseTopos})$, that is, two objects in $\func^L_{\baseTopos,\sC}(\D,\E)$, we denote by $\nattrans_\sC(F,G)\in\Gamma\sC$ the hom object of $\sC$-linear natural transformations from $F$ to $G$.
\end{nota}

\begin{lem}
\label{lem:absolute_classification_natural_transformations}
Let $q\colon\Y\to\Z$ be a $\baseTopos$-cartesian fibration of $\sC$-twisted ambidextrous $\baseTopos$-categories, and let $\zeta\in\presheaf(\Z;\sC)$. Then the composite $\Y_!q^*\zeta\otimes\Y_*q^*(-)\xrightarrow{\cap}\Y_!q^*(\zeta\otimes-)\xrightarrow{\epsilon_{q_!}}\Z_!(\zeta\otimes-)$ is adjoint to an equivalence $\Y_!q^*\zeta\xrightarrow{\simeq}\nattrans_\sC(\Y_*q^*,\Z_!(\zeta\otimes-))$ in $\Gamma\sC$.
\end{lem}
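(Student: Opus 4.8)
The plan is to reduce the statement to the classification of $\sC$-linear functors from \cref{prop:classification_of_linear_functors} together with the basic adjunction formalism. First I would observe that the target $\nattrans_\sC(\Y_*q^*, \Z_!(\zeta\otimes-))$ makes sense because both $\Y_*q^*$ and $\Z_!(\zeta\otimes-)$ are colimit-preserving $\sC$-linear $\baseTopos$-functors $\presheafTopos(\Y;\sC)\to\sC$: for $\Y_*q^*$ this uses that $\Y$ is $\sC$-twisted ambidextrous and that $q^*$ is $\sC$-linear, while $\Z_!(\zeta\otimes-)$ is a composite of the $\sC$-linear functors $\zeta\otimes-$ and $q_!$ (which is $\sC$-linear by \cref{lem:naturality_classification_C_linear_functors}(1), as $q$ is in particular a $\baseTopos$-functor) followed by $\Z_!$, which is $\sC$-linear since $\Z$ is $\sC$-twisted ambidextrous. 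Thus $\nattrans_\sC(\Y_*q^*, \Z_!(\zeta\otimes-))$ is an object of $\Gamma\sC$, and by the internal-hom description of $\module_\sC(\presentable^L_\baseTopos)$ it is computed as $\Gamma$ of the mapping object in $\funTopos^L_{\baseTopos,\sC}(\presheafTopos(\Y;\sC),\sC)$.

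\textbf{Key steps.} Under the Morita equivalence $\yoneda^*\colon\funTopos^L_{\baseTopos,\sC}(\presheafTopos(\Y;\sC),\sC)\xrightarrow{\simeq}\sC^\Y$ of \cref{prop:classification_of_linear_functors}, I would identify the classifying systems of the two functors in question. By \cref{lem:naturality_classification_C_linear_functors}(2) we have $\omega_{\Z_!(\zeta\otimes-)\circ q^*}\simeq q^!(\text{something})$ — more precisely, writing $G\coloneqq\Z_!(\zeta\otimes-)$, the naturality statement $\omega_{Gq^*}\simeq q_!\omega_G$ gives $\omega_{\Z_!q^*(\zeta\otimes-)}\simeq q_!(\zeta\otimes\omega_\Z)$ once one checks that the classifying system of $\Z_!(\zeta\otimes-)$ is $\zeta\otimes\omega_\Z$ (which follows from $\omega_\Z\simeq\Z_*\circ\yoneda$ and compatibility of the Morita equivalence with tensoring). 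Simultaneously, $\omega_{\Y_*q^*}\simeq q_!\omega_\Y$. Now $\funTopos^L_{\baseTopos,\sC}$-natural transformations from $\Y_*q^*$ to $\Z_!(\zeta\otimes-)$ correspond under $\yoneda^*$ to maps $\omega_{\Y_*q^*}\to\omega_{\Z_!(\zeta\otimes-)}$ in $\sC^\Y$, i.e. to $\map_{\sC^\Y}(q_!\omega_\Y, q_!(\zeta\otimes\omega_\Z))$. I would then argue this mapping object is the underlying space of $\Gamma$ of the internal-hom object $\Y_!q^*\zeta$: using the projection formula for $q_!$ (here I would invoke \cref{lem:projection_formula_for_presheaves}, valid since $q$ is a $\baseTopos$-cartesian fibration) to simplify $q_!(\zeta\otimes\omega_\Z)\simeq q_!q^*\zeta$... wait, that is not quite right since $\omega_\Z$ need not be trivial. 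Instead the natural transformations from $\Y_*q^*$ to $\Z_!(\zeta\otimes-)$ should be computed as $\Y_!q^*\zeta$ directly via the coend formula of \cref{prop:formula_with_twisted_arrow} applied to the $\sC$-linear functor $\Y_*q^*$: by \cref{thm:A}(1)/\cref{prop:formula_with_twisted_arrow}, $\Y_*q^*(-)\simeq\twistedArrow(\Y)_!(s^*\omega_{\Y_*q^*}\otimes t^*(-))$, so $\sC$-linear maps out of it into any $\sC$-linear target $H$ are computed as $\lim$ over $\twistedArrow(\Y)$ of $H$ evaluated on Yoneda images twisted by $\omega_{\Y_*q^*}$; for $H=\Z_!(\zeta\otimes-)$ one reads off $\nattrans_\sC(\Y_*q^*, H)\simeq H(\text{the presheaf }t_!s^*\omega_{\Y_*q^*})$, and I would identify this with $\Y_!q^*\zeta$ after unwinding.

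\textbf{The main obstacle.} The technical heart is verifying that the specific composite map written in the statement, namely $\cap$ followed by $\epsilon_{q_!}$, is \emph{the} adjunction unit realizing this abstract identification — i.e. matching the explicit cap-product formula with the structure maps produced by the Morita/coend machinery. I expect this to require carefully tracing through: (i) the definition of $\cap$ in terms of the projection-formula equivalence for $\Y_!$ and the counit $\epsilon_{\Y_*}$; (ii) how the adjunction unit of the internal hom in $\module_\sC(\presentable^L_\baseTopos)$ unwinds when one of the arguments is the dualizable object $\presheafTopos(\Y;\sC)$; and (iii) the compatibility of $\epsilon_{q_!}$ with these. A cleaner route, which I would pursue if the direct trace becomes unwieldy, is to first reduce to the absolute case $\Z=\ast$ (so $q=\Y$ itself) by base change along $\Z^*$ and the fact that everything in sight is stable under \'etale base change (and $q$ being a $\baseTopos$-cartesian fibration lets one pull back to slices $\baseTopos_{/\tau}$, cf. \cref{lem:projection_formula_for_presheaves}); in the absolute case the statement becomes the familiar assertion that cap product with the fundamental class $\Y_!\zeta$ classifies $\Y_*(\zeta\otimes-)$... actually $\Y_*(-)$, via the dualising system, which is essentially \cref{cor:formula_in_terms_of_dualising_system_for_dualish} combined with the definition of $\cap$. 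I would then bootstrap back to the relative case using \cref{lem:naturality_classification_C_linear_functors} and the projection formula, which is the standard pattern for such relative-classification lemmas.
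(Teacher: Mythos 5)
Your route through Morita theory stalls at precisely the point where the actual content lies, and the formulas you assert there are not correct. First, a small but telling slip: both $\Y_*q^*$ and $\Z_!(\zeta\otimes-)$ are functors $\presheafTopos(\Z;\sC)\to\sC$, not $\presheafTopos(\Y;\sC)\to\sC$. More seriously, the classifying system of $\Z_!(\zeta\otimes-)$ is not $\zeta\otimes\omega_\Z$ (this does not even typecheck: $\zeta$ is contravariant, $\omega_\Z$ covariant; and $\omega_\Z$ classifies $\Z_*$, not $\Z_!$), and the ``read-off'' formula $\nattrans_\sC(\Y_*q^*,H)\simeq H\bigl(t_!s^*\omega_{\Y_*q^*}\bigr)$ is false in general. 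Test it in the absolute case $\Z=\ast$, $q=\Y\to\ast$, $\zeta=c\in\Gamma\sC$: by \cref{prop:classification_of_linear_functors} one has $\nattrans_\sC(\Y_*\Y^*,c\otimes-)\simeq\hom_\sC(\Y_*(\unit),c)\simeq \Y_!(\unit)\otimes c$ (using $\Y_!(\unit)^\vee\simeq\Y_*(\unit)$ from \cref{cor:ambidexterity_and_duality}), which agrees with the lemma; your formula instead outputs $c\otimes \Y_!\omega_\Y\simeq c\otimes\Y_*(\unit)$ (by \cref{lem:naturality_classification_C_linear_functors}), and $\Y_*(\unit)\not\simeq\Y_!(\unit)$ already for $\Y=S^2$, $\sC=\spectra$. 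So the central identification of the mapping object, and a fortiori the identification of the unit map with the cap-product composite, is not established. Your fallback -- reduce to $\Z=\ast$ and ``bootstrap'' -- is also not justified: both sides of the claimed equivalence are single objects of $\Gamma\sC$ built from a colimit over $\Z$ and an end over $\presheafTopos(\Z;\sC)$, so there is no evident way to check the map fibrewise over $\Z$, and \'etale basechange applies to slices of $\baseTopos$, not to an arbitrary $\baseTopos$-category $\Z$ (handling a categorical base is exactly what forces the cofree machinery of the fibred section).

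The paper's proof avoids computing any classifying system of the target. It observes that the $\sC$-linear adjunction $q_!\Y^*\dashv \Y_*q^*$ induces a $\Gamma\sC$-linear adjunction between precomposition functors, $(-\circ\Y_*q^*)\dashv(-\circ q_!\Y^*)$, so that the map in question is an equivalence if and only if its composite with the resulting equivalence $\nattrans_\sC(\Y_*q^*,\Z_!(\zeta\otimes-))\simeq\nattrans_\sC(\id_\sC,\Z_!(\zeta\otimes q_!\Y^*(-)))$ is one. The adjoint of that composite is the map $\Y_!q^*\zeta\otimes-\to\Z_!(\zeta\otimes q_!\Y^*(-))$ in $\func^L_{\baseTopos,\sC}(\sC,\sC)$ obtained from unit, cap product and $\epsilon_{q_!}$, and unwinding the definition of $\cap$ together with the triangle identities for $\Y^*\dashv\Y_*$ and $q_!\dashv q^*$ identifies it with the composite of projection-formula equivalences $\Y_!q^*\zeta\otimes-\simeq\Y_!(q^*\zeta\otimes\Y^*(-))\simeq\Z_!q_!(q^*\zeta\otimes\Y^*(-))\simeq\Z_!(\zeta\otimes q_!\Y^*(-))$. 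That explicit matching of the cap-product composite against an adjunction equivalence -- the step you flag as ``the main obstacle'' -- is exactly the argument your proposal would still need to supply, and neither of your two suggested routes provides it.
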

\begin{proof}
The $\sC$-linear adjunction $q_!\Y^*\colon\sC\rightleftharpoons\presheafTopos(\Z;\sC)\cocolon\Y_*q^*$ gives rise to a $\Gamma\sC$-linear adjunction $(-\circ\Y_*q^*)\colon\func^L_{\baseTopos,\sC}(\sC,\sC)\rightleftharpoons\func^L_{\baseTopos,\sC}(\presheafTopos(\Z;\sC),\sC)\cocolon(-\circ q_!\Y^*)$; it suffices therefore to check that the following
composite is an equivalence in $\Gamma\sC$, as the composite of all maps but the first is known to be an equivalence:
\begin{equation}
\label{eq:absolute_classification_natural_transformations}
\begin{tikzcd}[row sep=10pt]
\Y_!q^*\zeta\ar[r]&\nattrans_\sC(\Y_*q^*,\Z_!(\zeta\otimes -))\ar[r,"-\circ q_!\Y^*"]&\nattrans_\sC(\Y_*q^*q_!\Y^*,\Z_!(\zeta\otimes q_!\Y^*(-)))
\ar[d,"\eta_{\Y_*q^*}"]\\
& &\nattrans_\sC(\id_{\sC},\Z_!(\zeta\otimes q_!\Y^*(-)))
\end{tikzcd}
\end{equation}
The adjoint of the previous composite is the following composite in
$\func^L_{\baseTopos,\sC}(\sC,\sC)$:
\begin{equation}
\label{eq:absolute_classification_natural_transformations_adjoined}
\Y_!q^*\zeta\otimes-\xrightarrow{\eta_{\Y_*q^*}}\Y_!q^*\zeta\otimes\Y_*q^*q_!\Y^*(-)\xrightarrow{\cap}\Y_!q^*(\zeta\otimes q_!\Y^*(-))\xrightarrow{\epsilon_{q_!}}\Z_!(\zeta\otimes q_!\Y^*(-))
\end{equation}
and using the definition of cap product and the triangle identities of the adjunctions $\Y^*\dashv\Y_*$ and $q_!\dashv q^*$, we may identify \cref{eq:absolute_classification_natural_transformations_adjoined} with the equivalence
\[
\Y_!q^*\zeta\otimes-\simeq\Y_!(q^*\zeta\otimes\Y^*(-))\simeq\Z_!q_!(q^*\zeta\otimes\Y^*(-))\simeq\Z_!(\zeta\otimes q_!\Y^*(-))
\]
given by the projection formulae for $\Y$ and $q$. It follows that \cref{eq:absolute_classification_natural_transformations} is also homotopic to the following composite, which is an equivalence:
\[
\Y_!q^*\zeta\simeq\nattrans_\sC(\id_\sC,\Y_!q^*\zeta\otimes-)\simeq\nattrans_\sC(\id_\sC,\Z_!(\zeta\otimes\Y^*(-)). \qedhere
\]
\end{proof}
\begin{constr}[Relative cap product]
\label{constr:relative_cap_product}
Let $u\colon\Y\to\Z$ be a $\baseTopos$-functor betwee $\sC$-twisted ambidextrous $\baseTopos$-categories and recall \cref{defn:generalised_relative_cohomology}. We define a relative cap product $(\Z,\Y,u)_!(-)\otimes\Z_*(-)\to(\Z,\Y,u)_!(-\otimes-)$ by considering the region in the following commutative diagram, coming from naturality of cap product, and by taking horizontal cofibres:
\begin{equation}
\label{eq:first_relative_cap_product}
\begin{tikzcd}[row sep=10pt]
\Y_!u^*(-)\otimes\Z_*(-)\ar[r,"\epsilon_{u_!}"]\ar[d,"\eta_{u_*}"]&\Z_!(-)\otimes\Z_*(-)\ar[dd,"\cap"]\\
\Y_!u^*(-)\otimes\Y_*u^*(-)\ar[d,"\cap"]\\
\Y_!u^*(-\otimes-)\ar[r,"\epsilon_{u_!}"]&\Z_!(-\otimes-).\\
\end{tikzcd}
\end{equation}
If we take horizontal fibres in the above left commutative square, we obtain the left-bottom region in the following commutative diagram, whose horizontal cofibres give rise to a relative cap product $(\Z,\Y,u)_!(-)\otimes(\Z,\Y,u)_*(-)\to\Z_!(-\otimes-)$:
\[
\begin{tikzcd}[row sep=10pt]
\Omega(\Z,\Y,u)_!(-)\otimes\Z_*(-)\ar[dd,"\cap"]\ar[dr]\ar[rr,"\eta_{u_*}"]& &\Omega(\Z,\Y,u)_!(-)\otimes\Y_*u^*(-)\ar[d]\\
&\Y_!u^*(-)\otimes\Z_*(-)\ar[r,"\eta_{u_*}"]&\Y_!u^*(-)\otimes\Y_*u^*(-)\ar[d,"\cap"]\\
\Omega(\Z,\Y,u)_!(-\otimes-)\ar[rr]& &\Y_!u^*(-\otimes-)
\end{tikzcd}
\]
\end{constr}

\begin{cor}
\label{cor:relative_classification_natural_transformations}
Let $q\colon\Y\to\Z$ be a $\baseTopos$-cartesian fibration of $\sC$-twisted ambidextrous $\baseTopos$-categories, and let $\zeta\in\presheaf_\baseTopos(\Z;\sC)$. Then the relative cap product $\cap\colon(\Z,\Y,q)_!\zeta\otimes (\Z,\Y,q)_*\to\Z_!(\zeta\otimes-)$ adjoins to an equivalence $(\Z,\Y,q)_!\zeta\xrightarrow{\simeq}\nattrans_\sC((\Z,\Y,q)_*,\Z_!(\zeta\otimes-))$ in $\Gamma\sC$.
\end{cor}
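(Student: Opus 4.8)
The plan is to reduce the relative statement to the absolute \cref{lem:absolute_classification_natural_transformations} by a fibre-sequence argument, exploiting that both $(\Z,\Y,q)_!$ and $(\Z,\Y,q)_*$ are defined as (co)fibres and that the relevant cap products were constructed compatibly in \cref{constr:relative_cap_product}. First I would recall from \cref{constr:relative_cap_product} that the relative cap product $(\Z,\Y,q)_!\zeta\otimes(\Z,\Y,q)_*\to\Z_!(\zeta\otimes-)$ sits in a commutative diagram whose rows are fibre sequences of $\sC$-linear $\baseTopos$-functors: applying $\nattrans_\sC(-,\Z_!(\zeta\otimes-))$ to the defining fibre sequence $(\Z,\Y,q)_*\to\Z_*\to\Y_*q^*$ produces a fibre sequence in $\Gamma\sC$
\[
\nattrans_\sC(\Y_*q^*,\Z_!(\zeta\otimes-))\to\nattrans_\sC(\Z_*,\Z_!(\zeta\otimes-))\to\nattrans_\sC((\Z,\Y,q)_*,\Z_!(\zeta\otimes-)),
\]
using levelwise stability of $\sC$ and the fact that $\nattrans_\sC$ sends (co)fibre sequences in the first variable to fibre sequences. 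The middle term is $\Z_!(\zeta\otimes-)$ classified by $\zeta$, i.e.\ it is $\nattrans_\sC(\Z_*,\Z_!(\zeta\otimes-))\simeq \Z_!\zeta$ via the absolute case; and the left term is $\Y_!q^*\zeta$ by \cref{lem:absolute_classification_natural_transformations}, whose hypotheses are met since $q$ is a $\baseTopos$-cartesian fibration of $\sC$-twisted ambidextrous $\baseTopos$-categories. So the cofibre of the first map is computed by taking the cofibre of the corresponding map $\Y_!q^*\zeta\to\Z_!\zeta$, which by inspection of \cref{constr:relative_cap_product} is $\epsilon_{q_!}$, hence has cofibre $(\Z,\Y,q)_!\zeta$ by \cref{defn:generalised_relative_cohomology}.

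The key step is thus to check that the identifications of the left and middle terms are \emph{compatible with the map between them}, i.e.\ that under \cref{lem:absolute_classification_natural_transformations} and its specialisation to $q=\id$, the map $\nattrans_\sC(\Y_*q^*,\Z_!(\zeta\otimes-))\to\nattrans_\sC(\Z_*,\Z_!(\zeta\otimes-))$ induced by the unit $\eta_{q_*}\colon\Z_*\to\Y_*q^*$ corresponds to $\epsilon_{q_!}\colon\Y_!q^*\zeta\to\Z_!\zeta$. This is a naturality check: both \cref{lem:absolute_classification_natural_transformations} applied to $q$ and applied to $\id_\Z$ are instances of one natural transformation, and the compatibility follows from chasing the adjunction $q_!\Y^*\dashv\Y_*q^*$ through the commutative square in \cref{constr:relative_cap_product}, together with the triangle identities. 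Given this, taking horizontal cofibres in the fibre sequence above yields the claimed equivalence $(\Z,\Y,q)_!\zeta\xrightarrow{\simeq}\nattrans_\sC((\Z,\Y,q)_*,\Z_!(\zeta\otimes-))$, and one checks directly that the resulting equivalence is the adjoint of the relative cap product by construction, since every map in sight was built from the cap products of \cref{constr:relative_cap_product}.

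The main obstacle I anticipate is precisely the bookkeeping in that compatibility check: one must ensure that the cofibre map produced by the abstract fibre-sequence argument genuinely agrees, as a map in $\Gamma\sC$, with the adjoint of the relative cap product, rather than merely abstractly computing the same object. The cleanest way to handle this is to run the entire argument at the level of the commutative diagram in \cref{constr:relative_cap_product} itself: apply $\nattrans_\sC(-,\Z_!(\zeta\otimes-))$ (equivalently, the $\Gamma\sC$-linear adjunction argument from the proof of \cref{lem:absolute_classification_natural_transformations}) to that whole diagram, observe that the absolute \cref{lem:absolute_classification_natural_transformations} identifies two of the three columns with $\Y_!q^*\zeta\to\Z_!\zeta$, and read off the third column as $(\Z,\Y,q)_!\zeta$ together with the asserted equivalence. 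No genuinely new input beyond \cref{lem:absolute_classification_natural_transformations}, \cref{constr:relative_cap_product}, and levelwise stability of $\sC$ is needed.
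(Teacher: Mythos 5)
Your proposal is correct and follows essentially the same route as the paper: the paper adjoins the first diagram of \cref{constr:relative_cap_product} to obtain a commutative square whose horizontal maps are $\epsilon_{q_!}\colon\Y_!q^*\zeta\to\Z_!\zeta$ and the map induced by $\eta_{q_*}$ on $\nattrans_\sC(-,\Z_!(\zeta\otimes-))$, whose vertical composites are the equivalences of \cref{lem:absolute_classification_natural_transformations} applied to $q$ and to $\id_\Z$, and then passes to horizontal cofibres, exactly as in your plan. Your anticipated compatibility issue is handled in the paper precisely by the move you suggest, namely running the argument on the commutative diagram of \cref{constr:relative_cap_product} itself so that the cofibre map is by construction the adjoint of the relative cap product.
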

\begin{proof}
We adjoin \cref{eq:first_relative_cap_product} to the following commutative diagram:
\begin{equation}
\label{eq:relative_cap_product}
\begin{tikzcd}[row sep=10pt]
\Y_!u^*\zeta\ar[r,"\epsilon_{u_!}"]\ar[d]& \Z_!\zeta\ar[dd]\\
\nattrans_\sC(\Y_*u^*,\Y_!u^*(\zeta\otimes-))\ar[d,"\epsilon_{u_!}"]\\
\nattrans_\sC(\Y_*u^*,\Z_!(\zeta\otimes-)\ar[r,"\eta_{u_*}"] & \nattrans_\sC(\Z_*,\Z_!(\zeta\otimes-));
\end{tikzcd}
\end{equation}
By \cref{lem:absolute_classification_natural_transformations} applied to $q$ and to $\id_\Z$, the vertical composites in \cref{eq:relative_cap_product} are equivalences, hence the map induced on cofibres is an equivalence in $\Gamma\sC$ as well.
\end{proof}

Classically, Poincar\'e duality or Poincar\'e--Lefschetz duality is formulated by specifying a local system and a fundamental class in the respective homology. This line of thought led in \cite[Definition 3.1.1]{PD1} to the notion of a \textit{Spivak datum}, which we generalise here.
\begin{defn}
Let $(\X,\partial\X)$ be a $\sC$-twisted ambidextrous $\baseTopos$-category pair, let $\zeta\in\presheaf_\baseTopos(\X;\sC)$, and let $c\colon\unit_{\Gamma\sC}\to(\X,\partial\X)_!\zeta$ be a $\Gamma\sC$-linear map: we then say that the pair $(\zeta,c)$ is a \textit{candidate Spivak datum}. We say that $(\zeta,c)$ is a \textit{Spivak datum} if $\zeta$ is groupoidal and the natural transformation $(\X,\partial\X)_*\to\X_!(\zeta\otimes-)$ induced by $c$ under cap product is an equivalence.
\end{defn}
We observe that for a $\baseTopos$-category pair $(\X,\partial\X)$, the moduli space of Spivak data is empty or contractible, depending on whether or not $(\X,\partial\X)$ is groupoidally $\sC$-twisted ambidextrous.

\subsection{Rigidity of classifying systems and duality}
\label{subsec:dualisable_results}
We fix a $\sC$-twisted ambidextrous $\baseTopos$-category pair $(\X,\partial\X)$ as in \cref{nota:X_dX_interiorX} throughout the subsection. Our goal is to investigate various questions about dualisable objects in $\func_\baseTopos(\X,\sC)$, which is considered as a $\sC$-linear symmetric monoidal $\baseTopos$-category. First, we recall the definition of dualisability for the reader's convenience.

\begin{defn}
\label{defn:dualisable}
An object $x$ in a symmetric monoidal $\infty$-category $\D$ is \textit{dualisable} if there exists another object $x^\vee$ and morphisms $e\colon x\otimes x^\vee\to\unit$ and $u\colon\unit\to x^\vee\otimes x$ such that there are equivalences $(e\otimes\id_x)\circ(\id_x\otimes u)\simeq\id_x$ and $(\id_{x^\vee}\otimes e)\circ(u\otimes \id_{x^\vee})\simeq\id_{x^\vee}$.

For a symmetric monoidal $\infty$-category $\D$ we let $\D^\dbl\subseteq\D$ denote the full $\infty$-subcategory spanned by dualisable objects. For a symmetric monoidal $\baseTopos$-category $\E$ we let similarly $\E^\dbl\subseteq\E$ denote the $\baseTopos$-subcategory spanned levelwise by dualisable objects. There is a duality $\baseTopos$-functor $(-)^\vee\colon(\E^\dbl)\op \to\E^\dbl$,
induced by taking duals.
\end{defn}

\begin{prop}\label{prop:dualisables_are_groupoidal}
Let $J\in\cat_{\baseTopos}$ and $\sC \in \calg(\cat_{\baseTopos})$. Then the fully faithful restriction along $J \rightarrow |J|$ induces an equivalence of $\baseTopos$-categories $(\sC^{\vert J \vert} )^\dbl \xrightarrow{\simeq} (\sC^{J} )^\dbl$.
\end{prop}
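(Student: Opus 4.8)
The $\baseTopos$-functor in question is restriction $p^{*}\colon\sC^{|J|}\to\sC^{J}$ along the localisation $p\colon J\to|J|$ at all morphisms of $J$. It is symmetric monoidal for the pointwise monoidal structures, and it is fully faithful with essential image the full $\baseTopos$-subcategory $\sC^{J}_{\mathrm{grpd}}\subseteq\sC^{J}$ of \emph{groupoidal} functors (those factoring through $p$, equivalently through $\sC^{\simeq}$): this is the universal property of $|J|$. Being symmetric monoidal, $p^{*}$ preserves dualisable objects, so it restricts to a \emph{fully faithful} functor $(\sC^{|J|})^{\dbl}\to(\sC^{J})^{\dbl}$, and what remains is essential surjectivity. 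Thus the whole statement is equivalent to the assertion
\[
(\sC^{J})^{\dbl}\subseteq\sC^{J}_{\mathrm{grpd}},
\]
i.e. \emph{every dualisable object of $\sC^{J}$ is groupoidal}. (Once this is known it automatically applies to the dual $F^{\vee}$ of a dualisable $F$ as well, since $F^{\vee}$ is again dualisable; this is what makes the identification $(\sC^{|J|})^{\dbl}\simeq(\sC^{J})^{\dbl}$ respect the duality structures.) Since an equivalence of $\baseTopos$-categories may be checked levelwise, and both ``dualisable'' and ``groupoidal'' are compatible with \'etale base change via the equivalences $\pi_{\tau}^{*}|J|\simeq|\pi_{\tau}^{*}J|$, it suffices to prove the displayed inclusion for an arbitrary base topos after applying $\Gamma$.

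\textbf{The heart of the matter.} Let $F\in\sC^{J}$ be dualisable; we must show $F$ inverts every morphism of $J$. Fix a morphism, say at level $\tau$; it is classified by a $\baseTopos_{/\tau}$-functor $\bar m\colon[1]\to\pi_{\tau}^{*}J$. Since $\pi_{\tau}^{*}$ is symmetric monoidal, $\pi_{\tau}^{*}F$ is dualisable in $(\pi_{\tau}^{*}\sC)^{\pi_{\tau}^{*}J}$; restricting along $\bar m$ (again symmetric monoidal) and taking global sections, we obtain a dualisable object $E$ of $\func([1],\sA)$ with its pointwise tensor, where $\sA\coloneqq\Gamma(\pi_{\tau}^{*}\sC)$ is an ordinary symmetric monoidal $\infty$-category. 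The problem is therefore reduced to the following purely $\infty$-categorical fact: \emph{a dualisable object of $\func([1],\sA)$ is an equivalence in $\sA$.}

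\textbf{Proof of the $\infty$-categorical fact.} The two endpoint functors $\eval_{0},\eval_{1}\colon\func([1],\sA)\to\sA$ are symmetric monoidal, and the tautological natural transformation $\alpha\colon\eval_{0}\Rightarrow\eval_{1}$ is \emph{monoidal}. I would then invoke (or prove) the general statement that a monoidal natural transformation between symmetric monoidal functors becomes an equivalence on every dualisable object: given such $\alpha\colon\Phi\Rightarrow\Psi$ and a dualisable $x$ with dual $x^{\vee}$, the monoidality constraints $\alpha_{x}\otimes\alpha_{x^{\vee}}$ intertwine the (co)units of the duality for $\Phi(x)$ and for $\Psi(x)$, and out of this data one writes down an explicit two-sided inverse to $\alpha_{x}$ --- the very same computation that classically manufactures the dual of a morphism between dualisable objects. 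Applying this with $\Phi=\eval_{0}$, $\Psi=\eval_{1}$ to a dualisable $E$, the morphism $\alpha_{E}$ is exactly the arrow underlying $E$, and it is an equivalence. This proves the fact, hence the Claim, hence the proposition.

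\textbf{Main obstacle.} The conceptual content is small; the work is in the bookkeeping. One careful point is the reduction itself: confirming that dualisability of $F$ in $\sC^{J}$ descends, through $\pi_{\tau}^{*}$ and restriction along $[1]\to\pi_{\tau}^{*}J$, to dualisability of the relevant arrow, and that ``$F$ groupoidal'' is indeed detected by inverting all such arrows. The other is making the ``monoidal transformations are invertible on dualisables'' argument genuinely homotopy-coherent rather than strict; but this is unproblematic, because to certify that $\alpha_{E}$ is an equivalence it is enough to exhibit a homotopy left and a homotopy right inverse, and both are assembled directly from the coherence data of the duality together with the monoidality of $\alpha$.
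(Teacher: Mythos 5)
Your proposal is correct and follows essentially the same route as the paper: both arguments reduce, via symmetric monoidal restriction along morphisms classified by $[1]\to J$ at each level (compatibly with \'etale basechange and global sections), to showing that a dualisable object of an arrow category $\func([1],\D)$ with the pointwise tensor structure is an equivalence with dualisable endpoints. The only cosmetic difference is that the paper carries out the duality computation by hand, exhibiting $(f')^\vee$ as a two-sided inverse to $f$ via naturality of the evaluation and the triangle identities, whereas you package the same computation as the standard lemma that a monoidal natural transformation between symmetric monoidal functors --- here the tautological one $\eval_0\Rightarrow\eval_1$ --- is invertible on dualisable objects.
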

\begin{proof}
We first consider the special case $\baseTopos=\spc$ and $J=[1]$: we claim that an object $(x \xrightarrow{f} y)$ in $\sC^{[1]}$ is dualisable if and only if $f$ is an equivalence and $x$ and $y$ are dualisable. Assume first that $f$ is an equivalence and $x$ and $y$ are dualisable. Then $\id_x\in\sC^{[1]}$ is dualisable, as it is the image of $x$ along the symmetric monoidal functor $\sC \rightarrow \sC^{[1]}$ 
given by restriction along $[1] \rightarrow *$; moreover we have an equivalence $f\simeq \id_x\in\sC^{[1]}$, hence also $f$ is dualisable.
    
Conversely, suppose that $f$ is dualisable; denote by $f' \colon x' \to y'$ its dual, and
let $e_f \colon f \otimes f' \to \id_\unit$ and $u_f \colon \id_\unit \to f' \otimes f$ denote the evaluation and coevaluation, respectively.
Restricting to $0$ and $1$, this shows that $x$ and $y$ are dualisable; more precisely, we obtain equivalences $(x')^{\vee}\simeq x, (y')^{\vee}\simeq y$, and so we may identify $(f')^{\vee}$ as a morphism $y\rightarrow x$.
We claim that $f$ is invertible with inverse $(f')^\vee$.
For this, consider the following commutative diagrams:
\[
\begin{tikzcd}[row sep=10pt]
x \otimes x' \ar[r, "e_x"] \ar[d, "f \otimes f'"'] & \unit \ar[d, "\id_{\unit}"]\\
y \otimes y' \ar[r, "e_y"] & \unit;
\end{tikzcd}
\hspace{1cm}
\begin{tikzcd}[column sep=30pt,row sep=10pt]
x \ar[r, "f"] \ar[d, "x \otimes u_x"']
& y \ar[r, "{(f')^\vee}"] \ar[d, "y \otimes u_x"]
& x\\
x \otimes x' \otimes x \ar[r, "{f \otimes x' \otimes x}"] & y \otimes x' \otimes x \ar[r, "{y \otimes f' \otimes x}"] & y \otimes y' \otimes x. \ar[u, "{e_y \otimes x}"']
\end{tikzcd}
\]
The commutativity of the square on left identifies the bottom-right composite $x \otimes x' \otimes x \to x$ in the right rectangle with $e_x \otimes x$ and the triangle identity shows that the top composite $x \to x$ in the right rectangle is equivalent to the identity.
An analogous argument yields the equivalence $f \circ(f')^\vee\simeq\id_y$. This completes the proof in the special case $\baseTopos=\spc$ and $J=[1]$.

In the general case, let $\tau\in \baseTopos$ and let $\zeta \in (\sC^J)(\tau)$ be dualisable. We want to show that $\zeta$ is in the image of $(\sC^{\vert J \vert})(\tau) \rightarrow (\sC^J)(\tau)$. Viewing $\zeta$ as a $\baseTopos$-functor $\tau\times J \rightarrow \sC$, we want to show that for each $\tau' \in \baseTopos$ and each morphism $f \colon \tau'\times [1] \rightarrow \tau \times J$  in $\cat_\baseTopos$, the restriction of $\zeta$ along $f$ is in the essential image of the functor $\sC(\tau')^\dbl \rightarrow (\sC^{[1]})(\tau')\simeq\sC(\tau')^{[1]}$ given by restriction along $[1]\to*$. Since $f^*\colon (\sC^J)(\tau) \rightarrow (\sC^{[1]})(\tau') \simeq (\sC(\tau'))^{[1]}$ is symmetric monoidal, we have  that $f^*(\zeta)$ is dualisable; since the target $(\sC(\tau'))^{[1]}$ carries the pointwise symmetric monoidal structure, the preceding special case shows that $f^*(\zeta)$ must be restricted from $\sC(\tau')^\dbl$ as desired.
\end{proof}

\begin{cor}[Ambidexterity and duality]
\label{cor:ambidexterity_and_duality}
If $\X\in\cat_{\baseTopos}$ is  $\sC$-twisted ambidextrous, the $\baseTopos$-functor $\X_! \colon \presheafTopos(\X;\sC) \rightarrow \sC$ restricts to a $\baseTopos$-functor $\X_! \colon \presheafTopos(\X;\sC)^\dbl \rightarrow \sC^\dbl$. In fact, for $\xi\in\presheaf(\X;\sC)^\dbl$, there is an equivalence $\X_!(\xi)^\vee \simeq \X_*(\xi^\vee)\in\Gamma\sC$, natural in $\xi$.
\end{cor}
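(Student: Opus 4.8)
The plan is to exploit the fact that $\X$ being $\sC$-twisted ambidextrous means precisely that $\X_* \colon \presheafTopos(\X;\sC) \to \sC$ is a colimit-preserving $\sC$-linear $\baseTopos$-functor which is also right adjoint to $\X^*$, hence by (twisted) ambidexterity it is \emph{both} a left and a right adjoint of $\X^*$; dually $\X_!$ is left adjoint to $\X^*$. The key point is that a duality $e \colon \xi \otimes \xi^\vee \to \unit$, $u \colon \unit \to \xi^\vee \otimes \xi$ in the symmetric monoidal $\baseTopos$-category $\presheafTopos(\X;\sC)$ is carried, under any symmetric monoidal $\baseTopos$-functor, to a duality datum. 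The functor $\X_!$ is not symmetric monoidal, but the \emph{projection formula} (which holds for $\X_!$ as recorded in the proof of \cref{constr:relative_cap_product}, i.e. $\X_!(-\otimes \X^*(-)) \simeq \X_!(-) \otimes -$) together with the equivalence $\X_! \simeq \X_*$ on the relevant level shows that $\X_!$ sends dualisable presheaves to dualisable objects of $\sC$, with dual computed by $\X_*$ applied to the dual presheaf.

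Concretely, I would proceed as follows. First, recall from \cref{cor:ambidexterity_and_duality}'s ambient setup that $\presheafTopos(\X;\sC)$ is a $\sC$-linear symmetric monoidal $\baseTopos$-category with pointwise tensor product and unit $\X^*(\unit_\sC)$, and that $\X_!$ satisfies the projection formula while $\X^*$ is symmetric monoidal. Given $\xi \in \presheaf(\X;\sC)^\dbl$ with dual $\xi^\vee$ and duality maps $e,u$, apply $\X_!$ and use the projection formula to build candidate evaluation and coevaluation maps for $\X_!(\xi)$ against $\X_*(\xi^\vee)$: namely
\[
\X_!(\xi) \otimes \X_*(\xi^\vee) \xleftarrow{\ \simeq\ } \X_!\big(\xi \otimes \X^*\X_*(\xi^\vee)\big) \xrightarrow{\epsilon_{\X_*}} \X_!(\xi \otimes \xi^\vee) \xrightarrow{\X_!(e)} \X_!\X^*(\unit_\sC) \simeq \X_!(\unit_{\presheafTopos}),
\]
and the analogous map on the other side using the unit $\eta_{\X_*}$ instead. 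Here the leftmost equivalence is the projection formula, and $\epsilon_{\X_*}$ is the counit of $\X^* \dashv \X_*$. Composing $\X_!(\unit_{\presheafTopos}) = \X_!\X^*(\unit_\sC) \xrightarrow{\epsilon_{\X_!}} \unit_\sC$ (the counit of $\X_! \dashv \X^*$) gives the evaluation map $\X_!(\xi)\otimes \X_*(\xi^\vee) \to \unit_\sC$; for coevaluation one uses the \emph{unit} $\eta_{\X^* \dashv \X_*}$, here using crucially that $\X_*$, being a \emph{left} adjoint of $\X^*$ by twisted ambidexterity, produces a map $\unit_\sC \to \X_*\X^*(\unit_\sC) \simeq \X_*(\unit_{\presheafTopos})$ and then one transports along $\X_*(u)$. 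The two triangle identities for the pair $(\X_!(\xi), \X_*(\xi^\vee))$ then reduce, after unwinding, to the triangle identities for $(\xi, \xi^\vee)$ together with the triangle identities of the ambidextrous adjunctions $\X_! \dashv \X^* \dashv \X_*$; this verification is the routine but slightly lengthy diagram chase I would not spell out in full.

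The main obstacle is organising the bookkeeping of which adjunction unit/counit is used where: the argument only works because $\X_*$ is \emph{simultaneously} left and right adjoint to $\X^*$, so the ``wrong-way'' coevaluation map $\unit_\sC \to \X_*(\xi^\vee \otimes \xi)$ genuinely exists — this is exactly where $\sC$-twisted ambidexterity enters and where a naive argument (treating $\X_*$ only as a right adjoint) would fail, as \cref{ex:interval_has_no_dualising_system} warns. Once the duality datum is in place, naturality in $\xi$ is automatic since every map used ($e$, $u$, the projection formula equivalence, the adjunction unit/counit) is natural in $\xi$; and the statement that $\X_!$ restricts to a $\baseTopos$-functor $\presheafTopos(\X;\sC)^\dbl \to \sC^\dbl$ follows levelwise, using that everything above is stable under \'etale base change (by \cref{lem:etale_basechange_classifying_systems} and the principle that internal-categorical statements are \'etale-local). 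One should also note the sanity check with \cref{prop:dualisables_are_groupoidal}: a dualisable $\xi$ is automatically groupoidal, so $\xi^\vee$ makes sense as a presheaf on $|\X|$ and $\X_*(\xi^\vee)$ agrees with $\X_!$ of the corresponding dualising-twisted system, consistent with the $D_F$-formulas of \cref{obs:formula_for_dualising_object} — but this is not needed for the proof, only reassurance.
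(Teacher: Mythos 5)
Your construction of the evaluation map is fine, but the proof has a genuine gap at the coevaluation, and it stems from a misreading of the hypothesis. $\sC$-twisted ambidexterity (\cref{defn:ambidexterityforpairs}) does \emph{not} say that $\X_*$ is simultaneously a left and a right adjoint of $\X^*$; it says that $\X_*$ preserves colimits and is $\sC$-linear, hence is an internal left adjoint of \emph{its own} right adjoint, which in general differs from $\X^*$. Indeed $[1]$ is $\sC$-twisted ambidextrous with $[1]_!=\eval_0\neq\eval_1=[1]_*$ (\cref{ex:interval_has_no_dualising_system}), so $[1]_*$ is certainly not left adjoint to $[1]^*$; if your reading were correct, $\X_!\simeq\X_*$ and the corollary would be immediate. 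Concretely, the map you build, $\unit_\sC\to\X_*\X^*(\unit_\sC)\xrightarrow{\X_*(u)}\X_*(\xi^\vee\otimes\xi)$, only uses the ordinary unit of $\X^*\dashv\X_*$ and exists without any ambidexterity hypothesis; the missing step is getting from $\X_*(\xi^\vee\otimes\xi)$ to $\X_*(\xi^\vee)\otimes\X_!(\xi)$. No such comparison map is available formally: the lax monoidal structure on $\X_*$ points the wrong way, and the projection formula only concerns $\X_!$. This is exactly the point where the hypothesis must enter, and your sketch does not supply it.

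The paper's proof sidesteps the explicit triangle identities entirely. It first records that in $\D\in\calg(\presentable^L)$ an object $x$ is dualisable if and only if the right adjoint of $x\otimes-$ is a $\D$-linear left adjoint, in which case that right adjoint is $y\otimes-$ for the dual $y$ (cf.\ \cref{rec:recognition_c_linear_functors} and \cref{prop:classification_of_linear_functors}). Then $-\otimes\X_!(\xi)\simeq\X_!(\X^*(-)\otimes\xi)$ is a composite of three functors each of whose right adjoints ($\X^*$, $-\otimes\xi^\vee$, and $\X_*$) is $\sC$-linear and colimit preserving --- the last one precisely by twisted ambidexterity --- so its right adjoint $\X_*(\X^*(-)\otimes\xi^\vee)\simeq-\otimes\X_*(\xi^\vee)$ is an internal left adjoint, giving both dualisability of $\X_!(\xi)$ and the identification of its dual in one stroke. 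If you want to keep an explicit route, the adjunction calculation $\hom_\sC(\X_!\xi,c)\simeq\X_*(\xi^\vee\otimes\X^*c)\simeq\X_*(\xi^\vee)\otimes c$ (the last equivalence being the $\sC$-linearity of $\X_*$) is the correct way to make twisted ambidexterity do the work; your étale-basechange argument for the first assertion of the corollary is then fine as stated.
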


\begin{proof}
Recall that for $\D \in \calg(\presentable^L)$, an object $x \in \D$ is dualisable if and only if the right adjoint to the $\D$-linear functor $x \otimes - \colon \D \rightarrow \D$ is itself a $\D$-linear left adjoint. On the one hand, if $x$ is dualisable, then the mentioned right adjoint is given by $x^\vee\otimes-$; on the other hand, if the right adjoint $\hom_\D(x,-)$ of $x\otimes-$ is $\D$-linear, then by Morita theory we have $\hom_\D(x,-) \simeq y \otimes -$ for some $y \in \D$, and in fact $y$ is dual to $x$, the duality data being provided by the unit and counit of the adjunction.

We next prove the second statement.
Let $\xi \in \presheaf_\baseTopos(\X;\sC)$; the $\sC$-linearity of $\X_!$ then gives an equivalence
$- \otimes \X_!(\xi) \simeq \X_!(\X^*(-) \otimes \xi)$. If $\xi$ is dualisable, the previous equivalence exhibits $-\otimes\X_!(\xi)$ as a composite of left adjoints in $\module_\sC(\presentable^L_\baseTopos)$ having $\sC$-linear right adjoints; it follows that $-\otimes\X_!(\xi)$ is itself a left adjoint with a $\sC$-linear right adjoint, establishing dualisability of $\X_!(\xi)$. More precisely, the right adjoint of $\X_!(\X^*(-) \otimes \xi)$ is $\X_*(\X^*(-) \otimes \xi^\vee)\simeq-\otimes\X_*(\xi^\vee)$, where we use $\sC$-linearity of $\X_*$ in the last formula.

The first statement follows by \'etale basechange: for $\tau\in\baseTopos$ the $\baseTopos$-functor $\X_!$ restricts at level $\tau$ to the functor $\X_!\colon\presheaf_{\baseTopos_{/\tau}}(\pi_\tau^*\X;\pi_\tau^*\sC)\to\pi_\tau^*\sC$, which by the previous point restricts to a functor $\X_!\colon\presheaf_{\baseTopos_{/\tau}}(\pi_\tau^*\X;\pi_\tau^*\sC)^\dbl\to(\pi_\tau^*\sC)^\dbl$
\end{proof}

\begin{prop}
\label{prop:omega_dualisable_iff_invertible}
Assume that $(\X, \partial \X)$ is a tame $\sC$-twisted ambidextrous $\baseTopos$-category pair such that $\omega_{\X,\partial\X}\in\func_\baseTopos(\X,\sC)$ is dualisable. Then $(\X,\partial\X)$ is $\sC$-Poincar\'e.
\end{prop}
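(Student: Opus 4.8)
The plan is to leverage the dualisability hypothesis in two stages: first to pin down the dualising system and reduce the Poincar\'e property to a pointwise statement, and then to produce, fibrewise over each object, a genuine $\otimes$-inverse by a cap-product bookkeeping that plays the r\^ole of the manifold-theoretic input in \cite[Thm.~B]{KleinQinSu}.

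\emph{Reduction to a pointwise statement.} By \cref{prop:dualisables_are_groupoidal}, a dualisable classifying system is automatically groupoidal, so $(\X,\partial\X)$ is in fact groupoidally $\sC$-twisted ambidextrous; \cref{cor:formula_in_terms_of_dualising_system_for_dualish} then furnishes a dualising system $D\coloneqq D_{\X,\partial\X}\in\presheaf_\baseTopos(|\X|;\sC)$ with $(\X,\partial\X)_*(-)\simeq\X_!(D\otimes-)$, and $D$ is dualisable, being the image of the dualisable $\omega_{\X,\partial\X}$ under the symmetric monoidal equivalence $\sC^{|\X|}\simeq\presheafTopos_\baseTopos(|\X|;\sC)$ of \cref{prop:Dequivalence}(2). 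Since $(\X,\partial\X)$ is tame we may invoke the boundary principle \cref{prop:boundary_principle}, giving $i^*\omega_{\X,\partial\X}\simeq\Omega\omega_{\partial\X}$ (so $\partial\X$ is groupoidally $\sC$-twisted ambidextrous too), and the categorical Poincar\'e--Lefschetz sequence \cref{prop:categorical_poincare_lefschetz_duality} additionally supplies $\X_*(-)\simeq(\X,\partial\X)_!(D\otimes-)$. To conclude that $(\X,\partial\X)$ is $\sC$-Poincar\'e it remains, by \cref{defn:Poincarecatpair}, to show that $\omega_{\X,\partial\X}$ takes values in $\picardSpaceTopos(\sC)$; this can be checked objectwise, so it suffices to prove that for every object $x$ of $\X$ the value $D(x)=\omega_{\X,\partial\X}(x)$ is invertible. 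As $D(x)$ is the restriction of the dualisable $D$ along $x$, it is dualisable, and in a stable presentably symmetric monoidal category a dualisable object is invertible exactly when it admits \emph{any} $\otimes$-inverse, i.e.\ when its evaluation $D(x)\otimes D(x)^\vee\to\unit$ is an equivalence.

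\emph{The global perfect pairing.} Since $(\X,\partial\X)$ is groupoidally $\sC$-twisted ambidextrous there is an essentially unique Spivak datum $(D,c)$ with $c\colon\unit_{\Gamma\sC}\to(\X,\partial\X)_!(D)$, whose cap product induces the equivalence $(\X,\partial\X)_*\simeq\X_!(D\otimes-)$. Applying the cap-product classification \cref{cor:relative_classification_natural_transformations} to the $\baseTopos$-cartesian fibration $i\colon\partial\X\to\X$ with coefficient $\zeta=D$ identifies $(\X,\partial\X)_!(D)\simeq\nattrans_\sC\bigl((\X,\partial\X)_*,\X_!(D\otimes-)\bigr)$, under which $c$ corresponds to the identity of $(\X,\partial\X)_*$; together with \cref{cor:ambidexterity_and_duality} --- which gives $\X_!(\xi)^\vee\simeq\X_*(\xi^\vee)$ for dualisable $\xi$, hence an Atiyah-type identification of the dual of $\X_!(D)\simeq(\X,\partial\X)_*(\unit)$ with a twisted relative homology of $(\X,\partial\X)$ --- this packages the duality globally.

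\emph{The obstacle.} The main difficulty is to descend this global duality to each object: one must show that the component over $x$ of the cap-product pairing --- a map $D(x)\otimes D(x)^\vee\to\unit$ in $\sC(\tau)$, where $\tau$ is the level of $x$ --- is an equivalence, and not merely that its source and target are abstractly dual. I would do this by restricting the Spivak datum $(D,c)$ over $x$, passing via \'etale base change to the level $\tau$ and then using a complementation/slice argument to localise at $x$, and then verifying that the resulting candidate Spivak datum is genuine; this last step is exactly where \cref{cor:ambidexterity_and_duality} and the duality bookkeeping of \cref{subsec:Spivak} are used, and it is the purely algebraic replacement for the manifold-theoretic step of \cite[Thm.~B]{KleinQinSu}. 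Granting it, each $D(x)$ is invertible, hence $\omega_{\X,\partial\X}$ factors through $\picardSpaceTopos(\sC)$ and $(\X,\partial\X)$ is $\sC$-Poincar\'e. The bookkeeping can be streamlined by first reducing to the case $\partial\X=\emptyset$: by \cref{ex:doubling_principle} the double $\X\amalg_{\partial\X}\X$ carries a complemented cover by two copies of $(\X,\partial\X)$ and its classifying system restricts to $\omega_{\X,\partial\X}$ on each copy, hence is dualisable, so \cref{cor:local_to_global_principle} reduces the statement to the absolute $\sC$-twisted ambidextrous $\baseTopos$-category $\X\amalg_{\partial\X}\X$.
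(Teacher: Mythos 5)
Your first reduction is fine and matches the paper: \cref{prop:dualisables_are_groupoidal} gives groupoidality, so after \cref{cor:formula_in_terms_of_dualising_system_for_dualish} everything comes down to upgrading the dualisable system $\bar\omega\coloneqq D(\omega_{\X,\partial\X})$ to an invertible one. But that upgrade is the entire mathematical content of the proposition, and you do not prove it: your third paragraph explicitly says ``Granting it''. Moreover, the strategy you sketch for closing the gap --- restricting the Spivak datum over a single object $x$, passing to the level $\tau$ by \'etale basechange, and ``using a complementation/slice argument to localise at $x$'' --- does not work as stated. A single object of $\X$ is in general neither a left closed $\baseTopos$-subcategory nor complemented in the sense of \cref{defn:complements_of_subcategory_pairs}, so \cref{prop:complementation_principle} does not apply, and there is no reason the restriction of a Spivak datum to a point is again a Spivak datum for that point. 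What one actually needs to show is that the evaluation $\bar\omega\otimes\bar\omega^\vee\to\X^*\unit$ (or at least an abstract identification of its source with $\X^*\unit$) holds globally, and no pointwise localisation produces this for free.

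The paper closes the gap by a global Yoneda argument that you have all the ingredients for but never assemble: for an arbitrary groupoidal $\zeta$ one computes
$\hom_{\Gamma\sC^\X}(\bar\omega\otimes\bar\omega^\vee,\zeta)\simeq\hom_{\Gamma\sC^\X}(\bar\omega,\bar\omega\otimes\zeta)\simeq\nattrans_\sC((\X,\partial\X)_*,\X_!(\bar\omega\otimes\zeta\otimes-))\simeq(\X,\partial\X)_!(\bar\omega\otimes\zeta)$
using \cref{cor:relative_classification_natural_transformations}, and then identifies this cofibre with $\X_*\zeta\simeq\hom_{\Gamma\sC^\X}(\X^*\unit,\zeta)$ via the boundary principle and the Poincar\'e--Lefschetz comparison (this is where tameness enters). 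Naturality in $\zeta$ and the Yoneda lemma (applied on groupoidal, in particular dualisable, systems) then give $\bar\omega\otimes\bar\omega^\vee\simeq\X^*\unit$, and a dualisable object whose tensor with its dual is the unit is invertible. If you want to salvage your write-up, replace the ``localise at $x$'' step by this corepresentability computation; the doubling reduction at the end is harmless but unnecessary.
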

\begin{proof}
The classifying system $\omega_{\X, \partial \X}$ is groupoidal by \cref{prop:dualisables_are_groupoidal}.
We abbreviate $\bar\omega\coloneqq D(\omega_{\X,\partial\X})$ and let $\zeta\in\func_\baseTopos(|\X\op|,\sC)\subset\func_\baseTopos(\X\op,\sC)$ be an arbitrary groupoidal object. Then we obtain a sequence of equivalences in $\Gamma\sC$, also leveraging \cref{cor:relative_classification_natural_transformations}:
\[
\begin{split}
\hom_{\Gamma\sC^\X}(\bar\omega\otimes\bar\omega^\vee,\zeta)&\simeq\hom_{\Gamma\sC^\X}(\bar\omega,\bar\omega\otimes\zeta)\simeq\nattrans_\sC((\X,\partial\X)_*,\X_!(\bar\omega\otimes\zeta\otimes-))\\
&\simeq(\X,\partial\X)_!(\bar\omega\otimes\zeta)
\simeq\cofib(\partial\X_!i^*(\bar\omega\otimes\zeta)\to\X_!(\bar\omega\otimes\zeta))\\
&\simeq\cofib(\Omega\partial\X_*i^*\zeta\to(\X,\partial\X)_*\zeta)\simeq\X_*\zeta\simeq\hom_{\Gamma\sC^\X}(\X^*\unit,\zeta).
\end{split}
\]
The previous chain of equivalences is natural in $\zeta\in\func_\baseTopos(|\X\op|,\sC)$, in particular in $\zeta\in\func_\baseTopos(\X\op,\sC)^\dbl$, so the Yoneda lemma provides an identification $\bar\omega\otimes\bar\omega^\vee\simeq\X^*\unit$, thus proving that $\bar\omega$ and hence $\omega$ are invertible.
\end{proof}

The following recovers \cite[Theorem B]{KleinQinSu} in the setting $\baseTopos=\spc$ and $\sC=\module_\eilenbergMacLaneCoeff$, using that $\hom_\eilenbergMacLaneCoeff(-,\eilenbergMacLaneCoeff)\colon\module_\eilenbergMacLaneCoeff\op\to\module_\eilenbergMacLaneCoeff$ is conservative, a fact that we learned from Markus Land. We thus give a ``purely algebraic'' proof of the mentioned theorem, answering \cite[Remark 4.2]{KleinQinSu}.

\begin{prop}
\label{prop:KQS_theorem_B}
Let $(\X,\partial \X)$ be a compact $\infty$-category pair considered as a constant $\baseTopos$-category pair, and let $(\zeta,c)$ be a Spivak datum for $(\X,\partial\X)$ with $\zeta$ invertible. Then the following two conditions are equivalent:
\begin{itemize}
\item[(1)] cap product with $c$ induces an equivalence $(\X,\partial\X)_*\xrightarrow{\simeq}\X_!(\zeta\otimes-)$ of $\baseTopos$-functors $\presheafTopos(\X;\sC)^\dbl\to\sC$;
\item[(2)] cap product with $c$
induces an equivalence $\X_*\xrightarrow{\simeq}(\X,\partial\X)_!(\zeta\otimes-)$ of $\baseTopos$-functors $\presheafTopos(\X;\sC)^\dbl\to\sC$.
\end{itemize}
Let moreover $\unit\in\Gamma\sC$ denote the monoidal unit, and assume that the $\baseTopos$-functor $\hom_\sC(-;\unit)\colon\sC\op\to\sC$ is conservative. Then the following two conditions are equivalent:
\begin{itemize}
\item[(1)'] cap product with $c$ induces an equivalence $(\X,\partial\X)_*\xrightarrow{\simeq}\X_!(\zeta\otimes-)$ of $\baseTopos$-functors $\presheafTopos(\X;\sC^\simeq)\to\sC$;
\item[(2)'] cap product with $c$
induces an equivalence $\X_*\xrightarrow{\simeq}(\X,\partial\X)_!(\zeta\otimes-)$ of $\baseTopos$-functors $\presheafTopos(\X;\sC^\simeq)\to\sC$.
\end{itemize}
\end{prop}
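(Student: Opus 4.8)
The plan is to prove both biconditionals by the same duality argument, using the monoidal dual on coefficients for (1)$\Leftrightarrow$(2) and the linear dual $\hom_\sC(-,\unit)$ for (1)'$\Leftrightarrow$(2)', the conservativity hypothesis being precisely what makes the latter case go through. First I would record the standing facts: since $(\X,\partial\X)$ is a compact $\infty$-category pair, both $\X$ and $\partial\X$ are compact $\infty$-categories (cf.\ the proof of \cref{lem:compact_is_ambidextrous}), hence $\sC$-twisted ambidextrous, so that all the $\baseTopos$-functors $\X_*,\partial\X_*i^*,(\X,\partial\X)_*$ and $\X_!,\partial\X_!i^*,(\X,\partial\X)_!$ appearing below are $\sC$-linear and preserve both limits and colimits (limits since each is a right adjoint or a finite limit of right adjoints; colimits by twisted ambidexterity, \cref{lem:C_linear_structure_on_relative_cohomology}, and stability of $\sC$); and that $i\colon\partial\X\to\X$ is a right, hence $\baseTopos$-cartesian, fibration by \cref{obs:leftclosed}\ref{rmk:left_closed_left_fibration}, so \cref{lem:absolute_classification_natural_transformations} and \cref{cor:relative_classification_natural_transformations} apply to $i$ and to $\id_\X$. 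Write $\kappa_1\colon(\X,\partial\X)_*\to\X_!(\zeta\otimes-)$ and $\kappa_2\colon\X_*\to(\X,\partial\X)_!(\zeta\otimes-)$ for the natural transformations obtained by feeding $c$ into the two relative cap products of \cref{constr:relative_cap_product}; these are exactly the maps occurring in (1)/(1)' and (2)/(2)'. The argument will use only that $c$ is a morphism $\unit_{\Gamma\sC}\to(\X,\partial\X)_!\zeta$ and that $\zeta$ is invertible (hence groupoidal), so it applies verbatim to candidate Spivak data.

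The heart of the matter is a ``Verdier duality'' package. Let $\mathbb{D}$ denote either the monoidal-dual $\baseTopos$-functor $(-)^\vee$ on $\presheafTopos(\X;\sC)^\dbl$ (which restricts along $i^*$ to $\presheafTopos(\partial\X;\sC)^\dbl$) or, for the primed statements, the pointwise linear dual induced by $\hom_\sC(-,\unit)$ on $\presheafTopos(\X;\sC^\simeq)$; in either case $\mathbb{D}$ is a contravariant endofunctor that reflects equivalences --- for $(-)^\vee$ because it is an autoequivalence with $\mathbb{D}^2\simeq\id$, for $\hom_\sC(-,\unit)$ by the conservativity hypothesis applied objectwise. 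The claim to establish is that $\mathbb{D}$ interchanges the cohomology functors with the homology functors: $\mathbb{D}\,\X_!(\xi)\simeq\X_*(\mathbb{D}\xi)$, $\mathbb{D}\,\partial\X_!(i^*\xi)\simeq\partial\X_*(i^*\mathbb{D}\xi)$ and hence, dualising the fibre and cofibre sequences of \cref{defn:relative_cohomology} (using that $\mathbb{D}$ exchanges fibre and cofibre sequences), $\mathbb{D}\,(\X,\partial\X)_!(\xi)\simeq(\X,\partial\X)_*(\mathbb{D}\xi)$ and conversely. For $(-)^\vee$ this is \cref{cor:ambidexterity_and_duality} together with the symmetric monoidality of $i^*$; for $\hom_\sC(-,\unit)$ the colimit-to-limit direction is formal, while the limit-to-colimit direction (which fails for $\X_*$ alone but holds for the relative functors, thanks to the cancellation in the (co)fibre sequences) uses compactness of $\X$ and $\partial\X$, and can be made precise by reducing along their finite cell structure to the case of simplices, where it is a direct computation. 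Crucially, all this must be done compatibly with the cap products, so that
\[
\mathbb{D}\,\kappa_1(\xi)\;\simeq\;\kappa_2\bigl(\mathbb{D}(\zeta\otimes\xi)\bigr)\qquad\text{and, symmetrically,}\qquad\mathbb{D}\,\kappa_2(\eta)\;\simeq\;\kappa_1\bigl(\mathbb{D}(\zeta\otimes\eta)\bigr),
\]
naturally in $\xi,\eta$; here one also uses $\zeta^\vee\otimes\zeta\simeq\unit$ and that $\mathbb{D}$ commutes with $\zeta\otimes-$ up to the twist by $\zeta^\vee$.

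Granting this package the two biconditionals are immediate and formally identical. Since $\xi\mapsto\mathbb{D}(\zeta\otimes\xi)$ carries $\presheafTopos(\X;\sC)^\dbl$ (resp.\ $\presheafTopos(\X;\sC^\simeq)$) into itself, if (1) holds then for each $\eta$ in that subcategory $\kappa_1(\mathbb{D}(\zeta\otimes\eta))$ is an equivalence, whence $\mathbb{D}\,\kappa_2(\eta)$ is an equivalence by the displayed identity, whence $\kappa_2(\eta)$ is an equivalence because $\mathbb{D}$ reflects equivalences; this is (1)$\Rightarrow$(2), and (2)$\Rightarrow$(1) is the same with $\kappa_1$ and $\kappa_2$ interchanged. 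Taking $\mathbb{D}=(-)^\vee$ and the subcategory $\presheafTopos(\X;\sC)^\dbl$ gives (1)$\Leftrightarrow$(2); taking $\mathbb{D}=\hom_\sC(-,\unit)$ and $\presheafTopos(\X;\sC^\simeq)$ gives (1)'$\Leftrightarrow$(2)'.

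I expect the genuine obstacle to be the cap-product compatibility, i.e.\ the two displayed identities: proved head-on it requires chasing through \cref{constr:relative_cap_product} to see that $\mathbb{D}$ sends the absolute cap product $\X_!(-)\otimes\X_*(-)\to\X_!(-\otimes-)$ and its boundary analogue to the corresponding dual pairings, and that passing to fibres and cofibres is compatible with this. A cleaner route I would try first is to bypass the explicit coend construction: by \cref{cor:relative_classification_natural_transformations} applied to $i$, $\kappa_1$ is the natural transformation that $c$ names under $\Gamma(\X,\partial\X)_!\zeta\simeq\nattrans_\sC((\X,\partial\X)_*,\X_!(\zeta\otimes-))$; dualising this equivalence through $\mathbb{D}$ and combining it with \cref{lem:absolute_classification_natural_transformations} for $\id_\X$ should identify $\mathbb{D}\,\kappa_1$ with the transformation named by $c$ under a companion equivalence $\Gamma(\X,\partial\X)_!\zeta\simeq\nattrans_\sC(\X_*,(\X,\partial\X)_!(\zeta\otimes-))$, i.e.\ with $\kappa_2$ up to the twist, avoiding diagram chases altogether.
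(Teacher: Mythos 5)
Your proposal is essentially the paper's own argument: it too dualises the two cap-product transformations through $\hom_\sC(-,\unit)$ (resp.\ the monoidal dual on dualisables), uses precisely your two exchange facts --- $\X_*((-)^\vee)\simeq(\X_!(-))^\vee$ always, and $\X_!((-)^\vee)\to(\X_*(-))^\vee$ an equivalence on $\presheafTopos(\X;\sC)^\dbl$ resp.\ on $\presheafTopos(\X;\sC^\simeq)\simeq\presheafTopos(\lvert\X\rvert;\sC)$ by reduction to compact spaces --- and settles the crux you isolate (compatibility of the duality with cap product) by your ``head-on'' option, namely an explicit cup/cap/evaluation diagram producing a commuting square whose verticals are $c\cap-$ and $(c\cap-)^\vee$, before invoking conservativity exactly as you do. The only inaccuracy is your parenthetical that the limit-to-colimit exchange, while failing for $\X_*$, holds for the relative functors on all presheaves ``thanks to cancellation'': it does not (already for $(\X,\partial\X)=([1],\{0\})$ one has $(\X,\partial\X)_!\simeq 0$ while $(\X,\partial\X)_*\not\simeq 0$), but this is harmless since your argument only needs the exchange on the dualisable/groupoidal subcategories, where it does hold.
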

\begin{proof}
For simplicity, we abbreviate by $(-)^\vee\colon\sC\op\to\sC$ the $\baseTopos$-functor $\hom_\sC(-;\unit)$; note that the restriction of $\hom_\sC(-;\unit)$ on $\sC^\dbl$ agrees with the functor $(-)^\vee$ from \cref{defn:dualisable}, and note also that the latter restricted functor is always conservative.
We similarly abbreviate by $(-)^\vee\colon\presheafTopos(\X;\sC)\op\to\presheafTopos(\X;\sC)$ the internal hom $\baseTopos$-functor $\hom_\X(-;\X^*\unit)$.
Note that on the subcategory $\presheafTopos(\X, \sC^\simeq)$ of groupoidal systems it is given by the pointwise internal hom as the composite
\begin{equation*}
\presheafTopos(\X;\sC^\simeq) \xrightarrow{\hom_{\sC}(-, \unit)} \funTopos(\X, \sC^\simeq) \simeq \presheafTopos(\X, \sC^\simeq).
\end{equation*}
We consider the following commutative diagram of $\baseTopos$-functors $\presheafTopos(\X;\sC)\to\sC$ and $\baseTopos$-natural transformations, in which we abbreviate by ``$\X_!(\unit)$'' the composite $\baseTopos$-functor $\presheafTopos(\X;\sC)^\simeq\to\ast\xrightarrow{\X_!(\unit)}\sC$, and similarly for $\unit$, and we input twice the same variable:
\[
\begin{tikzcd}[column sep=0pt]
&\X_!((-)^\vee)\otimes\X_*(-)\ar[dr,"-\cap-"]\\
(\X,\partial\X)_*(\zeta^{-1}\otimes(-)^\vee)\otimes\X_*(-)\ar[ur,"(c\cap-)\otimes\id"]\ar[dr,"\id\otimes(c\cap-)"']\ar[r,"-\cup-"]&(\X,\partial\X)_*(\zeta^{-1}\otimes(-)^\vee\otimes-)\ar[r,"c\cap-"] &\X_!((-)^\vee\otimes-)\ar[d,"\eval"]\\
&(\X,\partial\X)_*(\zeta^{-1}\otimes(-)^\vee)\otimes(\X,\partial\X)_!(-\otimes\zeta)\ar[ur,"c\cap-"']&\X_!(\unit)\to\unit.
\end{tikzcd}
\]
The previous diagram provides an equivalence between two morphisms in $\func_{\baseTopos,\sC}(\presheafTopos(\X;\sC),\sC)$ of the form $(\X,\partial\X)_*(\zeta^{-1}\otimes(-)^\vee)\otimes\X_*(-)\to\unit$. 
We may adjoin it to the following commutative diagram in $\func_{\baseTopos,\sC}(\presheafTopos(\X;\sC)^\simeq,\sC)$
\begin{equation}\label{diag:thm_b}
\begin{tikzcd}
(\X,\partial\X)_*(\zeta^{-1}\otimes(-)^\vee)\ar[r]\ar[d,"c\cap-"]&((\X,\partial\X)_!(-\otimes\zeta))^\vee\ar[d,"(c\cap-)^\vee"]\\
\X_!((-)^\vee)\ar[r]&(\X_*(-))^\vee.
\end{tikzcd}
\end{equation}
We now claim that the top horizontal arrow is always an equivalence while the bottom horizontal arrow is an equivalence when restricted to $\presheafTopos(\X;\sC^\simeq)$.

For the top map, we claim that the analogous $\baseTopos$-natural transformation $\X_* ((-)^\vee)\xrightarrow{\simeq}(\X_!(-))^\vee$ is an equivalence of $\baseTopos$-functors $\presheafTopos(\X;\sC)\rightarrow  \sC\op$ is an equivalence, from which it is easy to deduce the desired conclusion for relative (co)homology. To see this, note that the composite $\baseTopos$-functor $(\X_!(-))^\vee \colon \presheafTopos(\X;\sC) \xrightarrow{\X_!} \sC \xrightarrow{\hom_{\sC}(-, \unit)} \sC\op$ is left adjoint to the composite $\sC\op \xrightarrow{\hom_{\sC}(-, \unit)} \sC \xrightarrow{\X^*} \presheafTopos(\X;\sC)$.
This right adjoint identifies with the composite $\sC\op \xrightarrow{\X^*} \presheafTopos(\X, \sC)\op \xrightarrow{\hom_\X(-, \X^* \unit)} \presheafTopos(\X, \sC)$, since $\X^*\colon\sC\to \presheafTopos(\X, \sC)$ factors through $\presheafTopos(\X;\sC^{\simeq})$; the left adjoint to the latter composite is then evidently
$\X_* ((-)^\vee)$ (note the role of opposite $\baseTopos$-categories).

We now turn to the bottom arrow, which is natural in $\X$, and is clearly an equivalence for $\X = *$.
As both source and target are compatible with finite colimits in $\X$ by stability of $\sC$, and as compact spaces are obtained by finite colimits and retracts from $*$, we obtain that the bottom horizontal transformation is an equivalence whenever $\X$ is a compact space.
For a general $\baseTopos$-category $\X$, and in particular for a constant one, we have $\presheafTopos(\X; \sC^\simeq) \simeq \presheafTopos(\lvert \X \rvert; \sC)$; as the classifying space of a compact $\infty$-category is a compact space, this shows that the bottom horizontal map is an equivalence when restricted to groupoidal systems.

The left vertical $\baseTopos$-natural transformation in diagram \cref{diag:thm_b} agrees with (1)' precomposed by the invertible $\baseTopos$-functor $\zeta^{-1}\otimes-$, and restricted to the core $\baseTopos$-groupoid $\presheafTopos(\X;\sC)^\simeq$. The right vertical $\baseTopos$-natural transformation agrees instead with the restriction of (2)' to the core $\baseTopos$-groupoid $\presheafTopos(\X;\sC)^\simeq$, postcomposed by the $\baseTopos$-functor $(-)^\vee$. 

Assuming that $(-)^\vee$ is conservative, 
we thus obtain that (2)' is invertible given that (1)' is invertible.
An analogous argument identifies the dual of the transformation (1)' with (2)' precomposed with $(-)^\vee$, showing that (1)' is an equivalence if (2)' is one.

To show that (1) and (2) are equivalent, we can make similar arguments again using the commutative square \cref{diag:thm_b} by noting that $(-)^\vee$ is a self duality on both $\presheafTopos(\X, \sC)^\dbl$ and $\sC^\dbl$ as well as $\X_* \xi$ and $\X_! \xi$ being dualisable for $\xi \in \presheafTopos(\X, \sC)^\dbl$.
\end{proof}

\part{Applications}\label{part:applications}
\section{Examples from geometric topology}
\label{sec:unstraightenedpairs}
We apply the results from \cref{part:foundations}, most notably \cref{prop:boundary_principle,prop:complementation_principle,prop:localisation_principle} together with their corollaries, to study Poincar\'e $\baseTopos$-category pairs of the form $(\int_IX,\int_{\partial I}X)$ as in \cref{cons:integral_categories}. When $(I,\partial I)$ is a pair of a finite poset and a left closed subposet, we may think of $\int_IX$ as a stratified topological space, and of $\int_{\partial I}X$ as a closed stratified subspace which is a union of strata.
The main cases we will cover are those of Poincar\'e pairs 
and Poincar\'e ads, which we study in 
\cref{subsec:pdpairs} and \cref{subsec:pdads}: in particular, as a sanity check, we show that our theory recovers the classical definitions due to Wall, at least under finite domination assumptions. We then generalise in \cref{subsec:triangulated_manifolds} to the study of $\baseTopos$-category pairs obtained by unstraightening a diagram of $\baseTopos$-spaces parametrised by a \textit{combinatorial manifold}: the latter notion refers to a finite poset enjoying special properties that allow one to efficiently use the local-to-global principle from \cref{cor:local_to_global_principle}. 
We end this section with additional examples in \cref{subsec:more_examples}..
Throughout the section we fix $\sC\in\calg(\presentable^L_{\baseTopos,\stable})$.

We start with a lemma guaranteeing 
twisted ambidexterity for $\baseTopos$-categories of the form $\int_IX$ as above.
\begin{lem}\label{lem:unstraightening_twisted_ambidextrous}
Let $I$ be a compact $\infty$-category and let $X \colon I \to\baseTopos$ be a functor valued in $\sC$-twisted ambidextrous $\baseTopos$-spaces.
Then the $\baseTopos$-category $\int_I X$ is $\sC$-twisted ambidextrous. 
\end{lem}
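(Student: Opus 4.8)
The plan is to combine the closure properties of $\sC$-twisted ambidexterity established earlier with a reduction of a compact $\infty$-category $I$ to finite colimits and retracts of simplices. First I would record the key structural fact: the unstraightening construction $X \mapsto \int_I X$ is, on the level of $\baseTopos$-categories over $I$, compatible with colimits in the base. More precisely, since $\cat_\infty$ is generated under finite colimits and retracts by the simplices $\Delta^n$ (indeed by $\Delta^0$ and $\Delta^1$ alone via the usual cell-structure argument), and since $I$ is compact, we may write $I$ as a retract of a finite colimit of simplices. I would then argue that it suffices to treat the case where $I = \Delta^n$ is a simplex, provided we know two things: (a) $\sC$-twisted ambidexterity of $\baseTopos$-categories is closed under retracts (\cref{rmk:ambidexterity_closed_under_retracts}) and under finite fibrewise colimits (\cref{rmk:ambidexterity_closed_under_finite_colimits}), and (b) the assignment sending a diagram $X \colon I \to \baseTopos$ of $\sC$-twisted ambidextrous $\baseTopos$-spaces to $\int_I X$ is compatible with gluing $I$ along pushouts — so that unstraightening a diagram over a pushout $I_1 \cup_{I_0} I_2$ produces the pushout $\int_{I_1} X \cup_{\int_{I_0} X} \int_{I_2} X$ in $\cat_\baseTopos$. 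The latter compatibility is a standard feature of (cocartesian) unstraightening and can be cited or checked levelwise in $\baseTopos$ using \cref{ex:presheaftopoi}.

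Next I would treat the base case $I = \Delta^n$. Here one can proceed by induction on $n$: the inclusion $\{0\} \hookrightarrow \Delta^n$ is final (it is the inclusion of a terminal object after passing to the opposite, or one argues directly that $\Delta^n$ is weakly contractible with cone point $0$ initial; the precise directionality is chosen to match the conventions of \cref{lem:vanishing_limit_extension_by_zero} and \cref{lem:cartesian_fibrations_with_contractible_fibres}). Restricting the diagram $X$ along $\{0\} \hookrightarrow \Delta^n$ gives a $\sC$-twisted ambidextrous $\baseTopos$-space $X(0)$, and the inclusion $X(0) = \int_{\{0\}} X \hookrightarrow \int_{\Delta^n} X$ admits an adjoint because $\int_{\Delta^n} X \to \Delta^n$ is a cocartesian fibration and $0$ is a cone point of $\Delta^n$; one then shows this inclusion is either final or initial, so that the limit functor $(\int_{\Delta^n} X)_*$ is computed as $X(0)_*$ composed with restriction. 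Since $X(0)$ is $\sC$-twisted ambidextrous by hypothesis, and restriction along a functor of $\baseTopos$-categories is always $\sC$-linear and colimit-preserving on presheaf categories, the composite $(\int_{\Delta^n} X)_*$ is $\sC$-linear and colimit-preserving, as required. A cleaner alternative for the base case: observe that $\int_{\Delta^n} X$ retracts onto $X(0)$ (the retraction is induced by the cocartesian pushforward along $0 \to n$ or $n \to 0$), so that $\int_{\Delta^n} X$ is a retract of a $\baseTopos$-space that is $\sC$-twisted ambidextrous, and apply \cref{rmk:ambidexterity_closed_under_retracts} together with the fact that $\sC$-twisted ambidextrous $\baseTopos$-spaces are closed under the relevant operations.

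With the base case in hand, the general case follows by the dévissage outlined above: expressing $I$ as a retract of a finite iterated pushout of simplices $\Delta^{n_k}$, one pulls back $X$ along each structure map, obtains $\sC$-twisted ambidextrous unstraightenings $\int_{\Delta^{n_k}} X$ from the base case, assembles them into $\int_I X$ via the pushout-compatibility of unstraightening, and concludes $\sC$-twisted ambidexterity of $\int_I X$ from \cref{rmk:ambidexterity_closed_under_finite_colimits} and \cref{rmk:ambidexterity_closed_under_retracts}. The main obstacle I anticipate is making precise and rigorous the compatibility of cocartesian unstraightening with colimits in the base — specifically, that $\int_{(-)} (-)$ sends a pushout of index categories (equipped with compatible diagrams) to a pushout of total $\baseTopos$-categories. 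This is intuitively clear and follows from the fact that the cocartesian straightening equivalence \eqref{eq:straightening_cocart} and restriction along base maps interact well, but writing it out carefully — keeping track of the compatible systems of diagrams on the pieces and checking the pushout is computed in $\cat_\baseTopos$ rather than merely in $s\baseTopos$ — requires some care, analogous to the levelwise pushout argument in \cref{rmk:left_closed_covers}. Everything else is a routine application of the closure properties and finality lemmas already established.
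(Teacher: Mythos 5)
Your overall dévissage coincides with the paper's: both reduce, via closure of $\sC$-twisted ambidexterity under retracts and finite pushouts (\cref{rmk:ambidexterity_closed_under_retracts}, \cref{rmk:ambidexterity_closed_under_finite_colimits}) and compatibility of unstraightening with pushouts in the base, to generating cases. The genuine gap is in your base case, and it is a directionality error that the hedge about ``conventions'' does not repair. You work with the fibre $X(0)$ over the \emph{initial} vertex of $[n]$. The inclusion $X(0)\hookrightarrow\int_{[n]}X$ is in general neither final nor initial: for an object $e$ lying over $k>0$, the comma category of maps \emph{out of} $e$ into $X(0)$ is empty (there are no morphisms $k\to 0$), while the comma category of maps \emph{into} $e$ from $X(0)$ is the fibre of $X(0)\to X(k)$ over $e$, which need not be weakly contractible. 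For the same reason this inclusion admits no adjoint, so $(\int_{[n]}X)_*$ is not computed by restriction to $X(0)$. Moreover, even if it were initial, that would identify $\X_!$ with a restriction, not the limit functor $\X_*$ that twisted ambidexterity is about. Your ``cleaner alternative'' is also false: $\int_{[n]}X$ cannot be a retract of the $\baseTopos$-space $X(0)$ (nor of any $\baseTopos$-groupoid), since retracts of $\baseTopos$-groupoids are $\baseTopos$-groupoids and $\int_{[n]}X$ has non-invertible morphisms for $n\ge1$; in any case the retract closure property runs in the wrong direction for what you need. The fix is to use the fibre over the \emph{terminal} vertex: since $\int_{[n]}X\to[n]$ is a cocartesian fibration and $\{n\}\subset[n]$ is final, the fibre $X(n)\hookrightarrow\int_{[n]}X$ is final (it is right adjoint to the cocartesian pushforward; cf.\ \cref{obs:leftclosed}(1)), whence $(\int_{[n]}X)_*\simeq X(n)_*\,i^*$ is a composite of colimit-preserving $\sC$-linear functors.

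For comparison: the paper reduces only to $I=\emptyset$ and $I=[1]$ (these generate $\cat^\omega$ under retracts and pushouts, and $[n]$ is the pushout of copies of $[1]$ along its spine), and for $[1]$ it further decomposes $\int_{[1]}X$ as the mapping cylinder $X(0)\times[1]\amalg_{X(0)\times\{1\}}X(1)$, so that the only finality input needed is the evident one for the product projection $X(0)\times[1]$, namely $(X(0)\times[1])_*\simeq X(0)_*(i_1)^*$ with $i_1$ induced by $\{1\}\to[1]$. This sidesteps proving finality of a fibre inclusion for a general parametrised cocartesian fibration. With the terminal-vertex correction your route also goes through; the pushout-compatibility of unstraightening that you flag as the main obstacle is indeed used silently by the paper and follows from the fact that the total-category functor $\cocartesianCategory\to\cat$ preserves colimits, as invoked in the appendix.
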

\begin{proof}
Recall that $\cat^\omega \subseteq \cat$ is the smallest $\infty$-subcategory containing $\emptyset$ and $[1]$ and being closed under retracts and pushouts.
If $I \xrightarrow{i} J \xrightarrow{r} I$ exhibits the $\infty$-category $I$ as a retract of $J$, then $\int_I X$ is a retract of $\int_J r^*X$ and by \cref{rmk:ambidexterity_closed_under_retracts} $\int_I X$ is $\sC$-twisted ambidextrous if $\int_J r^* X$ is.
Similarly, if $I\simeq I_1 \amalg_{I_0} I_2$ is a pushout of $\infty$-categories, we have a pushout square of $\baseTopos$-categories as follows:
\[ 
\begin{tikzcd}[row sep=10pt]
\int_{I_0}X \ar[d]\ar[r]\ar[dr,phantom,"\ulcorner"very near end]& 
\int_{I_1} X\ar[d]\\
\int_{I_2}X\ar[r]
& \int_{I} X,
\end{tikzcd}
\]
and by \cref{rmk:ambidexterity_closed_under_finite_colimits}, $\int_{I} X$ is $\sC$-twisted ambidextrous if $\int_{I_0} X$, $\int_{I_1} X$ and $\int_{I_2} X$ are.
   
Consequently, we only need to check the statement for $I = \emptyset, [1]$.
The first case vacuously holds; in the second case we have $\int_{[1]} X \simeq X(0) \times [1] \amalg_{X(0) \times \{1\}} X(1)$.
Again using stability of $\sC$-twisted ambidextrous $\baseTopos$-categories under pushouts, we only need to argue that $X(0) \times [1]$ is $\sC$-twisted ambidextrous.
For this, we identify $(X(0) \times [1])_* \colon \presheafTopos(X(0) \times [1], \sC) \to \sC$ with the composite $\presheafTopos(X(0) \times [1], \sC) \xrightarrow{(i_1)^*} \presheafTopos(X(0), \sC) \to \xrightarrow{X(0)_*} \sC$, where $i_1 \colon X(0) \to X(0) \times [1]$ is induced by the inclusion $\{1\} \to [1]$;
both of these functors are colimit preserving $\sC$-linear, showing that $X(0) \times [1]$ is $\sC$-twisted ambidextrous.
\end{proof}

\subsection{Poincar\'e pairs}
\label{subsec:pdpairs}
In this subsection we consider the case $(I,\partial I)=([1],\{0\})$. We shall compare four notions of Poincar\'e duality pairs. The first is the notion stemming from our theory, and has already been given in \cref{defn:pdpairs}.
The second definition is the classical one due to Wall,
restricted to the setting $\baseTopos=\spc$ and $\sC=\module_\eilenbergMacLaneCoeff$. The third is a ``postclassical'' adaptation of the previous to the $\baseTopos$-parametrised and $\sC$-linear setting; both the classical and postclassical notions rely on the existence of fundamental classes inducing isomorphisms via cap product.
The fourth is an intermediate, ``neoclassical'' notion, in which 
the focus is still on $\sC$-valued presheaves on $\baseTopos$-spaces rather than on $\baseTopos$-categories, but no explicit reference to cap product is made any more.

\begin{defn}[Wall's classical Poincar\'e pairs]
The following definition is taken from \cite[p. 215]{Wall}.
\label{defn:classical_poincare_pairs}
Let $X\colon[1]\to\spc$ be a functor, and denote by $g\colon Y = X(0)\to Z = X(1)$ the corresponding morphism of spaces. We say that $X$ is a  \textit{classical Poincar\'e  pair of formal dimension $d\ge0$} if there is a local coefficient system $\calO$ over $Z$, valued in infinite cyclic groups, together with a relative fundamental class $[Z, Y]\in H_d(Z, Y;\calO)$ satisfying the following properties.
\begin{enumerate}[label=(\arabic*)]
\item For every local coefficient system of abelian groups $\Lambda_1$ over $Z$, cap product with $[Z,Y]$ induces an isomorphism of abelian groups
    \[
    [Z,Y]\cap-\colon H^*(Z,Y;\Lambda_1)\overset{\cong}{\to}H_{d-*}(Z;\calO\otimes\Lambda_1).
    \]
\item Let $[Y]=\partial[Z,Y]\in H_{d-1}(Y;g^*\calO)$ denote the image along the connecting homomorphism; then for any coefficient system of abelian groups $\Lambda_0$ over $Y$, cap product with $[Y]$ induces an isomorphism of abelian groups
    \[
    [Y]\cap-\colon H^*(Y;\Lambda_0)\overset{\cong}{\to}H_{d-1-*}(Y;g^*\calO\otimes\Lambda_0).
    \]
\end{enumerate}
If $Z$ is not connected, we say that $X$ i a classical Poincar\'e pair if each component of $Z$, relative the respective components of $Y$ that it contains, is a classical Poincar\'e pair of some dimension.
\end{defn}
\begin{defn}[Postclassical Poincar\'e pairs]
\label{defn:postclassical_poincare_pairs}
The following definition appears in the setting $\baseTopos=\spc$ and $\sC=\spectra$ as \cite[Def. 6]{LuriePoincare}.
Let $X\colon[1]\to\baseTopos$ be a functor, and denote by $g\colon Y\to Z$ the corresponding morphism of $\baseTopos$-spaces. We say that $X$ is a \textit{postclassical $\sC$-Poincar\'e pair} if there is a system $\zeta\in\presheaf_\baseTopos(Z;\picardSpaceTopos(\sC))$ and a fundamental class $c\colon\unit\to(Z,Y,g)_!\zeta$, i.e. a morphism in $\Gamma\sC\simeq\presheaf_\baseTopos(*;\sC)$, satisfying the following properties.
\begin{enumerate}[label=(\arabic*)]
\item Both $Z$ and $Y$ are $\sC$-twisted ambidextrous $\baseTopos$-spaces (consequently also $(Z,Y,g)_*\colon\presheafTopos(Z;\sC)\to\sC$ is colimit preserving and $\sC$-linear).
\item Cap product with $c$ induces an equivalence $(Z,Y,g)_*\xrightarrow{\simeq}Z_!(\zeta\otimes-)$.
\item Let $\partial c$ denote the composite $\unit\xrightarrow{c}(Z,Y,g)_!\zeta\xrightarrow{\canonical}\Sigma Y_!g^*\zeta$ in $\Gamma\sC$; then cap product with $\partial c$ induces an equivalence $\Omega Y_*\xrightarrow{\simeq} Y_!(g^*\zeta\otimes-)$ of $\sC$-linear $\baseTopos$-functors $\presheafTopos(Y;\sC)\to\sC$.
\end{enumerate}
\end{defn}

\begin{defn}[Neoclassical Poincar\'e pairs]\label{defn:neoclassical_poincare_pairs}
Recall \cref{defn:connecting_map_of_classifying_systems}.
Let $X\colon [1]\to\baseTopos$ be a functor and
let $g\colon Y\to Z$ denote the corresponding morphism of $\baseTopos$-spaces. We say that $X$ is a \textit{neoclassical $\sC$-Poincar\'e pair} if the following conditions hold.
\begin{enumerate}[label=(\arabic*)]
\item Both $Y$ and $Z$ are $\sC$-twisted ambidextrous $\baseTopos$-spaces.
\item The classifying system $\omega_{Z,Y,g}\colon Z\to\sC$ factors through $\picardSpaceTopos(\sC)$;
\item The connecting map $\delta\colon\Omega \omega_{Y}\to g^*\omega_{Z,Y,g}$ is an equivalence in $\func_\baseTopos(Y,\sC)$.
\end{enumerate}
\end{defn}
We start by proving the equivalence between \cref{defn:pdpairs,defn:postclassical_poincare_pairs,defn:neoclassical_poincare_pairs}.

\begin{prop}
\label{prop:pdpair_equals_neoclassical}
Let $X\colon[1]\to\baseTopos$ be a functor. Then $X$ is a $\sC$-Poincar\'e pair if and only it is a neoclassical $\sC$-Poincar\'e pair, and if and only if it is a postclassical $\sC$-Poincar\'e pair.
\end{prop}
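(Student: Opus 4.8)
The plan is to prove the two equivalences via a chain, showing first that the $\sC$-Poincar\'e condition of \cref{defn:pdpairs} is equivalent to the neoclassical condition of \cref{defn:neoclassical_poincare_pairs}, and then that the neoclassical condition is equivalent to the postclassical one of \cref{defn:postclassical_poincare_pairs}. Throughout we use the running identification $(\X,\partial\X)=(\int_{[1]}X,\int_{\{0\}}X)$ with $\partial\X\simeq Y$, the functor $f\colon\X\to[1]$ classifying $\partial\X$ being the evident cocartesian fibration whose straightening is $g\colon Y\to Z$, and $\interior{\X}\simeq Z$. By \cref{lem:unstraightening_twisted_ambidextrous}, once $Y$ and $Z$ are $\sC$-twisted ambidextrous, the pair $(\X,\partial\X)$ is $\sC$-twisted ambidextrous as well, and conversely $\sC$-twisted ambidexterity of the pair entails that of both $\X$ (hence $Z$, which is final in $\X$, so $\X_*\simeq Z_*$) and $\partial\X\simeq Y$. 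So the twisted ambidexterity hypotheses match up across all three definitions.

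First I would treat $\sC$-Poincar\'e $\iff$ neoclassical. The key input is \cref{obs:connecting_map_of_classifying_systems}: since $(\X,\partial\X)$ arises from a cocartesian fibration $\X\to[1]$, there is an identification of the map $i^*\omega_{\X,\partial\X}\to g^*j^*\omega_{\X,\partial\X}$ with the connecting map $\delta\colon\Omega\omega_{\partial\X}\to g^*\omega_{\interior{\X},\partial\X,g}$, and moreover $j^*\omega_{\X,\partial\X}\simeq\omega_{\interior{\X},\partial\X,g}\simeq\omega_{Z,Y,g}$. Then \cref{cor:towards_neoclassical_equivalence} says precisely that $(\X,\partial\X)$ is $\sC$-Poincar\'e if and only if both $\omega_{\partial\X}=\omega_Y$ and $\omega_{Z,Y,g}$ factor through $\picardSpaceTopos(\sC)$ and the connecting map $\delta$ is an equivalence. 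To match this with \cref{defn:neoclassical_poincare_pairs}, observe that its condition (3) asks that $\delta$ be an equivalence, from which one recovers that $\omega_Y\simeq\Omega\, g^*\omega_{Z,Y,g}$ — but since $Z$ is $\sC$-twisted ambidextrous, $g^*\omega_{Z,Y,g}$ factors through $\picardSpaceTopos(\sC)$ once $\omega_{Z,Y,g}$ does, and a desuspension of an invertible system is invertible; conversely in the presence of (3), invertibility of $\omega_Y$ forces invertibility of $g^*\omega_{Z,Y,g}$. Thus the neoclassical conditions (1)(2)(3) are exactly the hypotheses of \cref{cor:towards_neoclassical_equivalence}, giving the first equivalence.

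Next I would treat neoclassical $\iff$ postclassical. The neoclassical condition (2)(3) assert that $\omega_{Z,Y,g}$ is invertible and $\delta$ is an equivalence. Set $\zeta\coloneqq D(\omega_{Z,Y,g})\in\presheaf_\baseTopos(Z;\picardSpaceTopos(\sC))$, which is groupoidal and invertible by \cref{prop:Dequivalence}. Because $\omega_{Z,Y,g}$ is groupoidal, \cref{prop:dualish} and \cref{cor:formula_in_terms_of_dualising_system_for_dualish} give $(Z,Y,g)_*(-)\simeq Z_!(\zeta\otimes-)$; by \cref{cor:relative_classification_natural_transformations} (applied with $q=g$, a cocartesian hence in this groupoidal setting one argues via the cap-product classification — more directly, via \cref{constr:relative_cap_product} and \cref{cor:relative_classification_natural_transformations} for the identity and for $g$) this equivalence is implemented by cap product with a fundamental class $c\colon\unit\to(Z,Y,g)_!\zeta$, corresponding to the generating element of $\nattrans_\sC((Z,Y,g)_*,Z_!(\zeta\otimes-))$; this gives conditions (1)(2) of \cref{defn:postclassical_poincare_pairs}. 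For condition (3), one uses \cref{defn:connecting_map_of_classifying_systems} and \cref{obs:connecting_map_of_classifying_systems}: the composite $\partial c$ and the connecting map $\delta$ correspond to one another under the Morita dictionary, so $\delta$ being an equivalence is equivalent to cap product with $\partial c$ inducing $\Omega Y_*\xrightarrow{\simeq}Y_!(g^*\zeta\otimes-)$. Conversely, given a postclassical structure $(\zeta,c)$, condition (2) together with \cref{cor:formula_in_terms_of_dualising_system_for_dualish} forces $\omega_{Z,Y,g}\simeq\bar D(\zeta)$ to be groupoidal and invertible (invertibility since $\zeta$ is), and condition (3) re-expressed via the same Morita correspondence gives that $\delta$ is an equivalence. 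Hence the postclassical and neoclassical notions coincide.

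The main obstacle I anticipate is the bookkeeping in the second equivalence: matching the cap-product formulation of \cref{defn:postclassical_poincare_pairs} with the classifying-system formulation requires carefully tracking how the fundamental class $c$, its boundary $\partial c$, and the connecting map $\delta$ correspond under the equivalence $\yoneda^*$ of \cref{prop:classification_of_linear_functors} together with the cap-product identifications of \cref{subsec:Spivak} (specifically \cref{lem:absolute_classification_natural_transformations} and \cref{cor:relative_classification_natural_transformations}). In particular one must check that the canonical map $(Z,Y,g)_!\zeta\xrightarrow{\canonical}\Sigma Y_!g^*\zeta$ appearing in the definition of $\partial c$ is Morita-dual to the connecting map from \cref{defn:connecting_map_of_classifying_systems}, which is essentially the content of \cref{obs:connecting_map_of_classifying_systems} dualized; assembling this cleanly — rather than the individual statements — is the delicate point. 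Everything else is a direct appeal to the cutting-and-pasting machinery already set up, chiefly \cref{prop:boundary_principle}, \cref{cor:towards_neoclassical_equivalence}, \cref{prop:dualish}, \cref{cor:formula_in_terms_of_dualising_system_for_dualish}, and \cref{prop:Dequivalence}.
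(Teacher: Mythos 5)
Your proposal is correct and follows essentially the same route as the paper: the first equivalence is read off from \cref{cor:towards_neoclassical_equivalence} (together with the observation that conditions (2) and (3) of \cref{defn:neoclassical_poincare_pairs} force $\omega_Y$ to be invertible), and the second is established by producing the fundamental class via \cref{cor:relative_classification_natural_transformations} and matching $c$, $\partial c$ and $\delta$ under the Morita dictionary. The "delicate bookkeeping" you flag is exactly what the paper's proof carries out explicitly, via the commutative diagrams \cref{eq:neoclassical_postclassical} and \cref{eq:neoclassical_postclassical_2} relating $\hat c$, $\widehat{\partial c}$ and $\delta$; your choice of $\zeta=D(\omega_{Z,Y,g})$ rather than $\omega_{Z,Y,g}$ itself is immaterial since $Z$ is a $\baseTopos$-space.
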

\begin{proof}
We use the notation from \cref{defn:pdpairs}
and \cref{nota:left_adjoint_to_j}.
First, note that $\int[1] X$ is $\sC$-twisted ambidextrous by \cref{lem:unstraightening_twisted_ambidextrous}. 
As a direct consequence of \cref{cor:towards_neoclassical_equivalence}, $X$ is a $\sC$-Poincar\'e pair if and only if $X$ is a neoclassical $\sC$-Poincar\'e pair.

For the implication ``postclassical implies neoclassical'', let $(c,\zeta)$ be as in \cref{defn:postclassical_poincare_pairs}.
We can specialise \cref{eq:relative_cap_product} to the morphism $u\coloneqq g$; after taking  horizontal cofibres twice we obtain the following commutative diagram in $\Gamma\sC$, some of whose objects are endowed with a morphism from $\unit$:
\begin{equation}
\label{eq:neoclassical_postclassical}
\begin{tikzcd}[column sep=0pt, row sep=12pt]
\unit\xrightarrow{c}(Z,Y,g)_!\zeta\ar[r]\ar[ddd]&\Sigma Y_!g^*\zeta\overset{\partial c}{\leftarrow}\unit\ar[d]\\
&\unit\xrightarrow{\partial c\cap-}\nattrans_\sC(\Omega Y_*,Y_!(g^*\zeta\otimes-)\ar[d,"-\circ g^*"]\\
&\unit\xrightarrow{(\partial c\cap-)g^*}\nattrans_\sC(\Omega Y_*g^*,Y_!g^*(\zeta\otimes-))\ar[d,"\epsilon_{g_!}"]\\
\unit\xrightarrow{c\cap-}\nattrans_\sC((Z,Y,g)_*,Z_!(\zeta\otimes-))\ar[r]&\nattrans_\sC(\Omega Y_*g^*,Z_!(\zeta\otimes-)).
\end{tikzcd}
\end{equation}
Recall \cref{defn:DandbarD} and let $\bar\zeta\coloneqq \bar D(\zeta)\in\func_{\baseTopos}(Z,\picardSpaceTopos(\sC))$.
By \cref{prop:classification_of_linear_functors}, the $\baseTopos$-natural equivalence $c\cap-\colon(Z,Y,g)_*\xrightarrow{\simeq}Z_!(\zeta\otimes-)$ induced by $c$ corresponds to an equivalence $\hat c\colon\omega_{Z,Y,g}\xrightarrow{\simeq}\bar\zeta$, using also \cref{prop:dualish} to identify $\bar\zeta$ as the classifying system of the $\sC$-linear $\baseTopos$-functor $Z_!(\zeta\otimes-)$.
Similarly, the $\baseTopos$-natural equivalence $\partial c\cap-\colon\Omega Y_*\xrightarrow{\simeq} Y_!(g^*\zeta\otimes-)$ induced by $\partial c$ corresponds to an equivalence $\widehat{\partial c}\colon\Omega\omega_{Y}\xrightarrow{\simeq} g^*\bar\zeta$.
By \cref{lem:naturality_classification_C_linear_functors}, the $\baseTopos$-natural transformation $(\partial c\cap-)g^*$ corresponds to the morphism $g_!(\widehat{\partial c})\colon \Omega g_!\omega_{Y}\xrightarrow{\simeq}g_!g^*\bar\zeta$, which is therefore an equivalence. Finally, postcomposing the natural equivalence $(\partial c\cap-)g^*$ with the natural transformation $Y_!\epsilon_{g_!}(\zeta\otimes-)$ corresponds, again by \cref{lem:naturality_classification_C_linear_functors}, to postcomposing $g_!(\widehat{\partial c})$ by the map $g_!g^*\bar\zeta\xrightarrow{\epsilon_{g_!}}\bar\zeta$, whereas precomposing $(c\cap-)$ with the canonical natural transformation $\canonical\colon\Omega Y_*\to(Z,Y,g)_*$ corresponds to the map $\Omega g_!\omega_{Y}\to\omega_{Z,Y,g}$ adjoint to $\delta$. Commutativity of \cref{eq:neoclassical_postclassical} implies that the following square in $\func_\baseTopos(Z,\sC)$ on left commutes; applying $g^*$ to it and then precomposing by $\eta_{g_!}$ we obtain the commutative diagram on right.
\begin{equation}
\label{eq:neoclassical_postclassical_2}
\begin{tikzcd}
\Omega g_!\omega_{Y}\ar[r,"g_!(\widehat{\partial c})"]\ar[d]&g_!g^*\bar\zeta\ar[d,"\epsilon_{g_!}"]\\
\omega_{Z,Y,g}\ar[r,"\hat c"]&\bar\zeta;
\end{tikzcd}
\hspace{1cm}
\begin{tikzcd}
\Omega\omega_{Y}\ar[r,"\widehat{\partial c}"]\ar[d,"\eta_{g_!}"]\ar[dd,bend right=80,"\delta"']&g^*\bar\zeta\ar[d,"\eta_{g_!}"]\ar[dd,bend left=60, equal]\\
\Omega g^*g_!\omega_{Y}\ar[r,"g^*g_!(\widehat{\partial c})"]\ar[d]&g^*g_!g^*\bar\zeta\ar[d,"\epsilon_{g_!}"]\\
g^*\omega_{Z,Y,g}\ar[r,"g^*(\hat c)"]&g^*\bar\zeta.
\end{tikzcd}
\end{equation}
The assumption that $\hat c$ and $\widehat{\partial c}$ are equivalences implies that the horizontal arrows in the right diagram of \cref{eq:neoclassical_postclassical_2} are equivalences; hence also $\delta$ is an equivalence. Moreover, since $\bar\zeta$ factors through $\picardSpaceTopos(\sC)$, so do $\omega_{Z,Y,g}$, $g^*\bar\zeta$ and $\omega_{Y}$.

For the implication ``neoclassical implies postclassical'', we let $\zeta\coloneqq\omega_{Z,Y,g}\colon Z\to\picardSpaceTopos(\sC)$. We observe that $g$, being a morphism between $\baseTopos$-spaces, is a cartesian fibration; by \cref{cor:relative_classification_natural_transformations}, we obtain that the equivalence $(Z,Y,g)_*\simeq Z_!(\zeta\otimes-)$ is induced by cap product with some class $c\colon\unit\to(Z,Y,g)_!\zeta$. Letting $\partial c$ denote the composition of $c$ with the canonical map $(Z,Y,g)_!\zeta\to\Sigma Y_!g^*\zeta$, commutativity of \cref{eq:neoclassical_postclassical} implies again the existence of the commutative squares in \cref{eq:neoclassical_postclassical_2}; in particular, the hypothesis that $\delta$ is an equivalence implies that the left, right and bottom arrows of the right diagram in \cref{eq:neoclassical_postclassical_2} are equivalences, and thus also the top arrow is an equivalence: this means that $\partial c$ also induces an equivalence $Y_*\xrightarrow{\simeq} Y_!(g^*\zeta\otimes-)$ as desired.
\end{proof}

In the rest of the subsection we let $\baseTopos=\spc$, and we focus on the cases in which $\sC$ is either $\spectra$ or $\module_\eilenbergMacLaneCoeff=\module_\eilenbergMacLaneCoeff(\spectra)$. We shall prove the following proposition. 
\begin{prop}
\label{prop:equivalence_classical_and_neoclassical}
For a functor $X\colon[1]\to\spc$ corresponding to a map of spaces $g \colon Y \to Z$ the following hold. 
\begin{enumerate}
\item If $X$ is a postclassical $\module_\eilenbergMacLaneCoeff$-Poincar\'e pair, then it is also a classical Poincar\'e pair. The converse holds assuming that $Y$ and $Z$ are compact spaces.
\item If $X$ is a $\spectra$-Poincar\'e pair, then it is also a $\module_\eilenbergMacLaneCoeff$-Poincar\'e pair. The converse holds assuming that $Y$ and $Z$ are compact spaces.
\end{enumerate}

\end{prop}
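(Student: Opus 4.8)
The plan is to treat the two items separately and, for each, to reduce the comparison to three dictionary entries. The first is the identification $\picardSpace(\module_\eilenbergMacLaneCoeff)\simeq\coprod_{n\in\bbZ}B(\bbZ/2)$ (invertible $\eilenbergMacLaneCoeff$-modules are shifts of $\eilenbergMacLaneCoeff$, with automorphism group $\bbZ^\times$), so that a $\picardSpaceTopos(\module_\eilenbergMacLaneCoeff)$-valued presheaf on a space $Z$ is the same datum as a locally constant ``formal dimension'' $Z\to\bbZ$ together with a local system of infinite cyclic groups on $Z$ — exactly Wall's $\calO$. The second is the standard fact that the homotopy groups of the $\eilenbergMacLaneCoeff$-valued functors $Z_!(-)$, $(Z,Y,g)_*$, etc. compute singular homology and relative cohomology with the corresponding (local) coefficients. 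The third is the compatibility of the categorical cap product of \cref{constr:relative_cap_product} with the classical cap product with a fundamental class. Granting these, the passage between ``equivalence of $\eilenbergMacLaneCoeff$-linear functors'' and ``isomorphism on (co)homology with all local coefficients'' is supplied by the observation that shifts of discrete local systems generate $\presheafTopos(Z;\module_\eilenbergMacLaneCoeff)$ under colimits while all the functors involved preserve colimits (using twisted ambidexterity of $Y$ and $Z$). Note also that item~(2)'s forward implication is immediate: apply \cref{obs:omega_natural_in_C} to the symmetric monoidal colimit-preserving functor $\eilenbergMacLaneCoeff\otimes-\colon\spectra\to\module_\eilenbergMacLaneCoeff$.

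For item~(1), the forward implication goes as follows. Given postclassical data $(\zeta,c)$ as in \cref{defn:postclassical_poincare_pairs}, work componentwise in $Z$: the first dictionary entry extracts a formal dimension $d$ and an orientation system $\calO$, and the second identifies $\pi_0(c)$ with a relative fundamental class $[Z,Y]\in H_d(Z,Y;\calO)$ and $\pi_0(\partial c)$ with $[Y]=\partial[Z,Y]\in H_{d-1}(Y;g^*\calO)$. Restricting the two postclassical equivalences $(Z,Y,g)_*\simeq Z_!(\zeta\otimes-)$ and $\Omega Y_*\simeq Y_!(g^*\zeta\otimes-)$ to shifts of discrete local systems and passing to homotopy groups yields precisely Wall's isomorphisms of \cref{defn:classical_poincare_pairs}(1)--(2) via the second and third dictionary entries; hence $X$ is a classical Poincar\'e pair. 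For the converse, assume $X$ is a classical Poincar\'e pair with $Y$ and $Z$ compact. By \cref{lem:unstraightening_twisted_ambidextrous} and \cref{lem:compact_is_ambidextrous} the pair $(\X,\partial\X)\coloneqq(\int_{[1]}X,\int_{\{0\}}X)$ is $\module_\eilenbergMacLaneCoeff$-twisted ambidextrous, which is \cref{defn:postclassical_poincare_pairs}(1) and makes $(Z,Y,g)_*$ and $\Omega Y_*$ colimit preserving. Set $\zeta\coloneqq\Sigma^{-d}(\eilenbergMacLaneCoeff\otimes_\bbZ\calO)$ via the first dictionary entry and let $c$ correspond to $[Z,Y]$ via the second. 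By Wall's hypotheses together with the second and third entries, the natural transformations $c\cap-$ and $\partial c\cap-$ are isomorphisms on homotopy groups for all coefficients that are shifts of discrete local systems; since such coefficients generate $\presheafTopos(Z;\module_\eilenbergMacLaneCoeff)$, resp.\ $\presheafTopos(Y;\module_\eilenbergMacLaneCoeff)$, under colimits and all the functors preserve colimits, $c\cap-$ and $\partial c\cap-$ are equivalences of $\eilenbergMacLaneCoeff$-linear functors, i.e.\ \cref{defn:postclassical_poincare_pairs}(2)--(3) hold.

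For item~(2), we only need the converse. Assume $X$ is a $\module_\eilenbergMacLaneCoeff$-Poincar\'e pair with $Y,Z$ compact. As above, $(\X,\partial\X)\coloneqq(\int_{[1]}X,\int_{\{0\}}X)$ is $\spectra$-twisted ambidextrous, so there is a classifying system $\omega_{\X,\partial\X}\colon\X\to\spectra$ with $\omega_{\X,\partial\X}(x)\simeq(\X,\partial\X)_*(\yoneda_\X(x))$ and $\yoneda_\X(x)\simeq x_!(\unit)$ a compact object of $\presheafTopos(\X;\spectra)$. The key point is that $(\X,\partial\X)_*$ preserves compact objects: via its factorisation $(\X,\partial\X)_*\simeq\interior{\X}_*j^{\#}$ through the recollement of \cref{constr:stable_recollement_for_pairs}, the functor $j^{\#}$ preserves compact objects since its right adjoint $j_*$ is extension-by-zero (\cref{rmk:left_closed_kan_extension}) and hence commutes with filtered colimits, while $\interior{\X}_*=\lim_{\interior{\X}\op}(-)$ preserves compact objects because $\interior{\X}\simeq Z$ is a compact space (a finite homotopy limit of perfect objects is perfect). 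Therefore $\omega_{\X,\partial\X}$ lands in the full subcategory of finite (equivalently dualisable) spectra. On the other hand, \cref{obs:omega_natural_in_C} identifies the $\module_\eilenbergMacLaneCoeff$-classifying system of $(\X,\partial\X)$ with $\eilenbergMacLaneCoeff\otimes\omega_{\X,\partial\X}$, which factors through $\picardSpaceTopos(\module_\eilenbergMacLaneCoeff)$ by hypothesis. Now a finite spectrum $E$ with $\eilenbergMacLaneCoeff\otimes E$ an invertible $\eilenbergMacLaneCoeff$-module is itself invertible: after a shift $H_*(E;\bbZ)\cong\bbZ$ is concentrated in degree $0$, so a generator of $\pi_0E$ gives a map $\sphere\to E$ which is an integral-homology isomorphism, hence an equivalence, its cofibre being a bounded-below finite spectrum with vanishing integral homology; the same Hurewicz argument shows $\eilenbergMacLaneCoeff\otimes-$ is conservative on finite spectra. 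Applying these two facts objectwise and to morphisms shows that $\omega_{\X,\partial\X}$ takes invertible values and sends every morphism to an equivalence, i.e.\ factors through $\picardSpaceTopos(\spectra)$; thus $X$ is a $\spectra$-Poincar\'e pair. (One may also pass freely between ``$\module_\eilenbergMacLaneCoeff$-Poincar\'e pair'' and ``postclassical $\module_\eilenbergMacLaneCoeff$-Poincar\'e pair'' throughout, by \cref{prop:pdpair_equals_neoclassical}.)

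The main obstacle will be the third dictionary entry, namely matching the abstract cap product of \cref{constr:relative_cap_product} with the classical singular cap product with a fundamental class. The cleanest route I foresee is to use \cref{cor:relative_classification_natural_transformations} to see that the categorical cap product realises a canonical bijection between $H_d(Z,Y;\calO)$ and the space of $\eilenbergMacLaneCoeff$-linear natural transformations $(Z,Y,g)_*\to Z_!(\zeta\otimes-)$, to use the classical representability of natural transformations between singular (co)homology functors for the corresponding classical bijection, and to check that these agree by evaluating on the tautological class — which reduces, by naturality and $\eilenbergMacLaneCoeff$-linearity, to the case $Z=\ast$. This last step is routine but must be carried out carefully; the Picard computation and the Hurewicz-type lemma are standard.
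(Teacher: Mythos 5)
Your overall skeleton (the Picard computation $\picardSpace(\module_\eilenbergMacLaneCoeff)\simeq\coprod_\bbZ BC_2$, passage to homotopy groups, a generation argument for the converse of (1), and a Hurewicz-type argument for the converse of (2)) is the same as the paper's, but two of your key steps fail as stated. First, shifts of discrete local systems do \emph{not} generate $\presheaf(Z;\module_\eilenbergMacLaneCoeff)$ under colimits, even for $Z$ a finite complex: for $Z=S^2$ all discrete systems are constant, $\presheaf(S^2;\module_\eilenbergMacLaneCoeff)\simeq\lmodule_{C_*(\Omega S^2;\bbZ)}$ with $C_*(\Omega S^2;\bbZ)$ the free $E_1$-$\eilenbergMacLaneCoeff$-algebra on a degree-one class $x$, and the telescope $C_*(\Omega S^2;\bbZ)[x^{-1}]$ is a nonzero module with $\mathrm{RHom}(\eilenbergMacLaneCoeff,M)\simeq\fib(M\xrightarrow{\;x\;}\Sigma^{-1}M)=0$, i.e.\ right-orthogonal to every shifted discrete system; so the localising subcategory they generate is proper, and your ``classical implies postclassical'' step does not go through as written. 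The paper instead closes under colimits \emph{and} limits (connective covers plus Postnikov towers exhaust all local systems) and accordingly checks that all four functors in play preserve limits as well as colimits --- this is exactly where compactness of $Y$ and $Z$ is used again, to make $Z_!(\zeta\otimes-)$ and $Y_!(g^*\zeta\otimes-)$ limit-preserving. Your argument is repairable by the same device, but the limit half is not optional. (A minor omission in the forward direction: Wall requires formal dimension $d\ge0$, which the paper extracts from $H^d(Z,Y;\calO)\cong H_0(Z)\neq0$.)

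Second, in the converse of (2) your claim that $(\X,\partial\X)_*$ preserves compact objects, so that $\omega_{\X,\partial\X}$ takes values in finite spectra, is false, and both justifications break down. In the recollement the adjoint string is $j^*\dashv j_*\dashv j^{\#}$, so $j_*$ is the \emph{left} adjoint of $j^{\#}$ and cannot be used to certify that $j^{\#}$ preserves compacts; and $\interior{\X}_*$ does not preserve compact objects: the values of a compact local system such as $z_!\sphere$ are suspension spectra of path spaces, not finite spectra, so ``a finite limit of perfect objects is perfect'' is not applicable. Concretely, take $Y=\emptyset$ and $Z=S^1\vee S^1$: then $\omega_Z(z)\simeq Z_*(z_!\sphere)\simeq \sphere[F_2]^{hF_2}$, whose $\pi_{-1}$ contains $H^1(F_2;\bbZ[F_2])$, an infinitely generated group, so the classifying system is not pointwise finite. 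What is true --- and all your Hurewicz argument actually uses --- is that $\omega_{\X,\partial\X}$ is uniformly bounded below; this is \cref{lem:compact_omega_bounded_below}, proved by induction over the cell structure of the compact $\infty$-category $\X$, and it is precisely the input the paper combines with the Hurewicz theorem (a bounded-below spectrum whose $\eilenbergMacLaneCoeff$-homology is invertible is a shifted sphere, and $\eilenbergMacLaneCoeff\otimes-$ is conservative on bounded-below spectra). Substituting that lemma for your compactness-preservation claim repairs this direction; as it stands, the step is wrong.
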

Before proceeding, we mention the following corollary, which is an immediate consequence of 
\cref{prop:pdpair_equals_neoclassical} and
\cref{prop:equivalence_classical_and_neoclassical}.
\begin{cor}
\label{cor:seven_equivalent_properties}
Let $X\colon[1]\to\spc^\omega$. Then the following seven properties are equivalent for $X$: classical Poincar\'e duality pair, postclassical $\sC$-Poincar\'e duality pair for either $\sC=\spectra$ or $\sC=\module_\eilenbergMacLaneCoeff$, neoclassical $\sC$-Poincar\'e duality pair for either $\sC=\spectra$ or $\sC=\module_\eilenbergMacLaneCoeff$, and  $\sC$-Poincar\'e duality pair for either $\sC=\spectra$ or $\sC=\module_\eilenbergMacLaneCoeff$.
\end{cor}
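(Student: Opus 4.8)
The plan is to simply chain together the two preceding propositions; no genuinely new argument is needed. First I would fix $X\colon[1]\to\spc^{\omega}$ and write $g\colon Y=X(0)\to Z=X(1)$ for the corresponding map of spaces. Since $X$ is valued in $\spc^{\omega}$, both $Y$ and $Z$ are compact objects of $\spc$, i.e. finitely dominated spaces; this is precisely the standing assumption ``$Y$ and $Z$ are compact spaces'' appearing in the converse halves of \cref{prop:equivalence_classical_and_neoclassical}, so those converses are available to us. This observation about compactness is the only place where the hypothesis $X\colon[1]\to\spc^{\omega}$, rather than merely $X\colon[1]\to\spc$, is used.

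Next I would invoke \cref{prop:pdpair_equals_neoclassical} once for $\sC=\spectra$ and once for $\sC=\module_{\eilenbergMacLaneCoeff}$. For each of these two choices of coefficients this yields the chain of equivalences
\[
X\text{ is }\sC\text{-Poincar\'e}\iff X\text{ is neoclassical }\sC\text{-Poincar\'e}\iff X\text{ is postclassical }\sC\text{-Poincar\'e},
\]
so that the six ``$\sC$-flavoured'' properties in the statement collapse into at most two equivalence classes, one attached to $\spectra$ and one attached to $\module_{\eilenbergMacLaneCoeff}$.

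Then I would close the two remaining gaps using \cref{prop:equivalence_classical_and_neoclassical}. Part~(2) of that proposition, together with the compactness of $Y$ and $Z$ noted above, gives that $X$ is a $\spectra$-Poincar\'e pair if and only if it is a $\module_{\eilenbergMacLaneCoeff}$-Poincar\'e pair, merging the two classes into a single one. Part~(1), again using compactness, identifies ``postclassical $\module_{\eilenbergMacLaneCoeff}$-Poincar\'e pair'' with ``classical Poincar\'e pair''; since the former already lies in our single equivalence class, so does the latter. Assembling all these bi-implications gives the equivalence of all seven properties.

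There is no real obstacle here: the corollary is a bookkeeping consequence of the two cited propositions, and its proof amounts to the short diagram chase through the bi-implications above. The only point deserving a second glance is the reading of the word ``compact'' in \cref{prop:equivalence_classical_and_neoclassical} as ``compact object of $\spc$'' (equivalently, finitely dominated), so that it matches the hypothesis $X\colon[1]\to\spc^{\omega}$ exactly; once that is granted, the argument is immediate.
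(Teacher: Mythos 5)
Your proposal is correct and follows exactly the paper's route: the paper states the corollary as an immediate consequence of \cref{prop:pdpair_equals_neoclassical} and \cref{prop:equivalence_classical_and_neoclassical}, which is precisely the bi-implication chase you carry out, with the compactness hypotheses supplied by $X$ taking values in $\spc^{\omega}$.
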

The next lemma highlights how the hypothesis of compactness enters (2) in \cref{prop:equivalence_classical_and_neoclassical}.
\begin{lem}
\label{lem:compact_omega_bounded_below}
Let $\X$ be a compact $\infty$-category and let $\omega_\X\colon\X\to\spectra$ be the classifying object of the functor $\X_*\colon\presheaf(\X;\spectra)\to\spectra$. Then $\omega_\X$ is uniformly bounded below, i.e. there is some $k\in\bbZ$ such that $\omega_\X$ factors through $\spectra_{\ge k}\subset\spectra$. Similarly, if $(\X,\partial\X)$ is an $\infty$-category pair with $\X$ (and hence also $\partial\X$)
compact, then $\omega_{\X,\partial\X}$ is uniformly bounded below.
\end{lem}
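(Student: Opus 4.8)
The plan is to reduce to the absolute case by the usual ``doubling'' trick, then prove the claim for compact $\infty$-categories by the same cellular induction used in \cref{lem:compact_is_ambidextrous}, exploiting the fact that $\spectra_{\geq k}\subset\spectra$ is closed under finite colimits and retracts and that the generating cells behave well.

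First I would handle the pair case. Given an $\infty$-category pair $(\X,\partial\X)$ with $\X$ compact, \cref{prop:boundary_principle} (applicable since constant pairs with compact $\X$ are $\spectra$-twisted ambidextrous by \cref{lem:compact_is_ambidextrous}, and tameness can be arranged, or one argues directly) identifies $i^*\omega_{\X,\partial\X}\simeq\Omega\omega_{\partial\X}$, so a uniform lower bound for $\omega_\X$-type objects on both $\X$ (absolute) and $\partial\X$ (absolute, one degree shifted) yields one for $\omega_{\X,\partial\X}$; alternatively, using the explicit formula $\omega_{\X,\partial\X}(x)\simeq(\X,\partial\X)_*(x_!\unit)$ from \cref{obs:formula_for_dualising_object} and the fibre sequence $(\X,\partial\X)_*\to\X_*\to\partial\X_*i^*$, a lower bound on $\omega_\X$ and $\omega_{\partial\X}$ forces one on $\omega_{\X,\partial\X}$. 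So it suffices to treat the absolute case.

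For the absolute case I would argue as follows. By \cref{obs:formula_for_dualising_object}, for $x\in\X$ we have $\omega_\X(x)\simeq\X_*(x_!\unit_\spectra)\simeq\X_*(\yoneda_\X(x))$, and since $\yoneda_\X(x)=\mapTopos_\X(-,x)$ takes values in finite (indeed $0$- or contractible) spaces... actually, more robustly: the functor $\X_*\colon\presheaf(\X;\spectra)\to\spectra$ is exact, and $\omega_\X$ as a functor $\X\to\spectra$ is the restriction of $\X_*$ along Yoneda. The key observation is that the assignment $\X\mapsto\omega_\X\in\func(\X,\spectra)$, or rather the boundedness-below property, is stable under the operations generating $\cat^\omega$. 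Concretely: $\cat^\omega$ is generated under finite colimits and retracts by $[0]$ and $[1]$ (or just by $[1]$, since $[0]$ is a retract of $[1]$). For $\X=[1]$ we computed in \cref{ex:interval_continued} that $\omega_{[1]}$ sends $0\mapsto\unit_\spectra=\sphere$ and $1\mapsto 0$, visibly bounded below by $0$. For a pushout $\X\simeq\X_1\amalg_{\X_0}\X_2$, restriction gives $\omega_\X|_{\X_k}\simeq\omega_{\X_k}$ (by \cref{prop:complementation_principle}, or directly by the Beck--Chevalley/finality arguments), and since $\X\simeq\mathrm{colim}(\X_1\leftarrow\X_0\to\X_2)$ every object of $\X$ lies in the image of some $\X_k$; combined with the inductive lower bounds $k_1,k_2$ for $\omega_{\X_1},\omega_{\X_2}$ we get the uniform bound $\min(k_1,k_2)$ for $\omega_\X$. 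For a retract $\X\xrightarrow{s}\Y\xrightarrow{r}\X$, the arguments of \cref{rmk:ambidexterity_closed_under_retracts} give $\omega_\X\simeq r^*\omega_\Y$ (or at least $\omega_\X(x)$ is a retract of $\omega_\Y(s(x))$), so a lower bound for $\omega_\Y$ transfers to $\omega_\X$. This completes the induction.

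The main obstacle I anticipate is making precise the compatibility ``$\omega_\X|_{\X_k}\simeq\omega_{\X_k}$'' under pushouts \emph{at the level of the functors $\X\to\spectra$}, not merely the linear functors; but this is exactly the content of \cref{prop:complementation_principle}(2) once one checks the relevant finality hypotheses for the inclusions of left closed subcategories in a pushout of $\infty$-categories (which hold because the pieces $\X_k$ exhaust $\X$ and the complement inclusions are final for the poset-type decompositions involved), and in the worst case one can bypass it entirely: $\omega_\X(x)=\X_*(\yoneda_\X x)$, the presheaf $\yoneda_\X(x)$ restricted to $\X_k$ is $\yoneda_{\X_k}$-like, and $\X_*$ of a presheaf supported on $\X_k$ computes $(\X_k)_*$ of its restriction up to the contributions from $\X_0$, all of which are uniformly bounded below by induction. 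A secondary (very minor) point is the base of the induction for $[0]$: $\omega_{[0]}\simeq\sphere$, bounded below by $0$, and the empty category contributes $\omega_\emptyset\simeq 0$ trivially.
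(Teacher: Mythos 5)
Your overall skeleton is the same as the paper's: reduce the pair case to the absolute case via the fibre sequence $(\X,\partial\X)_*\to\X_*\to\partial\X_*i^*$ (your ``alternative'' is exactly the paper's step $\omega_{\X,\partial\X}\simeq\fib(\omega_\X\to i_!\omega_{\partial\X})$; the boundary-principle version alone would only control $i^*\omega_{\X,\partial\X}$), and then induct over the generation of compact $\infty$-categories from $[1]$ by retracts and finite colimits. The gap lies in how you propagate the bound through the two inductive steps. For pushouts you assert $\omega_\X|_{\X_k}\simeq\omega_{\X_k}$ via \cref{prop:complementation_principle}; but the pushouts generating $\cat^\omega$ are along arbitrary functors, and the pieces are in general neither left closed in the pushout nor are the finality hypotheses satisfied, so that principle does not apply---and the identity is false. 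Concretely, for $[2]\simeq[1]\amalg_{[0]}[1]$ one has $[2]_*=\eval_2$, so $\omega_{[2]}(x)\simeq\yoneda_{[2]}(x)(2)$ takes the values $(0,0,\sphere)$ on $(0,1,2)$, whose restriction to the first copy of $[1]$ is $(0,0)\not\simeq\omega_{[1]}=(0,\sphere)$. Your fallback (``$\yoneda_\X(x)$ is supported on $\X_k$, so $\X_*$ computes $(\X_k)_*$ of its restriction up to contributions from $\X_0$'') is likewise not true in general. The retract step has the same problem: ``$\omega_\X\simeq r^*\omega_\Y$'' does not typecheck, and the pointwise claim that $\omega_\X(x)$ is a retract of $\omega_\Y(s(x))$ fails: any choice of object exhibits $[0]$ as a retract of $[1]$, and taking $s$ to be the endpoint on which $\omega_{[1]}$ vanishes would force $\omega_{[0]}(\ast)\simeq\sphere$ to be a retract of $0$.

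The correct propagation mechanism---and what the paper uses---is the naturality of the Morita classification, \cref{lem:naturality_classification_C_linear_functors}: precomposing a linear functor with $f^*$ corresponds to applying $f_!$ to its classifying system. Thus for a retraction $\X\xrightarrow{s}\Y\xrightarrow{r}\X$ the functor $\X_*$ is a retract of $\Y_*r^*$, so $\omega_\X$ is a retract of $r_!\omega_\Y$ (not of $s^*\omega_\Y$); and for a pushout as in \cref{rmk:ambidexterity_closed_under_finite_colimits} the equivalence $\Y'_*\simeq(\Y_*f^*)\times_{\X_*h^*}(\X'_*g^*)$ yields $\omega_{\Y'}\simeq f_!\omega_\Y\times_{h_!\omega_\X}g_!\omega_{\X'}$. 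Since $\spectra_{\ge k}$ is closed under colimits, left Kan extension preserves uniform lower bounds, retracts preserve them, and pullbacks and fibres lose at most a shift, so the induction closes; the same $i_!$ is what makes your pair-case fibre-sequence argument precise. (A cosmetic remark: the values you quote from \cref{ex:interval_continued} are swapped---the classifying system of $[1]_*$ is $0\mapsto 0$, $1\mapsto\sphere$, as in the computation above---though this does not affect boundedness below.)
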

\begin{proof}
For the first part, we argue that the class of $\infty$-categories $\Y$ that are $\spectra$-twisted ambidextrous and for which $\omega_\Y$ is uniformly bounded below is closed under retracts and finite colimits, and it contains $[1]$. As a consequence this class will contain all compact $\infty$-categories as desired.

If $\X$ is a retract of $\Y$ as in \cref{rmk:ambidexterity_closed_under_retracts} and $\omega_\Y$ is uniformly bounded below, then the left Kan extension $r_!\omega_\Y$ is also  uniformly bounded below; moreover, since the functor $\X_*$ is a retract of $\Y_*r^*$, the system $\omega_\X$ is a retract of $r_!\omega_\Y$, hence $\omega_\X$ is uniformly bounded below. 

To check stability under finite colimits, we observe that $\omega_\emptyset$ is obviously uniformly bounded below; given a pushout as in \cref{rmk:ambidexterity_closed_under_finite_colimits}, and assuming that all of $\omega_\X,\omega_\Y,\omega_{\X'}$ are uniformly bounded below, the equivalence $\Y'_* \simeq (\Y_* f^*) \times_{\X_* h^*} (\X'_* g^*)$ gives rise to an equivalence $\omega_Y\simeq f_!\omega_\Y\times_{h_!\omega_{\X}}g_!\omega_{\X'}$, exhibiting $\omega_{Y'}$ as a pullback of three uniformly bounded below systems, so that $\omega_{\Y'}$ is uniformly bounded below as well. 

We finally observe that $\omega_{[1]}\colon[1]\to\spectra$ is the arrow $0\to\sphere$, which is uniformly bounded below.
This completes the argument for the first part.

To give an argument for the second part, recall from the proof of \cref{lem:compact_is_ambidextrous} that if $\X$ is a compact $\infty$-category and $\partial \X \subset \X$ is left closed, then $\partial \X$ is compact as well.
Using \cref{nota:X_dX_interiorX}, we argue that $\omega_{\X,\partial\X}\simeq\fib(\omega_\X\to i_!\omega_{\partial\X})$ is the fibre of a map of uniformly bounded below systems, hence it is uniformly bounded below.
\end{proof}

\begin{proof}[Proof of \cref{prop:equivalence_classical_and_neoclassical}]
For (1), we may assume that $Z$ is connected and that both $Y$ and $Z$ are $\module_\eilenbergMacLaneCoeff$-twisted ambidextrous. 
For the implication ``postclassical implies classical'', let $(c,\zeta)$ be a pair as in \cref{defn:postclassical_poincare_pairs}. Since $\picardSpace(\module_\eilenbergMacLaneCoeff)\simeq\coprod_{\bbZ}BC_2$ and $BC_2 = B GL_1(\bbZ)$, there is a number $d \in \bbZ$ and a local coefficient system $\calO$ of infinite cyclic groups over $Z$ such that $\zeta \simeq \calO[-d]$.
The map $c\colon\bbZ\to (Z,Y,g)_!\zeta$ then gives a homology class $[Z,Y]$ in $H_d(Z,Y;\calO)$, where relative homology is computed with respect to $g$; the image of this class along the connecting homomorphism gives us a class $[Y]\in H_{d-1}(Y;g^*\calO)$, which is represented by $\partial c\colon\eilenbergMacLaneCoeff\to \Sigma Y_!g^*\zeta$.
We may regard every coefficient system of abelian groups $\Lambda_0$ on $Y$ or $\Lambda_1$ on $Z$ as an object in $\presheaf(Y;\module_\eilenbergMacLaneCoeff)$ or $\presheaf(Z;\module_\eilenbergMacLaneCoeff)$, namely a presheaf with values in the heart $\module_\eilenbergMacLaneCoeff^\heartsuit$. 
The equivalences $c\cap-\colon (Z,Y,g)_*\Lambda_1\xrightarrow{\simeq}Z_!(\zeta\otimes\Lambda_1)$ and $\partial c\cap-\colon (Z,Y,g)_*\Lambda_0\xrightarrow{\simeq}Z_!(\zeta\otimes\Lambda_0)$ in $\module_\eilenbergMacLaneCoeff$ then give the desired equivalences between cohomology and homology groups after passing to homotopy groups.
We can finally deduce that $d\ge0$ from the equivalence $H^d(Z,Y;\calO)\simeq H_0(Z)$, where the latter is nonzero because $Z$, being connected, is non-empty.

For the implication ``classical implies postclassical'' we start with a pair $(\calO,[Z,Y])$ as in \cref{defn:classical_poincare_pairs}, and let $d\ge0$ denote the formal dimension. We may interpret $\calO$ as a map $\zeta\colon Z\to\picardSpace(\module_\eilenbergMacLaneCoeff)$ with image in the $-d$\textsuperscript{th} component, so that $[Z,Y]$ is represented by a map $c\colon\eilenbergMacLaneCoeff\to (Z,Y,g)_!\zeta$. Letting $\partial c$ denote the composite $\eilenbergMacLaneCoeff\xrightarrow{c}(Z,Y,g)_!\zeta\xrightarrow{\canonical}\Sigma Y_!g^*\zeta$, the hypothesis implies that the natural transformations induced by cap product with $c$ and $\partial c$
\begin{equation}
\label{eq:classical_postclassical}
\begin{split}
(Z,Y,g)_*\to Z_!(\zeta\otimes-)&\text{ in }\func^L_{\module_\eilenbergMacLaneCoeff}(\presheaf(Z;\module_\eilenbergMacLaneCoeff),\module_\eilenbergMacLaneCoeff);\\
\Omega Y_*g^*\to Y_!(g^*\zeta\otimes-)&\text{ in }\func^L_{\module_\eilenbergMacLaneCoeff}(\presheaf(Y;\module_\eilenbergMacLaneCoeff),\module_\eilenbergMacLaneCoeff),
\end{split}
\end{equation}
are equivalences at least when restricted to presheaves with values in $\module_\eilenbergMacLaneCoeff^\heartsuit$, as can be checked on homotopy groups. Since such presheaves generate $\presheaf(X(k);\module_\eilenbergMacLaneCoeff)$ for $k=0,1$ under colimits and limits
and since all functors involved in \cref{eq:classical_postclassical} preserve colimits and
limits by our compactness assumptions, we obtain that $c\cap-$ and $\partial c\cap-$ are equivalences.

The first part of point (2) is a direct consequence of \cref{obs:omega_natural_in_C}. For the second part of (2) we let $\omega_{\X,\partial\X}\colon\X\to\spectra$ denote the classifying system for the functor $(\X,\partial\X)_*\colon\presheaf(\X;\spectra)\to\spectra$, using the notation from \cref{defn:pdpairs}; the hypothesis and \cref{obs:omega_natural_in_C} imply that $\omega_{\X,\partial\X}\otimes\eilenbergMacLaneCoeff\colon\X\to\module_\eilenbergMacLaneCoeff$ factors through $\picardSpace(\module_\eilenbergMacLaneCoeff)$; by 
\cref{lem:compact_omega_bounded_below} and the Hurewicz theorem, also $\omega_{\X,\partial\X}$ factors through $\picardSpace(\spectra)$.
\end{proof}

\subsection{Poincar\'e ads}
\label{subsec:pdads}
In this subsection we present our definition of Poincar\'e duality for \textit{ads}, and prove its equivalence with Wall's original definition.

\begin{nota}[$n$-cubes]\label{nota:n_cubes}
For $n\ge0$ we let refer to the poset $\Box^n\coloneqq[1]^{\times n}$ as the \textit{$n$-cube}, and we let $\partial \Box^n\subseteq \Box^n$ denote the punctured cube $\Box^n\setminus\{\udl{1}\}$, where the maximum element has been taken out.
For an object $\udl{s}\in\Box^n$ we write $\#\udl{s}=\sum_is_i\ge0$, so for example, $\#\udl{1}=1+\dots+1=n$. We observe that $\Box^n_{/\udl{s}}$ is a left closed subposet of $\Box^n$, which is isomorphic to $\Box^{\#\udl{s}}$. We denote $\partial \Box^n_{/\udl{s}}\coloneqq \Box^n_{/\udl{s}}\setminus\{\udl{s}\}$, which under any identification $\Box^n_{/\udl{s}}\simeq\Box^{\#\udl{s}}$ corresponds to the left closed subposet $\partial\Box^{\#\udl{s}}$.
\end{nota}

\begin{obs}[Complements in cubes]\label{obs:complements_in_cubes}
Write $S\coloneqq \{\udl{s}\in \Box^n \:| \:\#\udl{s}=n-1\}$. Then the collection of category pairs
$\{(\Box^n_{/\udl{s}},\partial \Box^n_{/\udl{s}})\}_{\udl{s}\in S}$
form a complemented cover of $(\partial\Box^n,\emptyset)$. That they cover $\partial \Box^n$ is clear, so we are left to argue that $(\Box^n_{/\udl{s}},\partial \Box^n_{/\udl{s}})$ are complemented inside $\partial \Box^n$.
For a fixed $\udl{s}\in S$, let $Z\coloneqq\bigcup_{\udl{t}\in S\setminus\{\udl{s}\}}\Box^n_{/\udl{t}}$, and let $\partial Z\coloneqq Z\cap\partial \Box^n_{/\udl{s}}$; we claim that $(Z,\partial Z)$ forms a valid complement of $(\Box^n_{/\udl{s}},\partial \Box^n_{/\udl{s}})$ inside $\partial \Box^n$. Conditions (1) and (2) in  \cref{defn:complements_of_subcategory_pairs} are immediate, and condition (3) requires us to check that $Z\setminus\partial Z$ is final in $Z$.
Without loss of generality, we may further assume that $\udl{s}=(0,1,1,\ldots,1)$; then we have $Z=[1]\times\partial\Box^{n-1}$ and $\partial Z=\{0\}\times\partial\Box^{n-1}$, so that $Z\setminus\partial Z=\{1\}\times\partial\Box^{n-1}$ is indeed final in $Z$.
\end{obs}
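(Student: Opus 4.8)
The plan is to check the two requirements of a complemented cover from \cref{term:complemented_covers}: that the subposets $\Box^n_{/\udl{s}}$ with $\udl{s}\in S$ cover $\partial\Box^n$, and that each pair $(\Box^n_{/\udl{s}},\partial\Box^n_{/\udl{s}})$ is complemented in $(\partial\Box^n,\emptyset)$ in the sense of \cref{defn:complements_of_subcategory_pairs}. Throughout I would use, without further comment, that for a poset the left closed full subcategories are exactly the down-closed subsets, so that finite unions and intersections of left closed subposets are again left closed. Since the ambient pair has empty boundary, conditions (1)--(3) of \cref{defn:complements_of_subcategory_pairs} simplify: given a candidate complement $(\Z,\partial\Z)$ of $\Y\coloneqq\Box^n_{/\udl{s}}$ with $\partial\Y\coloneqq\partial\Box^n_{/\udl{s}}$, and writing $\W\coloneqq\Y\cap\Z$, condition (2) amounts to $\partial\Y\simeq\W\simeq\partial\Z$, while condition (3) reduces to the single requirement that $\Z\setminus\W\subseteq\Z$ be final (the inclusion $\partial\Z\setminus\partial\W\subseteq\partial\Z$ becoming an identity because $\partial\W=\emptyset$).

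Next I would record that $S$ is precisely the set of maximal elements of the poset $\partial\Box^n$: an element $\udl{s}\neq\udl{1}$ with two or more zero coordinates fails to be maximal, since flipping one of them to $1$ produces a strictly larger element still distinct from $\udl{1}$, whereas an element with exactly one zero coordinate has only $\udl{1}$ strictly above it. Hence $\partial\Box^n=\bigcup_{\udl{s}\in S}\Box^n_{/\udl{s}}$, which is the covering requirement. For fixed $\udl{s}\in S$ I would take $\Z\coloneqq\bigcup_{\udl{t}\in S\setminus\{\udl{s}\}}\Box^n_{/\udl{t}}$ and $\partial\Z\coloneqq\Z\cap\partial\Box^n_{/\udl{s}}$; both are left closed, and condition (1), $\partial\Box^n=\Y\cup\Z$, is immediate from the covering statement. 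The combinatorial crux is the identity
\[
\Box^n_{/\udl{s}}\cap\Z=\Box^n_{/\udl{s}}\setminus\{\udl{s}\}=\partial\Box^n_{/\udl{s}}:
\]
the element $\udl{s}$ does not lie in $\Z$ because distinct maximal elements of $\partial\Box^n$ are incomparable, and conversely any $\udl{u}\le\udl{s}$ with $\udl{u}\neq\udl{s}$ has a second zero coordinate besides the unique zero of $\udl{s}$, say at index $k$, hence is dominated by the maximal element $\udl{t}\in S$ whose only zero is at $k$, which is distinct from $\udl{s}$. This identity yields $\W=\Y\cap\Z=\partial\Box^n_{/\udl{s}}=\partial\Y$ and simultaneously $\partial\Z=\partial\Box^n_{/\udl{s}}=\W$, which is condition (2) in its simplified form.

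Finally, for condition (3) I would reduce via the coordinate-permutation automorphisms of $\Box^n$ --- which act transitively on $S$ and carry $\Y$, $\Z$, $\W$ isomorphically to the corresponding data of the permuted face --- to the case $\udl{s}=(0,1,\dots,1)$. Then $\Box^n_{/\udl{s}}=\{0\}\times\Box^{n-1}$, the maximal elements of $S\setminus\{\udl{s}\}$ are the $(1,\udl{v})$ with $\#\udl{v}=n-2$, so that $\Z=[1]\times\partial\Box^{n-1}$ and $\W=\{0\}\times\partial\Box^{n-1}$, whence $\Z\setminus\W=\{1\}\times\partial\Box^{n-1}$; finality of $\{1\}\times\partial\Box^{n-1}\hookrightarrow[1]\times\partial\Box^{n-1}$ then follows from finality of the terminal-object inclusion $\{1\}\hookrightarrow[1]$ together with the stability of final functors under taking products with identity functors (seen by writing a colimit over a product as an iterated colimit). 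Assembling these pieces shows that $(\Z,\partial\Z)$ is a complement for $(\Box^n_{/\udl{s}},\partial\Box^n_{/\udl{s}})$ inside $(\partial\Box^n,\emptyset)$ for every $\udl{s}\in S$, hence that $\{(\Box^n_{/\udl{s}},\partial\Box^n_{/\udl{s}})\}_{\udl{s}\in S}$ is a complemented cover. I expect no conceptual obstacle; the only care needed is the bookkeeping of intersections in the displayed identity and checking that the symmetry reduction in the last step is legitimate.
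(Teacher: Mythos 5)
Your proof is correct and follows essentially the same route as the paper's: the same choice of complement $(\Z,\partial\Z)$, the same reduction by coordinate symmetry to $\udl{s}=(0,1,\dots,1)$, and the same identification $\Z=[1]\times\partial\Box^{n-1}$, $\Z\setminus\partial\Z=\{1\}\times\partial\Box^{n-1}$ giving finality. You merely spell out the steps the paper declares ``clear'' or ``immediate'' — in particular the identity $\Box^n_{/\udl{s}}\cap\Z=\partial\Box^n_{/\udl{s}}$ verifying condition (2) — and those verifications are accurate.
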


\begin{terminology}
For $n\ge0$, an \textit{$(n+1)$-ad of $\baseTopos$-spaces} is a functor $X\colon \Box^n\rightarrow \baseTopos$.
\end{terminology}

\begin{rmk}
A $(0+1)$-ad is an object in $\baseTopos$. A $(1+1)$-ad is a map $X_0 \rightarrow X_1$ in $\baseTopos$, i.e. a pair of $\baseTopos$-spaces. A $(2+1)$-ad is what is classically called a \textit{triad}, i.e. a  square in $\baseTopos$
\[
\begin{tikzcd}[row sep=10pt]
X_{00} \ar[r] \ar[d] & X_{01} \ar[d]\\
X_{10} \ar[r] & X_{11}
\end{tikzcd}
\]
which one classically would represent by a triple of CW complexes $(X_{11},X_{01},X_{10})$ with $X_{01},X_{10}$ being subcomplexes of $X_{11}$, and with $X_{00}$ \textit{defined} as $X_{01}\cap X_{10}$. Compare the notion of a manifold triad.
Similarly, an $(n+1)$-ad is classically represented by an $(n+1)$-tuple of CW complexes, of which the last $n$ are subcomplexes of the first; the rest of the functor $\Box^n\to\mathrm{Top}$ is then defined by taking iterated intersections.
\end{rmk}

We give two definitions of Poincar\'e $(n+1)$-ads. The first is a direct generalisation of Wall's definition \cite[§2]{Wall_scm} in the case of $\sC=\module_{\eilenbergMacLaneCoeff}$ by \cref{cor:seven_equivalent_properties},
reducing to the notion of Poincar\'e pair; the second is the one directly stemming from our theory. We then prove the equivalence between the two definitions.

\begin{defn}[Poincar\'e $(n+1)$-ads]\label{defn:poincare_ads}
Let $X\colon \Box^n\rightarrow \baseTopos$ be an $(n+1)$-ad. Then we say that $X$ is:
\begin{itemize}
\item a \textit{classical $\sC$-Poincar\'e $(n+1)$-ad} if for every $\udl{s}\in\Box^n$ the following canonical morphism of $\baseTopos$-spaces, regarded as a functor $[1]\to\baseTopos$, is a $\sC$-Poincar\'e pair:
\[
\colim_{\partial\Box^n_{/\udl{s}}}X\rightarrow X(\udl{s}).
\]

\item a \textit{$\sC$-Poincar\'e $(n+1)$-ad} if the $\baseTopos$-category pair $(\X,\partial\X)\coloneqq(\int_{\Box^n}X,\int_{\partial\Box^n}X)$ is $\sC$-Poincar\'e in the sense of \cref{defn:categorypair}.
\end{itemize}
\end{defn}

\begin{prop}[Characterisations of Poincar\'e $(n+1)$-ads]\label{prop:wall_vs_us_ads}
Let $X\colon \Box^n\rightarrow\baseTopos$ be an $(n+1)$-ad. Then $X$ is a classical $\sC$-Poincar\'e $(n+1)$-ad if and only if it is a $\sC$-Poincar\'e $(n+1)$-ad.
\end{prop}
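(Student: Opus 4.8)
The plan is to prove both implications by induction on $n$, using the complemented cover of the punctured cube described in \cref{obs:complements_in_cubes} together with the local-to-global principle \cref{cor:local_to_global_principle} and the complementation principle \cref{prop:complementation_principle}. The base case $n=0$ is the tautology that a $\baseTopos$-space $X(\ast)$ is $\sC$-Poincar\'e as a $\baseTopos$-category (with empty boundary) if and only if it is $\sC$-Poincar\'e as a $(0+1)$-ad, which is definitional; the case $n=1$ is exactly \cref{prop:pdpair_equals_neoclassical} combined with \cref{defn:pdpairs}. So assume $n\ge 2$ and that the statement holds for all smaller cubes.

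First I would set up the key identification at the level of unstraightenings. Write $\X\coloneqq\int_{\Box^n}X$ and $\partial\X\coloneqq\int_{\partial\Box^n}X$ as in \cref{cons:integral_categories}. For each $\udl{s}\in\Box^n$, since $\Box^n_{/\udl s}\subseteq\Box^n$ is left closed (\cref{nota:n_cubes}), the $\baseTopos$-subcategory $\int_{\Box^n_{/\udl s}}X|_{\Box^n_{/\udl s}}\subseteq\X$ is left closed by \cref{obs:leftclosed}, and likewise $\int_{\partial\Box^n_{/\udl s}}X\subseteq\int_{\Box^n_{/\udl s}}X$. Transporting the complemented cover $\{(\Box^n_{/\udl s},\partial\Box^n_{/\udl s})\}_{\#\udl s=n-1}$ of $(\partial\Box^n,\emptyset)$ along the cocartesian fibration $\partial\X\to\partial\Box^n$ via \cref{lem:pulling_back_complemented_pairs} yields a complemented cover of $(\partial\X,\emptyset)$ by the pairs $(\int_{\Box^n_{/\udl s}}X,\int_{\partial\Box^n_{/\udl s}}X)$. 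Twisted ambidexterity of all pairs in sight is guaranteed by \cref{lem:unstraightening_twisted_ambidextrous}, since cubes and punctured cubes are compact $\infty$-categories and $\spc$-spaces are $\sC$-twisted ambidextrous (indeed any $\baseTopos$-space is, as its presheaf category is dualisable). The plan is then: by \cref{cor:local_to_global_principle}, $(\partial\X,\emptyset)$ is $\sC$-Poincar\'e iff each pair $(\int_{\Box^n_{/\udl s}}X,\int_{\partial\Box^n_{/\udl s}}X)$ is, and each of these is the unstraightening pair of the restricted ad $X|_{\Box^n_{/\udl s}}\colon\Box^{n-1}\to\baseTopos$, so by induction this holds iff $X|_{\Box^n_{/\udl s}}$ is a classical $\sC$-Poincar\'e $n$-ad for each $\udl s$ with $\#\udl s=n-1$, which unwinds to: the maps $\colim_{\partial\Box^n_{/\udl t}}X\to X(\udl t)$ are $\sC$-Poincar\'e pairs for all $\udl t$ with $\#\udl t\le n-1$.

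Next I would handle the passage from the boundary to the full pair $(\X,\partial\X)$. The pair $(\X,\partial\X)$ arises from the cocartesian fibration $\X\to\Box^n\to[1]$ where the last map sends $\udl 1\mapsto 1$ and everything else to $0$ — more precisely, $\partial\X$ is the fibre over $0$. One checks (using \cref{obs:leftclosed} and that the inclusion $\{\udl 1\}\hookrightarrow\Box^n$ realises $\interior\X$ as final) that $(\X,\partial\X)$ is tame in the sense of \cref{defn:tame_pair}, so \cref{cor:towards_neoclassical_equivalence} applies: $(\X,\partial\X)$ is $\sC$-Poincar\'e iff $\omega_{\partial\X}$ and $\omega_{\interior\X,\partial\X,g}$ both factor through $\picardSpaceTopos(\sC)$ and the connecting map $\delta\colon\Omega\omega_{\partial\X}\to g^*\omega_{\interior\X,\partial\X,g}$ is an equivalence, where $g\colon\partial\X\to\interior\X$ is the arrow classified by $\X\to[1]$. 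The $\baseTopos$-space $\interior\X\simeq X(\udl 1)$ and $|\partial\X|\simeq\colim_{\partial\Box^n}X$, and $g$ realises to the canonical map $\colim_{\partial\Box^n}X\to X(\udl 1)$; moreover $\omega_{\interior\X,\partial\X,g}$ is (by \cref{obs:connecting_map_of_classifying_systems} and \cref{prop:boundary_principle}) the relative dualising system of this very map. Thus the conditions of \cref{cor:towards_neoclassical_equivalence} say exactly: $\colim_{\partial\Box^n}X$ is a $\sC$-Poincar\'e $\baseTopos$-category (equivalently, by the realisation principle \cref{ex:absolute_realisation_principle}, a Poincar\'e space, equivalently the $n$-ad condition at $\udl s$ with $\#\udl s\le n-1$ just established via the induction), \emph{and} the map $\colim_{\partial\Box^n}X\to X(\udl 1)$ is a neoclassical — hence by \cref{prop:pdpair_equals_neoclassical} a $\sC$- — Poincar\'e pair. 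Combining, $(\X,\partial\X)$ is $\sC$-Poincar\'e iff $X|_{\Box^n_{/\udl s}}$ is classically Poincar\'e for $\#\udl s\le n-1$ and the top map $\colim_{\partial\Box^n_{/\udl 1}}X\to X(\udl 1)$ is a Poincar\'e pair, which is precisely the statement that $X$ is a classical $\sC$-Poincar\'e $(n+1)$-ad.

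I expect the main obstacle to be the careful bookkeeping in the inductive step: verifying that the complemented cover transported from $\partial\Box^n$ really does restrict, on each piece, to the unstraightening of the sub-ad $X|_{\Box^n_{/\udl s}}$ with the correct boundary (so that the inductive hypothesis is literally applicable), and that the finality hypotheses in \cref{obs:complements_in_cubes}, \cref{prop:complementation_principle} and \cref{cor:towards_neoclassical_equivalence} are all met — i.e. that $(\X,\partial\X)$ and all the sub-pairs are tame. The other delicate point is matching up the classifying system $\omega_{\interior\X,\partial\X,g}$ produced by the general machinery (\cref{defn:generalised_relative_cohomology}, \cref{obs:connecting_map_of_classifying_systems}) with the relative dualising system of the geometric map $\colim_{\partial\Box^n}X\to X(\udl 1)$ appearing in \cref{defn:pdpairs}; this should follow from \cref{prop:boundary_principle} and \cref{prop:localisation_principle} applied to the localisation $\X\to\X'$ collapsing $\partial\X$ and $\interior\X$ to their realisations, exactly as in the proof of \cref{prop:barXdXpdpair}, but it requires threading the identifications carefully.
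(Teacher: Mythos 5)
Your proposal follows essentially the same route as the paper's proof: induction on $n$, the complemented cover of the punctured cube from \cref{obs:complements_in_cubes} fed into \cref{cor:local_to_global_principle} (via \cref{lem:pulling_back_complemented_pairs}) to handle $\partial\X$, and the cocartesian fibration $\X\to\Box^n\xrightarrow{\min}[1]$ together with localisation/realisation to handle the top face. The only packaging difference is that you run both implications through \cref{cor:towards_neoclassical_equivalence} and \cref{prop:pdpair_equals_neoclassical}, whereas the paper uses \cref{prop:barXdXpdpair} for one direction and \cref{prop:localisation_principle} directly for the other; these are interchangeable, and the "delicate point" you flag (identifying $\omega_{\interior{\X},\partial\X,g}$ with the classifying system of the map of $\baseTopos$-spaces $\colim_{\partial\Box^n}X\to X(\udl{1})$) is resolved exactly by the localisation argument you sketch.

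There is, however, one genuinely false step in your setup: the claim that twisted ambidexterity of "all pairs in sight" is automatic because "any $\baseTopos$-space is $\sC$-twisted ambidextrous, as its presheaf category is dualisable." Dualisability of $\presheafTopos(\Y;\sC)$ holds for every $\Y$ (\cref{lem:presheaves_dualisable}) but is not the same as $\sC$-twisted ambidexterity of $\Y$, which requires $\Y_*$ to preserve colimits and be $\sC$-linear and is a genuine finiteness condition (an infinite discrete space already fails it for $\sC=\spectra$). Since the proposition imposes no finiteness hypothesis on the values of $X$, \cref{lem:unstraightening_twisted_ambidextrous} is not applicable unconditionally; the required ambidexterity must instead be extracted from the hypothesis in each direction, as the paper does. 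In the direction "classical $\Rightarrow$ categorical," the Poincar\'e-pair conditions in \cref{defn:poincare_ads} force each $X(\udl{s})$ to be $\sC$-twisted ambidextrous (via \cref{prop:localisation_principle}), after which \cref{lem:unstraightening_twisted_ambidextrous} applies. In the converse direction, $\sC$-twisted ambidexterity of $(\X,\partial\X)$ is part of \cref{defn:Poincarecatpair}, and ambidexterity of $\interior{\X}\simeq X(\udl{1})$ is then deduced from the conclusion that $|\partial\X|\to\interior{\X}$ is a $\sC$-Poincar\'e pair. With this repair your argument goes through and coincides with the paper's.
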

\begin{proof}
We prove the coimplication by induction on $n$. For $n=0,1$ the definitions of classical $\sC$-Poincar\'e $(n+1)$-ad and $\sC$-Poincar\'e $(n+1)$-ad coincide, so let us assume $n\ge2$. We denote $(\X,\partial\X)\coloneqq(\int_{\Box^n}X,\int_{\partial\Box^n}X)$, and use \cref{nota:X_dX_interiorX,nota:left_adjoint_to_j}: in particular we observe that $\interior{\X}$ is final in $\X$, as $\{\udl{1}\}$ is final in $\Box^n$.

We first assume that $X$ is a $\sC$-Poincar\'e $(n+1)$-ad.
By \cref{prop:boundary_principle} we then have that $\partial\X$ is a $\sC$-Poincar\'e $\baseTopos$-category, and by \cref{cor:local_to_global_principle}, which is applicable thanks to \ref{obs:complements_in_cubes}, we have that for all $\udl{s}\in\Box^n$ with $\#\udl{s}=n-1$, the $\baseTopos$-category pair $(\int_{\Box^n_{/\udl{s}}}X,\int_{\partial\Box^n_{/\udl{s}}})$ is $\sC$-Poincar\'e. By inductive hypothesis we have that for all $\udl{t}\in\Box^n$ with $\#\udl{t}\le n-1$, the canonical morphism of $\baseTopos$-spaces $\colim_{\partial\Box^n_{/\udl{t}}}X\to X(\udl{t})$ is a $\sC$-Poincar\'e pair. It is therefore only left to prove that the morphism of $\baseTopos$-spaces $\colim_{\partial\Box^n}X\to X(\udl{1})$ is a $\sC$-Poincar\'e pair. For this we observe that the functor $\min\colon\Box^n\to[1]$, sending all elements to $0$ except $\udl{1}\mapsto 1$, is a cocartesian fibration. Composing $\X\to\Box^n\xrightarrow{\min}[1]$ we obtain a $\baseTopos$-cocartesian fibration, corresponding to the $\baseTopos$-category pair $(\X, \partial \X)$.
By \cref{prop:barXdXpdpair} we obtain that the functor $[1] \to \baseTopos$ corresponding to $\lvert \partial \X \rvert \to \lvert \X \rvert$, which is precisely the map $\colim_{\partial\Box^n}X\to X(\udl{1})$, is a $\sC$-Poincar\'e pair.

Assume now that $X$ is a classical $\sC$-Poincar\'e $(n+1)$-ad. 
Applying \cref{prop:localisation_principle} to the pair $\colim_{\partial\Box^n_{/\udl{s}}}X\rightarrow X(\udl{s})$ we obtain that $X(\udl{s})$ is $\sC$-twisted ambidextrous for all $\udl{s}\in\Box^n$, i.e. $X$ takes values in $\sC$-twisted ambidextrous $\baseTopos$-spaces.
Consequently, $(\X, \partial \X)$ is $\sC$-twisted ambidextrous by \cref{lem:unstraightening_twisted_ambidextrous}. 
Furthermore, for all $\udl{s}\in\Box^n$ with $\#\udl{s}=n-1$ we have that $X|_{\Box^n_{/\udl{s}}}$ is a classical $\sC$-Poincar\'e $((n-1)+1)$-ad, hence, by inductive hypothesis, a $\sC$-Poincar\'e $((n-1)+1)$-ad. 
Again by \cref{cor:local_to_global_principle} and \cref{obs:complements_in_cubes} we obtain that $\partial\X$ is a $\sC$-Poincar\'e $\baseTopos$-category. Moreover, since $|\partial\X|\to\interior{\X}$ is a $\sC$-Poincar\'e pair, we get that $\interior{\X}\simeq X(\udl1)$ is $\sC$-twisted ambidextrous, and hence $\X$ is $\sC$-twisted ambidextrous.
This implies that $\omega_{\X,\partial\X}$ is defined, and the previous remarks show that $\omega_{\X,\partial\X}\colon\X\to\sC$ factors through the localisation of $\X$ at the wide subcategory $\partial\X\amalg\interior{\X}$. The argument from the previous paragraph identifies this localisation with the unstraightening of $|g|$, and hence the hypothesis that $|g|$ is a $\sC$-Poincar\'e pair, together with \cref{prop:localisation_principle}, imply that $(\X,\partial\X)$ is a $\sC$-Poincar\'e $\baseTopos$-category pair.
\end{proof}

\begin{prop}[Neoclassical Poincar\'e--Lefschetz sequence]
Let $X\colon[1]\to\baseTopos$ be a $\sC$-Poincar\'e pair as in \cref{defn:pdpairs}, let $\X=\int_{[1]}X$ and use \cref{nota:X_dX_interiorX,nota:left_adjoint_to_j}. Let $D\coloneqq j^* D_{\X,\partial\X}\in\presheafTopos(\interior{\X};\sC)$. Then there is a commuting diagram of fibre sequences in $\func_{\sC,\baseTopos}^L(\presheafTopos(\interior{\X};\sC),\sC)$
\[
\begin{tikzcd}
(\interior{\X},\partial \X,g)_*\ar[r] \ar[d, "\simeq"] & \interior{\X}_* \ar[r] \ar[d, "\simeq"] & \partial\X_* g^* \ar[d, "\simeq"] \\
\interior{\X}_!(D \otimes -) \ar[r] & (\interior{\X},\partial \X,g)_!(D \otimes -) \ar[r] & \Sigma \partial \X_! g^*(D \otimes \xi)
\end{tikzcd}
\]
\end{prop}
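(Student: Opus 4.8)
The plan is to derive this diagram from the absolute categorical Poincar\'e--Lefschetz sequence \cref{eq:categorical_poincare_lefschetz_duality} by restricting all the functors involved along the reflector onto the interior, and then rotating. As set-up, recall that $\X=\int_{[1]}X$ comes equipped with the cocartesian fibration $f\colon\X\to[1]$ classifying $X$; since $\interior{\X}=f^{-1}(1)$ is the fibre over the final object of $[1]$, it is final in $\X$ by \cref{obs:leftclosed}\ref{rmk:left_closed_fibration}, so $(\X,\partial\X)$ is tame, and \cref{nota:left_adjoint_to_j} provides the left adjoint $\ell\colon\X\to\interior{\X}$ of the inclusion $j$, with $g\simeq\ell i$. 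Moreover, $X$ being a $\sC$-Poincar\'e pair, the $\baseTopos$-category pair $(\X,\partial\X)$ is $\sC$-Poincar\'e, in particular groupoidally $\sC$-twisted ambidextrous and tame, so \cref{prop:categorical_poincare_lefschetz_duality} applies and produces the commuting diagram \cref{eq:categorical_poincare_lefschetz_duality} of fibre sequences in $\func^L_{\baseTopos,\sC}(\presheafTopos(\X;\sC),\sC)$.

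The main step is to precompose \cref{eq:categorical_poincare_lefschetz_duality} with the restriction $\baseTopos$-functor $\ell^*\colon\presheafTopos(\interior{\X};\sC)\to\presheafTopos(\X;\sC)$. As observed in the proof of \cref{obs:connecting_map_of_classifying_systems}, this $\ell^*$ is the left adjoint of $j^*$ (in particular it coincides with the left Kan extension $j_!$ along $j$) and is fully faithful; it is also symmetric monoidal and colimit preserving, so precomposition with it is an exact functor $\func^L_{\baseTopos,\sC}(\presheafTopos(\X;\sC),\sC)\to\func^L_{\baseTopos,\sC}(\presheafTopos(\interior{\X};\sC),\sC)$, carrying \cref{eq:categorical_poincare_lefschetz_duality} to a commuting diagram of fibre sequences. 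It then remains to identify the six resulting terms. On the top (cohomology) row this is precisely the identification already made in the proof of \cref{obs:connecting_map_of_classifying_systems}: $\X_*\ell^*\simeq\interior{\X}_*$ via $\eta_{\ell_*}$ (using fullness of $\ell^*$ and tameness), $(\X,\partial\X)_*\ell^*\simeq(\interior{\X},\partial\X,g)_*$ via the commutative triangle there, and $\partial\X_* i^*\ell^*=\partial\X_*(\ell i)^*=\partial\X_* g^*$. On the bottom (homology) row, I would first note that $D_{\X,\partial\X}$ is groupoidal --- $\omega_{\X,\partial\X}$ factors through $\picardSpaceTopos(\sC)\subseteq\sC^\simeq$, and $D$ preserves groupoidal systems by \cref{prop:Dequivalence} --- hence inverts the components of the unit $\id_\X\to j\ell$, so that $D_{\X,\partial\X}\simeq(j\ell)^*D_{\X,\partial\X}=\ell^* D$ with $D=j^* D_{\X,\partial\X}$, and therefore $D_{\X,\partial\X}\otimes\ell^*(-)\simeq\ell^*(D\otimes-)$ by symmetric monoidality of $\ell^*$. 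Since $\ell^*\simeq j_!$ and left adjoints compose, $\X_!\ell^*\simeq\X_! j_!\simeq\interior{\X}_!$; together with $i^*\ell^*=g^*$ this gives $\X_!(D_{\X,\partial\X}\otimes\ell^*-)\simeq\interior{\X}_!(D\otimes-)$ and $\partial\X_! i^*(D_{\X,\partial\X}\otimes\ell^*-)\simeq\partial\X_! g^*(D\otimes-)$; finally $(\X,\partial\X)_!(D_{\X,\partial\X}\otimes\ell^*-)$, being the horizontal cofibre, is $\cofib\big(\partial\X_! g^*(D\otimes-)\xrightarrow{\epsilon_{g_!}}\interior{\X}_!(D\otimes-)\big)=(\interior{\X},\partial\X,g)_!(D\otimes-)$ by \cref{defn:generalised_relative_cohomology}, where $g\simeq\ell i$ is used to identify $\epsilon_{i_!}\ell^*$ with $\epsilon_{g_!}$ after applying $\X_!$.

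Putting these identifications together, the precomposed diagram takes the form
\[
\begin{tikzcd}
\Omega\partial\X_* g^*\ar[d,"\simeq"]\ar[r]&(\interior{\X},\partial\X,g)_*\ar[d,"\simeq"]\ar[r]&\interior{\X}_*\ar[d,"\simeq"]\\
\partial\X_! g^*(D\otimes-)\ar[r]&\interior{\X}_!(D\otimes-)\ar[r]&(\interior{\X},\partial\X,g)_!(D\otimes-),
\end{tikzcd}
\]
and rotating each row once --- using that in the stable category $\func^L_{\baseTopos,\sC}(\presheafTopos(\interior{\X};\sC),\sC)$ a fibre sequence $A\to B\to C$ carries the same data as $B\to C\to\Sigma A$, together with $\Sigma\Omega\partial\X_* g^*\simeq\partial\X_* g^*$ --- yields exactly the diagram in the statement. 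To keep the bookkeeping light I would, exactly as in the proof of \cref{prop:categorical_poincare_lefschetz_duality}, only match the left-hand square and the two horizontal maps appearing in it, and then recover the third column by taking horizontal cofibres, which $(-\circ\ell^*)$ preserves. I expect the only genuinely delicate points to be the identification $D_{\X,\partial\X}\simeq\ell^* D$, which really does use groupoidality of the dualising system, and the verification that the two horizontal maps are the asserted canonical ones; the latter is a routine naturality chase.
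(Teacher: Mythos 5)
Your proposal is correct and follows exactly the route of the paper's own (one-line) proof: precompose the categorical Poincar\'e--Lefschetz diagram \cref{eq:categorical_poincare_lefschetz_duality} with $j_!\simeq\ell^*$ and rotate, with the term identifications you spell out (the ones from the proof of \cref{obs:connecting_map_of_classifying_systems} on the cohomology side, and groupoidality of $D_{\X,\partial\X}$ giving $D_{\X,\partial\X}\simeq\ell^*D$ on the homology side) being precisely the details the paper leaves implicit.
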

\begin{proof}
We rotate the diagram \cref{eq:categorical_poincare_lefschetz_duality} and precompose it by $j_!\simeq \ell^*$.
\end{proof}

\subsection{Triangulations of manifolds}
\label{subsec:triangulated_manifolds}
In this subsection we study Poincar\'e $\baseTopos$-category pairs obtained from diagrams of $\baseTopos$-spaces parametrised by the face poset of a triangulated manifold, and establish a local-to-global principle for these in the form of \cref{prop:combinatorial_manifold_Poincare}. 
We use the following definition of a \textit{combinatorial manifold}.
\begin{defn}
\label{defn:combinatorial_manifold}
A \textit{combinatorial manifold with boundary of dimension $d\ge0$} is a finite poset $I$ with a left closed subposet $\partial I$ for which there is a ``rank'' functor $\rho\colon I\to[d]$ satisfying the following properties, where we consider the classical geometric realisation of simplicial sets obtained e.g. as colimit in $\mathrm{Top}$ of a diagram of simplices:
\begin{itemize}
\item the set $I_{\max}$ of maximal elements of $I$ is contained in the right closed subposet $\interior{I}\coloneqq I\setminus\partial I$, and $\rho$ attains constant value $d$ on $I_{\max}$;
\item for all $j<i$ in $I$, the geometric realisation of the subposet $I_{j<<i}\subset I$ of elements strictly between $j$ and $i$ is homeomorphic to the sphere $S^{\rho(i)-\rho(j)-2}$;
\item for all $j\in \interior{I}$ the geometric realisation $|I_{>j}|$ of the subposet $I_{>j}\subset I$ 
of elements strictly larger than $j$ is homeomorphic to the sphere $S^{d-\rho(j)-1}$;
\item for all $j\in\partial I$, we have a homeomorphism of pairs $(|I_{>j}|, |\partial I_{>j}|) \cong (D^{d-\rho(j)-1}, S^{d-\rho(j)-2})$.
\end{itemize}
\end{defn}

For a more classical definition of combinatorial manifolds in the case of the face poset of a finite simplicial complex, we invite the reader to compare \cite[Ch. 1, p.26]{Hudson}. We stress here that we do not require the rank functor $\rho$ to be essentially surjective; in fact the essential image of $\rho$ will be a subposet of $[d]$ of the form $[d]_{\ge k}$ for some $k\in [d]$.

\begin{example}
Let $M$ be a compact $d$-manifold with boundary $\partial M$, endowed with a polyhedral decomposition such that links of facets contained in $\interior{M}$ are spheres, and links of facets contained in $\partial M$ are discs. We obtain a combinatorial manifold as follows: we let $I$ be the poset of facets of $M$, we let $\partial I$ be the subposet of facets of $\partial M$, and we let $\rho$ be the dimension function on facets. Note that in this case the following additional property holds: for all $j\in I$, the geometric realisation $|I_{<j}|$ of the subposet $I_{<j}\subset I$ of elements strictly smaller than $j$ is homeomorphic to the sphere $S^{\rho(j)-1}$.
\end{example}
\begin{example}
\label{ex:cubes_combinatorial_manifolds}
For $n\ge1$ the pair of posets $(\Box^n,\partial\Box^n)$, with the rank function $\rho=\#$ from \cref{nota:n_cubes}, is a combinatorial manifold. We may think of it as the poset of strata of the non-compact manifold with boundary and corners $(\mathbb{R}_{\ge0})^n$.
\end{example}

\begin{lem}
\label{lem:locally_flat_in_combinatorial_manifolds}
Let $I$ be a combinatorial manifold as in \cref{defn:combinatorial_manifold} with $\partial I=\emptyset$, let $i\in I_{\max}$ and let $j\in I_{<i}$. Then the embedding of spheres $|I_{j<<i}|\cong S^{d-\rho(j)-2}\hookrightarrow |I_{>j}|\cong S^{d-\rho(j)-1}$ is locally flat, hence standard by the piecewise linear Sch\"onflies theorem.
\end{lem}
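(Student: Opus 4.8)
The plan is to prove \emph{local flatness} first — the interesting part of the statement — and then quote the piecewise linear Sch\"onflies theorem as a black box. I would set this up as a downward induction on $\rho(j)\in[d]$ (equivalently, an induction on the dimension $d-\rho(j)-1$ of the ambient sphere $|I_{>j}|$), proving the slightly strengthened claim that for \emph{every} $j\in I_{<i}$ the inclusion of order complexes $|I_{j<<i}|\hookrightarrow|I_{>j}|$ exhibits $|I_{j<<i}|$ as a locally flat PL submanifold. Before the induction I would record what the axioms of \cref{defn:combinatorial_manifold} give us: since $\partial I=\emptyset$ every element of $I$ is interior, and since $i\in I_{\max}$ we have $\rho(i)=d$, so $|I_{>j}|\cong S^{d-\rho(j)-1}$, $|I_{j<<i}|\cong S^{\rho(i)-\rho(j)-2}=S^{d-\rho(j)-2}$, and $|I_{j<<k}|\cong S^{\rho(k)-\rho(j)-2}$ whenever $j<k$; moreover $I_{j<<i}\subseteq I_{>j}$ is an inclusion of full subposets, hence realises as a full simplicial subcomplex.

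The key computation I would carry out is the join decomposition of links in an order complex: for a poset $P$ and a vertex $x$ one has $\mathrm{lk}(x,|P|)\cong|P_{<x}|*|P_{>x}|$. Applying this to $P=I_{>j}$ and to $P=I_{j<<i}$ at a common vertex $k$ (that is, $j<k<i$), and using that $k>j$ forces $I_{>k}\subseteq I_{>j}$ while $k<i$ forces $I_{j<<k}\subseteq I_{j<<i}$, one finds that the link pair of $|I_{j<<i}|\subseteq|I_{>j}|$ at $k$ is the join of the sphere $|I_{j<<k}|$ with the pair $(|I_{>k}|,|I_{k<<i}|)$. Since $\rho(k)>\rho(j)$, the inductive hypothesis applies to the element $k\in I_{<i}$ and shows that $(|I_{>k}|,|I_{k<<i}|)$ is a locally flat (hence, by Sch\"onflies one dimension down, standard) codimension-one PL sphere pair; as joining with a sphere preserves local flatness, the link pair at every vertex of $|I_{j<<i}|$ is again a standard codimension-one sphere pair. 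This is exactly the hypothesis of the standard PL criterion under which a full subcomplex of a combinatorial manifold is a locally flat PL submanifold, which closes the induction; the base cases $d-\rho(j)\le 2$ are the empty set, a point, or $S^0$ sitting inside the ambient sphere, all manifestly locally flat. Once local flatness of the codimension-one PL sphere $|I_{j<<i}|\cong S^{d-\rho(j)-2}$ in $|I_{>j}|\cong S^{d-\rho(j)-1}$ is established, the piecewise linear Sch\"onflies theorem identifies the pair with the standard sphere/equator pair, which is the meaning of ``standard''.

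The step I expect to be the main obstacle is not the combinatorial bookkeeping above but a PL-recognition issue lurking beneath it: \cref{defn:combinatorial_manifold} only asserts that the realisations in question are \emph{homeomorphic} to spheres, whereas the link criterion and Sch\"onflies are PL statements, so I would need a preliminary lemma that each sphere realisation occurring here ($|I_{>j}|$, $|I_{j<<k}|$, and so on) is a \emph{combinatorial} sphere. I would prove this by a parallel induction on dimension using the same join decomposition: the vertex links of such a realisation are joins of lower-dimensional such realisations, hence inductively joins of combinatorial spheres, hence combinatorial spheres, so the realisation is a combinatorial manifold; and a combinatorial manifold homeomorphic to a sphere is a combinatorial sphere, with the low-dimensional cases handled classically. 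This is also the point at which one must be careful about the ambient dimension, and where any dimension restriction inherent in the cited form of the piecewise linear Sch\"onflies theorem would have to be acknowledged.
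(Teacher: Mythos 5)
Your core induction is the paper's: the proof there also inducts on $d-\rho(j)$, and at an intermediate element $j'$ (your $k$) it uses the closed-star decomposition $|I_{j<\le j'}|\times|I_{j'\le<i}|\subseteq|I_{j<\le j'}|\times|I_{\ge j'}|$, which is precisely the cone on your join-of-links decomposition $|I_{j<<j'}|\ast\bigl(|I_{j'<<i}|\subseteq|I_{>j'}|\bigr)$; in both versions the inductive hypothesis for the sphere pair $(|I_{>j'}|,|I_{j'<<i}|)$ is upgraded to standardness by Sch\"onflies one dimension down, and standardness of the resulting cone/product (equivalently, join) charts gives local flatness. So the combinatorial and geometric content of your argument matches the paper's; the divergence is entirely in the step you yourself flag as the main obstacle.

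That step is a genuine gap as proposed. \cref{defn:combinatorial_manifold} only asks the realisations $|I_{>j}|$, $|I_{j<<k}|$ to be \emph{homeomorphic} to spheres, and your plan to upgrade them to combinatorial spheres hinges on the assertion that a combinatorial manifold homeomorphic to a sphere is a combinatorial sphere. That assertion is the PL Poincar\'e conjecture, which is open in dimension $4$ (and the "low-dimensional cases handled classically" remark does not cover it), so the preliminary lemma cannot be established in the stated generality; relatedly, your link criterion for locally flat PL subcomplexes presupposes the ambient complex is a PL manifold, which again is not granted by the definition. The fix is that no PL recognition is needed at all: local flatness is a purely topological condition, and what the argument really uses is the generalized (Brown--Mazur) Sch\"onflies theorem, valid for locally flat topological embeddings $S^{n-1}\hookrightarrow S^n$ in every dimension. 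Your join decomposition of vertex links is simplicial, hence already identifies a neighbourhood pair of each vertex with the cone on a join of a sphere with a standard codimension-one sphere pair, which is a standard ball pair and is locally flat at all its interior points; since the open stars cover, the induction closes in the topological category. Dropping the PL framing and the recognition lemma, your proposal becomes essentially the paper's proof.
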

\begin{proof}
We proceed by induction on $d-\rho(j)\ge1$. For $d-\rho(j)=1$ we have that the unique embedding of $S^{-1}\cong\emptyset$ in $S^0$ is locally flat. Assuming $d-\rho(j)\ge2$, we need to check that for all $j'\in I_{j<<i}$ the embedding $|I_{j<<i}|\hookrightarrow |I_{>j}|$ is locally flat near $j'$. We observe that the subspaces $|I_{j<\le j'}|\times|I_{j'\le<i}|\subseteq|I_{j<<i}|$ are closed discs whose interiors give a cover of the sphere $|I_{j<<i}|$; it thus suffices to check that the following inclusion of discs is standard: 
\[
|I_{j<\le j'}|\times|I_{j'\le<i}|\simeq D^{\rho(j')-\rho(j)-1}\times D^{d-\rho(j')-1}\hookrightarrow |I_{j<\le j'}|\times |I_{\ge j'}|\simeq D^{\rho(j')-\rho(j)-1}\times D^{d-\rho(j')},
\]
and by the piecewise linear Sch\"onflies theorem this reduces to checking that the same inclusion of discs is locally flat; the latter condition is now implied by the inductive assumption that the inclusion $|I_{j'<<i}|\cong S^{d-\rho(j')-2}\hookrightarrow |I_{>j'}|\cong S^{d-\rho(j')-1}$ is locally flat.
\end{proof}

\begin{lem}
\label{lem:complements_in_combinatorial_manifolds}
Let $I$ be a combinatorial manifold as in \cref{defn:combinatorial_manifold} with $\partial I=\emptyset$. Then the collection of subcategory pairs $\{(I_{\le i},I_{<i})\}_{i\in I_{\max}}$ forms a complemented cover of $I$ in the sense of \cref{term:complemented_covers}.
\end{lem}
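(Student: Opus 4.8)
The plan is to exhibit, for each $i \in I_{\max}$, an explicit complement of the pair $(I_{\le i}, I_{<i})$ inside $(I,\emptyset)$, namely $(\Z_i,\partial\Z_i)\coloneqq (I\setminus\{i\},\, I_{<i})$. Since $i$ is maximal, $\{i\}$ is a right closed subposet, so $\Z_i$ is left closed in $I$; and $I_{<i}$ is left closed (it is the fibre over $0$ of the monotone map $I\to[1]$ sending $j$ to $0$ precisely when $j<i$), hence left closed in $\Z_i$ as well. With $\partial I=\emptyset$, the intersection $\W\coloneqq I_{\le i}\cap\Z_i$ equals $I_{<i}$, so conditions (1)--(2) of \cref{defn:complements_of_subcategory_pairs} are immediate, as is the second finality condition in (3) (it reads $I_{<i}\subseteq I_{<i}$). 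Since moreover every element of the finite poset $I$ lies below some maximal element, $I=\bigcup_{i\in I_{\max}} I_{\le i}$. Thus the entire statement reduces to the first finality condition in (3): \emph{for each $i\in I_{\max}$, the inclusion $I\setminus I_{\le i}\hookrightarrow I\setminus\{i\}$ is final.}

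To check finality, recall that a fully faithful inclusion of posets $A\hookrightarrow B$ is final exactly when $A_{\ge b}$ is weakly contractible for every $b\in B$; as $A_{\ge b}$ has a least element whenever $b\in A$, it suffices to treat $b\in (I\setminus\{i\})\setminus(I\setminus I_{\le i})=I_{<i}$. For such $b$ one has $A_{\ge b}=\{a\in I: a\ge b,\ a\not\le i\}=I_{>b}\setminus (I_{>b})_{\le i}$ (the point $a=b$ is excluded because $b\le i$). Setting $L\coloneqq I_{>b}$, the axioms of \cref{defn:combinatorial_manifold}, using $b\in\interior I=I$, show that $L$ is again a combinatorial manifold with empty boundary, of dimension $d'\coloneqq d-\rho(b)-1\ge 0$, with $\lvert L\rvert\cong S^{d'}$, and that $i$ is a maximal element of $L$. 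Hence everything comes down to the following \emph{key step}: if $L$ is a combinatorial manifold with $\partial L=\emptyset$ and $\lvert L\rvert\cong S^{d'}$, and $i\in L_{\max}$, then $L\setminus L_{\le i}$ is weakly contractible.

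For the key step I would argue geometrically. The base case $d'=0$ is trivial ($L$ is two incomparable points and $L\setminus L_{\le i}$ is a point). For $d'\ge1$: the order complex $\lvert L_{\le i}\rvert$ is the closed star of the vertex $i$ in the barycentric subdivision $\operatorname{sd}(L)$, because $i$ is maximal, so a chain of $L$ spans a simplex containing $i$ iff it lies in $L_{\le i}$; its link is $\lvert L_{<i}\rvert$. As $\operatorname{sd}(L)$ is a (PL) triangulation of $S^{d'}$ and $i$ is an interior vertex, $\lvert L_{\le i}\rvert$ is a PL $d'$-ball with boundary the PL $(d'-1)$-sphere $\lvert L_{<i}\rvert$; this is standard PL topology, with the required combinatorial input supplied by \cref{lem:locally_flat_in_combinatorial_manifolds} together with the piecewise linear Sch\"onflies theorem (one bootstraps by induction on $d'$ that the vertex link is a PL sphere and that it splits $S^{d'}$). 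By the piecewise linear Sch\"onflies theorem, $S^{d'}$ is the union of $\lvert L_{\le i}\rvert$ and a second PL $d'$-ball along their common boundary, so $\lvert L\rvert\setminus\lvert L_{\le i}\rvert=S^{d'}\setminus\lvert L_{\le i}\rvert$ is an open $d'$-ball, hence contractible. Finally, $L_{\le i}$ being a full subposet of $L$, the order complex $\lvert L\setminus L_{\le i}\rvert$ is the full subcomplex of $\operatorname{sd}(L)$ on the complementary vertices, and the standard deformation retraction of the complement of a full subcomplex onto the complementary full subcomplex gives $\lvert L\rvert\setminus\lvert L_{\le i}\rvert\simeq \lvert L\setminus L_{\le i}\rvert$. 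Combining the last two sentences proves the key step.

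The main obstacle is precisely this key step, and inside it the PL statement that the closed vertex star $\lvert L_{\le i}\rvert$ is a ball whose complement in $S^{d'}$ is again a ball; everything else is bookkeeping with left closed subposets. I expect the PL input to require the most care, both because it is where the combinatorial-manifold hypotheses and \cref{lem:locally_flat_in_combinatorial_manifolds} are genuinely used and because one must keep track that all the realizations in sight carry compatible PL structures — this is, however, exactly the situation that \cref{lem:locally_flat_in_combinatorial_manifolds} and the surrounding PL apparatus were set up to handle.
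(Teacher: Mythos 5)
Your proposal is correct and matches the paper's proof in all essentials: the same complement $(I\setminus\{i\},\,I_{<i})$, the same reduction via Quillen's Theorem A to the weak contractibility of $I_{>b}\setminus I_{b<\le i}$ for $b<i$, the same full-subcomplex deformation retraction identifying this with the open complement of $|I_{b<\le i}|$ in $|I_{>b}|$, and the same appeal to \cref{lem:locally_flat_in_combinatorial_manifolds} plus the PL Sch\"onflies theorem to see that this complement is an open disc. Your repackaging of $I_{>b}$ as a combinatorial manifold and of $|I_{b<\le i}|$ as a closed vertex star is a cosmetic variation, not a different argument.
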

\begin{proof}
It is clear that $I=\bigcup_{i\in I_{\max}}I_{\le i}$. It suffices therefore to prove that for all $i\in I_{\max}$ the pair of left closed subposets $I_{<i}\subset I_{\le i}$ of $I$ is complemented by the pair $I_{<i}\subset I\setminus\{i\}$.

Conditions (1) and (2) of \cref{defn:complements_of_subcategory_pairs} are immediate; to check (3), i.e. that the inclusion $I\setminus I_{\le i}=(I\setminus\{i\})\setminus I_{<i}\to I\setminus\{i\}$ is final, by Quillen's Theorem A it suffices to pick $j\in I_{<i}$ and show that the overcategory $(I\setminus I_{\le i})_{j/}\simeq I_{>j}\setminus I_{j<\le i}$ is weakly contractible.

The geometric realisation $|I_{>j}\setminus I_{j<\le i}|$ is homotopy equivalent to the (open) complement of $|I_{j<\le i}|$ in $|I_{>j}|$: this is best seen by regarding $I_{>j}$ as the geometric realisation of the finite simplicial complex whose $p$-simplices are chains of length $p$ in $I_{>j}$, and by identifying $I_{j<\le i}$ as a full subsimplicial complex of the previous.
By \cref{lem:locally_flat_in_combinatorial_manifolds} the topological space $|I_{>j}|\setminus|I_{j<\le i}|$ is homeomorphic to an open disc of dimension $d-\rho(j)-1$, which in particular is contractible as desired.
\end{proof}
\begin{obs}
The statement of \cref{lem:complements_in_combinatorial_manifolds} is wrong in general if we do not assume $\partial I=\emptyset$. For instance, let $I$ be the poset of facets of the triangulation of a pentagon; then $I$ has three maximal elements $i_1,i_2,i_3$, corresponding to the three triangles, and precisely one minimal element $j\in I$ lying below all maximal elements. Assuming that the triangle $i_2$ lies between $i_1$ and $i_3$ in the triangulation, we see that $(I\setminus I_{\le i_2})_{j/}$ is isomorphic to $[1]\amalg[1]$, in particular it is not weakly contractible. For this reason, we shall use a doubling argument similar to \cref{ex:doubling_principle} in the proof of \cref{prop:combinatorial_manifold_Poincare} to cover combinatorial manifolds with boundary as well.
\end{obs}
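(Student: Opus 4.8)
The plan is to take the pentagon example of the observation at face value, make it completely explicit, and verify the two assertions it rests on: that the displayed slice is $[1]\amalg[1]$, and that this already forces the conclusion of \cref{lem:complements_in_combinatorial_manifolds} to fail once $\partial I\neq\emptyset$, for the canonical cover $\{(I_{\le i},I_{<i})\}_{i\in I_{\max}}$.

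\textbf{Setup.} First I would fix the model: triangulate the pentagon on vertices $v_1,\dots,v_5$ as a fan from $v_1$, with triangles $i_1=\{v_1,v_2,v_3\}$, $i_2=\{v_1,v_3,v_4\}$, $i_3=\{v_1,v_4,v_5\}$, and let $I$ be the face poset of this triangulation (faces ordered by inclusion), $\partial I\subseteq I$ the downward-closed subset of faces contained in the boundary circle (the five vertices together with the five boundary edges), and $\rho\colon I\to[2]$ the dimension function. One checks directly from \cref{defn:combinatorial_manifold} that $(I,\partial I,\rho)$ is a combinatorial manifold with boundary of dimension $2$ — it is a standard PL triangulation of the disc $D^2$, so the required sphere/disc conditions on links hold. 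Its maximal elements are $I_{\max}=\{i_1,i_2,i_3\}$, all lying in $\interior{I}$; and the unique element of $I$ below all three of them is the apex $j\coloneqq\{v_1\}$, since $i_1\cap i_2\cap i_3=\{v_1\}$ as sets of vertices.

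\textbf{Reduction to one finality claim.} Since $\bigcup_{i\in I_{\max}}I_{\le i}=I$, the collection $\{(I_{\le i},I_{<i})\}_{i\in I_{\max}}$ is a cover in the sense of \cref{term:complemented_covers}, so the only issue is whether each $(I_{\le i},I_{<i})$ is complemented in $(I,\emptyset)$ in the sense of \cref{defn:complements_of_subcategory_pairs}; I focus on $i=i_2$. In a poset, a left closed full subcategory is exactly a downward-closed subset, so a complement $(\Z,\partial\Z)$ for $(I_{\le i_2},I_{<i_2})$ would consist of a downward-closed $\Z\subseteq I$ with $\Z\cup I_{\le i_2}=I$ and $\Z\cap I_{\le i_2}=I_{<i_2}$; in particular $i_2\notin\Z$. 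Since $i_2$ is maximal in $I$, the set $I\setminus\{i_2\}$ is itself downward closed and is clearly the unique downward-closed subset with those two properties, so $(I\setminus\{i_2\},I_{<i_2})$ is the \emph{only} candidate complement. For this candidate, conditions (1) and (2) of \cref{defn:complements_of_subcategory_pairs} hold and $\partial\W=\partial I\cap\W=\emptyset$, so condition (3) reduces to a single statement: the full inclusion $(I\setminus\{i_2\})\setminus I_{<i_2}=I\setminus I_{\le i_2}\hookrightarrow I\setminus\{i_2\}$ is final.

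\textbf{The slice computation and conclusion.} Recall that a functor of posets is final exactly when, for every object $b$ of the target, the comma category of $b$ is weakly contractible; Quillen's Theorem A is the sufficiency direction of this characterization, and for the present purpose I need the necessity direction. Taking $b=j=\{v_1\}$ (an object of $I\setminus\{i_2\}$) and the inclusion above, the comma category is the full subposet of $I\setminus I_{\le i_2}$ on elements $\ge j$, which equals $I_{>j}\setminus I_{j<\le i_2}$. Enumerating: the faces of $I$ containing $v_1$ are $\{v_1,v_2\},\{v_1,v_3\},\{v_1,v_4\},\{v_1,v_5\},i_1,i_2,i_3$, of which those $\le i_2$ are $\{v_1,v_3\},\{v_1,v_4\},i_2$; hence the comma category has underlying set $\{\{v_1,v_2\},\{v_1,v_5\},i_1,i_3\}$ with the only order relations $\{v_1,v_2\}<i_1$ and $\{v_1,v_5\}<i_3$, i.e. it is isomorphic to $[1]\amalg[1]$, whose realisation $\ast\amalg\ast$ is not weakly contractible. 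Therefore the inclusion is not final, $(I_{\le i_2},I_{<i_2})$ has no complement in $(I,\emptyset)$, and $\{(I_{\le i},I_{<i})\}_{i\in I_{\max}}$ is not a complemented cover of $I$ — which is precisely the conclusion of \cref{lem:complements_in_combinatorial_manifolds}, now seen to fail for this combinatorial manifold with boundary. I would end by recording the fix: doubling $I$ along $\partial I$ yields a boundaryless combinatorial manifold (here a triangulated $S^2$), to which \cref{lem:complements_in_combinatorial_manifolds} does apply, and this is the device used in \cref{prop:combinatorial_manifold_Poincare}. The only mildly delicate step is the second one, where one must check that $I\setminus\{i_2\}$ is genuinely the unique candidate complement, so that the failure of condition (3) really cannot be circumvented; everything else is a finite enumeration.
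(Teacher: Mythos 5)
Your proposal is correct and follows essentially the paper's own argument: model the pentagon as a fan triangulation, identify $j$ as the apex vertex, and compute that $(I\setminus I_{\le i_2})_{j/}\cong[1]\amalg[1]$ is not weakly contractible, so condition (3) of \cref{defn:complements_of_subcategory_pairs} fails for the cover $\{(I_{\le i},I_{<i})\}_{i\in I_{\max}}$. Your extra step checking that $I\setminus\{i_2\}$ is the \emph{unique} candidate complement of $(I_{\le i_2},I_{<i_2})$ in $(I,\emptyset)$ is a welcome precision which the paper leaves implicit, since the slice computation alone only shows that the particular complement used in the proof of \cref{lem:complements_in_combinatorial_manifolds} breaks down.
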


\begin{thm}
\label{prop:combinatorial_manifold_Poincare}
Let $(I,\partial I)$ be a combinatorial manifold as in \cref{defn:combinatorial_manifold}.
Let $X\colon I\to\cat_\baseTopos$ be a functor, and assume that $X$ takes values in $\sC$-twisted ambidextrous $\baseTopos$-categories. Let $(\X,\partial\X)\coloneqq(\int_IX,\int_{\partial I}X)$, and for $j\in I$ let $(\X_j,\partial\X_j)\coloneqq(\int_{I_{\le j}}X,\int_{I_{<j}}X)$. 
Then for any $j\in I$ the restriction of $\omega_{\X,\partial\X}$ on $\X_j$ is equivalent to $\Omega^{d-\rho(j)}\omega_{\X_j,\partial\X_j}$. In particular, the following conditions are equivalent:
\begin{enumerate}[label=(\arabic*)]
\item $(\X,\partial\X)$ is $\sC$-Poincar\'e;
\item for all $j\in I$, the $\baseTopos$-category pair $(\X_j,\partial\X_j)$ is $\sC$-Poincar\'e;
\item for all $i\in I_{\max}$, the $\baseTopos$-category pair $(\X_i,\partial\X_i)$ is $\sC$-Poincar\'e;
\end{enumerate}
If these conditions hold, then $(|\X|,|\partial \X|)$ is also a $\sC$-Poincar\'e pair of $\baseTopos$-spaces.
\end{thm}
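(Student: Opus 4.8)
The plan is to prove the three equivalent conditions first, deduce the restriction formula for $\omega_{\X,\partial\X}$ along the way, and then finish with the statement about realisations by invoking the relative realisation principle together with a doubling argument to handle the boundary.

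\medskip

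\textbf{Setting up the induction.} First I would reduce everything to the case $\partial I=\emptyset$ by a doubling trick. Given the combinatorial manifold $(I,\partial I)$, form the poset $\widetilde I\coloneqq I\amalg_{\partial I}I$ obtained by gluing two copies of $I$ along $\partial I$; the boundary conditions in \cref{defn:combinatorial_manifold} (the homeomorphism of pairs $(|I_{>j}|,|\partial I_{>j}|)\cong(D^{d-\rho(j)-1},S^{d-\rho(j)-2})$ for $j\in\partial I$) are precisely what is needed to check that $(\widetilde I,\emptyset)$ is again a combinatorial manifold, with rank functor induced by $\rho$. The functor $X$ extends to $\widetilde X\colon\widetilde I\to\cat_\baseTopos$, and $(\int_{\widetilde I}\widetilde X,\emptyset)$ admits a complemented cover by two copies of $(\X,\partial\X)$ in the manner of \cref{ex:doubling_principle}, using that $(\X,\partial\X)$ is tame — this tameness is itself a consequence of the ranked structure, since $\interior I\subseteq I$ is final (the maximal elements lie in $\interior I$ and are final). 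So by \cref{cor:local_to_global_principle} we may assume $\partial I=\emptyset$.

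\medskip

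\textbf{The closed case via local-to-global.} With $\partial I=\emptyset$, \cref{lem:complements_in_combinatorial_manifolds} gives a complemented cover $\{(I_{\le i},I_{<i})\}_{i\in I_{\max}}$ of $I$, and pulling this back along $\X\to I$ via \cref{lem:pulling_back_complemented_pairs} yields a complemented cover $\{(\X_i,\partial\X_i)\}_{i\in I_{\max}}$ of $(\X,\partial\X)$ by $\sC$-twisted ambidextrous pairs (ambidexterity holds by \cref{lem:unstraightening_twisted_ambidextrous}, since $I_{\le i}\cong\Box^{\rho(i)}$ is compact and $X$ is valued in ambidextrous $\baseTopos$-categories). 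Now \cref{cor:local_to_global_principle} immediately gives the equivalence of $(1)$ and $(3)$, and also that the restriction $\omega_{\X,\partial\X}|_{\X_i}\simeq\omega_{\X_i,\partial\X_i}$ by \cref{prop:complementation_principle}. For the general $j\in I$ and the power-of-suspension formula, I would iterate the boundary principle: $I_{<j}\cong\partial\Box^{\rho(j)}$, and by the cube analysis in \cref{obs:complements_in_cubes} together with \cref{prop:boundary_principle} applied repeatedly, each passage from $(\X_j,\partial\X_j)$ to a codimension-one face introduces one loop, so that over $\X_j$ one gets $\omega_{\X,\partial\X}|_{\X_j}\simeq\Omega^{d-\rho(j)}\omega_{\X_j,\partial\X_j}$; this is also what delivers $(1)\Leftrightarrow(2)$, since $\Omega^{d-\rho(j)}$ is invertible on $\sC$ and commutes with factoring through $\picardSpaceTopos(\sC)$. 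I would actually run this as a downward induction on $d-\rho(j)$, using the complemented cover to pass between a given $\X_j$ and the $\X_i$ for $i\in I_{\max}$ above it — the main bookkeeping obstacle here is matching up the various suspension shifts coming from the links, which is exactly where \cref{lem:locally_flat_in_combinatorial_manifolds} (piecewise linear Sch\"onflies) is needed to guarantee the links are \emph{standard} spheres so that the relevant complements are discs rather than merely homology discs.

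\medskip

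\textbf{The realisation statement.} Finally, assuming $(1)$, I would deduce that $(|\X|,|\partial\X|)$ is a $\sC$-Poincar\'e pair of $\baseTopos$-spaces. In the doubled (closed) case this is just \cref{ex:absolute_realisation_principle}: $\X$ is a $\sC$-Poincar\'e $\baseTopos$-category, hence $|\X|\simeq\colim_I X$ is a $\sC$-Poincar\'e $\baseTopos$-space. For the general case with nonempty $\partial I$, I would arrange $(\X,\partial\X)$ as arising from a cocartesian fibration $\X\to[1]$ — composing $\X\to I\xrightarrow{\min'}[1]$ for a suitable rank-type functor $I\to[1]$ collapsing $\interior I$ to $1$ and $\partial I$ to $0$, which is cocartesian because $I$ is a poset and the fibres are unions of up-sets — and then apply \cref{prop:barXdXpdpair} directly, concluding that the map $|\partial\X|\to|\X|$ corresponding to $|i|$ is a $\sC$-Poincar\'e pair of $\baseTopos$-spaces. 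The hard part throughout is really the combinatorial-geometric input: ensuring at each step of the induction that the relevant subposet complements are final, which relies on the Sch\"onflies-theoretic contractibility arguments of \cref{lem:locally_flat_in_combinatorial_manifolds,lem:complements_in_combinatorial_manifolds}, rather than on any further categorical machinery.
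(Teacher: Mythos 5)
Your overall architecture is the same as the paper's — double along $\partial I$, cover by the stars of maximal elements, apply \cref{cor:local_to_global_principle} together with \cref{prop:complementation_principle} and \cref{prop:boundary_principle}, and finish with the realisation principles (the paper merely performs the doubling at the end rather than at the start). However, the central step, the formula $\omega_{\X,\partial\X}|_{\X_j}\simeq\Omega^{d-\rho(j)}\omega_{\X_j,\partial\X_j}$ which is what yields $(1)\Leftrightarrow(2)$, does not go through as you describe it. You assert $I_{\le i}\cong\Box^{\rho(i)}$ and $I_{<j}\cong\partial\Box^{\rho(j)}$ and route the iterated boundary principle through the cube analysis of \cref{obs:complements_in_cubes}. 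These identifications are false for a general combinatorial manifold: \cref{defn:combinatorial_manifold} constrains only the intervals \emph{above} an element, and for the face poset of a triangulation $I_{\le i}$ of a top simplex is the face poset of a simplex, not a cube, so \cref{obs:complements_in_cubes} is unavailable (the compactness you extract from it is automatic for finite posets, so only that conclusion survives). The structural fact that actually makes the induction run, and which your sketch never identifies, is that for $i\in I_{\max}$ the poset $I_{<i}$ is again a combinatorial manifold \emph{without boundary} of dimension $d-1$, by the interval-sphere conditions. The paper's induction is on $d$: for non-maximal $j$ choose $i\in I_{\max}$ with $j<i$, restrict via \cref{prop:complementation_principle} to get $\omega_{\X_i,\partial\X_i}$, apply \cref{prop:boundary_principle} to pick up one loop on $\partial\X_i$, and then invoke the statement inductively for $I_{<i}$. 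Your ``downward induction on $d-\rho(j)$'' gestures at this, but without recognising $I_{<i}$ as a lower-dimensional combinatorial manifold, and without cubes, there is no induction hypothesis to invoke.

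Two further justifications are wrong even though the claims they support are true and needed. The finality of $\interior{I}\subseteq I$ (tameness), which your doubling step requires, does not follow from ``the maximal elements lie in $\interior{I}$ and are final'': over a fixed $j$ the maximal elements form a discrete set which generically has more than one element, so $I_{\max}$ is not final. The correct argument is Quillen's Theorem A together with the last axiom of \cref{defn:combinatorial_manifold}: for $j\in\partial I$ the relevant overcategory realises to $|I_{>j}|\setminus|\partial I_{>j}|\cong D^{d-\rho(j)-1}\setminus S^{d-\rho(j)-2}$, an open disc, hence is weakly contractible. Similarly, in the realisation step, ``the fibres are unions of up-sets'' only restates that $\partial I\subseteq I$ is left closed; cocartesianness of $I\to[1]$ would require a \emph{minimum} interior element above each boundary face, which already fails for triangulated surfaces with boundary, so the composite $\X\to I\to[1]$ cannot be fed into \cref{prop:barXdXpdpair} on the strength of that remark and this input needs a different justification (resting on the finality of $\interior{\X}\subseteq\X$ obtained as above via \cref{obs:leftclosed}).
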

\begin{proof}
First, note that $(\X, \partial \X)$ is $\sC$-twisted ambidextrous by \cref{lem:unstraightening_twisted_ambidextrous}, using that finite posets are finite $\infty$-categories and in particular compact.

As a first step, we assume $\partial I=\emptyset$ and prove the first statement by induction on $d-\rho(j)\ge0$. For $d-\rho(j)=0$, i.e. $j\in I_{\max}$, the first statement is a direct consequence of \cref{lem:complements_in_combinatorial_manifolds,lem:pulling_back_complemented_pairs,prop:complementation_principle}. For $d-\rho(j)\ge1$ we may find $i\in I_{\max}$ with $i>j$; then the restriction of $\omega_{\X,\partial\X}$ on $\X_i$ is $\omega_{\X_i,\partial\X_i}$, which by \cref{prop:boundary_principle} further restricts to $\Omega\omega_{\partial\X_i}$ on $\partial\X_i$; by inductive hypothesis applied to the combinatorial manifold $I_{<i}$ of dimension $d-1$, we have that $\omega_{\partial\X_i}$ restricts to $\Omega^{d-1-\rho(j)}\omega_{\X_j,\partial\X_j}$ on $\X_j$, as desired.

For the second statement, the equivalence of (1) and (3) is a direct consequence of \cref{cor:local_to_global_principle}. It is clear that (2) implies (3); to prove that (3) implies (2) we observe again that for $j\in I\setminus I_{\max}$ we may pick $i\in I_{\max}$ with $j<i$, and by the first statement $\omega_{\X_i,\partial\X_i}$ restricts to $\Omega^{d-\rho(j)}\omega_{\X_j,\partial\X_j}$ on $\X_j$.

We now pass to the general case in which $\partial I$ may be non-empty. 
We first claim that $\interior{I}\coloneqq I\setminus \partial I$ is final in $I$: to see this, we pick  $j\in\partial I$ and argue via Quillen's Theorem A, by checking that the overcategory $\interior{I}_{j/}$ is weakly contractible: we have an isomorphism $\interior{I}_{j/}\simeq I_{>j}\setminus\partial I_{>j}$, and the geometric realisation of the latter is homotopy equivalent to the open disc $|I_{>j}|\setminus|\partial I_{>j}|\cong D^{d-\rho(j)-1}\setminus S^{d-\rho(j)-2}$, which is contractible. It follows again by \cref{obs:leftclosed}\ref{rmk:left_closed_fibration} that also $\interior{\X}\coloneqq\X\setminus\partial\X$ is final in $\X$.
Now we let $I'\coloneqq I\amalg_{\partial I}I$ denote the ``double''of $I$ along $\partial I$, which again is a combinatorial manifold, 
and let $X'$ denote the composite functor $I'\xrightarrow{\text{fold}} I\xrightarrow{X}\cat_\baseTopos$, and let $\X'\coloneqq\int_{I'}X'\simeq \X\amalg_{\partial\X}\X$. By \cref{ex:gluingXY}, the restriction of $\omega_{\X'}$ along either inclusion $\X\hookrightarrow\X'$ agrees with $\omega_{\X,\partial\X}$; on the other hand, since $I'$ is a combinatorial manifold without boundary, and since $I'_{\le j}\simeq I_{\le j}$ for any $j\in I$ and any of the two inclusions $I\hookrightarrow I'$, we have that the restriction of $\omega_{\X'}$ on $\X_j$, that is the restriction of $\omega_{\X,\partial\X}$ on $\X_j$, agrees with $\Omega^{d-\rho(j)}\omega_{\X_j,\partial\X_j}$.
For the second part, by \cref{prop:complementation_principle}, (1) is equivalent to requiring that $\X'$ is $\sC$-Poincar\'e, which by the case ``$\partial I=\emptyset$'' is equivalent to (2) and (3) for the poset $I'$, which are  equivalent to (2) and (3), respectively, for the poset $I$, by the preceding observations about how $\omega_{\X'}$ restricts along any of the inclusions $\X\subseteq \X'$.

The final statement directly follows from \cref{prop:barXdXpdpair} and \cref{obs:leftclosed} \ref{rmk:left_closed_fibration}.
\end{proof}

\begin{rmk}
In the case where $X \colon I \to \baseTopos$ lands in $\baseTopos$-groupoids, an argument analogous to the proof of \cref{prop:wall_vs_us_ads}, which we leave to the reader, shows that conditions (1)-(3) in \cref{prop:combinatorial_manifold_Poincare} are also equivalent to the following:
\begin{enumerate}[label=(\arabic*)]
\setcounter{enumi}{3}
\item for all $j\in I$, the map of $\baseTopos$-spaces $\colim_{I_{\le j}}X\to X(j)$ is a $\sC$-Poincar\'e pair.
\end{enumerate}
\end{rmk}

\begin{example}
\label{example:cobordism}
Let $k\ge0$ and let $I$ denote the face poset of the segment $[0,k]$ subdivided into $k$ unit segments; schematically, $I$ is the following poset:
\[
\begin{tikzcd}[row sep=3pt]
&i_1& &i_2&\dots & i_{k-1}& &i_k\\
j_0\ar[ur]& &j_1\ar[ur]\ar[ul]& &\dots\ar[ur]\ar[ul]& &j_{k-1}\ar[ur]\ar[ul]& &j_k\ar[ul].
\end{tikzcd}
\]
Let $\partial I=\{j_0,j_k\}$. Then $(I,\partial I)$ is a combinatorial 1-manifold with boundary. Let $X\colon I\to\baseTopos$ be a functor, and assume that $(\X,\partial\X)\coloneqq(\int_{I}X,\int_{\partial I}X)$ is a $\sC$-Poincar\'e $\baseTopos$-category pair. Applying \cref{prop:combinatorial_manifold_Poincare}, we obtain the following:
\begin{itemize}
\item for $k=0$ we have $(I,\partial I)=(\ast,\emptyset)$ and hence we are requiring that $\X\simeq X(\ast)$ is a $\sC$-Poincar\'e $\baseTopos$-space;
\item for $k=1$ we may identify $(\X,\partial\X)\simeq(\int_{[1]}((X(j_0)\amalg X(j_1))\to X(i_1)),(X(j_0)\amalg X(j_1)))$ and hence we are asking that the map of $\baseTopos$-spaces $(X(j_0)\amalg X(j_1))\to X(i_1)$, considered as a functor $[1]\to\baseTopos$, is a $\sC$-Poincar\'e pair; we may think of $X(i_1)$ as a \textit{$\sC$-Poincar\'e cobordism} from $X(j_0)$ to $X(j_1)$.
\item for $k\ge2$, the local-to-global principle established in \cref{prop:combinatorial_manifold_Poincare} allows us to identify $X$ with the datum of $k$ composable $\sC$-Poincar\'e cobordisms.
\end{itemize}
Relying on generalisations of this idea, in future work we plan to construct and study higher categories of Poincar\'e cobordisms in a systematic way.
\end{example}

\subsection{More examples}
\label{subsec:more_examples}
We collect more applications with unstraightened $\baseTopos$-category pairs.
\begin{example}
\label{example:gluing_pairs}
Let $I\simeq[1]\amalg_{0}[1]$ be the poset with one minimum $0$ and two maximal elements $1,1'$. Let $X\colon I\to\baseTopos$ be a functor, and let $\X\coloneqq\int_IX$. Then by \cref{ex:gluingXY}, $\X$ is a $\sC$-Poincar\'e $\baseTopos$-category if and only if the restrictions of $X$ to both inclusions $[1]\hookrightarrow I$ are $\sC$-Poincar\'e duality pairs as in any of \cref{defn:pdpairs,defn:postclassical_poincare_pairs,defn:neoclassical_poincare_pairs}. If this is the case, we deduce from \cref{ex:absolute_realisation_principle} that $\vert\X\vert\simeq X(1)\amalg_{X(0)}X(1')$ is also a $\sC$-Poincar\'e duality $\baseTopos$-space. Our theory thus recovers and generalises the well-known principle according to which gluing two Poincar\'e duality pairs along their common boundary yields a Poincar\'e duality space.
\end{example}

\begin{example}
\label{example:gluing_ads}
Generalising \cref{example:gluing_pairs}, let $n\ge1$, consider the inclusion $\Box^{n-1} = \Box^{n-1} \times \{0\} \hookrightarrow \Box^n$, let $I = \Box^n \amalg_{\Box^{n-1}} \Box^n=\Box^{n-1}\times([1]\amalg_{[0]}[1])$, and let $\partial I=\partial\Box^{n-1}\times([1]\amalg_{[0]}[1])$. Let $X \colon I \to \baseTopos$ be a functor, and denote the associated $\baseTopos$-category pair by $(\X,\partial\X)\coloneqq(\int_IX,\int_{\partial I}X)$.
As the two copies of $(\Box^n, \partial \Box^n)$ form a complemented cover of $(I, \partial I)$, we have that $(\X,\partial\X)$ is $\sC$-Poincar\'e if and only if the two $(n+1)$-ads obtained by restricting $X$ to the two copies of $\Box^n$ are $\sC$-Poincar\'e in the sense of \cref{defn:poincare_ads}.
Now let $\W\subset I$ denote the wide subposet $(\Box^{n-1})^\simeq\times([1]\amalg_{[0]}[1])$. Then $\scL(I,\W)\simeq\Box^{n-1}$ and $\scL(\partial I,\W\cap \partial I) \simeq \partial \Delta^{n-1}$. By \cref{prop:localisation_principle} we have that the localisation of $(\X,\partial\X)$ at $p^{-1}(\W)$ is again a $\sC$-Poincar\'e $\baseTopos$-category pair. We thus recover the well-known principle according to which gluing two Poincar\'e $(n+1)$-ads along a common boundary $n$-ad results in a new Poincar\'e $n$-ad.
\end{example}

\begin{example}
The following example suggests that for a generic poset $I$ we can expect very few $\sC$-Poincar\'e $\baseTopos$-categories of the form $\int_IX$ as in \cref{cons:integral_categories}.
Let $k\ge1$ and let $I$ be the poset with $k$ maximal elements $i_1,\dots,i_k$ and a minimum $j$. Let $X\colon I\to\baseTopos$ be a functor and let $p\colon\X\coloneqq\int_IX\to I$ denote the unstraightening. Let $\Y\coloneqq p^{-1}(j)$, and let $i\colon\Y\to\X$ denote the (left closed) inclusion.

Assume that $\X$ is $\sC$-Poincar\'e with dualising system $D_\X=D(\omega_\X)\colon\X\to\picardSpaceTopos(\sC)$. Then the equivalence $\X_*\simeq\X_!(D_\X\otimes-)$ gives rise to a composite equivalence
\[
\X_*i_!\simeq\X_!(D_\X\otimes i_!(-))\simeq \Y_!(i^*D_\X\otimes-).
\]
On the other hand, recalling that $i_!\colon\presheafTopos(\Y;\sC)\to\presheafTopos(\X;\sC)$ is extension by zero, we have an equivalence $\X_*i_!\simeq(\Omega\Y_*)^{\bigoplus k-1}$.
We deduce an equivalence $i^*D_\X\simeq (\Omega D_\Y)^{\oplus k-1}$. If $\presheafTopos(\Y;\sC) \neq 0$, this is only possible for $k=2$. The case $k=2$ recovers \cref{example:gluing_pairs}.
\end{example}

\begin{example}
\label{example:spheres}
Let $\Y$ be a $\sC$-twisted ambidextrous $\baseTopos$-space, and let $t\colon\Y\to\ast$ denote the terminal map. Let $\unit\colon\ast\to\sC$ denote $\baseTopos$-functor giving the monoidal unit. We say that $\Y$ is:
\begin{itemize}
\item a \textit{$\sC$-homology sphere} if $(\ast,\Y,t)_*(\unit)\in\presheaf(\ast;\sC)\simeq\Gamma\sC$ is invertible;
\item a \textit{$\sC$-sphere} if $t$, considered as a functor $[1]\to\baseTopos$, is a $\sC$-Poincar\'e pair. 
\end{itemize}
If $\Y$ is a $\sC$-sphere, then we have a canonical identification $(\ast,\Y,t)_*(\unit)\simeq\ast_!(\omega_{\ast,\Y,t})\simeq\omega_{\ast,\Y,t}$; in particular $\Y$ is also a $\sC$-homology sphere. We will return to this very elementary example in \cref{example:Thom_isomorphism} when discussing the Thom isomorphism.
\end{example}

\begin{example}
\label{example:two_parallel_arrows}
Let $I$ be the category with two objects $a,b$ and two parallel arrows $a \rightrightarrows b$. Let $s\colon J\coloneqq I_{/b}\to I$ denote the source functor; note that $s$ is a right fibration, and that $J\simeq\twistedArrow([1])$ is a poset with one maximum and two minimal elements. Let $\partial I\coloneqq\{a\}$ and let $\partial J\coloneqq s^{-1}(\partial I)$; note that $s$ restricts to a (trivial) two-fold cover $\partial J\simeq \udl{2}\to\partial I\simeq\udl{1}$.

Now let $X\colon I\to\baseTopos$ be a functor taking values in $\sC$-twisted ambidextrous $\baseTopos$-spaces, and let $\X\coloneqq\int_IX$; similarly, let $(\tilde\X,\partial\tilde\X)\coloneqq(\int_JXs,\int_{\partial J}Xs)$. We claim that $\X$ is a $\sC$-Poincar\'e $\baseTopos$-category if and only if $(\tilde\X,\partial\tilde\X)$ is a $\sC$-Poincar\'e $\baseTopos$-category pair: in this case, in the light of \cref{example:cobordism}, we may think of $X$ as the datum of a Poincar\'e duality $\baseTopos$-space $X(a)$ together with a Poincar\'e self-cobordism $X(b)$. We next prove our claim.

Let $r\colon\tilde\X\to\X$ be the canonical map covering $s\colon J\to I$, let $\partial\X\coloneqq\int_{\partial I}X$, and let $\partial r\colon\partial\tilde\X\to\partial\X$ denote the restriction of $r$; again $\partial r$ is a trivial two-fold cover of $\baseTopos$-spaces. 
We start by observing that the fibre product of the cospan $\partial r_!\xrightarrow{\eta_{\partial r_*}} \partial r_*\partial r^*\partial r_!\overset{\eta_{\partial r_!}}{\longleftarrow}\partial r_*$ in $\func_{\sC,\baseTopos}^L(\presheafTopos(\partial\tilde\X;\sC),\presheafTopos(\partial\X;\sC))$ vanishes: this is immediate after identifying $\presheafTopos(\partial\tilde\X;\sC)\simeq\presheafTopos(\partial\X;\sC)^2$ and identifying both functors $r_!,r_*$ with the direct sum $\baseTopos$-functor. Let $i\colon\partial\X\to\X$ and $\tilde i\colon\partial\tilde\X\to\tilde\X$ denote the inclusions; then the definition of $(\tilde\X,\partial\tilde\X)_*$, together with the last remark, give a diagram of cartesian squares in $\func^L_{\sC,\baseTopos}(\presheafTopos(\tilde\X;\sC),\sC)$:
\[
\begin{tikzcd}[row sep=9pt, column sep=40pt]
(\tilde\X,\partial\tilde\X)_*\ar[r]\ar[d]\ar[dr,phantom,"\lrcorner"very near start]&0\ar[d]\ar[r]\ar[dr,phantom,"\lrcorner"near start]&\partial\X_*\partial r_!\tilde i^*\ar[d,"\eta_{\partial r_*}"]\\
\tilde\X_*\ar[r,"\eta_{\tilde i_*}"]&\partial\tilde\X_*\tilde i^*\ar[r,"\eta_{\partial r_!}"]&\partial\tilde\X_*\partial r^*\partial r_!\tilde i^*.
\end{tikzcd}
\]
On the other hand we have a pushout and pullback square of right fibrations in $\cat_\baseTopos$ as follows on left, giving rise to the middle pullback square in $\func_{\sC,\baseTopos}^L(\presheafTopos(\X;\sC),\sC)$ and, after precomposing by $r_!$ and using the Beck--Chevalley equivalences, to the commutative diagram on right, whose top right horizontal arrow is an equivalence because $r_!$ is fully faithful, see \cref{constr:stable_recollement_for_pairs}: 
\[
\begin{tikzcd}[column sep=9pt]
\partial\tilde\X\ar[d,"\tilde i"']\ar[r,"\partial r"]\ar[dr,phantom,"\lrcorner"very near start,"\ulcorner"very near end]&\partial\X\ar[d,"i"]\\
\tilde\X\ar[r,"r"]&\X;
\end{tikzcd}
\hspace{.5cm}
\begin{tikzcd}
\X_*\ar[d,"\eta_{i_*}"]\ar[r,"\eta_{r_*}"]\ar[dr,phantom,"\lrcorner"very near start]&\tilde\X_*r^*\ar[d,"\eta_{\tilde i_*}"]\\
\partial\X_*i^*\ar[r,"\eta_{\partial r_*}"]&\partial\tilde\X_*\partial r^*i^*;
\end{tikzcd}
\hspace{.5cm}
\begin{tikzcd}[row sep=9pt]
\X_*r_!\ar[d,"\eta_{i_*}"]\ar[r,"\eta_{r_*}"]\ar[dr,phantom,"\lrcorner"very near start]&\tilde\X_*r^*r_!\ar[d,"\eta_{\tilde i_*}"]&\tilde\X_*\ar[l,"\eta_{r_!}"',"\simeq"]\ar[d,"\eta_{\tilde i_*}"]\\
\partial\X_*i^*r_!\ar[r,"\eta_{\partial r_*}"]\ar[d,"\simeq"',"BC"]&\partial\tilde\X_*\partial r^*i^*r_!\ar[dr,"BC","\simeq"']&\partial\X_*\tilde i^*\ar[l,"\eta_{r_!}"']\ar[d,"\eta_{\partial r_!}"]\\
\partial\X_*\partial r_!\tilde i^*\ar[rr,"\eta_{\partial r_*}"]& &\partial\tilde\X_*\partial r^*\partial r_!\tilde i^*.
\end{tikzcd}
\]
It follows that we have an equivalence $\X_*r_!\simeq(\tilde\X,\partial\tilde\X)_*$, and by \cref{prop:classification_of_linear_functors} this corresponds to an equivalence
$\omega_{\tilde\X,\partial\tilde\X}\simeq r^*\omega_\X$. We conclude by observing that the pushout description of $\X$ implies that $\omega_\X$ factors through $\picardSpaceTopos(\sC)$ if and only both $r^*\omega_\X$ and $i^*\omega_\X$ do; but since $\partial r$ admits a section, it is also true that if $r^*\omega_\X$ factors through $\picardSpaceTopos\sC)$, then also $i^*\omega_\X$ automatically does.
\end{example}

\begin{rmk}
    Geometrically, we view \cref{example:two_parallel_arrows} as the situation of a Poincar\'e duality space $\X$ with a map to $S^1$, and a chosen transverse preimage of $1 \in S^1$. 
\end{rmk}

\begin{defn}[Formal dimension]
In the case $\baseTopos=\spc$, we consider the abelian group $\pi_0(\picardSpace(\sC))$. For a $\sC$-Poincar\'e $\infty$-category pair $(\X,\partial\X)$ with $|\X|$ connected we denote by $\mathrm{fdim}(\X,\partial\X)\in\pi_0(\picardSpace(\sC))$ the \textit{formal dimension} of $(\X,\partial\X)$, defined as \textit{the negative of} the component of $\picardSpace(\sC)$ in which $\omega_{\X,\partial\X}$ takes values.
\end{defn}

\begin{example}
\label{ex:two_copies_RP2}
Let $\baseTopos=\spc$ and $\sC=\spectra_\bbQ$; let $\partial\X=\bbR P^2\amalg\bbR P^2$ and let $\X$ be the left fibration over $[1]$ unstraightening the arrow of spaces $\partial\X\to\ast$. Then $\omega_{\X,\partial\X}$ is a functor restricting to the non-trivial coefficient system with fibre $\bbQ[-3]$ on each copy of $\bbR P^2$, and evaluating $\bbQ[-1]$ on $\ast\simeq\interior{\X}$. The first claim follows from \cref{prop:boundary_principle} and a standard computation of the orientation coefficient system of $\bbR P^2$; the second follows from the computation $\omega_{\X,\partial\X}(*)\simeq \X_!(\omega_{\X,\partial\X})\simeq(\X,\partial\X)_*(\unit)$ and a standard computation of the relative cohomology of the pair $(\ast,\partial\X)$ with \textit{constant} coefficients in $\bbQ$. In particular $\omega_{\X,\partial\X}$, even though it is pointwise valued in $\picardSpace(\spectra_\bbQ)$, does not factor through $\picardSpace(\spectra_\bbQ)$, so $(\X,\partial\X)$ is not $\spectra_\bbQ$-Poincar\'e.

We may now observe that if we replace $\sC$ by $\sC'=\module_{\bbQ[x,x^{-1}]}(\spectra)$, where $x$ has degree 2, then we have that $(\X,\partial\X)$ is $\sC'$-twisted ambidextrous and $\omega_{\X,\partial\X}$ takes pointwise value in a single component of $\pi_0(\picardSpace(\sC'))\simeq\bbZ/2$, and yet $(\X,\partial\X)$ is not $\sC'$-Poincar\'e because the connecting map $\delta\colon\Omega\omega_{\partial\X}\to i^*\omega_{\X,\partial\X}$, being basechanged from $\sC$, is equivalent to the zero map.

This example shows that it is hard to generalise \cite[Corollary C]{KleinQinSu} to an arbitrary stable $\sC\in\calg(\presentable^L)$.
\end{example}

\section{Fibred Poincar\'e duality pairs}
\label{sec:fibredpd}
The goal of this section is to prove \cref{thm:fibred_dualising_object_factorisation}. The main consequence of this theorem, \cref{cor:integration_for_unstraightenings}, provides a generalisation of Klein--Qin--Su's fibration theorem on Poincar\'e duality pairs \cite[Thm. G]{KleinQinSu} to all Poincar\'e ads and, in fact, to arbitrary shapes of diagrams of spaces satisfying Poincar\'e duality; also the category of coefficients will be an arbitrary $\sC\in\calg(\presentable^L_{\baseTopos,\stable})$. 

In this section we shall restrict to the setting $\baseTopos=\spc$ (cf. \cref{rmk:reason_for_topos_being_anima_fibredpd} below for an explanation).
In \cref{subsection:fibred_ambidex}, we introduce the concept of \textit{fibred ambidexterity} for functors between categories, generalising Cnossen's notion for maps of groupoids \cite{Cnossen2023}.
Cnossen defines the notion of \textit{$\sC$-twisted ambidextrous map of spaces}, for $\sC \in \calg(\presentable^L)$: writing $\paramFibred{E}$ for the object in the $\infty$-topos $\spc_{/\B}$ corresponding to a map of spaces $p\colon \E \rightarrow \B$, we say that $p$ is a $\sC$-twisted ambidextrous map if $\paramFibred{E}$ is $\pi^*_{\B}\sC$-twisted ambidextrous. In this case, the dualising system $D_\paramFibred{E} \colon \paramFibred{E} \rightarrow \pi_{\B}^* \sC$ corresponds, by adjunction, to a local system $D_p \colon \E \rightarrow \sC$.

The goal of this section is to generalise this line of thought to the case of a pair of functors $(p,q) \colon (\E,\partial\E) \rightarrow \B$ of $\infty$-categories. We will restrict our attention to the case in which $p$ and $q$ are cartesian fibrations, so that we may regard $\E$ and $\partial\E$ as  $\presheaf(\B)$-categories, that we shall denote by $\paramFibred{E}$ and $\partial\paramFibred{E}$ respectively. For $\sC \in \calg(\presentable^L)$, we construct in \cref{sec:cofree} a presentably symmetric monoidal $\B$-category $\cofree_{\B\op}\sC$, generalising the basechange $\pi_{\B}^*\sC$ in the case when $\B$ is a groupoid.
We will then say that $p \colon \E \rightarrow \B$ is $\sC$-twisted ambidextrous if $\paramFibred{E}$ is $\cofree_{\B\op}\sC$-twisted ambidextrous. Applying the Morita theory developed for arbitrary $\infty$-topoi in \cref{sec:morita} to the $\infty$-topos $\presheaf(\B)$, where $\B$ is an arbitrary category (not necessarily a space), we obtain a corresponding classifying system $\omega_{p,q} \in  \func_{\presheaf(\B)}(\paramFibred{E},\cofree_{\B\op}\sC)$. This again admits an interpretation as a $\sC$-valued functor - not defined on $\E$ itself, nor on $\E\op$, but over the ``horizontal opposite'' $\E\hop$ relative to $\B$ (cf \cref{cons:fibrewise_opposite}).

We do not expect that the notion of fibred ambidexterity may be characterised in a ``fibrewise'' fashion in general, but we prove in \cref{thm:fibrewise_characterisation_of_twisted_ambidexterity} in \cref{subsec:fibrewise_characterisation} that under suitable bicartesian assumptions this is indeed the case; this generalises the case of maps between groupoids, which are always bicartesian. With even more hypothesis on the bicartesian fibration, we also prove in \cref{subsec:special_computation_dualising_system} that the classifying system $\omega_{p,q}$ is ``horizontally groupoidal''. Combining all these materials, we then prove
\cref{thm:fibred_dualising_object_factorisation} in \cref{subsec:fibred_poincare_integration}.
We conclude the section with some  examples gathered in \cref{subsec:fibred_examples}, including specialising \cref{thm:fibred_dualising_object_factorisation} to recover a generalisation of Klein--Qin--Su's result.

\begin{rmk}\label{rmk:reason_for_topos_being_anima_fibredpd}
The main reason for the restriction $\baseTopos=\spc$ in this section is that, for an $\infty$-category $\B$, we will also need to consider parametrised homotopy theory over the alternative $\infty$-topos $\presheaf(\B)=\presheaf(\B;\spc)$, and exploit the interaction between the $\infty$-topoi $\presheaf(\B)$ and $\spc$.

In order for the arguments of this section to be valid over an arbitrary topos $\baseTopos$, we would need a well-developed parametrised homotopy theory over a $\baseTopos$-topos, i.e. a topos parametrised over $\baseTopos$; in particular, we would need to generalise the results from \cref{sec:morita} to categories parametrised over the $\baseTopos$-topos $\presheafTopos_\baseTopos(\B)=\presheafTopos_\baseTopos(\B;\animaTopos_\baseTopos)$. Currently, this extension of parametrised homotopy theory seems to be missing from the literature, and this forces our restriction to the case $\baseTopos=\spc$ in this section.
\end{rmk}

\subsection{Fibred ambidexterity and Poincar\'e pairs}\label{subsection:fibred_ambidex}
We fix a presentably symmetric monoidal $\infty$-category $\sC\in\calg(\presentable^L)$ throughout the section.
After addressing the requisite preliminaries, will reach the notion of fibred ambidexterity for a cartesian fibration $p \colon \E \rightarrow \B$ in  \cref{defn:fibred_ambidexterity} which will feed into the main \cref{defn:fibred_poincare_pairs}.
Similar to \cite{Cnossen2023}, the general theme here is to pass from working over $\spc$ to working over $\presheaf(\B)$.

\begin{constr}
\label{cons:fibrewise_opposite}
Let $\B$ be an $\infty$-category. The $\infty$-categories
\begin{equation}
\label{eq:hop_cocart_cart}
\cocartesianCategory_{/\B\op} \simeq \func(\B\op,\cat) \simeq \cartesianCategory_{/\B}
\end{equation}
inherit an involution from the involution $(-)\op \colon \cat \rightarrow \cat$, applied in the middle. We will denote this involution by $(-)\vop$ for both $\cocartesianCategory_{/\B\op}$ and $\cartesianCategory_{/\B}$, and refer to it as the \textit{vertical opposite construction}. For instance, $(-)\vop$ sends the cartesian fibration $p \colon \E \rightarrow \B$ to $p\vop \colon \E\vop \rightarrow \B$, which is intuitively obtained by switching the direction of all morphisms in the fibres of the cartesian fibration, but keeping the direction of the cartesian lifts of morphisms in $\B$. Recall that $p\colon \E \rightarrow \B$ is a cartesian fibration if and only if $p\op \colon \E\op \rightarrow \B\op$ is a cocartesian fibration; in other words, we have an equivalence of $\infty$-categories
\[ (-)\op \colon \cocartesianCategory_{/\B\op} \simeq \cartesianCategory_{/\B} \cocolon (-)\op. \]
We let $(-)\hop \coloneqq (-)\op \circ (-)\vop$ and refer to this functor as the \textit{horizontal opposite construction}. For instance, $(-)\hop$ sends $p\colon \E \rightarrow \B$ to $p\hop \colon \E\hop = (\E\vop)\op \rightarrow \B\op$, intuitively obtained by keeping the orientation of morphisms inside fibres, and switching the orientation of cartesian lifts of morphisms in $\B$, which result in cocartesian lifts of morphisms in $\B\op$. Note that the pair of inverse equivalences $(-)\hop\colon\cocartesianCategory_{/\B\op} \simeq \cartesianCategory_{/\B} \cocolon (-)\hop$ agrees with the pair of composite equivalences in \cref{eq:hop_cocart_cart}.
\end{constr}

\begin{nota}
\label{nota:cartesian_fibration_and_B_cat}
Given a cartesian fibration $(p \colon \E \rightarrow \B)\in\cartesianCategory_{/\B}$,
we denote the associated $\presheaf(\B)$-category by $\paramFibred{E} \in \cat_{\presheaf(\B)}$: this also corresponds to $(p\hop\colon \E\hop\rightarrow \B\op)\in\cocartesianCategory_{/\B\op}$ 
via the equivalences \cref{eq:hop_cocart_cart} and  $ \func(\B\op,\cat) \simeq \cat_{\presheaf(\B)}$.

We further denote by $\vert p\vert^v\colon\vert\E\vert^v\to\B$ and $\vert p\hop\vert^v\colon\vert\E\hop\vert^v\to\B\op$ the right, respectively left, fibration corresponding to $\vert\paramFibred{E}\vert\in\presheaf(\B)$. The corresponding total $\infty$-categories can be expressed as the following localisations, thanks to \cref{lem:localisation_of_fibration}:
\[
\vert\E\hop\vert^v\simeq\scL(\E\hop,\E\hop\times_{\B\op}\B^\simeq);\quad\quad 
\vert\E\vert^v\simeq\scL(\E,\E\times_{\B}\B^\simeq). 
\]
We say that a functor out of $\E$ or out of $\E\hop$ is \textit{fibrewise groupoidal} if it factors through $|\E|^v$, respectively through $|\E\hop|^v$.
We observe that the equivalence $|\paramFibred{E}|\simeq|\paramFibred{E}\op|\in\presheaf(\B)$ corresponds to an equivalence $|\E\hop|^v\simeq|\E\op|^v$ of left fibrations over $\B\op$.
\end{nota}

Given $\sC \in \calg(\presentable^L_\mathrm{st})$, the cofree construction from \cref{sec:cofree} provides a levelwise stable presentably symmetric monoidal $\presheaf(\B)$-category $\cofree_{\B\op}\sC$. The design criterion of the cofree construction $\cofree_{\B\op}\sC$ is that $\Gamma \presheafTopos_{\presheaf(\B)}(\paramFibred{E};\cofree_{\B\op}\sC)$ agrees with the $\infty$-category $\presheaf(\E;\sC)$ of $\sC$-valued presheaves, as explained in the following.
\begin{obs}[Cofree $\presheaf(\B)$-categories as coefficients]
\label{obs:cofree_coeff_cats}
Let $p \colon \E \rightarrow \B$ be a cartesian fibration.
Let $\D$ be any $\infty$-category. Then the definition of $\cofree_{\B\op}$ as a right adjoint of the cocartesian unstraightening $\int$, together with the identification $\E\hop\simeq\int_{\B\op}\paramFibred{E}$, yields
\begin{equation} \label{eqn:global_section_of_functors_into_cofree}
\Gamma \funTopos_{\presheaf(\B)}(\paramFibred{E}, \cofree_{\B\op}\D)
\simeq\func_{\presheaf(\B)}(\paramFibred{E}, \cofree_{\B\op}\D) \simeq \func(\E\hop,\D).
\end{equation}

In other words, $\cofree_{\B\op}\D$-valued systems on $\paramFibred{E} \in \cat_{\presheaf(\B)}$ are the same as $\D$-valued systems on $\E\hop$. On presheaves, by the equivalence $\int_{\B\op}(\paramFibred{E}\op)\simeq\E\op$, we have instead identifications
\[
\Gamma \presheafTopos_{\presheaf(\B)}(\paramFibred{E}; \cofree_{\B\op}\D)
\simeq \func_{\presheaf(\B)}(\paramFibred{E}\op,\cofree_{\B\op}\D) \simeq \func(\E\op,\D)\simeq\presheaf(\E;\D). 
\]
Given a $\presheaf(\B)$-functor $f \colon \paramFibred{E} \to\paramFibred{E}'$, the global section functor $\Gamma$ sends the $\presheaf(\B)$-functor $f^* \colon \presheafTopos_{\presheaf(\B)} (\paramFibred{E}';\cofree_{\B\op}\D) \rightarrow \presheafTopos_{\presheaf(\B)} (\paramFibred{E};\cofree_{\B\op}\D)$ to the functor $(\int(f))^* \colon \presheaf(\E';\D) \rightarrow \presheaf(\E;\D)$; since $\Gamma$ refines to an $(\infty,2)$-functor, we similarly have $\Gamma(f_!)\simeq(\int(f))_!$ and $\Gamma(f_*)\simeq(\int(f))_*$ if these left and right adjoints exist.

Furthermore, for $\sC\in\calg(\presentable^L)$, since $\Gamma\colon\presentable^L_{\presheaf(\B)}\to\presentable^L$ is lax symmetric monoidal, for $\xi \in \Gamma(\cofree_{\B\op}\sC)\simeq\presheaf(\B;\sC)$ the $\cofree_{\B\op}\sC$-linear $\presheaf(\B)$-endofunctor of $\presheafTopos_{\presheaf(\B)}(\paramFibred{E};\cofree_{\B\op}(\sC))$ given by tensoring with $\xi$ is sent along $\Gamma$ to the $\presheaf(\B;\sC)$-linear endofunctor of $\presheaf(\E;\sC)$ given by tensoring with $\xi$.
\end{obs}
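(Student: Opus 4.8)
The plan is to deduce every assertion in this observation from a single input — the defining adjunction of the cofree construction from \cref{sec:cofree} — together with the standard straightening/unstraightening dictionary. Write $\int_{\B\op}\colon\cat_{\presheaf(\B)}\simeq\func(\B\op,\cat)\simeq\cocartesianCategory_{/\B\op}\to\cat$ for the total-category functor, so that by construction $\cofree_{\B\op}$ is its right adjoint and there is an equivalence $\func_{\presheaf(\B)}(\paramFibred{F},\cofree_{\B\op}\D)\simeq\func(\int_{\B\op}\paramFibred{F},\D)$, natural in $\paramFibred{F}\in\cat_{\presheaf(\B)}$ and $\D\in\cat$. First I would record the two identifications of total categories that feed into it: under the equivalences \cref{eq:hop_cocart_cart} and \cref{nota:cartesian_fibration_and_B_cat} one has $\int_{\B\op}\paramFibred{E}\simeq\E\hop$; and straightening shows that $\paramFibred{E}\op\colon\B\op\to\cat$ is the functor $b\mapsto(\E_b)\op$ equipped with the opposites of the cartesian transition functors of $p$, whose cocartesian unstraightening over $\B\op$ is precisely $\E\op\to\B\op$, i.e. $\int_{\B\op}(\paramFibred{E}\op)\simeq\E\op$.

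Granting these, the first chain \cref{eqn:global_section_of_functors_into_cofree} follows by combining $\Gamma\funTopos_{\presheaf(\B)}(-,-)\simeq\func_{\presheaf(\B)}(-,-)$ from the recollections with the adjunction applied to $\paramFibred{F}=\paramFibred{E}$; the presheaf version follows the same way with $\paramFibred{F}=\paramFibred{E}\op$, using $\presheafTopos_{\presheaf(\B)}(\paramFibred{E};\cofree_{\B\op}\D)=\funTopos_{\presheaf(\B)}(\paramFibred{E}\op,\cofree_{\B\op}\D)$ and $\int_{\B\op}(\paramFibred{E}\op)\simeq\E\op$. For the functoriality in $f\colon\paramFibred{E}\to\paramFibred{E}'$, one observes that under the natural equivalence above precomposition $f^*$ corresponds to precomposition with $\int(f)$, i.e. to $(\int f)^*$; since $\Gamma$ refines to an $(\infty,2)$-functor $\widehat{\cat}_{\presheaf(\B)}\to\widehat{\cat}$ and in particular preserves adjunctions, the equivalences $\Gamma(f_!)\simeq(\int f)_!$ and $\Gamma(f_*)\simeq(\int f)_*$ follow from uniqueness of adjoints whenever the relevant adjoints exist.

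For the multiplicative statement I would argue as follows. Since $\int_{\B\op}$ is symmetric monoidal for the cartesian structures, its right adjoint $\cofree_{\B\op}$ is lax symmetric monoidal, so it promotes $\sC\in\calg(\presentable^L)$ to $\cofree_{\B\op}\sC\in\calg(\presentable^L_{\presheaf(\B)})$; moreover the case $\paramFibred{E}=\ast_{\presheaf(\B)}$ of the displayed equivalence, using $\int_{\B\op}\ast_{\presheaf(\B)}\simeq\B\op$, identifies $\Gamma(\cofree_{\B\op}\sC)\simeq\presheaf(\B;\sC)$ as symmetric monoidal $\infty$-categories. The $\presheaf(\B)$-category $\presheafTopos_{\presheaf(\B)}(\paramFibred{E};\cofree_{\B\op}\sC)$ carries its canonical $\cofree_{\B\op}\sC$-linear structure, and tensoring with $\xi\in\Gamma(\cofree_{\B\op}\sC)$ is a $\cofree_{\B\op}\sC$-linear endofunctor; applying the lax symmetric monoidal functor $\Gamma\colon\presentable^L_{\presheaf(\B)}\to\presentable^L$, which therefore carries $\module_{\cofree_{\B\op}\sC}$ into $\module_{\Gamma\cofree_{\B\op}\sC}$, sends it to the $\presheaf(\B;\sC)$-linear endofunctor of $\presheaf(\E;\sC)$ given by tensoring with $\Gamma(\xi)=\xi$.

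The main obstacle I anticipate is not the object-level bookkeeping, which is essentially formal once the adjunction is in hand, but making it all coherent at the level of $(\infty,2)$-categories and of symmetric monoidal structures: one needs that $\int_{\B\op}$ is symmetric monoidal and that the adjunction $\int_{\B\op}\dashv\cofree_{\B\op}$ upgrades compatibly with the $(\infty,2)$-categorical enhancements and with the module structures, so that the identifications above respect composition, adjoints, and the $\cofree_{\B\op}\sC$-action, and are not merely equivalences of underlying $\infty$-categories. This is precisely the content that the development of the cofree construction in \cref{sec:cofree} must supply.
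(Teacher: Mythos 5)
Your overall route is the same as the paper's: this is an observation whose justification is precisely the defining adjunction $\int_{\B\op}\dashv\cofree_{\B\op}$ from \cref{lem:omnibus_cofree}, the identifications $\int_{\B\op}\paramFibred{E}\simeq\E\hop$ and $\int_{\B\op}(\paramFibred{E}\op)\simeq\E\op$, the $(\infty,2)$-functoriality of $\Gamma$ for the statements about $f^*$, $f_!$, $f_*$, and lax symmetric monoidality of $\Gamma$ for the module statement. Those parts of your write-up match the intended argument; the only genuinely routine point you (like the paper) gloss is that the adjunction a priori yields equivalences of mapping spaces, and the upgrade to the equivalence of functor categories in \cref{eqn:global_section_of_functors_into_cofree} uses, e.g., the cotensor observation $\int_{\B\op}(\paramFibred{E}\times[n])\simeq\E\hop\times[n]$ for all $[n]\in\simplex$.

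There is, however, one incorrect step in your multiplicative part: you assert that $\int_{\B\op}$ is symmetric monoidal for the cartesian structures and deduce that its right adjoint $\cofree_{\B\op}$ is lax symmetric monoidal. Unstraightening does not preserve products: it sends a product in $\cat_{\presheaf(\B)}$ to a fibre product over $\B\op$, so for instance $\int_{\B\op}(\ast\times\ast)\simeq\B\op$, which differs from $\B\op\times\B\op$ whenever $\B$ is nontrivial. The fact you need is true, but for the opposite reason: $\cofree_{\B\op}$, being a right adjoint, preserves products and is therefore itself symmetric monoidal for the cartesian structures, and the further (nontrivial) point that it restricts to a lax symmetric monoidal functor $\presentable^L\to\presentable^L_{\presheaf(\B)}$ is exactly \cref{prop:cofree_of_presentable}, which you should cite rather than re-derive from the false premise. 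With that substitution, the remainder of your multiplicative argument (identifying $\Gamma(\cofree_{\B\op}\sC)\simeq\presheaf(\B;\sC)$ via $\int_{\B\op}\ast\simeq\B\op$ and transporting the $\xi$-action along the lax symmetric monoidal $\Gamma$) goes through as intended.
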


Specialising  \cref{defn:ambidexterityforpairs} of twisted ambidexterity to the topos $\baseTopos=\presheaf(\B)$, we now obtain the following:

\begin{defn}
\label{defn:fibred_ambidexterity}
Let $\sC \in \calg(\presentable^L)$, and $p \colon \E \rightarrow \B$ a cartesian fibration. We say that $p$ is a \textit{$\sC$-twisted ambidextrous} cartesian fibration if the $\presheaf(\B)$-category $\paramFibred{E}\in\cat_{\presheaf(\B)}$ corresponding to $p$ is $\cofree_{\B\op}\sC$-twisted ambidextrous. We define similarly the notions of $p$ being \textit{groupoidally $\sC$-twisted ambidextrous} and \textit{$\sC$-Poincar\'e}.
\end{defn}

Let $\sC \in \calg(\presentable^L)$.
Unraveling the notion of a $\sC$-twisted ambidextrous map, we get that a cartesian fibration $p \colon \E \rightarrow \B$ is $\sC$-twisted ambidextrous if the $\presheaf(\B)$-functor
\[ 
\paramFibred{E}_* \colon \presheafTopos_{\presheaf(\B)}(\paramFibred{E};\cofree_{\B\op}\sC) \rightarrow \cofree_{\B\op}\sC
\]
is $\presheaf(\B)$-colimit preserving and its lax $\cofree_{\B\op}\sC$-linear structure as a right adjoint to the $\cofree_{\B\op}\sC$-linear functor $p^*$ is again $\cofree_{\B\op}\sC$-linear. We will unpack these properties more explicitly in \cref{subsec:fibrewise_characterisation} under the further hypothesis that $p$ is a bicartesian fibration. Next, we  generalise the notion of twisted ambidextrous maps to pairs.

\begin{defn}
\label{defn:B_pair}
Consider a morphism in $\cartesianCategory_{/\B}$ given by a commuting triangle
\[
\begin{tikzcd}[row sep=10pt]
\partial_v \E \ar[rr, "j_v"] \ar[dr, "q"'] && \E \ar[dl, "p"] \\
& \B.
\end{tikzcd}
\]
We denote by $\paramFibred{j} \colon \partial \paramFibred{E} \rightarrow \paramFibred{E}$ the corresponding functor of $\presheaf(\B)$-categories. We say that $(p,q)$ is a \textit{$\B$-pair} if 
$\paramFibred{j}$ is a left closed inclusion of $\presheaf(\B)$-categories. We let $(p,q)_*\coloneqq\Gamma((\paramFibred{E},\partial\paramFibred{E})_*)\simeq\fib(p_*\to q_*j_v^*)\colon\presheaf(\E;\sC)\to\sC$.
\end{defn}

\begin{obs}
\label{obs:B-pair_over_[1]}
A triangle as in \cref{defn:B_pair} is a $\B$-pair if and only if for each $b \in \B$, the fibre $\partial_v \E_b \subset \E_b$ is left closed, and if the cartesian fibre transport along each morphism $b \rightarrow c$ in $\B$ makes the diagram
\[
\begin{tikzcd}[row sep=10pt]
\E_b \ar[dr] && \E_{c} \ar[ll] \ar[dl]\\
& {[1]} &
\end{tikzcd}
\]
commute, where the downward pointing maps exhibit the left closedness of $\partial_v \E_b \subset \E_b$ and $\partial_v \E_c \subset \E_c$ as in
\cref{defn:categorypair}. Equivalently, $(p,q)$ is a $\B$-pair if there is a morphism $\E\to\B\times[1]$ in $\cartesianCategory_{/\B}$ exhibiting $\partial_v\E$ as the fibre product of the cospan of $\infty$-categories $\B\xrightarrow{-\times0}\B\times[1]\leftarrow\E$.
\end{obs}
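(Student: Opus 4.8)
The plan is to run everything through the straightening equivalences $\cartesianCategory_{/\B}\simeq\func(\B\op,\cat)\simeq\cat_{\presheaf(\B)}$ recalled in \cref{cons:fibrewise_opposite} and \cref{ex:presheaftopoi}. Under these, $p$ corresponds to the $\presheaf(\B)$-category $\paramFibred{E}$, whose value at $b\in\B$ is the fibre $\E_b$ with functoriality given by cartesian fibre transport; likewise $q$ corresponds to the levelwise-full sub-$\presheaf(\B)$-category $\partial\paramFibred{E}$ with value $\partial_v\E_b$, the map $\paramFibred{j}$ to the levelwise inclusions, and the constant $\presheaf(\B)$-category $[1]$ to the object $\B\times[1]\in\cartesianCategory_{/\B}$, i.e. the projection. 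By \cref{defn:B_pair}, $(p,q)$ being a $\B$-pair means exactly that $\paramFibred{j}$ is the inclusion of a left closed $\presheaf(\B)$-subcategory in the sense of \cref{defn:categorypair}, i.e. that there is a $\presheaf(\B)$-functor $F\colon\paramFibred{E}\to[1]$ with $\partial\paramFibred{E}\simeq F^{-1}(0)$. Transporting this statement back across straightening, and using that limits in $\cat_{\presheaf(\B)}\simeq\func(\B\op,\cat)$ are computed levelwise, it becomes the existence of a morphism $\bar F\colon\E\to\B\times[1]$ in $\cartesianCategory_{/\B}$ with $\partial_v\E\simeq\E\times_{\B\times[1]}(\B\times\{0\})$, which is the ``Equivalently'' clause in the statement.

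For the first equivalence I would invoke the description of left closed $\presheaf(\B)$-subcategories in the example following \cref{obs:leftclosed}: a levelwise-full sub-functor $b\mapsto\partial_v\E_b\subseteq\E_b$ is left closed if and only if (i) for every $b$ and every morphism $x\to y$ in $\E_b$ with $y\in\partial_v\E_b$ one has $x\in\partial_v\E_b$ — equivalently (the case $\B=\ast$ of the same description), $\partial_v\E_b\subseteq\E_b$ is a left closed subcategory — and (ii) for every $f\colon b\to c$ in $\B$ the cartesian transport $\rho_f\colon\E_c\to\E_b$ satisfies $\rho_f^{-1}(\partial_v\E_b)=\partial_v\E_c$. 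Condition (i) is precisely condition (1) of the Observation. For (ii), assuming (i), \cref{lem:uniqueness_functor_to_1} furnishes for each $b$ an essentially unique functor $f_b\colon\E_b\to[1]$ exhibiting $\partial_v\E_b$ as left closed; then $\rho_f^{-1}(\partial_v\E_b)=(f_b\circ\rho_f)^{-1}(0)$, so (ii) holds precisely when $f_b\circ\rho_f$ and $f_c$ have the same preimage of $0$ in $\E_c$, which by \cref{lem:uniqueness_functor_to_1} again is equivalent to $f_b\circ\rho_f\simeq f_c$ — i.e. to commutativity of the triangle displayed in the statement, whose two downward maps are exactly $f_b$ and $f_c$. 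This identifies (ii) with condition (2), completing the reduction.

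The only point that is not purely formal is the backward direction: from the levelwise functors $\{f_b\}$ together with the commuting triangles one must produce an honest $\presheaf(\B)$-functor $\paramFibred{E}\to[1]$, not merely a family compatible ``up to homotopy''. This is exactly what the cited description of left closed $\presheaf(\B)$-subcategories supplies, and the reason no higher coherences must be checked by hand is \cref{lem:uniqueness_functor_to_1} applied to the topos $\presheaf(\B)$: the space of $\presheaf(\B)$-functors $\paramFibred{E}\to[1]$ exhibiting $\partial\paramFibred{E}$ as left closed is empty or contractible, so being a $\B$-pair is a property and it suffices to produce a single such functor. If one wished to bypass the cited example, one could argue directly: conditions (1) and (2) force cartesian transport to preserve $\partial_v\E$ and its complement, so factoring any morphism of $\E$ through a cartesian lift of its image shows $\partial_v\E\subseteq\E$ is a sieve in the total $\infty$-category; the classical fact that sieves are exactly the left closed subcategories then yields $F\colon\E\to[1]$ with $\partial_v\E=F^{-1}(0)$, and $(p,F)$ preserves cartesian edges because $F$ sends $p$-cartesian edges to equivalences in $[1]$ (their source and target have the same $F$-value by (2)). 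Either way, this translation between levelwise data and genuine $\presheaf(\B)$-functors is the crux; the remainder is unwinding straightening.
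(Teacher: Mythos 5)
Your proposal is correct. The paper states this as an unproved Observation, and the intended justification is exactly the one you give: unwind the straightening equivalence $\cartesianCategory_{/\B}\simeq\cat_{\presheaf(\B)}$, apply the levelwise description of left closed $\presheaf(\B)$-subcategories from the example following \cref{obs:leftclosed}, and use \cref{lem:uniqueness_functor_to_1} to convert the preimage condition on fibre transport into commutativity of the displayed triangle and to dispose of higher coherences. Your alternative direct argument (conditions (1) and (2) make $\partial_v\E$ a sieve in the total category, and the resulting $F\colon\E\to[1]$ sends $p$-cartesian edges to identities, hence $(p,F)$ lies in $\cartesianCategory_{/\B}$) is also sound and is a nice self-contained check.
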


\begin{defn}
A $\B$-pair $(p,q)$ is \textit{$\sC$-twisted ambidextrous} if both $p$ and $q$ are $\sC$-twisted ambidextrous in the sense of \cref{defn:fibred_ambidexterity}, i.e. if $(\paramFibred{E},\partial\paramFibred{E})$ is $\cofree_{\B\op}\sC$-twisted ambidextrous.
\end{defn}

By \cref{prop:classification_of_linear_functors}, if $(p,q)$ is a $\sC$-twisted ambidextrous $\B$-pair, the $\presheaf(\B)$-functor
\[ 
(\paramFibred{E},\partial \paramFibred{E})_*\colon \presheafTopos_{\presheaf(\B)}(\paramFibred{E};\cofree_{\B\op}\sC) \rightarrow \cofree_{\B\op}\sC
\]
corresponds to a classifying system $\omega_{p,q} \in \func_{\presheaf(\B)}(\paramFibred{E},\cofree_{\B\op}\sC)$, whose corresponding object in $\func(\E\hop,\sC)$ under the equivalence \cref{eqn:global_section_of_functors_into_cofree} shall be denoted $\omega_{p,q}$.

\begin{rmk}
\label{rmk:omegapq_and_paramFibredE_*}
We unravel the correspondence of $(\paramFibred{E},\partial \paramFibred{E})_*(-)$ to $\omega_{p,q}$ on global sections using \cref{obs:cofree_coeff_cats}. To this end, write $\frakTwistedArrow(\paramFibred{E})$ for the twisted arrow $\presheaf(\B)$-category of $\paramFibred{E}$. Write $\pi_\B\colon\twistedArrow_{\B}(\E)\to\B$ for the associated cartesian fibration over $\B$. We have source and target functors
$s_\B \colon \twistedArrow_{\B}(\E) \rightarrow \E\vop$ and $t_\B \colon \twistedArrow_\B(\E) \rightarrow \E$, which are maps of cartesian fibrations over $\B$.
We may regard $\omega_{p,q} \in \func(\E\hop;\sC) \simeq \presheaf(\E\vop;\sC)$, so that $s_\B^* (\omega_{p,q}) \in \presheaf(\twistedArrow_{\B}(\E);\sC)$. By \cref{prop:formula_with_twisted_arrow}, the functor $(p,q)_*\simeq\Gamma((\paramFibred{E},\partial \paramFibred{E})_*)\colon\presheaf_{\presheaf(\B)}(\paramFibred{E};\cofree_{\B\op}\sC)\to\Gamma\cofree_{\B\op}\sC$ may be identified with the composite functor
\[
\begin{tikzcd}[column sep=50pt]
\presheaf(\E;\sC) \ar[r,"t_\B^*\simeq","\Gamma(\paramFibred{t}^*)"']& \presheaf(\twistedArrow_{\B}(\E);\sC) \ar[r,"s_\B^* \omega_{p,q}\otimes-\simeq","\Gamma(\paramFibred{s}^*\omega_{p,q}\otimes-)"']& \presheaf(\twistedArrow_\B(\E);\sC) \ar[r,"(\pi_\B)_!\simeq","\Gamma(\frakTwistedArrow(\paramFibred{E})_!)"']&\presheaf(\B;\sC).
\end{tikzcd}
\]
\end{rmk}

\begin{defn}\label{defn:fibred_poincare_pairs}
Let $(p,q)$ be a $\sC$-twisted ambidextrous $\B$-pair. We say that $(p,q)$ is \textit{groupoidally $\sC$-twisted ambidextrous}, respectively \textit{$\sC$-Poincar\'e}, if $(\paramFibred{E},\partial\paramFibred{E})$ is groupoidally $\cofree_{\B\op}\sC$-twisted ambidextrous, respectively $\cofree_{\B\op}\sC$-Poincar\'e.
\end{defn}
\begin{rmk}
\label{rmk:Dpq_and_paramFibredE_*}
Similarly as in \cref{rmk:omegapq_and_paramFibredE_*}, if $(p,q)$ is a groupoidally $\sC$-twisted ambidextrous $\B$-pair, then we have an equivalence $(p,q)_*\simeq p_!(D_{p,q}\otimes-)$ of $\sC$-linear functors $\presheaf(\E;\sC)\to\presheaf(\B;\sC)$.
\end{rmk}

\begin{warning}
\label{warning:fibrewise_groupoidal}
If $(p,q)$ is a groupoidally $\sC$-twisted ambidextrous $\B$-pair, it is in general not true that 
$\omega_{p,q}$ factors through $\vert \E\hop\vert$;
the only conclusion that one can reach in general is that $\omega_{p,q}$ factors through the localisation $|\E\hop|^v$ from \cref{nota:cartesian_fibration_and_B_cat}, i.e. it is fibrewise groupoidal. Equivalently, $(p,q)$ is groupoidally $\sC$-twisted ambidextrous
if and only if the
restriction of $\omega_{p,q}$ on the fibre $\E_b\coloneqq\E\hop \times_{\B\op} \{b\}$ factors through $|\E_b|$ for each $b \in \B\op$. 

In this case, by \cref{prop:Dequivalence},
$\omega_{p,q}\colon|\E\hop|^v\to\sC$ and $D_{p,q}\colon|\E\op|^v\to\sC$ correspond to each other along the equivalence $|\E\hop|^v\simeq|\E\op|^v$ from \cref{nota:cartesian_fibration_and_B_cat}. Similarly, if $(p,q)$ is $\sC$-Poincar\'e, we can only deduce that $\omega_{p,q}$ is fibrewise groupoidal and its image is contained in the \textit{full $\infty$-subcategory} of $\sC$ spanned by objects in $\picardSpace(\sC)$.
\end{warning}

To end this subsection, we show that the classifying system interacts excellently with  basechange in the base category along right fibrations.
\begin{prop}[\'Etale locality for $\B$-pairs]
\label{prop:local_global_principle_for_functor_pairs}
Let $(p,q)$ be a $\B$-pair as in \cref{defn:B_pair}, and let $\pi\colon\B' \rightarrow \B$ be a right fibration. Let $p' \colon \E'\coloneqq\E \times_\B \B' \rightarrow \B'$ and $q'= \partial_v\E'\coloneqq\partial_v\E \times_\B \B' \rightarrow \B'$ denote the pullback cartesian fibrations. Then the following hold.
\begin{enumerate}
\item If $(p,q)$ is a $\sC$-twisted ambidextrous $\B$-pair, then $(p',q')$ is a $\sC$-twisted ambidextrous $\B'$-pair, and $\omega_{p',q'}$ is the restriction of $\omega_{p,q}$ under the canonical functor $\tilde\pi\colon(\E')\hop\to\E\hop$.
\item If $\pi$ is essentially surjective and $(p',q')$ is $\sC$-twisted ambidextrous, then $(p,q)$ is also $\sC$-twisted ambidextrous.
\end{enumerate}
\end{prop}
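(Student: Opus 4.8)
\textbf{Proof sketch for \cref{prop:local_global_principle_for_functor_pairs}.}
The plan is to recognise both statements as instances of the étale basechange results for classifying systems, \cref{lem:etale_basechange_classifying_systems}, transported through the correspondence between cartesian fibrations over $\B$ and $\presheaf(\B)$-categories. The key observation is that since $\pi\colon\B'\to\B$ is a right fibration, by \cref{ex:presheaftopoi} the induced geometric morphism on presheaf topoi $\presheaf(\B')\rightleftharpoons\presheaf(\B)$ is an \emph{étale} geometric morphism: indeed, a right fibration $\pi\colon\B'\to\B$ corresponds under straightening to a presheaf $\tau\in\presheaf(\B)$, and then $\presheaf(\B')\simeq\presheaf(\B)_{/\tau}$ with $\pi^*$ being the étale pullback $\pi^*_\tau$. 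Under this identification, the basechange of $\presheaf(\B)$-categories $\pi^*_\tau\colon\cat_{\presheaf(\B)}\to\cat_{\presheaf(\B')}$ corresponds to the pullback of cartesian fibrations along $\pi$: concretely, $\pi^*_\tau\paramFibred{E}$ is the $\presheaf(\B')$-category associated with $p'\colon\E'=\E\times_\B\B'\to\B'$. Likewise, $\pi^*_\tau(\cofree_{\B\op}\sC)\simeq\cofree_{(\B')\op}\sC$, which can be checked either directly from the universal property of the cofree construction in \cref{sec:cofree} or from the fact that both sides represent the same functor of $\presheaf(\B')$-categories via \cref{obs:cofree_coeff_cats} together with the compatibility of cocartesian unstraightening with basechange.

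With these identifications in place, the two assertions are essentially unwound from \cref{lem:etale_basechange_classifying_systems}. For part (1): if $(p,q)$ is $\sC$-twisted ambidextrous then $(\paramFibred{E},\partial\paramFibred{E})$ is $\cofree_{\B\op}\sC$-twisted ambidextrous, and hence $(\X,\partial\X)_*$ is $\cofree_{\B\op}\sC$-linear and colimit preserving. By \cref{lem:etale_basechange_classifying_systems}(1), applied with the étale morphism $\pi_\tau^*$ and to the left closed inclusion $\paramFibred{j}$, the functor $\pi_\tau^*((\paramFibred{E},\partial\paramFibred{E})_*)\simeq(\pi_\tau^*\paramFibred{E},\pi_\tau^*\partial\paramFibred{E})_*$ is again colimit preserving with strict $\pi_\tau^*(\cofree_{\B\op}\sC)\simeq\cofree_{(\B')\op}\sC$-linear structure, so $(p',q')$ is $\sC$-twisted ambidextrous; moreover its classifying system is $\pi_\tau^*\omega_{p,q}$. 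Translating back along the equivalence $\func(\E\hop,\sC)\simeq\func_{\presheaf(\B)}(\paramFibred{E},\cofree_{\B\op}\sC)$ of \cref{eqn:global_section_of_functors_into_cofree}, and noting that $\pi_\tau^*$ on the $\presheaf(\B)$-categorical side corresponds to precomposition with the canonical functor $\tilde\pi\colon(\E')\hop\to\E\hop$ (which is the horizontal opposite of the pullback square), we conclude $\omega_{p',q'}\simeq\tilde\pi^*\omega_{p,q}$ as desired. For part (2): if $\pi$ is essentially surjective then the object $\tau\in\presheaf(\B)$ classifying $\pi$ satisfies that $\tau\to\ast$ is an effective epimorphism; we may then invoke \cref{lem:etale_basechange_classifying_systems}(2) directly, applied once to $p$ and once to $q$, to descend $\sC$-twisted ambidexterity from $(p',q')$ to $(p,q)$.

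One technical point that will require a little care, and which I expect to be the main obstacle, is verifying cleanly that the $\presheaf(\B)$-categorical pullback $\pi^*_\tau\paramFibred{E}$ really does correspond to $\E\times_\B\B'\to\B'$ under the dictionary of \cref{nota:cartesian_fibration_and_B_cat}, including that it is compatible with the left closed inclusions (so that $\pi^*_\tau(\partial\paramFibred{E})$ is the $\presheaf(\B')$-category of $\partial_v\E\times_\B\B'\to\B'$, which follows from \cref{obs:leftclosed}\ref{rmk:left_closed_fibration} since left closedness is stable under pullback), and that $\tilde\pi\colon(\E')\hop\to\E\hop$ is indeed the functor through which restriction of $\omega_{p,q}$ computes $\omega_{p',q'}$. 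This is a matter of chasing through the equivalences $\cocartesianCategory_{/\B\op}\simeq\func(\B\op,\cat)\simeq\cat_{\presheaf(\B)}$ of \cref{cons:fibrewise_opposite} and \cref{nota:cartesian_fibration_and_B_cat} and checking that they intertwine the étale basechange $\pi_\tau^*$ with pullback of fibrations along $\pi$ — a compatibility that is standard (cf. \cite[Sec.\ 3.3]{Martini2022Yoneda}) but worth stating explicitly. Once this bookkeeping is set up, both parts follow formally from \cref{lem:etale_basechange_classifying_systems}.
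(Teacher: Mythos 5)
Your proposal is correct and follows essentially the same route as the paper's proof: both identify the right fibration $\pi$ with an étale geometric morphism of presheaf topoi, identify $(\pi^*\paramFibred{E},\pi^*\partial\paramFibred{E})$ with the pulled-back fibration pair and $\pi^*\cofree_{\B\op}\sC$ with $\cofree_{(\B')\op}\sC$ (the paper cites \cref{rmk:compatibility_of_cofree_with_etale_basechange} for the latter), and then deduce both parts from \cref{lem:etale_basechange_classifying_systems}, translating $\pi^*\omega_{p,q}$ into $\tilde\pi^*\omega_{p,q}$ via the dictionary of \cref{obs:cofree_coeff_cats}. The "bookkeeping" you flag is exactly what the paper's commuting diagram records, so there is no gap.
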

\begin{proof}
The right fibration $\pi\colon\B' \rightarrow \B$ gives rise to an \'etale geometric morphism $\pi_! \colon \presheaf(\B') \rightleftharpoons \presheaf(\B) \cocolon \pi^*$. Writing $(\paramFibred{E},\partial \paramFibred{E})$ for the $\presheaf(\B)$-category pair corresponding to $(p,q)$, the $\presheaf(\B')$-category pair corresponding to $(p',q')$ is exactly $(\pi^* \paramFibred{E},\pi^* \partial \paramFibred{E})$. Furthermore we may identify $\pi^* \cofree_{\B\op}\sC \simeq \cofree_{(\B')\op}(\sC)$, see \cref{rmk:compatibility_of_cofree_with_etale_basechange}.
Both claims on ambidexterity now follow from \cref{lem:etale_basechange_classifying_systems}. To identify $\omega_{p',q'}$ in (1), consider the following commuting diagram:
\[
\begin{tikzcd}[row sep=10pt]
\func_{\presheaf(\B)}(\paramFibred{E},\cofree_{\B\op}\sC) \ar[r, "\pi^*"] \ar[d, "\simeq"] 
&\func_{\presheaf(\B')}(\pi^* \paramFibred{E}, \pi^* \cofree_{\B\op}\sC) \ar[d, "\simeq"] \ar[r, "\simeq"] 
&\func_{\presheaf(\B')}(\pi^* \paramFibred{E}, \cofree_{(\B')\op} \sC) \ar[dl, "\simeq"]\\
\func(\E\hop,\sC) \ar[r,"\tilde\pi^*"] & \func((\E')\hop,\sC).
\end{tikzcd}
\]
Again by \cref{lem:etale_basechange_classifying_systems}, the top left horizontal functor $\pi^*$ sends
$\omega_{p,q}\mapsto\omega_{p',q'}$. The rest of the diagram then shows that this is compatible with the identification of functors into cofree coefficient $\presheaf(\B)$-categories from \cref{obs:cofree_coeff_cats}, in particular $\tilde\pi^*$ sends $\omega_{p,q}\mapsto\omega_{p',q'}$.
\end{proof}

\subsection{Fibrewise characterisation for bicartesian pairs}\label{subsec:fibrewise_characterisation}

Our next goal is to continue our study of $\sC$-twisted ambidextrous $\B$-pairs, and the behaviour of the classifying systems. More specifically, for a $\B$-pair $(p,q)$, we want to relate $\omega_{p,q}$ with the classifying systems for the pairs $(\E_b,\partial\E_b) \coloneqq (\E \times_\B \{b\}, \partial_v \E \times_\B \{b\})$, a notation that we will keep throughout the rest of the section. 

\begin{defn}
\label{defn:bicartesian_B_pair}
A $\B$-pair $(p,q)$ as in \cref{defn:B_pair}
is \textit{bicartesian} if both $p$ and $q$ are bicartesian fibrations, and the inclusion $\partial_v \E \subset \E$ is a map of bicartesian fibrations.
\end{defn}

Proving the following result is the main goal of this subsection.
\begin{thm}
\label{thm:fibrewise_characterisation_of_twisted_ambidexterity}
Let $(p,q)$ be a bicartesian $\B$-pair.
Then the following are equivalent:
\begin{enumerate}
\item the $\B$-pair is (groupoidally) $\sC$-twisted ambidextrous;
\item for each $b \in \B$, the $\infty$-category pair $(\E_b,\partial\E_b)$ is (groupoidally) $\sC$-twisted ambidextrous.
\end{enumerate}
In the $\sC$-twisted ambidextrous case, we  have equivalences $\omega_{\E_b,\partial\E_b} \simeq \omega_{p,q}\vert_{\E_b}$ for all $b\in \B$; and in the groupoidal $\sC$-twisted ambidextrous case we  also have equivalences $D_{\E_b,\partial\E_b}\simeq D_{p,q}\vert_{\E_b}$.
\end{thm}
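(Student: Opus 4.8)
The plan is to reduce to the case of a single cartesian fibration with empty boundary, and to verify that case ``fibrewise'' over $\B$. First I would dispense with the boundary: by \cref{defn:fibred_ambidexterity} and the definition of a $\sC$-twisted ambidextrous $\B$-pair, $(p,q)$ is $\sC$-twisted ambidextrous if and only if both cartesian fibrations $p$ and $q$ are, and likewise $(\E_b,\partial\E_b)$ is $\sC$-twisted ambidextrous if and only if both $\E_b$ and $\partial\E_b$ are. Moreover, by \cref{lem:C_linear_structure_on_relative_cohomology} and the naturality of the Morita classification (\cref{lem:naturality_classification_C_linear_functors}), the classifying system of the pair is the fibre $\omega_{p,q}\simeq\fib(\omega_p\to\paramFibred{j}_!\omega_q)$, and $\paramFibred{j}_!\omega_q$ restricts over each $b$ to $(i_b)_!\omega_{\partial\E_b}$, where $i_b\colon\partial\E_b\to\E_b$, because $\paramFibred{j}$ is a right fibration (\cref{obs:leftclosed}) and the fibre over $b$ is a pullback square, so \cref{prop:proper_smooth_base_change} applies. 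Hence, granting the absolute case, the identity $\omega_{p,q}|_{\E_b}\simeq\omega_{\E_b,\partial\E_b}$ follows, the equivalence of the groupoidal conditions then follows since fibrewise groupoidality is by definition detected on the fibres (\cref{warning:fibrewise_groupoidal}), and $D_{p,q}|_{\E_b}\simeq D(\omega_{p,q})|_{\E_b}\simeq D(\omega_{p,q}|_{\E_b})\simeq D(\omega_{\E_b,\partial\E_b})=D_{\E_b,\partial\E_b}$ follows from \cref{prop:dualish,prop:Dequivalence} together with the fibrewise description $\frakTwistedArrow(\paramFibred{E})|_b\simeq\twistedArrow(\E_b)$ of the twisted arrow $\presheaf(\B)$-category and a base-change argument of the type in \cref{cor:Beck_Chevalley_for_twisted_arrows} (the source and target functors of $\frakTwistedArrow(\paramFibred{E})$ being maps of cartesian fibrations over $\B$).

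It remains to treat the absolute case: for a bicartesian fibration $p\colon\E\to\B$, show that $p$ is $\sC$-twisted ambidextrous if and only if $\E_b$ is $\sC$-twisted ambidextrous for each $b\in\B$, and that then $\omega_p\in\func(\E\hop;\sC)$ satisfies $\omega_p|_{\E_b}\simeq\omega_{\E_b}$. By \cref{rec:recognition_c_linear_functors}, $p$ is $\sC$-twisted ambidextrous exactly when the $\presheaf(\B)$-functor $\paramFibred{E}_*\colon\presheafTopos_{\presheaf(\B)}(\paramFibred{E};\cofree_{\B\op}\sC)\to\cofree_{\B\op}\sC$ preserves $\presheaf(\B)$-colimits and its lax $\cofree_{\B\op}\sC$-linear structure is strict. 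Both conditions can be tested at the representables $b\in\B$, where, using the étale base change $\presheaf(\B_{/b})\rightleftarrows\presheaf(\B)$, the compatibility of the cofree construction with it, and \cref{lem:etale_basechange_classifying_systems,prop:local_global_principle_for_functor_pairs}, the functor $\paramFibred{E}_*$ becomes $(p_b)_*$ for the pulled-back cartesian fibration $p_b\colon\E\times_\B\B_{/b}\to\B_{/b}$. Since $p_b$ is cartesian, $(p_b)_*$ is the fibrewise limit: $(p_b)_*F$ sends $(x\to b)\in\B_{/b}$ to $(\E_x)_*(F|_{\E_x})$, so its colimit-preservation and the strictness of its linear structure reduce to the assertion that each $(\E_x)_*$ is colimit-preserving and $\sC$-linear, i.e.\ that each $\E_x$ is $\sC$-twisted ambidextrous. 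The Beck--Chevalley conditions comparing the different representable levels are where the bicartesian (not merely cartesian) hypothesis enters: they amount to the compatibility of the fibrewise limit $(p_b)_*$ with the left Kan extensions induced by maps of $\B$, and follow from the smooth--proper base change of \cref{prop:proper_smooth_base_change} applied to the pullback squares relating the various $\E\times_\B\B_{/b}$.

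To match classifying systems I would evaluate $\omega_p=\Gamma(\paramFibred{E}_*\circ\yoneda)$ at the representable $b$ and then at the terminal object $t=\id_b$ of $\B_{/b}$. The composite $\eval_t\circ(p_b)_*$ is restriction to the fibre over $t$ followed by $(\E_b)_*$; and the fibre inclusion $\E_b\hookrightarrow\E\times_\B\B_{/b}$ over the terminal object $t$ is fully faithful (because $t$ has only its identity endomorphism), so restriction along it carries the $\cofree_{\B\op}\sC$-linear Yoneda embedding of $\pi_b^*\paramFibred{E}$ to the Yoneda embedding of $\E_b$. Combining these two facts gives $\omega_p|_{\E_b}\simeq(\E_b)_*\circ\yoneda_{\E_b}\simeq\omega_{\E_b}$, which is \cref{obs:cofree_coeff_cats}'s identification specialised to the fibre; then, as indicated above, the pair and groupoidal statements, and the formula for $D_{\E_b,\partial\E_b}$, follow formally.

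The main obstacle I expect is the bookkeeping in the absolute case: carefully checking that the fibrewise-limit description of $(p_b)_*$ upgrades to a description of $\paramFibred{E}_*$ as a $\presheaf(\B)$-functor that is genuinely detected at the representable levels (including the verification that testing the ``left adjoint $\presheaf(\B)$-functor'' property on representables suffices for presentable $\presheaf(\B)$-categories), and pinning down precisely which Beck--Chevalley squares force the bicartesian hypothesis. By contrast, the reduction of the pair to its two halves, the naturality of the Morita classification, and the interaction with the twisted arrow construction are routine given the material of \cref{sec:morita}.
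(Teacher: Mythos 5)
Your architecture is essentially the paper's (pull back along the right fibrations $\B_{/b}\to\B$, compute the parametrised limit fibrewise, exploit full faithfulness of the fibre inclusion over the terminal object of $\B_{/b}$), but the step you yourself flag as the main obstacle is a genuine gap: you assert that colimit-preservation and strict linearity of $\paramFibred{E}_*$ "can be tested at the representables", and this is exactly what needs proof. Being a parametrised left adjoint requires levelwise colimit preservation at \emph{every} level $\tau\in\presheaf(\B)$, i.e.\ every right fibration $\B'\to\B$, together with Beck--Chevalley squares for \emph{all} morphisms of $\presheaf(\B)$, not only those between representables. The paper does not prove (or need) a "representables detect adjointness" principle; instead, in \cref{lem:colimits_fibrewise,lem:linearity_fibrewise} it verifies the conditions at an arbitrary level $\B'\to\B$ directly -- this costs nothing extra, since the pulled-back pair is again bicartesian with the same fibres and evaluation at points of $\B'$ lands in the fibre functors -- and it checks the Beck--Chevalley condition for an arbitrary composite $\B''\to\B'\to\B$ of right fibrations via the cube \cref{eqn:big_cube_basechange}, where the fibrewise hypothesis enters as the commutation \cref{eqn:double_beck_chevalley_commutation}. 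A related slip: the fibrewise description of $(p_b)_*$ at a point uses the \emph{cocartesian} half of the bicartesian hypothesis (cocartesian lifts give a left adjoint to the inclusion $\E_x\subseteq\E\times_\B\B_{/x}$, making it final), not cartesianness as you write; since your hypothesis is bicartesian this is repairable, but it obscures where each half is used.

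The second gap is the identification $\omega_{p,q}|_{\E_b}\simeq\omega_{\E_b,\partial\E_b}$: evaluating "at the representable $b$ and then at the terminal object $\id_b$" only computes the \emph{values} of $\omega_{p,q}$ on objects of $\E_b$; the theorem asserts an equivalence of functors, i.e.\ an identification of the classified $\sC$-linear functors. The paper obtains this by writing $(\E_b,\partial\E_b)_*\simeq k^*(p_b,q_b)_*j_!$ (your full-faithfulness observation) and then pushing the coend formula of \cref{prop:formula_with_twisted_arrow} through this composite: the fibre of $\twistedArrow_{\B_{/b}}(\E_{/b})\to\B_{/b}$ over $b$ is $\twistedArrow(\E_b)$, base change along the cartesian fibration $\pi_{\B_{/b}}$ via \cref{prop:proper_smooth_base_change}, and the two squares $u^*t_{\B_{/b}}^*j_!\simeq t^*$ and $u^*s_{\B_{/b}}^*\simeq s^*(j\vop)^*$ yield $(\E_b,\partial\E_b)_*(-)\simeq\twistedArrow(\E_b)_!\bigl(s^*(\omega_{p,q}|_{\E_b})\otimes t^*(-)\bigr)$, whence the claim by uniqueness of classifying systems; your first paragraph gestures at exactly this for the $D$-comparison, but the argument is not carried out, and it is also what the groupoidal statement and $D_{p,q}|_{\E_b}\simeq D_{\E_b,\partial\E_b}$ hang on. Finally, your boundary reduction via $\omega_{p,q}\simeq\fib(\omega_p\to\paramFibred{j}_!\omega_q)$ is fine in spirit, but the base change of $\paramFibred{j}_!$ to the fibre needs the (true, but unargued) observation that $\partial_v\E\hop\to\E\hop$ is again a left closed inclusion, i.e.\ that the fibrewise functors $\E_b\to[1]$ assemble to a functor $\E\hop\to[1]$; the paper avoids this by treating the pair $(\paramFibred{E},\partial\paramFibred{E})$ as a whole rather than splitting off the boundary.
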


\begin{rmk}
Let us comment on why the statement of \cref{thm:fibrewise_characterisation_of_twisted_ambidexterity} makes sense. On the one hand, $D_{p,q}\in\presheaf(\E,\sC)$ can be restricted to a presheaf over $\E_b$ for all $b\in\B$. On the other hand, $\omega_{p,q}$ is an object of $\func(\E\hop,\sC)$; the $\infty$-category $\E_b$, although defined as a fibre over $b$ of $p\colon \E \rightarrow \B$, is also the fibre over $b$ of $\E\hop \rightarrow \B\op$; this allows us to restrict $\omega_{p,q}$ to $\E_b$ as well.  
\end{rmk}

One direction of the proof of \cref{thm:fibrewise_characterisation_of_twisted_ambidexterity} depends on the following preliminary lemmas. 

\begin{lem}
\label{lem:colimits_fibrewise}
Let $(p,q)$ be a bicartesian pair, and assume that for each $b \in \B$, the functor $(\E_b,\partial \E_b)_* \colon \presheaf(\E_b;\sC) \rightarrow \sC$ preserves colimits. Then the  functor
$(\paramFibred{E},\partial \paramFibred{E})_* \colon \presheafTopos_{\presheaf(\B)}(\paramFibred{E};\cofree_{\B\op}\sC) \rightarrow \cofree_{\B\op}\sC$
preserves $\presheaf(\B)$-colimits.
\end{lem}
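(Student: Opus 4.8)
The plan is to verify directly that $(\paramFibred{E},\partial\paramFibred{E})_*$ is an internal left adjoint in $\presentable^L_{\presheaf(\B)}$; as its source and target are presentable $\presheaf(\B)$-categories, this is equivalent to preserving $\presheaf(\B)$-colimits. By the adjoint functor theorem for $\presheaf(\B)$-categories recalled in \cref{sec:preliminaries}, it then suffices to check that (i) for every $\tau\in\presheaf(\B)$ the functor induced on level $\tau$ preserves ordinary colimits, and (ii) for every morphism $f\colon\tau\to\tau'$ of $\presheaf(\B)$ the associated Beck--Chevalley transformation is an equivalence. Throughout I would use that pulling back the bicartesian pair $(p,q)$ along any functor into $\B$ yields another bicartesian pair.

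The computational heart of the argument is the elementary fact that a right Kan extension along a cartesian fibration is computed fibrewise: for a cartesian fibration $r\colon\mathcal{X}\to\mathcal{D}$, a standard cofinality argument identifies $(r_*G)(d)\simeq\lim_{\mathcal{X}_d}(G|_{\mathcal{X}_d})$. Since $p$ is bicartesian, it is in particular cocartesian, so $p\op\colon\E\op\to\B\op$ is a cartesian fibration and $p_*$ (the right Kan extension along $p\op$) satisfies $(p_*F)(b)\simeq(\E_b)_*(F|_{\E_b})$, naturally in $b\in\B$ and $F\in\presheaf(\E;\sC)$; likewise for $q$. Now, by \cref{obs:cofree_coeff_cats}, the compatibility of the cofree construction with étale base change \cref{rmk:compatibility_of_cofree_with_etale_basechange}, and the stability of internal limit functors under étale base change, the functor induced by $(\paramFibred{E},\partial\paramFibred{E})_*$ on level $\tau$ is identified with the relative cohomology functor $(p_\tau,q_\tau)_*\colon\presheaf(\E\times_\B\B_{/\tau};\sC)\to\presheaf(\B_{/\tau};\sC)$ of the pullback pair $(p_\tau,q_\tau)$ over $\B_{/\tau}$, which is again bicartesian. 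Applying the fibrewise formula over $\B_{/\tau}$ gives, for each object $c$ of $\B_{/\tau}$ lying over $b\in\B$ and each $F$,
\[
\bigl((p_\tau,q_\tau)_*F\bigr)(c)\;\simeq\;\fib\bigl((\E_b)_*(F|_{\E_b})\to(\partial\E_b)_*(F|_{\partial\E_b})\bigr)\;\simeq\;(\E_b,\partial\E_b)_*\bigl(F|_{\E_b}\bigr).
\]
As colimits in $\presheaf(\B_{/\tau};\sC)$ are computed pointwise, as restriction along the fibre inclusion $\E_b\hookrightarrow\E\times_\B\B_{/\tau}$ preserves colimits, and as $(\E_b,\partial\E_b)_*$ preserves colimits by hypothesis, it follows that $(p_\tau,q_\tau)_*$ preserves colimits. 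This settles (i).

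For (ii), the morphism $f\colon\tau\to\tau'$ induces a commuting square of $\infty$-categories with vertical arrows $p_\tau$ and $p_{\tau'}$ and horizontal arrows the evident functors $\E\times_\B\B_{/\tau}\to\E\times_\B\B_{/\tau'}$ and $\B_{/\tau}\to\B_{/\tau'}$; this square is a pullback, and its horizontal arrows are right fibrations since $\B_{/\tau}\to\B_{/\tau'}$ is a right fibration for every $f$. The Beck--Chevalley equivalence relating the levelwise left adjoints (which are left Kan extensions along the opposites of these horizontal maps) to the relative cohomology functors then follows from proper--smooth base change, \cref{prop:proper_smooth_base_change}; equivalently, $f$ gives rise to an étale geometric morphism $\presheaf(\B_{/\tau})\to\presheaf(\B_{/\tau'})$ with respect to which $\paramFibred{E}_*$ and $\partial\paramFibred{E}_*$ are compatible, which is the same mechanism underlying \cref{lem:etale_basechange_classifying_systems}. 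Together, (i) and (ii) establish that $(\paramFibred{E},\partial\paramFibred{E})_*$ preserves $\presheaf(\B)$-colimits.

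The main obstacle I anticipate is bookkeeping rather than conceptual difficulty: carefully matching the $\presheaf(\B)$-categorical functors $\paramFibred{E}_*$, $\paramFibred{j}^*$, the level-$\tau$ evaluations and the levelwise left adjoints with their ordinary-categorical counterparts via \cref{obs:cofree_coeff_cats}, and checking that evaluating at a level commutes with passing to a fibre and with the internal limit functor up to the étale base change already in hand. Once these identifications are pinned down, the key input --- that right Kan extension along a (bi)cartesian fibration is a fibrewise limit --- is elementary, and the hypothesis on the fibrewise pairs $(\E_b,\partial\E_b)$ plugs in directly.
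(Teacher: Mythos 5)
Your overall decomposition is the same as the paper's: check levelwise colimit preservation, then check the Beck--Chevalley condition for the level maps. Step (i) matches the paper's first step essentially verbatim (pull back the bicartesian pair along the right fibration classified by $\tau$, compute the relative cohomology functor fibrewise over the base of that right fibration, and feed in the hypothesis pointwise), and that part is correct.

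The gap is in step (ii). The Beck--Chevalley transformation you must show to be invertible is the \emph{mixed} one, $f_!\,(p_\tau,q_\tau)_*\to(p_{\tau'},q_{\tau'})_*\,\tilde f_!$, pairing a left Kan extension along the right fibrations with the right adjoints $(p_\tau)_*$, $(q_\tau)_*$. \cref{prop:proper_smooth_base_change} only supplies the two ``pure'' adjointability statements for such a pullback square: vertical right adjointability, $(p_\tau)_*\tilde f^*\simeq f^*(p_{\tau'})_*$ (compatibility with restriction, which is automatic for a $\presheaf(\B)$-functor), and horizontal left adjointability, $\tilde f_!(p_\tau)^*\simeq(p_{\tau'})^*f_!$. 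These let you \emph{construct} the mixed transformation, but they do not make it an equivalence, and it cannot be formal: for $\B=*$, so $\presheaf(\B)=\spc$, the mixed Beck--Chevalley condition for $S\to *$ is precisely the assertion that $(\E,\partial\E)_*$ commutes with $S$-indexed colimits --- this is part of the hypothesis of the lemma, not a consequence of base change, and it fails without that hypothesis. The paper closes exactly this point by using proper/smooth base change only to reduce the mixed transformation to the fibres over each $b$ (the cube \cref{eqn:big_cube_basechange}) and then invoking the colimit-preservation hypothesis a second time, in the form of the commuting square \cref{eqn:double_beck_chevalley_commutation} saying that $(\E'_b,\partial\E'_b)_*$ commutes with $(\B''_b)_!$. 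Your appeal to \cref{lem:etale_basechange_classifying_systems} does not repair this either, since that lemma concerns compatibility with the restrictions $\pi_\tau^*$, not with the left adjoints $f_!$. To complete the argument you should, after the base change reduction to fibres, explicitly use the fibrewise hypothesis once more to verify the mixed Beck--Chevalley condition.
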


\begin{proof}
We first check that for $\tau \in \presheaf(\B)$, the functor $(\paramFibred{E},\partial\paramFibred{E})_*(\tau) \colon \presheaf_{\presheaf(\B)}(\paramFibred{E};\cofree_{\B\op}\sC)(\tau) \rightarrow \cofree_{\B\op}\sC(\tau)$ preserves colimits. Note that $\tau$ corresponds to a right fibration $\pi\colon \B' \rightarrow \B$. Define $p' \colon \E' \rightarrow \B'$ and $q'\colon\partial_v\E'\to\B'$ as in \cref{prop:local_global_principle_for_functor_pairs}; we are then left to show that
$(p',q')_* \colon \presheaf(\E';\sC) \rightarrow \presheaf(\B';\sC)$ preserves colimits, and this may be checked by evaluating at each object $b\in \B'$. The map $q'\rightarrow p'$ is a map of cocartesian fibrations, so $(p',q')_*$ is computed fibrewise; hence, it suffices to show that for $b \in \B'$ the functor $(\E'_b,\partial\E'_b)_* \colon \presheaf(\E'_b;\sC) \rightarrow \sC$ preserves colimits, and this follows from the hypothesis that both $\E'_b \simeq \E_{\pi(b)}$ and $\partial\E'_b \simeq \partial\E_{\pi(b)}$ are $\sC$-twisted ambidextrous.

To see that $(\paramFibred{E},\partial \paramFibred{E})_*$ preserves all $\presheaf(\B)$-colimits, it is left to check that for each diagram
\[ 
\begin{tikzcd}[row sep=10pt]
(\E'',\partial_v\E'') \ar[r, "\tilde \pi'"] \ar[d,"{(p'',q'')}"']\ar[dr,phantom,"\lrcorner"very near start]& (\E',\partial_v\E') \ar[r, "\tilde\pi"] \ar[d, "{(p',q')}"'] \ar[dr,phantom,"\lrcorner"very near start]& (\E,\partial_v\E) \ar[d,"{(p,q)}"'] \\
\B'' \ar[r, "\pi'"] & \B' \ar[r, "\pi"] & \B
\end{tikzcd}
\]
in which all horizontal maps are right fibrations, and the top row can be specialised in two ways, the Beck--Chevalley transformation 
$\pi'_!(p'',q'')_* \rightarrow (p',q')_* \tilde \pi'_!$ is an equivalence of functors $\presheaf(\E'';\sC) \rightarrow \presheaf(\B';\sC)$. It is sufficient to show this after postcomposition with restriction along a general map $b \colon * \rightarrow \B'$.
Since $\tilde \pi'$ and $\pi'$ are right fibrations, and $(p',q')$ as well as $(p'',q'')$ are bicartesian pairs, all functors $\pi'_!$, $\tilde \pi'_!$, $ (p',q')_*$ and $(p'',q'')_*$ behave well with basechange along $b$, i.e. \cref{prop:proper_smooth_base_change} applies; therefore the transformation $b^*\pi'_!(p'',q'')_* \rightarrow b^*(p',q')_* \tilde \pi'_!$ is an equivalence as soon as the corresponding Beck--Chevalley map for the back face of the following basechanged cube is an equivalence:
\begin{equation}\label{eqn:big_cube_basechange}
\begin{tikzcd}[row sep=10pt]
& \B''_b\times (\E'_b,\partial\E'_b)\ar[dr, phantom, very near start, "\lrcorner"] \ar[rr] \ar[dl]\ar[dd]&& (\E'_b,\partial\E'_b) \ar[dl]\ar[dd]\\
(\E'',\partial_v\E'') \ar[rr, "\tilde \pi'" xshift = -20, crossing over] \ar[dr, phantom, very near start, "\lrcorner"]\ar[dd, "{(p'',q'')}"']&& (\E',\partial_v\E')\\
& \B''_b \ar[rr] \ar[dl]&&  \ast\ar[dl, "b"]\\
\B'' \ar[rr, "\pi'"] && \B'.\ar[uu,"{(p',q')}"' yshift = 15, leftarrow, crossing over]
\end{tikzcd}
\end{equation}
By assumption $(\E_b',\partial \E_b')_*\simeq(\E_{\pi(b)},\E_{\pi(b)})_*$ commutes with colimits, and in particular with $\B''_b$-shaped colimits; this is exactly the statement that the following square commutes:
\begin{equation}\label{eqn:double_beck_chevalley_commutation}
\begin{tikzcd}
\presheaf(\B''_b \times \E'_b;\sC) \ar[r, "(\B''_b)_!"] \ar[d, "{(\E_b',\partial \E_b')_*}"'] & \presheaf(\E'_b;\sC) \ar[d, "{(\E_b',\partial \E_b')_*}"] \\
\presheaf(\B''_b;\sC) \ar[r, "(\B''_b)_!"] & \sC,
\end{tikzcd}
\end{equation}
as required.
\end{proof}

\begin{lem}
\label{lem:linearity_fibrewise}
Let $(p,q)$ be a bicartesian $\B$-pair, and assume that for each $b \in \B$, the lax $\sC$-linear functor $(\E_b,\partial \E_b)_* \colon \presheaf(\E_b;\sC) \rightarrow \sC$ is $\sC$-linear. Then the lax $\cofree_{\B\op}\sC$-linear functor
$(\paramFibred{E},\partial \paramFibred{E})_* \colon \presheafTopos_{\presheaf(\B)}(\paramFibred{E};\cofree_{\B\op}\sC) \rightarrow \cofree_{\B\op}\sC$ is
$\cofree_{\B\op}\sC$-linear.
\end{lem}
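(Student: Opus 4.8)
The plan is to apply the recognition criterion of \cref{rec:recognition_c_linear_functors}. Colimit-preservation of $(\paramFibred{E},\partial\paramFibred{E})_*$ is already supplied by \cref{lem:colimits_fibrewise}, so the content of the lemma is that the canonical lax $\cofree_{\B\op}\sC$-linear structure on $(\paramFibred{E},\partial\paramFibred{E})_*$ is strict; recall this structure is the mate of the $\cofree_{\B\op}\sC$-linear structure on the functor $\paramFibred{j}_*\interior{\paramFibred{E}}^*$ of which $(\paramFibred{E},\partial\paramFibred{E})_*$ is the right adjoint (cf.\ \cref{obs:ambidextral_interpretation} and \cref{prop:right_adjoint_to_C_linear_functor_is_lax_C_linear}).

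First I would reduce to global sections. Strictness of a lax linear structure is a levelwise condition, and a level $\tau\in\presheaf(\B)$ corresponds to a right fibration $\pi\colon\B'\to\B$, with $\presheaf(\B)_{/\tau}\simeq\presheaf(\B')$ and $\pi_\tau^*\cofree_{\B\op}\sC\simeq\cofree_{(\B')\op}\sC$; under these identifications the value $(\paramFibred{E},\partial\paramFibred{E})_*(\tau)$ is the functor $(p',q')_*$ of the base-changed pair $(p',q')$ from \cref{prop:local_global_principle_for_functor_pairs}, which is again a bicartesian $\B'$-pair whose fibre pairs $(\E'_b,\partial\E'_b)\simeq(\E_{\pi(b)},\partial\E_{\pi(b)})$ have $\sC$-linear relative cohomology. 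It therefore suffices to prove: for an arbitrary bicartesian $\B$-pair $(p,q)$ all of whose fibre pairs $(\E_b,\partial\E_b)$ have $\sC$-linear relative cohomology, the functor $(p,q)_*\colon\presheaf(\E;\sC)\to\presheaf(\B;\sC)$ of \cref{defn:B_pair} satisfies the projection formula, i.e.\ for every $\zeta\in\presheaf(\B;\sC)$ the canonical map
\[
\zeta\otimes(p,q)_*(-)\;\longrightarrow\;(p,q)_*\bigl(p^*\zeta\otimes-\bigr)
\]
is an equivalence of functors $\presheaf(\E;\sC)\to\presheaf(\B;\sC)$.

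The next step is to compute $(p,q)_*$ fibrewise over $\B$ and evaluate the displayed transformation at each $b\in\B$. Since $(p,q)$ is bicartesian, $p$ and $q$ are in particular cocartesian fibrations, so $p\op$ and $q\op$ are cartesian fibrations over $\B\op$; proper base change (\cref{prop:proper_smooth_base_change}) then applies to the right Kan extensions $p_*=\Ran_{p\op}$ and $q_*=\Ran_{q\op}$ and shows that $p_*$, $q_*$, and the restriction $j_v^*$ along the inclusion $\partial_v\E\subset\E$ of bicartesian fibrations all commute with restriction along any point $b\colon\ast\to\B$. Unravelling \cref{defn:relative_cohomology} this yields a natural equivalence $\bigl((p,q)_*F\bigr)(b)\simeq(\E_b,\partial\E_b)_*\bigl(F|_{\E_b}\bigr)$; moreover the same base-change compatibilities identify the value over $b$ of the lax linear structure on $(p,q)_*$ with the canonical lax $\sC$-linear structure on $(\E_b,\partial\E_b)_*$, and identify the restriction over $b$ of the $\presheaf(\B;\sC)$-action on $\presheaf(\E;\sC)$ with the $\sC$-action on $\presheaf(\E_b;\sC)$, using that $(p^*\zeta)|_{\E_b}\simeq\constant_{\zeta(b)}$. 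Hence evaluating the displayed projection-formula transformation at $b$ recovers precisely the projection-formula map of $(\E_b,\partial\E_b)_*$ at $\zeta(b)\in\sC$, which is an equivalence since $(\E_b,\partial\E_b)_*$ is $\sC$-linear. As equivalences of $\sC$-valued presheaves on $\B$ are detected objectwise, the projection formula holds for $(p,q)_*$, completing the proof.

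The main obstacle will be the bookkeeping hidden in ``the same base-change compatibilities'': one must verify that the mate construction producing the lax linear structure on $(\paramFibred{E},\partial\paramFibred{E})_*$ — equivalently on $(p,q)_*$ — is compatible with restriction along points of $\B$, so that it restricts over $b$ to the canonical lax structure on $(\E_b,\partial\E_b)_*$. This reduces to the assertion that the units and counits of the adjunctions $\interior{\paramFibred{E}}^*\dashv\interior{\paramFibred{E}}_*$ and $\paramFibred{j}_!\dashv\paramFibred{j}^*\dashv\paramFibred{j}_*$ (and of their avatars over $\B$, for which one needs that the complement $\interior{\E}\subset\E$ is again a map of bicartesian fibrations) are stable under such restriction — again the proper/smooth base-change package for bicartesian fibrations, which also underlies the identification of the module-action functors used above. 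The remaining verifications are routine diagram chases.
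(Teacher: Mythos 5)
Your proposal is correct and follows essentially the same route as the paper's proof: reduce to the level of a right fibration $\B'\to\B$ using the description of $\cofree_{\B\op}\sC(\tau)\simeq\presheaf(\B';\sC)$, then check the projection-formula map $\lambda\otimes(p',q')_*(\xi)\to(p',q')_*((p')^*\lambda\otimes\xi)$ pointwise over $\B'$, using base change for the bicartesian pair to identify the restricted map at $b$ with the lax linearity map of $(\E_b,\partial\E_b)_*$. The compatibility of adjunction (co)units with restriction that you flag as the remaining bookkeeping is exactly the point the paper's proof also invokes to conclude.
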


\begin{proof}
To unravel the notion of $\cofree_{\B\op}\sC$-linearity, let $\pi\colon \B' \rightarrow \B$ be a right fibration corresponding to $\tau\in\presheaf(\B)$. Let $p' \colon \E'\to\B'$ and $q'\colon\partial_v\E'\to\B'$ be as in \cref{prop:local_global_principle_for_functor_pairs}. We have to check that for every $\lambda \in \cofree_{\B\op}\sC(\tau) \simeq \presheaf(\B';\sC)$ and $\xi \in \presheaf(\paramFibred{E};\cofree_{\B\op}\sC)(\tau) \simeq \presheaf(\E';\sC)$, the following map is an equivalence:
\begin{equation}
\label{eq:linearity_equation}
\lambda \otimes (p',q')_*(\xi) \rightarrow (p',q')_*((p')^*(\lambda) \otimes \xi).
\end{equation}
It suffices to check this after evaluation at each $b \in \B'$, i.e. after postcomposition with $b^*$ for any $b\colon\ast\to\B'$. 
We have an identification $b^*(p',q')_*(\zeta)\simeq(\E'_b,\partial\E'_b)_*(\zeta\vert_{\E'_b})$, using that $(p',q')$ is a bicartesian $\B'$-pair and relying on basechange along $b$ as in the proof of \cref{lem:colimits_fibrewise}. Postcomposing $b^*$ to \cref{eq:linearity_equation} gives  the lax linearity structure on $(\E_b',\partial\E'_b)_*$. This is because the lax linearity structure maps \cref{eq:linearity_equation} is constructed out of the relevant (co)unit maps of adjunctions, and applying $b^*$ to the adjunction (co)units of $(p')^* \dashv p'_*$ and $(q')^* \dashv q'_*$  gives the adjunction (co)unit of $(\E_b')^* \dashv (\E_b')_*$ and $(\partial\E_b')^* \dashv (\partial\E_b')_*$ respectively. But by assumption, the lax linearity map associated to $(\E_b',\partial\E_b')_* \colon \presheaf(\E_b';\sC) \rightarrow \sC$ is an equivalence. This completes the proof. 
\end{proof}
We are now ready to prove \cref{thm:fibrewise_characterisation_of_twisted_ambidexterity}.
\begin{proof}[Proof of \cref{thm:fibrewise_characterisation_of_twisted_ambidexterity}] We first consider the equivalence between the two statements treating (possibly non-groupoidal) twisted ambidexterity. The implication (2) $\implies$ (1) is immediate by \cref{lem:colimits_fibrewise,lem:linearity_fibrewise}.
For the implication (1) $\implies $ (2), consider the commutative diagram
\begin{equation}
\label{eq:fibrewise_classifying_system_computation_diagram}
\begin{tikzcd}[row sep=10pt]
(\E_b,\partial\E_b) \ar[r, "j"] \ar[d] & (\E_{/b},\partial_v\E_{/b}) \ar[r] \ar[d, "{(p_b,q_b)}"']\ar[dr,phantom,"\lrcorner"very near start] & (\E,\partial_v\E) \ar[d, "{(p,q)}"]\\
\{b\} \ar[r, "k"] & \B_{/b} \ar[r] & \B
\end{tikzcd}
\end{equation}
in which $b\in\B$ and the right square is defined to be cartesian. Since $\B_{/b}\to\B$ is a right fibration, by \cref{prop:local_global_principle_for_functor_pairs} we obtain the equivalence $\omega_{p,q} \vert_{(\E_{/b})\hop} \simeq \omega_{p_{b},q_{b}}$.

Turning to the left half of the diagram \cref{eq:fibrewise_classifying_system_computation_diagram}, since $b$ is a final object in $\B/_{b}$, we have that $j \colon (\E_b,\partial\E_b) \subset (\E_{/b},\partial_v\E_{/b})$ is a pair of full $\infty$-subcategory inclusions. In particular, we get an equivalence $(\E_b,\partial\E_b)_* \simeq (\E_b,\partial\E_b)_* j^* j_!$ of lax $\sC$-linear functors $\presheaf(\E_b;\sC) \rightarrow \sC$.
As $(p_{b},q_b)$ is a bicartesian pair, by \cref{prop:proper_smooth_base_change} we further have $(\E_b,\partial\E_b)_*j^* \simeq k^* (p_{b},q_b)_*$. Since this writes $(\E_b,\partial\E_b)_*\simeq k^*(p_{b},q_b)_*j_!$ as a composite of $\sC$-linear functors, we learn that $(\E_b,\partial\E_b)$ is $\sC$-twisted ambidextrous. This completes the proof of (1) $\implies$ (2).

We next pass to the comparison of classifying systems, assuming (possibly non-groupoidal) twisted ambidexterity. For $b\in\B$ we have an equivalence of $\sC$-linear functors
\[ 
(p_{b},q_b)_*(-) \simeq (\pi_{\B_{/b}})_!(s_{\B_{/b}}^*(\omega_{p,q})\otimes t_{\B_{/b}}^*(-)) \colon \presheaf(\E_{/b};\sC) \rightarrow \presheaf(\B_{/b};\sC),
\]
using the notation from \cref{rmk:omegapq_and_paramFibredE_*}.
The fibre over $b \in \B_{/b}$ of the cartesian fibration $\pi_{\B_{/b}}\colon\twistedArrow_{\B_{/b}}(\E_{/b})\to\B_{/b}$ is given by $\twistedArrow(\E_b)$. Writing $u \colon \twistedArrow(\E_b) \rightarrow \twistedArrow_{\B_{/b}}(\E_{/b})$ for the  fibre inclusion, \cref{prop:proper_smooth_base_change} again allows us to identify
$k^* (\pi_{\B_{/b}})_!\simeq \twistedArrow(\E_b)_! u^*$ as $\sC$-linear functor $ \presheaf(\twistedArrow_{\B_{/b}}(\E_{/b});\sC) \rightarrow \sC$.
Putting all equivalences together, we obtain a $\sC$-linear equivalence
\[
(\E_b,\partial\E_b)_*(-) \simeq k^*(p_{b},q_b)_*j_!\simeq \twistedArrow(\E_b)_!(u^* s_{\B_{/b}}^*(\omega_{p,q})\otimes u^*t^*_{\B_{/b}} j_!(-)).
\]
Finally, by the following commutative squares we have equivalences $u^* t_{\B_/b}^* j_! \simeq t^* j^*j_! \simeq t^*$ and $u^* s_{\B_/b}^* \simeq s^* (j\vop)^*$:
\[
\begin{tikzcd}[row sep=10pt]
\twistedArrow(\E_b) \ar[r, "u"] \ar[d, "t"] & \twistedArrow_{\B_{/b}}(\E_{/b}) \ar[d, "t_{\B_{/b}}"]\\  
\E_b \ar[r, "j"] &\E_{/b}; \end{tikzcd}
\hspace{1cm}
\begin{tikzcd}[row sep=10pt]
\twistedArrow(\E_b) \ar[r, "u"] \ar[d, "s"] & \twistedArrow_{\B_{/b}}(\E_{/b}) \ar[d, "s_{\B_{/b}}"] \\
\E_b\op \ar[r,"j\vop"] & (\E_{/b})\vop
\end{tikzcd}
\]
We thus obtain the equivalence $(\E_b,\partial\E_b)_*(-) \simeq \twistedArrow(\E_b)_!(s^*\omega_{p,q}\vert_{\E_b}\otimes t^*(-))$, showing that $\omega_{p,q}\vert_{\E_b}$ classifies $(\E_b,\partial\E_b)_*$, and hence agrees with $\omega_{\E_b,\partial\E_b}$.

We next turn to the equivalence of (1) and (2) in the setting of groupoidal ambidexterity. Assuming $\sC$-twisted ambidexterity for both $(p,q)$ and all $(\E_b,\partial\E_b)$ for simplicity, we have that $(p,q)$ is \textit{groupoidal} $\sC$-twisted ambidextrous if and only if $\omega_{p,q}$ is fibrewise groupoidal in the sense of \cref{nota:cartesian_fibration_and_B_cat}; however, the identifications $\omega_{p,q}\vert_{\E_b}\simeq\omega_{\E_b,\partial\E_b}$ precisely tell us that $\omega_{p,q}$ is fibrewise groupoidal if and only if $\omega_{\E_b,\partial\E_b}$ is groupoidal for all $b\in\B$, establishing the equivalence between (1) and (2).

Finally, if $\omega_{p,q}$ is fibrewise groupoidal, then $D_{p,q}$ and $\omega_{p,q}$ are equivalent when considered as functors out of $|\E\hop|^v\simeq|\E\op|^v$, see \cref{warning:fibrewise_groupoidal}. In particular their restriction on $|\E_b|\simeq|\E_b\op|$ agrees with $\omega_{\E_b,\partial\E_b}$, corresponding to $D_{\E_b,\partial\E_b}$.
\end{proof}

\subsection{Fibrewise computation for special pairs}
\label{subsec:special_computation_dualising_system}
Recall from \cref{warning:fibrewise_groupoidal} that for a groupoidally $\sC$-twisted ambidextrous $\B$-pair $(p,q)$ the systems $\omega_{p,q}$ and $D_{p,q}$ are in general not groupoidal, i.e. they do not factor through $|\E|$. In this subsection, we prove as \cref{cor:horizontal_groupoidality_special_bicartesian} that this is indeed the case at least when $(p,q)$ is a ``special'' $\B$-pair, in the sense that we now introduce.

\begin{defn}
\label{defn:special_bicartesian}
A bicartesian fibration $p\colon\E\rightarrow\B$ is said to be \textit{special} if $p$ participates in a  pullback square with $\bar\B\in\spc$
\begin{equation}
\label{eq:special_bicartesian}
\begin{tikzcd}[row sep=10pt]
\E \rar["\ell"]\dar["p"'] \ar[dr, phantom, "\lrcorner"very near start]& \bar\E\dar["\bar{p}"]\\
\B\rar[] & \bar\B;
\end{tikzcd}
\end{equation}
or equivalently, if the cartesian fibre transport $\E_c\to\E_b$ associated with any morphism $b\to c$ in $\B$ is an equivalence of $\infty$-categories;
or again equivalently, if the wide subcategories of $\E$ spanned by $p$-cocartesian and by $p$-cartesian morphisms coincide with each other.
\end{defn}

\begin{obs}
\label{obs:special_bicartesian_over_weakly_contractible}
Let $p \colon \E \rightarrow \B$ be a special bicartesian fibration, and assume that $\B$ is weakly contractible; then $p$ is equivalent to a product projection. Indeed $p$ is the pullback of a functor $\bar\E\to|\B|\simeq*$ along $\B\to|\B|$; in particular $\E\simeq\bar\E\times\B$.
\end{obs}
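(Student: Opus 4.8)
The plan is to reduce everything to a pullback over the realisation $|\B|$ and then invoke weak contractibility. First I would use that $p$, being special, participates in a pullback square as in \cref{eq:special_bicartesian} with $\bar\B \in \spc$ and bottom functor $c \colon \B \to \bar\B$. Since $\bar\B$ is an $\infty$-groupoid, the adjunction $|-| \dashv (\spc \subseteq \cat)$ recalled in \cref{subsec:recollections} yields an essentially unique factorisation $c \simeq \bar c \circ (\B \to |\B|)$ for some $\bar c \colon |\B| \to \bar\B$ through the unit $\B \to |\B|$.

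Next I would set $\bar\E' \coloneqq |\B| \times_{\bar\B} \bar\E$ and apply the pasting law for pullbacks in $\cat$ to rewrite the square of \cref{eq:special_bicartesian} as the outer rectangle of
\[
\begin{tikzcd}[row sep=10pt]
\E \rar \dar["p"'] \ar[dr,phantom,"\lrcorner"very near start] & \bar\E' \rar \dar \ar[dr,phantom,"\lrcorner"very near start] & \bar\E \dar["\bar p"] \\
\B \rar & |\B| \rar["\bar c"] & \bar\B,
\end{tikzcd}
\]
so that $p$ is exhibited as the pullback of $\bar\E' \to |\B|$ along the unit $\B \to |\B|$. Finally, invoking the hypothesis that $\B$ is weakly contractible, i.e. $|\B| \simeq *$, the functor $\bar\E' \to |\B|$ becomes simply the terminal functor of the $\infty$-category $\bar\E'$, and the left-hand pullback square collapses to an equivalence $\E \simeq \B \times_* \bar\E' \simeq \bar\E' \times \B$ carrying $p$ to the projection onto $\B$. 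Taking $\bar\E'$ for the ``$\bar\E$'' appearing in the statement, this is precisely the claim.

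I do not anticipate a genuine obstacle here: the argument is a formal manipulation of pullback squares, and the only point requiring a little care is the factorisation of $c$ through $|\B|$, which is exactly the defining property of the realisation functor. An alternative route, which avoids choosing the auxiliary space $\bar\B$, would be to verify directly that each of the three equivalent conditions in \cref{defn:special_bicartesian} is stable under base change along $\B \to |\B|$ and then run the argument over the contractible base $|\B|$; but the pasting-law argument above is the most economical.
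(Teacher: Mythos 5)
Your proposal is correct and follows essentially the same route as the paper's own (very brief) justification: factor the classifying map through $|\B|$, pull back in two stages, and use $|\B|\simeq *$ to collapse the remaining pullback to a product. The extra care you take with the pasting law and the essentially unique factorisation through the realisation is exactly what the paper leaves implicit.
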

\begin{obs}
\label{obs:special_B_pair}
Note that if $(p,q)$ is a $\B$-pair and $p$ is special bicartesian, then $q$ is also special bicartesian: the commutativity of the triangle in \cref{obs:B-pair_over_[1]} implies that the fibre transport $\E_c\times_{[1]}0\to\E_b\times_{[1]}0$ is an equivalence as well, if $\E_c\to\E_b$ is. More explicitly, this means that there exists a bicartesian $\bar\B$-pair $(\bar p,\bar q)\colon (\bar\E,\partial_v\bar\E)\rightarrow\bar\B$ such that there is a diagram of pullback squares as follows:
\[
\begin{tikzcd}[row sep=10pt]
\partial_v\E \ar[d,hook]\ar[r,"\partial_v\ell"] \ar[dr,phantom,"\lrcorner"very near start]& \partial_v\bar\E\ar[d,hook]\ar[dr,phantom,"\lrcorner"very near start]\ar[r]&\{0\}\ar[d,hook]\\
\E \ar[r,"\ell"]\ar[dr,phantom,"\lrcorner"very near start]\ar[d,"p"]& \bar\E\ar[d,"\bar p"]\ar[r]&{[1]}\\
\B\ar[r]&\bar B.
\end{tikzcd}
\]
\end{obs}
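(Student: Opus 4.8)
The plan is to reduce everything to the dictionary, via straightening, between special bicartesian fibrations over $\B$ and (co)cartesian fibrations over the groupoid completion $|\B|$. Recall from \cref{obs:B-pair_over_[1]} that the $\B$-pair structure on $(p,q)$ is the same datum as a morphism $F\colon\E\to\B\times[1]$ in $\cartesianCategory_{/\B}$ exhibiting $\partial_v\E$ as the pullback $\E\times_{\B\times[1]}(\B\times\{0\})$, and from \cref{defn:special_bicartesian} that $p$ is special bicartesian if and only if every cartesian fibre transport $f^*\colon\E_c\to\E_b$ of $p$ is an equivalence, equivalently if and only if the functor $\B\op\to\cat$ classifying $p$ inverts all morphisms. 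First I would show that $q$ is special bicartesian; then I would set $\bar\B\coloneqq|\B|$ and descend $F$ along the localisation $\B\op\to|\B|$ to produce $\bar q$ and the asserted pullback diagram.

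For the first part, $q$ is a cartesian fibration by \cref{defn:B_pair}, so it suffices to show its fibre transports are equivalences. Since $F$ preserves cartesian edges, and the cartesian edges of $\B\times[1]\to\B$ are exactly those inverted by the projection to $[1]$, the composite $F'\colon\E\xrightarrow{F}\B\times[1]\to[1]$ carries $p$-cartesian edges to equivalences; hence for each $f\colon b\to c$ in $\B$ one has $F'|_{\E_b}\circ f^*\simeq F'|_{\E_c}$, which is precisely the commuting triangle of \cref{obs:B-pair_over_[1]}. As $\partial_v\E$ is fibrewise the full subcategory $(F'|_{\E_b})^{-1}(0)\subseteq\E_b$, the $p$-cartesian lifts restrict to $q$-cartesian lifts, and the fibre transport of $q$ along $f$ is identified with the projection $\E_c\times_{\E_b}\partial_v\E_b\to\partial_v\E_b$, i.e. with the base change of the equivalence $f^*\colon\E_c\to\E_b$ along the full inclusion $\partial_v\E_b\hookrightarrow\E_b$; it is therefore an equivalence.

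For the diagram, I would use that both the functor $\B\op\to\cat$ classifying $p$ (which inverts all morphisms, as $p$ is special) and the constant functor at $[1]$ invert all morphisms, hence factor essentially uniquely through $\B\op\to|\B|$, and that restriction along this localisation identifies $\func(|\B|,\cat)$ with the full subcategory of $\func(\B\op,\cat)$ spanned by the morphism-inverting functors. Consequently the natural transformation corresponding to $F$ lifts uniquely to a natural transformation of functors $|\B|\to\cat$, i.e. to a morphism $\bar F\colon\bar\E\to\bar\B\times[1]$ in $\cartesianCategory_{/\bar\B}=\cocartesianCategory_{/\bar\B}$ with $\bar\B=|\B|$, such that $F\simeq\B\times_{\bar\B}\bar F$; in particular $\E\simeq\B\times_{\bar\B}\bar\E$, which is the bottom square. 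Setting $\partial_v\bar\E\coloneqq\bar\E\times_{\bar\B\times[1]}(\bar\B\times\{0\})\simeq\bar\E\times_{[1]}\{0\}$ and $\bar q\coloneqq\bar p\circ(\partial_v\bar\E\hookrightarrow\bar\E)$ gives the top-right square; base changing the defining pullback of $\partial_v\bar\E$ along $\ell\colon\E\to\bar\E$, and using $F\simeq\B\times_{\bar\B}\bar F$ together with $\partial_v\E\simeq F^{-1}(\B\times\{0\})$, gives the top-left square. Finally $(\bar p,\bar q)$ is a bicartesian $\bar\B$-pair: $\bar F$ witnesses the $\bar\B$-pair structure via \cref{obs:B-pair_over_[1]}, $\partial_v\bar\E\subseteq\bar\E$ is left closed as the preimage of a left closed subcategory, and all fibrations in sight live over the space $\bar\B$ and are hence automatically bicartesian.

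The only step that is not a routine diagram chase is this last descent: one must know that straightening matches cartesian fibrations over $\B$ with invertible fibre transport --- and the morphisms between them --- with (co)cartesian fibrations over $|\B|$ and their morphisms. This is the universal property of localisation (functors out of a localisation form a full subcategory, spanned by the functors inverting the localised morphisms) together with the identification of $|\B|$ with the localisation of $\B\op$ at all its morphisms, transported across the straightening equivalences \cref{eq:hop_cocart_cart}; I expect checking this compatibility to be the main, though entirely formal, obstacle, with everything else following immediately.
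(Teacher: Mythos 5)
Your proposal is correct and follows essentially the same route as the paper: the first claim is exactly the paper's observation that the fibre transports of $q$ are restrictions (base changes along the full inclusions $\partial_v\E_b\hookrightarrow\E_b$) of the equivalences $f^*\colon\E_c\to\E_b$, using the commuting triangle over $[1]$ from \cref{obs:B-pair_over_[1]}. The paper states the "more explicitly" diagram without proof, and your descent of the morphism $\E\to\B\times[1]$ through the localisation $\B\op\to|\B|$ (via the full-subcategory description of $\func(|\B|,\cat)\subseteq\func(\B\op,\cat)$ and straightening) is precisely the intended, entirely formal justification.
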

By virtue of \cref{obs:special_B_pair}, we may give the following definition.

\begin{defn}
\label{defn:special_pairs}
A \textit{special $\B$-pair} $(p,q)$ is a bicartesian $\B$-pair $(p,q)$ where $p$ (and hence also $q$) is a special bicartesian fibration.
\end{defn}

\begin{prop}
\label{prop:classifying_systems_of_product_projections}
Let $(\F,\partial\F)$ be a $\sC$-twisted ambidextrous $\infty$-category pair, and let $(p,q)$ denote the $\B$-pair consisting of the two product projections $p \colon \E\coloneqq\F \times \B \rightarrow \B$ and $q\colon\partial_v\E\coloneqq\partial\F\times\B\to\B$. Then $(p,q)$ is $\sC$-twisted ambidextrous and $\omega_{p,q}$ is the image of $\omega_{\F,\partial\F}$ under $\pi_\F^*\colon\func(\F;\sC) \rightarrow\func(\E\hop;\sC)$, where $\pi_\F\colon\E\hop\simeq\F\times\B\op\to\F$ is the projection.

If moreover $(\F,\partial\F)$ is groupoidally $\sC$-twisted ambidextrous, then $(p,q)$ is also groupoidally $\sC$-twisted ambidextrous and $D_{p,q}\simeq(\pi'_\F)^*D_{\F,\partial\F}$, where $(\pi'_\F)\colon\E\simeq\F\times\B\to\F$ is the projection.
\end{prop}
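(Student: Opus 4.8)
The plan is to reduce the statement to a combination of two previously established tools: the étale-locality of the classifying system for $\B$-pairs (\cref{prop:local_global_principle_for_functor_pairs}, or rather its avatar \cref{thm:fibrewise_characterisation_of_twisted_ambidexterity} for bicartesian pairs) and the compatibility of the cofree construction with products. First I would observe that the product projection $p\colon\F\times\B\to\B$ is a special bicartesian fibration in the sense of \cref{defn:special_bicartesian} — indeed its fibre transport along any morphism of $\B$ is the identity of $\F$ — and likewise $q$; hence $(p,q)$ is a special $\B$-pair whenever $\partial\F\subseteq\F$ is left closed, which holds since $(\F,\partial\F)$ is an $\infty$-category pair. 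This already places us in the scope of \cref{thm:fibrewise_characterisation_of_twisted_ambidexterity}: each fibre $(\E_b,\partial\E_b)$ is literally $(\F,\partial\F)$, so $(p,q)$ is (groupoidally) $\sC$-twisted ambidextrous if and only if $(\F,\partial\F)$ is, and moreover $\omega_{p,q}|_{\E_b}\simeq\omega_{\F,\partial\F}$ and $D_{p,q}|_{\E_b}\simeq D_{\F,\partial\F}$ for all $b$.

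Having the fibrewise identification in hand, the real content is to upgrade it to the global statement $\omega_{p,q}\simeq\pi_\F^*\omega_{\F,\partial\F}$ in $\func(\E\hop;\sC)\simeq\func(\F\times\B\op;\sC)$. Here I would argue that both sides restrict to $\omega_{\F,\partial\F}$ on every fibre over $\B\op$, and then use that for the \emph{product} cocartesian fibration $\E\hop\simeq\F\times\B\op\to\B\op$ a functor to $\sC$ which is constant (as one varies $b$) on the cocartesian edges is already pulled back from $\F$; more precisely, a cleaner route is to invoke \cref{obs:special_bicartesian_over_weakly_contractible}-style reasoning together with the $\presheaf(\B)$-categorical definition directly. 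Concretely: by \cref{obs:cofree_coeff_cats} the identity $\paramFibred{E}\simeq\pi_\B^*\F$ (where on the right $\F$ is viewed as a constant $\presheaf(\B)$-category and $\pi_\B^*$ is the étale basechange along $\B\to\ast$ composed with the constant functor, matching $\cofree_{\B\op}\sC\simeq\cofree_{\B\op}(\constant\sC)$) lets us apply \cref{lem:etale_basechange_classifying_systems} (or its geometric analogue \cref{lem:geometric_basechange_classifying_systems}): the functor $(\paramFibred{E},\partial\paramFibred{E})_*$ is obtained by basechange from $(\F,\partial\F)_*$, whence its classifying system is the basechange of $\omega_{\F,\partial\F}$, which under $\Gamma$ and the equivalence \cref{eqn:global_section_of_functors_into_cofree} is exactly $\pi_\F^*\omega_{\F,\partial\F}$.

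For the groupoidal claim, I would note that $\pi_\F^*\omega_{\F,\partial\F}$ is fibrewise groupoidal precisely when $\omega_{\F,\partial\F}$ is groupoidal, so \cref{thm:fibrewise_characterisation_of_twisted_ambidexterity} gives groupoidal $\sC$-twisted ambidexterity of $(p,q)$, and then the dualising system is computed by \cref{warning:fibrewise_groupoidal} together with \cref{prop:Dequivalence}: $D_{p,q}$ corresponds to $\omega_{p,q}$ under the equivalence $|\E\hop|^v\simeq|\E\op|^v$, which in the product case is the identity on the $\F$-coordinate (up to the canonical equivalence $|\F|\simeq|\F\op|$), giving $D_{p,q}\simeq(\pi_\F')^*D_{\F,\partial\F}$. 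The main obstacle I anticipate is bookkeeping the three variance twists — $\E$ versus $\E\op$ versus $\E\hop$ — so as to be sure that the projection $\pi_\F$ used for $\omega$ (on $\E\hop$) and the projection $\pi_\F'$ used for $D$ (on $\E$) are the ones claimed, and that the equivalence $\cofree_{\B\op}(\constant\sC)\simeq\pi_\B^*(\constant\sC)$ invoked above is the correct compatibility; once the bicartesian fibrewise theorem \cref{thm:fibrewise_characterisation_of_twisted_ambidexterity} is granted, no further homotopical input is needed, only this careful identification of the relevant étale/geometric basechange square.
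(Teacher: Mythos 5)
Your opening moves are fine and non-circular: \cref{thm:fibrewise_characterisation_of_twisted_ambidexterity} is proved before this proposition, so you may legitimately use it to get (groupoidal) $\sC$-twisted ambidexterity of $(p,q)$ from that of the fibres, all of which are $(\F,\partial\F)$. You also correctly identify that the fibrewise statement does not determine $\omega_{p,q}$ globally and that the entire weight of the argument rests on a "base change" step. That step is where the gap is. The lemmas you invoke do not apply: \cref{lem:etale_basechange_classifying_systems} requires an \emph{\'etale} geometric morphism, and $\constant\colon\spc\rightleftharpoons\presheaf(\B)\cocolon\Gamma$ is not \'etale unless $\B$ is a space; \cref{lem:geometric_basechange_classifying_systems} concerns the \emph{pushforward} $f_*F$ of a functor, whereas you need to go in the pullback direction. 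Most seriously, the identification $\cofree_{\B\op}\sC\simeq\pi_\B^*(\constant\sC)$ that your argument hinges on is false for non-groupoidal $\B$: evaluated at $b\in\B$ the cofree construction gives $\presheaf(\B_{/b};\sC)$, not $\sC$, and the whole reason the paper introduces $\cofree_{\B\op}$ is precisely that it \emph{differs} from the geometric basechange of $\sC$ when $\B$ is not a groupoid. So the "cleaner route" you propose does not compile, and your fallback route (a) — that a functor on $\F\times\B\op$ restricting fibrewise to $\omega_{\F,\partial\F}$ is pulled back from $\F$ — is also not true without controlling what happens on morphisms in the $\B\op$ direction.

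The correct mechanism, and the one the paper uses, is extension of scalars rather than geometric basechange: one identifies $\presheafTopos_{\presheaf(\B)}(\paramFibred{E};\cofree_{\B\op}\sC)$ levelwise (naturally in $b\in\B\op$) with $\cofree_{\B\op}\sC(b)\otimes_\sC\presheaf(\F;\sC)$, and likewise identifies $\paramFibred{E}^*$, $\paramFibred{E}_*$, $\paramFibred{i}^*$ and the twisted-arrow functors $\paramFibred{s}^*$, $\paramFibred{t}^*$, $\frakTwistedArrow(\paramFibred{E})_!$ as the results of tensoring the corresponding non-parametrised $\sC$-linear functors with $\cofree_{\B\op}\sC$ over $\sC$. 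Tensoring the coend formula $(\F,\partial\F)_*\simeq\twistedArrow(\F)_!(s^*\omega_{\F,\partial\F}\otimes t^*(-))$ with $\cofree_{\B\op}\sC$ then exhibits $(\paramFibred{E},\partial\paramFibred{E})_*$ in classified form with classifying system corresponding to $\unit\otimes\omega_{\F,\partial\F}$, i.e.\ $\pi_\F^*\omega_{\F,\partial\F}$; this is an application of the naturality of the Morita equivalence in the coefficient category (\cref{obs:naturality_in_C}), not of \cref{lem:etale_basechange_classifying_systems,lem:geometric_basechange_classifying_systems}. Your treatment of the groupoidal/dualising half via \cref{prop:Dequivalence} and the equivalence $\lvert\E\hop\rvert^v\simeq\lvert\E\op\rvert^v$ is fine once the identification of $\omega_{p,q}$ is in place, and your variance bookkeeping for $\pi_\F$ versus $\pi_\F'$ is correct.
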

\begin{proof}
Identifying $\cat_{\presheaf(\B)}\simeq\func(\B\op,\cat)$, we have that $(\paramFibred{E},\partial\paramFibred{E})$ corresponds to the constant functor $\B\op\to\cat_{/[1]}$ at $(\F,\partial\F)$. Letting $i\colon\partial\F\to\F$ and $\paramFibred{i}\colon\partial\paramFibred{E}\to\paramFibred{E}$
denote the inclusions, we similarly have that $\paramFibred{i}$, considered as a natural transformation of functors $\B\op\to\cat$, is the constant natural transformation at $i$.

For $b\in \B\op$ we have an symmetric monoidal identification $\cofree_{\B\op}\sC(b)\simeq\func((\B_{/b})\op,\sC)$, naturally in $b\in\B$; this refines $\cofree_{\B\op}\sC$,  considered as a functor $\B\op\to\presentable^L$, to a functor $\B\op\to\calg(\module_\sC(\presentable^L))$.
Moreover, using that $\paramFibred{i}\colon\partial\paramFibred{E}\to\paramFibred{E}$ is a constant $\presheaf(\B)$-functor, we have further symmetric monoidal identifications as follows, naturally in $b\in\B$:
\[
\begin{tikzcd}[row sep=10pt]
\presheafTopos(\paramFibred{E};\cofree_{\B\op}\sC)(b)\ar[r,"\simeq"]\ar[d,"\paramFibred{i}^*(b)"]&\presheaf(\F;\cofree_{\B\op}\sC(b))\ar[r,"\simeq"]\ar[d,"i^*"]&\cofree_{\B\op}\sC(b)\otimes_\sC\presheaf(\F;\sC)\ar[d,"i^*\otimes\id"]\\
\presheafTopos(\partial\paramFibred{E};\cofree_{\B\op}\sC)(b)\ar[r,"\simeq"]&\presheaf(\partial\F;\cofree_{\B\op}\sC(b))\ar[r,"\simeq"]&\cofree_{\B\op}\sC(b)\otimes_\sC\presheaf(\partial\F;\sC).
\end{tikzcd}
\]
In particular also the $\presheaf(B)$-functor $\paramFibred{i}^*\colon\presheafTopos(\paramFibred{E};\cofree_{\B\op}\sC)\to\presheafTopos(\partial\paramFibred{E};\cofree_{\B\op}\sC)$, considered as a functor $\B\op\to\func([1],\presentable^L)$, refines to a functor $\B\op\to\func([1],\calg(\module_\sC(\presentable^L)))$. 
Moreover, the above equivalences, which are natural in $b\in\B$, identify $\presheafTopos(\paramFibred{E};\cofree_{\B\op}\sC)$
with the (pointwise in $\B\op$) tensor product over $\sC$ of the functor $\cofree_{\B\op}\sC\colon\B\op\to\calg(\module_\sC(\presentable^L))$ and the object $\presheaf(\F;\sC)\in\calg(\module_\sC(\presentable^L))$.

The symmetric monoidal $\presheaf(\B)$-functor $\paramFibred{E}^*$ is similarly given by tensoring over $\sC$ the functor $\cofree_{\B\op}\sC\colon\B\op\to\calg(\module_\sC(\presentable^L))$ with the morphism $\F^*\colon\sC\to\presheaf(\F;\sC)$ in $\calg(\module_\sC(\presentable^L))$.
The hypothesis that $\F$ is $\sC$-twisted ambidextrous implies that $\paramFibred{E}^*$ admits a 
right adjoint after forgetting to $\module_{\cofree_{\B\op}\sC}(\presentable^L_{\presheaf(\B)})$,  and these can be computed as well by tensoring over $\sC$ the functor $\cofree_{\B\op}\sC\colon\B\op\to\module_\sC(\presentable^L)$ with the $\sC$-linear morphisms $F_*\colon\presheaf(\F;\sC)\to\sC$. 

Similar remarks hold for expressing $\partial\paramFibred{E}^*$ and $\partial\paramFibred{E}_*$ as tensor products of $\cofree_{\B\op}\sC$ with $\partial\F^*$ and $\partial\F_*$ over $\sC$, respectively. Passing to twisted arrow categories, we have that $\frakTwistedArrow(\paramFibred{E})$ corresponds to the constant functor $\B\op\to\cat$ at $\twistedArrow(\F)$, and $\paramFibred{s},\paramFibred{t}\colon\frakTwistedArrow(\paramFibred{E})\to\paramFibred{E}\op,\paramFibred{E}$ correspond to the constant natural transformations of functors $\B\op\to\cat$ at $s,t$, respectively. Thus we can identify $\presheafTopos(\frakTwistedArrow(\paramFibred{E});\cofree_{\B\op}\sC)\colon\B\op\to\calg(\module_\sC(\presentable^L))$ with the tensor product over $\sC$ of the functor $\cofree_{\B\op}\sC\colon\B\op\to\calg(\module_\sC(\presentable^L))$ and $\presheaf(\twistedArrow(\F);\sC)\in\calg(\module_\sC(\presentable^L))$; and we can similarly as above identify $\paramFibred{s}^*$, $\paramFibred{t}^*$, $\frakTwistedArrow(\paramFibred{E})^*$ and hence also the left adjoint $\frakTwistedArrow(\paramFibred{E})_!$ with the tensor product over $\sC$ of the functor $\cofree_{\B\op}\sC\colon\B\op$ and the morphisms $s^*$, $t^*$, $\twistedArrow(F)^*$ and $\twistedArrow(F)_!$ in $\module_\sC(\presentable^L)$, respectively.

The equivalence of $\sC$-linear functors $(\F_,\partial\F)_*\simeq\twistedArrow(\F)_!(s^*\omega_\F\otimes t^*(-))$ gives therefore rise to an identification of $\cofree_{\B\op}\sC$-linear functors $(\paramFibred{E},\partial\paramFibred{E})_*\simeq\frakTwistedArrow(\paramFibred{E})_!(\paramFibred{s}^*(\pi_\F^*\omega_{\F,\partial\F})\otimes\paramFibred{t}^*(-))$: we simply tensor the functor $\cofree_{\B\op}\sC$ over $\sC$ with the former equivalence. Here we regard 
\[
\pi_\F^*\omega_{\F,\partial\F}\in\func(\E\hop,\sC)\simeq\func(\B\op,\sC)\otimes_\sC\func(\F,\sC)
\simeq\Gamma\cofree_{\B\op}\sC\otimes_\sC\func(\F,\sC)
\]
as the object corresponding to $\unit\otimes\omega_{\F,\partial\F}$. This simultaneously shows that $(\paramFibred{E},\partial\paramFibred{E})$ is $\cofree_{\B\op}\sC$-twisted ambidextrous, and it identifies $\omega_{p,q}$ with $\pi_\F^*\omega_{\F,\partial\F}$.

If $(\F,\partial\F)$ is groupoidally $\sC$-twisted ambidextrous, we argue in a similar manner: the equivalence $(\F,\partial\F)_*\simeq\F_!(D_{\F,\partial\F}\otimes-)$, when tensored over $\sC$ with the functor $\cofree_{\B\op}\sC$, yields an equivalence $(\paramFibred{E},\partial\paramFibred{E})_*\simeq\paramFibred{E}_!((\pi'_\F)^*D_{\F,\partial\F}\otimes-)$, where $(\pi'_\F)^*D_{\F,\partial\F}\in\presheaf(\E;\sC)$ is evidently fibrewise groupoidal and corresponds to the object $\unit\otimes D_{\F,\partial\F}\in\Gamma\cofree_{\B\op}\sC\otimes_\sC\presheaf(\F;\sC)$. In particular $(\paramFibred{E},\partial\paramFibred{E})$ is groupoidally $\sC$-twisted ambidextrous.
\end{proof}

\begin{cor}
\label{cor:horizontal_groupoidality_special_bicartesian}
Let $(p,q)$ be a special $\sC$-twisted ambidextrous $\B$-pair. Then $\omega_{p,q}\in\func(\E\hop,\sC)$ is \textnormal{horizontally groupoidal}, i.e. it sends to equivalences all cartesian (equivalently, all cocartesian) morphisms of the special bicartesian fibration $\E\hop\to\B\op$. If moreover $(p,q)$ is groupoidally $\sC$-twisted ambidextrous, then $D_{p,q}$ is groupoidal.
\end{cor}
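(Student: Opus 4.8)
\textbf{Proof sketch for \cref{cor:horizontal_groupoidality_special_bicartesian}.}
The plan is to reduce to the situation already treated in \cref{prop:classifying_systems_of_product_projections} by means of the \'etale-locality result \cref{prop:local_global_principle_for_functor_pairs}. First I would observe that being horizontally groupoidal is a property of the functor $\omega_{p,q}\colon\E\hop\to\sC$ that can be checked on morphisms; concretely, it suffices to check that $\omega_{p,q}$ inverts the cocartesian lift of any morphism $b\to c$ in $\B\op$, equivalently the cartesian lift of any morphism $c\to b$ in $\B$. Since a single morphism $f\colon c\to b$ in $\B$ is the image of the terminal object under the right fibration $\B_{/f}\to\B$ whose source is (a retract of) $[1]$, and more efficiently, since $\B_{/b}\to\B$ is a right fibration through which the chosen lift factors, it is enough to prove the statement after basechanging $(p,q)$ along the right fibration $\pi\colon\B_{/b}\to\B$ for each $b\in\B$.

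Next, invoking \cref{prop:local_global_principle_for_functor_pairs}(1), the pullback pair $(p_b,q_b)\colon(\E_{/b},\partial_v\E_{/b})\to\B_{/b}$ is again $\sC$-twisted ambidextrous and $\omega_{p_b,q_b}$ is the restriction of $\omega_{p,q}$ along $(\E_{/b})\hop\to\E\hop$; moreover being special bicartesian is manifestly stable under basechange, so $(p_b,q_b)$ is again a special $\B_{/b}$-pair. As $\B_{/b}$ has a terminal object it is weakly contractible, so by \cref{obs:special_bicartesian_over_weakly_contractible} (applied to $p_b$, and with \cref{obs:special_B_pair} for the boundary) the pair $(p_b,q_b)$ is equivalent to the pair of product projections $(\F\times\B_{/b},\partial\F\times\B_{/b})\to\B_{/b}$ for $(\F,\partial\F)\coloneqq(\E_b,\partial_v\E_b)$. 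Now \cref{prop:classifying_systems_of_product_projections} identifies $\omega_{p_b,q_b}$ with $\pi_\F^*\omega_{\F,\partial\F}$ along the projection $(\E_{/b})\hop\simeq\F\times(\B_{/b})\op\to\F$; such a pullback along a projection visibly inverts every morphism lying over a morphism of $(\B_{/b})\op$, i.e. $\omega_{p_b,q_b}$ is horizontally groupoidal. Since horizontal groupoidality of $\omega_{p,q}$ is detected on morphisms and every such morphism is, up to equivalence, in the image of some $(\E_{/b})\hop\to\E\hop$, this proves the first assertion. (That the cartesian and cocartesian morphisms of the special bicartesian fibration coincide is part of \cref{defn:special_bicartesian}, so the parenthetical ``equivalently'' is automatic.)

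For the groupoidal case: if $(p,q)$ is additionally groupoidally $\sC$-twisted ambidextrous, then by \cref{warning:fibrewise_groupoidal} the systems $\omega_{p,q}$ and $D_{p,q}$ correspond to one another along the equivalence $|\E\hop|^v\simeq|\E\op|^v$ of left fibrations over $\B\op$ from \cref{nota:cartesian_fibration_and_B_cat}. The first part of the corollary says precisely that $\omega_{p,q}$ already factors through the \emph{further} localisation $|\E\hop|$ — horizontal groupoidality means exactly that, in addition to being fibrewise groupoidal, $\omega_{p,q}$ inverts the (co)cartesian edges, so it inverts all morphisms. Transporting this across the equivalence $|\E\hop|^v\simeq|\E\op|^v$ (which is compatible with the further localisations to $|\E\hop|\simeq|\E\op|$) shows $D_{p,q}$ factors through $|\E\op|$, i.e. $D_{p,q}$ is groupoidal, as claimed. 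Alternatively, one can read this off directly from the last paragraph of \cref{prop:classifying_systems_of_product_projections}, where $D_{p_b,q_b}\simeq(\pi'_\F)^*D_{\F,\partial\F}$ is visibly groupoidal, and then glue along the effective-epimorphism $\coprod_{b}\E_{/b}\to\E$ exactly as above.

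\textbf{Main obstacle.} The only subtlety I anticipate is the bookkeeping in the reduction step: making precise that ``horizontal groupoidality can be checked locally over $\B$'' — i.e. that every (co)cartesian morphism of $\E\hop\to\B\op$ is, up to equivalence, pulled back from one over some slice $\B_{/b}$, and that the restriction functors $(\E_{/b})\hop\to\E\hop$ are jointly essentially surjective on morphisms — together with verifying the compatibility between the equivalence $|\E\hop|^v\simeq|\E\op|^v$ and the extra localisations inverting cartesian edges. Both are formal consequences of the slice and localisation machinery recalled in \cref{subsec:recollections} and \cref{nota:cartesian_fibration_and_B_cat}, and of \cref{prop:local_global_principle_for_functor_pairs}, but they require a little care to state cleanly.
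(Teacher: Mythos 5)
Your proposal is correct and follows essentially the same route as the paper: basechange along the right fibration $\B_{/b}\to\B$ via \cref{prop:local_global_principle_for_functor_pairs}, reduction to a product projection via \cref{obs:special_bicartesian_over_weakly_contractible} and \cref{prop:classifying_systems_of_product_projections}, and then gluing over all $b\in\B$ using that every (co)cartesian morphism is hit by some slice. For the second claim the paper argues directly that $D_{p,q}$ is horizontally groupoidal and combines this with fibrewise groupoidality from \cref{thm:fibrewise_characterisation_of_twisted_ambidexterity}, which is the same content as your transport across $|\E\hop|^v\simeq|\E\op|^v$.
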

\begin{proof}
For $b\in\B$ we may consider the right fibration $\pi\colon\B'\coloneqq\B_{/b}\to\B$, and define the $\B'$-pair $(p',q')$ by pullback as in \cref{prop:local_global_principle_for_functor_pairs}; the same proposition then ensures that $\omega_{p',q'}\simeq\pi^*\omega_{p,q}$. By \cref{obs:special_bicartesian_over_weakly_contractible} we are able to apply \cref{prop:classifying_systems_of_product_projections} and conclude in particular that $\omega_{p',q'}$ is horizontally groupoidal.
Since every (co)cartesian morphism in $\E\hop$ with target in $\E_b$ is the image of a (co)cartesian morphism in $(\E')\hop$, letting $b$ vary we obtain the first claim.

For the second claim, we similarly argue that $D_{p,q}\colon\E\op\to\sC$ is horizontally groupoidal; by \cref{thm:fibrewise_characterisation_of_twisted_ambidexterity} we have that $D_{p,q}$ is also fibrewise groupoidal, and the two conditions together imply that $D_{p,q}$ is groupoidal.
\end{proof}

\subsection{The fibration theorem}\label{subsec:fibred_poincare_integration} 
Suppose we have a $\B$-pair $(p,q)$ where $\B$ itself participates in an $\infty$-category pair $(\B,\partial\B)$. Out of this, we may construct an $\infty$-category pair $(\E,\partial\E)$ that combines the fibrewise boundaries coming from $(p,q)$ and the boundary $\partial\B$ of the base. The main goal of this subsection is to prove our main result in the theory of fibred ambidexterity, \cref{thm:fibred_dualising_object_factorisation}, which relates Poincar\'e duality on $(\E,\partial\E)$ to that on the fibres and the base.
To be able to give the precise statement, we will need a couple of definitions.

\begin{defn}
\label{defn:fibred_triad}
A \textit{fibred triad} is the datum of an $\infty$-category pair $(\B,\partial\B)$ and of a bicartesian
$\B$-pair $(p,q)$ as in \cref{defn:bicartesian_B_pair}.
Given a fibred triad $(\B,\partial\B,p,q)$
we usually write $\partial_h \E\coloneqq p^{-1}(\partial \B)$, $\partial\partial \E\coloneqq\partial_v \E \times_{\E} \partial_h \E\simeq q^{-1}(\partial\B)$, which also coincides with the intersection $\partial_v\E\cap\partial_h\E$ of left closed $\infty$-subcategories of $\E$; and we set $\partial \E \coloneqq \partial_v \E \cup_{\partial \partial \E} \partial_h \E$. 

We say that $(\B,\partial\B,p,q)$ is \textit{$\sC$-twisted ambidextrous} if $(\B,\partial \B)$ is a $\sC$-twisted ambidextrous $\infty$-category pair and $(p,q)$ is a $\sC$-twisted ambidextrous $\B$-pair. We say that $(\B,\partial\B,p,q)$ is \textit{tame} if  $(\B,\partial\B)$ is tame in the sense of \cref{defn:tame_pair}.
\end{defn}

\begin{rmk}
    By \cref{thm:fibrewise_characterisation_of_twisted_ambidexterity},  $\sC$-twisted ambidexterity of $(p,q)$ in the definition of $\sC$-twisted ambidexterity of $(\B,\partial\B,p,q)$ above may also be rephrased as saying that each fibre of $(p,q)$ is $\sC$-twisted ambidextrous.
\end{rmk}

With the language set up, we can now state the theorem.
\begin{thm}
\label{thm:fibred_dualising_object_factorisation}
Let $(\B,\partial\B,p,q)$ be a $\sC$-twisted ambidextrous tame fibred triad as in \cref{defn:fibred_triad} and assume  that $p$ is a special bicartesian fibration (i.e., $(p,q)$ is a special $\B$-pair as in \cref{defn:special_pairs}).
Consider the following statements:
\begin{enumerate}
\item $(\E,\partial\E)$ is a $\sC$-Poincar\'e $\infty$-category pair;
\item $(\B,\partial\B)$ and all the fibres of $(p,q)\colon (\E,\partial_v\E)\rightarrow\B$ are $\sC$-Poincar\'e $\infty$-category pairs.
\end{enumerate} 
Then (2) implies (1), and under these  conditions, we  have an equivalence $D_{\E,\partial\E}\simeq D_{p,q} \otimes p^* D_{\B,\partial\B}$. If we additionally assume that $p$ is essentially surjective, then we also have that (1) implies (2).
\end{thm}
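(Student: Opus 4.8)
The plan is to reduce the statement to the absolute case by a doubling argument, and then to an interaction of ``cofree'' Morita theory over $\presheaf(\B)$ with the integration of a cocartesian fibration over $[1]$. First I would set up the combinatorial picture: the fibred triad gives left closed $\infty$-subcategories $\partial_v\E,\partial_h\E\subseteq\E$, with $\partial_h\E=p^{-1}(\partial\B)$ and $\partial\E=\partial_v\E\cup\partial_h\E$; since $(p,q)$ is a special $\B$-pair, \cref{obs:special_B_pair} supplies a bicartesian $\bar\B$-pair $(\bar p,\bar q)$ fitting into nested pullback squares, and in particular the whole situation is pulled back from $\bar\B\in\spc$. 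The key structural input is that the map $f\colon\E\to[1]$ exhibiting $\partial\E$ as left closed can be chosen to be a cocartesian fibration: indeed, composing $p\colon\E\to\B$ with a functor $\B\to[1]$ classifying $\partial\B$, and combining with the fibrewise ``$\min$'' structure from \cref{obs:B-pair_over_[1]}, one produces $\E\to[1]$ whose fibres are $\partial\E$ and $\interior{\E}=\E\setminus\partial\E$, and whose straightening is an arrow $(\partial\E\xrightarrow{h}\interior{\E})$ in $\cat$. I would verify this is a cocartesian fibration using the bicartesian hypothesis on $(p,q)$, so that \cref{prop:barXdXpdpair} and \cref{obs:connecting_map_of_classifying_systems} become available.

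Next I would prove the implication (2)$\implies$(1) together with the dualising-system formula. Start from \cref{thm:fibrewise_characterisation_of_twisted_ambidexterity}: since each fibre of $(p,q)$ is $\sC$-Poincar\'e, the $\B$-pair $(p,q)$ is groupoidally $\sC$-twisted ambidextrous, and by \cref{cor:horizontal_groupoidality_special_bicartesian} (using speciality) the system $D_{p,q}\in\presheaf(\E;\sC)$ is genuinely groupoidal and takes values in $\picardSpace(\sC)$ fibrewise. Now apply the relative realisation principle \cref{prop:barXdXpdpair} to the pair $(\E,\partial\E)$ with its cocartesian fibration to $[1]$: it suffices to show the classifying system $\omega_{\E,\partial\E}$ is defined and factors through $\picardSpace(\sC)$. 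Using \cref{lem:unstraightening_twisted_ambidextrous}-style arguments (or rather the fibred twisted ambidexterity already assumed) one gets $\sC$-twisted ambidexterity of $(\E,\partial\E)$. To compute $D_{\E,\partial\E}$, I would run the ``composition of ambidextrous maps'' argument in the spirit of \cite[Thm. E]{PD1}, but internally: the relative cohomology $(\E,\partial\E)_*$ factors as $(\B,\partial\B)_*\circ(\paramFibred{E},\partial_v\paramFibred{E})_*$ where the inner functor is the $\presheaf(\B)$-parametrised relative cohomology classified by $\omega_{p,q}$ and the outer one by $\omega_{\B,\partial\B}$ (here I use \cref{obs:cofree_coeff_cats} to translate $\cofree_{\B\op}\sC$-valued systems on $\paramFibred{E}$ into $\sC$-valued systems on $\E\hop$, and the identification $\Gamma((\paramFibred{E},\partial_v\paramFibred{E})_*)\simeq(p,q)_*$). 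By the groupoidality just established and \cref{cor:formula_in_terms_of_dualising_system_for_dualish}, $(p,q)_*(-)\simeq p_!(D_{p,q}\otimes-)$; composing with $(\B,\partial\B)_*\simeq\B_!(D_{\B,\partial\B}\otimes-)$ (relative realisation on the base, valid since $(\B,\partial\B)$ is tame and $\sC$-Poincar\'e) and using the projection formula for the cartesian fibration $p$, namely \cref{lem:projection_formula_for_presheaves}, gives $(\E,\partial\E)_*(-)\simeq\E_!\big((D_{p,q}\otimes p^*D_{\B,\partial\B})\otimes-\big)$. Since $D_{p,q}$ is groupoidal-valued-in-$\picardSpace$ and $p^*D_{\B,\partial\B}$ likewise, their tensor product is an invertible groupoidal system, so \cref{prop:omega_dualisable_iff_invertible} (or directly \cref{cor:formula_in_terms_of_dualising_system_for_dualish}) identifies it with $D_{\E,\partial\E}$ and shows $(\E,\partial\E)$ is $\sC$-Poincar\'e.

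For the converse, assuming $p$ essentially surjective, I would argue as follows. Restricting along any object $b\colon\ast\to\B$ one gets, via \cref{thm:fibrewise_characterisation_of_twisted_ambidexterity}, that $\omega_{p,q}|_{\E_b}\simeq\omega_{\E_b,\partial\E_b}$; so if $\omega_{\E,\partial\E}$ factors through $\picardSpace(\sC)$, then the formula $D_{\E,\partial\E}\simeq D_{p,q}\otimes p^*D_{\B,\partial\B}$ --- which we must first re-derive in this direction without assuming (2), using only that $(p,q)$ is $\sC$-twisted ambidextrous and the same composition/projection-formula argument, now producing a candidate identity of classifying systems --- restricts on each fibre $\E_b$ to $D_{\E_b,\partial\E_b}$ twisted by the constant invertible object $D_{\B,\partial\B}(b)\in\picardSpace(\sC)$; hence each $(\E_b,\partial\E_b)$ is $\sC$-Poincar\'e. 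To recover Poincar\'e duality for $(\B,\partial\B)$, I would choose a fibre, which is nonempty by essential surjectivity (and by the tameness/nondegeneracy hypotheses the fibres are nonempty as $\infty$-category pairs), and use that $D_{\E,\partial\E}$ being $\picardSpace$-valued forces $p^*D_{\B,\partial\B}$ to be $\picardSpace$-valued after cancelling the (fibrewise invertible) factor $D_{p,q}$; since $p$ is essentially surjective, $p^*$ is conservative on groupoidal systems, so $D_{\B,\partial\B}$ itself is $\picardSpace$-valued, i.e. $(\B,\partial\B)$ is $\sC$-Poincar\'e. The main obstacle I anticipate is the bookkeeping needed to justify the factorisation $(\E,\partial\E)_*\simeq(\B,\partial\B)_*\circ(\paramFibred{E},\partial_v\paramFibred{E})_*$ as a diagram of $\sC$-linear functors compatibly with the $\cofree_{\B\op}\sC$-linear structure --- this is where the speciality hypothesis and the careful use of \cref{obs:cofree_coeff_cats}, \cref{lem:projection_formula_for_presheaves}, and \cref{thm:fibrewise_characterisation_of_twisted_ambidexterity} all have to be threaded together --- rather than the final conservativity argument, which is routine once the formula for $D_{\E,\partial\E}$ is in hand.
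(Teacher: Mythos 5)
Your treatment of (2)$\implies$(1) and of the formula $D_{\E,\partial\E}\simeq D_{p,q}\otimes p^*D_{\B,\partial\B}$ matches the paper's: fibrewise characterisation plus \cref{cor:horizontal_groupoidality_special_bicartesian} to get an invertible groupoidal $D_{p,q}$, then the composition formula $(\E,\partial\E)_*\simeq(\B,\partial\B)_*(p,q)_*$ and the projection formula \cref{lem:projection_formula_for_presheaves} (this is exactly \cref{lem:basic_fibred_dualising_module_factorisation}). The appeal to \cref{prop:barXdXpdpair} is unnecessary there, but harmless.

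The converse direction has a genuine gap. You propose to ``re-derive the formula $D_{\E,\partial\E}\simeq D_{p,q}\otimes p^*D_{\B,\partial\B}$ in this direction without assuming (2), using only that $(p,q)$ is $\sC$-twisted ambidextrous and the same composition/projection-formula argument.'' But that argument is not available under mere twisted ambidexterity: writing $(p,q)_*(-)\simeq p_!(D_{p,q}\otimes-)$ requires $(p,q)$ to be \emph{groupoidally} twisted ambidextrous (equivalently, by \cref{thm:fibrewise_characterisation_of_twisted_ambidexterity}, that every fibre is groupoidally ambidextrous), which is precisely part of what (2) asserts and is not known from (1). Without it, $(p,q)_*$ is only classified by $\omega_{p,q}$ via the coend formula of \cref{prop:formula_with_twisted_arrow}, and $D_{p,q}$ does not classify anything. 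Similarly, your final step treats $D_{\B,\partial\B}(b)$ as an invertible object and $p^*$ as ``conservative on groupoidal systems,'' but groupoidality of $\omega_{\B,\partial\B}$ is part of the conclusion, not a hypothesis. The paper breaks this circularity by first proving fibrewise Poincar\'e duality independently: speciality lets one descend along the localisation $\E\to\bar\E$ over $\bar\B=|\B|$ (\cref{lem:horizontal_localisation}, \cref{cor:poincare_domination_of_localisations}), and over a \emph{groupoid} base the twisted arrow category coincides with the fibrewise one (\cref{lem:twisted_arrow_over_groupoids}), which yields $\omega_{\bar\E,\partial_v\bar\E}\simeq\bar p^*D_{\bar\B}\otimes\omega_{\bar p,\bar q}$ and hence invertibility of $\omega_{\bar p,\bar q}$ (\cref{lem:special_case_of_converse_for_groupoid_base}). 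Only then, with $D_{p,q}$ invertible in hand, does a separate cofinality argument for twisted arrow categories over a general base (\cref{lem:twisted_arrow_cofinality}) produce $\omega_{\E,\partial\E}\simeq u^*\bar D_{p,q}\otimes p^*\omega_{\B}$, from which essential surjectivity and the cartesian fibration property of $p$ (to see every morphism of $\B$ is hit) give Poincar\'e duality of $\B$. These two lemmas — the groupoid-base computation and the twisted-arrow cofinality — are the missing ideas in your proposal; the doubling reduction to $\partial\B=\emptyset$ (\cref{lem:doubling_trick}), which you mention only in passing, is also genuinely used in the converse.
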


The rest of this subsection is devoted to working towards the proof of this theorem, which will be given at the end of the subsection. Before proceeding, we first explain  how to reduce to proving the statement under the additional assumption $\partial\B=\emptyset$.
\begin{lem}[Doubling principle for fibred triads]
\label{lem:doubling_trick}
Recall \cref{ex:doubling_principle}.
Let $(\B,\partial\B,p,q)$
be a tame fibred triad, and consider the following three properties:
\begin{enumerate}
\item $(\E,\partial\E)$ is a $\sC$-Poincar\'e $\infty$-category pair;
\item $(p,q)$ is a $\sC$-Poincar\'e $\B$-pair;
\item $(\B,\partial \B)$ is a $\sC$-Poincar\'e $\infty$-category pair.
\end{enumerate}
Then there is a category $\B'$ and a bicartesian $\B'$-pair $(p',q')$ such that each of the properties above is equivalent to the following respective analogue:
\begin{itemize}
\item[(1)$^\prime$] $(\E',\partial_v\E')$ is a $\sC$-Poincar\'e $\infty$-category;
\item[(2)$^\prime$] $(p',q')$ is a $\sC$-Poincar\'e $\B'$-pair;
\item[(3)$^\prime$] $\B'$ is a $\sC$-Poincar\'e $\infty$-category.
\end{itemize}
Moreover, if $p$ is a special bicartesian fibration, then so is $p'$. A similar statement holds if we replace all instances of ``$\sC$-Poincar\'e'' above with ``groupoidally $\sC$-twisted ambidextrous'': for $i=1,2,3$, property ($i$) is equivalent to property ($i$)$^\prime$.
\end{lem}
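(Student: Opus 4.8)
The plan is to produce the double $\B'$ by the standard construction $\B' \coloneqq \B \amalg_{\partial\B} \B$ of \cref{ex:doubling_principle}, which is licit because $(\B,\partial\B)$ is tame. I would then build the $\B'$-pair $(p',q')$ by pulling back the bicartesian $\B$-pair $(p,q)$ along the fold map $\mathrm{fold}\colon \B' \to \B$; since pullbacks of (special) bicartesian fibrations along a functor are again (special) bicartesian fibrations, and since the left-closedness of $\partial_v\E_b \subset \E_b$ is preserved by pullback, $(p',q')$ is a bicartesian $\B'$-pair, and it is special if $p$ is. Concretely, $\E' \simeq \E \amalg_{\partial_h\E} \E$ and $\partial_v\E' \simeq \partial_v\E \amalg_{\partial\partial\E} \partial_v\E$, using $\partial_h\E = p^{-1}(\partial\B)$ and $\partial\partial\E = q^{-1}(\partial\B)$ as in \cref{defn:fibred_triad}. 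Unwinding the definition of $\partial\E = \partial_v\E \cup_{\partial\partial\E} \partial_h\E$, I would check that $\E'$ carries a complemented cover by two copies of $(\E,\partial\E)$: this is exactly the doubling situation of \cref{ex:doubling_principle} applied to the tame pair $(\E,\partial\E)$ — note that $(\E,\partial\E)$ is tame because $(\B,\partial\B)$ is tame and $p$ is a cocartesian fibration, so $\interior\E \subset \E$ is final by \cref{obs:leftclosed}\ref{rmk:left_closed_fibration}.

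With this setup the three equivalences are essentially formal applications of the cutting-and-pasting principles. For (1) $\Leftrightarrow$ (1)$'$: by \cref{cor:local_to_global_principle}, $\E'$ is $\sC$-Poincar\'e (resp. groupoidally $\sC$-twisted ambidextrous) if and only if each covering pair $(\E,\partial\E)$ is; since the two covering copies are equivalent to $(\E,\partial\E)$, this is the claim. For (3) $\Leftrightarrow$ (3)$'$: this is literally \cref{ex:doubling_principle} applied to $(\B,\partial\B)$, giving that $\B'$ is $\sC$-Poincar\'e iff $(\B,\partial\B)$ is. For (2) $\Leftrightarrow$ (2)$'$: I would invoke \cref{prop:local_global_principle_for_functor_pairs} — but the fold map $\B' \to \B$ is not a right fibration, so this cannot be applied directly. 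Instead I would use the fibrewise characterisation \cref{thm:fibrewise_characterisation_of_twisted_ambidexterity}: $(p,q)$ is $\sC$-Poincar\'e (resp. groupoidally $\sC$-twisted ambidextrous) as a $\B$-pair iff all its fibres $(\E_b,\partial\E_b)$ are, and likewise $(p',q')$ iff all fibres $(\E'_{b'},\partial\E'_{b'})$ are; but each fibre of $(p',q')$ over $b' \in \B'$ is, by construction of the pullback along the fold, equal to the fibre of $(p,q)$ over $\mathrm{fold}(b')$, and $\mathrm{fold}$ is surjective on objects. Hence the two fibrewise conditions coincide.

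The final compatibility assertion — ``if $p$ is special bicartesian, so is $p'$'' — follows immediately from \cref{defn:special_bicartesian}, since being special is characterised by the cartesian fibre transports being equivalences, a property stable under pullback. The main subtlety, and the step I would be most careful about, is the identification of $\partial\E'$ with the union $\partial_v\E' \cup \partial_h\E'$ in a way that exhibits the two-fold complemented cover: one must check that the ``horizontal'' boundary $\partial_h\E' = (p')^{-1}(\partial\B') $ together with $\partial_v\E'$ glue to give precisely the two copies of $\partial\E$ meeting along the overlap, rather than something larger. Here tameness of $(\B,\partial\B)$ (hence of the relevant subcategory inclusions) is what makes the union computations in \cref{defn:complement_union_intersection} behave levelwise, via \cref{rmk:left_closed_covers}, so that the pushout presentations pass through the unstraightening $\int$ without correction. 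Once these bookkeeping identifications are in place, each of the three equivalences reduces to a single citation as indicated above.
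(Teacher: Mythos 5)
Your construction is the same as the paper's: double the base along $\partial\B$, set $(p',q')$ to be the pullback of $(p,q)$ along the fold functor $\pi\colon\B'=\B\amalg_{\partial\B}\B\to\B$, and reduce each property by cutting and pasting. The treatment of (1)$\Leftrightarrow$(1)$'$ and (3)$\Leftrightarrow$(3)$'$ via the two-fold complemented cover of $(\E',\partial_v\E')$ by copies of $(\E,\partial\E)$, and of $\B'$ by copies of $(\B,\partial\B)$, matches the paper. Two small imprecisions there: the relevant citation is \cref{ex:gluingXY} rather than \cref{ex:doubling_principle}, since the glued pair $(\E',\partial_v\E')$ has nonempty boundary; and the finality conditions one actually needs are tameness of the gluing loci, i.e.\ of $(\E,\partial_h\E)$ and of $(\partial_v\E,\partial\partial\E)$ — which follow from tameness of $(\B,\partial\B)$ together with $p$ and $q$ being cocartesian, via \cref{obs:leftclosed} — not tameness of $(\E,\partial\E)$ as you wrote (pulling back the final inclusion $\interior{\B}\subseteq\B$ along the cocartesian fibration $p$ gives finality of $\E\setminus\partial_h\E$ in $\E$, which is the condition you want anyway).

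The one place you genuinely diverge is (2)$\Leftrightarrow$(2)$'$, and your stated reason for diverging is incorrect: the fold functor $\pi\colon\B\amalg_{\partial\B}\B\to\B$ \emph{is} a right fibration. Because $\partial\B\subseteq\B$ is left closed, $\pi$ is the unstraightening of the presheaf $\B\op\to\spc$ taking value $*\amalg*$ on $\interior{\B}$ and $*$ on $\partial\B$; left-closedness (no morphisms from the interior into the boundary) is exactly what makes this a well-defined presheaf. The paper therefore applies \cref{prop:local_global_principle_for_functor_pairs} directly, using essential surjectivity of $\pi$ for the converse direction. Your workaround via \cref{thm:fibrewise_characterisation_of_twisted_ambidexterity} is nevertheless valid: that theorem gives the fibrewise criterion for (groupoidal) twisted ambidexterity together with $\omega_{\E_b,\partial\E_b}\simeq\omega_{p,q}\vert_{\E_b}$; since the Poincar\'e property of a bicartesian $\B$-pair amounts to fibrewise groupoidality plus pointwise invertibility of $\omega_{p,q}$, and every object of $\E\hop$ lies in some fibre, the Poincar\'e property is likewise detected on fibres. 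The fibres of $(p',q')$ coincide with those of $(p,q)$ and $\pi$ is essentially surjective, so the two fibrewise conditions agree. In short, your proof goes through; the paper's route through \'etale locality is simply the more direct one, and you should delete the claim that $\pi$ fails to be a right fibration.
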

\begin{proof}
We  construct a replacement fibred triad $(\B',\partial\B',p',q')$ satisfying $\partial\B'\simeq\emptyset$ but possibly $\partial_v\E'\neq\emptyset$. Recall by definition of fibred triads that $p$ and $q$ are both bicartesian fibrations.
We double the base $\infty$-category $\B$ along $\partial \B$, and accordingly $\E$ along $\partial_h \E$; letting $\B'\coloneqq\B\amalg_{\partial\B}\B$ and $\pi\colon \B'\to\B$ be the fold functor, we thus replace the fibred triad $(\B,\partial\B,p,q)$ by the fibred triad $(\B',\emptyset,p',q')$, where $(p',q')\coloneqq(\pi^*p,\pi^*q)$. Note that $\pi\colon \B'\to\B$ is a right fibration corresponding to the functor $\B\op\to\spc$ attaining value $*\amalg*$ on $\B\setminus\partial\B$ and $*$ on $\partial\B$: in particular $\pi$ is an effective epimorphism. Observe also that the $\B'$-pair $(p',q')$ is again tame: indeed the final inclusion $\presheaf(\B)$-functor $\interior{\paramFibred{E}}\subseteq\paramFibred{E}$ is sent along the \'etale basechange $\pi^*$ again to a final inclusion $\presheaf(\B')$-functor $\interior{\paramFibred{E}'}\subseteq\paramFibred{E}'$.

Now note that the $\infty$-category pairs $(\E,\partial_h\E)$ and $(\partial_v\E,\partial\partial\E)$ are tame by \cref{obs:leftclosed}\ref{rmk:left_closed_fibration}
and the hypothesis that $p$ and $q$ are also cocartesian. By \cref{ex:gluingXY}, we then have that $(\E,\partial\E)$ is $\sC$-Poincar\'e if and only if $(\E',\partial_v\E')$ is $\sC$-Poincar\'e. Moreover, by \cref{prop:local_global_principle_for_functor_pairs}, the $\B$-pair $(p,q)$ is $\sC$-Poincar\'e if and only if the $\B'$-pair $(p',q')$ is $\sC$-Poincar\'e. By \cref{ex:gluingXY}, the $\infty$-category pair $(\B,\partial \B)$ is $\sC$-Poincar\'e if and only if $\B'$ is $\sC$-Poincar\'e.
Hence, using these observations, we can reduce (1)-(3) to the analogous properties of the fibred triad $(\B',\emptyset,p',q')$. It is also clear by construction that if $p$ is special, then so is $p'$.
\end{proof}

We next record the following composition relationship which will serve as the basic connection between fibred ambidexterity and ordinary ambidexterity.
\begin{obs}\label{obs:compositionality_formula}
Let $(\B,\partial\B,p,q)$ be a fibred triad. Then there is a natural equivalence
\[(\E,\partial\E)_*\simeq (\B,\partial\B)_*(p,q)_*\] of lax $\sC$-linear functors $\presheaf(\E;\sC)\rightarrow \sC$. To wit, simply consider the chain of equivalences
\[
\begin{split}
(\E,\partial\E)_*&\simeq\fib(\fib(\E_*\to\partial_v\E_*j_v^*)\to\fib(\partial_h\E_*j_h^*\to\partial\partial\E_*j^*))\\
&\simeq\fib(\B_*\fib(p_*\to q_*j_v^*)\to \partial\B_*\fib(\partial_h p_*j_h^*\to\partial_hq_*j^*))\\
&\simeq\fib(\B_*\fib(p_*\to q_*j_v^*)\to \partial\B_*i^*\fib(p_*\to q_*j_v^*))\simeq(\B,\partial\B)_*(p,q)_*,
\end{split}
\]
where the first equivalence comes from the identification $\partial\E\simeq\partial_v\E\amalg_{\partial\partial\E}\partial_h\E$; the second uses that $\B_*$ and $\partial\B_*$ preserve fibres; the third uses that $p$ and $q$ are \textit{cocartesian} and relies on \cref{prop:proper_smooth_base_change}; and the fourth is immediate by definition.
\end{obs}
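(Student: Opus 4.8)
The plan is to run the chain of equivalences indicated immediately after the statement, organised into three moves, all the while keeping track of the canonical lax $\sC$-linear enhancements. Throughout, write $k\colon\partial\E\to\E$, $j_v\colon\partial_v\E\to\E$, $j_h\colon\partial_h\E\to\E$, $j\colon\partial\partial\E\to\E$ and $i\colon\partial\B\to\B$ for the relevant left closed inclusions, abbreviate $\partial_hp\coloneqq p|_{\partial_h\E}\colon\partial_h\E\to\partial\B$ and $\partial_hq\coloneqq q|_{\partial\partial\E}\colon\partial\partial\E\to\partial\B$, and recall $(\E,\partial\E)_*=\fib(\E_*\to\partial\E_*k^*)$ from \cref{defn:relative_cohomology} and $(p,q)_*=\fib(p_*\to q_*j_v^*)$ from \cref{defn:B_pair}.

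First, I would decompose the total boundary. Since $\partial_v\E$ and $\partial_h\E$ are left closed in $\E$ with $\partial\partial\E=\partial_v\E\cap\partial_h\E$ and $\partial\E=\partial_v\E\cup\partial_h\E$, \cref{rmk:left_closed_covers} identifies $\partial\E$ with the pushout $\partial_v\E\amalg_{\partial\partial\E}\partial_h\E$ in $\cat$; as $\sC$-valued limit functors send this pushout of diagram shapes to a pullback, one gets $\partial\E_*k^*\simeq(\partial_v\E_*j_v^*)\times_{\partial\partial\E_*j^*}(\partial_h\E_*j_h^*)$. Feeding this into the fibre sequence associated to the composite $\E_*\to\partial\E_*k^*\to\partial_v\E_*j_v^*$ and using stability of $\sC$ then yields
\[
(\E,\partial\E)_*\simeq\fib\big(\fib(\E_*\to\partial_v\E_*j_v^*)\to\fib(\partial_h\E_*j_h^*\to\partial\partial\E_*j^*)\big).
\]
Next I would pull the base limits out: because limits compose, $\E_*\simeq\B_*p_*$, $\partial_v\E_*j_v^*\simeq\B_*q_*j_v^*$, $\partial_h\E_*j_h^*\simeq\partial\B_*(\partial_hp)_*j_h^*$ and $\partial\partial\E_*j^*\simeq\partial\B_*(\partial_hq)_*j^*$, and since $\sC$ is stable the limit functors $\B_*$ and $\partial\B_*$ are exact, hence commute with the two inner fibres. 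This rewrites the display as $\fib\big(\B_*(p,q)_*\to\partial\B_*\fib((\partial_hp)_*j_h^*\to(\partial_hq)_*j^*)\big)$.

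For the third move I would apply base change. The squares $(j_h,\partial_hp)$ over $(i,p)$ and $(\tilde\jmath,\partial_hq)$ over $(i,q)$ are cartesian, by $\partial_h\E=p^{-1}(\partial\B)$ and $\partial\partial\E=q^{-1}(\partial\B)$, and $p$ and $q$ are cocartesian fibrations; hence \cref{prop:proper_smooth_base_change} gives equivalences $i^*p_*\simeq(\partial_hp)_*j_h^*$ and $i^*q_*\simeq(\partial_hq)_*\tilde\jmath^*$. Since $j_v\tilde\jmath=j$ we have $\tilde\jmath^*j_v^*=j^*$, so $i^*q_*j_v^*\simeq(\partial_hq)_*j^*$, which identifies the inner fibre over $\partial\B$ with $i^*(p,q)_*$. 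Combining the three moves gives
\[
(\E,\partial\E)_*\simeq\fib\big(\B_*(p,q)_*\to\partial\B_*i^*(p,q)_*\big)=(\B,\partial\B)_*(p,q)_*,
\]
the last identity being the definition of $(\B,\partial\B)_*$ applied to the functor $(p,q)_*\colon\presheaf(\E;\sC)\to\presheaf(\B;\sC)$.

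The main obstacle I anticipate is not any of these computations individually but the verification that every identification above takes place in the relevant $\infty$-categories of \emph{lax $\sC$-linear} functors, rather than merely after forgetting the $\sC$-action: concretely, that the composition-of-limits equivalences, the exactness of $\B_*$ and $\partial\B_*$, and the Beck--Chevalley maps supplied by \cref{prop:proper_smooth_base_change} all refine to statements about lax $\sC$-linear natural transformations. This is formal, since each restriction functor in sight ($p^*$, $q^*$, $j_v^*$, $j_h^*$, $j^*$, $i^*$, $\B^*$, $\partial\B^*$) is $\sC$-linear and its right adjoint carries a canonical lax $\sC$-linear structure compatible with composition and with the formation of fibres (cf.\ \cref{rec:recognition_c_linear_functors}), but it is the step I would write out most carefully, using the same kind of bootstrapping as in the proof of \cref{lem:C_linear_structure_on_relative_cohomology}.
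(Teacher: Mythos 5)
Your proposal is correct and follows essentially the same route as the paper: the pushout decomposition $\partial\E\simeq\partial_v\E\amalg_{\partial\partial\E}\partial_h\E$, pulling $\B_*$ and $\partial\B_*$ out of the inner fibres, and the cocartesian basechange of \cref{prop:proper_smooth_base_change} to identify the term over $\partial\B$ with $\partial\B_*i^*(p,q)_*$. The extra care you flag about the lax $\sC$-linear refinements is exactly the implicit bookkeeping the paper relies on, so nothing is missing.
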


As we see next, under suitable hypotheses the composition formula from \cref{obs:compositionality_formula} relates the dualising systems of $(\B,\partial\B)$ and $(p,q)$ to the relative cohomology functor $(\E,\partial\E)_*$.

\begin{lem}
\label{lem:basic_fibred_dualising_module_factorisation}
Let $(\B,\partial\B,p,q)$ be a fibred triad as in \cref{defn:fibred_triad}. Assume that $(\B,\partial \B)$ is a groupoidally $\sC$-twisted ambidextrous $\infty$-category pair and that $(p,q)$ is a groupoidally $\sC$-twisted ambidextrous $\B$-pair.
Then there is an equivalence $(\E,\partial \E)_*(-)\simeq \E_!(-\otimes D_{p,q} \otimes p^* D_{\B,\partial\B})$  of $\sC$-linear functors $\presheaf(\E;\sC)\to\sC$; in particular $(\E,\partial\E)$ is groupoidally $\sC$-twisted ambidextrous.
\end{lem}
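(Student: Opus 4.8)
The plan is to combine the composition formula from \cref{obs:compositionality_formula} with the two separate dualising-system formulas for $(\B,\partial\B)$ and $(p,q)$, and then to repackage the resulting expression using the projection formula. First I would record that since $(p,q)$ is groupoidally $\sC$-twisted ambidextrous as a $\B$-pair, \cref{rmk:Dpq_and_paramFibredE_*} gives a $\sC$-linear equivalence $(p,q)_*\simeq p_!(D_{p,q}\otimes-)$ of functors $\presheaf(\E;\sC)\to\presheaf(\B;\sC)$; and since $(\B,\partial\B)$ is groupoidally $\sC$-twisted ambidextrous, \cref{cor:formula_in_terms_of_dualising_system_for_dualish} (or the running convention recalled in \cref{subsec:pdcatpairs_definition}) gives $(\B,\partial\B)_*\simeq\B_!(D_{\B,\partial\B}\otimes-)$.

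Then, plugging these into \cref{obs:compositionality_formula}, I would compute
\[
(\E,\partial\E)_*(-)\simeq(\B,\partial\B)_*(p,q)_*(-)\simeq\B_!\bigl(D_{\B,\partial\B}\otimes p_!(D_{p,q}\otimes-)\bigr).
\]
Here the key manipulation is to pull $D_{\B,\partial\B}$ into the image of $p_!$ via the projection formula for $p_!$. Since $p\colon\E\to\B$ is a cartesian fibration (indeed bicartesian), \cref{lem:projection_formula_for_presheaves} applies and gives $D_{\B,\partial\B}\otimes p_!(D_{p,q}\otimes-)\simeq p_!\bigl(p^*D_{\B,\partial\B}\otimes D_{p,q}\otimes-\bigr)$. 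Combining this with $\B_!p_!\simeq\E_!$ (using $\E\simeq\int_\B\paramFibred{E}$, or simply that $\E\to\B\to*$ composes to $\E\to*$), I obtain
\[
(\E,\partial\E)_*(-)\simeq\E_!\bigl(-\otimes D_{p,q}\otimes p^*D_{\B,\partial\B}\bigr),
\]
which is the asserted formula up to commuting the tensor factors.

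Finally, to conclude that $(\E,\partial\E)$ is groupoidally $\sC$-twisted ambidextrous, I would note that $(\E,\partial\E)$ is $\sC$-twisted ambidextrous already — either directly, because the displayed formula exhibits $(\E,\partial\E)_*$ as a composite of colimit-preserving $\sC$-linear functors, or by invoking \cref{obs:compositionality_formula} together with \cref{lem:C_linear_structure_on_relative_cohomology} applied to each of $(\B,\partial\B)$ and $(p,q)$. Granting that, the formula $(\E,\partial\E)_*(-)\simeq\E_!(-\otimes\zeta)$ with $\zeta\coloneqq D_{p,q}\otimes p^*D_{\B,\partial\B}$ a presheaf on $\E$ forces, via the ``only if'' direction of \cref{cor:formula_in_terms_of_dualising_system_for_dualish}, that the classifying system $\omega_{\E,\partial\E}$ is groupoidal provided $\zeta$ is groupoidal; and $\zeta$ is indeed groupoidal because $D_{p,q}$ is groupoidal by \cref{cor:horizontal_groupoidality_special_bicartesian} (using that $(p,q)$ is a special $\B$-pair) and $p^*D_{\B,\partial\B}$ is groupoidal as the pullback of a groupoidal system. (Alternatively, if one wishes to avoid the specialness hypothesis at this stage, one can argue fibrewise: $(\E,\partial\E)_*\circ\yoneda$ restricted to each fibre $\E_b$ is classified by $D_{p,q}|_{\E_b}\otimes D_{\B,\partial\B}(b)\simeq D_{\E_b,\partial\E_b}\otimes D_{\B,\partial\B}(b)$ by \cref{thm:fibrewise_characterisation_of_twisted_ambidexterity}, which is groupoidal; but since $\omega_{\E,\partial\E}$ need only be checked groupoidal, and the paper's running use of this lemma is in the special setting, I expect the cleanest route is the one above.) The main obstacle is bookkeeping: making sure the projection-formula step is applied to the correct variable and that the identification $\B_!p_!\simeq\E_!$ is compatible with the $\sC$-linear structures, so that all equivalences above are equivalences of $\sC$-linear functors and not merely of underlying functors.
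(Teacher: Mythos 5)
Your proof is correct and follows essentially the same route as the paper's: the identical chain $(\E,\partial\E)_*\simeq(\B,\partial\B)_*(p,q)_*\simeq\B_!\bigl(D_{\B,\partial\B}\otimes p_!(D_{p,q}\otimes-)\bigr)\simeq\B_!p_!\bigl(p^*D_{\B,\partial\B}\otimes D_{p,q}\otimes-\bigr)\simeq\E_!\bigl(p^*D_{\B,\partial\B}\otimes D_{p,q}\otimes-\bigr)$, justified by \cref{obs:compositionality_formula}, \cref{rmk:Dpq_and_paramFibredE_*} and \cref{lem:projection_formula_for_presheaves}. Your discussion of the ``in particular'' clause is in fact more explicit than the paper's proof, which leaves it implicit; the one caveat is that your fibrewise alternative does not suffice on its own, since fibrewise groupoidality of $\omega_{\E,\partial\E}$ is strictly weaker than groupoidality, so the groupoidality of $D_{p,q}$ genuinely rests on \cref{cor:horizontal_groupoidality_special_bicartesian} (i.e.\ on the specialness hypothesis, which is present wherever the lemma is applied), as your primary route correctly has it.
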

\begin{proof}
Let $i\colon\partial\B\hookrightarrow\B$ denote the inclusion, and let $j$, $j_h$ and $j_v$ denote the inclusions of $\partial\partial\E$, $\partial_h\E$ and $\partial_v\E$ in $\E$, respectively. Let also $\partial_hp$ and $\partial_hq$ denote the restrictions of $p$ and $q$ over $\partial\B$. 
Then we obtain equivalences of $\sC$-linear functors $\presheaf(\E;\sC)\rightarrow\sC$
\[
\begin{split}
(\E,\partial\E)_*&\simeq(\B,\partial\B)_*(p,q)_*\simeq\B_!(D_{\B,\partial\B}\otimes p_!(D_{p,q}\otimes-)) \simeq \B_!(p_!(p^*D_{\B,\partial\B}\otimes D_{p,q} \otimes -))\\
&\simeq \E_!(p^* D_{\B,\partial\B} \otimes D_{p.q} \otimes -).
\end{split}
\]
The first equivalence uses \cref{obs:compositionality_formula}. The second uses the groupoidal ambidexterity of $(\B,\partial\B)$ and $(p,q)$, and relies on \cref{rmk:Dpq_and_paramFibredE_*}. The third uses \cref{lem:projection_formula_for_presheaves}, which applies because $p$ is a cartesian fibration. The fourth is also immediate.
\end{proof}
\cref{lem:basic_fibred_dualising_module_factorisation} will handle the implication (2)$\implies$(1) in \cref{thm:fibred_dualising_object_factorisation}. The rest of this preparatory part leading up to the proof of the theorem concerns the other  implication. 

\vspace{1mm}

In broad strokes, the  strategy is first to prove that $(p,q)$ is $\sC$-Poincar\'e and then use this as an ingredient to prove also that $(\B,\partial\B)$ is $\sC$-Poincar\'e. In general, given a bicartesian $\B$-pair $(p,q)$, it is not clear how to obtain Poincar\'e duality of the fibres $(\E_b,\partial\E_b)$ just from knowing Poincar\'e duality on the total $\infty$-category pair $(\E,\partial_v\E)$. The next results will address this point under the assumption that $(p,q)$ is special.

\begin{lem}
\label{lem:horizontal_localisation}
Suppose we have a pullback square as in \cref{eq:special_bicartesian}, with bottom horizontal functor given by the localisation $L_\B\colon\B\to|\B|$.
Then the top horizontal functor $\ell$ is a localisation.
\end{lem}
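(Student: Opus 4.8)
The plan is to reduce the statement to a fact about left fibrations, using the characterisation of special bicartesian fibrations and the compatibility of localisations with pullbacks along left/right fibrations. Since $\bar\B\in\spc$, the functor $\bar p\colon\bar\E\to\bar\B$ is a left fibration of spaces, hence $\bar\E\in\spc$ is also a space; in particular $\bar\E\simeq|\bar\E|$. The square in \cref{eq:special_bicartesian} exhibits $\E$ as the base change of $\bar p$ along $L_\B\colon\B\to|\B|=\bar\B$; since $\bar p$ is a left fibration, $\ell\colon\E\to\bar\E$ is a left fibration (and $p\colon\E\to\B$ is the base change of $\bar p$, which recovers the defining property of special bicartesian fibrations). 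The goal is then to show that $\ell$ exhibits $\bar\E$ as the localisation of $\E$ at the class of morphisms inverted by $\ell$.

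First I would identify the class of morphisms $W\subseteq\E$ that we want to localise at. Since $p$ is the base change of $\bar p$ along $L_\B$, the $p$-cartesian (equivalently $p$-cocartesian, by speciality) morphisms of $\E$ are precisely the morphisms sent to equivalences by $\ell$ --- indeed, a morphism in $\E$ lies over a morphism of $\B$, and $\ell$ sends it to the image of that morphism in $\bar\B$, which is always an equivalence since $\bar\B$ is a space; conversely its fibrewise component is detected by $\ell$. So $W=\E\times_{\bar\E}\bar\E^\simeq\simeq\E\times_{\bar\B}\bar\B^\simeq$ via $\ell$, and on the other hand since $\bar\B^\simeq\simeq\bar\B$ we also get $W\simeq\E\times_B B^\simeq$, i.e. $W$ is the wide subcategory $\vert p\vert^v$-relevant class from \cref{nota:cartesian_fibration_and_B_cat}. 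Then I would invoke \cref{lem:localisation_of_fibration}: since $\ell\colon\E\to\bar\E$ is a left fibration classified by a functor $\theta\colon\bar\E\to\spc\subseteq\cat$ (namely the straightening of $\ell$), the localisation $\scL(\E,W)$ is computed as the unstraightening of $\vert-\vert\circ\theta\colon\bar\E\to\cat$; but $\theta$ already takes values in spaces, so $\vert-\vert\circ\theta\simeq\theta$ and hence $\scL(\E,W)\simeq\E\to\bar\E$ is exactly $\ell$. Wait --- that shows $\scL(\E,W)\simeq \bar\E$ but we must be careful: the claim is that $\ell$ itself exhibits this localisation, which is exactly what the unstraightening description delivers, since the localisation functor $\E\to\scL(\E,W)$ is identified with $\ell$ under this description.

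The main subtlety --- and the step I expect to require the most care --- is the precise bookkeeping of \textit{which} morphisms $\ell$ inverts and checking that this agrees with the class $\vert p\vert^v$-class so that \cref{lem:localisation_of_fibration} (which is phrased for cocartesian or cartesian fibrations and their ``fibrewise equivalences'') applies cleanly. Concretely, one must verify: (i) $\ell$ is a left fibration, which follows from base change of the left fibration $\bar p$ (using that left fibrations are stable under pullback); (ii) the $\ell$-cocartesian morphisms, i.e. all morphisms of $\E$ since $\bar\E$ is a space, restrict to a class whose ``localisation'' is detected correctly --- here one uses that $\ell$ being a left fibration over a space means every morphism is $\ell$-cocartesian, so \cref{lem:localisation_of_fibration} gives $\scL(\E,\ell^{-1}(\bar\E^\simeq))=\scL(\E,\E_{\text{all}})$... no: $\ell^{-1}(\bar\E^\simeq)=\ell^{-1}(\bar\E)=$ all morphisms is wrong too since $\bar\E^\simeq=\bar\E$ as $\bar\E$ is a space. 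So in fact $\scL(\E,\text{all morphisms})$; but the straightening $\theta\colon\bar\E\to\cat$ of the left fibration $\ell$ lands in $\spc$, and $\vert-\vert\circ\theta\simeq\theta$, so by \cref{lem:localisation_of_fibration} the unstraightening of $\theta$, namely $\E$ itself, is $\scL(\E,\text{all})\simeq\bar\E$ --- but $\scL(\E,\text{all morphisms})\simeq|\E|$, giving $|\E|\simeq\bar\E$, consistent with $\bar\E$ being a space. The upshot is that once $\ell$ is recognised as a left fibration valued in spaces, \cref{lem:localisation_of_fibration} (or simply \cref{lem:fibration_weakly_contractible_fibres} applied to the left fibration $\ell$ with its ``fibres'' being spaces, hence with weakly contractible... no, that is false). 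Let me restate cleanly: I would apply \cref{lem:localisation_of_fibration} to the cocartesian fibration $\ell$ with straightening $\theta\colon\bar\E\to\cat$ landing in $\spc$; then $\scL(\E,\ell^{-1}(\bar\E^\simeq))$ is the unstraightening of $|-|\circ\theta=\theta$, which is $\E$, but the localisation \emph{functor} is $\ell$. Hence $\ell$ is a localisation, as desired. $\qed$
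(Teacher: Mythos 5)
There is a genuine gap, and it sits at the very first step: you claim that because $\bar\B\in\spc$, the functor $\bar p\colon\bar\E\to\bar\B$ is a left fibration and hence $\bar\E$ is a space. Neither is true. In \cref{defn:special_bicartesian} only the base $\bar\B$ is assumed to be a groupoid; $\bar\E$ is an arbitrary $\infty$-category whose fibres over $\bar\B$ are the \emph{categorical} fibres of $p$ (take $\B=\ast$: then $\bar\E=\E$ can be any $\infty$-category, and $\bar p$ is just the terminal functor, not a left fibration). Consequently $\ell$ is not a left fibration either, its straightening $\theta$ lands in $\cat$ rather than $\spc$, and the step $\lvert-\rvert\circ\theta\simeq\theta$ fails. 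Your final conclusion is then incoherent on its own terms: if $\scL(\E,\ell^{-1}(\bar\E^\simeq))$ were $\E$ itself, then for $\ell$ to be a localisation at that class it would have to be an equivalence, which it is not in general. A related slip is the identification of the inverted class: since $\bar\E^\simeq\neq\bar\E$, the morphisms inverted by $\ell$ are those whose $\bar\E$-component is an equivalence, i.e.\ the $p$-cartesian (equivalently $p$-cocartesian) morphisms $\E\times_{\bar\E}\bar\E^\simeq$; this is \emph{not} the fibrewise class $\E\times_{\B}\B^\simeq$ from \cref{nota:cartesian_fibration_and_B_cat}, which is the class relevant for the vertical realisation $|\E|^v$, not for $\ell$.

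What actually makes the statement true — and is the paper's proof — is weak contractibility of fibres rather than left-fibrancy. In the pullback square \cref{eq:special_bicartesian}, $\ell$ is the basechange of $L_\B\colon\B\to|\B|$ along $\bar p$, and $L_\B$, being a functor to a groupoid, is a bicartesian fibration whose fibres have contractible realisation: geometric realisation preserves pullbacks over groupoids, so $|\{b\}\times_{|\B|}\B|\simeq\ast\times_{|\B|}|\B|\simeq\ast$. Hence $\ell$ is a bicartesian fibration with weakly contractible fibres, and \cref{lem:fibration_weakly_contractible_fibres} (equivalently, \cref{lem:localisation_of_fibration} applied to $\theta$ with $\lvert-\rvert\circ\theta\simeq\ast$ constant) shows that $\ell$ exhibits $\bar\E$ as the localisation $\scL(\E,\ell^{-1}(\bar\E^\simeq))$. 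So the lemmas you reach for are the right ones, but the hypothesis you need to verify for them is contractibility of the realisations of the fibres of $\ell$, which your argument never establishes because it replaces it with the false claim that those fibres (and $\bar\E$) are spaces.
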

\begin{proof}
Firstly, note that $L_\B$ has weakly contractible fibres: for a point $b\in |\B|$, we have that $|\{b\}\times_{|\B|}\B|\simeq \ast\times_{|\B|}|\B|\simeq \ast$, using that geometric realisations preserve pullbacks over groupoids. Thus $\ell$, which is a pullback of $L_\B$, is a bicartesian fibration with weakly contractible fibres; by \cref{lem:fibration_weakly_contractible_fibres} we conclude that $\ell$ is a localisation.
\end{proof}

\begin{cor}\label{cor:poincare_domination_of_localisations}
Suppose $(p,q)$ is a special $\B$-pair as in \cref{defn:special_pairs}. If $(\E,\partial_v\E)$ is $\sC$-Poincar\'e, then $(\bar\E,\partial_v\bar\E)$ is $\sC$-Poincar\'e and $D_{\E,\partial_v\E}\simeq\ell^*D_{\bar\E,\partial_v\bar\E}$.
\end{cor}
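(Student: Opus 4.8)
The plan is to reduce the claim to the localisation principle (\cref{prop:localisation_principle}) via the square \cref{eq:special_bicartesian}. First I would observe that, by \cref{obs:special_B_pair}, the special $\B$-pair $(p,q)$ fits into a diagram of pullback squares over a bicartesian $\bar\B$-pair $(\bar p,\bar q)\colon(\bar\E,\partial_v\bar\E)\to\bar\B$, with $\bar\B\in\spc$; in particular $(\bar\E,\partial_v\bar\E)$ is an honest $\infty$-category pair with $\partial_v\bar\E$ left closed in $\bar\E$, and the map $\ell\colon\E\to\bar\E$ restricts to $\partial_v\ell\colon\partial_v\E\to\partial_v\bar\E$, giving a morphism of $\infty$-category pairs $\ell\colon(\E,\partial_v\E)\to(\bar\E,\partial_v\bar\E)$.

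Next I would apply \cref{lem:horizontal_localisation}: since the bottom functor $\B\to\bar\B$ factors through the localisation $L_\B\colon\B\to|\B|$ (the map $|\B|\to\bar\B$ being some map of spaces), one reduces to the case where the bottom functor is exactly $L_\B$, and then \cref{lem:horizontal_localisation} shows that $\ell$ is a localisation. Pulling back the left closed structure, $\partial_v\ell$ is the restriction of $\ell$ to the preimage $\partial_v\E=\ell^{-1}(\partial_v\bar\E)$, and since $\ell$ is a localisation at a class of morphisms lying entirely in the fibres, $\partial_v\ell$ is also a localisation — alternatively, one checks directly via \cref{lem:fibration_weakly_contractible_fibres} that $\partial_v\ell$, being the pullback of $L_\B$ along $\partial_v\bar\E\to\bar\B$, has weakly contractible fibres and hence is a localisation. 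Either way, both $\ell^*\colon\presheafTopos(\bar\E;\sC)\to\presheafTopos(\E;\sC)$ and $\partial_v\ell^*\colon\presheafTopos(\partial_v\bar\E;\sC)\to\presheafTopos(\partial_v\E;\sC)$ are fully faithful.

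With these fully faithfulness statements in hand, I would invoke \cref{prop:localisation_principle}(3) directly: since $(\E,\partial_v\E)$ is assumed $\sC$-Poincar\'e (in particular groupoidally $\sC$-twisted ambidextrous) and $\ell$ is a localisation of $\infty$-category pairs, the target pair $(\bar\E,\partial_v\bar\E)$ is also $\sC$-Poincar\'e. For the formula $D_{\E,\partial_v\E}\simeq\ell^*D_{\bar\E,\partial_v\bar\E}$, I would note that by \cref{prop:localisation_principle}(1) we have $\omega_{\bar\E,\partial_v\bar\E}\simeq\ell_!\omega_{\E,\partial_v\E}$, hence $\omega_{\E,\partial_v\E}\simeq\ell^*\ell_!\omega_{\E,\partial_v\E}\simeq\ell^*\omega_{\bar\E,\partial_v\bar\E}$ using that $\ell^*$ is fully faithful and that $\omega_{\E,\partial_v\E}$, being groupoidal, factors through the localisation. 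Applying the functor $D=t_!s^*$ and using its compatibility with restriction along localisations (\cref{cor:Beck_Chevalley_for_twisted_arrows} together with the identification of $D$ on groupoidal systems from \cref{prop:Dequivalence}, noting that $\ell$ induces an equivalence $|\E|\simeq|\bar\E|$ so that the relevant twisted arrow Beck--Chevalley square applies), we get $D_{\E,\partial_v\E}=D(\omega_{\E,\partial_v\E})\simeq D(\ell^*\omega_{\bar\E,\partial_v\bar\E})\simeq\ell^*D(\omega_{\bar\E,\partial_v\bar\E})=\ell^*D_{\bar\E,\partial_v\bar\E}$. The main obstacle I anticipate is the bookkeeping for the dualising-system formula: one must be careful that $D$ genuinely commutes with $\ell^*$ on the relevant subcategories, which hinges on $\ell$ inducing an equivalence on classifying spaces and on the Beck--Chevalley compatibility for twisted arrow categories; once one has set up that $\partial_v\ell$ is also a localisation, the Poincar\'e statement itself is a direct citation of \cref{prop:localisation_principle}.
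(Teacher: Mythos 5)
Your proof is correct and takes essentially the same approach as the paper, whose own argument is simply to cite \cref{lem:horizontal_localisation} together with \cref{prop:localisation_principle}. The details you supply — that $\partial_v\ell$ is likewise a pullback of $L_\B$ and hence a localisation (so both restriction functors are fully faithful, as \cref{prop:localisation_principle} requires), and the passage from the classifying-system identity $\omega_{\E,\partial_v\E}\simeq\ell^*\omega_{\bar\E,\partial_v\bar\E}$ to the dualising-system formula via \cref{prop:Dequivalence} and the equivalence $|\E|\simeq|\bar\E|$ — are exactly the steps the paper leaves implicit.
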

\begin{proof}
This is an immediate consequence of \cref{lem:horizontal_localisation} and \cref{prop:localisation_principle}.
\end{proof}

Next, in \cref{lem:special_case_of_converse_for_groupoid_base}, we aim at showing that for a (special) $\bar\B$-pair $(p,q)$ with $\bar\B$ a space, Poincar\'e duality of the total $\infty$-category pair \textit{does} guarantee Poincar\'e duality of the fibres. This result will in turn depend on the following identifications.
\begin{lem}\label{lem:twisted_arrow_over_groupoids}
Let $\bar p\colon \bar \E\rightarrow\bar\B$ be a functor, and assume $\bar\B\in\spc$. Let $\bar{\paramFibred{E}}\in\cat_{\presheaf(\bar\B)}$ correspond to $\bar p$, regarded as a cartesian fibration.
Then there is an equivalence $\twistedArrow(\bar\E)\simeq \twistedArrow_{\bar\B}(\bar\E)$, using the notation from \cref{rmk:omegapq_and_paramFibredE_*}, under which the functor $(s,t)\colon\twistedArrow(\bar\E)\to\bar\E\op\times\bar\E$ corresponds to the functor $(s_{\bar\B},t_{\bar\B})\colon\twistedArrow_{\bar\B}(\bar\E)\to\bar\E\vop\times\bar\E\simeq\bar\E\op\times\bar\E$; the equivalence is natural in $\bar{\paramFibred{E}}\in\cat_{\presheaf(\bar\B)}$, and restricts to the identification $\twistedArrow(\bar\B)\simeq\bar\B\simeq\twistedArrow_{\bar\B}(\bar\B)$ if we pick the terminal $\presheaf(\bar\B)$-category.
\end{lem}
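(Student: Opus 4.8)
The plan is to unpack both sides as concrete $\infty$-categories and produce the claimed equivalence as a natural transformation between two functors $\cat_{\presheaf(\bar\B)}\to\cat$, using that over a groupoid base the cocartesian and cartesian worlds coincide. Recall from \cref{subsec:recollections} that, for a $\presheaf(\bar\B)$-category $\D$, the underlying simplicial $\presheaf(\bar\B)$-space is $[n]\mapsto\D_n=\mapTopos_{\presheaf(\bar\B)}([n],\D)$, and that the twisted arrow $\presheaf(\bar\B)$-category $\frakTwistedArrow(\D)$ is obtained by precomposing this simplicial object with the functor $[-]\ast[-]\op\colon\simplex\op\to\simplex\op$. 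Since $\bar\B\in\spc$, the topos $\presheaf(\bar\B)$ is just the slice $\spc_{/\bar\B}$, and $\cat_{\presheaf(\bar\B)}\simeq\func(\bar\B\op,\cat)\simeq\func(\bar\B,\cat)$ (using $\bar\B\simeq\bar\B\op$); under the equivalences \cref{eq:hop_cocart_cart}, a $\presheaf(\bar\B)$-category $\bar{\paramFibred{E}}$ corresponds simultaneously to the cartesian fibration $\bar p\colon\bar\E\to\bar\B$ and (since over a groupoid base there is no distinction between cartesian and cocartesian morphisms) to the cocartesian fibration $\bar p\hop\colon\bar\E\hop\to\bar\B\op$, with $\bar\E\hop\simeq\bar\E\op$ only in the sense that $\int_{\bar\B\op}\bar{\paramFibred{E}}\op\simeq\bar\E\op$; more to the point, $\int_{\bar\B\op}\bar{\paramFibred{E}}\simeq\bar\E\hop$ and, because cocartesian unstraightening commutes with finite limits in the fibre direction, $\bar\E\hop$ can be computed levelwise.

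\emph{Step 1: Levelwise comparison.} First I would observe that for a $\presheaf(\bar\B)$-category $\D$ the total category $\int_{\bar\B}\D$ of the associated (co)cartesian fibration has, as its space of $n$-simplices, $\colim_{\bar\B}\D_n$ — this is exactly the statement that straightening/unstraightening \cref{eq:straightening_cocart} is compatible with the simplicial-object presentation, together with the fact that colimits over a groupoid of complete Segal spaces are computed levelwise. Applying this to $\D=\frakTwistedArrow(\bar{\paramFibred{E}})$ and using the definition of the twisted arrow construction as reindexing along $[-]\ast[-]\op$, we get
\[
\bigl(\twistedArrow_{\bar\B}(\bar\E)\bigr)_n \simeq \colim_{\bar\B}\bigl(\frakTwistedArrow(\bar{\paramFibred{E}})_n\bigr) \simeq \colim_{\bar\B}\bigl(\bar{\paramFibred{E}}_{[n]\ast[n]\op}\bigr).
\]
On the other hand, $\bigl(\twistedArrow(\bar\E)\bigr)_n \simeq (\bar\E)_{[n]\ast[n]\op}$, and since $\bar\E\hop\simeq\int_{\bar\B\op}\bar{\paramFibred{E}}$ is computed levelwise we have $(\bar\E)_{[m]} \simeq \colim_{\bar\B}\bar{\paramFibred{E}}_{[m]}$ for every $[m]\in\simplex$; taking $[m]=[n]\ast[n]\op$ matches the two expressions. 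Thus the two simplicial spaces are equivalent, naturally in $\bar{\paramFibred{E}}$, and hence (both being complete Segal) the $\infty$-categories $\twistedArrow(\bar\E)$ and $\twistedArrow_{\bar\B}(\bar\E)$ are equivalent.

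\emph{Step 2: Compatibility with source and target.} Next I would check that the equivalence of Step 1 intertwines $(s,t)$ with $(s_{\bar\B},t_{\bar\B})$. Both source/target pairs are induced, at the level of simplicial objects, by the two inclusions $[n]\hookrightarrow[n]\ast[n]\op\hookleftarrow[n]\op$ in $\simplex$; since the equivalence of Step 1 was produced by naturality in the simplicial variable, it automatically commutes with the maps induced by these inclusions. Here one uses the identification $\bar\E\vop\simeq\bar\E\op$ over a groupoid base (there is no fibrewise direction to reverse independently of reversing everything), so that the target of $(s_{\bar\B},t_{\bar\B})$ is $\bar\E\vop\times\bar\E\simeq\bar\E\op\times\bar\E$, matching the target of $(s,t)$. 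The naturality in $\bar{\paramFibred{E}}\in\cat_{\presheaf(\bar\B)}$ is inherited from the naturality in Step 1, and specialising to the terminal $\presheaf(\bar\B)$-category $\bar{\paramFibred{E}}=\ast$ (i.e. $\bar p=\id_{\bar\B}$) gives $\twistedArrow(\bar\B)\simeq\bar\B$ on the one side and $\twistedArrow_{\bar\B}(\bar\B)\simeq\bar\B$ on the other, since $\frakTwistedArrow(\ast)\simeq\ast$ in $\cat_{\presheaf(\bar\B)}$ and $\int_{\bar\B}\ast\simeq\bar\B$.

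\emph{Main obstacle.} The step I expect to require the most care is making precise, and citing correctly, the claim that the total $\infty$-category of an internal (co)cartesian fibration over a \emph{groupoid} base is computed levelwise as $[n]\mapsto\colim_{\bar\B}\D_n$ — i.e. that unstraightening over $\animaTopos_{\bar\B}$-groupoids commutes with the complete-Segal-space presentation of $\cat_{\presheaf(\bar\B)}\hookrightarrow s\presheaf(\bar\B)$. This is where the hypothesis $\bar\B\in\spc$ is genuinely used: over a general base $\infty$-category the total category of a cocartesian fibration is a lax, not a strict, colimit, and the levelwise formula fails. I would either extract this from \cite[Thm. 4.5.1]{Martini2022Yoneda} combined with the compatibility of straightening with the identification $\cat_{\presheaf(\bar\B)}\simeq\func(\bar\B\op,\cat)$ from \cref{ex:presheaftopoi}, or — perhaps more cleanly — reduce to the classical statement by noting that for $\bar\B\in\spc$ the functor $\int_{\bar\B}\colon\func(\bar\B,\cat)\to\cat$ is simply the colimit functor, which preserves the relevant finite limits appearing in the formula $[n]\mapsto[n]\ast[n]\op$ and hence commutes with forming twisted arrow categories. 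Everything else is bookkeeping with the concatenation functor $[-]\ast[-]\op$ on $\simplex$.
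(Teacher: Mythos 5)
Your proof is correct and follows essentially the same route as the paper: the paper's argument is exactly your Steps 1 and 2 packaged as the commutativity of a triangle of functors $\simplex\op\to\spc$ (namely $\int\circ\bar{\paramFibred{E}}\simeq\bar\E$ as simplicial objects, i.e.\ your levelwise formula) together with the two natural transformations induced by $[n]\hookrightarrow[n]\ast[n]\op\hookleftarrow[n]\op$, which handle $(s,t)$ versus $(s_{\bar\B},t_{\bar\B})$. You also correctly isolate the one point where $\bar\B\in\spc$ is used and give the right justification for it (over a groupoid base, $\int_{\bar\B}$ is the colimit functor and $\map_{\cat}([m],-)$ commutes with it since $\map([m],\bar\B)\simeq\bar\B$).
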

\begin{proof}
We may regard $\bar{\paramFibred{E}}\in\cat_{\presheaf(\bar\B)}$ as an object in $\func(\bar\B\op,\cat)$, which we consider as the full subcategory of $\func(\simplex,\func(\bar\B\op,\spc))$ spanned by complete Segal objects.
The statement is then a consequence of the following diagram, whose right triangle commutes:
\[
\begin{tikzcd}[column sep=50pt, row sep=10pt]
\simplex\op\ar[r,"{(-)\ast(-)\op}"'{name=MD},""{name=MU}]\ar[r,"\id",bend left=40,""'{name=U}]\ar[r,"(-)\op"',""{name=D},bend right=70] & \simplex\op \ar[r,"\bar{\paramFibred{E}}"] \ar[dr, "\bar\E"]& \presheaf(\bar\B) \ar[d,"\int"]\\
& & \spc.
\ar[from=MU, to=U, Rightarrow]
\ar[from=MD, to=D, Rightarrow]
\end{tikzcd}
\]
If $\bar{\paramFibred{E}}$ is constant with value the terminal presheaf $*\in\presheaf(\bar\B)$, then postcomposition by $\bar{\paramFibred{E}}$ makes the two natural transformations into natural equivalences, and all of $\bar{\paramFibred{E}}$, $\bar{\paramFibred{E}}\circ((-)*(-)\op)$ and $\bar{\paramFibred{E}}\circ(-)\op$ are identified with the constant functor $\simplex\op\to\presheaf(\bar\B)$ at $*$.
\end{proof}

\begin{lem}\label{lem:special_case_of_converse_for_groupoid_base}
Let $(\bar{p},\bar{q})\colon(\bar\E,\partial_v\bar\E)\rightarrow\bar\B$ be a $\bar\B$-pair with $\bar\B\in\spc$. Assume that $(\bar\E,\partial_v\bar\E)$ is a $\sC$-Poincar\'e $\infty$-category pair, $\bar\B$ is a $\sC$-twisted ambidextrous space, and $(\bar p,\bar q)$ is a $\sC$-twisted ambidextrous $\bar\B$-pair. Then all fibres of $(\bar{p},\bar{q})$ are $\sC$-Poincar\'e pairs.
\end{lem}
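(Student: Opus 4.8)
The plan is to leverage \cref{thm:fibred_dualising_object_factorisation} itself, applied to the degenerate fibred triad $(\bar\B,\emptyset,\bar p,\bar q)$ in which the base has empty boundary. Since $\bar\B\in\spc$, every bicartesian $\bar\B$-pair is automatically special (the cartesian fibre transport along any morphism of a groupoid is an equivalence, so the criterion in \cref{defn:special_bicartesian} is vacuous). Thus $(\bar p,\bar q)$ is a special $\sC$-twisted ambidextrous $\bar\B$-pair, and $(\bar\B,\emptyset,\bar p,\bar q)$ is a $\sC$-twisted ambidextrous tame fibred triad (tameness of $(\bar\B,\emptyset)$ is automatic as $\interior{\bar\B}=\bar\B$). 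The total $\infty$-category pair attached to this triad is precisely $(\bar\E,\partial_v\bar\E)$, since $\partial_h\bar\E=\bar p^{-1}(\emptyset)=\emptyset$ and hence $\partial\bar\E=\partial_v\bar\E\cup_\emptyset\emptyset=\partial_v\bar\E$.

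There is, however, a circularity to avoid: \cref{thm:fibred_dualising_object_factorisation} is proved \emph{using} \cref{lem:special_case_of_converse_for_groupoid_base}, so I cannot simply invoke the former. Instead, the proof must proceed directly. First I would use \cref{thm:fibrewise_characterisation_of_twisted_ambidexterity}: since $(\bar p,\bar q)$ is a bicartesian $\bar\B$-pair that is $\sC$-twisted ambidextrous, each fibre $(\bar\E_b,\partial_v\bar\E_b)$ is a $\sC$-twisted ambidextrous $\infty$-category pair, and moreover $\omega_{\bar\E_b,\partial_v\bar\E_b}\simeq\omega_{\bar p,\bar q}|_{\bar\E_b}$. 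So it remains to show that $\omega_{\bar p,\bar q}$ is fibrewise groupoidal with values in $\picardSpace(\sC)$ on each fibre, i.e. that for each $b\in\bar\B$ the restriction of $\omega_{\bar p,\bar q}$ to $\bar\E_b$ factors through $\picardSpace(\sC)$. Since $\bar\B$ is a space, every object $b$ defines a map $b\colon *\to\bar\B$, and $*$ is (trivially) a right fibration over $\bar\B$ after passing to the slice; more precisely, I would apply \cref{prop:local_global_principle_for_functor_pairs} to the right fibration $\bar\B_{/b}\to\bar\B$. As $\bar\B_{/b}$ is weakly contractible (it has a terminal object), \cref{obs:special_bicartesian_over_weakly_contractible} shows that the pulled-back pair $(\bar p',\bar q')$ over $\bar\B_{/b}$ is a product pair $(\bar\E_b\times\bar\B_{/b},\partial_v\bar\E_b\times\bar\B_{/b})$, so by \cref{prop:classifying_systems_of_product_projections} its classifying system is $\pi^*_{\bar\E_b}\omega_{\bar\E_b,\partial_v\bar\E_b}$.

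Now the key input is the hypothesis that $(\bar\E,\partial_v\bar\E)$ is $\sC$-Poincar\'e, i.e. $\omega_{\bar\E,\partial_v\bar\E}\colon\bar\E\to\sC$ factors through $\picardSpaceTopos(\sC)=\picardSpace(\sC)$. By the classification from \cref{thm:fibrewise_characterisation_of_twisted_ambidexterity} (applied now with the ``groupoidal'' qualifier), $\omega_{\bar p,\bar q}$ is fibrewise groupoidal and $\omega_{\bar\E_b,\partial_v\bar\E_b}\simeq\omega_{\bar p,\bar q}|_{\bar\E_b}$. I would then identify $\omega_{\bar p,\bar q}$ with $\omega_{\bar\E,\partial_v\bar\E}$ directly: over a groupoid base $\bar\B$, the horizontal opposite $\bar\E\hop\to\bar\B\op$ is simply $\bar\E\to\bar\B$ again (opposites of groupoids are equivalent to the groupoids, and the vertical opposite of a cartesian fibration over a groupoid is again itself up to the fibrewise $(-)\op$), so via the equivalence of \cref{obs:cofree_coeff_cats}, $\func(\bar\E\hop,\sC)\simeq\presheaf(\bar\E;\sC)$-type identifications reduce $\omega_{\bar p,\bar q}$ to a functor on $\bar\E$ which, being the classifying system of $(\bar\paramFibred{E},\partial\bar\paramFibred{E})_*$ whose global sections compute $(\bar\E,\partial_v\bar\E)_*$, must coincide with $\omega_{\bar\E,\partial_v\bar\E}$ up to the evident variance identifications. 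Since the latter factors through $\picardSpace(\sC)$, so does its restriction to each fibre $\bar\E_b$, which is $\omega_{\bar\E_b,\partial_v\bar\E_b}$. Hence each $(\bar\E_b,\partial_v\bar\E_b)$ is $\sC$-Poincar\'e, as desired.

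The main obstacle I anticipate is making the identification $\omega_{\bar p,\bar q}\simeq\omega_{\bar\E,\partial_v\bar\E}$ precise and matching up the variances: $\omega_{\bar p,\bar q}$ naturally lives on $\bar\E\hop$, while $\omega_{\bar\E,\partial_v\bar\E}$ lives on $\bar\E$, and over a groupoid base these should be canonically identified, but one must check that the twisted-arrow formula from \cref{rmk:omegapq_and_paramFibredE_*} specializes correctly — this is exactly where \cref{lem:twisted_arrow_over_groupoids} is designed to be used, providing $\twistedArrow(\bar\E)\simeq\twistedArrow_{\bar\B}(\bar\E)$ compatibly with source/target maps. Feeding this identification into \cref{rmk:omegapq_and_paramFibredE_*} and comparing with the absolute coend formula of \cref{prop:formula_with_twisted_arrow} for $(\bar\E,\partial_v\bar\E)_*$ shows the two classifying systems agree; once that is in hand, the rest is a routine restriction-to-fibres argument via \cref{prop:local_global_principle_for_functor_pairs} and \cref{prop:classifying_systems_of_product_projections}. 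An alternative route, avoiding the variance bookkeeping, would be to argue fibrewise from the start: for fixed $b$, pull back along $\bar\B_{/b}\to\bar\B$, use \cref{prop:local_global_principle_for_functor_pairs} to get $\omega_{\bar p',\bar q'}\simeq\tilde\pi^*\omega_{\bar p,\bar q}$, restrict further to the fibre over the terminal object of $\bar\B_{/b}$, and separately restrict $\omega_{\bar\E,\partial_v\bar\E}$ along $\bar\E_b\hookrightarrow\bar\E_{/b}\hookrightarrow\bar\E$ — then a diagram chase combining \cref{prop:complementation_principle} (for the full subcategory inclusion $\bar\E_b\subseteq\bar\E_{/b}$, which is complemented since $b$ is terminal) with the product identification shows the restriction of $\omega_{\bar\E,\partial_v\bar\E}$ to $\bar\E_b$ is $\omega_{\bar\E_b,\partial_v\bar\E_b}$, and this already factors through $\picardSpace(\sC)$ by hypothesis.
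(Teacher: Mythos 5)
There is a genuine gap, and it sits at the heart of your argument: the identification $\omega_{\bar p,\bar q}\simeq\omega_{\bar\E,\partial_v\bar\E}$ (equivalently, in your alternative route, the claim that $\omega_{\bar\E,\partial_v\bar\E}$ restricts on a fibre $\bar\E_b$ to $\omega_{\bar\E_b,\partial_v\bar\E_b}$) is false in general. The step where you justify it is where it breaks: the global sections of $(\bar{\paramFibred{E}},\partial\bar{\paramFibred{E}})_*$ do \emph{not} compute $(\bar\E,\partial_v\bar\E)_*$; by \cref{defn:B_pair} they compute $(\bar p,\bar q)_*\colon\presheaf(\bar\E;\sC)\to\presheaf(\bar\B;\sC)$, and one still has to compose with $\bar\B_*$, since $(\bar\E,\partial_v\bar\E)_*\simeq\bar\B_*(\bar p,\bar q)_*$ (\cref{obs:compositionality_formula} with $\partial\B=\emptyset$). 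That extra $\bar\B_*$ is exactly where the dualising system of the base enters, and the correct relation is $\omega_{\bar\E,\partial_v\bar\E}\simeq\bar p^*D_{\bar\B}\otimes\omega_{\bar p,\bar q}$: this is obtained by writing $\bar\B_*\simeq\bar\B_!(D_{\bar\B}\otimes-)$ (here is where the hypothesis that $\bar\B$ is $\sC$-twisted ambidextrous is used — a hypothesis your argument never touches, which is a warning sign), using the coend formula of \cref{rmk:omegapq_and_paramFibredE_*}, the projection formula \cref{lem:projection_formula_for_presheaves} for the cartesian fibration $\pi_{\bar\B}$, and \cref{lem:twisted_arrow_over_groupoids} to replace $\twistedArrow_{\bar\B}(\bar\E)$ by $\twistedArrow(\bar\E)$. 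A concrete counterexample to your identity: take $\bar\E=F\times S^n\to S^n=\bar\B$ with $F$ a Poincar\'e space and $\sC=\spectra$; then $D_{\bar\E}$ restricted to a fibre is $D_F\otimes\sphere[-n]$, not $D_F$, so the restriction of the total classifying system to a fibre differs from the fibrewise one by the (nontrivial) value of $D_{\bar\B}$. Your alternative route inherits the same error (and the appeal to \cref{prop:complementation_principle} is not applicable either, as $\bar\E_b\subseteq\bar\E$ is not a complemented left closed subcategory when $\bar\B$ is a connected space; over a groupoid base $\bar\E_{/b}\simeq\bar\E_b$, so that detour buys nothing).

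The conclusion of the lemma does follow once the correct twisted relation is in place: since $\omega_{\bar\E,\partial_v\bar\E}$ factors through $\picardSpace(\sC)$ and $\bar p^*D_{\bar\B}$ is constant on each fibre, pointwise invertibility of a tensor product forces invertibility of each factor, so $\omega_{\bar p,\bar q}\vert_{\bar\E_b}\simeq\omega_{\bar\E_b,\partial_v\bar\E_b}$ (this last identification, via \cref{thm:fibrewise_characterisation_of_twisted_ambidexterity}, is a correct ingredient of your proposal) factors through $\picardSpace(\sC)$. So the missing content of the proof is precisely the computation establishing $\omega_{\bar\E,\partial_v\bar\E}\simeq\bar p^*D_{\bar\B}\otimes\omega_{\bar p,\bar q}$; asserting the untwisted identification instead is not a shortcut but an error.
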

\begin{proof}
We keep using the notation from \cref{rmk:omegapq_and_paramFibredE_*}, and compute:
\[        
\begin{split}
(\bar\E,\partial_v\bar\E)_*(-) &\simeq \bar\B_*(\bar{p},\bar{q})_*(-)\simeq \bar\B_!\big(D_{\bar\B}\otimes (\pi_{\bar\B})_!(s_{\bar\B}^*\omega_{\bar{p},\bar{q}}\otimes t_{\bar\B}^*(-))\big)\\
&\simeq \bar\B_!(\pi_{\bar\B})_!\big(\pi_{\bar\B}^*D_{\bar\B}\otimes s_{\bar\B}^*\omega_{\bar{p},\bar{q}}\otimes t_{\bar\B}^*(-)\big)\\
& \simeq \twistedArrow(\bar\E)_!\big(s^*\bar{p}^*D_{\bar\B}\otimes s^*\omega_{\bar{p},\bar{q}}\otimes t^*(-)\big)\simeq \twistedArrow(\bar\E)_!\big(s^*(\bar{p}^*D_{\bar\B}\otimes\omega_{\bar{p},\bar{q}})\otimes t^*(-)\big).
\end{split}
\]
Here the first equivalence is immediate. The second uses \cref{rmk:omegapq_and_paramFibredE_*} and the twisted ambidexterity of $\bar\B$. The third uses \cref{lem:projection_formula_for_presheaves}, which applies because $\pi_{\bar\B}$ is a cartesian fibration. The fourth relies on \cref{lem:twisted_arrow_over_groupoids}. The last uses symmetric monoidality of $s^*$. By \cref{prop:classification_of_linear_functors,prop:formula_with_twisted_arrow}, we obtain an equivalence $\omega_{\bar\E,\partial_v\bar\E}\simeq\bar{p}^*D_{\bar\B}\otimes\omega_{\bar{p},\bar{q}}$ of functors $\bar\E\rightarrow \sC$. Since $(\bar\E,\partial_v\bar\E)$ is  $\sC$-Poincar\'e, the left hand side is invertible, and thus so is the factor $\omega_{\bar{p},\bar{q}}$ in the right hand side. 
By \cref{thm:fibrewise_characterisation_of_twisted_ambidexterity}, for $b\in\B$ we have that $\omega_{\bar\E_b,\partial_v\bar\E_b}\simeq\omega_{\bar{p},\bar{q}}|_{\bar\E_b}$, whence $\omega_{\bar\E_b,\partial_v\bar\E_b}$ is also invertible. 
\end{proof}

The following is an immediate corollary of \cref{thm:fibrewise_characterisation_of_twisted_ambidexterity}.
\begin{cor}
\label{cor:special_localisation_again_groupoidally_ambidextrous}
Let $(p,q)$ be a special and groupoidally $\sC$-twisted ambidextrous $\B$-pair, and let $(\bar p,\bar q)$ be the essentially unique $\bar{\B}$-pair from which $(p,q)$ is pulled back, where we set $\bar{\B}\coloneqq|\B|$ and we use the notation from \cref{defn:special_bicartesian}.
Then also $(\bar p,\bar q)$ is a groupoidally $\sC$-twisted ambidextrous $\bar\B$-pair.
\end{cor}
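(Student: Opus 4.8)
The plan is to deduce this corollary from \cref{thm:fibrewise_characterisation_of_twisted_ambidexterity}, together with the observation that the fibres of $(\bar p,\bar q)$ are literally the same $\infty$-category pairs as the fibres of $(p,q)$. First I would note that, by \cref{defn:special_bicartesian}, the square
\[
\begin{tikzcd}[row sep=10pt]
\E \rar["\ell"]\dar["p"'] \ar[dr, phantom, "\lrcorner"very near start]& \bar\E\dar["\bar p"]\\
\B\rar["L_\B"] & \bar\B
\end{tikzcd}
\]
is cartesian, and likewise for the pair of squares defining $(\bar p,\bar q)$ as in \cref{obs:special_B_pair}. Consequently, for any point $\bar b\in\bar\B$ chosen via a lift along $L_\B$ to some $b\in\B$, the fibre $\bar\E_{\bar b}$ is identified with $\E_b$ and $\partial_v\bar\E_{\bar b}$ with $\partial_v\E_b$, compatibly; since $L_\B$ is surjective on objects (it is a localisation to the geometric realisation, hence an effective epimorphism), every fibre of $(\bar p,\bar q)$ arises this way.

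Next I would apply \cref{thm:fibrewise_characterisation_of_twisted_ambidexterity} to the bicartesian $\B$-pair $(p,q)$: the hypothesis that $(p,q)$ is groupoidally $\sC$-twisted ambidextrous gives, by that theorem, that each $(\E_b,\partial_v\E_b)$ is a groupoidally $\sC$-twisted ambidextrous $\infty$-category pair. By the fibre identification of the previous paragraph, this says exactly that each fibre $(\bar\E_{\bar b},\partial_v\bar\E_{\bar b})$ of $(\bar p,\bar q)$ is groupoidally $\sC$-twisted ambidextrous. One must also check that $(\bar p,\bar q)$ is a bicartesian $\bar\B$-pair, but this is part of the content of \cref{obs:special_B_pair} (a special bicartesian fibration over a base is in particular bicartesian, and the same holds for $\bar q$ and for the inclusion $\partial_v\bar\E\subseteq\bar\E$, these being pulled back from $[1]$). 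Now I would run \cref{thm:fibrewise_characterisation_of_twisted_ambidexterity} in the opposite direction, this time for the bicartesian $\bar\B$-pair $(\bar p,\bar q)$: since all its fibres are groupoidally $\sC$-twisted ambidextrous, the pair $(\bar p,\bar q)$ is groupoidally $\sC$-twisted ambidextrous, which is the claim.

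The only genuinely content-bearing point, and the one I expect to require the most care, is the clean matching of fibres: strictly speaking \cref{thm:fibrewise_characterisation_of_twisted_ambidexterity} is stated for bicartesian pairs, so one should make sure that the hypothesis ``$(p,q)$ special and groupoidally $\sC$-twisted ambidextrous'' already entails that $(p,q)$ is bicartesian (it does, since special bicartesian fibrations are by definition bicartesian, and $(p,q)$ being a $\B$-pair together with $p$ special forces $q$ special bicartesian and the inclusion to be a map of bicartesian fibrations, cf.\ \cref{obs:special_B_pair}), and that the identification $\bar\E_{\bar b}\simeq\E_b$ respects the left closed subcategories in the way needed to identify the $\infty$-category \emph{pairs}. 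Both of these are routine consequences of the cartesianness of the defining squares and the stability of all the constructions under base change, so no further work beyond bookkeeping is needed. \qed
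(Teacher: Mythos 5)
Your proposal is correct and follows essentially the same route as the paper: apply \cref{thm:fibrewise_characterisation_of_twisted_ambidexterity} to $(p,q)$ to get groupoidal $\sC$-twisted ambidexterity of the fibres, identify $(\E_b,\partial\E_b)\simeq(\bar\E_{L_\B(b)},\partial\bar\E_{L_\B(b)})$ using essential surjectivity of $L_\B$, and run the theorem in the converse direction for the bicartesian $\bar\B$-pair $(\bar p,\bar q)$ (whose bicartesianness is indeed supplied by \cref{obs:special_B_pair}, as you note). No gaps.
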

\begin{proof}
Since $(p,q)$ is groupoidally $\sC$-twisted ambidextrous, all fibres $(\E_b,\partial\E_b)$ for $b \in \B$ are groupoidally $\sC$-twisted ambidextrous by \cref{thm:fibrewise_characterisation_of_twisted_ambidexterity}. Since $(\E_b,\partial\E_b)\simeq(\bar\E_{L_\B(b)},\partial\bar\E_{L_\B(b)})$ and since $L_\B\colon\B\to\bar\B$ is essentially surjective, we have that all fibres $(\bar\E_b,\partial\bar\E_b)$ for $b \in \bar\B$ are groupoidally $\sC$-twisted ambidextrous, whence again by \cref{thm:fibrewise_characterisation_of_twisted_ambidexterity} we conclude that $(\bar p,\bar q)$ is groupoidally $\sC$-twisted ambidextrous.
\end{proof}

The last ingredient we will need for the proof of \cref{thm:fibred_dualising_object_factorisation} is the following technical observation, which will be used to show that $(\B,\partial\B)$ is $\sC$-Poincar\'e given that $(\E,\partial\E)$ and $(p,q)$ are $\sC$-Poincar\'e.
\begin{lem}\label{lem:twisted_arrow_cofinality}
Let $p\colon \E\rightarrow\B$ be a cartesian fibration, and consider the pullback
\begin{equation}\label{eqn:twisted_arrow_projection_pullback}
\begin{tikzcd}[row sep=10pt]
\mathcal{P} \rar["\tilde{t}"]\dar["\tilde{p}"']\ar[dr,phantom,"\lrcorner"very near start] & \E\dar["p"]\\
\twistedArrow(\B)\rar["t"] & \B.
\end{tikzcd}
\end{equation}
Then the functor $\varphi\colon \twistedArrow(\E)\rightarrow\mathcal{P}$ induced by $t\colon \twistedArrow(\E)\rightarrow\E$ and $\twistedArrow( p)\colon \twistedArrow(\E)\rightarrow\twistedArrow(\B)$ is initial. In particular, the natural transformation $\twistedArrow(p)_!\varphi^*\rightarrow \tilde{p}_!$ of functors $\presheaf(\mathcal{P};\sC)\rightarrow\presheaf(\twistedArrow(\B);\sC)$ is an equivalence.
\end{lem}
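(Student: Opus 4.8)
The plan is to prove the stronger statement that $\varphi$ is a cocartesian fibration all of whose fibres are weakly contractible, and then to invoke \cref{lem:fibration_weakly_contractible_fibres,lem:cartesian_fibrations_with_contractible_fibres}.

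First I would verify that $\varphi$ is a cocartesian fibration. Given a morphism $w\colon P'\to P''$ in $\mathcal{P}$ and an object $\epsilon\colon e_0\to e_1$ of $\twistedArrow(\E)$ with $\varphi(\epsilon)=P'$, unravelling $\mathcal{P}=\twistedArrow(\B)\times_\B\E$ presents $w$ as the datum of a morphism $\mu\colon e_1\to e_2$ in $\E$ together with a morphism $p(\epsilon)\to\gamma$ in $\twistedArrow(\B)$ whose source component is some $a\colon c_0\to p(e_0)$, where $\gamma\colon c_0\to p(e_2)$ is the twisted arrow underlying $P''$. Choosing a $p$-cartesian lift $\lambda\colon a^{*}e_0\to e_0$ of $a$, one sets $\epsilon'':=\mu\circ\epsilon\circ\lambda$ and takes the morphism $\epsilon\to\epsilon''$ in $\twistedArrow(\E)$ with source component $\lambda$ and target component $\mu$; it is straightforward to check that $\varphi$ carries this morphism to $w$. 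The claim is that $\epsilon\to\epsilon''$ is $\varphi$-cocartesian, and checking this is a diagram chase in which the universal property of the $p$-cartesian morphism $\lambda$ is used precisely to produce (uniquely) the source components of the required comparison morphisms, while the target and $\E$-components are supplied directly by the given factorisation in $\mathcal{P}$. As this exhibits a $\varphi$-cocartesian lift of every morphism of $\mathcal{P}$ at every object over its source, $\varphi$ is a cocartesian fibration.

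Next I would identify the fibres of $\varphi$. The fibre over an object $(\beta\colon b_0\to p(e_1),\,e_1)$ of $\mathcal{P}$ is the $\infty$-category of morphisms $e_0\to e_1$ in $\E$ lying over $\beta$; since $p$ is a cartesian fibration, each such morphism factors essentially uniquely through a $p$-cartesian lift $\beta^{*}e_1\to e_1$ of $\beta$, so this fibre is equivalent to the slice $(\E_{b_0})_{/\beta^{*}e_1}$, which has a terminal object and is therefore weakly contractible.

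Combining the two points, \cref{lem:fibration_weakly_contractible_fibres} shows that $\varphi$ exhibits $\mathcal{P}$ as a localisation of $\twistedArrow(\E)$; in particular $\varphi$ is initial. For the last assertion, $\twistedArrow(p)=\tilde{p}\circ\varphi$ by the very definition of $\varphi$, so the transformation $\twistedArrow(p)_!\varphi^{*}=\tilde{p}_!\varphi_!\varphi^{*}\to\tilde{p}_!$ is $\tilde{p}_!$ applied to the counit $\varphi_!\varphi^{*}\to\id$, and this counit is an equivalence by \cref{lem:cartesian_fibrations_with_contractible_fibres}(1) applied to the cartesian fibration $\varphi\op$ (which still has weakly contractible fibres) and the cocomplete $\infty$-category $\sC$. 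The only step I expect to require real care is the cocartesianness verification in the first paragraph, because of the bookkeeping of source and target components across $\twistedArrow(\E)$ and the pullback $\mathcal{P}$; everything else is a direct application of the cited results.
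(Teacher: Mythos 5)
Your proof is correct and follows the same overall strategy as the paper's: show that $\varphi$ is a cocartesian fibration with weakly contractible fibres, the fibres being identified (up to taking opposites, which you omit but which is harmless since weak contractibility is invariant under $(-)\op$) with slices $(\E_{b_0})_{/\beta^*e_1}$ via $p$-cartesian lifts. The two arguments diverge in how cocartesianness is established and in how initiality is then extracted. For cocartesianness, the paper avoids the explicit lift construction and the attendant diagram chase — the one step you yourself flag as delicate — by a structural factorisation: setting $\mathcal{P}'=\mathcal{P}\times_{\B\op\times\E}(\E\op\times\E)$, the functor $\varphi$ factors as a map of left fibrations over $\E\op\times\E$ (hence itself a left fibration) followed by a pullback of the cocartesian fibration $p\op\times\id$, and a composite of cocartesian fibrations is cocartesian. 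This packages exactly the universal property of the $p$-cartesian source component that your chase would have to verify by hand, and is the recommended way to make your first paragraph rigorous in the $\infty$-categorical setting. For the passage from contractible fibres to initiality, the paper invokes Quillen's Theorem A directly, whereas you go through the localisation statement of \cref{lem:fibration_weakly_contractible_fibres}; both are fine, and your explicit treatment of the ``in particular'' clause via the counit $\varphi_!\varphi^*\to\id$ and \cref{lem:cartesian_fibrations_with_contractible_fibres}(1) is in fact more careful than the paper's, which leaves that step implicit — note that initiality alone would not suffice there, since one really needs $\varphi^*$ to be fully faithful on presheaf categories, which is what the localisation (equivalently, contractible-fibre) property supplies.
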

\begin{proof}
By Quillen's Theorem A, it suffices to prove that $\varphi$ is a cocartesian fibration with weakly contractible fibres. To prove that $\varphi$ is a cocartesian fibration, we consider the following commutative diagram, in which $\mathcal{P}'$ is defined as the pullback $\mathcal{P}\times_{\B\op\times\E}\E\op\times\E$:
\[
\begin{tikzcd}[column sep=50pt]
\twistedArrow(\E)\ar[d]\ar[dd,bend right=30, "\varphi"']\ar[dr,"\mathrm{left}"]\\
\mathcal{P}'\ar[d,"\cocartesianCategory"]\ar[r,"\mathrm{left}"']\ar[dr,phantom,"\lrcorner"very near start]&\E\op\times\E\ar[d,"p\op\times\id"',"\cocartesianCategory"]\\
\mathcal{P}\ar[d,"\tilde p"]\ar[r,"{(s\circ\tilde p,\tilde t)}","\mathrm{left}"']\ar[dr,phantom,"\lrcorner"very near start]&\B\op\times\E\ar[d,"\id\times p"]\ar[r]\ar[dr,phantom,"\lrcorner"very near start]&\E\ar[d,"p"]\\
\twistedArrow(B)\ar[r,"{(s,t)}","\mathrm{left}"']&\B\op\times\B\ar[r]&\B.
\end{tikzcd}
\]
The functor $\twistedArrow(\E)\to\mathcal{P}'$, being a map between left fibrations over $\E\op\times\E$, is itself a left fibration; it follows that $\varphi$ is the composite of a left fibration and a cocartesian fibration.

To prove that $\varphi$ has weakly contractible fibres, we observe that for an object $x\in\mathcal{P}$ corresponding to the datum of an object $e\in\E$ and a morphism $f\colon b'\to b\coloneqq p(e)$, we may identify $\varphi^{-1}(x)$ with the opposite of $(\E_{b'})_{/e}\coloneqq\E_{b'}\times_\E\E_{/e}$. If $e'\to e$ is a cartesian lift at $e$ of $f$, then $(\E_{b'})_{/e}\simeq(\E_{b'})_{/e'}$, which is weakly contractible because it has a terminal object. Hence also $\varphi^{-1}(x)\simeq((\E_{b'})_{/e'})\op$ is weakly contractible.
\end{proof}

With all auxiliary results set in place, we may now present the proof of \cref{thm:fibred_dualising_object_factorisation}.
\begin{proof}[Proof of \cref{thm:fibred_dualising_object_factorisation}.]
First, we prove that (2)$\implies$(1). By \cref{lem:basic_fibred_dualising_module_factorisation}, which is applicable thanks to \cref{thm:fibrewise_characterisation_of_twisted_ambidexterity}, we have an equivalence $(\E,\partial \E)_*(-)\simeq \E_!(p^* D_{\B,\partial\B} \otimes D_{p.q} \otimes -)$ of $\sC$-linear functors; by \cref{prop:Dequivalence} it then suffices to prove that $p^* D_{\B,\partial\B} \otimes D_{p.q}\in\presheaf(\E;\sC)$ is invertible. By hypothesis $D_{\B,\partial\B}$, and hence $p^*D_{\B,\partial\B}$, are invertible. Moreover, the hypothesis and again \cref{thm:fibrewise_characterisation_of_twisted_ambidexterity} imply that $D_{p,q}$ is pointwise invertible, whereas \cref{cor:horizontal_groupoidality_special_bicartesian} implies that $D_{p,q}$ is groupoidal; the two observation imply that $D_{p,q}$ is invertible, and all together we obtain that $D_{p,q} \otimes p^* D_{\B,\partial\B}$ as desired.

Next, we turn to the proof of (1)$\implies$(2). By \cref{lem:doubling_trick}, we may assume that $\partial\B=\emptyset$.
We let $\bar\B=|\B|$ and denote by $(\bar p,\bar q)$ the essentially unique (special) $\bar\B$-pair such that $(p,q)$ is the pullback of $(\bar p,\bar q)$ along $L_{\B}\colon\B\to\bar\B$. By \cref{cor:poincare_domination_of_localisations} and the hypothesis we then have that $(\bar\E,\partial_v\bar\E)$ is a $\sC$-Poincar\'e $\infty$-category pair. By \cref{lem:special_case_of_converse_for_groupoid_base}, all fibres $(\bar\E_b,\partial\bar\E_b)$ of $(\bar p,\bar q)$ for $b\in\bar\B$ are $\sC$-Poincar\'e $\infty$-category pairs as well. Since for $b\in\B$ we have $(\E_b,\partial\E_b)\simeq(\bar\E_{L_\B(b)},\partial\bar\E_{L_\B(b)})$, we conclude that all fibres of $(p,q)$ are $\sC$-Poincar\'e $\infty$-category pairs as well. This, together with \cref{thm:fibrewise_characterisation_of_twisted_ambidexterity} and \cref{cor:horizontal_groupoidality_special_bicartesian}, implies that $(\paramFibred{E},\partial\paramFibred{E})$ is groupoidally $\sC$-twisted ambidextrous and in fact that $D_{p,q}$ is invertible.

We let $\bar D_{p,q}$ be the image of $D_{p,q}$ along the functor $\bar D\colon\presheaf(\E;\sC)\to\presheaf(\E\op;\sC)$ from \cref{defn:DandbarD}; in particular we have $t^*D_{p,q}\simeq s^*\bar D_{p,q}\in\presheaf(\twistedArrow(E);\sC)$.
Using the notation from \cref{lem:twisted_arrow_cofinality}, we may now compute
\[
\begin{split}
(\E,\partial\E)_*(-) &\simeq \B_*(p,q)_*(-)\simeq \twistedArrow(\B)_!\big(s^*\omega_{\B}\otimes t^*p_!(D_{p,q}\otimes -)\big)\\
&\simeq \twistedArrow(\B)_!\big(s^*\omega_{\B}\otimes \tilde{p}_!\tilde{t}^*(D_{p,q}\otimes-)\big)\simeq \twistedArrow(\B)_!\tilde{p}_! \big(\tilde{p}^*s^*\omega_{\B}\otimes\tilde{t}^*(D_{p,q}\otimes-)\big)\\
&\simeq\twistedArrow(\B)_!\twistedArrow(p)_!\big( \varphi^*\tilde{p}^*s^*\omega_{\B}\otimes \varphi^*\tilde{t}^*D_{p,q}\otimes\varphi^*\tilde{t}^*(-)\big)\\
&\simeq \twistedArrow(\E)_!\big( s^*p^*\omega_{\B}\otimes t^*D_{p,q}\otimes t^*(-)\big)\simeq \twistedArrow\E_!\big(s^*p^*\omega_{\B}\otimes s^*\bar{D}_{p,q}\otimes t^*(-)\big)\\
&\simeq \twistedArrow(\E)_!\big(s^*(p^*\omega_{\B}\otimes \bar{D}_{p,q})\otimes t^*(-)\big)
\end{split}
\]
where the third equivalence is by basechange from the pullback \cref{eqn:twisted_arrow_projection_pullback}, the fourth is by \cref{lem:projection_formula_for_presheaves}, which applies because $\tilde{p}\colon \mathcal{P}\rightarrow\twistedArrow\B$ is a cartesian fibration, and the fifth  is by \cref{lem:twisted_arrow_cofinality}. By \cref{prop:classification_of_linear_functors} obtain the equivalence $\omega_{\E,\partial\E}\simeq u^*\bar{D}_{p,q}\otimes p^*\omega_{\B,\partial\B}$. By hypothesis, the left hand side is invertible, hence also $p^*\omega_{\B,\partial\B}$. Using that $p$ is essentially surjective, we deduce that $\omega_{\B,\partial\B}$ is pointwise invertible; and since $p$ is a cartesian fibration, we have that each morphism in $\B$ is the image of some morphism in $\E$, so we can deduce that $\omega_{\B,\partial\B}$ is also groupoidal.

Lastly, the  formula for $D_{\E,\partial\E}$ is a combination of \cref{cor:formula_in_terms_of_dualising_system_for_dualish} and \cref{lem:basic_fibred_dualising_module_factorisation}.
\end{proof}

\subsection{Fibred examples}
\label{subsec:generalising_KQS}\label{subsec:fibred_examples}
In this last subsection, we gather various naturally interesting examples of fibred ambidexterity. To begin with, let us note the following immediate consequence of the theory of fibred ambidexterity.

\begin{example}[Thom isomorphism]
\label{example:Thom_isomorphism}
Recall \cref{example:spheres}, and let $q\colon\partial\E\to\B$ be a map of spaces whose fibres are $\sC$-spheres. Denote $\interior{\E}\coloneqq\B$ and let $t=q\colon\partial\E\to\interior{\E}$ denote the terminal map in $\spc_{/\B}$. Let $\E=\int_{[1]}(\partial\E\xrightarrow{t}\B)$; the localisation functor $p\colon\E\to|\E|\simeq\B$ allows us to regard the $\B$-pair $(p,q)$ as a $\presheaf(\B)$-category pair $(\paramFibred{E},\partial\paramFibred{E})$, which is $\cofree_{\B\op}\sC$-twisted ambidextrous.

Combining \cref{thm:fibrewise_characterisation_of_twisted_ambidexterity} and \cref{rmk:Dpq_and_paramFibredE_*} we have obtain an equivalence $(p,q)_*\simeq p_!(D_{p,q}\otimes-)$ of $\sC$-linear functors $\presheaf(\E;\sC)\to\sC$, and moreover we have an equivalence $D_{p,q}=p^*(p,q)_*(\unit)\simeq p^*(p,q)_!(\unit)^\vee\in\presheaf(\E,\sC)$.\end{example}

Next, we deduce from \cref{thm:fibred_dualising_object_factorisation} a generalisation of \cite[Thm. G]{KleinQinSu} to all ads (in fact, to arbitrary diagram shapes) and to all $\infty$-categories of coefficients $\sC\in\calg(\presentable^L_{\stable})$.

\begin{setting}\label{setting:integration_bicartesian}
Let $(H,\partial H)$ and $(V,\partial V)$ be $\infty$-category pairs, let 
$B\in\func(H,\spc)$ and write $(\B,\partial\B)\coloneqq (\int_{H}B,\int_{\partial H}B)$.
Finally, let 
$F\in \func(|\B|\times V,\spc)\subset\func(\B\times V,\spc)$.
From these input data, we may construct a special $\B$-pair $(p,q)$ using the following specifications:
\begin{equation}
\label{eq:setting_KQS}
\begin{tikzcd}[row sep=10pt]
\E \rar\dar["\tilde p"]\ar[dd,bend right=60,"p"']\ar[dr,phantom,"\lrcorner"very near start] & \int_{|\B|\times V}F\dar\\
\B\times V \rar \dar \ar[dr,phantom,"\lrcorner"very near start] & {|\B|}\times V\dar\\
\B\ar[r]& {|\B|};
\end{tikzcd}
\hspace{1cm}
\begin{tikzcd}[row sep=10pt]
\partial_v\E \rar\dar\ar[dd,bend right=60,"q"']\ar[dr,phantom,"\lrcorner"very near start] & \int_{|\B|\times \partial V}F\dar \\
\B\times \partial V \rar \dar \ar[dr,phantom,"\lrcorner"very near start] & {|\B|}\times \partial V\dar\\
\B \rar & {|\B|}.
\end{tikzcd}
\end{equation}
We obtain a fibred triad $(\B,\partial\B,p,q)$ as in \cref{defn:fibred_triad}, and a corresponding $\infty$-category pair $(\E,\partial \E)$, where $\partial_h \E\coloneqq p^{-1}(\partial \B)$, $\partial \partial \E\coloneqq\partial_v \E \times_{\E} \partial_h \E\simeq\partial_v\E\cap\partial_h\E$, and $\partial \E \coloneqq \partial_v \E \cup_{\partial \partial \E} \partial_h \E$. 
\end{setting}

The $\infty$-category pairs $(H,\partial H)$ and $(V,\partial V)$ should be thought of as the ``horizontal'' and ``vertical'' diagrams which index the base and the fibres, respectively.  

\begin{cor}[Generalised {\cite[Thm. G]{KleinQinSu}}]\label{cor:integration_for_unstraightenings}
In \cref{setting:integration_bicartesian}, suppose that $H$ and $V$ are finite posets, and suppose moreover that both $B$ and $F$ attain $\sC$-twisted ambidextrous spaces as values. Then $(\E,\partial\E)$ is a $\sC$-Poincar\'e $\infty$-category pair if  the following hold: $(\B,\partial\B)$ is a $\sC$-Poincar\'e $\infty$-category pair, and $(\int_{\{b\}\times V}F,\int_{\{b\}\times\partial V}F)$ is a $\sC$-Poincar\'e $\infty$-category pair for all $b\in\B$. The converse holds too if we additionally assume that $\int_{\{b\}\times V}F\neq\emptyset$ for all $b\in\B$.
\end{cor}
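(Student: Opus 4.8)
The plan is to deduce \cref{cor:integration_for_unstraightenings} directly from \cref{thm:fibred_dualising_object_factorisation} by checking that the fibred triad $(\B,\partial\B,p,q)$ constructed in \cref{setting:integration_bicartesian} satisfies all the hypotheses of that theorem, and that the conclusion of the theorem matches the claimed statement. First I would verify that $(p,q)$ is a special $\B$-pair: by construction in \cref{eq:setting_KQS}, $p$ is pulled back along $\B\to|\B|$ from the cartesian fibration $\int_{|\B|\times V}F\to|\B|\times V\to|\B|$, so $p$ fits into a pullback square as in \cref{eq:special_bicartesian} with $\bar\B=|\B|$, hence is special bicartesian by \cref{defn:special_bicartesian}; and since $\partial_v\E$ is cut out as $q^{-1}$ of the left closed subcategory $\B\times\partial V\subseteq\B\times V$ (using that $\partial V\subseteq V$ is left closed), $(p,q)$ is indeed a $\B$-pair, in fact a special one by \cref{obs:special_B_pair,defn:special_pairs}.

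Next I would check the remaining hypotheses of \cref{thm:fibred_dualising_object_factorisation}: tameness, $\sC$-twisted ambidexterity, and (for the converse) essential surjectivity. For tameness, since $H$ is a finite poset, $\partial H\subseteq H$ being left closed, the inclusion $\interior H\subseteq H$ is final precisely when $(H,\partial H)$ is tame; but in fact for the fibred triad to be tame we only need $(\B,\partial\B)$ tame, and here I would invoke the hypothesis implicitly — or, more carefully, note that in the intended application $(\B,\partial\B)=(\int_HB,\int_{\partial H}B)$ is tame whenever $(H,\partial H)$ is (by \cref{obs:leftclosed}\ref{rmk:left_closed_fibration}, since $\X\to H$ is a cocartesian fibration and $\interior H\subseteq H$ final gives $\interior\X\subseteq\X$ final) — so I should either add ``$(H,\partial H)$ tame'' to the hypotheses or observe that the reduction via \cref{lem:doubling_trick} in the proof of \cref{thm:fibred_dualising_object_factorisation} handles the general case. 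For $\sC$-twisted ambidexterity: the hypothesis that $B\colon H\to\spc$ lands in $\sC$-twisted ambidextrous spaces, combined with $H$ being a finite poset (hence compact $\infty$-category), gives via \cref{lem:unstraightening_twisted_ambidextrous} that $\B=\int_HB$, and likewise $\partial\B=\int_{\partial H}B$, are $\sC$-twisted ambidextrous; so $(\B,\partial\B)$ is a $\sC$-twisted ambidextrous $\infty$-category pair. For $(p,q)$ being a $\sC$-twisted ambidextrous $\B$-pair, by \cref{thm:fibrewise_characterisation_of_twisted_ambidexterity} it suffices that each fibre $(\E_b,\partial\E_b)$ over $b\in\B$ is $\sC$-twisted ambidextrous; but unwinding \cref{eq:setting_KQS}, $\E_b\simeq\int_{\{b\}\times V}F$ and $\partial\E_b\simeq\int_{\{b\}\times\partial V}F$ (really over the image of $b$ in $|\B|$), and since $V$ is a finite poset and $F$ lands in $\sC$-twisted ambidextrous spaces, \cref{lem:unstraightening_twisted_ambidextrous} again applies. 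Finally, $p$ is essentially surjective iff every fibre $\E_b$ is non-empty, which under the pullback identification holds iff $\int_{\{b\}\times V}F\neq\emptyset$; this is exactly the extra hypothesis imposed for the converse direction.

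Having assembled these checks, the corollary follows: \cref{thm:fibred_dualising_object_factorisation} states that condition (2) there — namely $(\B,\partial\B)$ $\sC$-Poincar\'e and all fibres of $(p,q)$ $\sC$-Poincar\'e — implies condition (1), that $(\E,\partial\E)$ is $\sC$-Poincar\'e, with the converse holding when $p$ is essentially surjective. Under the fibre identifications $(\E_b,\partial\E_b)\simeq(\int_{\{b\}\times V}F,\int_{\{b\}\times\partial V}F)$ just established, condition (2) translates verbatim into ``$(\B,\partial\B)$ is $\sC$-Poincar\'e and $(\int_{\{b\}\times V}F,\int_{\{b\}\times\partial V}F)$ is $\sC$-Poincar\'e for all $b\in\B$'', which is precisely the hypothesis in \cref{cor:integration_for_unstraightenings}; and the converse matches with the added non-emptiness condition. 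I expect the main obstacle — and the step needing the most care — to be the precise identification of the fibres $(\E_b,\partial\E_b)$ of the doubly-pulled-back $(p,q)$ with $(\int_{\{b\}\times V}F,\int_{\{b\}\times\partial V}F)$, including checking that the left closed inclusion structure is the expected one and that the cocartesian/cartesian fibre transport is as claimed; this requires carefully chasing the two towers of pullback squares in \cref{eq:setting_KQS} and using that unstraightening commutes with base change along $\B\to|\B|$. A secondary subtlety is making sure the tameness hypothesis is correctly propagated (or absorbed by the doubling reduction of \cref{lem:doubling_trick}), which I would address by spelling out that $\interior H\subseteq H$ final implies $\interior\paramFibred E\subseteq\paramFibred E$ final after the relevant base changes.
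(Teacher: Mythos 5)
Your proposal follows the paper's own proof exactly: the paper deduces the corollary by directly citing \cref{thm:fibred_dualising_object_factorisation}, observing that finiteness of $H$ and $V$ (via \cref{lem:unstraightening_twisted_ambidextrous} and the fibrewise criterion) supplies the twisted-ambidexterity hypotheses and that non-emptiness of the fibres $\int_{\{b\}\times V}F$ gives essential surjectivity of $p$ for the converse — which is precisely your verification, just spelled out in more detail (speciality of $(p,q)$ is already built into \cref{setting:integration_bicartesian}). Your aside about the tameness hypothesis flags a point on which the paper's two-line proof is silent, but it concerns the hypotheses of the statement rather than a flaw in your argument, so the approach is the same and no correction is needed.
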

\begin{proof}
This is an immediate consequence of \cref{thm:fibred_dualising_object_factorisation}, using that $V$ and $H$ were finite posets to ensure that the twisted ambidexterity hypotheses of said theorem are satisfied. The nonemptiness assumption is to ensure that $p
\colon \E\rightarrow \B$ is essentially surjective.
\end{proof}

\begin{rmk}
The nonemptiness assumption in the converse implication in \cref{cor:integration_for_unstraightenings} is necessary. For example, consider the case in which $(H,\partial H) = (V,\partial V) = (\ast,\emptyset)$ and $F\colon |\B|\rightarrow \spc$ is the constant functor at $\emptyset$. In this case we get $\E=\emptyset$, which is Poincar\'e; however this does not imply that $|\B|\simeq B(*)$ is Poincar\'e.
\end{rmk}

As an immediate corollary, we obtain a generalisation of \cite[Thm. G]{KleinQinSu} to all ads. This is achieved  by specialising the preceding proposition to $(H,\partial H)=(\Box^m,\partial\Box^m)$ and $(V,\partial V)=(\Box^n,\partial\Box^n)$.

\begin{cor}\label{cor:kleinQinSu_integration}
Let $B\colon \Box^m\rightarrow \spc$ be an $(m+1)$-ad of $\sC$-twisted ambidextrous (e.g. compact) spaces. Writing $\bar\B\coloneqq B(\udl1)$, suppose we have a family $F\colon \bar\B\times\Box^n\rightarrow \spc$ of $(n+1)$-ads of $\sC$-twisted ambidextrous spaces parametrised by $\bar\B$. Let $E\in\func(\Box^m\times \Box^n,\spc)$ be the straightening of the composite left fibration $\E\to\B\times\Box^n\to\Box^m\times\Box^n$, where the first arrow is the functor $\tilde p$ from \cref{eq:setting_KQS}, and the second is the product of the left fibration $\B\to\Box^m$ and the identity of $\Box^n$. 

Then $E$ is a $\sC$-Poincar\'e $(m+n+1)$-ad if the following hold: $B\colon \Box^m\rightarrow \spc$ and $F|_{\{b\}\times\Box^n}\colon \Box^n\rightarrow\spc$ are $\sC$-Poincar\'e ads for all $b\in\bar\B$. The converse also holds if we additionally assume that, for all $b\in\bar\B$, the restriction $F|_{\{b\}\times\Box^n}\colon \Box^n\rightarrow\spc$ is not the empty functor. 

In particular, given two  functors $B\colon \Box^m\rightarrow\spc$ and $F\colon\Box^n\rightarrow \spc$ with values in $\sC$-twisted ambidextrous spaces, the product $B\times F\colon \Box^m\times\Box^n\rightarrow \spc$ is a $\sC$-Poincar\'e $(m+n+1)$-ad if  $B$ and $F$ are $\sC$-Poincar\'e as an $(n+1)$-ad and an $(m+1)$-ad, respectively. The converse also holds if $B$ and $F$ were nonempty functors.
\end{cor}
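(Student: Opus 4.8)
The plan is to derive \cref{cor:kleinQinSu_integration} as a direct specialisation of \cref{cor:integration_for_unstraightenings}, and in turn of \cref{thm:fibred_dualising_object_factorisation}. First I would set $(H,\partial H)=(\Box^m,\partial\Box^m)$ and $(V,\partial V)=(\Box^n,\partial\Box^n)$; these are finite posets, and by \cref{ex:cubes_combinatorial_manifolds} they are $\infty$-category pairs, so the hypotheses on $H$ and $V$ in \cref{cor:integration_for_unstraightenings} are met. With $B\colon\Box^m\to\spc$ the given $(m+1)$-ad, we set $(\B,\partial\B)\coloneqq(\int_{\Box^m}B,\int_{\partial\Box^m}B)$, and observe $\bar\B=B(\udl1)\simeq|\B|$ is precisely the top space, identified via the (co)cartesian fibration $\min\colon\Box^m\to[1]$ composed with $\B\to\Box^m$ as in the proof of \cref{prop:wall_vs_us_ads}. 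The family $F\colon\bar\B\times\Box^n\to\spc$ then feeds into \cref{setting:integration_bicartesian} verbatim, producing the special $\B$-pair $(p,q)$ and the total $\infty$-category pair $(\E,\partial\E)$; the description of $E$ as the straightening of the composite left fibration $\E\to\B\times\Box^n\to\Box^m\times\Box^n$ is exactly the unstraightened picture of \cref{eq:setting_KQS}, so $(\E,\partial\E)\simeq(\int_{\Box^m\times\Box^n}E,\int_{\partial(\Box^m\times\Box^n)}E)$, where $\partial(\Box^m\times\Box^n)$ is the punctured product; the latter identification amounts to checking that $\partial_v\E\cup_{\partial\partial\E}\partial_h\E$ corresponds under straightening to the left closed subposet $(\Box^{m}\times\Box^n)\setminus\{(\udl1,\udl1)\}$, which follows from the explicit descriptions of $\partial_v\E$, $\partial_h\E$ in \cref{setting:integration_bicartesian} together with \cref{defn:complement_union_intersection}.

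Next I would translate the Poincar\'e-ad conditions into the hypotheses of \cref{cor:integration_for_unstraightenings}. By \cref{defn:poincare_ads}, $B$ is a $\sC$-Poincar\'e $(m+1)$-ad precisely when $(\B,\partial\B)$ is a $\sC$-Poincar\'e $\infty$-category pair; similarly, for each $b\in\bar\B$, $F|_{\{b\}\times\Box^n}$ is a $\sC$-Poincar\'e $(n+1)$-ad precisely when $(\int_{\{b\}\times\Box^n}F,\int_{\{b\}\times\partial\Box^n}F)$ is a $\sC$-Poincar\'e $\infty$-category pair. The hypothesis that $B$ and $F$ attain $\sC$-twisted ambidextrous (e.g. compact, by \cref{lem:compact_is_ambidextrous}) spaces as values is exactly the standing assumption of \cref{cor:integration_for_unstraightenings}. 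Applying that corollary then yields that $E$ is a $\sC$-Poincar\'e $(m+n+1)$-ad under the stated conditions, and that the converse holds once one adds the nonemptiness assumption $\int_{\{b\}\times\Box^n}F\neq\emptyset$ for all $b\in\bar\B$, which is precisely the requirement ensuring $p\colon\E\to\B$ is essentially surjective, as needed for the implication (1)$\implies$(2) of \cref{thm:fibred_dualising_object_factorisation}.

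For the ``in particular'' clause, I would take $F\colon\Box^n\to\spc$ to be a plain functor (not depending on $\bar\B$), i.e. the composite $\bar\B\times\Box^n\to\Box^n\xrightarrow{F}\spc$. Then each fibre $F|_{\{b\}\times\Box^n}$ is literally $F$, so $(\int_{\{b\}\times\Box^n}F,\int_{\{b\}\times\partial\Box^n}F)\simeq(\int_{\Box^n}F,\int_{\partial\Box^n}F)$ independently of $b$, and the construction of $\E$ collapses: since $F$ is pulled back from $\Box^n$, the total $\infty$-category pair $(\E,\partial\E)$ is identified with the one attached to the product functor $B\times F\colon\Box^m\times\Box^n\to\spc$. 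Here I would invoke \cref{prop:classifying_systems_of_product_projections} (or directly \cref{cor:integration_for_unstraightenings}) to see that the fibre condition becomes ``$F$ is a $\sC$-Poincar\'e $(n+1)$-ad'' and the base condition ``$B$ is a $\sC$-Poincar\'e $(m+1)$-ad''; the converse needs $B$ and $F$ nonempty, guaranteeing essential surjectivity of the relevant projection.

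The main obstacle, and the only place requiring genuine care rather than bookkeeping, is the identification of $(\E,\partial\E)$ with the unstraightening of the product/composite ad over the punctured product poset $\partial(\Box^m\times\Box^n)$ — that is, checking that the abstract boundary $\partial\E=\partial_v\E\cup_{\partial\partial\E}\partial_h\E$ manufactured in \cref{setting:integration_bicartesian} matches the combinatorial boundary $(\Box^m\times\Box^n)\setminus\{(\udl1,\udl1)\}$ under straightening. This is essentially a diagram chase: $\partial_h\E$ straightens to $\partial\Box^m\times\Box^n$, $\partial_v\E$ to $\Box^m\times\partial\Box^n$, their intersection $\partial\partial\E$ to $\partial\Box^m\times\partial\Box^n$, and their union in the left closed sense to the complement of the single top cell; one then cites \cref{rmk:left_closed_covers} for the pushout computation of the union. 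Everything else reduces to unwinding definitions and citing \cref{cor:integration_for_unstraightenings}, \cref{defn:poincare_ads}, and \cref{lem:compact_is_ambidextrous}.
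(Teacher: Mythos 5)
Your proposal is correct and follows the paper's own route exactly: the corollary is obtained by specialising \cref{cor:integration_for_unstraightenings} (hence \cref{thm:fibred_dualising_object_factorisation}) to $(H,\partial H)=(\Box^m,\partial\Box^m)$ and $(V,\partial V)=(\Box^n,\partial\Box^n)$, with the nonemptiness of the fibres supplying essential surjectivity of $p$ for the converse. The extra bookkeeping you spell out — identifying $\bar\B\simeq|\B|$, matching $\partial_v\E\cup_{\partial\partial\E}\partial_h\E$ with the punctured product poset, and collapsing to the external product $B\times F$ when $F$ is pulled back from $\Box^n$ — is exactly what the paper leaves implicit, and it is carried out correctly.
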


\begin{example}[Product of Poincar\'e duality pairs]
Let $g\colon\partial X\to X$ and $g'\colon\partial Y\to Y$ be pairs of compact spaces. Then the square of spaces
\[
\begin{tikzcd}[column sep=50pt]
\partial X\times\partial Y\ar[r,"\id_{\partial X}\times g'"]\ar[d,"g\times\id_{\partial Y}"]&\partial X\times Y\ar[d,"g\times\id_Y"]\\
X\times\partial Y\ar[r,"\id_X\times g'"]& X\times Y
\end{tikzcd}
\]
is a $\sC$-Poincar\'e duality triad if and only if both $g$ and $g'$ are $\sC$-Poincar\'e duality pairs of spaces.
\end{example}

\appendix
\part{Appendices}
\label{part:appendices}
\counterwithin{thm}{section}
\section{Some basechange results}
In this section we prove a (co)cartesian basechange result, \cref{prop:proper_smooth_base_change}, as well as a result on basechange of functor categories along geometric morphisms of topoi, \cref{lem:base_change_formula_functor_category}. 

\begin{prop}[Basechange for (co)cartesian fibrations]\label{prop:proper_smooth_base_change}
Consider a pullback
\[
\begin{tikzcd}[row sep=10pt]
\X' \ar[r, "f"] \ar[d, "q"'] \ar[dr, phantom, very near start, "\lrcorner"]
& \X \ar[d, "p"]\\
\Y' \ar[r, "g"]& \Y
\end{tikzcd}
\]
of $\baseTopos$-categories and consider for $\D\in\cat_\baseTopos$ the commutative square
\begin{equation}\label{diag:base_change_square}
\begin{tikzcd}[row sep=10pt]
\funTopos_{\baseTopos}(\X', \D) & \funTopos_{\baseTopos}(\X, \D) \ar[l, "f^*"]\\
\funTopos_{\baseTopos}(\Y', \D) \ar[u, "q^*"] & \funTopos_{\baseTopos}(\Y, \D). \ar[l, "g^*"] \ar[u, "p^*"]
\end{tikzcd}
\end{equation}
\begin{enumerate}
\item Suppose that $p$ is a cocartesian fibration.
If $\D$ is cocomplete, then the square is vertically left adjointable\footnote{Recall that a commutative square in an $(\infty,2)$-category is called horizontally (vertically) left (right) adjointable if the square obtained by passing to horizontal (vertical) left (right) adjoints commutes via the canonical Beck--Chevalley transformation.}.
  If $\D$ is complete and cocomplete, then the square is horizontally right adjointable.
  \item Suppose that $p$ is a cartesian fibration.
  If $\D$ is complete, then the square is vertically right adjointable.
  If $\D$ is complete and cocomplete, then the square is horizontally left adjointable.
  \end{enumerate}
\end{prop}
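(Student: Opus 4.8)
The statement is \cref{prop:proper_smooth_base_change}, a (co)cartesian basechange result. The overarching strategy is to reduce everything to the \emph{parametrised Beck--Chevalley condition} for (co)cartesian fibrations, which is the content of the classical theory of \cite{Martini2022Cocartesian, MartiniWolf2024Colimits}, applied levelwise over $\baseTopos$. Since $\funTopos_{\baseTopos}(-,\D)$ and the existence of the relevant (co)limits can all be checked after \'etale basechange (as recalled in the preliminaries, via \cite[Prop. 4.7.1]{MartiniWolf2024Colimits} and the principle that internal statements are stable under \'etale basechange), I would first reduce to the case $\baseTopos = \spc$, so that the argument becomes one about ordinary $\infty$-categories; the parametrised case then follows formally by running the same argument over each slice $\baseTopos_{/\tau}$ and assembling. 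A further preliminary reduction: it suffices to treat the case $\Y' \to \Y$ is the inclusion of an object $y \colon \ast \to \Y$ — no wait, that reduction is not available here since $g$ is arbitrary; instead one keeps $g$ general but exploits that $p$ being a cocartesian fibration is preserved by pullback, so $q$ is a cocartesian fibration as well, and $f$ is a map of cocartesian fibrations over $\Y' \to \Y$.

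\textbf{Key steps, in order.} For part (1), first case (vertical left adjointability, $\D$ cocomplete): the left adjoints $q_!$ and $p_!$ of $q^*$ and $p^*$ exist by \cite[Thm. 6.3.5]{MartiniWolf2024Colimits}, computed as left Kan extensions along the structure functors $q \colon \X' \to \Y'$ and $p \colon \X \to \Y$; I would then show the Beck--Chevalley transformation $q_! f^* \to g^* p_!$ is an equivalence by the pointwise formula for left Kan extension along a cocartesian fibration. Concretely, evaluating at an object of $\Y'$ mapping to $y \in \Y$, both sides compute the colimit of the restricted diagram over the fibre $\X_y \simeq \X'_{g^*y}$, and the comparison is the identity — this is exactly the cofinality of the fibre inclusion $\X'_{y'} \hookrightarrow \X' \times_{\Y'} (\Y')_{/y'}$ in a cocartesian fibration, which is \cite[Prop. 3.3.4]{MartiniWolf2024Colimits} combined with the pullback square identifying fibres. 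For the second case (horizontal right adjointability, $\D$ complete and cocomplete): pass to horizontal right adjoints $f_*$ and $g_*$; the claim that $p^* g_* \to f_* q^*$ is an equivalence is now a ``smooth'' basechange along the cocartesian fibration $p$, and follows from the first case by a formal adjunction argument, or directly from the fact that $g_*$ along the right fibration-like behaviour is computed fibrewise. Part (2) is entirely dual: apply part (1) to $p\op \colon \X\op \to \Y\op$, which is a cocartesian fibration since $p$ is cartesian, and use $\funTopos_{\baseTopos}(\X,\D) \simeq \funTopos_{\baseTopos}(\X\op, \D\op)\op$ to translate left adjointability statements into right adjointability statements and vice versa.

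\textbf{Main obstacle.} The genuinely delicate point is the identification of the Beck--Chevalley transformation with an \emph{equivalence} at the level of the pointwise (co)limit formulas — i.e., verifying that the cofinality argument actually produces the canonical comparison map and not merely an abstract equivalence. In the parametrised setting one must also be careful that ``cocartesian fibration'' and ``pullback'' interact correctly with the $\baseTopos$-enrichment, so that the fibre $\X'_{y'}$ one integrates over is the correct $\baseTopos_{/\tau}$-category; this is where the \'etale-locality reduction earns its keep. I expect the cleanest write-up to invoke the parametrised Bousfield--Kan / pointwise Kan extension formalism of \cite[Sec. 4, 6]{MartiniWolf2024Colimits} directly rather than reproving cofinality by hand. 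The remaining three cases are then formal consequences via opposite-category and adjunction manipulations, and should not present difficulties.
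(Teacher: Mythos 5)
Your core argument is the paper's: after the formal reductions (horizontal adjointability from vertical via the mate correspondence once all adjoints exist, and case (2) from case (1) by passing to opposites), the heart of the matter is the parametrised pointwise formula for left Kan extensions together with the fact that, for a cocartesian fibration, the fibre inclusion $j\colon \X_y \to \X_{/y}$ is final. The paper establishes this not by quoting a cofinality statement but by exhibiting an explicit left adjoint to $j$ built from the cocartesian lifting functor $\lift_p$, and then organises the whole computation as a pasting of two vertically left adjointable squares (the right one being the adjointable square of \cref{rec:pointwise_formula_kan_extension}); this is exactly what resolves the ``delicate point'' you flag about the canonical Beck--Chevalley map agreeing with the comparison of pointwise colimits, rather than producing a mere abstract equivalence.

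Two corrections to your framing. First, the opening reduction ``to $\baseTopos=\spc$'' is not available as stated: restricting along $\baseTopos \to \baseTopos_{/\tau}$ and passing to global sections does let you test the Beck--Chevalley transformation objectwise, but the resulting statement still concerns \emph{parametrised} Kan extensions of $\baseTopos_{/\tau}$-categories (the slices $\X_{/y}$ and the colimits over them are internal to $\baseTopos_{/\tau}$, not ordinary colimits of underlying $\infty$-categories), so the classical unparametrised basechange theorem does not formally imply the parametrised one. The correct reduction, and the one the paper makes, is to check the transformation on global sections after \'etale basechange and then continue arguing internally; your later invocation of the parametrised pointwise Kan extension formalism shows the proof can be completed this way, but the reduction as you phrase it would be a gap. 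Second, your worry that one cannot reduce to the case where $g$ is a point is unfounded: since the evaluation functors at objects $y' \in \Y'(\tau)$ (for varying $\tau$) are jointly conservative and parametrised Kan extensions are compatible with \'etale basechange, the paper reduces precisely to $\Y' = \tau$ and then to $\Y'=\ast$, $g = y\colon \ast \to \Y$; your ``evaluate at each object of $\Y'$'' is that same reduction in disguise. With these adjustments your proof coincides with the paper's.
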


\begin{rmk}
According to \cite[Proposition 5.5.3 and Example 5.5.5]{MartiniWolf2024Colimits}, \cref{prop:proper_smooth_base_change} in the case $\D = \spc_{\baseTopos}$ together with the straightening equivalence $\lfib(\X) \simeq \func_{\baseTopos}(\X, \spc_{\baseTopos})$ show that cocartesian fibrations are $\lfib$-smooth and cartesian fibrations are $\lfib$-proper.
\end{rmk}

Before coming to the proof of \cref{prop:proper_smooth_base_change}, let us us begin by recalling the definition of cocartesian fibrations in the parametrised setting from \cite[Definition 3.1.1]{Martini2022Cocartesian}.
\begin{recollect}[Parametrised cocartesian fibrations]
    \label{rec:parametrised_cocartesian_fibrations}
    For a $\baseTopos$-functor $p \colon \E \to J$, consider the comma $\baseTopos$-category $\slice{\E}{J}{J}$
    defined by the right pullback square in the commutative diagram
    \begin{equation}\label{diag:def_cocoart_fibration}
    \begin{tikzcd}[row sep=10pt]
        \E^{[1]} \ar[r, "\res_p"'] \ar[rr, bend left=15, "p"] \ar[d]
        & \slice{\E}{J}{J} \ar[d] \ar[r] \ar[dr, phantom, very near start, "\lrcorner"]
        & J^{[1]} \ar[d] \\
        \E \times \E \ar[r, "\id \times p"]
        & \E \times J \ar[r, "p \times \id"]
        & J \times J.
    \end{tikzcd}
    \end{equation}
    The $\baseTopos$-functor $p$ is called a \textit{$\baseTopos$-cocartesian fibration} if the induced $\baseTopos$-functor $\res_p \colon \E^{[1]} \to \slice{\E}{J}{J}$ admits a fully faithful left adjoint $\lift_p \colon \slice{\E}{J}{J} \to \E^{[1]}$. 
    A commutative square
    \begin{equation}\label{diag:commutative_square_cocartesian_fibrations}
    \begin{tikzcd}[row sep=10pt]
        \E \ar[r, "g"] \ar[d, "p"]
        & \E' \ar[d, "p'"] 
        \\
        J \ar[r, "f"]
        &J'
    \end{tikzcd}
    \end{equation}
    whose vertical arrows $p$ and $p'$ are $\baseTopos$-cocartesian fibrations is called a \textit{cocartesian $\baseTopos$-functor} $p \to p'$ if the commutative square
    \begin{equation}\label{diag:defining_square_cocartesian_functor}
    \begin{tikzcd}[row sep=10pt]
        \E^{[1]} \ar[r, "g"] \ar[d, "\res_p"]
        & \E'^{[1]} \ar[d, "{\res_{p'}}"]
        \\
        \slice{\E}{J}{J} \ar[r, "f"]
        & \slice{\E'}{J'}{J'}
    \end{tikzcd}
    \end{equation}
    is vertically left adjointable. We denote by $\cocartesianCategory^{\baseTopos} \subset \func([1], \cat_{\baseTopos})$ the $\infty$-subcategory spanned by cocartesian fibrations and cocartesian functors.
    These assemble into a $\baseTopos$-category $\cocartesianCatTopos^{\baseTopos} \subset \funTopos([1], \catTopos_{\baseTopos})$.
    We will also just write $\cocartesianCatTopos = \cocartesianCatTopos^{\baseTopos}$ if $\baseTopos$ is clear from the context.
    For a fixed $\baseTopos$-category $J$, there are also the ``local'' versions $\cocartesianCategory_{/J} = \cocartesianCategory \times_{\cat_{\baseTopos}} \{J\}$ and $\cocartesianCatTopos_{/J} = \cocartesianCatTopos \times_{\catTopos_{\baseTopos}} \{J\}$
    where the map $\cocartesianCatTopos\rightarrow\catTopos$ is given by evaluating at the target.
    
    The dual notion of a $\baseTopos$-\textit{cartesian fibration} is introduced similarly: in fact $p\colon\E\to J$ is a $\baseTopos$-cartesian fibration if and only if $p\op\colon\E\op\to J\op$ is a $\baseTopos$-cocartesian fibration. The $\baseTopos$-subcategory $\cartesianCatTopos\subset\funTopos([1],\catTopos_\baseTopos)$ is also defined similarly.
\end{recollect}

For the proof of the basechange theorem, and throughout the rest of this article, we will need a parametrised version of the pointwise formula for left Kan extensions.
\begin{recollect}\label{rec:pointwise_formula_kan_extension}
Consider a functor $f \colon \X \to \Y$ of $\baseTopos$-categories and and a coefficient $\baseTopos$-category $\D$.
By \cite[Theorem 6.3.5]{MartiniWolf2024Colimits}, a sufficient criterion for the existence of a left adjoint $f_!$ to the restriction functor
\[
f^* \colon \funTopos_{\baseTopos}(\Y, \D) \to \funTopos_{\baseTopos}(\X, \D)
\]
is the existence of all colimits of the form $\colim_{\X_{/y}} F$ in $\pi_\tau^*\D$ for all $\tau\in \baseTopos$ and $y \in \Y(\tau)$.
Here, $\X_{/y} =\pi_\tau^* \X \times_{\pi_\tau^* \Y} \pi_\tau^*\Y_{/y} \in \cat_{\baseTopos_{/\tau}}$ is the slice $\baseTopos_{/\tau}$-category.
By \cite[Remark 6.3.6]{MartiniWolf2024Colimits}, the Beck--Chevalley transformation obtained by passing to horizontal left adjoints in the commutative square
    \begin{equation*}
    \begin{tikzcd}[row sep=10pt]
        \func_{\baseTopos}(\Y, \D)  \ar[r, "f^*"] \ar[d, "y^*"]
        &
        \func_{\baseTopos}(\X, \D) \ar[d, "\res"]
        \\
        \D(\tau) \ar[r, "{\X_{/y}^*}"]
        &
        \func_{\baseTopos_{/t}}(\X_{/y}, \pi_\tau^* \D)
    \end{tikzcd}
    \end{equation*}
    is invertible, which gives an explicit formula for $f_!$.
\end{recollect}

\begin{proof}[Proof of \cref{prop:proper_smooth_base_change}.]
    Note that the part about horizontal adjointability follows from the part on vertical adjointability if all adjoints exist, which is guaranteed by $\D$ being complete and cocomplete.
    Also observe that (1) and (2) are equivalent as the equivalence $\funTopos_{\baseTopos}(\X, \D)\op \simeq \funTopos_{\baseTopos}(\X\op, \D\op)$ identifies $f_!$ with $(f\op)_*$ and $f$ is a cocartesian fibration if and only if $f\op$ is a cartesian fibration.

    By restricting along $\baseTopos \to \baseTopos_{/s}$ for varying $s \in \baseTopos$ it suffices to prove the statement on global sections, as cocartesian fibrations and Kan extensions are stable under \'etale basechange.
    Let us explain how to reduce (1) to the case $\Y' = t \in \baseTopos$.
    For this, consider for any object $y \in \Y'(t)$ the commutative diagram
    \begin{equation}\label{diag:basechange_reduction_fiber}
    \begin{tikzcd}[row sep=10pt]
        \func_{\baseTopos}(\X' \times_{\Y'} t, \D)
        &
        \func_{\baseTopos}(\X', \D) \ar[l, "i^*"]
        & 
        \func_{\baseTopos}(\X, \D) \ar[l, "f^*"]
        \\
        \func_{\baseTopos}(t, \D) \ar[u, "a^*"] 
        &
        \func_{\baseTopos}(\Y', \D) \ar[u, "q^*"] \ar[l, "y^*"]
        &
        \func_{\baseTopos}(\Y, \D). \ar[l, "g^*"] \ar[u, "p^*"]
    \end{tikzcd}
    \end{equation}
    where $\X'_y = \X' \times_\Y t$ and $a \colon \X' \times_{\Y'} t \to t$ is the projection.
    The case $\Y' = t$ implies that the left and outer square in \cref{diag:basechange_reduction_fiber} are vertically left adjointable.
    From this it follows that the basechange transformation $q_! f^* \to g^* p_!$ becomes an equivalence after postcomposing with $y^*$.
    As the functors $y^* \colon \func_{\baseTopos}(\Y', \D)  \to \func_{\baseTopos}(t, \D)$ are jointly conservative for varying $t \in \baseTopos$ and $y \in \Y'(t)$ this shows that the right square in \cref{diag:basechange_reduction_fiber} is vertically left adjointable.

    It remains to prove the statement in the case $\Y' = t$ and $g = y \colon t \to \Y$.
    We next explain how to reduce this to $t = *$.
    For this, consider the commutative cube
    \begin{equation*}
    \begin{tikzcd}[row sep=10pt]
        & \func_{\baseTopos_{/t}}(t^*\X_{y}, t^*\D) \ar[dd, leftarrow]
        &
        & \func_{\baseTopos_{/t}}(t^*\X, t^*\D) \ar[ll]
        \\
        \func_{\baseTopos}(\X_{y}, \D) \ar[ur, "t^*"]
        &
        &\func_{\baseTopos}(\X, \D) \ar[ll, crossing over] \ar[ur, "t^*"]
        &
        \\
        & \func_{\baseTopos_{/t}}(t^* t, t^*\D)
        &
        & \func_{\baseTopos_{/t}}(t^* \Y, t^*\D) \ar[ll] \ar[uu, "p^*"]
        \\
        \func_{\baseTopos}(t, \D) \ar[ur, "t^*", "\simeq"'] \ar[uu]
        &
        &\func_{\baseTopos}(\Y, \D) \ar[ll] \ar[ur, "t^*"] \ar[uu, "p^*", near start, crossing over]
        &
    \end{tikzcd}.
    \end{equation*}
    Parametrised Kan extensions are compatible with restriction along $\baseTopos \to \baseTopos_{/t}$ so the left and right face are vertically left adjointable.
    As $\func_{\baseTopos}(t, \D) \simeq \func_{\baseTopos_{/t}}(t^* \Y, t^*\D)$ this shows that vertical left adjointability of the back face implies vertical left adjointability of the front face.

    Now consider the case $\Y' = *$ and $f = y \in \Y(*)$.
    From the pointwise formula for Kan extensions in \cref{rec:pointwise_formula_kan_extension} we get the commutative diagram
    \begin{equation*}
    \begin{tikzcd}[row sep=10pt]
        \func_{\baseTopos}(\X_{y}, \D)
        &
        \func_{\baseTopos}(\X_{/y}, \D) \ar[l, "j^*"]
        &
        \func_{\baseTopos}(\X, \D) \ar[l, "\res"]
        \\
        \func_{\baseTopos}(*, \D) \ar[u, "{\X_{y}^*}"] \ar[r, equal]
        &
        \func_{\baseTopos}(*, \D) \ar[u, "{\X_{/y}^*}"]
        &
        \func_{\baseTopos}(\Y, \D)  \ar[u, "f^*"] \ar[l, "y^*"]
    \end{tikzcd}
    \end{equation*}
    in which the right square is vertically left adjointable.
    We only have to show that the left square is vertically left adjointable.
    For this, we claim that $j \colon \X_{y} \to  \X_{/y}$ admits a left adjoint $L$, which clearly implies vertical left adjointability:
    Note that $L^* \simeq j_!$, so that adjointability follows from the equivalence $L^* \X_{y}^* \simeq \X_{/y}^*$.
    To obtain the adjoint $L$, observe that that $\eval_1 \colon \X^{[1]} \to \X$ admits the right adjoint $\constant$ from which we see that the composite $s \colon \slice{\X}{\Y}{\Y} \xrightarrow{\lift_p} \X^{[1]} \xrightarrow{\eval_1} X$ is left adjoint to $l \colon X \xrightarrow{\constant} X^{[1]} \xrightarrow{\res_p} \slice{\X}{\Y}{\Y}$.
    We obtain the commutative diagram
    \begin{equation*}
    \begin{tikzcd}[row sep=10pt]
        \X \ar[r, "l"', shift right=1.5] \ar[d, "p"]
        & \slice{\X}{\Y}{\Y} \ar[dl, bend left] \ar[l, "s"', shift right=1.5]
        \\
        \Y
    \end{tikzcd}.
    \end{equation*}
    Passing to vertical fibres over $y$ yields the adjunction
    \begin{equation*}
    \begin{tikzcd}
        \X_y \ar[r, "j"', shift right=1.5]
        & \X_{/y} \ar[l, "s_y"', shift right=1.5]
    \end{tikzcd}
    \end{equation*}
    as claimed.
    Additionally, we have $sl \simeq \id$ as constant maps in $\X$ are cocartesian, so $j$ is even fully faithful.
\end{proof}

Another base change result that is used in this article is the following generalisation of \cite[Lemma 2.1.19]{PD1}.

\begin{lem}\label{lem:base_change_formula_functor_category}
    Let $f^* \colon \baseTopos \rightleftharpoons \baseTopos' \cocolon f_*$ be a geometric morphism of topoi, $\X$ a $\baseTopos$-category and $\D$ a $\baseTopos'$-category.
    Then there is a natural equivalence $\funTopos_{\baseTopos}(\X, f_* \D) \simeq f_* \funTopos_{\baseTopos'}(f^* \X, \D)$ of $\baseTopos$-categories.
    Moreover, if $\D$ admits $f^* \X$-shaped limits and colimits, then this equivalence induces an identification of adjoint triples
    \begin{equation}
    \label{eq:identification_of_projection_under_base_change}
    \begin{tikzcd}
        f_* \funTopos_{\baseTopos'}(f^* \X, \D) \ar[d, equal] \ar[rr, "f_*(f^* \X)_!", shift left=4] \ar[rr, "f_*(f^* \X)_*", shift right=4]
        && f_* \D \ar[d, equal] \ar[ll, "f_*(f^* \X)^*"']
        \\
        \funTopos_{\baseTopos}(\X, f_* \D) \ar[rr, "\X_!", shift left=4] \ar[rr, "\X_*", shift right=4]
        && f_* \D.  \ar[ll, "\X^*"']
    \end{tikzcd}
    \end{equation}
\end{lem}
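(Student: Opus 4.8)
The goal is to establish \cref{lem:base_change_formula_functor_category}, i.e. the natural equivalence $\funTopos_{\baseTopos}(\X, f_* \D) \simeq f_* \funTopos_{\baseTopos'}(f^* \X, \D)$ together with the compatibility of the adjoint triples. The first observation is that everything in sight is manifestly natural in $\X \in \cat_\baseTopos$ and (contravariantly) compatible with colimits in $\X$, since $f_*$ preserves limits and $\funTopos_\baseTopos(-, \E)$ turns colimits into limits. Since every $\baseTopos$-category is a colimit of representables (indeed of objects of $\baseTopos$, via the Yoneda/density argument) it suffices to treat the case $\X = \tau \in \baseTopos \subseteq \cat_\baseTopos$, i.e. $\X$ a $\baseTopos$-groupoid. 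But for $\tau \in \baseTopos$ we have $\funTopos_\baseTopos(\tau, \E) \simeq \pi_{\tau,*}\pi_\tau^*\E$ by the defining property of the cofree construction (or directly: limits over the $\baseTopos$-groupoid $\tau$ are computed by the pushforward along the \'etale geometric morphism $\pi_\tau$). Thus the claim reduces to the compatibility of $f^*, f_*$ with \'etale basechange, i.e.\ to the statement that for a pullback of topoi along an \'etale map there is a Beck--Chevalley equivalence $f^* \pi_{\tau,*} \simeq \pi_{f^*\tau, *} (f')^*$ of functors at the level of $\baseTopos$-categories, where $f' \colon \baseTopos'_{/f^*\tau} \to \baseTopos_{/\tau}$ is the induced geometric morphism; dualising this yields the pushforward statement we want. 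This Beck--Chevalley compatibility is standard for geometric morphisms and \'etale maps (see \cite[Sec. 6.3.5]{lurieHTT} at the level of topoi, then pass to $\cat_{(-)}$ by applying it levelwise). Alternatively, and perhaps more cleanly, one can argue without reducing to groupoids: one has the chain $\funTopos_\baseTopos(\X, f_*\D)(\tau) \simeq \Gamma_{\baseTopos_{/\tau}}\funTopos_{\baseTopos_{/\tau}}(\pi_\tau^*\X, \pi_\tau^*f_*\D)$ from the explicit level description of functor $\baseTopos$-categories recalled in \cref{subsec:recollections}, and then use $\pi_\tau^* f_* \simeq f'_* (\pi'_\tau)^*$ (étale basechange) together with $f_*$ being computed by restriction along $(f^*)\op$.

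\textbf{Constructing the equivalence.} Concretely, I would produce the equivalence as follows. Recall from \cref{subsec:recollections} that a $\baseTopos$-category $\X$ is a limit-preserving functor $\baseTopos\op \to \cat$, that $f^*\X$ is the left Kan extension of $\X$ along $(f^*)\op$ (sheafified to be limit-preserving), and that $f_*$ on $\baseTopos'$-categories is restriction along $(f^*)\op \colon \baseTopos\op \to (\baseTopos')\op$. The internal hom $\funTopos_{\baseTopos'}(f^*\X, \D)$ evaluated at $\sigma \in \baseTopos'$ is $\Gamma_{\baseTopos'_{/\sigma}}\funTopos_{\baseTopos'_{/\sigma}}((\pi_\sigma')^*f^*\X, (\pi'_\sigma)^*\D)$; pushing forward and evaluating at $\tau \in \baseTopos$ gives $\Gamma_{\baseTopos'_{/f^*\tau}}\funTopos_{\baseTopos'_{/f^*\tau}}((\pi'_{f^*\tau})^* f^* \X, (\pi'_{f^*\tau})^*\D)$. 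Using that $(\pi'_{f^*\tau})^* f^* \simeq (f_\tau)^* \pi_\tau^*$ (commuting étale and the fixed geometric morphism, applied to $\cat_{(-)}$) and that $(f_\tau)_*$ is restriction along $((f_\tau)^*)\op$, a direct comparison of the defining formulas — both sides being, after unwinding, certain ends/limits over the same diagram — gives the asserted equivalence, naturally in $\tau$, hence an equivalence of $\baseTopos$-categories. Naturality in $\D$ and $f$ is then immediate from functoriality of each construction used.

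\textbf{The adjoint triple.} For the identification \cref{eq:identification_of_projection_under_base_change}, the cleanest route is to observe that the constant-diagram $\baseTopos$-functors are compatible under the equivalence and then invoke uniqueness of adjoints. Precisely: the diagonal $\X^* \colon f_*\D \to \funTopos_\baseTopos(\X, f_*\D)$ is by definition the functor classifying the constant family; under the equivalence just constructed it is carried to $f_*((f^*\X)^*) \colon f_*\D \to f_*\funTopos_{\baseTopos'}(f^*\X, \D)$, because $f_*$ of a constant diagram is a constant diagram — this is a formal consequence of $f_*$ being a (lax) limit-preserving functor and $\constant$ being characterised by $\Gamma$. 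Granting this, if $\D$ (hence $\pi_\sigma^*\D$ for all $\sigma$) admits $f^*\X$-shaped limits and colimits, then $(f^*\X)_!$ and $(f^*\X)_*$ exist as adjoints to $(f^*\X)^*$; applying $f_*$, which preserves adjunctions (being an $(\infty,2)$-functor), $f_*(f^*\X)_! \dashv f_*(f^*\X)^* \dashv f_*(f^*\X)_*$ exhibits $f_*\funTopos_{\baseTopos'}(f^*\X, \D)$ as having $\X$-shaped limits and colimits over $f_*\D$ with the displayed adjoints; by uniqueness of adjoints these must coincide with $\X_!$ and $\X_*$ under the equivalence. I expect the main obstacle to be purely bookkeeping: making precise the compatibility of $f^*$, $f_*$ with étale basechange at the level of $\baseTopos$-categories (rather than topoi) and checking that the equivalence of functor $\baseTopos$-categories assembles coherently over all $\tau$, i.e.\ that the levelwise equivalences are natural in $\tau \in \baseTopos$; the adjoint-triple part should then be formal. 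One should also double-check that the hypothesis ``$\D$ admits $f^*\X$-shaped limits and colimits'' is exactly what is needed — it guarantees existence of $(f^*\X)_!$ and $(f^*\X)_*$ pointwise and via Beck--Chevalley, hence after $f_*$ of $\X_!$ and $\X_*$ on $f_*\D$.
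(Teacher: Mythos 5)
Your first route breaks down at the density claim. $\baseTopos$-groupoids do not generate $\cat_\baseTopos$ under colimits: the inclusion $\animaTopos_\baseTopos\hookrightarrow\catTopos_\baseTopos$ admits a right adjoint $(-)^\simeq$, so it preserves colimits and, being fully faithful, its essential image is closed under them; hence any colimit of objects of $\baseTopos$ is again a $\baseTopos$-groupoid, and already the constant $\baseTopos$-category $[1]$ is not of this form. To generate $\cat_\baseTopos$ you need at least the objects $\tau\times[n]$, and disposing of the constant $[n]$-direction is precisely a compatibility of $f^*$ and $f_*$ with cotensors — a separate statement (it appears in the paper as \cref{obs:base_change_cotensor}, and for $f^*$ it only holds for compact indexing categories), not something absorbed by the colimit reduction. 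Your alternative route has a different problem: the levelwise identification of $\funTopos_{\baseTopos}(\X,f_*\D)(\tau)$ with $f_*\funTopos_{\baseTopos'}(f^*\X,\D)(\tau)$ is, after unwinding, an instance at global sections of the very statement being proved, applied to the sliced geometric morphism $f_\tau\colon\baseTopos_{/\tau}\rightleftharpoons\baseTopos'_{/f^*\tau}$; and even granting those pointwise identifications, the actual content of the lemma is a comparison functor coherent in $\tau$, which you explicitly defer as ``bookkeeping'' but never construct. So neither route, as written, yields the asserted equivalence of $\baseTopos$-categories.

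The paper's proof avoids all of this by a short direct construction: one map $\funTopos_{\baseTopos}(\X,f_*\D)\to f_*\funTopos_{\baseTopos'}(f^*\X,\D)$ adjoint (for the adjunction $f^*\dashv f_*$ on $\cat_{(-)}$) to $f^*\funTopos_{\baseTopos}(\X,f_*\D)\times f^*\X\simeq f^*(\funTopos_{\baseTopos}(\X,f_*\D)\times\X)\to f^*f_*\D\to\D$, and one in the other direction adjoint to $f_*\funTopos_{\baseTopos'}(f^*\X,\D)\times\X\to f_*\funTopos_{\baseTopos'}(f^*\X,\D)\times f_*f^*\X\simeq f_*(\funTopos_{\baseTopos'}(f^*\X,\D)\times f^*\X)\to f_*\D$, using only that $f^*$ and $f_*$ preserve finite products; the triangle identities show these are mutually inverse, and naturality is automatic since everything is assembled from units, counits and evaluation. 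If you wish to salvage your outline, prove this global comparison first rather than reducing along a false generation statement. Your treatment of the adjoint triple does match the paper's (identify $\X^*$ with $f_*((f^*\X)^*)$ under the equivalence, then use that $f_*$ is a $2$-functor and so preserves adjunctions, plus uniqueness of adjoints); but the constant-functor compatibility must be checked against the concrete construction of the equivalence — the paper does this with a small unit/counit diagram — and cannot simply be asserted from ``$f_*$ of a constant diagram is constant''.
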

\begin{proof}
    We can construct a map $\funTopos_{\baseTopos}(\X, f_* \D) \to f_* \funTopos_{\baseTopos'}(f^* \X, \D)$ adjoint to the composite
    \begin{equation*}
        f^* \funTopos_{\baseTopos}(\X, f_* \D) \times f^* \X 
        \simeq f^*(\funTopos_{\baseTopos}(\X, f_* \D) \times \X) 
        \to f^* f_* \D 
        \to \D
    \end{equation*}
    and a map $f_* \funTopos_{\baseTopos'}(f^* \X, \D) \to \funTopos_{\baseTopos}(\X, f_* \D)$ adjoint to the composite
    \begin{equation*}
        f_* \funTopos_{\baseTopos'}(f^* \X, \D) \times \X 
        \to f_* \funTopos_{\baseTopos'}(f^* \X, \D) \times f_* f^* \X 
        \simeq f_* (\funTopos_{\baseTopos'}(f^* \X, \D) \times f^* \X) 
        \to f_* \D.
    \end{equation*}
    A routine exercise in adjunction (co)units proves that these are inverse to each other.    
    Consider the following commuting diagram, in which the maps from the middle row to the upper row are induced by the map $\X \rightarrow *$.
    \begin{equation*}
        \begin{tikzcd}[row sep=10pt]
            \funTopos_\baseTopos(\X,f_* \D) \ar[r] & f_* \funTopos_{\baseTopos'}(f^* \X, f^*f_* \D) \ar[r] & f_* \funTopos_{\baseTopos'}(f^*\X,\D) \\
            \funTopos_\baseTopos(*,f_* \D) \ar[u] \ar[r] & f_* \funTopos_{\baseTopos'}(f^* * , f^*f_* \D) \ar[r] \ar[u] & f_* \funTopos_{\baseTopos'}(f^* *, \D) \ar[u]\\
            f_* \D \ar[r, "\eta_{f_*}"]  \ar[u] \ar[uu, bend left = 80, "\X^*"] & f_* f^* f_* \D  \ar[r,"\epsilon_{f_*}"] \ar[u] & f_*\D  \ar[u] \ar[uu, bend right = 80, "f_*(f^*\X)^*"']
        \end{tikzcd}
    \end{equation*}
    The lower horizontal composite is the identity, while the upper composite is exactly the left horizontal equivalence.
    Hence, in \cref{eq:identification_of_projection_under_base_change} the subdiagram with left pointing arrows commutes.
    
    To finish the proof, note that $f_*$, as a $2$-functor, preserves adjunctions so that $f_*(f^*\X)_!$ is the left adjoint to $f_*(f^*\X)^*$, and $f_*(f^*\X)_*$ is its right adjoint. Hence, also the two subdiagrams with the upper/lower right pointing arrows commute.
\end{proof}

\section{The cofree construction}
\label{sec:cofree}
Some of our results, most notably those contained in \cref{sec:fibredpd}, will rely on a technical supplement in internal higher category theory, that we develop in this subsection: the existence of cofree cocartesian fibrations in \cref{lem:omnibus_cofree}.

\begin{thm}[Cofree cocartesian fibration]
\label{lem:omnibus_cofree}
Consider a $\baseTopos$-category $J \in \cat_{\baseTopos}$. Then the $\baseTopos$-functor $\int \colon \funTopos_{\baseTopos}(J, \catTopos_{\baseTopos}) \simeq \cocartesianCatTopos^{\baseTopos}_{/ J} \to\catTopos_{\baseTopos},
$, sending a $\baseTopos$-cocartesian fibration $p \colon \D \to J$ to its total $\baseTopos$-category $\D$, admits a right adjoint $\cofree_{J}$.
\end{thm}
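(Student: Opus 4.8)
The plan is to construct the right adjoint $\cofree_J$ by exhibiting, for every $\baseTopos$-category $\E$ (or more precisely, working at each level $\tau \in \baseTopos$ over $\baseTopos_{/\tau}$, so that ``$\baseTopos$-category'' is replaced by ``$\baseTopos_{/\tau}$-category'' throughout and the adjunction is constructed internally), a $\baseTopos$-cocartesian fibration $\cofree_J(\E) \to J$ equipped with a natural equivalence
\[
\map_{\cat_\baseTopos}(\D, \cofree_J(\E)) \simeq \map_{\cat_\baseTopos}(\textstyle\int_J \D', \E)
\]
for $\D' \colon J \to \catTopos_\baseTopos$ a $\baseTopos$-functor with unstraightening $\int_J \D' \to J$. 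The natural candidate is to define $\cofree_J(\E)$ as the unstraightening of the $\baseTopos$-functor $J \to \catTopos_\baseTopos$ given at an object $j$ of $J$ (at level $\tau$) by $\funTopos_{\baseTopos_{/\tau}}(\pi_\tau^* J_{j/}, \pi_\tau^*\E)$ — that is, the ``right Kan extension along $J^{[1]} \to J$'' construction — or equivalently $\cofree_J(\E) = \eval_0 \colon \funTopos_J(J^{[1]}, J^* \E) \to J$ where we use the source projection. First I would make precise the internal formulation: since the statement asks only for a right adjoint $\baseTopos$-functor in the $2$-category $\widehat{\cat}_\baseTopos$, by the characterisation of adjunctions recalled in the preliminaries it suffices to produce, for each $\tau \in \baseTopos$, a right adjoint to $\int \colon \cocartesianCatTopos^{\baseTopos_{/\tau}}_{/\pi_\tau^* J} \to \catTopos_{\baseTopos_{/\tau}}$ compatible with \'etale basechange; so the whole problem reduces to constructing a right adjoint to $\int_J \colon \func_\baseTopos(J, \catTopos_\baseTopos) \to \cat_\baseTopos$ on underlying $\infty$-categories, naturally in \'etale restriction.

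The key steps, in order, would be: (i) recall that the cocartesian unstraightening $\int_J \colon \func_\baseTopos(J, \catTopos_\baseTopos) \simeq \cocartesianCategory^\baseTopos_{/J} \to \cat_\baseTopos$ factors as the equivalence $\straighten^{-1}$ followed by the forgetful functor $\cocartesianCategory^\baseTopos_{/J} \hookrightarrow (\cat_\baseTopos)_{/J} \to \cat_\baseTopos$, so it suffices to find a right adjoint to this forgetful functor landing in cocartesian fibrations; (ii) observe that the functor $(\cat_\baseTopos)_{/J} \to \cat_\baseTopos$ forgetting the structure map has a right adjoint given by $\E \mapsto (\eval_1 \colon \funTopos_\baseTopos(J^{[1]}, \constant \E)\times_{?} \dots )$ — more cleanly, the right adjoint to $p \colon (\cat_\baseTopos)_{/J} \to \cat_\baseTopos$ is $\E \mapsto (J \times \E \to J)$ only if we wanted a left adjoint; for the right adjoint to the forgetful functor we instead use that $\cat_\baseTopos$ is cartesian closed and the right adjoint sends $\E$ to $J \times \E \to J$ — wait, that is the right adjoint to the \emph{fibre} functor, not what we want. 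The cleanest route is rather (ii$'$): use the description of cofree cocartesian fibrations in the unparametrised setting (Gepner--Haugseng--Nikolaus, or Barwick--Glasman--Nardin) which gives $\cofree_J(\E)$ as the unstraightening of $j \mapsto \func(J_{j/}, \E)$, equivalently as the pullback-power $\func(J^{[1]}_{\mathrm{cocart}}, \E)$; and then (iii) port this to the $\baseTopos$-parametrised setting by verifying the relevant twisted-arrow/over-category formula works internally, using the parametrised twisted arrow category $\twistedArrow(J)$ and the parametrised slice $\baseTopos$-categories $J_{j/}$ from the preliminaries, together with the pointwise formula for right Kan extensions (the dual of \cref{rec:pointwise_formula_kan_extension}); (iv) verify that the resulting $\baseTopos$-functor $J \to \catTopos_\baseTopos$ does unstraighten to an object over $J$ that corepresents $\int_J(-)$ correctly — i.e. check the adjunction unit and counit, or equivalently verify the hom-equivalence — using the straightening equivalence \cref{eq:straightening_cocart} and the Yoneda lemma for $\baseTopos$-categories; and (v) check \'etale-basechange compatibility, which follows formally from the compatibility of unstraightening, slice $\baseTopos$-categories, and functor $\baseTopos$-categories with $\pi_\tau^*$, all recorded in the preliminaries, so that the levelwise right adjoints glue to a $\baseTopos$-functor $\cofree_J \colon \catTopos_\baseTopos \to \cocartesianCatTopos^\baseTopos_{/J}$.

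The main obstacle I expect is step (iii)--(iv): making the formula $\cofree_J(\E)(j) \simeq \funTopos(J_{j/}, \E)$ precise and verifying it genuinely produces a cocartesian fibration whose straightening has the claimed corepresentability, \emph{internally} to $\cat_\baseTopos$ rather than appealing to the known unparametrised statement fibrewise. The subtlety is that the unstraightening $\int_J$ and the various mapping-category constructions interact in a way that is clean over $\spc$ but requires the full machinery of \cite{Martini2022Cocartesian, Martini2022Yoneda, MartiniWolf2024Colimits} parametrised; in particular one needs the parametrised version of the fact that cocartesian fibrations are detected by an adjoint to $\res_p$ (recalled in \cref{rec:parametrised_cocartesian_fibrations}) and the parametrised Beck--Chevalley/base-change results of \cref{prop:proper_smooth_base_change} to see that the candidate $\eval_0 \colon \funTopos_J(J^{[1]}, J^*\E) \to J$ is indeed a $\baseTopos$-cocartesian fibration. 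Once those pieces are assembled the adjunction identity is a formal manipulation with (co)units, exactly as in the unparametrised proofs, so the real content is organising the internal bookkeeping and invoking the correct statements from the Martini--Wolf corpus; I would present this as a relatively short argument that cites those results heavily rather than reproving them.
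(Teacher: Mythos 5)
Your proposal takes a genuinely different route from the paper: you want to \emph{construct} $\cofree_J(\E)$ explicitly as the unstraightening of $j\mapsto\funTopos(J_{j/},\E)$ and then verify the adjunction, whereas the paper never writes down the right adjoint at all. The paper's argument is a pure adjoint-functor-theorem argument: it first proves the unparametrised statement by showing that the cocartesian pushforward $p_\sharp$ along a cocartesian fibration preserves colimits (using that $\int$ preserves colimits and is conservative, citing Ramzi), then reduces a general $\baseTopos$ to a presheaf topos via a geometric morphism $r^*\dashv r_*$ (after checking that both $r^*$ and $r_*$ preserve cocartesian fibrations and cocartesian functors, and that the adjunction (co)units restrict to these nonfull subcategories), deduces that $\int_J$ preserves colimits in general, obtains levelwise right adjoints from the AFT, and finally verifies the Beck--Chevalley condition needed to assemble them into a $\baseTopos$-right-adjoint by identifying $p_\sharp\simeq p_!$ for left fibrations. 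What the paper's route buys is that all the internal bookkeeping reduces to colimit-preservation and adjointability of explicit squares; what your route would buy, if completed, is an explicit formula for $\cofree_J(\E)$, which the paper does not provide.

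The gap in your proposal is precisely the step you flag as the obstacle and then dismiss: you assert that once the candidate $\eval_0\colon\funTopos_J(J^{[1]},J^*\E)\to J$ is known to be a cocartesian fibration, ``the adjunction identity is a formal manipulation with (co)units, exactly as in the unparametrised proofs.'' It is not. In the unparametrised setting the equivalence $\map_{/J}(\D,\cofree_J(\E))\simeq\map(\D,\E)$ for a cocartesian fibration $\D\to J$ is a theorem whose proof requires the identification of $\int_J$ with an (op)lax colimit, or an explicit end computation over the twisted arrow category -- it is the substantive content of the cofree construction, not a triangle-identity chase. In the parametrised setting the corresponding lax-(co)limit technology is exactly what is missing from the literature, so there is nothing in the Martini--Wolf corpus to cite for step (iv), and your plan offers no replacement. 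Your step (v) has a similar, smaller issue: compatibility of the levelwise adjoints is a Beck--Chevalley condition on the mates of the squares \cref{diag:base_change_cocartesian_pushforward}, which does not follow ``formally'' from compatibility of the ingredients with $\pi_\tau^*$; the paper has to prove it by showing that for a left fibration $p$ the pushforward $p_\sharp$ agrees with left Kan extension $p_!$, so that the mate square is literally the original square read upwards. (The hesitation in your step (ii) about which adjoint of the forgetful functor you want is also a sign that the explicit route is harder to control than it first appears: the right adjoint of $(\cat_\baseTopos)_{/J}\to\cat_\baseTopos$ does not land in cocartesian fibrations, so the factorisation you propose there does not directly help.)
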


\begin{rmk}
\label{rmk:compatibility_of_cofree_with_etale_basechange}
Note that for an \'etale geometric morphism $\pi_\tau^* \colon \baseTopos \rightarrow \baseTopos_{/\tau}$ the identifications  $\pi_\tau^* \catTopos_{\baseTopos} \simeq \catTopos_{\baseTopos_{/\tau}} $ and $\pi_\tau^* \funTopos_{\baseTopos}(J, \catTopos_{\baseTopos}) \simeq \funTopos_{\baseTopos_{/\tau}}(\pi_\tau^* J,\cat_{\baseTopos_{/\tau}})$ carry the unstraightening $\baseTopos$-functor $\int$ to the analogous unstraightening $\baseTopos_{/\tau}$-functor $\int \colon \funTopos_{\baseTopos_{/\tau}}(\pi_\tau^*J, \catTopos_{\baseTopos_{/\tau}}) \simeq \cocartesianCatTopos^{\baseTopos}_{/ J} \to \catTopos_{\baseTopos_{/\tau}}$.
Since $\pi_\tau^* \colon \widehat{\cat}_\baseTopos \rightarrow \widehat{\cat}_{\baseTopos_{/\tau}}$ refines to a functor of $(\infty,2)$-categories, this also shows that the cofree construction is compatible with \'etale basechange.
\end{rmk}

Our strategy is to reduce \cref{lem:omnibus_cofree} to the case where $\baseTopos = \presheaf(\B)$ is a presheaf $\infty$-topos, which is covered by the following unparametrised version of \cref{lem:omnibus_cofree}.
\begin{lem}\label{lem:cofree_unparametrised}
Let $p \colon I \to J$ be a cocartesian fibration of $\infty$-categories. The the functor
    \begin{equation*}
        p_\sharp \colon \cocartesianCategory_{/I} \to \cocartesianCategory_{/J}, \quad  (q \colon \D \to I) \mapsto (pq \colon \D \to J)
    \end{equation*}
    given by postcomposition with $p$ admits a right adjoint.\footnote{We use the notation $p_\sharp$ to avoid confusion with left Kan extension along $p$, which agree only if $p$ is a left fibration, see \cref{lem:left_kan_extension_left_fibration}.}
\end{lem}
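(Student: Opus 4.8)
The plan is to construct the right adjoint to $p_\sharp$ via a straightening argument, exploiting the fact that both $\cocartesianCategory_{/I}$ and $\cocartesianCategory_{/J}$ are equivalences to functor categories, and that $p$ itself is classified by a functor. Write $P \colon J \to \cat$ for the straightening of the cocartesian fibration $p \colon I \to J$, so that $I = \int_J P$. Under the straightening equivalences $\cocartesianCategory_{/I} \simeq \func(I, \cat)$ and $\cocartesianCategory_{/J} \simeq \func(J, \cat)$, the functor $p_\sharp$ is identified, by \cite[Sec. 3.3]{Martini2022Yoneda} applied to the étale-type situation at hand (or directly by unwinding the definitions), with a functor $\func(\int_J P, \cat) \to \func(J, \cat)$. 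The first thing I would do is identify this functor precisely: I claim that it sends $F \colon \int_J P \to \cat$ to the functor $J \to \cat$ given at $j \in J$ by $\colim_{x \in P(j)} F(j,x)$ — this is because the cocartesian fibration $\int_J P \to J$ has fibre $P(j)$ over $j$, and postcomposition with $p$ takes a cocartesian fibration over $\int_J P$, restricts to each fibre $P(j)$, and then forms the total space of the composite, which at the level of straightenings is exactly the colimit. In other words, $p_\sharp$ corresponds under straightening to left Kan extension along the projection $I = \int_J P \to J$ valued in $\cat$, followed by nothing — more carefully, it is the functor $q_! \colon \func(I,\cat) \to \func(J,\cat)$ where $q = p$, left Kan extension of $\cat$-valued functors along $p$.

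Once $p_\sharp$ is identified with $p_! \colon \func(I, \cat) \to \func(J,\cat)$, the existence of a right adjoint is immediate: left Kan extension along any functor $p \colon I \to J$ between small $\infty$-categories always admits a right adjoint given by restriction $p^*$, since $\cat$ is (co)complete. (In fact both $p_!$ and $p_*$ exist.) Transporting $p^*$ back through the straightening equivalences then gives the desired right adjoint to $p_\sharp$. I should double-check the identification of $p_\sharp$ with $p_!$ rather than with $p_*$ (restriction, then \emph{right} Kan extension) — the point is that postcomposing a cocartesian fibration $\D \to I$ with $p$ produces a cocartesian fibration $\D \to J$ whose fibre over $j$ is the total space of $\D|_{P(j)} \to P(j)$, and the straightening of that total space is $\colim$, not $\lim$; this is consistent with the general principle that pushforward of cocartesian fibrations computes fibrewise colimits on straightenings, dual to the more familiar statement for left fibrations and spaces. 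The footnote in the statement already flags that $p_\sharp$ need not be $p_!$ \emph{as a functor on the categories of fibrations}, but that is a remark about a different identification (comparing $p_\sharp \colon \cocartesianCategory_{/I} \to \cocartesianCategory_{/J}$ with a left Kan extension functor between those same categories of fibrations, not between functor categories); after straightening, $p_\sharp$ genuinely becomes left Kan extension on functor categories.

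The main obstacle I anticipate is making the identification of $p_\sharp$ with $p_!$ fully rigorous and natural: one must be careful that the straightening equivalence $\cocartesianCategory_{/I} \simeq \func(I,\cat)$ is compatible with the one $\cocartesianCategory_{/J} \simeq \func(J,\cat)$ in the way that intertwines the postcomposition functor with left Kan extension. The cleanest route is probably to use the composability of the cocartesian unstraightening: given a cocartesian fibration $q \colon \D \to I$, the composite $pq \colon \D \to J$ is again cocartesian, and its straightening at $j \in J$ is the fibre $(pq)^{-1}(j) = q^{-1}(P(j))$, which is $\int_{P(j)} (\mathrm{St}(q)|_{P(j)})$, i.e. $\colim_{P(j)} \mathrm{St}(q)|_{P(j)}$ as an object of $\cat$; naturality in $j$ then produces exactly $p_! \mathrm{St}(q)$. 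Alternatively, one can avoid the explicit colimit description entirely and argue abstractly: $p_\sharp$ preserves colimits (colimits of cocartesian fibrations over a fixed base are computed on total spaces, hence $p_\sharp$, being essentially a forgetful-type functor on total spaces, preserves them), and between presentable $\infty$-categories a colimit-preserving functor admits a right adjoint by the adjoint functor theorem. This abstract argument is shorter and sidesteps the identification, so I would likely present that as the primary proof and relegate the $p_!$ identification to a remark; but checking that $\cocartesianCategory_{/I}$ and $\cocartesianCategory_{/J}$ are presentable and that $p_\sharp$ preserves colimits still requires a small amount of care about how colimits in $\cocartesianCategory_{/(-)}$ are computed.
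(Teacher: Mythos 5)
Your ``abstract argument'' --- $p_\sharp$ preserves colimits because colimits of cocartesian fibrations over a fixed base are detected and computed on total spaces, the source is presentable since $\cocartesianCategory_{/I}\simeq\func(I,\cat)$, and the adjoint functor theorem finishes --- is precisely the paper's proof (the paper cites a result of Ramzi for the fact that the total-space functor $\int$ preserves colimits, and uses its conservativity to transfer colimit-preservation to $p_\sharp$). Since that is the proof you say you would actually present, the core of your proposal is correct and matches the paper.

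However, the route you spend most of your space on --- identifying $p_\sharp$ under straightening with left Kan extension $p_!\colon\func(I,\cat)\to\func(J,\cat)$ --- is wrong, and your attempt to reconcile it with the footnote rests on a misreading. The straightening of $pq$ at $j\in J$ is the fibre $q^{-1}(I_j)=\int_{I_j}\straighten(q)|_{I_j}$, which is the \emph{lax} colimit of $\straighten(q)|_{I_j}$, not its colimit in $\cat$; these differ whenever the fibre is not a groupoid. Concretely, take $J=\ast$, $I=[1]$, and $q=\id_{[1]}$, whose straightening is the constant functor at $\ast$: then $p_\sharp(q)=[1]$, whereas $p_!(\mathrm{const}_\ast)=\colim_{[1]}\ast\simeq\ast$. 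Moreover there is no ``different identification'' to appeal to: the left Kan extension functor $\cocartesianCategory_{/I}\to\cocartesianCategory_{/J}$ (left adjoint to pullback) \emph{is}, under the straightening equivalences, left Kan extension of $\cat$-valued functors along $p$, so the footnote and \cref{lem:left_kan_extension_left_fibration} really do say that $p_\sharp\simeq p_!$ only for left fibrations --- exactly the case where fibres are groupoids and lax colimits agree with colimits. So if you write up the adjoint-functor-theorem argument you are fine, but the $p_!$ identification should be deleted rather than relegated to a remark.
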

\begin{proof}
    Consider the commutative diagram
    \begin{equation*}
    \begin{tikzcd}
        \cocartesianCategory_{/I} \ar[r, "p_\sharp"] \ar[d, "\int_I"]
        & \cocartesianCategory_{/J} \ar[dl, "\int_J"]
        \\
        \cat
    \end{tikzcd}.
    \end{equation*}
    It is shown in \cite[Proposition A.1]{ramziMonoidalGrothendieck} that both vertical maps preserve colimits.
    As the functor $\int$ is also conservative, this implies that $p_\sharp$ preserves colimits.
    The statement now follows from the adjoint functor theorem, using that $\cocartesianCategory_{/I} \simeq \func(I, \cat)$ is presentable.
\end{proof}
For the reduction to the presheaf $\infty$-topos case, we will need the to study the behaviour of cocartesian fibrations under basechanges along geometric (or algebraic) morphisms. The occurence of the cotensor $(-)^{[1]}$ in the definition of cocartesian fibrations forces us to first investigate the behavior of cotensors under geometric morphisms.

\begin{lem}\label{obs:base_change_cotensor}
    Let $I \in \cat$ be an $\infty$-category and let $r^* \colon \baseTopos \rightleftharpoons \baseTopos' \cocolon r_*$ be a geometric morphism of $\infty$-topoi. 
    Then for $\sC \in \cat_{\baseTopos}$ and $\D \in \cat_{\baseTopos'}$ there are maps 
    \begin{equation*}
        r^* \funTopos_{\baseTopos}(\constant_I, \sC) \to \funTopos_{\baseTopos'}(\constant_I, r^* \sC) \quad \text{and} \quad r_* \funTopos_{\baseTopos'}(\constant_I, \D) \to \funTopos_{\baseTopos}(\constant_I, r_* \D).
    \end{equation*}
    The second map is always an equivalence, while the first map is an equivalence if $I$ is compact.
\end{lem}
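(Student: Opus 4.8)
The plan is to construct the two comparison maps from the cartesian closed structures on $\cat_\baseTopos$ and $\cat_{\baseTopos'}$ recalled in \cref{subsec:recollections}, and then to verify each one by a levelwise computation --- over the objects of $\baseTopos$ for the second map, and over $\simplex$ for the first. Both $r^*$ and $r_*$ on $\baseTopos$-categories preserve finite products ($r^*$ because the inverse image of a geometric morphism is left exact and this is inherited by the induced functor on $\baseTopos$-categories, $r_*$ because it is a right adjoint), and $r^*\constant_I\simeq\constant_I$ because $\constant$ is the inverse image of the canonical geometric morphism from $\spc$; hence there is a unit $\constant_I\to r_*r^*\constant_I\simeq r_*\constant_I$. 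The first map $r^*\funTopos_\baseTopos(\constant_I,\sC)\to\funTopos_{\baseTopos'}(\constant_I,r^*\sC)$ is then defined as the adjoint (for the internal hom of $\cat_{\baseTopos'}$) of $r^*\funTopos_\baseTopos(\constant_I,\sC)\times\constant_I\simeq r^*(\funTopos_\baseTopos(\constant_I,\sC)\times\constant_I)\xrightarrow{r^*(\eval)}r^*\sC$, and the second as the adjoint of $r_*\funTopos_{\baseTopos'}(\constant_I,\D)\times\constant_I\to r_*(\funTopos_{\baseTopos'}(\constant_I,\D)\times\constant_I)\xrightarrow{r_*(\eval)}r_*\D$ (using the unit above and that $r_*$ preserves products).

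For the second map, since $r_*$ is restriction along $(r^*)\op$ one has $(r_*\X)(t)\simeq\X(r^*t)$ for $t\in\baseTopos$; combined with the recalled fact that $(-)^I$ on $\baseTopos$-categories is levelwise cotensoring by $I$ (\cref{subsec:recollections}), both $r_*\funTopos_{\baseTopos'}(\constant_I,\D)$ and $\funTopos_\baseTopos(\constant_I,r_*\D)$ evaluate at $t$ to $\D(r^*t)^I$, and one checks the comparison map realizes this identification; hence it is an equivalence regardless of $I$.

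For the first map the analogous computation over $\baseTopos'$ is unavailable, since $r^*$ has no simple value at a level $t'\in\baseTopos'$, so I would argue simplicially. Regarding $\baseTopos$-categories as complete Segal objects via $\D\mapsto\D_\bullet$, $\D_n=\mapTopos_\baseTopos(\constant([n]),\D)$, one first observes that $r^*\colon\cat_\baseTopos\to\cat_{\baseTopos'}$ is computed levelwise on $\simplex$: the inclusion $\cat_{(-)}\subseteq\func(\simplex\op,(-))$ is natural in geometric morphisms, $r_*$ being restriction along $(r^*)\op$ is levelwise $\func(\simplex\op,r_*)$, and its left adjoint is therefore $\func(\simplex\op,r^*)$ restricted to complete Segal objects --- which is legitimate since the left-exact $r^*\colon\baseTopos\to\baseTopos'$ preserves the Segal and completeness conditions. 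Now $(\sC^I)_n\simeq\mapTopos_\baseTopos(\constant([n])\times\constant_I,\sC)\simeq\mapTopos_\baseTopos(\constant([n]\times I),\sC)$, using that $\constant$ is left exact. When $I$ is compact, $[n]\times I$ lies in $\cat^\omega$ --- because $[n]\times-$ preserves colimits, $\cat^\omega$ is generated from $\{[0],[1]\}$ under retracts and pushouts, and each $[k]$ is the spine pushout $[1]\amalg_{[0]}\cdots\amalg_{[0]}[1]$ --- so $[n]\times I$ is obtained from $[0]$ and $[1]$ by finitely many pushouts and retracts. Consequently $(\sC^I)_n$ is, functorially, a finite limit and retract of $\sC_0$ and $\sC_1$; applying the levelwise left-exact $r^*$ commutes with these finite limits and retracts, yielding $(r^*(\sC^I))_n\simeq((r^*\sC)^I)_n$ for every $n$, hence the first comparison map is an equivalence.

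The main obstacle is the bookkeeping in this last step: one must check that the simplex-wise equivalences assemble into the globally defined comparison map of the statement, and that the presentation of $(\sC^I)_n$ as a finite limit and retract of $\sC_0$ and $\sC_1$ is functorial enough to be compatible with $r^*$. For this it is cleanest to realize the decomposition of $[n]\times I$ as an explicit identity inside $\cat^\omega$ and transport the whole diagram through the left-exact functors $\constant$ and then $r^*$. Everything else is formal manipulation with adjunctions and the recalled cartesian closed structure.
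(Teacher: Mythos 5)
Your proposal is correct and follows essentially the same route as the paper: the second map is handled by the identical levelwise computation over $\baseTopos$, and the first by the same two ingredients — that $r^*\colon\cat_\baseTopos\to\cat_{\baseTopos'}$ is computed levelwise on complete Segal objects, and that the compact categories $[n]\times I$ are built from $[0]$ and $[1]$ by finite colimits and retracts, which the left-exact $r^*$ on the topos level respects. The only organizational difference is that you verify the comparison on every simplicial level, whereas the paper verifies it only on cores and on cores of the $[1]$-cotensor (using compactness of $[1]\times I$), which is the same idea with slightly less bookkeeping.
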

\begin{proof}
    For the first claim, recall that $r_* \colon \cat_{\baseTopos'} \to \cat_{\baseTopos}$ is given by restriction along $(r^*)\op$ in the sheaf model $\cat_{\baseTopos'} = \func^{\lim}({\baseTopos'}\op, \cat)$.
    Thus, for $\tau\in\baseTopos$ we have $r_* \funTopos_{\baseTopos'}(\constant_I, \D) (\tau) = \func(I, \D(\tau)) = \funTopos_{\baseTopos}(\constant_I, r_* \D)(\tau)$.

    Next we show that $r^* \funTopos_{\baseTopos}(\constant_I, \sC) \to \funTopos_{\baseTopos'}(\constant_I, r^* \sC)$ is an equivalence for compact $I$.
    Let us first argue the slightly weaker claim that the induced map
    \[ r^* \funTopos_{\baseTopos}(\constant_I, \sC)^{\simeq} \to \funTopos_{\baseTopos'}(\constant_I, r^* \sC)^{\simeq} \]
    is an equivalence in $\baseTopos$ for every compact $\infty$-category $I$.
    As both sides are stable under finite limits, and since compact $\infty$-categories are generated by $[0]$ and $[1]$ through retracts and finite colimits, it suffices to check this claim for $I = [n]$ for $n \ge 0$.
    In this case, it follows from the identification of $\funTopos_{\baseTopos'}(\constant_{[n]}, \sC)^\simeq$ with the value of $\sC$ at $[n]$, viewing $\baseTopos$-categories as complete Segal objects, together with the observation that $r^*\colon \cat_\baseTopos \rightarrow \cat_{\baseTopos'}$ is obtained by restricting the functor
    $\func(\simplex\op,\baseTopos) \rightarrow \func(\simplex\op,\baseTopos')$ given by postcomposition with $r^*$ to complete Segal objects.
    
    For the general statement, recall that a map $\X \to \Y$ of $\baseTopos$-categories is an equivalence if and only if the maps of $\baseTopos$-groupoids $\X^\simeq \to \Y^\simeq$ and $\funTopos(\constant_{[1]}, \X)^\simeq \to \funTopos(\constant_{[1]}, \Y)^\simeq$ are equivalences.
    From the core-part above we obtain the chain of equivalences
    \begin{align*}
        \funTopos_{\baseTopos'}(\constant_{[1]}, r^* \funTopos_{\baseTopos}(\constant_I, \sC))^\simeq
        &\simeq r^* \funTopos_{\baseTopos}(\constant_{[1]}, \funTopos_{\baseTopos'}(\constant_I, \sC))^\simeq \\
        &\simeq r^* \funTopos_{\baseTopos}(\constant_{[1] \times I}, \sC))^\simeq \\
        &\simeq \funTopos_{\baseTopos'}(\constant_{[1] \times I}, r^* \sC))^\simeq \\
        &\simeq \funTopos_{\baseTopos'}(\constant_{[1]}, \funTopos_{\baseTopos'}(\constant_I, r^* \sC))^\simeq,
    \end{align*}
    where we used that also $[1] \times I$ is a compact $\infty$-category.
\end{proof}

\begin{cor}\label{cor:cocartesian_preserved_under_basechange}
    Let $r^* \colon \baseTopos \rightleftharpoons \baseTopos' \cocolon r_*$ be a geometric morphism. 
    Then both functors $r_* \colon \func([1], \cat_{\baseTopos'}) \to \func([1], \cat_{\baseTopos})$ and $r^* \colon \func([1], \cat_{\baseTopos}) \to \func([1], \cat_{\baseTopos'})$ preserve cocartesian fibrations and cocartesian functors between those.
    Furthermore, the adjunction unit $\id \to r_* r^*$ and counit $r^*r_* \to \id$ evaluate to cocartesian functors between cocartesian fibrations.
\end{cor}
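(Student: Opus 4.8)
The plan is to reduce everything to the fact that the notion of a $\baseTopos$-cocartesian fibration, as recalled in \cref{rec:parametrised_cocartesian_fibrations}, is built entirely out of three kinds of data: finite limits in the relevant $\infty$-category of $\baseTopos$-categories (the comma construction $\slice{\E}{J}{J}$ is the pullback in \cref{diag:def_cocoart_fibration}), the cotensor $(-)^{[1]}$, and the existence of a fully faithful left adjoint $\lift_p$ to the canonically induced functor $\res_p \colon \E^{[1]} \to \slice{\E}{J}{J}$; the notion of a cocartesian functor replaces this last clause by vertical left adjointability of the square \cref{diag:defining_square_cocartesian_functor}. Writing $\rho$ for either $r^*$ or $r_*$, regarded as a functor on the appropriate $\func([1],\cat_{(-)})$, the whole corollary follows once one knows: $(a)$ $\rho$ preserves finite limits; $(b)$ $\rho$ commutes with $(-)^{[1]}$; and $(c)$ $\rho$ refines to an $(\infty,2)$-functor, and $r^* \dashv r_*$ together with its unit and counit are an adjunction and $2$-natural transformations of such.

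For $(a)$: $r_*$ is a right adjoint and hence preserves all limits, while $r^*$ is left exact by definition of a geometric morphism. For $(b)$: the first part of \cref{obs:base_change_cotensor} gives $r_* \funTopos_{\baseTopos'}(\constant_{[1]}, -) \simeq \funTopos_{\baseTopos}(\constant_{[1]}, r_* -)$, and since $[1]$ and $[1]\times[1]$ are compact $\infty$-categories the second part of \cref{obs:base_change_cotensor} gives the analogous equivalence for $r^*$. For $(c)$: $r_* \colon \cat_{\baseTopos'} \to \cat_{\baseTopos}$ is restriction along $(r^*)\op$ in the sheaf models $\cat_{\baseTopos}=\func^{\lim}(\baseTopos\op,\cat)$, hence is manifestly an $(\infty,2)$-functor, and the geometric morphism upgrades to an adjunction of $(\infty,2)$-functors, so $r^*$ is one as well. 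I expect $(c)$, and more precisely the $2$-naturality of the assignments $p \mapsto \slice{\E}{J}{J}$, $p \mapsto \res_p$, $p \mapsto \lift_p$ and their compatibility with $\rho$ and with the unit/counit, to be the main bookkeeping obstacle; for $r_*$ it is essentially formal, and for $r^*$ it is the point where one must genuinely invoke the $(\infty,2)$-categorical refinement of the geometric morphism.

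Granting $(a)$--$(c)$, the argument is mechanical. By $(a)$ and $(b)$, $\rho$ carries the pullback square \cref{diag:def_cocoart_fibration} for $p$ to the one for $\rho p$, yielding a natural equivalence $\rho(\slice{\E}{J}{J}) \simeq \slice{\rho\E}{\rho J}{\rho J}$ under which $\rho(\res_p) \simeq \res_{\rho p}$. If $p$ is a cocartesian fibration, applying the $(\infty,2)$-functor $\rho$ to the adjunction $\lift_p \dashv \res_p$ with fully faithful $\lift_p$ produces a fully faithful left adjoint of $\rho(\res_p) \simeq \res_{\rho p}$, so $\rho p$ is a cocartesian fibration. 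Since $(\infty,2)$-functors also preserve vertical left adjointability of squares, $\rho$ sends the defining square \cref{diag:defining_square_cocartesian_functor} of a cocartesian functor to the corresponding square for $\rho$ of the functor, so $\rho$ preserves cocartesian functors. Finally, for a cocartesian fibration $p \colon \E \to J$, the components at $\E$ and $J$ of the unit $\id \to r_* r^*$ (resp. counit $r^* r_* \to \id$) assemble into a commuting square over $J$ (resp. over $r^*r_* J$); by $2$-naturality of $\res_{(-)}$ and the triangle identities the associated $\res$-square is vertically left adjointable, so these squares are cocartesian functors between cocartesian fibrations, which is the last assertion.
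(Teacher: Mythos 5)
Your proposal is correct and follows essentially the same route as the paper: identify $r_*(\res_p)\simeq\res_{r_*p}$ and $r^*(\res_q)\simeq\res_{r^*q}$ via preservation of finite limits and of the cotensor $(-)^{[1]}$ (\cref{obs:base_change_cotensor}), use that the $(\infty,2)$-functoriality of $r^*$ and $r_*$ preserves right adjoints with fully faithful left adjoints and vertically left adjointable squares, and handle the unit/counit by the mate argument at the right adjoint morphism $\res_p$ via the triangle identities. The only difference is cosmetic: the paper isolates the last step as a general lemma about a 2-natural transformation evaluated at a right adjoint morphism, whereas you invoke the same ingredients directly.
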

\begin{proof}
Both functors $r^*$ and $r_*$ refine to $(\infty,2)$-functors, hence they both preserve the property of being a right adjoint with fully faithful left adjoint. 
The statement now follows from the following two identifications for functors $p \colon \D \to J$ in $\cat_{\baseTopos'}$ and $q \colon \E \to I$ in $\cat_{\baseTopos}$.
    \begin{align*}
        &r_*( \res_p \colon \D^{[1]} \rightarrow \slice{\D}{J}{J} ) \simeq (\res_{r_* p} \colon (r_*\D)^{[1]} \rightarrow  \slice{r_* \D}{r_* J}{r_* J}), \\
        &r^*(\res_q \colon \E^{[1]} \rightarrow \slice{\E}{I}{I}) \simeq (\res_{r^*q} \colon (r^* \E)^{[1]} \rightarrow \slice{r^* \E}{r^*I}{r^*I})
    \end{align*}
    This uses that both $r_*$ and $r^*$ preserve finite limits and cotensors by $[1]$ as shown in \cref{obs:base_change_cotensor}.

    The $(\infty,2)$-functors $r^*$ and $r_*$ also preserve vertically left adjointable squares.
    Applying this to the square \cref{diag:defining_square_cocartesian_functor} appearing in the definition of cocartesian functors, we see that $r^*$ and $r_*$ preserve those.

    Finally, let us argue that the unit and counit restrict to cocartesian functors between cocartesian fibrations.
    This follows from the general $(\infty,2)$-categorical fact that for a transformation $h \colon F \to G$ of $(\infty,2)$-functors $F, G \colon \mathsf{C} \to \mathsf{D}$ and a right adjoint morphism $r \colon x \to y$ in $\mathsf{C}$, the following commutative square is vertically left adjointable:
    \begin{equation*}
    \begin{tikzcd}[row sep=10pt]
        F(x) \ar[d, "F(r)"] \ar[r, "{h_x}"]
        & G(x) \ar[d, "G(r)"]
        \\
        F(y) \ar[r, "{h_y}"]
        & G(y).
    \end{tikzcd}
    \end{equation*}
    For this we have to show that the upper composite in the commutative diagram
    \begin{equation*}
    \begin{tikzcd}[row sep=10pt]
        G(l) h_y \ar[r, "F(\eta)"] \ar[dr, "G(\eta)"']
        & G(l) h_y F(r) F(l) \ar[r, "\simeq"] \ar[d, "\simeq"]
        & G(l) G(r) h_x F(l) \ar[r, "G(\epsilon)"]
        & h_x F(l)
        \\
        & G(l) G(r) G(l) h_y \ar[r, "G(\epsilon)"] \ar[ur, "\simeq"]
        & G(l) h_y \ar[ur, "\simeq"]
    \end{tikzcd}
    \end{equation*}
    is an equivalence, where $l \colon y \to x$ is left adjoint to $r$ and the left commuting triangle comes from the structure of $F$ being a 2-functor.
    But the bottom composite is an equivalence by the triangle identities, which proves the claim.
\end{proof}

For the proof of \cref{lem:omnibus_cofree} we need one further ingredient, which identifies left Kan extension along left fibrations with the cocartesian pushforward functor.
For this, given a cocartesian fibration $p \colon I \to J$ of $\baseTopos$-categories, denote by $p_\sharp$ the functor
\begin{equation*}
    p_\sharp \colon \cocartesianCategory^\baseTopos_{/I} \to  \cocartesianCategory^\baseTopos_{/J}, \quad (q \colon \D \to J) \mapsto (pq \colon \D \to I)
\end{equation*}
obtained by postcomposition with $p$.
In the case $I = *$, this recovers the unstraightening functor $\int_J \colon \cocartesianCategory^\baseTopos_{/J} \to \cat_{\baseTopos}$.
Also denote by $p^* \colon \cocartesianCategory^\baseTopos_{/J} \to \cocartesianCategory^\baseTopos_{/I}$ the pullback functor.
It corresponds to restriction along $p$ under the unstraightening equivalence $\cocartesianCategory^\baseTopos_{/I} \simeq \func_{\baseTopos}(I, \catTopos_{\baseTopos})$ and admits a left adjoint $p_!$ by left Kan extension.
The cocartesian functor
\begin{equation*}
\begin{tikzcd}[row sep=10pt]
    \D \times_{J} I \ar[d] \ar[r]
    & \D \ar[d, "q"]
    \\
    I \ar[r, "p"]
    & J.
\end{tikzcd}
\end{equation*}
for $q \in \cocartesianCategory^{\baseTopos}_{/J}$ induces a transformation $p_\sharp p^* \to \id$.
\begin{lem}\label{lem:left_kan_extension_left_fibration}
    If $p \colon I \to J$ is a left fibration of $\baseTopos$-categories, the transformation $p_\sharp p^* \to \id$ exhibits $p_\sharp$ as a left adjoint to $p^*$, and induces an equivalence $p_\sharp \simeq p_!$.
\end{lem}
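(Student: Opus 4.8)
The plan is to prove the two assertions at once by showing that $p_\sharp$ \emph{is} the left Kan extension functor $p_!$, in a way compatible with the structure transformation $p_\sharp p^* \to \id$. Since $p_!$ is by construction left adjoint to $p^*$, once we know that the given transformation $p_\sharp p^* \to \id$ corresponds to the counit of the adjunction $p_! \dashv p^*$, both claims follow immediately. Throughout, recall that under the straightening equivalence \cref{eq:straightening_cocart} the pullback functor $p^* \colon \cocartesianCategory^\baseTopos_{/J} \to \cocartesianCategory^\baseTopos_{/I}$ is identified with restriction along $p$, so that $p_!$ is computed by the parametrised pointwise formula of \cref{rec:pointwise_formula_kan_extension}: for a $\baseTopos$-cocartesian fibration $q' \colon \D' \to I$ with straightening $Q' \colon I \to \catTopos_\baseTopos$ and an object $j \in J(\tau)$, one has $(p_! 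Q')(j) \simeq \colim_{I_{/j}} Q' \circ \mathrm{pr}$, where $I_{/j} = \pi_\tau^* I \times_{\pi_\tau^* J} \pi_\tau^* J_{/j}$ and $\mathrm{pr}$ is the projection to $\pi_\tau^* I$ (this uses that $\catTopos_\baseTopos$ is cocomplete). On the other side, $p_\sharp q' = p q' \colon \D' \to J$, and its straightening has value at $j$ the fibre $(\D')_j = \D' \times_J \tau \simeq \D' \times_I I_j$, which is by definition the total $\baseTopos_{/\tau}$-category of the pulled-back fibration $q'|_{I_j} \colon \D' \times_I I_j \to I_j$ over the fibre $I_j$ of $p$ over $j$.

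The comparison rests on two structural inputs, and it is here that the hypothesis that $p$ is a \emph{left} fibration (rather than an arbitrary cocartesian fibration) is used. The first input is that the fibre inclusion $\iota \colon I_j \hookrightarrow I_{/j}$ is $\baseTopos_{/\tau}$-final: the base $J_{/j}$ has a $\baseTopos_{/\tau}$-terminal object, namely $\id_j$; the map $I_{/j} \to J_{/j}$ is a left fibration, being a pullback of $p$; and $I_j$ is its fibre over $\id_j$. Cocartesian transport along the canonical morphisms of $I_{/j}$ lying over the maps $x \to \id_j$ of $J_{/j}$ defines a $\baseTopos_{/\tau}$-functor $I_{/j} \to I_j$ which is left adjoint to $\iota$ (the unit being given by those canonical morphisms, the counit the identity), and a $\baseTopos$-functor admitting a left adjoint is $\baseTopos$-final. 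As $\iota$ followed by $\mathrm{pr}$ is the fibre inclusion $I_j \hookrightarrow \pi_\tau^* I$, finality yields $(p_! Q')(j) \simeq \colim_{I_j} Q'|_{I_j}$. The second input is that the total $\baseTopos$-category of a $\baseTopos$-cocartesian fibration over a $\baseTopos$-\emph{groupoid} computes the $\baseTopos$-colimit of its straightening; since $p$ is a left fibration, $I_j$ is a $\baseTopos_{/\tau}$-groupoid, so applying this to $q'|_{I_j}$ gives $(\D')_j \simeq \colim_{I_j} Q'|_{I_j}$. Comparing, the fibres over $J$ of $p_\sharp q'$ and of $p_! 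Q'$ agree. To promote this to an equivalence of functors, I would first produce the comparison $\baseTopos$-natural transformation $p_! \to p_\sharp$ out of the universal property of $p_!$ (the structure transformation $p_\sharp p^* \to \id$ gives, by adjunction, a map $p_! \to p_\sharp$), and then check it is an equivalence using the fibrewise computation above together with the fact that equivalences of $\baseTopos$-cocartesian fibrations over $J$ are detected on fibres.

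It remains to identify the transformation $\epsilon \colon p_\sharp p^* \to \id$ coming from the cocartesian functors $\D \times_J I \to \D$ with the counit of $p_! \dashv p^*$. Fibrewise over $j$, the source $p_\sharp p^* q$ has fibre the total space over $I_j$ of the constant fibration with value $Q(j)$, namely $\colim_{I_j} \constant_{Q(j)}$, and $\epsilon$ restricts to the canonical collapse map $\colim_{I_j} \constant_{Q(j)} \to Q(j)$; this is precisely the fibrewise description of the counit obtained from the pointwise formula above, so the two agree. Alternatively — and this route also uses left-fibrationness essentially — one may produce the unit $\eta_{q'} \colon \D' \to \D' \times_J I$ from the pair $(\id_{\D'}, q')$, verify that it is a cocartesian $\baseTopos$-functor over $I$ (here one uses that for a left fibration $p$ every morphism of $I$ is $p$-cocartesian, so that $q'$-cocartesian morphisms of $\D'$ remain $(pq')$-cocartesian), and check the triangle identities directly, which proves $p_\sharp \dashv p^*$ with $\epsilon$ as counit and then gives $p_\sharp \simeq p_!$ by uniqueness of left adjoints. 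I expect the main obstacle to be the second structural input — that $\baseTopos$-cocartesian fibrations over $\baseTopos$-groupoids present $\baseTopos$-colimits — together with the bookkeeping needed to upgrade the objectwise identification $p_\sharp q' \simeq p_! Q'$ to an equivalence of functors compatible with the transformations to the identity; the finality of the fibre inclusion and the counit identification are comparatively routine once that is in hand.
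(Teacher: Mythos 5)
Your proposal is correct in substance, but your primary route is genuinely different from the paper's. The paper does not compute anything fibrewise: it starts from the pre-existing slice adjunction $p_\sharp \colon \cat^{\baseTopos}_{/I} \rightleftharpoons \cat^{\baseTopos}_{/J} \cocolon p^*$ and merely checks that its unit and counit land in the nonfull subcategories of cocartesian fibrations, the counit having been handled before the statement and the unit $X \to p^*p_\sharp X$ using exactly the observation you make in your ``alternative'' route, namely that every morphism of $I$ is $p$-cocartesian since $p$ is a left fibration, so $q$-cocartesian morphisms stay $(pq)$-cocartesian; the identification $p_\sharp \simeq p_!$ then comes for free from uniqueness of left adjoints to $p^*$. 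So your second route is essentially the paper's proof (with triangle identities replaced by restriction of an ambient adjunction), while your first route trades this for the pointwise formula of \cref{rec:pointwise_formula_kan_extension}, $\baseTopos$-finality of the fibre inclusion $I_j \hookrightarrow I_{/j}$ (the left adjoint you build is the same one constructed in the proof of \cref{prop:proper_smooth_base_change}), and the fact that unstraightening over a $\baseTopos$-groupoid computes the $\baseTopos$-colimit. That last input is true (cocartesian lifts of equivalences are equivalences, so over a groupoid the localisation of the unstraightening at cocartesian edges is trivial, or one invokes the identification $\func_\baseTopos(\tau,\catTopos_\baseTopos)\simeq\cat_{\baseTopos_{/\tau}}\simeq(\cat_\baseTopos)_{/\tau}$), but it is extra machinery the paper's argument avoids; in exchange your route yields an explicit fibrewise description of $p_\sharp$ as a Kan extension. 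One small correction: adjointing $\epsilon\colon p_\sharp p^*\to\id$ over the known adjunction $p_!\dashv p^*$ produces a comparison map $p_\sharp\to p_!$ (via $p_\sharp\to p_\sharp p^*p_!\to p_!$), not $p_!\to p_\sharp$ as you wrote; to go the other way you should adjoint the evident unit $(\id_{\D'},q')\colon \D'\to\D'\times_J I$ instead. Either direction can then be checked to be an equivalence fibrewise, so this is a slip of bookkeeping rather than a gap.
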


\begin{proof}
    First of all, note that we have an adjunction $p_{\sharp}\colon \cat^{\baseTopos}_{/I}\rightleftharpoons \cat^{\baseTopos}_{/J} \cocolon p^*$ as a general fact about slice $\infty$-categories. We would like to argue that this adjunction restricts to the nonfull subcategories of cocartesian fibrations. In other words, we have commuting squares
    \[
    \begin{tikzcd}[row sep=10pt]
        \cocartesianCategory^{\baseTopos}_{/I} \rar["p_{\sharp}", shift left = 1]\dar[hook]& \cocartesianCategory^{\baseTopos}_{/J}\lar["p^*", shift left = 1]\dar[hook]\\
        \cat^{\baseTopos}_{/I} \rar["p_{\sharp}", shift left = 1]& \cat^{\baseTopos}_{/J}, \lar["p^*", shift left = 1]
    \end{tikzcd}
    \] and to argue that the bottom adjunction restricts to one on the top, we just have to argue that the adjunction (co)units restrict to the nonfull subcategories. That is, for $X\coloneqq (q \colon \X\rightarrow I)\in\cocartesianCategory^{\baseTopos}_{/I}$ and $Y\coloneqq (\Y\rightarrow J)\in\cocartesianCategory^{\baseTopos}_{/J}$, the morphisms $X\rightarrow p^*p_{\sharp}X$ and $p_{\sharp}p^*Y\rightarrow Y$ in $\cat^{\baseTopos}_{/I}$ and $\cat^{\baseTopos}_{/J}$ are in fact morphisms in $\cocartesianCategory^{\baseTopos}_{/I}$ and $\cocartesianCategory^{\baseTopos}_{/J}$, respectively. 
    For $p_{\sharp}p^*Y\rightarrow Y$ we explained this before the statement of the lemma.
    For the map $X\rightarrow p^*p_{\sharp}X$, consider the pullback square defining $p^*p_{\sharp}X$:
\[
\begin{tikzcd}[row sep=10pt]
\X \times_J I\ar[dr,phantom,"\lrcorner"very near start] \ar[rr] \ar[d]
& & I \ar[d, "p"]\\
\X \ar[r, "q"]& I \ar[r, "p"] &J.
\end{tikzcd}
\]
Note that a morphism $(f, g)$ in $\X \times_J I$ is $p^*p_{\sharp}X$-cocartesian if and only if the morphism $f$ in $\X$ is $qp$-cocartesian and the morphism $g$ in $I$ is $p$-cocartesian.
As $p$ is a left fibration, any morphism in $I$ is $p$-cocartesian.
This shows that $g$ is $qp$-cocartesian if and only if it is $q$-cartesian, from which we see that the map $X\rightarrow p^*p_{\sharp}X$ preserves cocartesian morphisms as claimed.
\end{proof}

\begin{proof}[Proof of \cref{lem:omnibus_cofree}.]
    The proof consists of multiple reduction steps, the first three showing the existence of levelwise adjoints and the last showing that Beck--Chevalley conditions hold for them to assemble to an adjoint as $\baseTopos$-functor according to \cite[Prop. 3.2.9.]{MartiniWolf2024Colimits}:
    
    \textit{1. The presheaf $\infty$-topos case:}
    Let us first argue that the right adjoint exists after applying global sections in the case $\baseTopos = \presheaf(\B)$.
    Under the equivalence $\cat_\baseTopos = \func(\B\op, \cat)$ and $\func_\baseTopos(J, \catTopos_\baseTopos) = \func((\int_\B J)\op, \cat)$, the unstraightening functor identifies with the cocartesian pushforward 
    \begin{equation*}
        p_\sharp \colon \cocartesianCategory_{/(\int_\B J)\op} \to \cocartesianCategory_{/\B\op}
    \end{equation*}
    where $p \colon \int_\B J \to \B$ is the cartesian fibration classifying $J \in \func(\B\op, \cat)$.
    This admits a right adjoint by \cref{lem:cofree_unparametrised}.

    \textit{2. Reduction to the presheaf $\infty$-topos case:}
    Next, let us argue that the right adjoint exists on global sections for general $\baseTopos$.
    For this, we can write $\baseTopos$ as a left exact accessible localisation of a presheaf $\infty$-topos and obtain a geometric morphism $r^* \colon \presheaf(\B) \rightleftharpoons \baseTopos \cocolon r_*$ for a $\infty$-category $\B$ such that the right adjoint $r_*$ is fully faithful.
    We claim that the adjunction
    \begin{equation*}
    \begin{tikzcd}
        (\cat_{\presheaf(\B)})_{/r_* J} \ar[r, "r^*", shift left=1.5]
        &  (\cat_\baseTopos)_{/J} \ar[l, hook, "r_*", shift left=1.5]
    \end{tikzcd}
    \end{equation*}
    restricts to the following adjunction of nonfull subcategories
    \begin{equation*}
    \begin{tikzcd}
        \cocartesianCategory^{\presheaf(\B)}_{/r_* J} \ar[r, "r^*", shift left=1.5]
        & \cocartesianCategory^{\baseTopos}_{/J}. \ar[l, hook, "r_*", shift left=1.5]
    \end{tikzcd}
    \end{equation*}
    This follows from \cref{cor:cocartesian_preserved_under_basechange}, where we showed that both $r^*$ and $r_*$ restrict to functors between those subcategories and the adjunction unit and counit restrict to those subcategories as well.
    Now consider the diagram
    \begin{equation*}
    \begin{tikzcd}
        \cocartesianCategory^{\presheaf(\B)}_{/r_* J} \ar[r, "r^*", shift left=1.5]\ar[d, "\int_{r_* J}"]
        & \cocartesianCategory^{\baseTopos}_{/J} \ar[l, hook, "r_*", shift left=1.5] \ar[d, "\int_J"]
        \\
        \cat_{\presheaf(\B)}\ar[r, "r^*", shift left=1.5]
        & \cat_{\baseTopos}, \ar[l, hook, "r_*", shift left=1.5] 
    \end{tikzcd}
    \end{equation*}
    which by construction commutes with both leftwards and both rightwards pointing arrows.
    This formally implies that the right vertical arrow preserves colimits as the left vertical arrow does so:
    For a diagram $F \colon L \to \cocartesianCategory_{J}^{\baseTopos}$ one has $\colim_L F \simeq r^* \colim_L r_* F$, which gives
    \begin{equation*}
        \colim_L \int_J F \simeq r^* \colim_L r_* \int_J F \simeq \int_J r^* \colim_L r_* F \simeq \int_J \colim_L F.
    \end{equation*}
    Again using the adjoint functor theorem, we see that $\int_J$ admits a right adjoint.

    \textit{3. Existence of levelwise adjoints:}
    The previous part of the proof showed that the functor $\int_J \colon \cocartesianCategory^\baseTopos_{/J} \rightarrow \cat_\baseTopos$ preserves colimits.
    So, the proof of \cref{lem:cofree_unparametrised} applies to show that for any cocartesian fibration $p \colon I \to J$ of $\baseTopos$-categories, the functor
    \begin{equation*}
        p_\sharp \colon \cocartesianCategory^\baseTopos_{/I} \to  \cocartesianCategory^\baseTopos_{/J}
    \end{equation*}
    obtained by postcomposition with $p$ admits a right adjoint.
    Under the identification $\cocartesianCategory^\baseTopos_{/ J \times \tau} = \cocartesianCatTopos^\baseTopos_{/J}(\tau)$ for an object $\tau \in \baseTopos$, the functor $\int \colon \cocartesianCatTopos^{\baseTopos}_{/J} \to \catTopos_{\baseTopos}$ at level $\tau$ identifies with $p_\sharp \colon \cocartesianCategory^\baseTopos_{/J \times \tau} \to  \cocartesianCategory^\baseTopos_{/\tau}$ for the projection $p \colon J \times \tau \to \tau$. 
    This proves that the $\baseTopos$-functor $\int_J \colon \cocartesianCatTopos^\baseTopos_{/J} \rightarrow \catTopos_\baseTopos$ admits levelwise right adjoints.

    \textit{4. Beck--Chevalley condition for levelwise adjoints:}
    Finally, consider a map $f \colon \tau' \to \tau$ in $\baseTopos$.
    For the levelwise adjoints to $\int_J$ to assemble to a $\baseTopos$-adjoint, we have to show that for any such $f$ the following commutative diagram with downwards pointing arrows is horizontally right adjointable.
    \begin{equation}\label{diag:base_change_cocartesian_pushforward}
    \begin{tikzcd}
        \cocartesianCategory^\baseTopos_{/J \times \tau'} \ar[r, "J_\sharp"] \ar[d, "(J \times f)^*",  shift left=1.5]
        &
        \cocartesianCategory^\baseTopos_{/\tau'} \ar[d, "f^*", shift left=1.5] 
        \\
        \cocartesianCategory^\baseTopos_{/J \times \tau} \ar[r, "J_\sharp"] \ar[u, "(J \times f)_\sharp" , shift left=1.5]
        &
        \cocartesianCategory^\baseTopos_{/\tau} \ar[u, "f_\sharp" , shift left=1.5]
    \end{tikzcd}
    \end{equation}
    Equivalently, we have to show that it is vertically left adjointable.
    Applying \cref{lem:left_kan_extension_left_fibration} to the left fibrations $J \times f$ and $f$, the lax commutative square obtained by passing to vertical left adjoints is given by the commutative diagram with upwards pointing arrows, which proves the claim.
\end{proof}

\begin{prop}
\label{prop:cofree_of_presentable}
    The functor $\cofree_{\B\op}\colon \widehat{\cat}\rightarrow \widehat{\cat}_{\presheaf(\B)}\simeq \func(\B\op,\widehat{\cat})$ refines to a symmetric monoidal functor between cartesian symmetric monoidal $\infty$-categories, and it restricts to a lax symmetric monoidal functor $\cofree_{\B\op}\colon \presentable^L\rightarrow \presentable^L_{\presheaf(\B)}$.
\end{prop}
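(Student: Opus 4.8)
The plan is to exhibit $\cofree_{\B\op}$ as a right adjoint whose corresponding left adjoint is symmetric monoidal, and then to invoke the standard fact that the right adjoint to a symmetric monoidal functor is canonically lax symmetric monoidal (cf. \cite[Cor.~7.3.2.7]{lurieHA}). First I would recall from \cref{lem:omnibus_cofree}, applied to the base $\infty$-topos $\baseTopos=\spc$ and the $\spc$-category $\B\op\in\cat=\cat_\spc$, that $\cofree_{\B\op}\colon\widehat{\cat}\to\func(\B\op,\widehat{\cat})\simeq\cocartesianCategory_{/\B}$ is right adjoint to the unstraightening functor $\int=\int_{\B\op}\colon\cocartesianCategory_{/\B}\simeq\func(\B\op,\widehat\cat)\to\widehat\cat$ sending a cocartesian fibration over $\B\op$ to its total $\infty$-category. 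Here $\int_{\B\op}$ sends $X\colon\B\op\to\widehat\cat$ to $\int_{\B\op}X=\colim_{\B\op}X$ (the total category of the cocartesian fibration classified by $X$ is the Grothendieck construction, which computes this colimit).

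The key input is then that the unstraightening functor $\int_{\B\op}\colon\func(\B\op,\widehat\cat)\to\widehat\cat$ is symmetric monoidal when both sides carry their cartesian symmetric monoidal structures. On the source, the cartesian product is computed pointwise; on the target it is the cartesian product of $\infty$-categories. That $\int_{\B\op}$ preserves finite products follows from the monoidal Grothendieck correspondence of \cite{ramziMonoidalGrothendieck}, or more elementarily from the fact that $\int_{\B\op}$ preserves colimits (as used in the proof of \cref{lem:omnibus_cofree}, via \cite[Prop.~A.1]{ramziMonoidalGrothendieck}) and preserves finite products of representable-type diagrams, together with the observation that the cartesian product in $\func(\B\op,\widehat\cat)$ commutes with colimits in each variable. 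Since a functor between cartesian symmetric monoidal $\infty$-categories that preserves finite products admits an essentially unique symmetric monoidal refinement, $\int_{\B\op}$ is symmetric monoidal. Therefore its right adjoint $\cofree_{\B\op}$ acquires a canonical lax symmetric monoidal structure; but since the symmetric monoidal structures are cartesian, a lax symmetric monoidal functor is the same data as an ordinary functor (the lax structure maps $\cofree_{\B\op}(\sC)\times\cofree_{\B\op}(\D)\to\cofree_{\B\op}(\sC\times\D)$ are forced, and the coherence is automatic for cartesian structures). Hence $\cofree_{\B\op}$ is in fact \emph{strong} symmetric monoidal between the cartesian symmetric monoidal $\infty$-categories $\widehat\cat$ and $\widehat\cat_{\presheaf(\B)}$: this is because its left adjoint being symmetric monoidal and both sides being cartesian means $\cofree_{\B\op}$ automatically preserves products, and a product-preserving functor between cartesian symmetric monoidal categories is canonically symmetric monoidal. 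I would spell this out carefully, as conflating "lax between cartesian structures" with "strong" requires the remark that in the cartesian case \emph{every} functor is lax symmetric monoidal in a unique way and it is strong precisely when it preserves products.

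For the second assertion, I would restrict attention to the presentable setting. Recall that $\presentable^L\subseteq\widehat\cat$ and $\presentable^L_{\presheaf(\B)}\subseteq\widehat\cat_{\presheaf(\B)}$ are (non-full) subcategories, and that the cartesian product of presentable $\infty$-categories (resp.\ presentable $\presheaf(\B)$-categories) is again presentable, so these subcategories inherit the cartesian symmetric monoidal structure. It then suffices to check two things: that $\cofree_{\B\op}$ sends presentable $\infty$-categories to presentable $\presheaf(\B)$-categories, and that the (automatic, from the cartesian case) lax structure maps are morphisms in $\presentable^L_{\presheaf(\B)}$, i.e.\ are left adjoint $\presheaf(\B)$-functors. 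The first point: for $\sC\in\presentable^L$, $\cofree_{\B\op}\sC$ is the functor $\B\op\to\widehat\cat$ whose value at $b$ is $\func((\B_{/b})\op,\sC)\simeq\sC^{(\B_{/b})\op}$, which is presentable since $\sC$ is and $(\B_{/b})\op$ is small; and one must verify conditions (1) and (2) in the definition of presentable $\baseTopos$-category from \cref{sec:preliminaries} — condition (1) (existence of left adjoints $f_!$ along $f\colon b\to b'$) holds because restriction along $(\B_{/b})\op\to(\B_{/b'})\op$ between presheaf categories admits a left adjoint by left Kan extension, and condition (2) (Beck--Chevalley for cartesian squares in $\presheaf(\B)$) is the standard basechange for left Kan extension along these slice inclusions. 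The main obstacle I anticipate is precisely this verification that $\cofree_{\B\op}$ lands in $\presentable^L_{\presheaf(\B)}$ and that the structure maps are $\presheaf(\B)$-left-adjoint: one cannot just cite formal nonsense, since the subcategory inclusion $\presentable^L\hookrightarrow\widehat\cat$ is not monoidal in the naive sense and the cartesian product in $\presentable^L$ must be checked to agree with that in $\widehat\cat$ (it does, as the product of presentables is again presentable and computed the same way). Once presentability is established, laxness of $\cofree_{\B\op}\colon\presentable^L\to\presentable^L_{\presheaf(\B)}$ is inherited by restricting the (strong) symmetric monoidal structure on $\cofree_{\B\op}\colon\widehat\cat\to\widehat\cat_{\presheaf(\B)}$ along the subcategory inclusions, noting that the inclusions $\presentable^L\hookrightarrow\widehat\cat$ and $\presentable^L_{\presheaf(\B)}\hookrightarrow\widehat\cat_{\presheaf(\B)}$ are lax symmetric monoidal (indeed product-preserving), so the composite and the factorization give the desired lax structure; I would invoke \'etale locality and \cref{rmk:compatibility_of_cofree_with_etale_basechange} to reduce the Beck--Chevalley checks to the levelwise statements.
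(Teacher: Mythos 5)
There is a genuine gap in the second half of your proposal: you treat the symmetric monoidal structures on $\presentable^L$ and $\presentable^L_{\presheaf(\B)}$ as the cartesian ones inherited from $\widehat{\cat}$ and $\widehat{\cat}_{\presheaf(\B)}$, but the structure the statement (and everything downstream) refers to is the Lurie tensor product, respectively its parametrised analogue from Martini--Wolf. This matters: the whole point of the proposition is that for $\sC\in\calg(\presentable^L)$, i.e.\ a \emph{presentably symmetric monoidal} category, the parametrised category $\cofree_{\B\op}\sC$ becomes an object of $\calg(\presentable^L_{\presheaf(\B)})$ for the parametrised Lurie tensor product, so that one can form module categories and do Morita theory over it in \cref{sec:fibredpd}; with the cartesian structures the claim is essentially content-free (every object is canonically a commutative algebra). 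For the Lurie-type structure, lax monoidality is not inherited by ``restricting along the inclusions'': since that structure is built out of multilinear functors, what one actually has to prove is that $\cofree_{\B\op}$ sends a functor $F\colon\sC_1\times\dots\times\sC_n\to\D$ that preserves colimits in each variable to a $\presheaf(\B)$-functor preserving \emph{parametrised} colimits in each variable. Fibrewise colimits are easy, but the groupoidal (i.e.\ $\B$-indexed) colimits require a basechange/projection-formula check along right fibrations over $\B$, using that $F$ is linear in each variable. This multilinearity-preservation argument is the core of the paper's proof and is entirely absent from your proposal.

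There is also a flaw in your first half: the unstraightening $\int_{\B\op}\colon\func(\B\op,\widehat{\cat})\to\widehat{\cat}$ is \emph{not} symmetric monoidal for the cartesian structures — it sends the terminal functor to $\B\op$, and $\int(X\times Y)\simeq\int X\times_{\B\op}\int Y$ rather than $\int X\times\int Y$ — so you cannot obtain the lax structure on $\cofree_{\B\op}$ as the right adjoint of a monoidal left adjoint. The correct (and simpler) argument, which is the paper's, is that $\cofree_{\B\op}$ preserves products simply because it is a right adjoint, and a product-preserving functor between cartesian symmetric monoidal $\infty$-categories is canonically symmetric monoidal; your final sentence of that paragraph gestures at this, but the route through monoidality of $\int$ is invalid. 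Your presentability check for $\cofree_{\B\op}\sC$ is in the right spirit, though you verify the two conditions only at representables $b\in\B$, whereas the definition requires them for all objects of $\presheaf(\B)$ (all right fibrations over $\B$), which is how the paper phrases the check; that extension is routine but should be addressed.
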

\begin{proof}
Since $\cofree_{\B\op}\colon \widehat{\cat}\rightarrow \func(\B\op,\widehat{\cat})$ is a right adjoint, it preserves products and so  promotes to a symmetric monoidal functor as claimed. We next argue that $\cofree_{\B\op}$ restricts to a functor $\presentable^L\rightarrow \presentable^L_{\presheaf(\B)}$. We first argue on the level of objects: let $\sC\in\presentable^L$; we need to check that 
$\cofree_{\B\op}\sC$ is presentable, according to the criteria stated in \cref{subsec:recollections}.
First, note that $\presheaf(\B)$-groupoids $\paramFibred{X}$ correspond to right fibrations $\X \rightarrow \B$, and that $\cofree_{\B\op}(\sC)(\paramFibred{X}) \simeq \presheaf(\X;\sC)$ (see \cref{obs:cofree_coeff_cats}) which is presentable. Given a morphism $f \colon \paramFibred{X} \rightarrow \paramFibred{Y}$, corresponding to a map $f \colon \X \rightarrow \Y$ of right fibrations over $\B$, the map $f^* \colon \presheaf(\paramFibred{Y};\cofree_{\B\op}(\sC)) \rightarrow \presheaf(\paramFibred{X};\cofree_{\B\op}(\sC))$ identifies with $f^* \colon \presheaf(\Y;\sC) \rightarrow \presheaf(\X;\sC)$, which indeed admits a left adjoint. 

To verify the second requirement for presentability, 
let $p\colon \X\rightarrow \Y$, $q\colon \Z\rightarrow \Y$ be maps of right fibrations over $\B$. In particular, $p$ and $q$ are themselves right fibrations. Since right fibrations are stable under compositions, the inclusion $\rfib(\B)\subset \cat_{/\B}$ preserves pullbacks. Consider the leftmost pullback in the following:
\[
\begin{tikzcd}[row sep=10pt]
\W \rar["\bar{q}"]\dar["\bar{p}"'] \ar[dr,phantom,"\lrcorner"very near start] & \X\dar["p"]\\
\Z\rar["q"] & \Y;
\end{tikzcd}
\hspace{1cm}
\begin{tikzcd}[row sep=10pt]
\presheaf(\W,\sC) \dar["\bar{p}_!"'] \ar[dr,phantom,"\Rightarrow"] & \presheaf(\X,\sC)\dar["p_!"]\lar["\bar{q}^*"']\\
\presheaf(\Z,\sC) & \presheaf(\Y,\sC);\lar["q^*"'] 
\end{tikzcd}
\hspace{1cm}
\begin{tikzcd}[row sep=10pt]
\presheaf(\X,\sC)\dar["p_!"']\rar["{\presheaf(\X,F)}"]\ar[dr,phantom,"\Rightarrow"]  & \presheaf(\X,\D)\dar["p_!"]\\
\presheaf(\Y,\sC)\rar["{\presheaf(\Y,F)}"] & \presheaf(\Y,\D).
\end{tikzcd}
\]
Since $p$ is a right fibration, by proper basechange the Beck--Chevalley lax square in the middle commutes. This proves $\cofree_{\B\op}\sC\in\presentable^L_{\presheaf(\B)}$; in particular $\cofree_{\B\op}$ restricts to a functor between full subcategories $\presentable\to\presentable_{\presheaf(\B)}$.

On the level of morphisms, if $F\colon \sC\rightarrow \D$ is a morphism in $\presentable^L$, then $F\colon \cofree_{\B\op}\sC\rightarrow\cofree_{\B\op}\D$ is fibrewise colimit preserving and for any right fibration $p\colon \X\rightarrow \Y$, the rightmost Beck--Chevalley lax square above commutes since $\presheaf(\X,G)p^*\simeq p^*\presheaf(\Y,G)$, where $G\colon\D\to\sC$ is the right adjoint of $F$. This concludes the proof that $\cofree_{\B\op}$ restricts to a functor $\presentable^L\to\presentable^L_{\presheaf(\B)}$.

Finally, to see that $\cofree_{\B\op}\colon\presentable^L\to\presentable^L_{\presheaf(\B)}$ refines to a lax symmetric monoidal functor, we need to check that the restricted functor $\cofree_{\B\op}\colon\presentable\to\presentable_{\presheaf(\B)}$ between full subcategories of $\widehat{\cat}$ and $\widehat{\cat}_{\presheaf(\B)}$ sends multilinear functors to parametrised multilinear functors.
That is, if $F\colon\sC\coloneqq \prod_{i=1}^n\sC_i \rightarrow \D$ is a functor in $\presentable$ which is colimit preserving in each variable, then $\cofree_{\B\op}F\colon \cofree_{\B\op}\sC\simeq\prod^n_{i=1}\cofree_{\B\op}\sC_i\rightarrow\cofree_{\B\op}\D$ preserves $\B$-parametrised colimits in each variable. It follows from \cref{obs:cofree_coeff_cats} 
that $\cofree_{\B\op}F$ at level $\paramFibred{X}$ identifies with $\presheaf(\paramFibred{X}, \prod_i \sC_i) \xrightarrow{F} \presheaf(\paramFibred{X}, \D)$ and thus preserves fibrewise colimits in each variable.
To see that it also preserves $\B$-groupoidal colimits in each variable, it suffices to argue in the case $n=2$. Let $p\colon \X\rightarrow \Y$ be a map of right fibrations over $\B$ and $\xi\in \presheaf(\Y,\sC_2)=(\cofree_{\B\op}\sC_2)(\Y)$. We need to show that the following lax square on the left commutes:
\[
\begin{tikzcd}[row sep=10pt]
\presheaf(\X;\sC_1) \rar["{F(-,p^*\xi)}"]\dar["p_!"'] \ar[dr, phantom , "\Leftarrow"]& \presheaf(\X;\D)\dar["p_!"]\\
\presheaf(\Y;\sC_1) \rar["{F(-,\xi)}"] & \presheaf(\Y;\D);
\end{tikzcd}
\hspace{2cm}
\begin{tikzcd}[row sep=10pt]
\W\rar["\bar{y}"] \dar["\W"'] \ar[dr,phantom,"\lrcorner"very near start]& \X\dar["p"]\\ 
\ast\rar["y"] & \Y.
\end{tikzcd}
\]
This may be checked pointwise on each $y\in \Y$, so consider the pullback square on the right. Since $p$ was a right fibration, basechange gives us that $y^*p_!F(-,p^*\xi)\simeq \W_!\bar{y}^*F(-,p^*\xi)\simeq \W_!F(\bar{y}^*(-),\W^*y^*\xi)$ and $y^*F(p_!(-),\xi)\simeq F(y^*p_!(-),y^*\xi)\simeq F(\W_!\bar{y}^*-,y^*\xi)$. But then the Beck--Chevalley map $\W_!F(-,\W^*y^*\xi)\rightarrow F(\W_!-,y^*\xi)\colon \presheaf(\W;\sC)\rightarrow \D$ is an equivalence since $F\colon\sC_1\times\sC_2\rightarrow\D$ preserves colimits in the first variable, and so we reach the desired conclusion. 

Therefore, because the restricted functor $\cofree_{\B\op}\colon\presentable^L\to\presentable^L_{\presheaf(B)}$ preserves multilinearity as shown above, the definition of the symmetric monoidal structure on $\presentable^L_{\presheaf(\B)}$ \cite[Section 2.6]{MartiniWolf2022Presentable}
yields that $\cofree_{\B\op}\colon\presentable^L\rightarrow\presentable^L_{\presheaf(\B)}$ refines to a lax symmetric monoidal functor, as wanted.
\end{proof}

\section{Localisation of \texorpdfstring{$\baseTopos$}{T}-categories}
In this subsection we review the theory of localisations of $\baseTopos$-categories, by which we shall mean the direct generalisation of Dwyer-Kan localisations to the parametrised setting. 
Recall that a wide $\baseTopos$-subcategory of $\X \in \cat_\baseTopos$ is a $\baseTopos$-functor $\W \rightarrow \X$ such that for each $\tau\in\baseTopos$, the functor $\W(\tau) \rightarrow \X(\tau)$ is a wide subcategory inclusion of $\infty$-categories.
\begin{defn}
\label{defn:relcat}
We denote by $\relcat_{\baseTopos}$ the full subcategory of $\func([1],\cat_{\baseTopos})$ spanned by inclusions of wide $\baseTopos$-subcategories. An object $(\D,\W)\in\relcat_{\baseTopos}$ is called a \textit{relative $\baseTopos$-category}.
The fully faithful inclusion functor $\cat_{\baseTopos}\to\relcat_{\baseTopos}$, sending $\D\mapsto(\D,\D^\simeq)$, admits a left adjoint $\scL\colon\relcat_{\baseTopos}\to\cat_{\baseTopos}$, which we call the localisation functor.

We can assemble the $\infty$-categories $\relcat_{\baseTopos_{/\tau}}$ for varying $\tau\in\baseTopos$ into a large $\baseTopos$-category $\relCatTopos_\baseTopos\in\widehat{\cat}_\baseTopos$; we can moreover assemble the inclusion functors $\cat_{\baseTopos_{/\tau}}\to\relcat_{\baseTopos_{/\tau}}$ into a fully faithful $\baseTopos$-functor $\catTopos_\baseTopos\to\relCatTopos_\baseTopos$, whose left adjoint $\baseTopos$-functor shall be denoted $\scL\colon\relCatTopos_\baseTopos\to\catTopos_\baseTopos$.
\end{defn}
For $(\D,\W)\in\relcat_{\baseTopos}$, the $\baseTopos$-category $\scL(\D,\W)$ is the initial example of a $\baseTopos$-category receiving a $\baseTopos$-functor from $\D$ along which ``morphisms in $\W$ are inverted'', i.e. such that the composite $\baseTopos$-functor $\W\to\D\to \scL(\D,\W)$ factors through $\scL(\D,\W)^\simeq$.
\begin{obs}
\label{obs:localisation_op}
We may consider $(-)\op$ as an involution on both $\cat_{\baseTopos}$ and $\relcat_{\baseTopos}$, and the inclusion $\cat_{\baseTopos}\to\relcat_{\baseTopos}$ is $C_2$-equivariant; similarly, the $\baseTopos$-inclusion $\catTopos_\baseTopos\to\relCatTopos_\baseTopos$ is $C_2$-equivariant. Passing to the left adjoint and $\baseTopos$-left adjoint, respectively, we obtain that also $\scL$ is $C_2$-equivarint. In particular, for $(\D,\W)\in\relcat_\baseTopos$ we have $\scL(\D,\W)\op\simeq \scL(\D\op,\W\op)$.
\end{obs}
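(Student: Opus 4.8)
The final statement to prove is \cref{obs:localisation_op}: that the opposite-category involution is compatible with the localisation functor, both on the level of ordinary $\infty$-categories and on the level of $\baseTopos$-categories.

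The plan is to exploit the purely formal nature of the claim. The key observation is that the involution $(-)\op$ on $\func([1],\cat_\baseTopos)$ (applied objectwise, i.e. postcomposing with $(-)\op\colon\cat_\baseTopos\to\cat_\baseTopos$) preserves the full subcategory $\relcat_\baseTopos$, since $\W(\tau)\to\X(\tau)$ being a wide subcategory inclusion is stable under taking opposites; similarly $(-)\op$ restricts to an involution on $\cat_\baseTopos$. First I would make precise that the fully faithful inclusion $\iota\colon\cat_\baseTopos\to\relcat_\baseTopos$, $\D\mapsto(\D,\D^\simeq)$, is $C_2$-equivariant: this amounts to the natural equivalence $(\D^\simeq)\op\simeq(\D\op)^\simeq$, which holds because the core functor $(-)^\simeq$ commutes with $(-)\op$ (both are characterised by universal properties preserved by the involution, or one simply notes $(-)^\simeq$ is right adjoint to the inclusion $\spc_\baseTopos\hookrightarrow\cat_\baseTopos$ which is $C_2$-equivariant, with trivial action on $\spc_\baseTopos$). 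Then, since $\scL$ is \emph{defined} as the left adjoint to $\iota$, and $\iota$ is $C_2$-equivariant, the left adjoint is automatically $C_2$-equivariant by uniqueness of adjoints: more precisely, $(-)\op\circ\scL$ and $\scL\circ(-)\op$ are both left adjoint to $(-)\op\circ\iota\simeq\iota\circ(-)\op$, hence canonically equivalent. Evaluating this equivalence of functors at an object $(\D,\W)\in\relcat_\baseTopos$ yields $\scL(\D,\W)\op\simeq\scL(\D\op,\W\op)$.

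For the parametrised statement, I would argue in the same way but one categorical level up, using that the $\baseTopos$-functor $\catTopos_\baseTopos\to\relCatTopos_\baseTopos$ is $C_2$-equivariant — this follows by applying the unparametrised argument at each level $\baseTopos_{/\tau}$ and checking naturality in $\tau$, i.e. the core and opposite constructions are compatible with étale basechange (which is the general principle recalled in \cref{subsec:recollections}). Passing to the $\baseTopos$-left adjoint $\scL\colon\relCatTopos_\baseTopos\to\catTopos_\baseTopos$, the same uniqueness-of-adjoints argument, now internal to the $(\infty,2)$-category $\widehat{\cat}_\baseTopos$, gives that $\scL$ is $C_2$-equivariant.

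I do not anticipate a serious obstacle here; the statement is formal and the proof is short. The one point requiring mild care is spelling out what ``$C_2$-equivariant'' means for a functor between $\infty$-categories equipped with involutions and why equivariance of a functor is inherited by its adjoint — this is a standard fact (an involution is a $2$-periodic automorphism, and passing to adjoints is functorial, so it carries the $B C_2$-action on the functor to a $B C_2$-action on the adjoint), but since the excerpt does not develop equivariant category theory explicitly, the cleanest exposition is probably to avoid the word ``equivariant'' in the proof itself and instead just exhibit directly the chain of equivalences: $(-)\op\circ\iota\simeq\iota\circ(-)\op$, hence $\scL\circ(-)\op$ and $(-)\op\circ\scL$ are both left adjoint to the same functor, hence equivalent, hence the desired formula on objects. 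For the $\baseTopos$-version one replaces ``left adjoint'' by ``$\baseTopos$-left adjoint'' throughout, which is legitimate by the characterisation of $\baseTopos$-adjunctions recalled in \cref{subsec:recollections}.
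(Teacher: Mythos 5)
Your proposal is correct and follows essentially the same route as the paper, which justifies the observation exactly by noting that the inclusion $\cat_{\baseTopos}\to\relcat_{\baseTopos}$ (resp.\ $\catTopos_\baseTopos\to\relCatTopos_\baseTopos$) is $C_2$-equivariant and then passing to (parametrised) left adjoints, using uniqueness of adjoints. Your additional remarks — that wide subcategory inclusions are stable under $(-)\op$, that $(\D^\simeq)\op\simeq(\D\op)^\simeq$, and that for the stated conclusion it suffices to exhibit the single equivalence $\scL\circ(-)\op\simeq(-)\op\circ\scL$ of functors rather than a fully coherent $C_2$-equivariance — merely make explicit what the paper leaves implicit.
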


In the next lemma we shall use that a $\baseTopos$-functor $f \colon I \to \relCatTopos_{\baseTopos}$ can be described as a $\baseTopos$-natural transformation $\theta' \to \theta$ of two $\baseTopos$-functors $I \to \catTopos_{\baseTopos}$ with the property that, for all $\tau\in\baseTopos$ and $i\in I(\tau)$, the $\baseTopos_{/\tau}$-functor $\theta'(i) \to \theta(i)$ is a wide $\baseTopos_{/\tau}$-subcategory inclusion.

\begin{lem}
\label{lem:localisation_of_fibration}
Let $(\theta'\to\theta)\colon I\to\relCatTopos_{\baseTopos}$ be a $\baseTopos$-functor, let $p\colon \X\to I$ be the $\baseTopos$-cocartesian fibration corresponding to $\theta\colon I\to\catTopos_{\baseTopos}$, let $p'\colon  \widehat{\W} \to I$ be the $\baseTopos$-cocartesian fibration corresponding to $\theta'$, and denote $\W:=I^\simeq\times_I \widehat{\W}$.
Then the pair $(\X,\W)$ is again a relative $\baseTopos$-category and the $\baseTopos$-functor $\scL(\X,\W)\to I$ induced on localisation is the $\baseTopos$-cocartesian fibration corresponding to the composite $\baseTopos$-functor $\scL\circ (q'\to q)\colon I\to\catTopos_{\baseTopos}$.
\end{lem}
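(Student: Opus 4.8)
The plan is to reduce the statement about $\baseTopos$-cocartesian fibrations to the underlying statement about $\infty$-categories by working levelwise and using the stability of all constructions under \'etale basechange. First I would record that $(\X,\W)$ is genuinely a relative $\baseTopos$-category: for each $\tau\in\baseTopos$ and each $\baseTopos$-cocartesian fibration $p$, the $\baseTopos_{/\tau}$-categories $\X(\tau)$ and $\W(\tau)$ are described by straightening, and the hypothesis that $\theta'(i)\to\theta(i)$ is a wide inclusion for all objects $i$ of $I$ ensures that $\W\to\X$ is levelwise a wide $\baseTopos_{/\tau}$-subcategory inclusion; the point is that a morphism of $\X$ lands in $\W$ precisely when it lies over an equivalence in $I$ and is $\theta'$-cocartesian, so $\W$ contains all equivalences of $\X$ and all identities, and the square with $\{0\}\hookrightarrow[1]$ behaves correctly. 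This uses only \cref{obs:leftclosed}\ref{rmk:left_closed_fibration} and the definition of $\baseTopos$-cocartesian fibrations from \cref{rec:parametrised_cocartesian_fibrations}.

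Next I would set up the comparison $\baseTopos$-functor. The localisation $\baseTopos$-functor $\scL\colon\relCatTopos_\baseTopos\to\catTopos_\baseTopos$ composed with $(q'\to q)\colon I\to\relCatTopos_\baseTopos$ gives a $\baseTopos$-functor $\scL\circ(q'\to q)\colon I\to\catTopos_\baseTopos$; let $q\colon\Y\to I$ be its $\baseTopos$-cocartesian unstraightening, so $\Y=\int_I(\scL\circ\theta'\to\theta)$, where I slightly abuse notation. The universal property of localisation, applied fibrewise, produces a $\baseTopos$-natural transformation $\theta\to\scL\circ(q'\to q)$ in $\func_\baseTopos(I,\catTopos_\baseTopos)$, hence (via the straightening-unstraightening equivalence \cref{eq:straightening_cocart}, which is compatible with morphisms) a commuting triangle $\X\to\Y\to I$ of $\baseTopos$-cocartesian fibrations over $I$. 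Since $\W\to\X\to\Y$ sends morphisms of $\W$ to equivalences of $\Y$ — this can be checked on objects in the sense of the recollections, and on objects it is exactly the statement that the localisation functor inverts $\theta'$-cocartesian morphisms lying over equivalences — the universal property of $\scL(\X,\W)$ gives a canonical $\baseTopos$-functor $\Phi\colon\scL(\X,\W)\to\Y$ over $I$. It remains to show $\Phi$ is an equivalence.

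To prove $\Phi$ is an equivalence, by the principle that statements in internal higher category theory are checked levelwise and are stable under \'etale basechange (cf.\ the recollections in \cref{subsec:recollections} on existence and preservation of colimits, and more directly \cite[Sec.\ 2.14]{MartiniWolf2024Colimits}), it suffices to treat the case $\baseTopos=\spc$, i.e.\ the unparametrised statement: for a natural transformation $\theta'\to\theta\colon I\to\cat$ of functors of $\infty$-categories with each $\theta'(i)\to\theta(i)$ a wide subcategory inclusion, the Dwyer--Kan localisation of $\int_I\theta$ at the subcategory $\W$ (defined as above) is computed by $\int_I(\scL\circ\theta')$. Here $\scL$ is the levelwise Dwyer--Kan localisation. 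This is a known fact — it says that localisation of cocartesian fibrations can be performed fibrewise when one only inverts vertical arrows — and can be proved by presenting $\int_I\theta$ and $\int_I(\scL\circ\theta')$ as unstraightenings and using that the cocartesian unstraightening $\func(I,\cat)\to\cat_{/I}$ sends the levelwise localisation to the localisation: concretely, mapping out of $\scL(\int_I\theta,\W)$ into an $\infty$-category $\D$ classifies functors $\int_I\theta\to\D$ inverting $\W$, which by currying and the fibrewise nature of $\W$ identifies with functors $\int_I(\scL\circ\theta')\to\D$, giving the comparison on mapping spaces and hence the equivalence by Yoneda. The main obstacle is making this last unparametrised identification precise and checking that the comparison $\baseTopos$-functor $\Phi$ constructed in the previous paragraph is the one realizing this equivalence levelwise (rather than merely some equivalence); this is a compatibility bookkeeping issue that one handles by tracking everything through the straightening equivalences \cref{eq:straightening_cocart} and their naturality, together with \cref{obs:localisation_op} if one prefers to dualize to cartesian fibrations at any point.
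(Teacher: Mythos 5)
Your overall shape (show both sides corepresent the same functor and conclude by Yoneda) matches the paper's, but there are two genuine problems in the execution. First, the reduction ``by \'etale basechange it suffices to treat $\baseTopos=\spc$'' does not work. \'Etale basechange only takes you from $\baseTopos$ to slices $\baseTopos_{/\tau}$, never to $\spc$; and while an equivalence of $\baseTopos$-categories can indeed be checked levelwise, the localisation $\scL(\X,\W)$ is \emph{not} computed levelwise: the assignment $\tau\mapsto\scL(\X(\tau),\W(\tau))$ need not be limit-preserving, so $\scL(\X,\W)(\tau)$ is not the Dwyer--Kan localisation of $\X(\tau)$ at $\W(\tau)$, and you have no fibrewise description of the left-hand side with which to compare $\Phi(\tau)$. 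Second, even in the unparametrised case the step you label ``by currying and the fibrewise nature of $\W$'' is exactly where the content of the lemma sits, and you supply no mechanism for it: there is no literal currying available (the target $\D$ does not live over $I$), so it is not explained why functors $\int_I\theta\to\D$ inverting $\W$ should agree with functors out of the fibrewise localisation.

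The paper resolves both issues at once by staying in the parametrised world and invoking the cofree adjunction $\int_I\dashv\cofree_I$ of \cref{lem:omnibus_cofree}: one has $\map_{\cat_\baseTopos}(\int_I\bar\theta,\D)\simeq\nattrans(\bar\theta,\cofree_I\D)$, so $\W$-local functors $\X\to\D$ correspond to $\theta'$-local natural transformations $\theta\to\cofree_I\D$, which by the levelwise universal property of localisation correspond to natural transformations $\scL\circ(\theta'\to\theta)\to\cofree_I\D$, i.e.\ to functors $\Y\to\D$. This adjunction is the device your ``currying'' is gesturing at, and it is already needed for $\baseTopos=\spc$; with it, the explicit comparison functor $\Phi$ and the levelwise reduction become unnecessary. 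A smaller point: your description of $\W$ as the ``$\theta'$-cocartesian morphisms lying over equivalences'' is not correct --- $\W$ contains all fibrewise morphisms of the $\theta'(i)$, most of which are not cocartesian, and these are precisely the morphisms the localisation is meant to invert.
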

\begin{proof}
It is easy to see that $\widehat{W} \to \W$ is a wide $\baseTopos$-subcategory.
Furthermore, $\W = \widehat{\W} \times_I I^\simeq \to \widehat{\W}$ is a wide $\baseTopos$-subcategory.
This shows that $(\X,\W)$ is a relative $\baseTopos$-category.

For $r,r'\in\func_{\baseTopos}(I,\catTopos_{\baseTopos})$ we let $\nattrans(r,r')\coloneqq \map_{\func_{\baseTopos}(I,\catTopos_{\baseTopos})}(r,r')\in\spc$ denote the space of $\baseTopos$-natural transformations from $r$ to $r'$. For $r\in\func(I,\catTopos_{\baseTopos})$ we further let $\nattrans(\theta,r)_{\theta'\mathrm{-loc}}\subseteq\nattrans(\theta,r)$ denote the subspace comprising all natural transformations that, for all $\tau\in\baseTopos$ and all $i\in I(\tau)$, restrict to a $\theta'(i)$-local functor $\theta(i)\to r(i)$; in other words, $\nattrans(\theta,r)_{\theta'\mathrm{-loc}}$ is the image of the map $\nattrans(\scL\circ(\theta'\to \theta),r) \to \nattrans(\theta, r)$ induced by the localisation $\baseTopos$-natural transformation $\theta\to \scL\circ(\theta'\to \theta)$.

Recall the $\baseTopos$-functor $\int_I\colon\func_\baseTopos(I,\catTopos_{\baseTopos})\to\catTopos_{\baseTopos}$ from \cref{subsec:recollections}, sending $\bar\theta\colon I\to\catTopos_{\baseTopos}$ to the source of a cocartesian fibration corresponding to $\bar\theta$, admits a $\baseTopos$-right adjoint $\cofree_I\colon\catTopos_{\baseTopos}\to\func_\baseTopos(I,\catTopos_{\baseTopos})$ by \cref{lem:omnibus_cofree}. 
Now let $\Y\to I$ be the  $\baseTopos$-cocartesian fibration corresponding to $\scL\circ (\theta'\to \theta)$.
For $\D\in\cat_{\baseTopos}$ we compute the following chain of equivalences of spaces
\[
\map_{\cat_{\baseTopos}}(\Y,\D)\simeq\nattrans(\scL\circ (\theta'\to \theta),\cofree_I\D)\simeq\nattrans(\theta,\cofree_I\D)_{\theta'\mathrm{-loc.}}\simeq\map_{\cat_{\baseTopos}}(\X,\D)_{\W\mathrm{-loc}}.
\]
In the last step we use that $\theta'$-local natural transformations correspond to $\W$-local functors along the identification $\nattrans(\theta,\cofree_I\D)\simeq\map_{\cat_{\baseTopos}}(\X,\D)$. This exhibits $\Y\simeq \scL(\X,\W)$.
\end{proof}

\section{Lax linearity}
\label{subsec:lax_linearity}
In the following, we want to study the notion of lax $\sC$-linear $\baseTopos$-functors between $\sC$-linear $\baseTopos$-categories, where $\sC \in \calg(\presentable^L_\baseTopos)$ is a presentably symmetric monoidal $\baseTopos$-category. The purpose of this study is that while Morita theory classifies $\sC$-linear functors,  many of the functors we wish to apply it to will naturally only carry a slightly weaker structure, namely \textit{lax $\sC$-linearity}. A typical example of a lax $\sC$-linear functor is the right adjoint to a $\sC$-linear functor, which will be the main result (\cref{prop:right_adjoint_to_C_linear_functor_is_lax_C_linear}) of this section. However, actual $\sC$-linearity is just a \textit{property} for lax $\sC$-linear functors. 

To provide some intuition, let us first consider the case $\baseTopos=\spc$. Given a presentably symmetric monoidal $\infty$-category $\sC$, the datum of a $\sC$-linear functor $f \colon \mathcal{M} \rightarrow \mathcal{N}$ provides in particular, for all $c \in \sC$ and $m\in \mathcal{M}$, an equivalence
\[  c \otimes_\sC f(m) \xrightarrow{\sim} f( c \otimes_\sC m).  \]
Now suppose that $f$ admits a right adjoint as a plain functor. Then, for $c \in \sC$ and $n \in \mathcal{N}$ there still is a comparison map $c \otimes_\sC g(n) \rightarrow g(c \otimes_\sC m)$, namely the adjoint of the composite
\[ f(c \otimes_\sC g(n)) \xleftarrow{\simeq} c \otimes_\sC fg(n) \xrightarrow{\mathrm{counit}} c \otimes_\sC n. \]
The collection of these morphisms together with higher coherence data equip $g$ with the structure of a \textit{lax $\sC$-linear functor}. At this level of detail, it is easy to believe that lax $\sC$-linear functors can be composed to lax $\sC$-linear functors, and that it is a property among lax $\sC$-linear functors to be actually $\sC$-linear, namely the property that all the comparison maps $c \otimes_\sC g(n) \rightarrow g(c \otimes_\sC n)$ are actually equivalences. The following will give a more precise recollection on lax linearity, in the presentation given by Cnossen in \cite{Cnossen2023}.

Recall that for an ($\infty$-)operad $\calO$, an \textit{$\calO$-monoidal $\infty$-category} is a cocartesian fibration $\sC^{\otimes} \rightarrow \calO^{\otimes}$ such that the composite $\sC^{\otimes} \rightarrow \calO^{\otimes} \rightarrow \mathrm{Fin}_*$ is an operad. An $\calO$-monoidal functor is a morphism of operads over $\calO^{\otimes}$, preserving all cocartesian lifts of morphisms in $\calO^{\otimes}$. We write $\cat^{\calO} \subset \cocartesianCategory(\calO^\otimes)$ for the full $\infty$-subcategory spanned by $\calO$-monoidal $\infty$-categories. The main result of \cite[Appendix A]{Cnossen2023} we will use is that one may regard $\calO$-algebras in $\cat_\baseTopos$ as $\baseTopos^{\mathrm{op}, \calO}$-monoidal $\infty$-categories, where $\baseTopos^{\mathrm{op}, \calO}$ is the following (ordinary) operad, constructed out of $\calO$; up to enlarging the universe, we may similarly describe $\calO$-algebras in $\presentable^L_{\baseTopos}$ as large $\baseTopos^{\mathrm{op}, \calO}$-monoidal $\infty$-categories.
\begin{defn}
We consider the coproduct symmetric monoidal structure on $\baseTopos\op$, encoded by a cocartesian fibration $\baseTopos^{\mathrm{op},\sqcup} \rightarrow \mathrm{Fin}_*$. We set
    \[ (\baseTopos^{\mathrm{op}, \calO})^\otimes = \calO^\otimes \times_{\mathrm{Fin}_*} \baseTopos^{\mathrm{op},\sqcup}. \]
\end{defn}

\begin{prop}\label{prop:O_algebras_in_T-categories}
    There is an equivalence
    \[ \alg_\calO(\func(\baseTopos\op,\cat)) \simeq \cat^{\baseTopos^{\mathrm{op},\calO}}. \]
    In particular, the $\infty$-category of $\calO$-algebras in $\cat_\baseTopos$ is a full $\infty$-subcategory of the $\infty$-category of $\baseTopos^{\mathrm{op},\calO}$-monoidal $\infty$-categories.
\end{prop}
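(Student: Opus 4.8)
\textbf{Proof plan for \cref{prop:O_algebras_in_T-categories}.}
The plan is to recall the straightening--unstraightening machinery and the explicit description of $\calO$-monoidal $\infty$-categories as commutative algebra objects in the symmetric monoidal $\infty$-category of $\calO$-operads, and then to apply the Day convolution / relative-operad formalism to the coproduct symmetric monoidal structure on $\baseTopos\op$. First I would record that, under the equivalence between cocartesian fibrations over $\mathrm{Fin}_*$ and symmetric monoidal $\infty$-categories, the cocartesian fibration $\baseTopos^{\mathrm{op},\sqcup}\to\mathrm{Fin}_*$ classifies the coproduct (``finite coproduct'') symmetric monoidal structure on $\baseTopos\op$; equivalently it classifies the product symmetric monoidal structure on $\baseTopos$ after opposition. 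A key point here is that the coproduct symmetric monoidal structure is \emph{cocartesian}, so that by the universal property of cocartesian symmetric monoidal structures, lax symmetric monoidal functors out of $\baseTopos^{\mathrm{op},\sqcup}$ are the same as plain functors. I would then observe that $(\baseTopos^{\mathrm{op},\calO})^\otimes = \calO^\otimes\times_{\mathrm{Fin}_*}\baseTopos^{\mathrm{op},\sqcup}$ is the pullback operad, and that $\baseTopos^{\mathrm{op},\calO}\to\calO$ exhibits $\baseTopos^{\mathrm{op},\calO}$ as an $\calO$-operad whose underlying $\infty$-category is $\baseTopos\op$.

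The central step is then to identify $\baseTopos^{\mathrm{op},\calO}$-monoidal $\infty$-categories with $\calO$-algebras in $\func(\baseTopos\op,\cat)$ equipped with the pointwise (Day convolution against the cartesian structure on $\baseTopos\op$, which because $\baseTopos\op$ carries its coproduct structure is just the pointwise) $\calO$-monoidal structure. This is a direct application of the comparison between $\calO$-monoidal structures on $\infty$-categories fibred over an $\calO$-operad and the relative version of algebras: unwinding, an $\calO$-monoidal functor $\sC^\otimes\to(\baseTopos^{\mathrm{op},\calO})^\otimes$ whose fibres are $\infty$-categories is precisely a functor $\baseTopos\op\to\cat^{\calO}$, i.e.\ a $\baseTopos\op$-indexed family of $\calO$-monoidal $\infty$-categories together with the lax structure coming from the coproduct in $\baseTopos\op$; since the coproduct is cocartesian this lax structure is automatically strict in the relevant direction, yielding exactly $\alg_\calO(\func(\baseTopos\op,\cat))$ with the pointwise tensor. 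I would carry this out by first treating $\calO=\mathrm{Fin}_*$ (the symmetric monoidal case), where the statement reduces to the well-known fact that symmetric monoidal structures on functor categories $\func(\baseTopos\op,\cat)$ pulled back along the coproduct structure correspond to $\baseTopos\op$-families of symmetric monoidal $\infty$-categories, and then deducing the general $\calO$ case by pulling back along $\calO^\otimes\to\mathrm{Fin}_*$, using that the constructions $\baseTopos^{\mathrm{op},(-)}$ and $\alg_{(-)}$ are both compatible with this pullback.

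Finally, the ``in particular'' clause follows formally: since $\cat^{\calO}\subset\cocartesianCategory(\calO^\otimes)$ is a full subcategory by definition, and the equivalence above is obtained by restricting the equivalence $\func(\calO^\otimes,-)$-type comparison, the essential image of $\alg_\calO(\func(\baseTopos\op,\cat))$ inside the $\infty$-category of $\baseTopos^{\mathrm{op},\calO}$-monoidal $\infty$-categories is a full subcategory (characterised by the fibrewise condition of being an $\infty$-category rather than a general operad). The main obstacle I anticipate is bookkeeping the coherences in the identification of the pullback operad $(\baseTopos^{\mathrm{op},\calO})^\otimes$ with the total space of the pointwise $\calO$-monoidal structure on $\func(\baseTopos\op,\cat)$ — specifically verifying that cocartesian lifts over $\calO^\otimes$ in the fibre-product correspond exactly to the pointwise $\calO$-monoidal operations, and that the inert morphisms match up so that one genuinely lands in $\calO$-algebras and not merely $\calO$-operad maps; this is where I would invoke \cite[Appendix A]{Cnossen2023} rather than re-derive it, since the excerpt explicitly attributes the result there.
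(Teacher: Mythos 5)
The paper gives no proof of \cref{prop:O_algebras_in_T-categories}: the statement is imported from \cite[Appendix A]{Cnossen2023}, so your concluding appeal to that reference is exactly what the paper itself does, and your outline (pointwise $\calO$-structure on $\func(\baseTopos\op,\cat)$, assembling a $\baseTopos\op$-indexed family of $\calO$-monoidal $\infty$-categories into a single cocartesian fibration over the pullback operad $(\baseTopos^{\mathrm{op},\calO})^\otimes$, with the inert/Segal bookkeeping delegated to Cnossen) has the right shape for that argument. One correction so the sketch is not mistaken for a proof: your ``key point'' misquotes the universal property of the cocartesian structure --- operad maps (lax symmetric monoidal functors) out of $\baseTopos^{\mathrm{op},\sqcup}$ into $\mathcal{D}^\otimes$ are not plain functors $\baseTopos\op\to\mathcal{D}$ but functors $\baseTopos\op\to\calg(\mathcal{D})$ (\cite[\S 2.4.3]{lurieHA}); what actually produces $\baseTopos\op$-families of ($\calO$-)monoidal $\infty$-categories is combining this, for the cartesian target $\cat^{\times}$, with the identification of $\mathcal{P}$-monoidal $\infty$-categories with $\mathcal{P}$-monoids in $\cat$, and the currying over the fibre product $\calO^\otimes\times_{\mathrm{Fin}_*}\baseTopos^{\mathrm{op},\sqcup}$ is precisely the content you (correctly) leave to \cite[Appendix A]{Cnossen2023}. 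Also, the ``in particular'' clause is cleanest not via a condition characterising the essential image, but simply because $\cat_\baseTopos\subseteq\func(\baseTopos\op,\cat)$ is a full subcategory closed under the pointwise structure, so $\alg_\calO$ of it sits fully faithfully inside $\alg_\calO(\func(\baseTopos\op,\cat))\simeq\cat^{\baseTopos^{\mathrm{op},\calO}}$.
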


Given an $\calO$-algebra $\sC$ in $\cat_\baseTopos$, let us write $\sC^{\otimes, \baseTopos,\calO}$ for the corresponging $\baseTopos^{\mathrm{op},\calO}$-monoidal category. 
Specialising to the case of left modules, recall that there is an operad $\leftmod$ coming with an operad map $E_1 \rightarrow \leftmod$. Intuitively, an $\leftmod$-algebra consists of a pair $(A,M)$, where $A$ is an $E_1$-algebra and $M$ a left $A$-module; in fact $A$ is extracted from $(A,M)$ by restricting along the aforementioned operad map $E_1 \rightarrow \leftmod$.

By definition of the left side, for $\sC \in \alg_{E_1}(\cat_\baseTopos)$ we have an equivalence
\[
\lmodule_{\sC}(\cat_\baseTopos) \simeq \alg_{\leftmod}(\cat_\baseTopos) \times_{\alg_{E_1}(\cat_{\baseTopos})} \{ \sC\}.  
\]
In particular, to a left $\sC$-module we can assign a $\baseTopos^{\mathrm{op},\leftmod}$-monoidal $\infty$-category.

\begin{defn}
\label{defn:lax_C_linear_functor}
    Let $\sC \in \alg_{E_1}(\cat_\baseTopos)$ be a monoidal $\baseTopos$-category, and let $\mathcal{M}$ as well as $\mathcal{N}$ be left $\sC$-modules. A \textit{lax $\sC$-linear $\baseTopos$-functor} $f \colon \mathcal{M} \rightarrow \mathcal{N}$ is the datum of
    \begin{itemize}
    \item a morphism $\mathcal{M}^{\otimes,\baseTopos,\leftmod} \rightarrow \mathcal{N}^{\otimes,\baseTopos,\leftmod}$ in $\cat_{/(\baseTopos^{\mathrm{op},\leftmod})^\otimes}$
    which is a map of operads;
    \item an identification with the identity of $\mathcal{\sC}^{\otimes,\baseTopos,E_1}$ of the pullback morphism along the operad map $\baseTopos^{\mathrm{op},E_1} \rightarrow \baseTopos^{\mathrm{op},\leftmod}$.
    \end{itemize}
\end{defn}

\begin{prop}
\label{prop:right_adjoint_to_C_linear_functor_is_lax_C_linear}
Let $\sC \in \alg_{E_1}(\cat_\baseTopos)$ and let $f \colon \mathcal{M} \rightarrow \mathcal{N}$ be a $\sC$-linear $\baseTopos$-functor which admits a right adjoint $g$ as a plain functor. Then $g$ admits a lax $\sC$-linear structure.
\end{prop}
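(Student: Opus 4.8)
The plan is to reduce the statement to a known general fact about $(\infty,2)$-categories: namely, that a lax $\mathcal{O}$-monoidal functor between $\mathcal{O}$-monoidal $\infty$-categories admits a lax $\mathcal{O}$-monoidal right adjoint whenever it admits a right adjoint fibrewise, i.e. after restricting to each colour. This is the operadic generalisation of the familiar passage ``a strong monoidal functor has a lax monoidal right adjoint'' (going back to the treatment in \cite{lurieHA} for the symmetric monoidal case), and it applies verbatim to the operad $(\baseTopos^{\mathrm{op},\leftmod})^\otimes$.

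Concretely, recall from \cref{defn:lax_C_linear_functor} that a lax $\sC$-linear $\baseTopos$-functor $f\colon\sM\to\sN$ is, by definition, a map of operads $\sM^{\otimes,\baseTopos,\leftmod}\to\sN^{\otimes,\baseTopos,\leftmod}$ over $(\baseTopos^{\mathrm{op},\leftmod})^\otimes$ which restricts to the identity on $\sC^{\otimes,\baseTopos,E_1}$; a genuine $\sC$-linear functor is moreover a $\baseTopos^{\mathrm{op},\leftmod}$-monoidal functor, i.e. it preserves cocartesian edges lying over inert-and-active morphisms as appropriate. So first I would unwind: since $f\colon\sM\to\sN$ is $\sC$-linear and $\sM^{\otimes,\baseTopos,\leftmod}\to(\baseTopos^{\mathrm{op},\leftmod})^\otimes$, $\sN^{\otimes,\baseTopos,\leftmod}\to(\baseTopos^{\mathrm{op},\leftmod})^\otimes$ are the associated cocartesian fibrations of operads, the induced functor $F\colon \sM^{\otimes,\baseTopos,\leftmod}\to\sN^{\otimes,\baseTopos,\leftmod}$ is a morphism of operads that preserves the relevant cocartesian lifts. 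Next, I would verify the fibrewise-adjoint hypothesis: over an object $\big(\langle n\rangle,(\tau_i),c_i\big)$ of the operad $(\baseTopos^{\mathrm{op},\leftmod})^\otimes$ — where each colour $c_i$ is either the algebra colour or the module colour — the fibre of $F$ is, up to the monoidal structure, a finite product of the functors $\Gamma_{\baseTopos_{/\tau}}$ applied to restrictions of $f$ (on module colours) and identities (on the $\sC$-colour). Since $f$ admits a right adjoint $g$ in $\widehat{\cat}_\baseTopos$, each $f(\tau)\colon\sM(\tau)\to\sN(\tau)$ admits a right adjoint $g(\tau)$ (by the characterisation of adjunctions in $\cat_\baseTopos$ recalled in \cref{subsec:recollections}), hence $F$ admits a right adjoint on each fibre of the operad $(\baseTopos^{\mathrm{op},\leftmod})^\otimes$.

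The main technical step is then to invoke the relative adjoint functor machinery: a morphism of $\infty$-operads over a fixed base operad which preserves cocartesian edges over inert morphisms, and which admits a right adjoint fibrewise over each colour (compatibly with the inert structure), automatically assembles these fibrewise right adjoints into a map of operads $G\colon \sN^{\otimes,\baseTopos,\leftmod}\to\sM^{\otimes,\baseTopos,\leftmod}$ over $(\baseTopos^{\mathrm{op},\leftmod})^\otimes$, which is right adjoint to $F$ relative to the base; this is the operadic dual-adjunction statement from \cite[§7.3]{lurieHA}. The lax monoidal (equivalently, colour-wise) structure on $G$ is exactly the assertion that $G$ is a map of operads — it need not preserve active cocartesian edges, which is precisely why one only obtains a \emph{lax} structure. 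Finally, I would check that $G$ restricts to the identity on $\sC^{\otimes,\baseTopos,E_1}$: this is because the restriction of $F$ to the algebra colour is the identity of $\sC^{\otimes,\baseTopos,E_1}$, whose right adjoint is again the identity, so the relative right adjoint restricts accordingly. This produces the desired lax $\sC$-linear structure on $g=\Gamma(G)$ in the sense of \cref{defn:lax_C_linear_functor}.

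I expect the main obstacle to be purely bookkeeping: making the identification of the fibre of $F$ over a general object of $(\baseTopos^{\mathrm{op},\leftmod})^\otimes$ precise, and checking that the fibrewise right adjoints are compatible with the inert morphisms of the operad (so that the relative adjoint functor theorem applies). This compatibility amounts to a Beck--Chevalley condition for the squares associated to inert maps $\langle n\rangle\to\langle m\rangle$ and to morphisms $\tau'\to\tau$ in $\baseTopos$, which follows from the fact that $F$ is a morphism of operads preserving inert cocartesian edges together with the standard mate calculus; none of this is deep, but it is where the care is needed. Once this is in place, the conclusion is formal. I would also remark that $\sC$-linearity of $g$ — as opposed to mere lax $\sC$-linearity — is then the \emph{property} that $G$ preserves the active cocartesian edges, equivalently that the projection-formula comparison maps $c\otimes g(n)\to g(c\otimes n)$ are equivalences; this is used elsewhere in the paper (e.g. \cref{rec:recognition_c_linear_functors}, \cref{lem:adjunction_unit_lax_linear}) but is not part of the present statement.
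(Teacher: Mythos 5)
Your proposal is correct and follows essentially the same route as the paper: pass to the associated operads over $(\baseTopos^{\mathrm{op},\leftmod})^\otimes$, reduce the objectwise right-adjoint hypothesis to the two colours via the Segal condition, and invoke a relative/pointwise right-adjoint extraction result to obtain a map of operads that restricts to the identity on $\sC^{\otimes,\baseTopos,E_1}$. The only cosmetic difference is the reference: the paper cites \cite[Cor.~C]{HHLNCalculusOfMates} for the extraction, where you cite the equivalent relative-adjoint machinery of \cite[\S 7.3]{lurieHA}, and the inert-compatibility you flag as the main bookkeeping obstacle is exactly what that cited result absorbs.
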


\begin{proof}
Let $a,m\in\leftmod^\otimes\left<1\right>$ denote the colours for the algebra and the module, respectively.
The functor $f$ induces a functor $f^{\otimes,\baseTopos,\leftmod}\colon\mathcal{M}^{\otimes,\baseTopos,\leftmod} \rightarrow \mathcal{N}^{\otimes,\baseTopos,\leftmod}$, i.e. a morphism in $\cocartesianCategory((\baseTopos^{\mathrm{op},\leftmod})^\otimes)$. This functor admits a ``right adjoint extraction'' in the sense of \cite[Cor. C]{HHLNCalculusOfMates}: to apply this result we need to check that $f^{\otimes,\baseTopos,\leftmod}$ admits objectwise in $(\baseTopos^{\mathrm{op},\leftmod})^\otimes$ a right adjoint, and the Segal condition implies that it suffices to check the last claim on colours, i.e. on objects of $(\baseTopos^{\mathrm{op},\leftmod})^\otimes\left<1\right>$; for objects projecting to $a$, this is immediate as an identity functor admits itself as right adjoint; for objects projecting to $m$ this is a consequence of the hypothesis.
Thus we obtain a right adjoint
\[
g^{\mathrm{lax},\baseTopos,\leftmod}\colon\mathcal{N}^{\otimes,\baseTopos,\leftmod} \rightarrow \mathcal{N}^{\otimes,\baseTopos,\leftmod}.
\]
to $f^{\otimes,\baseTopos,\leftmod}$, which is a map of operads but in general does not preserve all cocartesian morphisms over $\baseTopos^{\mathrm{op},\leftmod}$.
Since the extraction of adjoints is computed fibrewise, $g^{\mathrm{lax},\baseTopos,\leftmod}$ restricts to the right adjoint $g\colon\mathcal{N}\to\mathcal{M}$ over $m\times\baseTopos\op$, and it restricts to the (fibrewise right adjoint of the) identity of $\sC^{\otimes,\baseTopos,E_1}$ along pullback along $\baseTopos^{\mathrm{op},E_1} \rightarrow \baseTopos^{\mathrm{op},\leftmod}$.
\end{proof}

\begin{defn}
Recall \cref{defn:lax_C_linear_functor}.
We let $\func_{\sC-\mathrm{lax}}(\mathcal{M},\mathcal{N})$ denote the full subcategory spanned by operad maps $\mathcal{M}\to\mathcal{N}$ of the fibre at the identity of $\sC^{\otimes,\baseTopos,E_1}$ of the functor
\[
\func_{/\baseTopos^{\mathrm{op},\leftmod}}(\mathcal{M}^{\otimes,\baseTopos,\leftmod},\mathcal{N}^{\otimes,\baseTopos,\leftmod}) \rightarrow 
\func_{/\baseTopos^{\mathrm{op},E_1}}(\sC^{\otimes,\baseTopos,E_1},\sC^{\otimes,\baseTopos,E_1})
\]
we obtain a $\infty$-category of lax $\sC$-linear functors from $\mathcal{M}$ to $\mathcal{N}$.
\end{defn}

\begin{lem}
    \label{lem:adjunction_unit_lax_linear}
    Let $f \colon \mathcal{M} \rightarrow \mathcal{N}$ be a $\sC$-linear map, which has a lax $\sC$-linear right adjoint. Then the adjunction unit and counit refine to morphisms in $\func_{\sC-\mathrm{lax}}(\mathcal{M},\mathcal{M})$ and $\func_{\sC-\mathrm{lax}}(\mathcal{N},\mathcal{N})$ respectively.
\end{lem}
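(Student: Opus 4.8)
\textbf{Plan for the proof of \cref{lem:adjunction_unit_lax_linear}.}

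The plan is to deduce the statement from the construction of the lax $\sC$-linear right adjoint in the proof of \cref{prop:right_adjoint_to_C_linear_functor_is_lax_C_linear}, together with the calculus of mates in the $(\infty,2)$-categorical setting of \cite{HHLNCalculusOfMates}. Recall that in that proof we produced, by applying the ``right adjoint extraction'' of \cite[Cor. C]{HHLNCalculusOfMates} fibrewise over $(\baseTopos^{\mathrm{op},\leftmod})^\otimes$, a morphism of operads $g^{\mathrm{lax},\baseTopos,\leftmod}\colon\mathcal{N}^{\otimes,\baseTopos,\leftmod}\to\mathcal{M}^{\otimes,\baseTopos,\leftmod}$ together with an adjunction data $f^{\otimes,\baseTopos,\leftmod}\dashv g^{\mathrm{lax},\baseTopos,\leftmod}$ \emph{internal to} $\func_{/(\baseTopos^{\mathrm{op},\leftmod})^\otimes}$, i.e. the unit $\eta^{\otimes}$ and counit $\epsilon^\otimes$ are themselves natural transformations of functors over $(\baseTopos^{\mathrm{op},\leftmod})^\otimes$. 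The first step is to observe that $\eta^\otimes$ and $\epsilon^\otimes$, being extracted fibrewise, restrict over the colour $m\times\baseTopos\op$ to the ordinary adjunction unit $\eta\colon\id_{\mathcal M}\to gf$ and counit $\epsilon\colon fg\to\id_{\mathcal N}$; and they restrict along $\baseTopos^{\mathrm{op},E_1}\to\baseTopos^{\mathrm{op},\leftmod}$ to the (identity) unit and counit of the identity adjunction on $\sC^{\otimes,\baseTopos,E_1}$. Thus $\eta^\otimes$ is precisely a morphism in $\func_{\sC\text{-}\mathrm{lax}}(\mathcal M,\mathcal M)$ from $\id_{\mathcal M}$ (which is $\sC$-linear, a fortiori lax $\sC$-linear) to $gf$ (which is lax $\sC$-linear as a composite of the $\sC$-linear $f$ with the lax $\sC$-linear $g$); similarly $\epsilon^\otimes$ is a morphism in $\func_{\sC\text{-}\mathrm{lax}}(\mathcal N,\mathcal N)$ from $fg$ to $\id_{\mathcal N}$.

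More carefully, to say that a natural transformation of lax $\sC$-linear functors ``is a morphism in $\func_{\sC\text{-}\mathrm{lax}}$'' means precisely that the underlying natural transformation of operad maps $\mathcal M^{\otimes,\baseTopos,\leftmod}\to\mathcal M^{\otimes,\baseTopos,\leftmod}$ lies over $(\baseTopos^{\mathrm{op},\leftmod})^\otimes$ and pulls back to the identity transformation of $\id_{\sC^{\otimes,\baseTopos,E_1}}$. The first condition holds for $\eta^\otimes$ by construction of the internal adjunction. The second condition holds because the extraction procedure is compatible with restriction along the inert-preserving operad map $\baseTopos^{\mathrm{op},E_1}\to\baseTopos^{\mathrm{op},\leftmod}$: restricting $f^{\otimes,\baseTopos,\leftmod}\dashv g^{\mathrm{lax},\baseTopos,\leftmod}$ along this map yields the identity adjunction on $\sC^{\otimes,\baseTopos,E_1}$ by the second defining property of a lax $\sC$-linear structure, and the unit of the identity adjunction is the identity 2-cell. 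One then needs to observe that ``restriction of the internal unit'' agrees with ``unit of the restricted adjunction'', which is a formal property of the mate/adjoint calculus: restricting along a functor preserves adjunction data. The same argument verbatim handles $\epsilon^\otimes$.

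I expect the only genuine subtlety to be bookkeeping: one must be careful that the two notions of ``lax $\sC$-linear transformation'' — the one implicitly packaged by $\func_{\sC\text{-}\mathrm{lax}}$ as a $1$-morphism category inside an $(\infty,2)$-category of $\baseTopos^{\mathrm{op},\leftmod}$-monoidal categories, and the one produced by the fibrewise adjoint extraction — literally coincide, and that the compatibility of extraction with restriction along $\baseTopos^{\mathrm{op},E_1}\to\baseTopos^{\mathrm{op},\leftmod}$ is exactly what pins down the $\sC$-linear (as opposed to merely lax) structure on the unit and counit. Both of these are essentially immediate from \cite[Cor. C]{HHLNCalculusOfMates} and the definitions, so the proof will be short; there is no hard estimate or construction, only the verification that the structure we already built in \cref{prop:right_adjoint_to_C_linear_functor_is_lax_C_linear} carries the required $(\infty,2)$-categorical naturality.
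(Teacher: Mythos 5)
Your proposal is correct and follows essentially the same route as the paper: both deduce the claim from the internal adjunction $f^{\otimes,\baseTopos,\leftmod} \dashv g^{\mathrm{lax},\baseTopos,\leftmod}$ over $(\baseTopos^{\mathrm{op},\leftmod})^\otimes$ produced in the proof of \cref{prop:right_adjoint_to_C_linear_functor_is_lax_C_linear}, observing that its unit and counit are transformations over the base which restrict along $\baseTopos^{\mathrm{op},E_1}\to\baseTopos^{\mathrm{op},\leftmod}$ to the identity adjunction on $\sC^{\otimes,\baseTopos,E_1}$. Your additional remarks on the compatibility of adjoint extraction with restriction are exactly the formal point the paper's terser proof leaves implicit.
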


\begin{proof}
    Recall from the proof of \cref{prop:right_adjoint_to_C_linear_functor_is_lax_C_linear} that the adjunction $f \dashv g$ induces an adjunction $f^{\otimes,\baseTopos,\leftmod} \dashv g^{\mathrm{lax},\baseTopos,\leftmod}$ in $\cat_{/(\baseTopos^{\mathrm{op},\leftmod})^\otimes}$
    which restricts to the adjunction $\id\colon\sC^{\otimes,\baseTopos,E_1}\rightleftharpoons\sC^{\otimes,\baseTopos,E_1}\cocolon\id$ 
and consists of operad maps, so the claim follows.
\end{proof}

\printbibliography

@misc{pdcob,
title={Poincar\'e cobordisms: Foundations},
author={Bianchi, A. and Hilman, K. and  Kirstein, D. and  Kremer, C.},
note={In preparation}
}

@misc{isovariantpd,
title={Isovariant semifree Poincar\'e spaces and the gap condition},
author={Kirstein, D. and  Kremer, C.},
note={In preparation}
}

@misc{PD1,
title={Parametrised Poincar\'e duality and equivariant fixed points methods},
author={Hilman, K. and  Kirstein, D. and  Kremer, C.},
year={2024},
note={\href{https://arxiv.org/abs/2405.17641}{arXiv:2405.17641}}
}

@article {Levitt68,
    AUTHOR = {Levitt, N.},
     TITLE = {On the structure of {P}oincar\'e duality spaces},
   JOURNAL = {Topology},
    VOLUME = {7},
      YEAR = {1968},
     PAGES = {369-388}
}

@article {Levitt_Ranicki,
    AUTHOR = {Levitt, N. and Ranicki, A.},
     TITLE = {Intrinsic transversality structures},
   JOURNAL = {Pacific Journal of Mathematics},
    VOLUME = {129},
      YEAR = {1987},
    NUMBER = {1},
     PAGES = {85-144}
}

@article{Kleindual,
author={Klein, J. R.},
title={The dualizing spectrum of a topological group},
journal={Mathematische Annalen},
volume={319},
year={2001},
number={3},
pages={421-456}
}

@book {lurieHTT,
    AUTHOR = {Lurie, J.},
     TITLE = {Higher topos theory},
    SERIES = {Annals of Mathematics Studies},
    VOLUME = {170},
 PUBLISHER = {Princeton University Press, Princeton, NJ},
      YEAR = {2009},
}

@book{lurieHA,
AUTHOR = {Lurie, J.},
TITLE = {Higher algebra},
publisher    = {Harvard University, Cambridge, Massachusetts},
year         = {2017},
note = {Available at \href{https://www.math.ias.edu/~lurie/}{https://www.math.ias.edu/~lurie/}}
}

@misc{LuriePoincare,
author={Lurie, J.},
title={Algebraic {L}-theory and {S}urgery, {L}ecture 26},
year={2011},
note={Available at \href{https://www.math.ias.edu/~lurie/287x.html}{https://www.math.ias.edu/~lurie/287x.html}}
}

@article{LandPD,
author={Land, M.},
title={Reducibility of low dimensional {P}oincar\'e duality spaces},
journal={M\"unster Journal of Mathematics},
volume={15},
year={2022},
pages={47-81}
}

@article{nineauthors,
title={Hermitian {K}-theory for stable $\infty$-categories {I}: Foundations},
author={Calm\`es, B. and Dotto, E. and Harpaz, Y. and Hebestreit, F. and Land, M. and Moi, K. and Nardin, D. and Nikolaus, T. and Steimle, W.},
journal={Selecta Mathematica},
volume={29},
year={2023}
}

@misc{nineauthorsII,
title={Hermitian {K}-theory for stable $\infty$-categories {II}: Cobordism categories and additivity},
author={Calm\`es, B. and Dotto, E. and Harpaz, Y. and Hebestreit, F. and Land, M. and Moi, K. and Nardin, D. and Nikolaus, T. and Steimle, W.},
year={2020},
note={To appear in Acta Mathematica. \href{https://arxiv.org/abs/2009.07224}{arXiv:2009.07224}}
}

@article{HHLNCalculusOfMates,
  title={Lax monoidal adjunctions, two-variable fibrations and the calculus of mates},
  author={Haugseng, R. and Hebestreit, F. and Linskens, S. and Nuiten, J.},
  series =  {Proceedings of the London Mathematical Society},
  year = {2023},
  volume={127},
  number={4},
  pages={889-957}
}

@inproceedings{quinnBordism,
title={Assembly maps in bordism-type
theories},
author={Quinn, F.},
year = {1995},
editor={Ferry S.C., Ranicki A., Rosenberg J.M.}, 
booktitle={Novikov Conjectures, Index Theorems, and Rigidity: Oberwolfach 1993},
organization={London Mathematical Society},
series={Lecture Note Series},
publisher={Cambridge University Press},
pages={201-271}
}

@phdthesis{quinnThesis,
      title={A geometric formulation of surgery},
      author={Quinn, F.},
    year = {1970},
    school={Princeton University},
    type={Ph.D. thesis}
}

@misc{Cnossen2023,
      title={Twisted ambidexterity in equivariant homotopy theory},
      author={Cnossen, B.},
      year={2023},
      note={\href{https://arxiv.org/abs/2303.00736}{arXiv:2303.00736}},
}

@book {Hudson,
    AUTHOR = {Hudson, J. F. P.},
     TITLE = {Piecewise linear topology},
      series = {University of Chicago Lecture Notes},
      note={Prepared with the assistance of J. L. Shaneson and J. Lees},
 PUBLISHER = {W. A. Benjamin, Inc., New York-Amsterdam},
      YEAR = {1969},
     PAGES = {ix+282}
}

@misc{ramziMonoidalGrothendieck,
title={A monoidal {G}rothendieck construction for $\infty$-categories},
author={Ramzi, M.},
year ={2022},
note= {To appear in Nagoya Mathematical Journal.  \href{https://arxiv.org/abs/2209.12569}{arXiv:2209.12569}}
}

@article{Wall,
author={Wall, C. T. C.},
title={Poincar\'e {C}omplexes: {I}},
journal={Annals of Mathematics},
volume={86},
number={2},
year={1967},
pages={213-245}
}

@book {Wall_scm,
    AUTHOR = {Wall, C. T. C.},
     TITLE = {Surgery on compact manifolds},
    SERIES = {Mathematical Surveys and Monographs},
    VOLUME = {69},
      NOTE = {Edited and with a foreword by A. A. Ranicki},
 PUBLISHER = {American Mathematical Society, Providence, RI},
      YEAR = {1999},
     PAGES = {xvi+302}
}

@article{Spivak,
author={Spivak, M.},
title={Spaces satisfying Poincar\'e duality},
journal={Topology},
volume={6},
year={1967},
pages={77-101}
}

@inproceedings{Browder1,
author={Browder, W.},
title={Homotopy type of differentiable manifolds},
booktitle={Proceedings of the Aarhus Colloquium on Algebraic Topology},
year={1962}
}

@article{Browder2,
author={Browder, W.},
title={Remark on the Poincar\'e duality theorem},
journal={Proceedings of the American Mathematical Society},
volume={13},
year={1962},
pages={927-930}
}

@article{KleinQinSu,
author={Klein, J. R. and Qin, L. and Su, Y.},
title={On the various notions of Poincar\'e duality pair},
journal={Transactions of the American Mathematical Society},
volume={375},
number={6},
year={2022},
pages={4251-4283}
}

@article {KleinEmb1,
    AUTHOR = {Klein, J. R.},
     TITLE = {Poincar\'e{} duality embeddings and fiberwise homotopy theory},
   JOURNAL = {Topology},
    VOLUME = {38},
      YEAR = {1999},
    NUMBER = {3},
     PAGES = {597-620}
}

@misc{Martini2022Yoneda,
      title={Yoneda's lemma for internal higher categories},
      author={Martini, L.},
      year={2022},
      note={\href{https://arxiv.org/abs/2103.17141}{arXiv:2103.17141}},
}

@misc{Martini2022Cocartesian,
      title={Cocartesian fibrations and straightening internal to an $\infty$-topos},
      author={Martini, L.},
      year={2022},
      note={\href{https://arxiv.org/abs/2204.00295}{arXiv:2204.00295}},
}

@misc{MartiniWolf2024Colimits,
      title={Colimits and cocompletions in internal higher category theory},
      author={Martini, L. and Wolf, S.},
      year={2024},
      note={\href{https://arxiv.org/abs/2111.14495}{arXiv:2111.14495}},
}

@misc{MartiniWolf2022Presentable,
      title={Presentable categories internal to an $\infty$-topos},
      author={Martini, L. and Wolf, S.},
      year={2022},
      note={\href{https://arxiv.org/abs/2209.05103}{arXiv:2209.05103}},
}

@misc{ChasSullivan,
author={Chas, Moira and Sullivan, Dennis},
title={String topology},
year={1999},
note={\href{https://arxiv.org/abs/math/9911159}{arXiv:math/9911159}}
}

@misc{Bianchi:stringtopology,
author={Bianchi, A.},
title={String topology and graph cobordisms},
note={In preparation}
}
\end{document}